\RequirePackage{fix-cm}
\documentclass[11pt,a4paper,twoside]{book}

\usepackage{ERPhDStyle}
\usepackage{ERIndex}
\usepackage{ERGlossary}

\usepackage{ERPhDCommands}

\newglossaryentry{not:Xtil}{
		type=notation,
		name={\ensuremath{(X,\sim)}},
		symbol={\ensuremath{(X,\sim)}},
		description={a set $X$ with equivalence relation $\sim$},
		sort={Xtil}}

\newglossaryentry{not:xbar}{
		type=notation,
		name={\ensuremath{\overline{x}}},
		symbol={\ensuremath{\overline{x}}},
		description={the equivalence class of $x$ in $(X,\sim)$},
		sort={xbar}}
		
\newglossaryentry{not:Xbar}{
		type=notation,
		name={\ensuremath{\overline{X}}},
		symbol={\ensuremath{\overline{X}}},
		description={the set of equivalence classes of $(X,\sim)$},
		sort={Xbar}}

\newglossaryentry{not:SymXtil}{
		type=notation,
		name={\ensuremath{\Sym(X,\sim)}},
		symbol={\ensuremath{\Sym(X,\sim)}},
		description={the group of bijections of $X$ to itself, preserving $\sim$},
		sort={SymXtil}}

\newglossaryentry{not:gbar}{
		type=notation,
		name={\ensuremath{\overline{g}}},
		symbol={\ensuremath{\overline{g}}},
		description={for $g\in\Sym(X,\sim)$, the induced action of $g$ on $\overline{X}$},
		sort={gbar}}

\newglossaryentry{not:M}{
		type=notation,
		name={\ensuremath{\M}},
		symbol={\ensuremath{\M}},
		description={a local Moufang set, usually acting on $(X,\sim)$ with root groups $\{U_x\}_{x\in X}$},
		sort={Maa}}

\newglossaryentry{not:Ux}{
		type=notation,
		name={\ensuremath{U_x}},
		symbol={\ensuremath{U_x}},
		description={the root group corresponding to $x$ in a local Moufang set},  
		sort={Ux}}
		
\newglossaryentry{not:G}{
		type=notation,
		name={\ensuremath{G}},
		symbol={\ensuremath{G}},
		description={the little projective group of a local Moufang set $\M$},  
		sort={Ga}}

\newglossaryentry{not:Uxbar}{
		type=notation,
		name={\ensuremath{U_{\overline{x}}}},
		symbol={\ensuremath{U_{\overline{x}}}},
		description={the induced action of $U_x$ on $\overline{X}$},
		sort={Uxbar}}
		
\newglossaryentry{not:Mbar}{
		type=notation,
		name={\ensuremath{\overline{\M}}},
		symbol={\ensuremath{\overline{\M}}},
		description={the quotient Moufang set of $\M$},
		sort={Maa(bar)}}

\newglossaryentry{not:0infty}{
		type=notation,
		name={\ensuremath{0,\infty}},
		symbol={\ensuremath{0,\infty}},
		description={a pair of fixed chosen elements of $X$ with $0\nsim\infty$},
		sort={0}}

\newglossaryentry{not:alphax}{
		type=notation,
		name={\ensuremath{\alpha_x}},
		symbol={\ensuremath{\alpha_x}},
		description={unique element of $U_\infty$ mapping $0$ to $x$ for $x\nsim\infty$},
		sort={a0}}

\newglossaryentry{not:gammax}{
		type=notation,
		name={\ensuremath{\gamma_x}},
		symbol={\ensuremath{\gamma_x}},
		description={unique element of $U_0$ mapping $\infty$ to $x\tau$, defined as $\alpha_x^\tau$},
		sort={g0}}

\newglossaryentry{not:Uinfcirc}{
		type=notation,
		name={\ensuremath{U_\infty^\circ}},
		symbol={\ensuremath{U_\infty^\circ}},
		description={the group $\{\alpha_x\mid x\sim0\}$},
		sort={Uinfcirc}}

\newglossaryentry{not:Uinftimes}{
		type=notation,
		name={\ensuremath{U_\infty^\times}},
		symbol={\ensuremath{U_\infty^\circ}},
		description={the set $\{\alpha_x\mid x\nsim0\}$},
		sort={Uinftimes}}
		
\newglossaryentry{not:Uxcirc}{
		type=notation,
		name={\ensuremath{U_x^\circ}},
		symbol={\ensuremath{U_x^\circ}},
		description={the group $\{ u\in U_x\mid \overline{u}=\id\}$},
		sort={Uxcirc}}

\newglossaryentry{not:Uxtimes}{
		type=notation,
		name={\ensuremath{U_x^\times}},
		symbol={\ensuremath{U_x^\circ}},
		description={the set $\{ u\in U_x\mid \overline{u}\neq\id\}$},
		sort={Uxtimes}}

\newglossaryentry{not:mux}{
		type=notation,
		name={\ensuremath{\mu_x}},
		symbol={\ensuremath{\mu_x}},
		description={$\mu$-map corresponding to $x$},
		sort={m0x}}

\newglossaryentry{not:tau}{
		type=notation,
		name={\ensuremath{\tau}},
		symbol={\ensuremath{\tau}},
		description={fixed element swapping $0$ and $\infty$},
		sort={t0au}}

\newglossaryentry{not:tilx}{
		type=notation,
		name={\ensuremath{\til x}},
		symbol={\ensuremath{\til x}},
		description={defined as $(-(x\tau^{-1}))\tau$},
		sort={xtil}}

\newglossaryentry{not:hx}{
		type=notation,
		name={\ensuremath{h_x}},
		symbol={\ensuremath{h_x}},
		description={Hua map corresponding to $x$ if $\tau$ is clear from context},
		sort={hx}}

\newglossaryentry{not:hxtau}{
		type=notation,
		name={\ensuremath{h_{x,\tau}}},
		symbol={\ensuremath{h_{x,\tau}}},
		description={Hua map corresponding to $x$, defined a $\tau\mu_x$},
		sort={hxtau}}

\newglossaryentry{not:H}{
		type=notation,
		name={\ensuremath{H}},
		symbol={\ensuremath{H}},
		description={Hua subgroup of $\M$},
		sort={H}}

\newglossaryentry{not:Gx}{
		type=notation,
		name={\ensuremath{G_x}},
		symbol={\ensuremath{G_x}},
		description={point stabilizer of $x$ in $G$},
		sort={Gx}}

\newglossaryentry{not:Gxy}{
		type=notation,
		name={\ensuremath{G_{x,y}}},
		symbol={\ensuremath{G_{x,y}}},
		description={two-point stabilizer of $x$ and $y$ in $G$},
		sort={Gxy}}

\newglossaryentry{not:xy}{
		type=notation,
		name={\ensuremath{{}^xy}},
		symbol={\ensuremath{{}^xy}},
		description={left quasi-inverse of a quasi-invertible pair $(x,y)$},
		sort={xy}}

\newglossaryentry{not:xy2}{
		type=notation,
		name={\ensuremath{x^y}},
		symbol={\ensuremath{x^y}},
		description={right quasi-inverse of a quasi-invertible pair $(x,y)$},
		sort={xy2}}

\newglossaryentry{not:M(U,tau)}{
		type=notation,
		name={\ensuremath{\M(U,\tau)}},
		symbol={\ensuremath{\M(U,\tau)}},
		description={local Moufang set acting on $(X,\sim)$ constructed with $U$ and $\tau$},
		sort={Maa(U,tau)}}


\newglossaryentry{not:phi}{
		type=notation,
		name={\ensuremath{\phi}},
		symbol={\ensuremath{\phi}},
		description={homomorphism between local Moufang sets},
		sort={p0hi}}
		
\newglossaryentry{not:phibar}{
		type=notation,
		name={\ensuremath{\overline{\phi}}},
		symbol={\ensuremath{\overline{\phi}}},
		description={induced homomorphism between the quotient Moufang sets},
		sort={p0hibar}}

\newglossaryentry{not:thetax}{
		type=notation,
		name={\ensuremath{\theta_x}},
		symbol={\ensuremath{\theta_x}},
		description={group homomorphism on $U_x$ induced by a homomorphism between local Moufang sets},
		sort={thetax}}

\newglossaryentry{not:imphi}{
		type=notation,
		name={\ensuremath{\Im(\phi)}},
		symbol={\ensuremath{\Im(\phi)}},
		description={image of a homomorphism between the quotient Moufang sets},
		sort={imphi}}

\newglossaryentry{not:Igeq}{
		type=notation,
		name={\ensuremath{(I,\succcurlyeq)}},
		symbol={\ensuremath{(I,\succcurlyeq)}},
		description={directed set on index set $I$ with transitive, reflexive relation $\succcurlyeq$},
		sort={Igeq}}

\newglossaryentry{not:Miphiij}{
		type=notation,
		name={\ensuremath{(\M_i,\phi_{ij})}},
		symbol={\ensuremath{(\M_i,\phi_{ij})}},
		description={inverse system of local Moufang sets indexed by the directed set $(I,\succcurlyeq)$},
		sort={Miphiij}}

\newglossaryentry{not:Xiphiij}{
		type=notation,
		name={\ensuremath{(X_i,\phi_{ij})}},
		symbol={\ensuremath{(X_i,\phi_{ij})}},
		description={inverse system of sets indexed by the directed set $(I,\succcurlyeq)$},
		sort={Xiphiij}}

\newglossaryentry{not:limMi}{
		type=notation,
		name={\ensuremath{\protect\varprojlim\M_i}},
		symbol={\ensuremath{\protect\varprojlim\M_i}},
		description={inverse limit of the inverse system $(\M_i,\phi_{ij})$},
		sort={limMi}}

\newglossaryentry{not:limXi}{
		type=notation,
		name={\ensuremath{\protect\varprojlim X_i}},
		symbol={\ensuremath{\protect\varprojlim X_i}},
		description={inverse limit of the inverse system $(X_i,\phi_{ij})$},
		sort={limXi}}


\newglossaryentry{not:R}{
		type=notation,
		name={\ensuremath{R}},
		symbol={\ensuremath{R}},
		description={local ring with maximal ideal $\m$},
		sort={R}}

\newglossaryentry{not:Rhat}{
		type=notation,
		name={\ensuremath{\complete{R}}},
		symbol={\ensuremath{\complete{R}}},
		description={completion of a local ring $R$},
		sort={Raaa}}

\newglossaryentry{not:Rtimes}{
		type=notation,
		name={\ensuremath{R^\times}},
		symbol={\ensuremath{R^\times}},
		description={invertible elements of $R$},
		sort={Raacross}}

\newglossaryentry{not:m}{
		type=notation,
		name={\ensuremath{\m}},
		symbol={\ensuremath{\m}},
		description={maximal ideal of a local ring $R$},
		sort={ma}}

\newglossaryentry{not:P1R}{
		type=notation,
		name={\ensuremath{\P^1(R)}},
		symbol={\ensuremath{\P^1(R)}},
		description={projective line over $R$},
		sort={Pa}}

\newglossaryentry{not:[ab]}{
		type=notation,
		name={\ensuremath{[a,b]}},
		symbol={\ensuremath{[a,b]}},
		description={point of $\P^1(R)$},
		sort={ab]}}

\newglossaryentry{not:M(R)}{
		type=notation,
		name={\ensuremath{\M(R)}},
		symbol={\ensuremath{\M(R)}},
		description={projective local Moufang set over $R$},
		sort={Maa(R)}}

\newglossaryentry{not:W}{
		type=notation,
		name={\ensuremath{W}},
		symbol={\ensuremath{W}},
		description={a right $R$-modules},
		sort={W}}

\newglossaryentry{not:P(W)}{
		type=notation,
		name={\ensuremath{\P(W)}},
		symbol={\ensuremath{\P(W)}},
		description={projective space corresponding to $R\times W\times R$},
		sort={Pa(W}}

\newglossaryentry{not:[rxs]}{
		type=notation,
		name={\ensuremath{[r,x,s]}},
		symbol={\ensuremath{[r,x,s]}},
		description={point of $\P(W)$},
		sort={rws}}

\newglossaryentry{not:q}{
		type=notation,
		name={\ensuremath{q}},
		symbol={\ensuremath{q}},
		description={a quadratic form on $W$},
		sort={q}}

\newglossaryentry{not:qtil}{
		type=notation,
		name={\ensuremath{\tilde{q}}},
		symbol={\ensuremath{\tilde{q}}},
		description={extended quadratic form of $q$ on $R\times W\times R$},
		sort={qtil}}

\newglossaryentry{not:f}{
		type=notation,
		name={\ensuremath{f}},
		symbol={\ensuremath{f}},
		description={bilinear form corresponding to the quadratic form $q$},
		sort={f}}

\newglossaryentry{not:QWq}{
		type=notation,
		name={\ensuremath{\rQ(W,q)}},
		symbol={\ensuremath{\rQ(W,q)}},
		description={isotropic points of $\tilde{q}$ in $\P(W)$},
		sort={QWq}}

\newglossaryentry{not:M(W,q)}{
		type=notation,
		name={\ensuremath{\M(W,q)}},
		symbol={\ensuremath{\M(W,q)}},
		description={orthogonal local Moufang set corresponding to $q$},
		sort={Maa(W,q)}}

\newglossaryentry{not:ast}{
		type=notation,
		name={\ensuremath{{}^\ast}},
		symbol={\ensuremath{{}^\ast}},
		description={an involution on $R$},
		sort={0ast}}

\newglossaryentry{not:Z(R)}{
		type=notation,
		name={\ensuremath{Z(R)}},
		symbol={\ensuremath{Z(R)}},
		description={the center of $R$},
		sort={Z(R)}}

\newglossaryentry{not:e0}{
		type=notation,
		name={\ensuremath{\epsilon}},
		symbol={\ensuremath{\epsilon}},
		description={an element of $Z(R)$ such that $\epsilon\epsilon^\ast=1$},
		sort={e0}}

\newglossaryentry{not:Lambda}{
		type=notation,
		name={\ensuremath{\Lambda}},
		symbol={\ensuremath{\Lambda}},
		description={a form parameter},
		sort={L0}}

\newglossaryentry{not:Lambdamin}{
		type=notation,
		name={\ensuremath{\Lambda_{\mathrm{min}}}},
		symbol={\ensuremath{\Lambda_{\mathrm{min}}}},
		description={the set of generalized traces},
		sort={L01}}

\newglossaryentry{not:Lambdamax}{
		type=notation,
		name={\ensuremath{\Lambda_{\mathrm{max}}}},
		symbol={\ensuremath{\Lambda_{\mathrm{max}}}},
		description={the set of skew-symmetric elements},
		sort={L02}}

\newglossaryentry{not:RLambda}{
		type=notation,
		name={\ensuremath{(R,\Lambda)}},
		symbol={\ensuremath{(R,\Lambda)}},
		description={a form ring},
		sort={RLambda}}

\newglossaryentry{not:fq}{
		type=notation,
		name={\ensuremath{(f,q)}},
		symbol={\ensuremath{(f,q)}},
		description={a $\Lambda$-quadratic form},
		sort={fq}}
		
\newglossaryentry{not:fqtil}{
		type=notation,
		name={\ensuremath{(\tilde{f},\tilde{q})}},
		symbol={\ensuremath{(\tilde{f},\tilde{q})}},
		description={extended $\Lambda$-quadratic form of $(f,q)$ on $R\times W\times R$},
		sort={fqtil}}

\newglossaryentry{not:HWq}{
		type=notation,
		name={\ensuremath{\rH(W,q)}},
		symbol={\ensuremath{\rH(W,q)}},
		description={isotropic points of $(\tilde{f},\tilde{q})$ in $\P(W)$},
		sort={HWq}}

\newglossaryentry{not:HW}{
		type=notation,
		name={\ensuremath{\rH}},
		symbol={\ensuremath{\rH}},
		description={$\rH(W,q)$ when $W$ and $q$ are clear from context},
		sort={HW}}

\newglossaryentry{not:M(W,f,q)}{
		type=notation,
		name={\ensuremath{\M(W,f,q)}},
		symbol={\ensuremath{\M(W,f,q)}},
		description={hermitian local Moufang set corresponding to $(f,q)$},
		sort={Maa(W,f,q)}}


\newglossaryentry{not:xyJP}{
		type=notation,
		name={\ensuremath{x^y}},
		symbol={\ensuremath{x^y}},
		description={quasi-inverse of a quasi-invertible pair $(x,y)$ in a Jordan pair},
		sort={xyJP}}

\newglossaryentry{not:V}{
		type=notation,
		name={\ensuremath{V}},
		symbol={\ensuremath{V}},
		description={a Jordan pair $V=(V^+,V^-)$},
		sort={V}}

\newglossaryentry{not:Vp}{
		type=notation,
		name={\ensuremath{(V^+,V^-)}},
		symbol={\ensuremath{(V^+,V^-)}},
		description={a Jordan pair $V=(V^+,V^-)$},
		sort={Vp}}

\newglossaryentry{not:Qx}{
		type=notation,
		name={\ensuremath{Q_x}},
		symbol={\ensuremath{Q_x}},
		description={quadratic maps in a Jordan pair},
		sort={Qx}}

\newglossaryentry{not:Qxz}{
		type=notation,
		name={\ensuremath{Q_{x,z}}},
		symbol={\ensuremath{Q_{x,z}}},
		description={bilinearization of the quadratic maps in a Jordan pair},
		sort={Qxz}}

\newglossaryentry{not:Dxy}{
		type=notation,
		name={\ensuremath{D_{x,y}}},
		symbol={\ensuremath{D_{x,y}}},
		description={bilinear maps in a Jordan pair},
		sort={Dxy}}

\newglossaryentry{not:xyz}{
		type=notation,
		name={\ensuremath{\{x\;y\;z\}}},
		symbol={\ensuremath{\{x\;y\;z\}}},
		description={triple product in a Jordan pair},
		sort={xyz}}

\newglossaryentry{not:Bxy}{
		type=notation,
		name={\ensuremath{B_{x,y}}},
		symbol={\ensuremath{B_{x,y}}},
		description={Bergman operators in a Jordan pair},
		sort={Bxy}}

\newglossaryentry{not:RadV}{
		type=notation,
		name={\ensuremath{\Rad V}},
		symbol={\ensuremath{\Rad V}},
		description={radical of a Jordan pair $V$},
		sort={RadV}}

\newglossaryentry{not:P(V)}{
		type=notation,
		name={\ensuremath{\P(V)}},
		symbol={\ensuremath{\P(V)}},
		description={projective space over $V$},
		sort={Pa(V)}}
		
\newglossaryentry{not:[xy]}{
		type=notation,
		name={\ensuremath{[x,y]}},
		symbol={\ensuremath{[x,y]}},
		description={point of $\P(V)$},
		sort={xy]}}

\newglossaryentry{not:M(V)}{
		type=notation,
		name={\ensuremath{\M(V)}},
		symbol={\ensuremath{\M(V)}},
		description={local Moufang set corresponding to $V$},
		sort={Maa(V)}}


\newglossaryentry{not:xn}{
		type=notation,
		name={\ensuremath{x\cdot n}},
		symbol={\ensuremath{x\cdot n}},
		description={$n$ times $x$ in $U_\infty$, defined as $0\alpha_x^n$},  
		sort={xn}}

\newglossaryentry{not:xntil}{
		type=notation,
		name={\ensuremath{x\protect\cdottil n}},
		symbol={\ensuremath{x\protect\cdottil n}},
		description={$n$ times $x$ in $U_0$, defined as $\infty\gamma_{x\tau^{-1}}^n$},  
		sort={xntil}}

\newglossaryentry{not:xyplus}{
		type=notation,
		name={\ensuremath{x+y}},
		symbol={\ensuremath{x+y}},
		description={defined as $0\alpha_x\alpha_y$},
		sort={xyplus}}

\newglossaryentry{not:xyplustil}{
		type=notation,
		name={\ensuremath{x\protect\plustil y}},
		symbol={\ensuremath{x\protect\plustil y}},
		description={defined as $\infty\gamma_{x\tau^{-1}}\gamma_{y\tau^{-1}}$},
		sort={xyplustil}}

\newglossaryentry{not:xymin}{
		type=notation,
		name={\ensuremath{x-y}},
		symbol={\ensuremath{x-y}},
		description={defined as $0\alpha_x\alpha_y^{-1}$},
		sort={xymin}}

\newglossaryentry{not:xymintil}{
		type=notation,
		name={\ensuremath{x\protect\mintil y}},
		symbol={\ensuremath{x\protect\mintil y}},
		description={defined as $\infty\gamma_{x\tau^{-1}}\gamma_{y\tau^{-1}}^{-1}$},
		sort={xymintil}}

\newglossaryentry{not:muxy}{
		type=notation,
		name={\ensuremath{\mu_{x,y}}},
		symbol={\ensuremath{\mu_{x,y}}},
		description={bilinearization of $\mu$ with respect to $+$},
		sort={m0xy}}

\newglossaryentry{not:muxytil}{
		type=notation,
		name={\ensuremath{\protect\mutil_{x,y}}},
		symbol={\ensuremath{\protect\mutil_{x,y}}},
		description={bilinearization of $\mu$ with respect to $\protect\plustil$},
		sort={m0xytil}}

\newglossaryentry{not:gdivk}{
		type=notation,
		name={\ensuremath{g/k}},
		symbol={\ensuremath{g/k}},
		description={the unique $k$-th of $g$ in a unique $k$-divisible group},
		sort={gdivk}}

\newglossaryentry{not:gdivk2}{
		type=notation,
		name={\ensuremath{g\cdot\tfrac{1}{k}}},
		symbol={\ensuremath{g\cdot\tfrac{1}{k}}},
		description={the unique $k$-th of $g$ in a unique $k$-divisible group},
		sort={gdivk2}}


\newglossaryentry{not:K}{
		type=notation,
		name={\ensuremath{K}},
		symbol={\ensuremath{K}},
		description={a field},
		sort={K}}

\newglossaryentry{not:va}{
		type=notation,
		name={\ensuremath{v(a)}},
		symbol={\ensuremath{v(a)}},
		description={the valuation of $a$},
		sort={va}}

\newglossaryentry{not:pi}{
		type=notation,
		name={\ensuremath{\pi}},
		symbol={\ensuremath{\pi}},
		description={the uniformizer of the field $K$ with discrete valuation $v$},
		sort={p0i}}

\newglossaryentry{not:O}{
		type=notation,
		name={\ensuremath{\rO}},
		symbol={\ensuremath{\rO}},
		description={the valuation ring of the field $K$ with discrete valuation $v$},
		sort={O}}

\newglossaryentry{not:L}{
		type=notation,
		name={\ensuremath{L}},
		symbol={\ensuremath{L}},
		description={an $\rO$-lattice in $K^2$},
		sort={La}}

\newglossaryentry{not:[L]}{
		type=notation,
		name={\ensuremath{[L]}},
		symbol={\ensuremath{[L]}},
		description={the homothety class of the $\rO$-lattice $L$},
		sort={La]}}

\newglossaryentry{not:dLL}{
		type=notation,
		name={\ensuremath{d([L_0],[L])}},
		symbol={\ensuremath{d([L_0],[L])}},
		description={the distance between two lattice classes},
		sort={dLL}}

\newglossaryentry{not:e}{
		type=notation,
		name={\ensuremath{e}},
		symbol={\ensuremath{e}},
		description={a unit in a local Moufang set, a local ring, a Jordan pair...},
		sort={ea}}

\newglossaryentry{not:J}{
		type=notation,
		name={\ensuremath{J}},
		symbol={\ensuremath{J}},
		description={a Jordan algebra (usually with quadratic map $W$)},
		sort={J}}

\newglossaryentry{not:Wx}{
		type=notation,
		name={\ensuremath{W_x}},
		symbol={\ensuremath{W_x}},
		description={the map corresponding to $x$ in a Jordan algebra},
		sort={Wx}}

\newglossaryentry{not:RadJ}{
		type=notation,
		name={\ensuremath{\Rad J}},
		symbol={\ensuremath{\Rad J}},
		description={the radical of a Jordan algebra $J$},
		sort={RadJ}}

\newglossaryentry{not:C}{
		type=notation,
		name={\ensuremath{\rC}},
		symbol={\ensuremath{\rC}},
		description={a category},
		sort={C}}

\newglossaryentry{not:I}{
		type=notation,
		name={\ensuremath{\rI}},
		symbol={\ensuremath{\rI}},
		description={the category corresponding to a partially ordered set $(I,\succcurlyeq)$},
		sort={I}}

\newglossaryentry{not:ObC}{
		type=notation,
		name={\ensuremath{\Ob\rC}},
		symbol={\ensuremath{\Ob\rC}},
		description={the class of objects of a category $\rC$},
		sort={ObC}}

\newglossaryentry{not:HomCD}{
		type=notation,
		name={\ensuremath{\Hom(C,D)}},
		symbol={\ensuremath{\Hom(C,D)}},
		description={the class of morphisms between $C$ and $D$},
		sort={HomCD}}

\newglossaryentry{not:1C}{
		type=notation,
		name={\ensuremath{\id_C}},
		symbol={\ensuremath{\id_C}},
		description={the identity morphism on $C$},
		sort={1C}}

\newglossaryentry{not:Set}{
		type=notation,
		name={\ensuremath{\Set}},
		symbol={\ensuremath{\Set}},
		description={the category of sets with maps},
		sort={Set}}

\newglossaryentry{not:Gr}{
		type=notation,
		name={\ensuremath{\Gr}},
		symbol={\ensuremath{\Gr}},
		description={the category of groups with group homomorphisms},
		sort={Gr}}

\newglossaryentry{not:Ab}{
		type=notation,
		name={\ensuremath{\Ab}},
		symbol={\ensuremath{\Ab}},
		description={the category of abelian groups with group homomorphisms},
		sort={Ab}}

\newglossaryentry{not:Ring}{
		type=notation,
		name={\ensuremath{\Ring}},
		symbol={\ensuremath{\Ring}},
		description={the category of unital commutative rings with ring homomorphisms},
		sort={Ring}}

\newglossaryentry{not:ModR}{
		type=notation,
		name={\ensuremath{\Mod_R}},
		symbol={\ensuremath{\Mod_R}},
		description={the category of right $R$-modules with $R$-linear maps},
		sort={ModR}}

\newglossaryentry{not:LMou}{
		type=notation,
		name={\ensuremath{\LMou}},
		symbol={\ensuremath{\LMou}},
		description={the category of local Moufang sets},
		sort={LMou}}

\newglossaryentry{not:Mou}{
		type=notation,
		name={\ensuremath{\Mou}},
		symbol={\ensuremath{\Mou}},
		description={the category of Moufang sets},
		sort={Mou}}

\newglossaryentry{not:GLn}{
		type=notation,
		name={\ensuremath{\GL_n(R)}},
		symbol={\ensuremath{\GL_n(R)}},
		description={the general linear group of degree $n$ over $R$},
		sort={GLn}}

\newglossaryentry{not:Scn}{
		type=notation,
		name={\ensuremath{\Sc_n(R)}},
		symbol={\ensuremath{\Sc_n(R)}},
		description={the scalar matrices of degree $n$ over $R$},
		sort={Scn}}

\newglossaryentry{not:SLn}{
		type=notation,
		name={\ensuremath{\SL_n(R)}},
		symbol={\ensuremath{\SL_n(R)}},
		description={the special linear group of degree $n$ over $R$},
		sort={SLn}}

\newglossaryentry{not:PSLn}{
		type=notation,
		name={\ensuremath{\PSL_n(R)}},
		symbol={\ensuremath{\PSL_n(R)}},
		description={the projective special linear group of degree $n$ over $R$},
		sort={PSLn}}

\newglossaryentry{not:PGLn}{
		type=notation,
		name={\ensuremath{\PGL_n(R)}},
		symbol={\ensuremath{\PGL_n(R)}},
		description={the projective general linear group of degree $n$ over $R$},
		sort={PGLn}}

\newglossaryentry{not:dT}{
		type=notation,
		name={\ensuremath{\partial T}},
		symbol={\ensuremath{\partial T}},
		description={the boundary of a tree $T$},
		sort={d0T}}

\newglossaryentry{not:T}{
		type=notation,
		name={\ensuremath{T}},
		symbol={\ensuremath{T}},
		description={Serre's tree for $\PSL_2$},
		sort={T}}
		

\babelhyphenation[english]{Mou-fang}

\babelhyphenation[dutch]{Mou-fang}
\babelhyphenation[dutch]{Mou-fang-ver-za-me-ling}
\babelhyphenation[dutch]{Mou-fang-ver-za-me-ling-en}

\title{Local Moufang sets}



\begin{document}
\frontmatter

	\begin{titlepage}
	\centering
	
	\vspace*{\fill}
	
	\begin{tikzpicture}[inner sep=0pt,outer sep=0pt]
		\node (title) {\resizebox{11cm}{!}{\Huge\bfseries\scshape\color{steelblue} Local Moufang sets}};
		\node[below, steelblue,yshift=-0.4cm] (bottomline) at (title.south) {\rule{\textwidth}{1.6mm}};
		\node[above, steelblue,yshift=0.4cm] (bottomline) at (title.north) {\rule{\textwidth}{1.6mm}};
		\node[left, yshift=-1.6cm] (name) at (title.east) {\huge\scshape Erik Rijcken};
	\end{tikzpicture}
	
	\vspace*{\fill}
	
	\begin{tikzpicture}[inner sep=0pt,outer sep=0pt]
		\node (top) {\rule{10.9cm}{0mm}};
		\node[below right] (ugentlogo) at (top.west) {\includegraphics[scale=1]{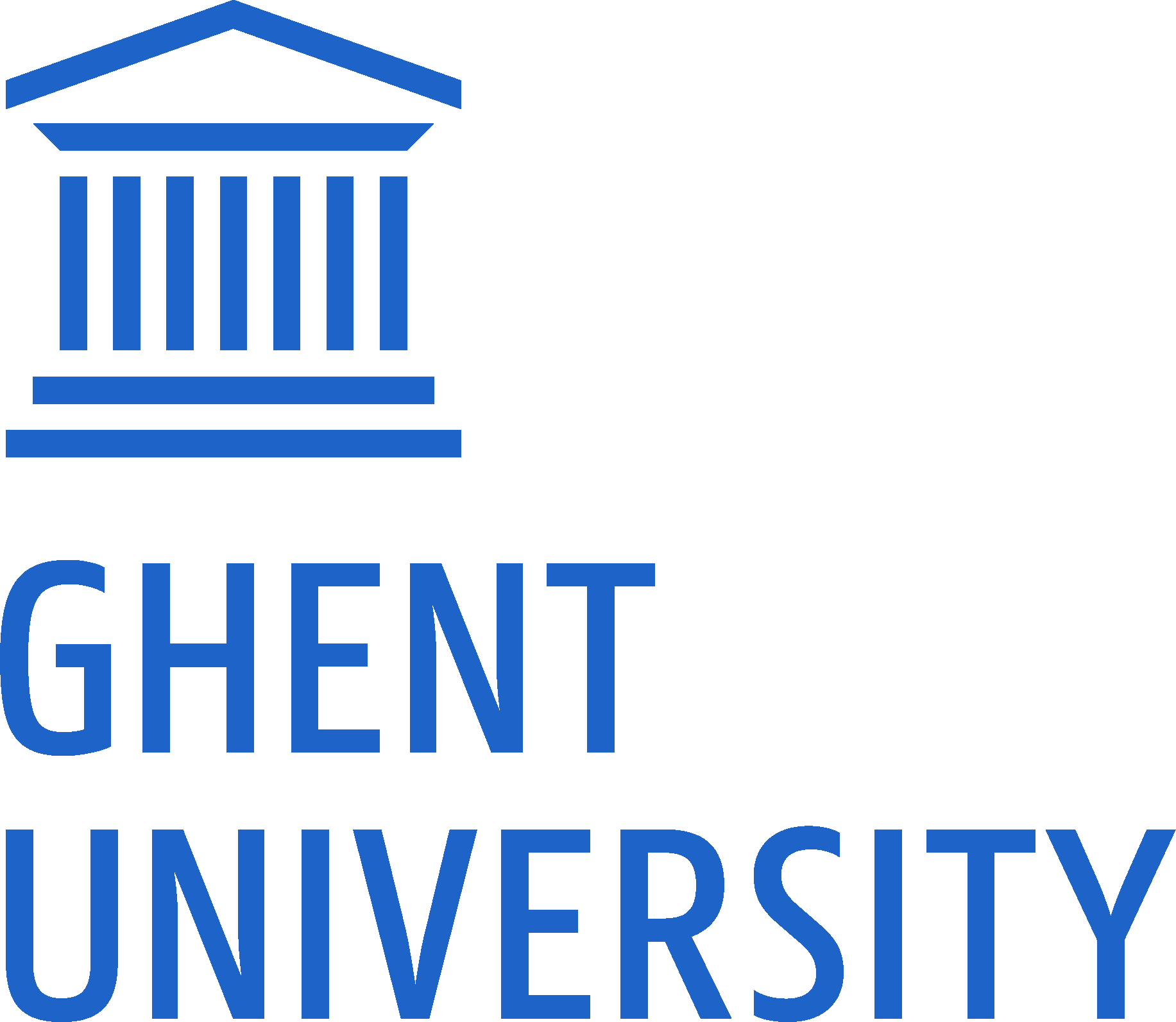}};
		\node[above right] (bottom) at (ugentlogo.south west) {\rule{10.9cm}{0mm}};
		
		\node[above left] (plaats) at (bottom.south east) {\normalsize\setlength\tabcolsep{0pt}
									\begin{tabular}{r}
										Department of Mathematics \\
										Faculty of Sciences \\
										Ghent University \\
										June 2017\vspace*{-0.14cm}
									\end{tabular}};
		\node[above left, yshift=0.6cm] (tom) at (plaats.north east) {\large Supervisor: Prof.\ Dr.\ Tom De Medts};
		\node[below, yshift=-0.8cm] (proefschr) at (bottom.south) {\small\begin{tabular}{c}
										Dissertation submitted in fulfillment of the requirements for \\
										the degree of Doctor in science: mathematics
									\end{tabular}};
	\end{tikzpicture}
\end{titlepage}
	
	\cleardoublepage
	\thispagestyle{empty}
	\vspace*{6em}
	\fquote[Kelsier]{The trick is to never stop looking.\\ There's \emph{always} another secret.}
	
	\cleardoublepage
	\notocchapterspace
	\phantomsection
	\addcontentsline{toc}{chapter}{Table of contents}\markboth{Table of contents}{}%
	\tableofcontents

\mainmatter

	\chapter*{Introduction}\markboth{Introduction}{}%
	\addcontentsline{toc}{chapter}{Introduction}
	\section*{Historical context}\markright{Historical context}{}%
\addcontentsline{toc}{section}{Historical context}%

Groups of matrices, or linear groups, have been studied since the 19th century, with applications in many areas of mathematics and physics.
Originally, these groups were mostly considered over fields, but starting in the middle of the 20th century, people began to study linear groups over arbitrary rings.
While linear groups over general rings are hard to study, there are certainly some interesting classes of rings over which interesting results can be obtained.
In the 1960s, Klingenberg studied some linear groups over a \emph{local ring}, a ring $R$ with a unique maximal ideal $\m$.
In \cite{KlingenbergGL} he investigated the normal subgroups of $\GL_n(R)$.
He also studied orthogonal and symplectic groups in \cite{KlingenbergO,KlingenbergS}.

In the 1990s, J.~Tits introduced the notion of \emph{Moufang sets} in \cite{TitsTwinBldgs} as an axiomatic approach to simple algebraic groups of relative rank one.
A Moufang set is a set, along with a class of groups acting on this set, one for each point of the set, satisfying some axioms.
For a simple algebraic group of relative rank one, the set consists of the parabolic subgroups, the class of groups are the corresponding root subgroups.
There are quite a few equivalent ways of looking at Moufang sets: they correspond to \emph{split BN-pairs of rank one} (another notion introduced by Tits), are closely related to \emph{abstract rank one groups} (introduced by Timmesfeld in \cite{TimmesfeldAR1G}), and are equivalent to \emph{division pairs} (more recently defined by Loos in \cite{LoosRogdiv}).

T.~De Medts and R.~Weiss initiated the study of arbitrary Moufang sets in 2006.
Since then, the theory of Moufang sets has been developed more deeply, and many examples of Moufang sets have been described.
All Moufang sets we currently know have some algebraic origin, but a classification of Moufang sets is still an open question.

The known Moufang sets are not only of algebraic origin, but they all have some underlying `\emph{division} structure', where all nonzero elements are invertible.
For example: projective Moufang sets can be defined over \emph{alternative division algebras}, \emph{Jordan division algebras} gives rise to Moufang sets, and more recently L.~Boelaert proved that every \emph{structurable division algebra} gives rise to a Moufang set (see \cite{BoelaertL}).
In fact, every Moufang set arising from a simple linear algebraic group of relative rank one (over a field of characteristic different from $2$ and $3$), arises from such a structurable division algebra (this was shown in \cite{BoelaertDMStavrova}).
We can see this division requirement in the constructions of known Moufang sets: inverses pop up everywhere!
There are also some Moufang sets which do not arise from algebraic groups directly, but these are still defined over fields (also known as \emph{division rings}).

A related consequence of the definition of Moufang sets is the fact that morphisms of Moufang sets are automatically injective. This means there are relatively few morphisms, and there is no meaningful notion of quotients in the theory of Moufang sets.

One can wonder what happens if we try to define Moufang sets using more general algebraic structures, and this is precisely where this dissertation enters the story. Instead of division structures, I looked at \emph{local} structures. This means that there can be non-invertible elements in the structure, but they can still be controlled easily. In trying to define the known constructions over \emph{local rings} and \emph{local Jordan algebras}, I found a set of axioms that generalize Moufang sets, to \emph{local Moufang sets}. (From this point of view, we could start using the name \emph{division Moufang sets} for Moufang sets, but to avoid confusion we will refrain from doing so. Loos does include the adjective division in his equivalent notion of \emph{division pairs}.)

\section*{Outline}\markright{Outline}{}%
\addcontentsline{toc}{section}{Outline}%

There are two parts in my dissertation.
In \autoref{part:theory}, we define the notion of local Moufang sets, and develop the general theory.
\autoref{part:examples} then contains some examples of local Moufang sets, characterizes some of them, and explores some connections.

The first part starts with the definition of local Moufang sets, along with some basic properties, in \autoref{chap:chap2_definitions}.
The difference with Moufang sets is the assumption of extra structure on the set, given by an equivalence relation on the set.
We still have a conjugacy class of groups, called \emph{root groups}, acting on the set, but the axioms need to be adapted in order to be compatible with the extra structure.
As in the theory of Moufang sets, it is useful to fix two (non-equivalent) points of our set (this could be called a \emph{basis}).
An important new notion for local Moufang sets, is the notion of \emph{units}, which have a connection to the invertible elements of the underlying algebraic structures in the examples.
Using these units, we can extend much of the theory from Moufang sets to local Moufang sets. One example is the existence of $\mu$-maps, which swap the points of the basis, and many of the identities of these $\mu$-maps.

A first goal in the development of the theory of local Moufang sets is determining the stabilizer of the basis.
A candidate for this stabilizer is the \emph{Hua subgroup}, the group generated by products of an even number of $\mu$-maps.
The Hua subgroup turns out to coincide with the stabilizer of the basis, which we show in \autoref{thm:hua2pt}.
In order to prove this, we introduced \emph{quasi-invertibility}, a notion originating from Jordan theory.

In \autoref{chap:chap3_constr}, we give a general way to construct local Moufang sets similar to a construction for Moufang sets. This construction uses a lot less information than is required in the definition: we only need one root group and one permutation of the set swapping the basis.
Using only these two things, we can construct the other root groups and complete the data of the local Moufang set.
However, this data will not always satisfy the axioms.
We determine a few necessary and sufficient conditions in \autoref{cor:construction_equivalentconditions}.
The construction and conditions will be used frequently to define local Moufang sets, and to determine when some structure is a local Moufang set.

The notion of homomorphisms of local Moufang sets is introduced in \autoref{chap:chap4_category}.
This is combined with the definition of local Moufang subsets, quotients, and eventually, the category of local Moufang sets.
It is interesting to observe how tightly a homomorphism of local Moufang sets is connected to a map on the underlying set and group homomorphisms between the root groups.
We mainly use these notions to define inverse limits of local Moufang sets, and to determine when the inverse limit exists.

In the final theoretical chapter, \autoref{chap:chap5_special}, we look at local Moufang sets which are \emph{special}, i.e.\ which satisfy a specific extra condition.
This extra condition is expected to be closely related to the root groups being abelian.
Making some necessary changes, we are able to generalize the theory of special Moufang sets to local Moufang sets.
The main results we can get here assume that a local Moufang set is both special and has abelian root groups.
In this case, we can show that $\mu$-maps are involutions (\autoref{prop:mu-involution}), and give a condition which ensures that the root groups are uniquely $k$-divisible (\autoref{prop:ndivglobal}).

Next, we get to the examples of local Moufang sets in \autoref{part:examples}. The first examples are the \emph{projective local Moufang sets} in \autoref{chap:chap6_projective}.
These are defined over a local ring $R$, where the underlying set is the projective line $\P^1(R)$, and the root groups are subgroups of $\PSL_2(R)$.
We succeeded in characterizing the projective local Moufang sets using some conditions (\autoref{thm:PSL2equiv}).
This includes the construction of a local ring from local Moufang sets satisfying some of these conditions.
The third section of this chapter makes a connection to the \emph{Bruhat-Tits tree} of $\PSL_2$ over a field $K$ with \emph{discrete valuation}. Such a field contains a local ring, and the action of $\PSL_2(K)$ on the tree induces many projective local Moufang sets. Using the inverse limit construction, we even get a projective local Moufang set action on the boundary.

In \autoref{chap:chap7_jordan}, we explore the connection of local Moufang sets with Jordan theory.
The most natural approach uses \emph{Jordan pairs}, and we are able to define a local Moufang set from any local Jordan pair.
Conversely, if we have a local Moufang set satisfying some conditions, we are able to construct a local Jordan pair.
Using this construction, we characterize the local Moufang sets originating from local Jordan pairs in \autoref{thm:extra}.

Both the projective local Moufang sets and those originating from Jordan pairs are special, and have abelian root groups.
In \autoref{chap:chap8_hermitian}, our main goal is to define Hermitian local Moufang sets.
We first define orthogonal local Moufang sets, which are a special case of Hermitian local Moufang sets (and can also be constructed using Jordan pairs).
This example shows the approach we will use to generalize the Hermitian Moufang sets to the local setting.
The definition of Hermitian local Moufang sets is a bit more technical, and we need a great amount of algebraic manipulations to prove that the axioms of local Moufang sets are satisfied.

A large part of this dissertation is contained in two articles \cite{DMRijckenPSL,DMRijckenLMS_JP}.
\autoref{chap:chap4_category}, \autoref{sec:Serre} and \autoref{chap:chap8_hermitian} are the main parts of this dissertation that are not contained in those articles.
	\tocchapterspace
	\chapter{Preliminaries}
	In this chapter, we give a short overview of some mathematics we will use in this dissertation. We start with some basic category theory, which will mainly be used in \autoref{chap:chap4_category}. Next, we choose our notation for everything involving group actions, we give some basic theory on local rings, and define some linear groups over rings. The third section gives a short overview of the theory of Jordan algebras and Jordan pairs, which we will use in \autoref{chap:chap7_jordan}. Finally, we discuss the basic theory and examples of Moufang sets, which we will later try to generalize to a local setting.

\section{Category theory}	
\subsection{Categories and functors}

Much of mathematics is a study of objects, being sets with some structure (like group operations, a topology...), and their connections. Usually, when we consider objects with some structure, we also consider maps between those objects that preserve the structure. Category theory tries to study properties that hold in great generality for such classes of objects and maps. A common reference work for category theory is \cite{McLaneCategories}, though we will usually refer to \cite{BorceuxCatI}. Another good initial work on category theory is \cite{LeinsterCat}.

\begin{definition}
	A \define{category} $\rC$\notatlink{C} consists of the following:
	\begin{nrenumerate}
		\item a class of \define[category!object]{objects} $\Ob\rC$\notatlink{ObC};
		\item for each two objects $C,D\in\Ob\rC$ a class of \define[category!morphism]{morphisms} denoted by $\Hom(C,D)$\notatlink{HomCD};
		\item for each $C\in\Ob\rC$ a morphism $\id_C\in\Hom(C,C)$ we call the \define[category!identity]{identity} on $C$\notatlink{1C};
		\item for all three objects $C,D,E\in\Ob\rC$, a composition map 
		\[\Hom(C,D)\times\Hom(D,E)\to\Hom(C,D)\;.\]
		We write the composition of $(f,g)$ as $fg$ or $g\circ f$.
	\end{nrenumerate}
	This data should satisfy the following axioms:
	\begin{romenumerate}
		\item For $f\in\Hom(C,D)$, $g\in\Hom(D,E)$ and $h\in\Hom(E,F)$, we have 
		\[h\circ(g\circ f) = (h\circ g)\circ f\;.\]
		\item For all $f\in\Hom(C,D)$ and $g\in\Hom(D,E)$, we have 
		\[\id_D\circ f = f\text{ and }g\circ\id_D = g\;.\]
	\end{romenumerate}
	We call a category \define[category!small category]{small} if $\Ob\rC$ is a set.
\end{definition}

Remark that $\Ob\rC$ need not be a set (so we can have the category of sets, for example), but it is often required that $\Hom(C,D)$ is. Such categories are also called \define[category!locally finite category]{locally finite} categories.

\begin{notation}
	For a morphism $f\in\Hom(C,D)$, we also write $f\colon C\to D$ or $C\xlongrightarrow{f}D$.
\end{notation}

\begin{definition}
	In a category $\rC$, a \define{diagram} is a directed graph where the vertices correspond to objects and the arrows correspond to morphisms between the corresponding objects. We say a diagram \define[diagram!commutative diagram]{commutes} if for any two paths between objects in the diagram, the corresponding composition of morphisms coincides.
\end{definition}

\begin{example}
	Let $C,D,E,F\in\Ob\rC$, then
	\begin{center}\begin{tikzpicture}[regulararrow,node distance=2cm]
		\node (C) {$C$};
		\node[right of=C] (D) {$D$};
		\node[below of=C,yshift=0.5cm] (E) {$E$};
		\node[right of=E] (F) {$F$};
		\draw[->]	(C) edge node[above] {\scriptsize$f$} (D)
				(C) edge node[left] {\scriptsize$g$} (E)
				(E) edge node[above] {\scriptsize$h$} (D)
				(D) edge node[right] {\scriptsize$i$} (F)
				(E) edge node[below] {\scriptsize$j$} (F);
	\end{tikzpicture}\end{center}
	is a diagram, which commutes when $i\circ f=j\circ g$, $f=h\circ g$ and $i\circ h=j$ (and hence also $i\circ f=j\circ g=i\circ h\circ g$).
\end{example}

\begin{definition}
	A \define[functor]{(covariant) functor} $F$ from a category $\rC$ to a category $\rD$ consists of the following:
	\begin{nrenumerate}
		\item a mapping from $\Ob\rC$ to $\Ob\rD$ that associates an object $F(C)\in\Ob\rD$ for each object $C\in\Ob\rC$;
		\item a mapping from $\Hom(C,D)$ to $\Hom(F(C),F(D))$ for all $C,D\in\Ob\rC$.
	\end{nrenumerate}
	This data should satisfy the following axioms:
	\begin{romenumerate}
		\item For $f\in\Hom(C,D)$, $g\in\Hom(D,E)$, we have 
		\[F(g\circ f) = F(g)\circ F(f)\;.\]
		\item For all $C\in\Ob\rC$, $F(\id_C) = \id_{F(C)}$.
	\end{romenumerate}
\end{definition}

Many types of structures give rise to a category. A few examples:

\begin{examples}\preenum
	\begin{nrenumerate}
		\item Sets with maps between sets is a category denoted by $\Set$\notatlink{Set}.
		\item Groups with group homomorphisms is a category denoted by $\Gr$\notatlink{Gr}. 
		\item Commutative unital rings with ring homomorphisms is a category denoted by $\Ring$\notatlink{Ring}.
		\item For a ring $R$ we write $\Mod_R$\notatlink{ModR} for the category of right $R$-modules with $R$-linear mappings. 
		\item Let $(I,\succcurlyeq)$ be a partially ordered set, we can define the category $\rI$ by
		\[\Ob\rI = I\text{ and }\Hom(i,j) = 
		\begin{cases}
			\{f_{ij}\} &\text{ if $i\succcurlyeq j$}\\
			\emptyset &\text{ otherwise}
		\end{cases}\;.\]
	\end{nrenumerate}
\end{examples}

Some functors can be used to embed a category in another in some way. Properties of these functors can then give more information on this embedding.

\begin{definition}
	A functor $F\colon\rC\to\rD$ is \define[functor!faithful functor]{faithful} if the induced maps 
		\[F\colon\Hom(C,D)\to\Hom(F(C),F(D))\]
	are all injective. We call $F$ \define[functor!full functor]{full} if the induced maps 
		\[F\colon\Hom(C,D)\to\Hom(F(C),F(D))\]
	are all surjective.
\end{definition}

\begin{example}
	As every group has an underlying set of group elements, we can define the \define{forgetful functor} $F\colon\Gr\to\Set$ that sends a group to its underlying set and a morphism to the map between the group elements. While the forgetful functor is far from injective on the objects, it is faithful.
\end{example}

We could look at all the functors between two fixed categories $\rC$ and $\rD$. It is possible to construct maps between these functors as well:

\begin{definition}
	Let $F,G\colon\rC\to\rD$ be two functors. A \define{natural transformation} $\alpha$ is a family of maps $\alpha_C\colon F(C)\to G(C)$ in $\rD$ such that for every morphism $f\colon C\to C'$ in $\rC$, the following diagram commutes in $\rD$:
	\begin{center}\begin{tikzpicture}[regulararrow,node distance=2cm]
		\node (FC) {$F(C)$};
		\node[right of=C] (FCb) {$F(C')$};
		\node[below of=C,yshift=0.5cm] (GC) {$G(C)$};
		\node[right of=E] (GCb) {$G(C')$};
		\draw[->]	(FC) edge node[above] {\scriptsize$F(f)$} (FCb)
				(FC) edge node[left] {\scriptsize$\alpha_C$} (GC)
				(FCb) edge node[right] {\scriptsize$\alpha_{C'}$} (GCb)
				(GC) edge node[below] {\scriptsize$G(f)$} (GCb);
	\end{tikzpicture}\end{center}
	A natural transformation is an \define{natural isomorphism} if every $\alpha_C$ is an isomorphism.
\end{definition}


\subsection{Mono-, epi- and isomorphisms}

We would like to find notions that are similar to injective and surjective morphisms to the context of general categories. As the objects are not necessarily sets, we need to base this purely on properties of the morphisms.

\begin{definition}
	Let $f\colon C\to D$ be a morphism in a category $\rC$. We call $f$ a \define{monomorphism} if for all $g,h\in\Hom(E,C)$, 
	\[f\circ g = f\circ h\implies g=h\;.\]
	We call $f$ an \define{epimorphism} if for all $g,h\in\Hom(D,E)$, 
	\[g\circ f = h\circ f\implies g=h\;.\]
	We call $f$ an \define{isomorphism} if there exists a $g\in\Hom(D,C)$ such that
	\[g\circ f = \id_C\text{ and }f\circ g=\id_D\;.\]
\end{definition}

\begin{notation}
	We write $C\xrightarrowtail{f} D$ to indicate $f$ is a monomorphism and $C\xtwoheadrightarrow{f} D$ to indicate $f$ is an epimorphism. If there is an isomorphism between $C$ and $D$, we write $C\cong D$.
\end{notation}

While monomorphisms and epimorphisms have many similar properties to injective and surjective mappings, they are not always the same when it makes sense to talk about injective and surjective mappings. It is useful to see some basic properties of these notions:

\begin{proposition}
	In a category $\rC$ the following hold.
	\begin{romenumerate}
		\item The composition of monomorphisms (epimorphisms, isomorphisms) is a monomorphism (epimorphism, isomorphism).
		\item If $f\circ g$ is a monomorphism, then so is $g$.
		\item If $f\circ g$ is an epimorphism, then so is $f$.
		\item An isomorphism is both a monomorphism and a epimorphism.
	\end{romenumerate}
\end{proposition}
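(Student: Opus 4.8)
The plan is to prove the four parts in order, each time unwinding the relevant universal property directly from the definitions of monomorphism, epimorphism, and isomorphism — no machinery beyond the category axioms (associativity and identity laws) is needed.

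For part (i), suppose $f\colon C\to D$ and $g\colon D\to E$ are monomorphisms and that $(g\circ f)\circ u = (g\circ f)\circ v$ for some $u,v\colon B\to C$. By associativity this reads $g\circ(f\circ u) = g\circ(f\circ v)$; since $g$ is a monomorphism, $f\circ u = f\circ v$, and since $f$ is a monomorphism, $u=v$. The epimorphism case is the formal dual, reversing all arrows and swapping the roles of pre- and post-composition. For isomorphisms, if $g$ is a two-sided inverse of $f$ and $g'$ a two-sided inverse of $f'$ (with $f'\circ f$ composable), then $g\circ g'$ is a two-sided inverse of $f'\circ f$: check $(g\circ g')\circ(f'\circ f) = g\circ(g'\circ f')\circ f = g\circ \id\circ f = \id_C$ and symmetrically on the other side, using associativity repeatedly.

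For part (ii), assume $f\circ g$ is a monomorphism and that $g\circ u = g\circ v$. Post-composing with $f$ gives $f\circ(g\circ u) = f\circ(g\circ v)$, i.e.\ $(f\circ g)\circ u = (f\circ g)\circ v$ by associativity, so $u=v$ since $f\circ g$ is a monomorphism; hence $g$ is a monomorphism. Part (iii) is again the exact dual: if $f\circ g$ is an epimorphism and $u\circ f = v\circ f$, pre-compose with $g$ to get $(u\circ f)\circ g = (v\circ f)\circ g$, i.e.\ $u\circ(f\circ g) = v\circ(f\circ g)$, whence $u=v$. For part (iv), let $f\colon C\to D$ be an isomorphism with inverse $g$. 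If $f\circ u = f\circ v$, apply $g$ on the left: $u = \id_C\circ u = (g\circ f)\circ u = g\circ(f\circ u) = g\circ(f\circ v) = (g\circ f)\circ v = v$, so $f$ is a monomorphism; the epimorphism claim follows symmetrically by applying $g$ on the right, or simply by noting $g$ is also an isomorphism and invoking the dual argument.

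There is no real obstacle here: every step is a one-line application of associativity and the identity axioms. The only thing to be mildly careful about is bookkeeping with the variance — writing composition as $fg$ in diagrammatic order in one place and $g\circ f$ in the other — so I would fix the $\circ$ notation throughout the proof and state the epimorphism cases as explicit duals rather than re-deriving them, to keep the argument short and transparent.
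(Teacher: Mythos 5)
Your proof is correct: all four parts are handled by exactly the standard unwinding of the definitions, and each step (associativity, identity laws, cancelling against a two-sided inverse) is valid. For comparison, the paper does not prove this proposition at all — it simply cites Propositions~1.7.2, 1.8.2 and~1.9.2 of Borceux's \emph{Handbook of Categorical Algebra~I}. So your argument is the self-contained version of what that reference contains; nothing is lost, and the duality remarks you make (stating the epi cases as formal duals of the mono cases) are precisely how such a write-up should be kept short. One purely cosmetic caution, which you already flagged yourself: the paper defines the composition of $(f,g)$ to be written either $fg$ or $g\circ f$, so when you speak of ``$f\circ g$ is a monomorphism, then so is $g$'' you are (correctly) treating $g$ as the first-applied morphism; just fix the $\circ$ convention once at the start so the reader does not have to re-derive the variance from context.
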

\begin{proof}
	These are in Propositions~1.7.2,~1.8.2 and~1.9.2 of \citenobackref{BorceuxCatI}.
\end{proof}

\begin{examples}\preenum
	\begin{nrenumerate}
		\item In $\Set$, monomorphisms, epimorphisms and isomorphisms are precisely injections, surjections and bijections.
		\item In $\Gr$, monomorphisms, epimorphisms and isomorphisms are precisely injective, surjective and bijective group homomorphisms.
		\item In $\Ring$, the inclusion map $\Z\to\Q$ is an epimorphism, but not a surjection.
	\end{nrenumerate}
\end{examples}

%
%
%

\subsection{Limits}

Limits are objects that generalize many different concepts, like products, kernels or pullbacks. We will mostly be working with the specific case of inverse limits.

\begin{definition}
	Let $F\colon\rD\to\rC$ be a functor. A \define{cone} on $F$ consists of
	\begin{nrenumerate}
		\item an object $C\in\Ob\rC$;
		\item for each $D\in\Ob\rD$, a morphism $p_D\colon C\to F(D)$
	\end{nrenumerate}
	such that for each morphism $f\colon D\to D'$ in $\rD$, we have 
	\[p_{D'} = F(f)\circ p_D\;.\]
\end{definition}

This final condition can also be written as a commutative diagram: for all $f\colon D\to D'$ in $\rD$, the following diagram must commute:
\begin{center}\begin{tikzpicture}[regulararrow,node distance=2.5cm]
		\node (C) {$C$};
		\node[below of=C,yshift=1cm] (D) {$F(D)$};
		\node[right of=D] (Db) {$F(D')$};
		\draw[->]	(C) edge node[left] {\scriptsize$p_D$} (D)
				(C) edge node[above] {\scriptsize$p_{D'}$} (Db)
				(D) edge node[above] {\scriptsize$F(f)$} (Db);
	\end{tikzpicture}\end{center}

\begin{definition}
	Let $F\colon\rD\to\rC$ be a functor. A \define{limit} of $F$ is a cone $(L,(p_D)_D)$ on $F$ that satisfies the \define{universal property}:
	for every cone $(C,(q_D)_D)$ on $F$ there is a unique morphism $f\colon C\to L$ such that for all $D\in\rD$, $q_D = p_D\circ f$.
\end{definition}

While cones are not unique at all, the universal property implies that if a limit exists, it is automatically unique.

\begin{proposition}\label{prop:limitproperties}
	Let $F\colon\rD\to\rC$ be a functor.
	\begin{romenumerate}
		\item If a limit of $F$ exists, then it is unique up to isomorphism.
		\item Assume $(L,(p_D)_D)$ is a limit of $F$. If $G\colon\rD\to\rC$ is a functor and $\alpha\colon F\to G$ is a natural isomorphism, then $(L,(p_D\alpha_D)_D)$ is a limit of $G$.
	\end{romenumerate}
\end{proposition}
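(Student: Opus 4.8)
The plan is to prove both parts directly from the universal property, using the fact already established that cones are the objects among which the limit is terminal.

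\emph{Part (i): uniqueness up to isomorphism.} Suppose $(L,(p_D)_D)$ and $(L',(p'_D)_D)$ are both limits of $F$. Since $(L',(p'_D)_D)$ is in particular a cone and $(L,(p_D)_D)$ is a limit, the universal property gives a unique morphism $f\colon L'\to L$ with $p'_D = p_D\circ f$ for all $D$. Symmetrically, the universal property of $(L',(p'_D)_D)$ gives a unique $g\colon L\to L'$ with $p_D = p'_D\circ g$ for all $D$. First I would compose these: $f\circ g\colon L\to L$ satisfies $p_D\circ(f\circ g) = (p_D\circ f)\circ g = p'_D\circ g = p_D$, so $f\circ g$ is a morphism $L\to L$ compatible with the cone $(L,(p_D)_D)$. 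But $\id_L$ is another such morphism, and the uniqueness clause in the universal property (applied to the cone $(L,(p_D)_D)$ viewed as a cone over itself) forces $f\circ g = \id_L$. The symmetric argument gives $g\circ f = \id_{L'}$, so $f$ is an isomorphism, i.e.\ $L\cong L'$.

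\emph{Part (ii): transport along a natural isomorphism.} First I would check that $(L,(p_D\alpha_D)_D)$ is a cone on $G$: for $f\colon D\to D'$ in $\rD$ we need $p_{D'}\alpha_{D'} = G(f)\circ(p_D\alpha_D)$. This follows by combining the cone condition for $F$, namely $p_{D'} = F(f)\circ p_D$, with the naturality square for $\alpha$, namely $\alpha_{D'}\circ F(f) = G(f)\circ\alpha_D$; chaining these gives $p_{D'}\alpha_{D'} = F(f)\,p_D\,\alpha_{D'}$... here I must be careful with composition order, but the naturality square rewrites $p_D\alpha_{D'}$-style terms appropriately, and the identity falls out. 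Then, to verify the universal property, take any cone $(C,(q_D)_D)$ on $G$. Using that each $\alpha_D$ is an isomorphism with inverse $\alpha_D^{-1}$, the family $(q_D\alpha_D^{-1})_D$ is a cone on $F$ (again by a naturality computation, now with $\alpha^{-1}$, which is also a natural transformation). Since $(L,(p_D)_D)$ is a limit of $F$, there is a unique $h\colon C\to L$ with $q_D\alpha_D^{-1} = p_D\circ h$, equivalently $q_D = (p_D\alpha_D)\circ h$. Uniqueness of $h$ as a factorization through $(L,(p_D\alpha_D)_D)$ follows because any such factorization, composed with $\alpha_D^{-1}$, is a factorization through $(L,(p_D)_D)$, which is unique.

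I do not expect a serious obstacle here; the only thing to watch is the bookkeeping of composition order (the paper writes composition both as $fg$ and as $g\circ f$), so the main care is to apply the naturality squares for $\alpha$ and $\alpha^{-1}$ on the correct side. Everything else is a formal diagram chase from the universal property, and no structure beyond the definitions of cone, limit, and natural isomorphism is needed.
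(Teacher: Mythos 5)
Your proof is correct and, for part (ii), follows essentially the same route as the paper: check the cone condition via naturality, transport any cone on $G$ back along $\alpha_D^{-1}$, and invoke the universal property of $(L,(p_D)_D)$ together with uniqueness of the factorization. For part (i) the paper simply cites Proposition~2.6.3 of Borceux, and the argument you write out is the standard one that reference contains, so there is no substantive difference.
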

\begin{proof}\preenum
	\begin{romenumerate}
		\item This is Proposition~2.6.3 of \citenobackref{BorceuxCatI}.
		\item The definition of natural transformations and cone show that $(L,(p_D\alpha_D)_D)$ is a cone on $G$. If $(C,(q_D)_D)$ is a cone on $G$, then $(C,(q_D\alpha_D^{-1})_D)$ is a cone on $F$, so there is a unique map $f\colon C\to L$ such that $\alpha_D^{-1}\circ q_D = p_D\circ f$, which means $f$ is also the unique map such that $q_D = (\alpha_D\circ p_D)\circ f$, hence $(L,(p_D\alpha_D)_D)$ satisfies the universal property.
		\qedhere
	\end{romenumerate}
\end{proof}


The category $\Set$ is special in the sense that all functors from small categories have a limit (see \autoref{thm:set_inverse_limit}).

\begin{definition}
	A category $\rC$ is \define[category!complete category]{complete} if $F$ has a limit for all functors $F\colon\rD\to\rC$ with $\rD$ a small category.
\end{definition}


\begin{example}
	Some complete categories are $\Set$, $\Gr$ and $\Ring$.
\end{example}

\subsection{Inverse limits}

We now define inverse limits, and see what inverse limits are in the categories $\Set$, $\Gr$ and $\Ring$.

\begin{definition}
	A \define{directed set} is a (nonempty) partially ordered set $(I,\succcurlyeq)$\notatlink{Igeq} such that for every $i,j\in I$, there is a $k\in I$ for which $k\succcurlyeq i$ and $k\succcurlyeq j$, i.e.\ every two elements of $I$ have a common upper bound.
	The \define[category!associated to a directed set]{category associated to $(I,\succcurlyeq)$} is the category $\rI$\notatlink{I} defined by 
		\[\Ob\rI = I\text{ and }\Hom(i,j) = 
		\begin{cases}
			\{f_{ij}\} &\text{ if $i\succcurlyeq j$}\\
			\emptyset &\text{ otherwise}
		\end{cases}\;.\]
\end{definition}

With all categorical definitions, we can now easily define inverse limits:

\begin{definition}
	An \define{inverse limit} is a limit of a functor $F\colon\rI\to\rC$ with $\rI$ the category associated to a directed set.
\end{definition}

As this may be a bit abstract, we look at the definitions in a different way.

\begin{definition}
	Let $(I,\succcurlyeq)$ be a directed set. An \define{inverse system} in $\rC$ over $I$ is a collection of objects $(C_i)_{i\in I}$ and for each $i\succcurlyeq j$ a map $\phi_{ij}\colon C_i\to C_j$ such that
	\begin{romenumerate}
		\item $\phi_{ii} = \id_{C_i}$ for all $i\in I$;
		\item for all $i\succcurlyeq j\succcurlyeq k$, $\phi_{jk}\circ\phi_{ij} = \phi_{ik}$.
	\end{romenumerate}
\end{definition}

\begin{definition}
	Two inverse systems $(C_i,\phi_{ij})$ and $(C'_i,\phi'_{ij})$ are \define[inverse system!isomorphic inverse systems]{isomorphic} if there are isomorphisms $\alpha_i\colon C_i\to C'_i$ such that 
	\[\phi'_{ij}\circ\alpha_i = \alpha_j\circ\phi_{ij}\]
	for all $i\succcurlyeq j$.
\end{definition}

An inverse system corresponds to a functor $F\colon\rI\to\rC$ by setting $F(i) = C_i$ and $F(f_{ij}) = \phi_{ij}$. Isomorphic inverse systems correspond to naturally isomorphic functors.

\begin{definition}
	Let $(C_i,\phi_{ij})$ be an inverse system in $\rC$. An \define{inverse limit} consists of an object $L\in\Ob\rC$ and for each $i\in I$ a \define{projection map} $p_i\colon L\to C_i$ such that
	\begin{manualenumerate}[label=\textnormal{(IL\arabic*)},labelwidth=\widthof{(IL1)}]
		\item $\phi_{ij}\circ p_i = p_j$ for all $i\succcurlyeq j$;\label{axiom:IL1}
		\item $L$ is the universal object with this property, i.e.\ if $(C,(q_i))$ is another such object, there is unique morphism $f\colon C\to L$ such that $q_i = p_i\circ f$ for all $i\in I$.\label{axiom:IL2}
	\end{manualenumerate}
	When an inverse limit exists, we denote it by $\varprojlim C_i$.
\end{definition}

\begin{proposition}
	Let $(C_i,\phi_{ij})$ be an inverse system with inverse limit $\varprojlim C_i$.
	\begin{romenumerate}
		\item If $C$ is another inverse limit of $(C_i,\phi_{ij})$, then $C\cong\varprojlim C_i$.
		\item If $(C'_i,\phi'_{ij})$ is an inverse system isomorphic to $(C_i,\phi_{ij})$, then $\varprojlim C_i$ is an inverse limit to $(C'_i,\phi'_{ij})$.
	\end{romenumerate}
\end{proposition}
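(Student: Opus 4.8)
The statement is the standard "uniqueness up to isomorphism" for inverse limits, together with the transport of an inverse limit along an isomorphism of inverse systems. Since an inverse system $(C_i,\phi_{ij})$ is literally the same data as a functor $F\colon\rI\to\rC$ (with $F(i)=C_i$ and $F(f_{ij})=\phi_{ij}$), and an inverse limit of $(C_i,\phi_{ij})$ is exactly a limit of $F$ in the sense of the earlier definition, the plan is simply to reduce both parts to \autoref{prop:limitproperties}. So the proof is essentially a dictionary argument: recognize that "inverse limit of the inverse system" and "limit of the associated functor" are synonyms, and invoke the corresponding parts of \autoref{prop:limitproperties}.

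For part (i): if $C$ is another inverse limit of $(C_i,\phi_{ij})$ with projections $(q_i)$, then both $(C,(q_i))$ and $(\varprojlim C_i,(p_i))$ are limits of the functor $F$ associated to the inverse system, so by \autoref{prop:limitproperties}(i) they are isomorphic, i.e.\ $C\cong\varprojlim C_i$. (Concretely, the universal property \ref{axiom:IL2} applied in both directions yields morphisms $f\colon C\to\varprojlim C_i$ and $g\colon\varprojlim C_i\to C$ whose composites are, by the uniqueness clause, the identities; but appealing directly to \autoref{prop:limitproperties}(i) is cleaner.)

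For part (ii): an isomorphism of inverse systems $(C_i,\phi_{ij})\cong(C'_i,\phi'_{ij})$ given by $\alpha_i\colon C_i\to C'_i$ is, under the dictionary, exactly a natural isomorphism $\alpha\colon F\to F'$ between the associated functors (the condition $\phi'_{ij}\circ\alpha_i=\alpha_j\circ\phi_{ij}$ is the naturality square). Hence by \autoref{prop:limitproperties}(ii), if $(\varprojlim C_i,(p_i))$ is a limit of $F$, then $(\varprojlim C_i,(\alpha_i\circ p_i))$ is a limit of $F'$; translating back, $\varprojlim C_i$ equipped with the projections $\alpha_i\circ p_i$ is an inverse limit of $(C'_i,\phi'_{ij})$. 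The only point needing a word of care is the direction of composition: since the paper writes $p_D\alpha_D$ for what is the composite "first $p_D$, then $\alpha_D$", I should state the new projection maps consistently with that convention, namely as $\alpha_i\circ p_i$.

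There is no real obstacle here; the "hard part", such as it is, is purely bookkeeping — making sure the identification of inverse systems with functors $\rI\to\rC$ matches the variances and composition conventions fixed earlier, so that the two clauses of \autoref{prop:limitproperties} apply verbatim. Once that identification is spelled out (which the text has essentially already done in the remark immediately preceding the statement), both parts are one-line invocations.
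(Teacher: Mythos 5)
Your proposal is correct and follows essentially the same route as the paper, which proves the proposition simply by observing that both parts are translations of \autoref{prop:limitproperties} under the dictionary between inverse systems and functors $\rI\to\rC$. Your extra care about the composition convention for the new projection maps in part (ii) is fine and consistent with the paper's statement.
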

\begin{proof}
	These properties are the translations of \autoref{prop:limitproperties}.
\end{proof}

\subsection{Inverse limits of sets and algebraic structures}

We have already stated that $\Set$ is a complete category, meaning that limits for functors from small categories always exist. In particular, inverse limits always exist.

\begin{theorem}\label{thm:set_inverse_limit}
	Let $(X_i,\phi_{ij})$\notatlink{Xiphiij} be an inverse system in $\Set$ over $I$. Then
	\[\varprojlim X_i\notatlink{limXi} = \left\{(x_i)_{i\in I}\in\prod_{i\in I}X_i\;\middle|\; x_i\phi_{ij} = x_j\text{ for all $i\succcurlyeq j$}\right\}\;,\]
	and the projection maps are $p_j\colon \varprojlim X_i\to X_j\colon (x_i)_i\mapsto x_j$.
\end{theorem}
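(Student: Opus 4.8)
The plan is to verify that the set $L := \bigl\{(x_i)_{i\in I}\in\prod_{i\in I}X_i \mid x_i\phi_{ij}=x_j \text{ for all } i\succcurlyeq j\bigr\}$ together with the restricted projection maps $p_j$ satisfies the two axioms \ref{axiom:IL1} and \ref{axiom:IL2} of an inverse limit, and then invoke uniqueness. Since $\Set$ is already known to be complete (stated above), one could also simply cite that, but the point here is to give the explicit description, so a direct verification is the natural route.

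First I would check \ref{axiom:IL1}: for $i\succcurlyeq j$ and $(x_k)_k\in L$, we have $(x_k)_k\,p_i\,\phi_{ij} = x_i\phi_{ij} = x_j = (x_k)_k\,p_j$ by the defining condition of $L$, so $\phi_{ij}\circ p_i = p_j$. (One should also note $L\neq\emptyset$ is not required — the empty set is a legitimate object of $\Set$, and the formula is correct even then.) Next, for \ref{axiom:IL2}, suppose $(C,(q_i)_{i\in I})$ is another cone, i.e.\ $\phi_{ij}\circ q_i = q_j$ for all $i\succcurlyeq j$. Define $f\colon C\to L$ by $cf := (c\,q_i)_{i\in I}$. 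I must check this lands in $L$: for $i\succcurlyeq j$, $(c\,q_i)\phi_{ij} = c\,q_j$ exactly because $(C,(q_i))$ is a cone, so $(cq_i)_i$ does satisfy the compatibility condition defining $L$, hence $cf\in L\subseteq\prod X_i$. By construction $f\circ p_j$ sends $c\mapsto c\,q_j$, so $q_j = p_j\circ f$ for all $j$. For uniqueness, if $g\colon C\to L$ also satisfies $q_j = p_j\circ g$ for all $j$, then for each $c\in C$ the $j$-th coordinate of $cg$ is $cg\,p_j = c\,q_j$, which is the $j$-th coordinate of $cf$; since an element of $\prod X_i$ is determined by its coordinates, $cg = cf$, so $g=f$.

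Finally, I would remark that by the uniqueness of limits (\autoref{prop:limitproperties}(i), or its inverse-limit translation stated just above) this set is \emph{the} inverse limit, justifying the notation $\varprojlim X_i$. There is no real obstacle here — the only mildly delicate points are being careful that $f(c)$ genuinely lies in $L$ (which uses the cone condition on $C$ in precisely the direction opposite to how \ref{axiom:IL1} uses it on $L$) and not tacitly assuming $I$ or the $X_i$ are nonempty. Everything else is a routine unwinding of the universal property, with all the categorical content already packaged in the completeness of $\Set$ and the general limit machinery developed earlier.
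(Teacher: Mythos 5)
Your verification is correct: the compatibility condition on $L$ gives \ref{axiom:IL1}, the cone condition on $(C,(q_i))$ is exactly what guarantees that $c\mapsto (cq_i)_i$ lands in $L$, and coordinatewise determination gives uniqueness of the mediating map, so $L$ with the restricted projections satisfies the universal property; your cautionary remarks about possibly empty $L$ and the appeal to uniqueness of limits are also appropriate. The paper, by contrast, does not carry out this verification at all: its proof is a one-line citation of the general construction of limits in $\Set$ (Mac Lane, p.~110), with the remark that specializing that construction to an inverse system yields the displayed set. So your route is genuinely more self-contained --- you prove \ref{axiom:IL1} and \ref{axiom:IL2} directly from the definitions rather than importing the completeness of $\Set$ --- at the cost of redoing a standard argument the paper was content to outsource; both are valid, and yours has the advantage of making visible exactly where each hypothesis (the compatibility condition on $L$, the cone condition on $C$) is used. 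One cosmetic point: with the paper's convention that $g\circ f$ means ``first $f$, then $g$'', the map you describe should be written $p_j\circ f\colon c\mapsto cq_j$ (as you indeed conclude, $q_j=p_j\circ f$), not $f\circ p_j$; this is a notational slip only and does not affect the argument.
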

\begin{proof}
	A construction of the limit in $\Set$ can be found in \cite[p.~110]{McLaneCategories}. Making this more explicit gives the desired expression.
\end{proof}

Despite there always being an inverse limit, some problems can still occur. The biggest being that it is not necessarily the case that $\varprojlim X_i$ is not empty. G.~Bergman has some notes which give a good overview on this problem \cite{EmptyLimits}. One possible reason for having empty inverse limits is when the maps of the inverse system are not surjective. As can be seen from the expression in \autoref{thm:set_inverse_limit}, only the elements of $X_j$ which are in the image of all $\phi_{ij}$ matter. Hence it makes sense to assume surjectivity.

\begin{definition}\label{def:surj_inv_system}
	An inverse system $(X_i,\phi_{ij})$ in $\Set$ is \define[inverse system!surjective inverse system]{surjective} when $\phi_{ij}$ is surjective for all $i\succcurlyeq j$.
\end{definition}

Another possible obstruction to ensure nonempty limits is the size of the index set $I$. To be more precise: the \emph{cofinality} of $(I,\succcurlyeq)$:

\begin{definition}
	We say a partially ordered set $(I,\succcurlyeq)$ has a \define{cofinal sequence} (or also: has countable \define{cofinality}) if there is a countable sequence $i_1\preccurlyeq i_2\preccurlyeq i_3\preccurlyeq \cdots$ in $I$ such that for all $i\in I$ there is an $\ell\in\N$ such that $i_\ell\succcurlyeq i$.
\end{definition}

These two properties are sufficient to ensure that the inverse limit is nonempty.

\begin{theorem}\label{thm:inverse_limit_surjections}
	Let $(I,\succcurlyeq)$ be a directed set. Then $(I,\succcurlyeq)$ has a cofinal sequence if and only if for every surjective inverse system $(X_i,\phi_{ij})$ over $I$ with all $X_i$ nonempty, $\varprojlim X_i$ is not empty. In this case, the projection maps $p_j\colon \varprojlim X_i\to X_j$ are surjective.
\end{theorem}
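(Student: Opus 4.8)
The statement has two directions. For the easier ("only if") direction, suppose $(I,\succcurlyeq)$ has no cofinal sequence; I would build a surjective inverse system of nonempty sets with empty limit. The natural idea is to take $X_i$ to be the set of cofinal-looking sequences of length indexed below $i$, or more concretely to use the "upper sets" construction: let $X_i$ be the set of functions $s$ assigning to each $j\succcurlyeq i$ an element of some chosen infinite set, subject to a coherence condition that forces an element of $\varprojlim X_i$ to encode a cofinal sequence in $I$; since none exists, the limit is empty, while each $X_i$ is nonempty and the bonding maps (restriction) are surjective because from any finite amount of data one can always extend. One must check surjectivity of $\phi_{ij}$ carefully — this uses directedness of $I$.

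For the main ("if") direction, assume a cofinal sequence $i_1\preccurlyeq i_2\preccurlyeq\cdots$ exists. The key reduction is that the inverse limit over $I$ is isomorphic to the inverse limit over the subsystem indexed by $\N$: a cone over the cofinal chain extends uniquely to a cone over all of $I$ by directedness (given $i\in I$, pick $\ell$ with $i_\ell\succcurlyeq i$ and set the component at $i$ to be $\phi_{i_\ell i}$ composed with the $i_\ell$-component; well-definedness again uses directedness and the inverse-system compatibility). So it suffices to prove nonemptiness for a surjective inverse system $(Y_n,\psi_{mn})$ over $\N$ with $\psi_{n+1,n}\colon Y_{n+1}\twoheadrightarrow Y_n$ surjective and every $Y_n$ nonempty. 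This is the classical fact: pick any $y_1\in Y_1$, then inductively choose $y_{n+1}\in Y_{n+1}$ with $\psi_{n+1,n}(y_{n+1})=y_n$, using surjectivity at each stage; the sequence $(y_n)$ lies in $\varprojlim Y_n$ by \autoref{thm:set_inverse_limit}. This requires the axiom of (dependent) choice, which I would note.

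Finally, for surjectivity of the projection maps $p_j$: fix $j$ and $x_j\in X_j$. Using the cofinal sequence, find $\ell$ with $i_\ell\succcurlyeq j$; since the system is surjective, lift $x_j$ along $\phi_{i_\ell j}$ to some $x_{i_\ell}\in X_{i_\ell}$, then run the same inductive lifting argument up the cofinal chain starting from $x_{i_\ell}$ (rather than from an arbitrary element), and extend to a full cone over $I$ as above. The resulting element of $\varprojlim X_i$ maps to $x_j$ under $p_j$.

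The main obstacle is the "only if" direction: constructing an explicit surjective system with empty limit that genuinely captures the failure of countable cofinality, and verifying that the bonding maps are surjective. Once the reduction to a countable chain is in place, the "if" direction is a routine choice argument, so I would spend most of the write-up on the extension-of-cones lemma (limit over $I$ equals limit over the cofinal chain) and on the counterexample construction.
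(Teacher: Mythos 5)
Your ``if'' direction and the argument for surjectivity of the projections are correct, and they are essentially what the paper does for that part: restrict to the cofinal chain $i_1\preccurlyeq i_2\preccurlyeq\cdots$, lift step by step using surjectivity of the consecutive bonding maps (dependent choice), and extend the compatible family $(y_\ell)_\ell$ over the chain to all of $I$ by setting $x_i = \phi_{i_\ell i}(y_\ell)$ for any $\ell$ with $i_\ell\succcurlyeq i$; well-definedness and compatibility follow from directedness exactly as you indicate, and starting the lift at a preimage of a prescribed $x_j\in X_j$ (possible since $\phi_{i_\ell j}$ is onto) gives surjectivity of $p_j$. This is precisely the induction the paper sketches for the final remark.

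The genuine gap is the ``only if'' direction. What you offer there is not a construction but a description of what a construction ought to achieve: ``cofinal-looking sequences'', ``functions subject to a coherence condition that forces an element of the limit to encode a cofinal sequence''. The whole difficulty of this implication is to exhibit such a system and to verify that the bonding maps are surjective, and the naive versions of your idea break exactly there. For example, already over $I=\omega_1$ the first guess --- take $X_\alpha$ to be the injections of $\{\beta\mid \beta\prec\alpha\}$ into $\N$ with restriction as bonding maps --- is \emph{not} a surjective system, since a bijection onto $\N$ admits no proper extension; one must restrict to injections with infinite complement (or modify the target), and for an arbitrary directed set with no countable cofinal subset the construction and the surjectivity check are considerably more delicate. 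This is precisely the content of Henkin's theorem, which the paper does not reprove but invokes by citation (``the equivalence is the main theorem of \cite{HenkinInverseLimits}''). As written, your proposal therefore establishes one implication and the surjectivity statement, but the converse implication is missing: either supply the actual inverse system together with the verification that all $\phi_{ij}$ are onto and the limit is empty, or do as the paper does and cite Henkin for it.
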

\begin{proof}
	The equivalence is the main theorem of \cite{HenkinInverseLimits}. For the final remark, take any $x_j\in X_j$ and use the surjectivity of all $\phi_{ij}$ and induction to construct a sequence $(x_i)_{i\in I}$ such that $(x_i)p_j = x_j$.
\end{proof}

In the categories of $\Gr$ and $\Ring$, inverse limits also always exist. The underlying set of an inverse limit is the inverse limit of the underlying sets, while the group operations are done pointwise. An explicit description of all these constructions can be found in \S~7.1 of Chapter~III in~\cite{BourbakiTopologyI}. In the case of groups and rings we get the following:

\begin{theorem}\label{thm:group_inverse_limit}
	Let $(G_i,\phi_{ij})$ be an inverse system in $\Gr$. Then
	\[\varprojlim G_i = \left\{(g_i)_{i\in I}\in\prod_{i\in I}G_i\;\middle|\; g_i\phi_{ij} = g_j\text{ for all $i\succcurlyeq j$}\right\}\;,\]
	with $(g_i)\cdot(h_i) = (g_ih_i)$, $1 = (1_i)$ and projection maps 
	\[p_j\colon\varprojlim G_i\to G_j\colon(g_i)\mapsto g_j\;.\]
\end{theorem}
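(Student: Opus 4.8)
The plan is to exhibit the stated set as an explicit model of the inverse limit in $\Gr$ and to verify the two defining axioms; since the inverse limit, when it exists, is unique up to isomorphism by \autoref{prop:limitproperties}, this settles the theorem. Write $L = \{(g_i)_{i\in I}\in\prod_{i\in I}G_i \mid g_i\phi_{ij}=g_j \text{ for all } i\succcurlyeq j\}$. The key observation is that $L$ is nothing but the underlying set $\varprojlim X_i$ of \autoref{thm:set_inverse_limit}; the whole content is therefore that this set inherits a group structure from the product $\prod_{i\in I}G_i$ and that, with this structure, it is the limit in $\Gr$.

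First I would check that $L$ is a subgroup of $\prod_{i\in I}G_i$ (equipped with pointwise operations). Since each $\phi_{ij}$ is a homomorphism, $1_i\phi_{ij}=1_j$, so $(1_i)_i\in L$; and for $(g_i)_i,(h_i)_i\in L$ one computes $(g_ih_i^{-1})\phi_{ij}=(g_i\phi_{ij})(h_i\phi_{ij})^{-1}=g_jh_j^{-1}$, so $L$ is closed under the group operations. Hence $L$ is a group with identity $(1_i)_i$, and each projection $p_j\colon L\to G_j$, being the restriction of a coordinate projection, is a group homomorphism.

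Next I would verify axiom~\ref{axiom:IL1}: for $g=(g_i)_i\in L$ and $i\succcurlyeq j$ we have $(g\,p_i)\phi_{ij}=g_i\phi_{ij}=g_j=g\,p_j$, i.e.\ $\phi_{ij}\circ p_i=p_j$. For the universal property~\ref{axiom:IL2}, suppose $(C,(q_i))$ is a group equipped with homomorphisms $q_i\colon C\to G_i$ satisfying $\phi_{ij}\circ q_i=q_j$ for all $i\succcurlyeq j$. Define $f\colon C\to\prod_{i\in I}G_i$ by $cf=(cq_i)_i$. The compatibility relation on the $q_i$ says precisely that $cf\in L$ for every $c\in C$, so $f$ factors through $L$; it is a homomorphism because the $q_i$ are and $L$ carries pointwise operations; and $p_i\circ f=q_i$ holds by construction. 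Finally, any $f'\colon C\to L$ with $p_i\circ f'=q_i$ for all $i$ agrees with $f$ in every coordinate, hence equals $f$, which gives uniqueness.

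I do not expect a genuine obstacle here: this is the routine fact that limits in $\Gr$ are computed on underlying sets. The only points demanding a little care are the two uses of the hypothesis that the $\phi_{ij}$ are homomorphisms — once to see that $L$ is a subgroup, and once to see that the canonical map $f$ actually lands in $L$. Alternatively, one could simply cite the general construction of limits of algebraic structures in \S~7.1 of Chapter~III of \cite{BourbakiTopologyI} and make it explicit, exactly as was done for $\Set$ in \autoref{thm:set_inverse_limit}.
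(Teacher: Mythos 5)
Your proof is correct. Note that the paper itself does not prove this theorem at all: it simply states it after citing the general construction of limits of algebraic structures in \S~7.1 of Chapter~III of \cite{BourbakiTopologyI}, so your explicit verification --- that the compatible tuples form a subgroup of the product with pointwise operations, that the coordinate projections satisfy \ref{axiom:IL1}, and that the universal property \ref{axiom:IL2} holds with the canonical map $c\mapsto(cq_i)_i$ --- is precisely the routine argument the paper delegates to that reference, carried out correctly and in parallel with the set-theoretic case of \autoref{thm:set_inverse_limit}.
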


\begin{theorem}\label{thm:ring_inverse_limit}
	Let $(R_i,\phi_{ij})$ be an inverse system in $\Ring$. Then
	\[\varprojlim R_i = \left\{(r_i)_{i\in I}\in\prod_{i\in I}G_i\;\middle|\; r_i\phi_{ij} = r_j\text{ for all $i\succcurlyeq j$}\right\}\;,\]
	with $(r_i)+(s_i) = (r_i+s_i)$, $(r_i)\cdot(s_i) = (r_is_i)$, $0 = (0_i)$, $1 = (1_i)$ and projection maps $p_j\colon\varprojlim R_i\to R_j\colon(r_i)\mapsto r_j$.
\end{theorem}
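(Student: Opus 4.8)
The plan is to verify directly that the stated data is an inverse limit in $\Ring$, exactly paralleling the arguments for $\Set$ in \autoref{thm:set_inverse_limit} and for $\Gr$ in \autoref{thm:group_inverse_limit}; one may alternatively just cite the general construction in \S~7.1 of Chapter~III of \cite{BourbakiTopologyI}. Write $L = \{(r_i)_{i\in I}\in\prod_{i\in I}R_i\mid r_i\phi_{ij}=r_j\text{ for all }i\succcurlyeq j\}$. The first step is to observe that $L$, with componentwise operations, is a commutative unital ring: the full product $\prod_{i\in I}R_i$ is such a ring, and since each $\phi_{ij}$ is a ring homomorphism it sends $0_i\mapsto0_j$, $1_i\mapsto1_j$ and respects $+$, $-$ and $\cdot$, so $(0_i)_i,(1_i)_i\in L$ and $L$ is closed under these operations; hence $L$ is a subring with identity. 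Each projection $p_j\colon L\to R_j$ is the restriction of the $j$-th product projection, hence a ring homomorphism, and for $(r_i)_i\in L$ one has $(\phi_{ij}\circ p_i)\bigl((r_i)_i\bigr)=r_i\phi_{ij}=r_j=p_j\bigl((r_i)_i\bigr)$, which is axiom \ref{axiom:IL1}.

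The second step is the universal property \ref{axiom:IL2}. Let $(C,(q_i)_i)$ be any commutative unital ring with ring homomorphisms $q_i\colon C\to R_i$ satisfying $\phi_{ij}\circ q_i=q_j$ for all $i\succcurlyeq j$, and define $f\colon C\to\prod_{i\in I}R_i$ by $f(c)=(q_i(c))_{i\in I}$. For each $i\succcurlyeq j$ and each $c\in C$ we have $q_i(c)\phi_{ij}=q_j(c)$ by hypothesis, so $f(c)\in L$; and $f$ is a ring homomorphism because each coordinate $q_i$ is. By construction $p_i\circ f=q_i$ for all $i$. For uniqueness, if $f'\colon C\to L$ also satisfies $p_i\circ f'=q_i$ for all $i$, then for every $c\in C$ the tuples $f(c)$ and $f'(c)$ agree in every coordinate $q_i(c)$, hence $f'=f$. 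This establishes \autoref{thm:ring_inverse_limit}. (If one prefers to build on earlier results: the underlying set of $L$ is $\varprojlim X_i$ for the system $(X_i,\phi_{ij})$ of underlying sets by \autoref{thm:set_inverse_limit}, the unique set map $f$ it supplies is automatically a ring homomorphism by the coordinatewise observation above, and uniqueness among ring homomorphisms is inherited from uniqueness among set maps.)

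There is essentially no genuine obstacle here — every step is a routine check. The only points deserving care are bookkeeping ones: keeping the right-action/composition conventions straight, so that "$r_i\phi_{ij}=r_j$'' is read as $(\phi_{ij}\circ p_i)=p_j$ on $L$; and noticing that the statement "$f$ takes values in $L$'' is precisely the hypothesis $\phi_{ij}\circ q_i=q_j$ and needs no further derivation. It is also worth remarking, in contrast with the $\Set$ case, that $L$ is automatically nonempty since it contains $(0_i)_i$, so no cofinality or surjectivity hypothesis is needed.
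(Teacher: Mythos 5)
Your proof is correct, and it fills in exactly the argument the paper leaves implicit: the paper states this theorem without proof, simply citing the general pointwise construction of inverse limits of algebraic structures in Bourbaki, which is precisely the verification you carry out (subring of the product, projections as ring homomorphisms, and the universal property with uniqueness). Nothing further is needed; your remark that nonemptiness is automatic here, unlike in the bare $\Set$ case, is a correct and sensible aside.
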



\section{Groups and rings}	
\subsection{Group actions}

A great deal of this thesis is about group actions. Some conventions and notations:

\begin{notation}\preenum
	\begin{itemize}
		\item Group actions will always be \emph{right actions}.
		\item We denote the action of $g$ on $x$ by $x\cdot g$ or $xg$.
		\item We write $G_x:=\{g\in G\mid xg=x\}\notatlink{Gx}$ and $G_{x,y}:= (G_x)_y\notatlink{Gxy}$ for a point stabilizer and a two-point stabilizer.
		\item We write $x^G$ for the orbit of $x$.
		\item The group $\Sym(X)$ is the group of all bijections from $X$ to itself.
		\item An action of $G$ on $X$ is not necessarily faithful, we write 
		\[\Im(G\to\Sym(X))\]
			for the induced faithful action of $G$ on $X$.
		\item We define group conjugation as $g^h:= h^{-1}gh$.
	\end{itemize}
\end{notation}

What we will consider, are groups acting on sets with equivalence relations.

\begin{notation}\preenum
	\begin{itemize}
		\item If $(X,\sim)\notatlink{Xtil}$ is a set with equivalence relation, we denote the equivalence class of $x\in X$ by $\class{x}\notatlink{xbar}$, and the set of equivalence classes by $\class{X}\notatlink{Xbar}$.
		\item The group $\Sym(X,\sim)\notatlink{SymXtil}$ is the group of equivalence-preserving bijections of $X$.
		\item If $g\in\Sym(X,\sim)$, we denote the induced faithful action of $g$ on $\class{X}$ by $\induced{g}\notatlink{gbar}$. Similarly, if $G\leq\Sym(X,\sim)$, the induced faithful action of $G$ on $\class{X}$ is denoted by $\induced{G}$.
	\end{itemize}
\end{notation}

\subsection{Local rings}	

Local rings can be defined in various ways. %

\begin{definition}
	A (unital) ring $R\notatlink{R}$ is a \define{local ring} if the set $\m\notatlink{m}$ of non-invertible elements of $R$ is a (two-sided) ideal. We write $R^\times$\notatlink{Rtimes} for the invertible elements of $R$, and $Z(R)$\notatlink{Z(R)} for the center of $R$.
\end{definition}

If the maximal ideal is not clear from context, we also write `$(R,\m)$ is a local ring', which implies $\m$ is the maximal ideal in $R$. Older works often also require $R$ to be Noetherian, but we will not require it in the definition. A few equivalent definitions:

\begin{proposition}
	Let $R$ be a unital ring and write $\Rad R$ for the intersection of all maximal left ideals. Then the following are equivalent:
	\begin{romenumerate}
		\item $R$ is local;
		\item $R$ has a unique maximal (left/right) ideal;
		\item $R/\Rad R$ is a division ring.
	\end{romenumerate}
\end{proposition}
\begin{proof}
	This is part of Theorem~19.1 in \cite{LamRings}.
\end{proof}

\begin{definition}
	A \define{local ring homomorphism} between local rings $(R,\m)$ and $(S,\n)$ is a ring homomorphism $f\colon R\to S$ such that 
	\[f(\m)\subset \n\;.\]
\end{definition}

Before we delve deeper into the theory of local rings, some examples:

\begin{examples}\preenum
	\begin{nrenumerate}
		\item A field is a local ring with maximal ideal $(0)$.
		\item If $(R,\m)$ is a local ring and $I$ is an ideal in $R$, then $R/I$ is a local ring with maximal ideal $\m/I$.
		\item If $(R,\m)$ is a local ring, then the ring of formal power series $R[[x_1,\ldots,x_n]]$ is a local ring with maximal ideal $(\m,x_1,\ldots,x_n)$ (see \citenobackrefoptional{p.~70}{CohanCompleteLocalRings}).
		\item The ring $\Z_{(p)}$ of rational numbers without multiples of $p$ as denominators is a local ring with $p\Z_{(p)}$ as maximal ideal. The inclusion $\Z_{(p)}\to\Q$ is a ring homomorphism, but not a local ring homomorphism.
		\item The ring $\Z_p$ of $p$-adic integers is local with maximal ideal $p\Z_p$.
	\end{nrenumerate}
\end{examples}

We now restrict commutative local rings. A local ring naturally becomes a topological ring:

\begin{definition}
	Let $(R,\m)$ be a local ring. The \define[m-adic topology@$\m$-adic topology]{$\m$-adic topology} is the topology induced by taking $\{\m^i\mid i\in\N\}$ to be a neighborhood base of $0$. We call this topology the \define{natural topology} on $R$.
\end{definition}

If $R$ is a Noetherian local ring, we get $\cap_i\m^i=(0)$. Hence the natural topology on $R$ is Hausdorff, and in particular Cauchy sequences can have at most one limit.

\begin{definition}
	A sequence $\{r_n\}_n$ in $R$ is a \define{Cauchy sequence} if for all $N\in\N$, $r_n-r_m\in\m^N$ for all $n,m$ larger than some constant only depending on $N$. We call a local ring $R$ \define[local ring!complete local ring]{complete} if every Cauchy sequence has a limit.
\end{definition}

Every Noetherian local ring can be extended to a complete local ring. Even if $R$ is not Noetherian, one can still take a completion, but $R$ will no longer be a subring.

\begin{definition}
	Let $(R,\m)$ be a local ring, then we define the \define{completion} of $R$ as
	\[\complete{R}\notatlink{Rhat} := \varprojlim R/\m^i\;.\]
\end{definition}

A few basic properties of completions of local rings:

\begin{proposition}
	Let $(R,\m)$ be a local ring. Then the following hold.
	\begin{romenumerate}
		\item the map 
			\[\iota\colon R\to\complete{R}\colon r\to (r+\m^i)_i\]
			is a local ring homomorphism and $\Ker\iota = \cap_i\m^i$;
		\item $\doublehat{R} \cong \complete{R}$;
		\item if $R$ is Noetherian, $\complete{R}$ is a complete ring with maximal ideal $\complete{R}\bigotimes_R\m$.
	\end{romenumerate}
\end{proposition}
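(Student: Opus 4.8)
The three statements are standard facts about completions of local rings, and the plan is to prove them in the order given, using that $\complete{R}=\varprojlim R/\m^i$ is an inverse limit of rings (where the transition maps $R/\m^{i}\to R/\m^{j}$ for $i\geq j$ are the canonical projections) together with the explicit description of inverse limits of rings from \autoref{thm:ring_inverse_limit}.

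For (i), I would first check that $\iota$ is well-defined and a ring homomorphism: the components $r\mapsto r+\m^i$ are ring homomorphisms compatible with the transition maps, so by the universal property \ref{axiom:IL2} (or directly from the description in \autoref{thm:ring_inverse_limit}) the tuple $(r+\m^i)_i$ lies in $\complete{R}$ and $\iota$ is a ring homomorphism. Its kernel consists of those $r$ with $r\in\m^i$ for all $i$, i.e.\ $\Ker\iota=\cap_i\m^i$. To see $\iota$ is a \emph{local} ring homomorphism, I note that the maximal ideal of $\complete{R}$ is $\widehat{\m}:=\{(r_i)_i\in\complete{R}\mid r_1=0\}$ (equivalently, all $r_i\in\m/\m^i$); one checks quickly that this is an ideal and that its complement consists of units, since a compatible tuple that is a unit mod $\m$ can be inverted componentwise in each $R/\m^i$ in a compatible way (Hensel-style: invertibility mod $\m$ implies invertibility mod $\m^i$ in a local ring, and the inverses are unique hence compatible). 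Then $\iota(\m)\subseteq\widehat{\m}$ is immediate.

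For (ii), the cleanest route is to observe that the inverse system $(R/\m^i)_i$ and the inverse system $(\complete{R}/\widehat{\m}^{\,i})_i$ are isomorphic as inverse systems, via the maps induced by $\iota$; concretely $\complete{R}/\widehat{\m}^{\,i}\cong R/\m^i$ because $\widehat{\m}^{\,i}$ is exactly the kernel of the $i$-th projection $\complete{R}\to R/\m^i$ (here one uses $\widehat{\m}^{\,i}=\ker(p_i)$, which requires a small argument that powers of $\widehat{\m}$ behave well — this is where a little care is needed). Once the two inverse systems are identified, taking inverse limits and invoking uniqueness of inverse limits gives $\doublehat{R}\cong\complete{R}$.

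For (iii), assuming $R$ Noetherian I would quote the standard structure theory: $\complete{R}$ is $\widehat{\m}$-adically complete (its own completion, by (ii), maps isomorphically to it), and the identification $\widehat{\m}=\complete{R}\otimes_R\m$ uses flatness of completion over a Noetherian ring together with right-exactness of tensor product applied to $0\to\m\to R\to R/\m\to 0$. I expect (iii) to be the part I would simply cite (e.g.\ from Atiyah--Macdonald or Matsumura) rather than reprove, and the genuine work is in (i) and the inverse-system identification in (ii); the main obstacle is the bookkeeping showing $\widehat{\m}^{\,i}=\ker(p_i\colon\complete{R}\to R/\m^i)$ and that $\complete{R}/\widehat{\m}^{\,i}\cong R/\m^i$ naturally, after which everything else falls out formally from \autoref{thm:ring_inverse_limit} and uniqueness of limits.
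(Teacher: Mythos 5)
The paper does not actually prove this proposition: it is disposed of by a citation to Atiyah--Macdonald (the first section of Chapter~10 for (i)--(ii), and Propositions~10.15 and~10.16 for (iii)). So your plan to argue (i) and (ii) directly and only cite (iii) is a different, more self-contained route. Part (i) as you sketch it is correct: the component maps are compatible, so $\iota$ lands in $\complete{R}$ and is a ring homomorphism, $\Ker\iota=\cap_i\m^i$ is immediate, and $\complete{R}$ is local with maximal ideal $\Ker(p_1)$ because a compatible tuple that is a unit modulo $\m$ is invertible in every $R/\m^i$ and the inverses are again compatible by uniqueness; $\iota(\m)\subseteq\Ker(p_1)$ is then clear.

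The genuine gap is in (ii), at exactly the step you flag as a ``small argument'': the equality $\widehat{\m}^{\,i}=\Ker(p_i)$ is not bookkeeping. The inclusion $\widehat{\m}^{\,i}\subseteq\Ker(p_i)$ is formal (a product of $i$ tuples killed by $p_1$ is killed by $p_i$), but the reverse inclusion is essentially Atiyah--Macdonald Proposition~10.15, whose proof uses that $R$ is Noetherian (Artin--Rees/Krull); for non-Noetherian rings, $\m$-adic completion is known not to be idempotent in the $\widehat{\m}$-adic sense, so this step cannot be taken for granted, and as stated your argument for (ii) only works under the Noetherian hypothesis that the proposition reserves for (iii). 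What can be proved for an arbitrary local ring --- and what the cited source actually proves, hence the natural reading of (ii) --- is completeness with respect to the filtration by the kernels $K_i=\Ker(p_i)$: since the transition maps $R/\m^i\to R/\m^j$ are surjective and the index set is $\N$, every $p_i$ is surjective by \autoref{thm:inverse_limit_surjections}, so $\complete{R}/K_i\cong R/\m^i$ compatibly with the transition maps, and therefore $\varprojlim_i \complete{R}/K_i\cong\varprojlim_i R/\m^i=\complete{R}$. So either prove (ii) in this form, or interpret $\doublehat{R}$ as the $\widehat{\m}$-adic completion and add the Noetherian hypothesis, in which case your identification $\widehat{\m}^{\,i}=\Ker(p_i)$ becomes available via the result you already cite for (iii). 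Part (iii) itself, being a citation, matches the paper.
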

\begin{proof}
	These properties can all be found in \cite{AtiyahMacDonCommAlg}. The first two are in the first section of Chapter~10. The third property combines Proposition~10.15 and Proposition~10.16.
\end{proof}

In \cite{CohanCompleteLocalRings}, I.~Cohen determined the structure of the complete Noetherian local rings. There are two cases, depending on the characteristics of the ring and the residue field.

\begin{definition}
	The \define{characteristic} of a ring $R$ is 
	\[\min\{n>0\mid 1\cdot n = 0\}\]
	if such $n$ exist, and $0$ otherwise. We denote it by $\characteristic(R)$. \\
	If $(R,\m)$ is a local ring, we say we are in the \define[local ring!equal-characteristic]{equal-characteristic} case if $\characteristic(R) = \characteristic(R/\m)$ and in the \define[local ring!unequal-characteristic]{unequal-characteristic} case otherwise.
\end{definition}

As $R/\m$ is a field, and $R$ is local, we can describe the possibilities for the characteristic more precisely:
\begin{nrenumerate}
	\item $\characteristic(R) = \characteristic(R/\m) = 0$;
	\item $\characteristic(R) = \characteristic(R/\m) = p$ for some prime $p$;
	\item $\characteristic(R/\m) = p$ for some prime $p$, while $\characteristic(R)=0$;
	\item $\characteristic(R/\m) = p$ for some prime $p$, while $\characteristic(R)=p^n$ for $n>1$.
\end{nrenumerate}

In the equal-characteristic case, we can describe $R$:

\begin{theorem}
	Let $R$ be a complete Noetherian local ring in the equal-characteristic case with $\m$ generated by $n$ elements. Then $R$ is a quotient of the ring $k[[x_1,\ldots,x_n]]$ with $k$ the residue field of $R$.
\end{theorem}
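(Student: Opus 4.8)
The plan is to isolate the one genuinely hard ingredient — the existence of a coefficient field — cite it, and then deduce the theorem by a routine convergence-and-approximation argument using completeness.

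First I would invoke Cohen's coefficient field theorem (\cite{CohanCompleteLocalRings}): since $R$ is a complete Noetherian local ring in the equal-characteristic case, it contains a subfield $k'$ which the projection $R\to R/\m$ maps isomorphically onto the residue field. Identifying $k'$ with the residue field $k$, we may regard $k$ as a subring of $R$ with $R=k+\m$. Next, let $x_1,\dots,x_n$ generate $\m$, and define a $k$-algebra homomorphism $\varphi\colon k[[X_1,\dots,X_n]]\to R$ by $X_i\mapsto x_i$. This is well defined: for a power series $\sum_\alpha c_\alpha X^\alpha$ the partial sums $\sum_{\lvert\alpha\rvert\le N}c_\alpha x^\alpha$ form a Cauchy sequence in $R$, since consecutive differences lie in $\m^{N+1}$, and $R$ is complete, so the series converges; one checks directly that this assignment is compatible with addition and multiplication of power series.

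The core step is to show $\varphi$ is surjective. Because $x_1,\dots,x_n$ generate $\m$, the degree-$N$ monomials in the $x_i$ generate $\m^N$ as an $R$-module, and using $R=k+\m$ one gets $\m^N=\sum_{\lvert\alpha\rvert=N}k\,x^\alpha+\m^{N+1}$. Combining this with $R=k+\m$ and inducting on $N$ yields $R=\sum_{\lvert\alpha\rvert\le N}k\,x^\alpha+\m^{N+1}$ for every $N$. Given $r\in R$, I would then build inductively a sequence of polynomials $f_N\in k[X_1,\dots,X_n]$ of degree $\le N$ with $r-\varphi(f_N)\in\m^{N+1}$ and with $f_{N+1}-f_N$ homogeneous of degree $N+1$; the $f_N$ then converge to a power series $f$, and $\varphi(f)=r$ follows because $R$ is Noetherian local, hence $\cap_i\m^i=(0)$ by Krull's intersection theorem, so the $\m$-adic limit is unique and equals $r$. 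Therefore $\varphi$ is surjective and $R\cong k[[X_1,\dots,X_n]]/\Ker\varphi$, exhibiting $R$ as a quotient of $k[[x_1,\dots,x_n]]$.

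The main obstacle is entirely concentrated in the first step: the existence of a coefficient field is the substantive content of Cohen's theorem (in equal characteristic $0$ it is a short Zorn's lemma argument extending $\mathbb{Q}$ to a maximal subfield, but in equal characteristic $p$ it requires a careful treatment of $p$-bases when the residue field is imperfect), and I would quote it rather than reprove it. Everything after that is formal, relying only on $\m$-adic completeness and the Krull intersection property guaranteed by the Noetherian hypothesis.
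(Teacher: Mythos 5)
Your proof is correct, and it takes a more self-contained route than the paper, which simply cites Cohen's structure theorem (Theorem~9, p.~72 of \cite{CohanCompleteLocalRings}) for the full statement. You instead isolate the genuinely hard ingredient --- the existence of a coefficient field, which is also due to Cohen --- and then carry out the standard deduction yourself: define $\varphi\colon k[[X_1,\dots,X_n]]\to R$ by $X_i\mapsto x_i$ via $\m$-adic convergence, prove $\m^N=\sum_{\lvert\alpha\rvert=N}k\,x^\alpha+\m^{N+1}$ and hence $R=\sum_{\lvert\alpha\rvert\le N}k\,x^\alpha+\m^{N+1}$, and approximate any $r\in R$ by truncations $f_N$ whose limit maps to $r$ by uniqueness of limits (Krull intersection). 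The only point worth tightening is that the Hausdorff property $\cap_i\m^i=(0)$ is already needed at the well-definedness stage, since declaring $\varphi\bigl(\sum_\alpha c_\alpha X^\alpha\bigr)$ to be \emph{the} limit of the partial sums presupposes unique limits, not just existence; you invoke Krull only at the end, so move that remark earlier (or note once that the $\m$-adic topology is Hausdorff by the Noetherian hypothesis, as the paper itself observes). With that rearrangement the argument is complete: the paper's citation buys brevity, while your version makes explicit that everything beyond the coefficient field theorem is a formal completeness-and-approximation argument.
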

\begin{proof}
	This is Theorem~9 on p.~72 of \citenobackref{CohanCompleteLocalRings}.
\end{proof}

In the unequal-characteristic case, a similar result holds. This is more technical, and we will not explain all the terminology:

\begin{theorem}
	Let $R$ be a complete Noetherian local ring in the unequal-characteristic case with $\m$ generated by $n$ elements. Then $R$ is a quotient of the ring $S[[x_1,\ldots,x_n]]$, with $S$ a complete, unramified discrete valuation ring of characteristic $0$ with residue field $R/\m$.
\end{theorem}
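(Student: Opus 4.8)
The plan is to reprise Cohen's original argument, running parallel to the equal-characteristic theorem quoted just above: produce a \emph{coefficient ring} $S$ inside $R$, use it together with a choice of generators of $\m$ to build a map out of a power series ring, and then verify surjectivity by a completeness argument. Since we are in the unequal-characteristic case, $\characteristic(R/\m)=p$ for a prime $p$, and here $\characteristic(R)=0$; write $k:=R/\m$. The first — and by far the most delicate — step is to exhibit a subring $S\subseteq R$ that is a complete local ring with maximal ideal $pS$ such that $S\to R/\m$ is surjective with kernel $pS$. Because $\characteristic(R)=0$ forces $p$ to be a non-zero-divisor in $S$, such an $S$ is automatically a complete discrete valuation ring with uniformizer $p$, hence unramified, and with residue field $k$. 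When $k$ is \emph{perfect}, $S$ is the ring of Witt vectors $W(k)$: one lifts the Teichm\"uller (multiplicative) representatives of $k$ into $R$ by Hensel's lemma, and uses the $\m$-adic completeness of $R$ to organize these lifts into a ring homomorphism $W(k)\to R$. For a general (imperfect) residue field one replaces this by the $p$-basis construction: choose a $p$-basis of $k$, lift it arbitrarily, and iteratively correct the lift modulo successively higher powers of $p$ using completeness of $R$.

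Granting the coefficient ring $S$, fix generators $y_1,\dots,y_n$ of $\m$ and let $\varphi\colon S[x_1,\dots,x_n]\to R$ be the $S$-algebra map with $x_i\mapsto y_i$. Since $\varphi$ carries the ideal $(p,x_1,\dots,x_n)$ into $\m$ and $R$ is complete for its $\m$-adic topology, $\varphi$ extends continuously to the $(p,x_1,\dots,x_n)$-adic completion of $S[x_1,\dots,x_n]$, which — because $S$ is $p$-adically complete — is exactly $S[[x_1,\dots,x_n]]$; call this extension $\widehat{\varphi}\colon S[[x_1,\dots,x_n]]\to R$.

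For surjectivity one argues by induction on $N$ that every $r\in R$ is $\widehat{\varphi}(F)$ for some $F$: the base case is the surjectivity $S\twoheadrightarrow R/\m$ coming from the coefficient ring; for the inductive step, if $r-\varphi(F_N)\in\m^N$ then, as $\m^N$ is generated by the degree-$N$ monomials in the $y_i$, write $r-\varphi(F_N)=\sum_{|\alpha|=N}b_\alpha y^\alpha$, lift each $b_\alpha$ to $s_\alpha\in S$ via $S/pS\cong R/\m$, and set $F_{N+1}=F_N+\sum_{|\alpha|=N}s_\alpha x^\alpha$, so $r-\varphi(F_{N+1})\in\m^{N+1}$. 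The $F_N$ converge in $S[[x_1,\dots,x_n]]$ and $r=\lim\varphi(F_N)=\widehat{\varphi}(\lim F_N)$ by completeness of $R$. Hence $\widehat{\varphi}$ is surjective and $R\cong S[[x_1,\dots,x_n]]/\Ker\widehat{\varphi}$, which is the asserted presentation.

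The crux of the whole proof is the existence of the coefficient ring: with a perfect residue field it is the classical, essentially functorial, Witt vector construction, but for an arbitrary residue field it genuinely requires the theory of $p$-bases together with a careful iterated Hensel-type lifting, and this is the only place where substantial new work is needed — stages two and three are the same completeness-plus-Nakayama bookkeeping that underlies the equal-characteristic statement.
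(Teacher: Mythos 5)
There is a genuine gap: you have silently restricted the unequal-characteristic case to $\characteristic(R)=0$. The paper's own case list (just before the theorem) shows that unequal characteristic comprises \emph{two} situations: $\characteristic(R)=0$, but also $\characteristic(R)=p^n$ with $n>1$ (e.g.\ $R=\Z/p^2\Z$, or $(\Z/p^2\Z)[[t]]$). In the latter situation your first step cannot be carried out as stated: a complete unramified discrete valuation ring $S$ of characteristic $0$ can never be a subring of a ring of characteristic $p^n$, so there is no ``coefficient ring $S\subseteq R$'' of the kind you posit, and the chain of deductions ``$\characteristic(R)=0$ forces $p$ to be a non-zero-divisor in $S$, hence $S$ is a DVR'' has nothing to apply to. Cohen's argument handles this by separating two objects: a \emph{coefficient ring} $A\subseteq R$ (a complete local ring with maximal ideal $pA$ and $A/pA\cong R/\m$, which may have $p$ nilpotent), and a \emph{Cohen ring} $S$ (the complete unramified DVR of characteristic $0$ with residue field $R/\m$) together with a surjection $S\twoheadrightarrow A$ with kernel $(p^n)$ when $\characteristic(R)=p^n$. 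The map $S[[x_1,\dots,x_n]]\to R$ is then built through this surjection; it is injective on $S$ only in your subcase. Your second and third stages (continuous extension to the completion, and the inductive lifting of coefficients to prove surjectivity) go through essentially verbatim once the map $S\to R$ --- no longer an inclusion --- is in hand, so the fix is localized to the first stage, but as written the proof only establishes the theorem when $\characteristic(R)=0$.

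Two smaller remarks. First, even in the subcase $\characteristic(R)=0$ the prime $p$ can be a zero-divisor in $R$ (e.g.\ $\Z_p[[t]]/(pt)$); your argument only needs $p$ to be a non-zero-divisor in $S$, which is fine, but be careful not to let regularity of $p$ in $R$ creep in elsewhere. Second, for calibration: the paper itself does not prove this statement at all --- it cites Theorem~12 on p.~84 of Cohen's paper --- so what you are reconstructing is Cohen's proof, and the coefficient-ring existence that you defer (Witt vectors for perfect residue fields, $p$-bases and iterated lifting in general) is precisely the substantial content of that reference; deferring it is reasonable in a sketch, but the missing $\characteristic(R)=p^n$ case is not a deferral, it is an omission that changes the shape of the construction.
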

\begin{proof}
	This is Theorem~12 on p.~84 of \citenobackref{CohanCompleteLocalRings}.
\end{proof}

For similar results in the non-commutative case, see \cite{BathoLocalRings}.

\subsection{Linear groups}

The theory of matrix groups is most often considered over fields or division rings. Over general rings, we can still define these groups, though the relation between them is not always what we are used to.

\begin{definition}
	Let $R$ be a unital ring. We write $\Mat_n(R)$ for the set of $n\times n$-matrices with entries in $R$, and we define addition and multiplication as usual.\\
	The \define{general linear group} of degree $n$ over $R$ is
	\[\GL_n(R)\notatlink{GLn} := \left\{ M \in \Mat_n(R)\mid \exists N\in\Mat_n(R)\colon MN=NM=\I_n\right\}\;.\]
	The \define{group of scalar matrices} is
	\[\Sc_n(R)\notatlink{Scn} := \left\{ r\I_n\mid r\in R\right\}\;.\]
	If $R$ is commutative, we define the determinant of a matrix as usual. The \define{special linear group} is
	\[\SL_n(R)\notatlink{SLn} := \left\{ M \in \GL_n(R)\mid \det(M)=1\right\}\;.\]
\end{definition}

%
In linear algebra, row and column operations are often important. These translate to some very specific matrices:

\begin{definition}
	Let $R$ be a unital ring. An \define{elementary matrix} is a matrix $e_{ij}(r)$ which has $1$ along the diagonal, $r$ on the $(i,j)$ position and $0$ everywhere else. The \define{elementary group} of degree $n$ over $R$ is
	\[\E_n(R) := \big\langle e_{ij}(r)\in\GL_n(R) \;\big|\; 1\leq i,j\leq n, i\neq j, r\in R\big\rangle\;.\]
\end{definition}


Clearly, if $R$ is commutative, $\det(e_{ij}(r)) = 1$, so $\E_n(R)\subset \SL_n(R)$.

\begin{theorem}\label{thm:SL_is_E}
	If $R$ is a commutative, local ring, $\E_n(R) = \SL_n(R)$.
\end{theorem}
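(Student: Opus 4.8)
The plan is to show both inclusions. The inclusion $\E_n(R)\subset\SL_n(R)$ is immediate since each elementary matrix has determinant $1$ and the determinant is multiplicative; this was already noted in the excerpt. So the real content is $\SL_n(R)\subset\E_n(R)$, i.e.\ every matrix of determinant $1$ over a commutative local ring can be reduced to the identity by elementary row and column operations.

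The key tool is that in a local ring $(R,\m)$, an element is invertible if and only if it does not lie in $\m$. I would argue by induction on $n$. For the inductive step, take $M\in\SL_n(R)$ and look at its first column $(a_{11},\dots,a_{n1})^{\mathsf T}$. The crucial claim is that \emph{at least one $a_{i1}$ is a unit}: if all $a_{i1}$ were in $\m$, then the whole first column would consist of non-units, and (expanding the determinant along the first column, or using that $\det M\in\m$ whenever a column lies in $\m$) we would get $\det M\in\m$, contradicting $\det M=1\notin\m$. Once some $a_{i1}\in R^\times$, use an elementary row operation of the form $e_{1i}(r)$ (adding a multiple of row $i$ to row $1$) to arrange that the $(1,1)$-entry is a unit — actually one can even arrange it to be $1$ by first making it a unit and then using $e_{1i}\big((1-a_{11})a_{i1}^{-1}\big)$ type moves, or alternatively just keep it a unit $u$ and clear with $e_{j1}(-a_{j1}u^{-1})$ and $e_{1j}(-u^{-1}a_{1j})$. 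After clearing the rest of the first row and first column, $M$ is reduced (modulo $\E_n(R)$) to a block-diagonal matrix $\operatorname{diag}(u, M')$ with $M'\in\GL_{n-1}(R)$; one further left-multiplies by the elementary matrices realizing $\operatorname{diag}(u, u^{-1}\I_{n-1})$ — which is a product of elementaries, since $\operatorname{diag}(u,u^{-1})=e_{12}(u)e_{21}(-u^{-1})e_{12}(u)\cdot(\text{signed permutation, itself elementary})$ in the $2\times2$ block and leaves the rest alone — to normalize the corner to $1$, and since $\det M = 1$ we get $\det M' = u^{-1}$... let me instead phrase it cleanly: after the corner is normalized to $1$ and the first row/column cleared, we have $\operatorname{diag}(1,M')$ with $M'\in\SL_{n-1}(R)$, and we invoke the inductive hypothesis on $M'$.

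The base case $n=1$ is trivial ($\SL_1(R)=\{1\}=\E_1(R)$), and $n=2$ is where one should be careful, since there the "clear the rest of the row/column" and "split off the corner" steps coincide and one needs the explicit identity expressing $\operatorname{diag}(u,u^{-1})$ as a product of elementary matrices in $\SL_2(R)$ — this uses invertibility of $u$ essentially and is the standard Whitehead-lemma computation $\left(\begin{smallmatrix}u&0\\0&u^{-1}\end{smallmatrix}\right) = \left(\begin{smallmatrix}1&u\\0&1\end{smallmatrix}\right)\left(\begin{smallmatrix}1&0\\-u^{-1}&1\end{smallmatrix}\right)\left(\begin{smallmatrix}1&u\\0&1\end{smallmatrix}\right)\left(\begin{smallmatrix}0&-1\\1&0\end{smallmatrix}\right)$, with the last factor in turn a product of three elementaries. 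I expect the main obstacle to be purely bookkeeping: making sure each "operation" is genuinely realized by left/right multiplication by an $e_{ij}(r)$, and handling the $n=2$ corner split, rather than any conceptual difficulty — the one genuinely essential use of the local hypothesis is the claim that some entry of the first column is a unit, and everything else is the standard Gaussian-elimination argument.
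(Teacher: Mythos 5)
Your proof is correct. Note that the paper does not actually prove this statement: it simply cites Theorem~4.3.9 of Hahn--O'Meara, and your Gaussian-elimination argument is essentially the standard proof behind that citation. You correctly isolate the only place where locality enters --- a column of a matrix of determinant $1$ cannot lie entirely in $\m$, since cofactor expansion along that column would force $\det M\in\m$, contradicting $\det M = 1\notin\m$ --- and the rest is routine elimination by left and right multiplication with elementary matrices (whose inverses are again elementary, so the reduction indeed expresses $M$ itself as a product of elementaries). One small remark: once you arrange the $(1,1)$-entry to be exactly $1$ (always possible: if the only unit in the first column is $a_{11}$ itself, first add row $1$ to another row to create a second unit entry, then add a suitable multiple of that row to row $1$), clearing the first row and column yields a block matrix with upper-left entry $1$ and lower block in $\SL_{n-1}(R)$, so the Whitehead-lemma expression of $\mathrm{diag}(u,u^{-1})$ as a product of elementaries is not needed, even for $n=2$; your final ``clean'' phrasing already does this, so the earlier digression about normalizing a unit corner is redundant rather than wrong.
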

\begin{proof}
	This is Theorem~4.3.9 of \cite{HahnOMearaClassicalGroups}.
\end{proof}

As for fields and division rings, there is a natural notion of projective linear groups.

\begin{definition}
	Let $R$ be a commutative ring. We define the \define{projective general linear group} of degree $n$ over $R$ as
	\[\PGL_n(R)\notatlink{PGLn} := \GL_n(R)/\Sc_n(R)\;.\]
	The \define{projective special linear group} of degree $n$ over $R$ is
	\[\PSL_n(R)\notatlink{PSLn} := \SL_n(R)/(\SL_n(R)\cap\Sc_n(R))\;.\]
	If $M\in\GL_n(R)$, we write $[M]$ for the image of $M$ in the quotient $\PGL_n(R)$.
\end{definition}

An immediate consequence of \autoref{thm:SL_is_E} is now

\begin{theorem}\label{thm:PSL_is_E}
	If $R$ is a commutative, local ring, $\PSL_n(R)$ is generated by the matrices $[e_{ij}(r)]$ for all $1\leq i,j\leq n$, $i\neq j$ and $r\in R$.
\end{theorem}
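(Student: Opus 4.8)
The plan is to read this off \autoref{thm:SL_is_E} with essentially no extra work. By that theorem, since $R$ is a commutative local ring, $\SL_n(R) = \E_n(R)$, so $\SL_n(R)$ is generated by the elementary matrices $e_{ij}(r)$ with $1 \leq i,j \leq n$, $i \neq j$ and $r \in R$. The only thing left is to push this generating set down through the quotient defining $\PSL_n(R)$.

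For that I would invoke the elementary fact that a surjective group homomorphism sends a generating set to a generating set: if $\pi\colon G \to H$ is surjective and $G = \langle S\rangle$, then every $h\in H$ equals $\pi(w)$ for some word $w$ in the elements of $S$ and their inverses, hence $h$ is a word in $\pi(S)$ and $\pi(S)^{-1}$, so $H = \langle \pi(S)\rangle$. Applying this to the canonical projection $\pi\colon \SL_n(R) \to \PSL_n(R) = \SL_n(R)/(\SL_n(R)\cap\Sc_n(R))$, which is surjective by construction, yields that $\PSL_n(R)$ is generated by the images $\pi(e_{ij}(r))$ for $i\neq j$ and $r\in R$.

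Finally I would check that $\pi(e_{ij}(r))$ is exactly what is denoted $[e_{ij}(r)]$: the notation $[M]$ was introduced for the image of $M$ in $\PGL_n(R) = \GL_n(R)/\Sc_n(R)$, and since $\SL_n(R)\cap\Sc_n(R)$ is precisely the kernel of the restriction $\SL_n(R)\to\PGL_n(R)$, the group $\PSL_n(R)$ embeds into $\PGL_n(R)$ compatibly with $\pi$; as $e_{ij}(r)\in\SL_n(R)$ (indeed $\det e_{ij}(r)=1$), its image under $\pi$ agrees with $[e_{ij}(r)]$. This gives the claimed generating set. I do not expect any real obstacle here; the only point requiring a moment's care is this notational matching between the image in $\PGL_n(R)$ and the image in $\PSL_n(R)$.
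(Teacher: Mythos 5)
Your argument is correct and is exactly the route the paper intends: it presents the theorem as an immediate consequence of \autoref{thm:SL_is_E}, with the (unstated) observation that the canonical surjection $\SL_n(R)\to\PSL_n(R)$ carries the generating set of elementary matrices to a generating set of the quotient. Your extra care in matching $\pi(e_{ij}(r))$ with the notation $[e_{ij}(r)]$ is a reasonable addition but changes nothing of substance.
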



\section{Jordan algebras and Jordan pairs}	
\subsection{Jordan algebras}

While our use of Jordan algebras is minimal, they are relevant through their connection to Jordan pairs. A good first introduction to Jordan algebras are Jacobson's lecture notes \cite{JacobsonJordanAlgebrasNotes}. A more detailed and complete reference for the theory of Jordan algebras is \cite{McCrimmonTaste}. While Jordan algebras are often defined over fields, they can be defined over commutative, unital rings $k$. In this case, Jordan algebras are also called Jordan rings. Before we can give the definition, we need the notion of quadratic maps.

\begin{definition}
	Let $A$ and $B$ be $k$-modules. A map $Q\colon A\to B$ is \define[quadratic map]{quadratic} if
	\begin{romenumerate}
		\item $Q(\lambda x) = \lambda^2Q(x)$ for all $\lambda\in k$ and all $x\in A$;
		\item $Q(\cdot,\cdot)\colon A\times A\to B\colon (x,y)\mapsto Q(x+y)-Q(x)-Q(y)$ is $k$-bilinear.
	\end{romenumerate}
\end{definition}

We will also need the following lemma on quadratic maps:
\begin{lemma}\label{lem:quadextend}
	Let $Q\colon A\to B$ be a quadratic map between $k$-modules, and let $\ell$ be a commutative ring extension of $k$. Then there exists a unique $\tilde{Q}\colon \ell\otimes_k A\to \ell\otimes_k B$ such that $\tilde{Q}(1\otimes x) = 1\otimes Q(x)$ for all $x\in A$.
\end{lemma}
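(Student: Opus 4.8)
\emph{Strategy.} The plan is to dispose of uniqueness first (it is immediate), and then prove existence by reducing to the case where $A$ is free; the only genuinely delicate point will be well-definedness of the extended map.

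\emph{Uniqueness.} Every element of $\ell\otimes_k A$ is a finite sum $z=\sum_i\lambda_i\otimes x_i$. If $\tilde{Q}$ is $\ell$-quadratic with $\tilde{Q}(1\otimes x)=1\otimes Q(x)$, then polarizing this relation gives $\tilde{Q}(1\otimes x,1\otimes y)=\tilde{Q}(1\otimes(x+y))-\tilde{Q}(1\otimes x)-\tilde{Q}(1\otimes y)=1\otimes Q(x,y)$, and expanding $\tilde{Q}(z)$ via $\ell$-quadraticity forces $\tilde{Q}(z)=\sum_i\lambda_i^2\otimes Q(x_i)+\sum_{i<j}\lambda_i\lambda_j\otimes Q(x_i,x_j)$. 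So $\tilde{Q}$ is unique, and this right-hand side is the only possible definition; everything hinges on its being independent of the chosen expression of $z$ as a sum of simple tensors.

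\emph{Existence.} I would first check the free case. If $A$ is $k$-free with a totally ordered basis $(e_s)_s$, then $\ell\otimes_k A$ is $\ell$-free on $(1\otimes e_s)_s$, and the classical fact that a quadratic map on a free module is freely prescribed by its values on basis elements and the values of its polarization on unordered pairs of basis elements shows that $\tilde{Q}\bigl(\sum_s r_s(1\otimes e_s)\bigr):=\sum_s r_s^2\,(1\otimes Q(e_s))+\sum_{s<u}r_s r_u\,(1\otimes Q(e_s,e_u))$ is a well-defined $\ell$-quadratic map, and expanding $x=\sum_s c_s e_s$ verifies $\tilde{Q}(1\otimes x)=1\otimes Q(x)$. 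For general $A$, pick a $k$-linear surjection $\pi\colon F\twoheadrightarrow A$ with $F$ free; then $Q\circ\pi$ is quadratic, so by the free case it extends to an $\ell$-quadratic $P\colon\ell\otimes_k F\to\ell\otimes_k B$ with $P(1\otimes f)=1\otimes Q(\pi f)$. The point is now that $P$ descends along $\mathrm{id}_\ell\otimes\pi\colon\ell\otimes_k F\twoheadrightarrow\ell\otimes_k A$: by right exactness of the tensor product the kernel $N$ of this map is spanned as an abelian group by the elements $\lambda\otimes\kappa$ with $\kappa\in\ker\pi$, and since $Q(0)=0$ and $Q(\cdot,\cdot)$ is bilinear (so $Q(0,\cdot)=0$) one gets $P(w)=0$ for $w\in N$ and $P(z,w)=0$ for all $z\in\ell\otimes_k F$, $w\in N$, hence $P(z+w)=P(z)$ whenever $w\in N$. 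Thus $P$ factors through a well-defined map $\tilde{Q}\colon\ell\otimes_k A\to\ell\otimes_k B$; it is $\ell$-quadratic because a factorization of a quadratic map through an $\ell$-linear surjection is again quadratic, and $\tilde{Q}(1\otimes x)=P(1\otimes f)=1\otimes Q(x)$ for any preimage $f$ of $x$.

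\emph{Main obstacle.} The hard part is purely the well-definedness of $\tilde{Q}$; pushing the question down to the free module $\ell\otimes_k F$ (where everything is visible in coordinates) and then performing the kernel computation is what makes it routine. If one is willing to use more machinery, the whole lemma also follows from the correspondence between quadratic maps $A\to B$ and $k$-linear maps $\Gamma^2 A\to B$ together with the fact that $\Gamma^2$ commutes with base change, which yields existence and uniqueness at once; I would nevertheless prefer the elementary argument sketched above.
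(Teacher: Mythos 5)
Your proof is correct. Note that the paper itself does not prove this lemma at all: it simply cites it as the lemma on p.~16 of Jacobson's lecture notes, so any self-contained argument is ``new'' relative to the dissertation. Your argument is the standard one and it works: uniqueness follows from polarizing $\tilde{Q}(1\otimes x)=1\otimes Q(x)$ and expanding on sums of simple tensors; existence follows from the free case (where a quadratic map may be prescribed freely by its values on basis vectors and its polar values on unordered pairs) together with descent along $\id_\ell\otimes\,\pi$ for a free presentation $\pi\colon F\twoheadrightarrow A$, using right exactness to identify generators of the kernel $N$. The only place I would ask you to be slightly more explicit in a final write-up is the passage from generators of $N$ to all of $N$: since $P$ is quadratic rather than linear, ``$P$ vanishes on $N$'' needs both $P(\lambda\otimes\kappa)=0$ and the vanishing of the polar form $P(\,\cdot\,,\lambda\otimes\kappa)$ (obtained by polarizing the defining relation, giving $P(1\otimes f,1\otimes\kappa)=1\otimes Q(\pi f,0)=0$), after which the quadratic expansion handles arbitrary finite sums; you assert both facts, so this is a matter of presentation, not a gap. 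Your closing remark that the lemma also follows from the identification of quadratic maps $A\to B$ with linear maps $\Gamma^2A\to B$ and base change for $\Gamma^2$ is a legitimate alternative that gives existence and uniqueness simultaneously, at the cost of more machinery; the elementary route you chose matches the spirit of the cited source.
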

\begin{proof}This is the lemma on p.~16 of \citenobackref{JacobsonJordanAlgebrasNotes}.\end{proof}

Using this notion of extending quadratic maps, we can now define Jordan algebras and a few relevant notions.

\begin{definition}
	Let $k$ be a commutative, unital ring. A \define[Jordan algebra]{(quadratic) Jordan algebra} is a triple $(J,W,1)$, where $J\notatlink{J}$ is a $k$-module, $1\in J$ and $W\colon J\to\End(J)\notatlink{Wx}$ is a quadratic map such that (writing $W_x:= W(x)$)
	\begin{manualenumerate}[label=\textnormal{(JA\arabic*)},labelwidth=\widthof{(JA1)}]
		\item $W_1 = \id$;\label{axiom:JA1}
		\item $W_xW_yW_x = W_{yW_x}$ for all $x,y\in J$;\label{axiom:JA2}
		\item if we define $W_{x,y}:=W_{x+y}-W_x-W_y$ and $V_{x,y}$ by $zV_{x,y} := xW_{z,y}$ for all $z\in J$, then $W_yV_{x,y} = V_{y,x}W_y$;\label{axiom:JA3}
		\item if $\ell$ is a commutative ring extension of $k$, then \hyperref[axiom:JA2]{\normalfont{(JA2-3)}} are satisfied for $\tilde{W}$ as in \autoref{lem:quadextend}.\label{axiom:JA4}
	\end{manualenumerate}
	A submodule $I\subset J$ is called an \define[Jordan algebra!ideal]{ideal} if $xW_y,yW_x\in I$ for all $x\in I$ and $y\in J$. If $I$ is an ideal, $J/I$ is a Jordan algebra\\
	We call an element $x\in J$ \define[Jordan algebra!invertible]{invertible} if $W_x$ is invertible. A Jordan algebra is \define[Jordan algebra!Jordan division algebra]{division} if all $x\in J\setminus\{0\}$ are invertible.\\
	We call an element $x\in J$ \define[Jordan algebra!quasi-invertible element]{quasi-invertible} if $1-x$ is invertible. An ideal $I$ is \define[Jordan algebra!quasi-invertible ideal]{quasi-invertible} if every $x\in I$ is quasi-invertible.
\end{definition}

In literature, the quadratic map is most often denoted $U$, but to avoid confusion with the root groups in a (local) Moufang set, we will use $W$. In the following proposition, we gather some properties on the radical of a Jordan algebra.

\begin{proposition}
	Let $(J,W,1)$ be a Jordan algebra.
	\begin{romenumerate}
		\item There is a maximal quasi-invertible ideal in $J$ we call the \define[Jordan algebra!radical]{radical} and denote $\Rad J\notatlink{RadJ}$.
		\item The quotient $J/\Rad J$ is a Jordan division algebra if and only if $\Rad J$ consists of the non-invertible elements.
	\end{romenumerate}
\end{proposition}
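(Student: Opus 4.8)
The plan is to isolate one classical technical input and then derive both parts formally. That input is a \emph{lifting-of-invertibility} lemma: if $K$ is a quasi-invertible ideal of a Jordan algebra $J$, then $y\in J$ is invertible if and only if its image in $J/K$ is invertible; equivalently, $z\in J$ is already quasi-invertible as soon as its image in $J/K$ is quasi-invertible. This is the Jordan analogue of the fact that units of an associative ring lift modulo the Jacobson radical, it is essentially the only place where the homotope identities behind the axioms (JA1)--(JA4) really get used, and in this preliminary chapter I would quote it from \cite{McCrimmonTaste} (or \cite{JacobsonJordanAlgebrasNotes}) rather than reprove it. I would also record the elementary fact that a surjective homomorphism of Jordan algebras sends invertible elements to invertible elements (immediate from the naturality of $W$), hence quasi-invertible elements to quasi-invertible elements, so that the image of a quasi-invertible ideal under a quotient map is again a quasi-invertible ideal.

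For (i), I would first note that the set of quasi-invertible ideals of $J$ is nonempty — it contains $\{0\}$, since $W_1=\id$ is invertible by axiom (JA1) — and is closed under unions of chains, because a union of a chain of ideals is an ideal and each of its elements already lies in one member of the chain. Zorn's lemma then produces a maximal quasi-invertible ideal. To upgrade ``maximal'' to ``largest'', so that $\Rad J$ is unambiguous, I would show that the sum $I+K$ of two quasi-invertible ideals is quasi-invertible: writing $x=a+b$ with $a\in I$, $b\in K$, the image of $x$ in $J/K$ equals the image of $a\in I$, hence lies in the image of the quasi-invertible ideal $I$, which is a quasi-invertible ideal of $J/K$; so the image of $x$ is quasi-invertible in $J/K$, and the lifting lemma makes $x$ quasi-invertible in $J$. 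A maximal quasi-invertible ideal therefore absorbs every quasi-invertible ideal, so it is unique; this ideal is $\Rad J$.

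For (ii), the first, hypothesis-free step is that $\Rad J$ contains no invertible element: an ideal meeting the units is all of $J$, and if $J$ itself were quasi-invertible then $1$ would be quasi-invertible, i.e.\ $0=1-1$ would be invertible, forcing $J=\{0\}$ — a case in which the statement is vacuous. Hence $\Rad J\subseteq\{\text{non-invertible elements of }J\}$ always. If this inclusion is an equality, then $J\neq\{0\}$ gives $J/\Rad J\neq\{0\}$, and every nonzero element of $J/\Rad J$ is the image of an invertible element of $J$, hence is invertible, so $J/\Rad J$ is a Jordan division algebra. Conversely, if $J/\Rad J$ is a Jordan division algebra and $x\in J$ is non-invertible, its image in $J/\Rad J$ cannot be invertible, for otherwise the lifting lemma applied to the quasi-invertible ideal $\Rad J$ would make $x$ invertible; so that image is $0$, i.e.\ $x\in\Rad J$. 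Combined with the reverse inclusion, this gives the equality.

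The main obstacle is the lifting lemma itself: once it is available, everything above is routine bookkeeping with ideals and quotients. A genuinely self-contained proof would have to re-establish that (quasi-)invertibility can be detected modulo a quasi-invertible ideal, which is precisely where the Jordan-algebra axioms and homotope manipulations do their work; short of that, citing \cite{McCrimmonTaste} is the clean route.
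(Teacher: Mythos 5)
Your argument is correct, but it takes a different route from the paper: the paper gives no argument at all for this proposition, simply citing Jacobson's lecture notes (Theorem~1, p.~86) for (i) and Camburn for (ii), whereas you cite only the single more primitive fact that (quasi-)invertibility in $J$ can be detected modulo a quasi-invertible ideal, and then derive both parts by routine ideal-and-quotient bookkeeping. That trade-off is reasonable: your lifting lemma is genuinely where the substance lies, and it is in fact cheap to prove from the standard criterion that $1\in\Im W_x$ forces $W_x$ invertible (if $1 = zW_x$ then $\id = W_{zW_x} = W_xW_zW_x$ by (JA2), so $W_x$ has a two-sided inverse): if $\overline{y}$ is invertible in $J/K$ then $zW_y = 1-k$ with $k\in K$ quasi-invertible, so $W_yW_zW_y = W_{1-k}$ is invertible and hence so is $W_y$. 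Two small points you should make explicit rather than wave at. First, your parenthetical claim that surjective homomorphisms preserve invertibility is not literally "immediate from naturality": naturality only gives that $W_{\varphi(x)}$ is surjective, and you then need the same $1\in\Im W$ criterion (again via (JA2)) to conclude invertibility. Second, with the paper's definitions the zero algebra is vacuously a Jordan division algebra while its radical is $\{0\}$ and its set of non-invertible elements is empty, so the equivalence in (ii) silently assumes $J\neq\{0\}$; your dismissal of that case as "vacuous" is a convention issue worth one honest sentence, not a flaw in the mathematics. With those caveats, your derivation of (i) (Zorn plus the fact that a sum of two quasi-invertible ideals is quasi-invertible, via the image in $J/K$ and lifting) and of both directions of (ii) is sound.
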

\begin{proof}\preenum
	\begin{romenumerate}
		\item This is Theorem~1 on p.~86 of \citenobackref{JacobsonJordanAlgebrasNotes}.
		\item This is equivalence (2.2) of \cite{CamburnLocalJordanAlgebras}.\qedhere
	\end{romenumerate}
\end{proof}

We now arrive at the definition of a local Jordan algebra.

\begin{definition}
	A Jordan algebra $(J,W,1)$ is \define[Jordan algebra!local Jordan algebra]{local} if $\Rad J$ consists of the non-invertible elements.
\end{definition}

\subsection{Jordan pairs}

Jordan pairs were first introduced by K.~Meyberg in \cite{MeybergJordanTriple} and studied extensively by O.~Loos in \cite{LoosJordanPairs}. We recall some notations and definitions from \citenobackref{LoosJordanPairs}. Remark that we will change the left action of \emph{loc.\ cit.}\ to a right action, in order to be consistent with the action of our local Moufang sets. The index $\sigma$ will always have $+$ and $-$ as possible values.

\begin{definition}
	Let $k$ be a commutative unital ring and $V\notatlink{V} = (V^+,V^-)\notatlink{Vp}$ a pair of $k$-modules with quadratic maps 
	\[Q\colon V^\sigma\to \Hom(V^{-\sigma},V^\sigma)\notatlink{Qx}\;.\]
	We write $Q_x:=Q(x)$, $Q_{x,z}\notatlink{Qxz} := Q_{x+z}-Q_x-Q_z$, $zD_{x,y}\notatlink{Dxy} := yQ_{x,z}$ and $\{xyz\}\notatlink{xyz}:=yQ_{x,z}$.
	Then $V$ is a \define{Jordan pair} if the following axioms are satisfied in all scalar extensions of the base ring:
			\begin{manualenumerate}[label=\textnormal{(JP\arabic*)},labelwidth=\widthof{(JP1)}]
				\item $\{x\,y\,zQ_x\} = \{yxz\}Q_x$;\label{axiom:JP1}
				\item $\{yQ_x\,y\,z\} = \{x\,xQ_y\,z\}$;\label{axiom:JP2}
				\item $Q_{yQ_x} = Q_xQ_yQ_x$.\label{axiom:JP3}
			\end{manualenumerate}
	A pair of submodules $I = (I^+,I^-)$ is an \define[Jordan pair!ideal]{ideal} if $xQ_y\in I^\sigma$, $yQ_x\in I^{-\sigma}$ and $\{xyz\}\in I^\sigma$ for all $x\in I^\sigma, y\in V^{-\sigma}, z\in V^\sigma$. If $(I^+,I^-)$ is an ideal, the quotient $V/I = (V^+/I^+, V^-/I^-)$ is a Jordan pair. An ideal $I$ is \define[Jordan pair!proper ideal]{proper} if $I\neq V$.
	A \define[Jordan pair!homomorphism]{homomorphism} of Jordan pairs is a pair of $k$-linear maps $h_\sigma\colon V^\sigma\to W^\sigma$ such that
	\[h_\sigma(yQ_x) = h_{-\sigma}(y)Q_{h_\sigma(x)}\quad\text{for all $x\in V^\sigma,y\in V^{-\sigma}$.}\]
\end{definition}

The following proposition gives some useful criteria to check whether a given structure is a Jordan pair.
\begin{proposition}Let $V$ be a pair of $k$-modules with quadratic maps $Q$ as before.\label{prop:JPsufficientaxioms}
	\begin{romenumerate}
		\item If $V$ has no $2$-torsion (i.e.\ neither $V^+$ nor $V^-$ have $2$-torsion), then \ref{axiom:JP3} follows from \hyperref[axiom:JP1]{\normalfont{(JP1-2)}}. Hence in this case $V$ is a Jordan pair if \hyperref[axiom:JP1]{\normalfont{(JP1-2)}} are satisfied in all scalar extensions of the base ring.
		\item If $V$ has no $2$-torsion and \ref{axiom:JP1}, all its linearizations and \ref{axiom:JP2} hold, then $V$ is a Jordan pair.\label{itm:sufficient_JP}
	\end{romenumerate}
\end{proposition}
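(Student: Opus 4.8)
The plan is to deduce both parts from the elementary fact that, over a $2$-torsion-free module, a quadratic map is determined by its bilinearization: if $f,g\colon A\to B$ are quadratic maps with the same bilinearization, then $h:=f-g$ is additive and satisfies $h(2x)=4h(x)$, so $2h(x)=h(2x)-2h(x)=0$, and $2$-torsion-freeness of $B$ forces $f=g$. Since $V^{+}$ and $V^{-}$ — and hence each $\Hom(V^{\sigma},V^{-\sigma})$ — are $2$-torsion-free by hypothesis, this principle is available throughout.

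For (ii), I would first observe that for a fixed $x\in V^{\sigma}$ both sides of \ref{axiom:JP3} are quadratic maps in $y\in V^{-\sigma}$ with values in $\Hom(V^{-\sigma},V^{\sigma})$: the left-hand side is the quadratic map $Q$ precomposed with the $k$-linear map $y\mapsto yQ_{x}$, and the right-hand side is the $k$-linear map $A\mapsto Q_{x}AQ_{x}$ postcomposed with the quadratic map $y\mapsto Q_{y}$. Their bilinearizations in $y$ are $Q_{y_{1}Q_{x},\,y_{2}Q_{x}}$ and $Q_{x}Q_{y_{1},y_{2}}Q_{x}$ respectively, so by the principle above it suffices to prove the linearized identity
\[
Q_{y_{1}Q_{x},\,y_{2}Q_{x}} \;=\; Q_{x}\,Q_{y_{1},y_{2}}\,Q_{x}
\qquad (x\in V^{\sigma},\ y_{1},y_{2}\in V^{-\sigma}),
\]
which is a linearization of \ref{axiom:JP3} but strictly weaker than \ref{axiom:JP3} itself. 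The real work is to derive this from \ref{axiom:JP1}, all linearizations of \ref{axiom:JP1}, and \ref{axiom:JP2} — without ever using \ref{axiom:JP3} — by a long but mechanical chain of operator manipulations: introduce the maps $D_{x,y}$, use that the linearizations of \ref{axiom:JP1} rewrite the triple products $\{x\,y\,z\}$ and the operators $Q_{x,zQ_{x}}$ in an iterable form, and combine these with \ref{axiom:JP2} until the two sides close up. Once that identity is established, the $2$-torsion-free reduction yields \ref{axiom:JP3} for every $x$; since the whole argument runs verbatim over each polynomial extension $k[t_{1},\dots,t_{n}]$ (still $2$-torsion-free), $V$ satisfies \ref{axiom:JP1}--\ref{axiom:JP3} in all scalar extensions and is a Jordan pair.

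Part (i) then follows formally. Its hypothesis — that \ref{axiom:JP1} and \ref{axiom:JP2} hold in \emph{all} scalar extensions — in particular supplies all linearizations of \ref{axiom:JP1} over $k$ and over every $k[t_{1},\dots,t_{n}]$, so (ii) applies over $k$ and over each such polynomial ring (all free, hence $2$-torsion-free, $k$-modules), giving \ref{axiom:JP3} there. As an identity in the two vector variables $x,y$ holds in every scalar extension once it holds over a polynomial ring in sufficiently many variables, \ref{axiom:JP3} holds universally, so $V$ is a Jordan pair.

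The main obstacle is clearly the middle step of (ii): deriving the linearized fundamental formula from \ref{axiom:JP1}, its linearizations, and \ref{axiom:JP2} alone. This is the classical statement that the linearizations of the fundamental formula are redundant — only its ``diagonal'' is an independent axiom — and the bookkeeping of which linearizations are needed, and in what order to combine them with \ref{axiom:JP2}, is genuinely delicate. In the write-up I would either carry out this computation in the operator calculus of Jordan pairs or invoke the known statement for linear Jordan pairs (legitimate since over a $2$-torsion-free ring the linear and quadratic notions coincide), the bilinearization reduction above being exactly what lets either route do the job.
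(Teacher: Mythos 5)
Your opening reduction is sound and is indeed the natural first move: since $2q(y)$ equals the value of the bilinearization of $q$ at $(y,y)$, two quadratic maps into a $2$-torsion-free module with equal bilinearizations coincide, so (JP3) would follow from the linearized identity $Q_{y_1Q_x,\,y_2Q_x}=Q_xQ_{y_1,y_2}Q_x$, and your deduction of (i) from (ii) via polynomial extensions (which stay $2$-torsion-free) is also fine in outline. But there is a genuine gap at exactly the point you flag as ``the main obstacle'': the derivation of that linearized fundamental formula from (JP1), its linearizations, and (JP2) is the entire mathematical content of the proposition, and you neither carry it out nor reduce it correctly to something citable. Your fallback --- ``invoke the known statement for linear Jordan pairs, legitimate since over a $2$-torsion-free ring the linear and quadratic notions coincide'' --- rests on a false premise: the linear and quadratic theories coincide when $2$ is \emph{invertible} (so that $Q_xy=\tfrac12\{x\,y\,x\}$ can be formed inside $V$), not merely when $V^\pm$ have no $2$-torsion. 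To use the linear theory you would have to embed $V$ into $V\otimes_{\Z}\Z[\tfrac12]$ (injective by torsion-freeness) and check that the hypotheses of (ii) transfer to that extension; none of this is in your write-up. Likewise, your claim that ``the whole argument runs verbatim over each polynomial extension'' silently needs (JP2) over $k[t_1,\ldots,t_n]$, i.e.\ the linearizations of (JP2) over $k$, which are not among the stated hypotheses (they do follow cheaply, since (JP2) has degree at most two in each variable, but this should be said, not assumed).

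For comparison, the paper does not prove the statement at all: part (i) is cited as Proposition~2.2(a) of Loos's \emph{Jordan Pairs}, and part (ii) as the remark following his Definition~1.2. So an acceptable blind proof here is either that citation or the full operator computation you describe; as written, your proposal supplies neither, and the one concrete bridge you offer (torsion-freeness $\Rightarrow$ linear theory applies) is incorrect as stated.
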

\begin{proof}\preenum
	\begin{romenumerate}
		\item This is \citenobackrefoptional{Proposition 2.2(a)}{LoosJordanPairs}.
		\item This is remarked just after \citenobackrefoptional{Definition 1.2}{LoosJordanPairs}.\qedhere
	\end{romenumerate}
\end{proof}

We will also need the notions of invertibility, quasi-invertibility, and of course of a local Jordan pair, again from \citenobackref{LoosJordanPairs}.

\begin{definition}
	An element $x\in V^\sigma$ is \define[Jordan pair!invertible]{invertible} if and only if $Q_x$ is invertible. In this case, we define $x^{-1}:=xQ_x^{-1}$. A Jordan pair is \define[Jordan pair!division Jordan pair]{division} if all nonzero elements are invertible. A Jordan pair is \define[Jordan pair!local Jordan pair]{local} if the non-invertible elements form a proper ideal.\\
	For $(x,y)\in V$ (this means $x\in V^+, y\in V^-$), we define the \define[Jordan pair!Bergman operator]{Bergman operator}
		\[ B_{x,y}\notatlink{Bxy} := \id - D_{x,y} + Q_yQ_x \;, \]
	and $(x,y)$ is \define[Jordan pair!quasi-invertible]{quasi-invertible} if and only if $B_{x,y}$ is invertible. In this case, we define the \define[Jordan pair!quasi-inverse]{quasi-inverse} $x^y\notatlink{xyJP} := (x-yQ_x)B^{-1}_{x,y}$. An element $x\in V^+$ (or $y\in V^-$) is \define[Jordan pair!propertly quasi-invertible]{properly quasi-invertible} if and only if $(x,y)$ is quasi-invertible for all $y\in V^-$ (or all $x\in V^+$, respectively). The \define[Jordan pair!Jacobson radical]{Jacobson radical} $\Rad V = (\Rad V^+, \Rad V^-)\notatlink{RadV}$ is the pair of sets of all properly quasi-invertible elements.
\end{definition}

To do computations in local Jordan pairs, we will need some further properties and identities:

\begin{proposition}\label{prop:JPbasic} Let $V = (V^+,V^-)$ be a Jordan pair.
	\begin{romenumerate}[labelindent=0.2ex, leftmargin=*]
		\item For any $x\in V^\sigma$ and $y\in V^{-\sigma}$, we have, \label{itm:JPbasicidDQ}
			\[Q_{x,yQ_x} = Q_xD_{x,y} = D_{y,x}Q_x\;.\]
		\item For invertible $x\in V^\sigma$ and any $y\in V^\sigma$, we have \label{itm:JPbasicid}
			\[Q_{x,y}Q_x^{-1} = D_{x^{-1},y}\;.\]
		\item For $(x,y)\in V$ with $x$ invertible, $B_{x,y} = Q_{x^{-1}-y}Q_x$. If $y$ is invertible, we have $B_{x,y} = Q_yQ_{x-y^{-1}}$.\label{itm:JPbasicA}
		\item Assume $(x,y)\in V$ is quasi-invertible and $z\in V^-$. Then $(x^y,z)$ is quasi-invertible if and only if $(x,y+z)$ is quasi-invertible in $(V^-,V^+)$. In this case, we have $x^{y+z} = (x^y)^z$.\label{itm:JPbasicB}
		\item $(x,y)\in V$ is quasi-invertible if and only $(y,x)$ is quasi-invertible in $(V^-,V^+)$. In this case, $x^y = x+y^xQ_x$.\label{itm:JPbasicswitch}
		\item $(x,zQ_y)\in V$ is quasi-invertible if and only if $(xQ_y,z)$ is quasi-invertible. In this case, $(xQ_y)^z = x^{zQ_y}Q_y$.\label{itm:JPbasicQy}
		\item The pair of sets $\Rad V$ is an ideal.
		\item If $V$ is a local Jordan pair, then $\Rad V$ is the set of non-invertible elements of $V$.\label{itm:JPbasicD}
		\item If $V/\Rad V$ is a nontrivial Jordan division pair, then $V$ is a local Jordan pair.
		\item If $(x,y)\modulo{\Rad V}$ is quasi-invertible in $V/\Rad V$, then $(x,y)\in V$ is also quasi-invertible.\label{itm:JPbasicQIlift}
		\item If $x\in \Rad V^+$ and $y\in V^-$, then $x^y\in\Rad V^+$.
		\item If $x,y\in V^+$ are invertible, then\label{itm:JPbasicInvertRadical}
			\[x-y\in\Rad V^+ \implies x^{-1}-y^{-1}\in\Rad V^-\;.\]
	\end{romenumerate}
\end{proposition}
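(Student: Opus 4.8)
The plan is to reduce everything to the quotient Jordan pair $V/\Rad V$. Since $\Rad V$ is an ideal (item~(vii) of this proposition), $\overline V:=V/\Rad V$ is again a Jordan pair and the quotient map $V\to\overline V$, $a\mapsto\overline a$, is a homomorphism, so $\overline{bQ_a}=\overline b\,Q_{\overline a}$ for all admissible $a,b$. The statement will then follow once I know that the quotient map carries invertible elements to invertible elements and commutes with taking inverses: indeed, $x-y\in\Rad V^+$ says exactly that $\overline x=\overline y$, and equal invertible elements have equal inverses, so $\overline{x^{-1}}=\overline{y^{-1}}$, i.e.\ $x^{-1}-y^{-1}\in\Rad V^-$.

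So the real task is the following: if $z\in V^+$ is invertible, then $\overline z\in\overline V^+$ is invertible with $(\overline z)^{-1}=\overline{z^{-1}}$. First I would recall that $z^{-1}=zQ_z^{-1}$ is again invertible with $Q_{z^{-1}}=Q_z^{-1}$ (cf.\ \cite{LoosJordanPairs}). The defining closure properties of the ideal $I:=\Rad V$, applied to the operators $Q_u$ with $u=z\in V^+$ resp.\ $u=z^{-1}\in V^-$, give $Q_z(I^-)\subseteq I^+$ and $Q_{z^{-1}}(I^+)\subseteq I^-$; hence the bijection $Q_z\colon V^-\to V^+$ and its inverse $Q_z^{-1}=Q_{z^{-1}}$ both stabilise $\Rad V$, so $Q_z$ induces a bijection on the quotients. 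By homomorphy this induced bijection is precisely $Q_{\overline z}\colon\overline V^-\to\overline V^+$, so $\overline z$ is invertible, and $(\overline z)^{-1}=\overline z\,Q_{\overline z}^{-1}=\overline{zQ_z^{-1}}=\overline{z^{-1}}$. Applying this to both $x$ and $y$ will then finish the argument.

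The main (and essentially only) obstacle is the step in the second paragraph: checking that invertibility survives passage to $V/\Rad V$. This uses both that $\Rad V$ is an ideal and that $Q_z^{-1}=Q_{z^{-1}}$ — the latter being what guarantees that the \emph{inverse} of $Q_z$, and not merely $Q_z$ itself, maps the radical into itself. Everything else is formal bookkeeping. A more hands-on alternative is to feed the pair $(x,y^{-1})$ into item~\ref{itm:JPbasicA}, which yields $Q_{x^{-1}-y^{-1}}Q_x=Q_{y^{-1}}Q_{x-y}$ and hence $Q_{x^{-1}-y^{-1}}=Q_{y^{-1}}Q_{x-y}Q_x^{-1}$; its image lies in $\Rad V^-$ because $x-y\in\Rad V^+$ forces $Q_{x-y}$ to land in $\Rad V^+$ and then $Q_x^{-1}=Q_{x^{-1}}$ carries that into $\Rad V^-$. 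But one must still pass from ``$Q_{x^{-1}-y^{-1}}$ has radical image'' to ``$x^{-1}-y^{-1}\in\Rad V^-$'', so the quotient argument above seems the cleaner route.
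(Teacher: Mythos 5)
Your proposal only addresses the final item \ref{itm:JPbasicInvertRadical}. The proposition has eleven further parts; most of them (\ref{itm:JPbasicidDQ}, \ref{itm:JPbasicA}--\ref{itm:JPbasicQIlift} and the two unlabelled radical statements) are handled in the paper by citation to Loos, but two are actually proved there and are not touched by your text: the identity $Q_{x,y}Q_x^{-1}=D_{x^{-1},y}$ of \ref{itm:JPbasicid}, and the statement that $x\in\Rad V^+$ and $y\in V^-$ imply $x^y\in\Rad V^+$ (which the paper gets from \ref{itm:JPbasicB} and proper quasi-invertibility). Using \ref{itm:JPbasicA} and the ideal property of $\Rad V$ as known inputs is legitimate when proving the last item, but as a proof of the full statement your proposal is incomplete on these earlier points.

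For \ref{itm:JPbasicInvertRadical} itself your argument is correct and takes a genuinely different route from the paper's. The paper argues directly inside $V$: since $\Rad V$ is an ideal and $Q_x^{-1}=Q_{x^{-1}}$ maps $\Rad V^+$ back into $\Rad V^-$, it suffices to show $(x^{-1}-y^{-1})Q_x\in\Rad V^+$, and writing $y^{-1}Q_x=y^{-1}Q_{(x-y)+y}$ gives $(x^{-1}-y^{-1})Q_x=(x-y)-y^{-1}Q_{x-y,y}-y^{-1}Q_{x-y}$, which lies in $\Rad V^+$ because $x-y$ does. You instead pass to $\overline{V}=V/\Rad V$ and check that invertibility descends with $\overline{z^{-1}}=(\overline{z})^{-1}$; your verification is sound, since both $Q_z$ and $Q_z^{-1}=Q_{z^{-1}}$ stabilise the radical by the ideal property, so the induced map $Q_{\overline{z}}$ is bijective, and then $\overline{x}=\overline{y}$ forces $\overline{x^{-1}}=\overline{y^{-1}}$. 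Your route is more conceptual (it isolates the compatibility of inversion with the radical quotient, a fact also available in Loos), whereas the paper's computation is shorter and self-contained. You were also right to be wary of your ``hands-on alternative'': knowing that $Q_{x^{-1}-y^{-1}}$ has image in $\Rad V^-$ does not by itself yield $x^{-1}-y^{-1}\in\Rad V^-$ in a general (not necessarily local) Jordan pair, which is the setting of this proposition.
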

\begin{proof}\preenum
	\begin{romenumerate}[labelindent=0.2ex, leftmargin=*]
		\item This is JP4 in \citenobackrefoptional{2.1}{LoosJordanPairs}.
		\item From the definition of $Q_{y,z}$, it is clear that
		\[Q_xQ_{y,z}Q_x = Q_{yQ_x,zQ_x}\;,\]
		so
		\begin{align*}
			Q_{x,y}Q_x^{-1} &= Q_xQ_x^{-1}Q_{x,y}Q_x^{-1} = Q_xQ_{x^{-1},yQ_x^{-1}} \\
					&= Q_xQ_x^{-1}D_{x^{-1},y} = D_{x^{-1},y}\;.
		\end{align*}
		\item This is \citenobackrefoptional{2.12}{LoosJordanPairs}.
		\item This is \citenobackrefoptional{3.7(1)}{LoosJordanPairs}.
		\item This is \citenobackrefoptional{3.3}{LoosJordanPairs}.
		\item This is \citenobackrefoptional{3.5(1)}{LoosJordanPairs}.
		\item This is part of \citenobackrefoptional{4.2}{LoosJordanPairs}.
		\item This is \citenobackrefoptional{4.4(a)}{LoosJordanPairs}.
		\item This is \citenobackrefoptional{4.4(b)}{LoosJordanPairs}.
		\item This is \citenobackrefoptional{4.3}{LoosJordanPairs}.
		\item If $x\in\Rad V^+$, then $(x,z)$ is quasi-invertible for all $z\in V^-$. Hence $(x,y+z)$ is quasi-invertible for all $z\in V^-$, so $(x^y,z)$ is quasi-invertible for all $z\in V^-$. Hence $x^y$ is properly quasi-invertible, and $x^y\in\Rad V^+$.
		\item Since $\Rad V$ is an ideal and $Q_x$ is invertible, it is sufficient to prove that $(x^{-1}-y^{-1})Q_x$ is in $\Rad V^+$. This is indeed true, as
		\begin{align*}
			(x^{-1}-y^{-1})Q_x &= x - y^{-1}Q_{x-y+y} \\
				&= x - y^{-1}Q_{x-y,y} - y^{-1}Q_{x-y} - y^{-1}Q_{y} \\
				&= (x-y) - y^{-1}Q_{x-y,y} - y^{-1}Q_{x-y} \in \Rad V^+\;,
		\end{align*}
		since $x-y\in\Rad V^+$.\qedhere
	\end{romenumerate}
\end{proof}

The connections between Jordan algebras and Jordan pairs are as follows:

\begin{proposition}
    Let $(J,W,1)$ be a quadratic Jordan algebra. Then:
	\begin{romenumerate}
		\item $(J,J)$ is a Jordan pair with $Q := W$.\label{itm:alg-pair}
		\item $J$ is a Jordan division algebra if and only if $(J,J)$ is a Jordan division pair.\label{itm:pair-alg-div}
		\item $J$ is a local Jordan algebra if and only if $(J,J)$ is a local Jordan pair.\label{itm:pair-alg-loc}
		\item The radical of the Jordan pair $(J,J)$ is $(\Rad J,\Rad J)$, where $\Rad J$ is the radical of the Jordan algebra $J$.\label{itm:pair-alg-rad}
		\item The map $J\mapsto(J,J)$ induces a bijection from isotopy classes of local Jordan algebras to isomorphism classes of local Jordan pairs.\label{itm:pair-alg-classes}
	\end{romenumerate}
\end{proposition}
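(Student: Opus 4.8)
The plan is to take the five assertions more or less in turn, building each on the Jordan-pair identities recalled above and on Loos's theory of homotopes and isotopes; it is cleanest to prove (iv) before (iii). For (i) I would simply verify that the doubled datum $(J,J)$ with $Q:=W$ satisfies \ref{axiom:JP1}--\ref{axiom:JP3} in every scalar extension: axiom \ref{axiom:JP3} is literally \ref{axiom:JA2}, while \ref{axiom:JP1}--\ref{axiom:JP2} follow from \ref{axiom:JA2}--\ref{axiom:JA3} together with their linearizations, and \ref{axiom:JA4} ensures these identities persist under scalar extension. This is the classical ``a quadratic Jordan algebra doubles to a Jordan pair'' construction, so I would cite Loos for the identity manipulations rather than reproduce them. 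For (ii), note that by the definitions an element $x\in J$ is invertible exactly when $W_x=Q_x$ is invertible, i.e.\ exactly when $x$, regarded in $V^+$ (equivalently in $V^-$, since $V^+=V^-=J$), is invertible in the pair; hence ``all nonzero elements of $J$ are invertible'' and ``all nonzero elements of $(J,J)$ are invertible'' are the same condition, and moreover the non-invertible elements of $(J,J)$ form exactly the pair $(N,N)$, with $N\subseteq J$ the set of non-invertible elements of $J$.

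For (iv) I would establish $\Rad(J,J)=(\Rad J,\Rad J)$ using the standard description of quasi-invertibility in $(J,J)$ via homotopes of $J$: a pair $(x,y)\in(J,J)$ is quasi-invertible iff $x$ is quasi-invertible in the $y$-homotope of $J$, and running this over all $y$ identifies the properly quasi-invertible elements of $V^+$ with $\Rad J$ (and symmetrically on $V^-$); this too is in Loos. Part (iii) then drops out: if $J$ is local then $\Rad J=N$, so by (ii) and (iv) the non-invertibles of $(J,J)$ coincide with $\Rad(J,J)$, which is an ideal by Proposition~\ref{prop:JPbasic} and is proper because $1$ is invertible (so $N\neq J$), whence $(J,J)$ is local; conversely, if $(J,J)$ is local then its radical is its set of non-invertibles by Proposition~\ref{prop:JPbasic}\ref{itm:JPbasicD}, i.e.\ equals $(N,N)$, and comparing with (iv) yields $\Rad J=N$, so $J$ is local.

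The substantial point is (v). For well-definedness and injectivity I would invoke the standard correspondence between isotopies of Jordan algebras and isomorphisms of the associated pairs: isotopic $J,J'$ give isomorphic pairs $(J,J)\cong(J',J')$ (the pair isomorphism being built from the isotopy together with the $Q$-operator of an invertible element), and conversely every pair isomorphism $(J,J)\to(J',J')$ restricts to an isotopy $J\to J'$; hence the assignment is well defined and injective on isotopy classes. For surjectivity, let $V$ be a local Jordan pair: since its non-invertibles form a \emph{proper} ideal, $V$ is nontrivial and admits an invertible element $v\in V^-$, and Loos's coordinatization endows the $k$-module $V^+$ with the structure of a quadratic Jordan algebra $J$ (with $W_x:=Q_xQ_v$ and unit $vQ_v^{-1}$) for which $(J,J)\cong V$. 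Finally $J$ is local: as $Q_v$ is invertible, its non-invertible elements are the $x\in V^+$ with $Q_x$ non-invertible, so they form a proper ideal, and localness follows from (iii); and replacing $v$ by another invertible element of $V^-$ turns $J$ into an isotope, so the isotopy class of $J$ does not depend on the choice. Together with injectivity this gives the desired bijection. The main obstacle I anticipate is exactly in (v): making the dictionary ``isomorphisms of $(J,J)$ $\leftrightarrow$ isotopies of $J$'' and ``invertible elements of $V^-$ $\leftrightarrow$ homotope coordinatizations'' precise enough, and checking that every local-ness condition transports correctly across it; parts (i)--(iv) are by comparison routine bookkeeping on top of Loos's identities.
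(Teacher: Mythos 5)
Your proposal is correct and follows essentially the same route as the paper, which simply cites Loos (\emph{Jordan Pairs}, 1.6, 1.10, 1.12 and 4.17) for the five statements; your sketch is a fleshed-out version of those same citations, using the homotope/isotope machinery they rest on. The only organizational difference is that you derive (iii) from (ii) and (iv) rather than citing it directly, which is harmless since you prove (iv) first.
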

\begin{proof}
	The first statement~\ref{itm:alg-pair} is \citenobackrefoptional{1.6}{LoosJordanPairs}. Statements \ref{itm:pair-alg-div} and \ref{itm:pair-alg-loc} are in \citenobackrefoptional{1.10}{LoosJordanPairs}, and \ref{itm:pair-alg-classes} is a consequence of \ref{itm:pair-alg-loc} and \citenobackrefoptional{1.12}{LoosJordanPairs}. Finally, \ref{itm:pair-alg-rad} is one of the statements of \citenobackrefoptional{4.17}{LoosJordanPairs}.
\end{proof}

Local Jordan pairs will pop up in the theory of local Moufang sets in \autoref{chap:chap7_jordan}, which means it is interesting to know some examples.

\begin{examples}\preenum \label{ex:local}
	\begin{nrenumerate}
		\item Let $A$ be a local associative (not necessarily commutative) ring. Then $V = (A,A)$ with $Q_a\colon A\to A:x\mapsto axa$ is a local Jordan pair.
		\item Let $V=(V^+,V^-)$ be a Jordan division pair over a field $k$ and let $R$ be a commutative $k$-algebra which is a local ring. Then we can define $V\otimes_k R = (V^+\otimes_k R,V^-\otimes_k R)$ with
		\[(x\otimes r)Q_{y\otimes s}:= xQ_y\otimes rs^2\;.\]
		This is a local Jordan pair.
		\item Let $J$ be a finite dimensional Jordan division algebra over a field $K$ which is complete with respect to a discrete valuation $v$. Then we can extend the valuation on $K$ to a valuation $v_J$ on $J$. The subalgebra $J_0 = \{x\in J\mid v_J(x)\geq0\}$ is now a local Jordan algebra with $\Rad J_0 = \{x\in J\mid v_J(x)>0\}$ (and hence $(J_0,J_0)$ is a local Jordan pair). We refer to \cite{KnebuschJordanValuation,PeterssonJordanValuations,PeterssonExceptionalJordanValuation} for more details.
	\end{nrenumerate}
\end{examples}

\section{Moufang sets}	
\subsection{Definition and construction}

Moufang sets were introduced by J.~Tits in the context of twin buildings in \cite{TitsTwinBldgs}. From a geometric point of view, Moufang sets are Moufang buildings of rank one. From an algebraic point of view, they are connected to abstract rank one groups introduced by Timmesfeld \cite{TimmesfeldAR1G}. A good reference to learn about Moufang sets are the notes by T.~De Medts and Y.~Segev \cite{DMSegevCourseMS}.

\begin{definition}
	A \define{Moufang set} $\M$ is a set $X$ (with $\abs{X}>2$) with a collection of groups $(U_x)_{x\in X}$ such that
	\begin{manualenumerate}[label=\textnormal{(M\arabic*)},labelwidth=\widthof{(M1)}]
		\item For $x\in X$, $U_x\leq\Sym(X)$ fixes $x$ and acts sharply transitively on $X\setminus\{x\}$\label{axiom:M1}.
		\item For $x\in X$ and $g\in \langle U_x\mid x\in X\rangle$, we have $U_x^g = U_{xg}$\label{axiom:M2}.
	\end{manualenumerate}
	We write $\M = \big(X,(U_x)_{x\in X}\big)$. The groups $U_x$ are called the \define{root groups} of $\M$ and the group $G=\langle U_x\mid x\in X\rangle$ generated by these root groups is called the \define{little projective group} of $\M$.
\end{definition}

One immediate consequence of the definition of a Moufang set is that the little projective group acts doubly transitively on $X$ and furthermore, for any two different $x,y\in X$, we have $G = \langle U_x,U_y\rangle$. This allows us to fix any two points $0$ and $\infty$ in $X$ and only work with the groups $U_0$ and $U_\infty$. By the $2$-transitivity, there must also be elements $\tau\in G$ swapping $0$ and $\infty$,and hence also $U_\infty^\tau = U_0$ (and $U_0^\tau=U_\infty$). This means that $G = \langle U_\infty,\tau\rangle$. This led T.~De Medts and R.~Weiss to construct Moufang sets from a group $U$ and one permutation $\tau$ in \cite{DMWeissMSJordanAlgebras}. It is common to write this group additively even though it need not be commutative, hence we will write $0$ for the identity.

\begin{construction}\label{constr:MS}
	Let $U$ be a group and denote the identity element of $U$ by $0$ and assume $\tau$ is a permutation of $U\setminus\{0\}$. We now set $X = U\cup\{\infty\}$ with $\infty$ a new symbol. For each $x\in U$, let $\alpha_x$ be the permutation of $X$ given by $\infty\alpha_x = \infty$ and the right permutation action of $U$ elsewhere. Furthermore, extend $\tau$ to $X$ by defining $0\tau = \infty$ and $\infty\tau = 0$. We can now define a group $U_x$ for each $x\in X$ as follows:
	\begin{align*}
		U_\infty &:= \{\alpha_x\mid x\in U\} 	& 	U_0 &:= U_\infty^\tau 	&	U_x := U_0^{\alpha_x}
	\end{align*}
	for all $x\in X\setminus\{0,\infty\}$. We write
	\[\M(U,\tau) := \big(X,(U_x)_{x\in X}\big)\;.\]
\end{construction}

If you start from a Moufang set $\M$ and take $U_\infty$ and any $\tau\in G$ swapping $0$ and $\infty$, then by \ref{axiom:M2} we get $\M = \M(U_\infty,\tau)$. In general however, $\M(U,\tau)$ need not be a Moufang set. What we do get is a so-called \emph{pre-Moufang set} (a notion introduced by O.~Loos in \cite{LoosDivpairs}):

\begin{definition}
	A \define{pre-Moufang set} is a set $X$ with a collection of groups $(U_x)_{x\in X}$ satisfying \ref{axiom:M1}.
\end{definition}

Clearly a Moufang set is a pre-Moufang set. The little projective group of a pre-Moufang set still acts doubly transitively on $X$, so we can still always choose two points $0$ and $\infty$.

\subsection{\textit{\textmugreek}-maps and Hua maps}

Assume we have a (pre-)Moufang set $\M$ with fixed points $0$ and $\infty$. For any $x\neq\infty$, we write $\alpha_x$ for the unique element of $U_\infty$ such that $0\alpha_x = x$. Even though we know $G$ acts doubly transitively, it is interesting to have specific maps swapping $0$ and $\infty$. The following proposition describes such maps:

\begin{proposition}
	Let $\M$ be a pre-Moufang set with fixed points $0$ and $\infty$ and assume $x\in X\setminus\{0,\infty\}$. Then there is a unique element $\mu_x\in U_0\alpha_xU_0$ such that $0\mu_x = \infty$ and $\infty\mu_x=0$. We call this map the \define[m-map@$\mu$-map]{$\mu$-map} corresponding to $x$.
\end{proposition}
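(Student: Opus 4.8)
The plan is to prove existence and uniqueness separately, following the classical argument for Moufang sets (as in De Medts–Segev) and checking that nothing breaks in the pre-Moufang setting, where we only have axiom \ref{axiom:M1}. First I would analyze the structure of an arbitrary element $g\in U_0\alpha_xU_0$, say $g=\beta_1\alpha_x\beta_2$ with $\beta_1,\beta_2\in U_0$, and impose the two conditions $0g=\infty$ and $\infty g=0$. Since $U_0$ fixes $0$ and acts sharply transitively on $X\setminus\{0\}$, and $\alpha_x$ sends $0$ to $x$ while fixing $\infty$, I would chase where $0$ and $\infty$ go: $0g = (0\beta_1)\alpha_x\beta_2 = 0\alpha_x\beta_2 = x\beta_2$, so the condition $0g=\infty$ forces $x\beta_2$ to be the point $\infty$, i.e. it pins down $\beta_2$ as the unique element of $U_0$ carrying $x$ to $\infty$ (which is possible precisely because $x\neq 0$ and $x\neq\infty$, so $x\in X\setminus\{0\}$ and $\infty\in X\setminus\{0\}$, and $U_0$ is sharply transitive on $X\setminus\{0\}$).

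Having fixed $\beta_2$, I would then use the second condition. Compute $\infty g = (\infty\beta_1)\alpha_x\beta_2 = ?$; here $\beta_1\in U_0$ does not fix $\infty$ in general, so $\infty\beta_1$ is some point $y\in X\setminus\{0\}$, then $\alpha_x$ moves it (noting $\alpha_x$ fixes only $\infty$ and acts transitively on $X\setminus\{\infty\}$), then $\beta_2$ moves it further. The requirement $\infty g = 0$ becomes a single equation determining the image $\infty\beta_1$ uniquely: applying $\beta_2^{-1}$ then $\alpha_x^{-1}$ to $0$ gives the forced value of $\infty\beta_1$, and sharp transitivity of $U_0$ on $X\setminus\{0\}$ then gives a unique $\beta_1$ (one must check the forced value of $\infty\beta_1$ is indeed $\neq 0$, which holds because $0\alpha_x = x \neq \infty = $ would-be image, wait — rather because $0\beta_2^{-1}=\infty$ and $\infty\alpha_x^{-1}=\infty\neq$, so the preimage is $\infty$, hence $\infty\beta_1 = \infty$?); I would need to be careful here, but the upshot is that both conditions together pin down $\beta_1$ and $\beta_2$ uniquely, giving uniqueness of $\mu_x$. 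For existence, I would run the same computation in reverse: define $\beta_2$ as the element of $U_0$ sending $x\mapsto\infty$, define $\beta_1$ as the element of $U_0$ sending $\infty$ to the value forced above, set $\mu_x=\beta_1\alpha_x\beta_2$, and verify directly that $0\mu_x=\infty$ and $\infty\mu_x=0$.

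The main obstacle I anticipate is the bookkeeping in the $\infty g$ computation: unlike $0g$, the point $\infty\beta_1$ is not immediately visible, so one has to carefully track whether the forced target value lies in the domain on which each factor acts sharply transitively (i.e. stays away from the fixed point of that factor), to legitimately invoke \ref{axiom:M1}. In the pre-Moufang setting this is the only axiom available, so every inverse-image step must be justified by sharp transitivity on the appropriate punctured set. A secondary point is confirming that the resulting $\mu_x$ genuinely lies in the double coset $U_0\alpha_x U_0$ as opposed to some larger set — but this is automatic from the construction. I expect no need for \ref{axiom:M2} anywhere, which is why the statement holds for pre-Moufang sets; I would flag this explicitly. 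Overall the argument is a short orbit-chase, and the cleanest writeup is: (i) reduce to solving $x\beta_2=\infty$ and one further equation for $\beta_1$; (ii) observe each has a unique solution by sharp transitivity; (iii) check the constructed map does the job.
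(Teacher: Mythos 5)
Your strategy is the right one, and it is essentially the argument the paper relies on (and writes out explicitly for the local analogue in \autoref{prop:mu_defin}): write $\mu_x=\beta_1\alpha_x\beta_2$ with $\beta_1,\beta_2\in U_0$, pin down $\beta_2$ from $0\mu_x=\infty$, pin down $\beta_1$ from $\infty\mu_x=0$, and use nothing beyond sharp transitivity, i.e.\ \ref{axiom:M1}. The first half is fine: $0\beta_1\alpha_x\beta_2=x\beta_2$, so $\beta_2$ must be the unique element of $U_0$ sending $x$ to $\infty$, which exists since $x,\infty\in X\setminus\{0\}$.

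The gap is in the determination of $\beta_1$, precisely at the step you flag with a question mark. The forced value is $\infty\beta_1=0\beta_2^{-1}\alpha_x^{-1}$, and here your computation goes wrong: $0\beta_2^{-1}=0$, not $\infty$, because $\beta_2\in U_0$ fixes $0$ (it is $\infty\beta_2^{-1}$ that equals $x$). Hence the forced value is $0\alpha_x^{-1}$ (the point the paper denotes $-x$), not $\infty$; taken at face value, your tentative conclusion $\infty\beta_1=\infty$ would give $\beta_1=\id$ and $\mu_x=\alpha_x\beta_2$, which fails, since $\infty\alpha_x\beta_2=\infty\beta_2\neq 0$ ($\beta_2$ fixes $0$ and is injective). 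The missing piece, which also settles the "stays away from the fixed point" worry you raise, is simply: $0\alpha_x^{-1}\neq 0$ because $x\neq 0$, so both $\infty$ and $0\alpha_x^{-1}$ lie in $X\setminus\{0\}$ and sharp transitivity of $U_0$ yields a unique $\beta_1$ with $\infty\beta_1=0\alpha_x^{-1}$; then $0\beta_1\alpha_x\beta_2=\infty$ and $\infty\beta_1\alpha_x\beta_2=0\alpha_x^{-1}\alpha_x\beta_2=0\beta_2=0$, giving existence, while uniqueness of $\beta_1,\beta_2$ gives uniqueness of $\mu_x$. Note that the paper's proof of the local analogue sidesteps exactly the bookkeeping that tripped you up by rewriting the second condition as $0\mu_x^{-1}=\infty$, i.e.\ $0\beta_2^{-1}\alpha_x^{-1}\beta_1^{-1}=(0\alpha_x^{-1})\beta_1^{-1}=\infty$, from which the characterization of $\beta_1$ is immediate; you may want to adopt that inversion trick in your write-up.
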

\begin{proof}
	The proof of Proposition~4.1.1 from \citenobackref{DMSegevCourseMS} only requires \ref{axiom:M1}.
\end{proof}

If $\M$ is a Moufang set, we usually choose $\tau$ to be a $\mu$-map. In $\M(U,\tau)$, $\tau$ is in general not a $\mu$-map, but we can use $\mu$-maps to determine which $\M(U,\tau)$ are Moufang sets:

\begin{theorem}
	The construction $\M(U,\tau)$ is a Moufang set if and only if $U^{\mu_x} = U^\tau$ for all $x\in X\setminus\{0,\infty\}$. In this case $\M(U,\tau) = \M(U,\mu_x)$ for any $x\in X\setminus\{0,\infty\}$.
\end{theorem}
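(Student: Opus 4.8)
The plan is to verify axiom~\ref{axiom:M2}, since axiom~\ref{axiom:M1} holds for $\M(U,\tau)$ by construction ($\M(U,\tau)$ being a pre-Moufang set). Write $G=\langle U_x\mid x\in X\rangle$ for the little projective group; as $\alpha_x\in U_\infty$ and $U_x=U_0^{\alpha_x}$ for $x\neq0,\infty$, we have $G=\langle U_\infty,U_0\rangle$. I would introduce
\[N:=\{g\in G\mid U_x^g=U_{xg}\text{ for all }x\in X\}\;,\]
check that $N$ is a subgroup of $G$, and check that $U_\infty\subseteq N$: for $\alpha_y\in U_\infty$ the equality $U_x^{\alpha_y}=U_{x\alpha_y}$ is immediate when $x=\infty$, and for $x\neq\infty$ it reduces to $\alpha_x\alpha_y=\alpha_{x\alpha_y}$, which holds because both sides lie in $U_\infty$ and send $0$ to the same point while $U_\infty$ acts sharply transitively on $X\setminus\{\infty\}$. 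Since $N$ is a subgroup containing $U_\infty$ and $G=\langle U_\infty,U_0\rangle$, this already gives
\[\M(U,\tau)\text{ is a Moufang set}\iff N=G\iff U_0\subseteq N\;.\]

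For ``$\Leftarrow$'', I would assume $U_\infty^{\mu_x}=U_0$ (i.e.\ $U^{\mu_x}=U^\tau$) for all $x\neq0,\infty$. The crucial claim is that every $\mu_a$ then lies in $N$; granting this, $U_0=U_\infty^{\mu_a}=\mu_a^{-1}U_\infty\mu_a\subseteq\mu_a^{-1}N\mu_a=N$ (conjugating the subgroup $N$ by the element $\mu_a\in N$), so $U_0\subseteq N$ and~\ref{axiom:M2} follows. To prove $\mu_a\in N$ one checks $U_y^{\mu_a}=U_{y\mu_a}$ for all $y\in X$. For $y=\infty$ this is exactly the hypothesis. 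For $y=0$: from $(U_0\alpha_aU_0)^{-1}=U_0\alpha_{-a}U_0$ and the fact that $\mu_a^{-1}$ interchanges $0$ and $\infty$, uniqueness of the $\mu$-map yields $\mu_a^{-1}=\mu_{-a}$, so the hypothesis applied to $-a\in U\setminus\{0\}$ gives $U_\infty^{\mu_a^{-1}}=U_0$, that is $U_0^{\mu_a}=U_\infty=U_{0\mu_a}$. The remaining case $y\neq0,\infty$, where $y\mu_a\in X\setminus\{0,\infty\}$ because $\mu_a$ interchanges $0$ and $\infty$, is the main obstacle: writing $\mu_a=\gamma_p\alpha_a\gamma_q$ with $\gamma_p,\gamma_q\in U_0=U_\infty^\tau$ (the indices $p,q$ being determined by $0\mu_a=\infty$ and $\infty\mu_a=0$) and substituting $U_y=U_0^{\alpha_y}$, the desired equality becomes the assertion that a specific element built from the $\alpha$'s, from $\tau$, and from $\mu_a$ normalizes $U_0$, which one then settles using the $\mu$-map identities available for the construction. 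This is a genuine computation, and care is required because $\tau$ need not be an involution (nor need it belong to $G$), so no~\ref{axiom:M2}-type identity for $\tau$ itself can be invoked.

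For ``$\Rightarrow$'', if $\M(U,\tau)$ is a Moufang set then each $\mu_a\in U_0\alpha_aU_0\subseteq G$ interchanges $0$ and $\infty$, so~\ref{axiom:M2} gives $U_\infty^{\mu_a}=U_{\infty\mu_a}=U_0=U^\tau$, as required. Finally, assuming the equivalent conditions and fixing any $x\neq0,\infty$, the construction $\M(U,\mu_x)$ uses the same group $U$ and the same permutations $\alpha_y$ (which depend only on $U$), and its root group at $0$ is $U_\infty^{\mu_x}=U_0$; hence its root groups $U_0^{\alpha_y}$ coincide with those of $\M(U,\tau)$, so $\M(U,\mu_x)=\M(U,\tau)$.
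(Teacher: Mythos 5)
Your overall skeleton is sound and is essentially the standard one (note that the paper does not reprove this statement: it cites Theorem~3.1(i) of De Medts--Weiss and Lemma~4.1.2 of the De Medts--Segev course notes, and the same scheme is carried out in full only for the local analogue, in \autoref{lem:gleq} and \autoref{thm:constrMouf}). Your reduction to \ref{axiom:M2}, the verification that $N$ is a subgroup containing $U_\infty$, the identity $\mu_a^{-1}=\mu_{-a}$, the ``$\Rightarrow$'' direction, and the final identification $\M(U,\mu_x)=\M(U,\tau)$ are all correct.

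The genuine gap is the case you yourself flag as ``the main obstacle'': $U_y^{\mu_a}=U_{y\mu_a}$ for $y\neq0,\infty$. This cannot be left as ``a genuine computation settled using the $\mu$-map identities'': it is the actual content of the theorem, and it is the only place where the hypothesis is consumed at points other than $\pm a$ --- as written, your argument uses $U^{\mu_x}=U^\tau$ only for $x=\pm a$, which is not enough. The way to close it is short but must be exhibited. Using your $y=0$ case ($U_0^{\mu_a}=U_\infty$), write
\[U_y^{\mu_a}=U_0^{\alpha_y\mu_a}=\bigl(U_0^{\mu_a}\bigr)^{\alpha_y^{\mu_a}}=U_\infty^{\alpha_y^{\mu_a}}\,,\]
and note that $\alpha_y^{\mu_a}\in U_\infty^{\mu_a}=U_0$ is the unique element of $U_0$ sending $\infty$ to the unit $b:=y\mu_a$. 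So you must show $U_\infty^{\gamma}=U_b$ for this $\gamma\in U_0$, and this is where the hypothesis at the point $b$ enters: by the factorization $\mu_b=g\alpha_b h$ with $g,h\in U_0$ ($g$ sending $\infty\mapsto-b$, $h$ sending $b\mapsto\infty$), one has $U_0^{\mu_b}=U_b^{h}$, so the hypothesis at $-b$ (which gives $U_0^{\mu_b}=U_0^{\mu_{-b}^{-1}}=U_\infty$) is equivalent to $U_b=U_\infty^{h^{-1}}$, and $h^{-1}$ is precisely $\gamma$. Your alternative reformulation --- that $\alpha_y\mu_a\alpha_{y\mu_a}^{-1}$ normalizes $U_0$ --- is correct but not obviously easier to settle; the route above, which mirrors the equivalence proved in \autoref{lem:gleq} and its use in \autoref{thm:constrMouf}, is the efficient way to finish, and without it (or an equivalent argument making visible how the condition ``for all $x$'' is used) the proof is incomplete.
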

\begin{proof}
	This is Theorem~3.1(i) in \citenobackref{DMWeissMSJordanAlgebras}, also using the fact that $\mu_x^{-1}$ is also a $\mu$-map. The second statement is Lemma~4.1.2 in \citenobackref{DMSegevCourseMS}.
\end{proof}

If we have a Moufang set and a fixed $\tau$ swapping $0$ and $\infty$, we also get a specific element fixing $0$ and $\infty$ for each $x\in X\setminus\{0,\infty\}$.

\begin{definition}
	Let $\M(U,\tau)$ be a Moufang set and $x\in X\setminus\{0,\infty\}$. The \define{Hua map} corresponding to $x$ is $h_x := \tau\mu_x$. The \define{Hua subgroup} of $\M$ is the group $H := \langle\mu_x\mu_y\mid x,y\in X\setminus\{0,\infty\}\rangle$.
\end{definition}

Observe that if $\tau$ is a $\mu$-map, the Hua subgroup is the group generated by Hua maps. The Hua subgroup fixes $0$ and $\infty$, and it is in fact equal to the two-point stabilizer of the little projective group.

\begin{theorem}
	If $\M$ is a Moufang set with little projective group $G$, then $H = G_{0,\infty}$ and $G_\infty = U_\infty H$.
\end{theorem}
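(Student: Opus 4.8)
The statement to prove is the classical fact that for a Moufang set $\M$ with little projective group $G$, the Hua subgroup equals the two-point stabilizer, $H = G_{0,\infty}$, and moreover $G_\infty = U_\infty H$. The plan is to prove the two assertions in sequence, deriving the second from the first together with the sharp transitivity in \ref{axiom:M1}.

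First I would establish $H \leq G_{0,\infty}$, which is immediate: each $\mu_x$ swaps $0$ and $\infty$, so each product $\mu_x\mu_y$ fixes both points, and hence the group they generate lies in $G_{0,\infty}$. For the reverse inclusion, let $g \in G_{0,\infty}$. The key observation is that $G = \langle U_\infty, \tau\rangle$ and, using \ref{axiom:M2} and the relation $U_\infty^{\mu_x} = U_0$ coming from the $\mu$-maps, one can write any element of $G$ in a controlled normal form; concretely, since $G = \langle U_0, U_\infty\rangle$ acts $2$-transitively, I would use that $G_\infty = U_\infty G_{0,\infty}$ and that $G$ is generated by $U_\infty$, $U_0$, to reduce $g$ to a product of root-group elements and one $\mu$-map. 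The cleanest route: take any $\mu := \mu_x$; then $\mu g \in G_\infty$ maps $\infty$ to $0$, so $\mu g$ also swaps $0$ and $\infty$ (as $g$ fixes both and $\mu$ swaps them), and I want to show $\mu g$ is again (up to an element of $H$) a $\mu$-map. Since $\mu g$ swaps $0,\infty$ and lies in $G = \langle U_\infty,\tau\rangle$, writing it via the Bruhat-type decomposition $G = U_\infty \sqcup U_\infty \tau U_\infty$ (valid for $2$-transitive $G$ with the given structure) forces $\mu g \in U_\infty \tau U_\infty$, and the condition that it swaps $0$ and $\infty$ pins it down to lie in $U_0 \alpha_y U_0$ for a unique $y$, i.e.\ $\mu g = \mu_y$ for some $y \in X\setminus\{0,\infty\}$ (or $\mu g = \tau$ itself if $\tau$ is a $\mu$-map, which we may assume). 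Hence $g = \mu_x^{-1}\mu_y \in H$, using that $\mu_x^{-1}$ is again a $\mu$-map. This shows $G_{0,\infty} \leq H$, completing the first equality.

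For the second statement, I would use the sharp transitivity of $U_\infty$ on $X\setminus\{\infty\}$ from \ref{axiom:M1}. Given any $g \in G_\infty$, the point $0g$ lies in $X\setminus\{\infty\}$, so there is a unique $u \in U_\infty$ with $0u = 0g$; wait — more carefully, there is a unique $u \in U_\infty$ with $0 u = 0g$ only if $0g \neq \infty$, which holds since $g$ fixes $\infty$ and $g$ is a bijection with $\infty g = \infty$, forcing $0g \neq \infty$. Then $gu^{-1} \in G_\infty$ fixes both $0$ and $\infty$, so $gu^{-1} \in G_{0,\infty} = H$, giving $g \in H U_\infty$. Since $U_\infty$ normalizes nothing a priori, I should be slightly careful about the order of the factors: $g = (gu^{-1})u \in H U_\infty$, and then $G_\infty = H U_\infty$; to get the stated form $U_\infty H$, note that $G_\infty = (G_\infty)^{-1} = (HU_\infty)^{-1} = U_\infty^{-1} H^{-1} = U_\infty H$ since both $U_\infty$ and $H$ are subgroups. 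This yields $G_\infty = U_\infty H$ as required.

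The main obstacle I anticipate is making rigorous the normal-form/Bruhat-decomposition step in the reverse inclusion $G_{0,\infty} \leq H$: one needs that every element of $G$ swapping $0$ and $\infty$ is a $\mu$-map up to $H$, which rests on the double transitivity of $G$, the sharp transitivity in \ref{axiom:M1}, and the defining property of $\mu_x$ as the unique element of $U_0\alpha_x U_0$ swapping the two points. The cleanest packaging is probably to first prove the auxiliary claim that $\{\text{elements of } G \text{ swapping } 0 \text{ and } \infty\} = \{\mu_x \mid x\in X\setminus\{0,\infty\}\} \cdot (G_{0,\infty}\cap \langle\mu\rangle\text{-coset structure})$, or more simply to observe $H$ acts transitively on $X\setminus\{0,\infty\}$ via $\mu_x H = \mu_y H$ analysis — this transitivity fact, combined with the fact that $\mu_x \mu_y \in H$ and $\mu_x$ swaps $0,\infty$, is really the crux. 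Everything else is bookkeeping with \ref{axiom:M1}–\ref{axiom:M2} and the fact, already recorded in the excerpt, that $\mu_x^{-1}$ is again a $\mu$-map.
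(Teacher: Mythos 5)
Your easy inclusion $H\leq G_{0,\infty}$ and the final Frattini-type step (deducing $G_\infty=U_\infty H$ once $H=G_{0,\infty}$ is known) are fine, but the heart of the theorem, $G_{0,\infty}\leq H$, is not established. The decomposition you invoke, $G=U_\infty\cup U_\infty\tau U_\infty$, is false for any proper Moufang set: every element of the second piece moves $\infty$, so this decomposition would force $G_\infty=U_\infty$ and hence $G_{0,\infty}=1$; it is valid precisely when $H$ is trivial. What double transitivity does give is $G=U_\infty G_{0,\infty}\cup U_\infty G_{0,\infty}\tau U_\infty$, but feeding that into your argument is circular: it only tells you that $\mu_x g$ lies in a coset involving $G_{0,\infty}$, not that it is a $\mu$-map. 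Your fallback claim, that every element of $G$ interchanging $0$ and $\infty$ is a $\mu$-map, is also false in general: the interchanging elements form the coset $\tau G_{0,\infty}=\tau H$, whereas the $\mu$-maps are only the elements $\tau h_x$ for Hua maps $h_x$, and products $h_xh_y$ are typically not Hua maps (already for orthogonal or Jordan-algebra Moufang sets), so the $\mu$-maps do not exhaust the coset. The uniqueness statement of \autoref{prop:mu_defin} identifies the swapping element inside a \emph{given} double coset $U_0\alpha_yU_0$; it does not place an arbitrary swapping element of $G$ into such a double coset.

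The missing content is exactly what the cited result of De Medts--Weiss provides, and what the paper proves in the local setting as \autoref{thm:hua2pt}: one establishes the Bruhat-type decomposition with $H$ in place of $G_{0,\infty}$, namely $G=U_\infty H\cup U_\infty H\tau U_\infty$ (in the local version, $G=U_0HU_\infty\cup U_0H\tau U_0^\circ$), by showing that this set $Q$ satisfies $QG\subseteq Q$, i.e.\ is stable under right multiplication by the generators $U_\infty$ and $\tau$. The nontrivial absorption step, rewriting $K\alpha_a\tau$ back inside $Q$, rests on the identity $\mu_{a\tau}\,\alpha_a^\tau=\alpha_{(-a\tau)\tau^{-1}}^\tau\,\alpha_{a\tau}$ from \autoref{lem:mu}\ref{itm:muform} together with $\mu_{a\tau}\tau^{-1}\in H$; this is where the Hua subgroup genuinely enters, and there is no way to shortcut it by pure $2$-transitivity. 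Once that decomposition is available, an element fixing both $0$ and $\infty$ can only occur in the first piece with trivial root-group factors, which yields $G_{0,\infty}=H$, and your concluding argument then gives $G_\infty=U_\infty H$. Your closing paragraph correctly identifies this normal-form step as the crux, but the proposal neither supplies it nor sketches a workable substitute, so the proof as written has a genuine gap.
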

\begin{proof}
	This is Theorem~3.1(ii) in \citenobackref{DMWeissMSJordanAlgebras}.
\end{proof}

The Hua maps can also be used to determine when the construction $\M(U,\tau)$ is a Moufang set, but it requires a different definition of the Hua maps (which coincides when we know we have a Moufang set). When using this different definition of $h_x$, one gets

\begin{theorem}
	The construction $\M(U,\tau)$ is a Moufang set if and only if $\restr{h_x}{U}$ is an automorphism of $U$ for all $x\in X\setminus\{0,\infty\}$.
\end{theorem}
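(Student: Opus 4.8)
The plan is to reduce everything to the criterion of the preceding theorem: $\M(U,\tau)$ is a Moufang set if and only if $U^{\mu_x}=U^\tau$ for all $x\in X\setminus\{0,\infty\}$, i.e.\ (identifying $U$ with $U_\infty=\{\alpha_u\mid u\in U\}$) if and only if $U_\infty^{\mu_x}=U_0$ for all such $x$, where $U_0:=U_\infty^\tau$. So it suffices to show that this family of equalities holds if and only if $\restr{h_x}{U}$ is an automorphism of $U$ for all $x\in X\setminus\{0,\infty\}$, where $h_x$ is the Hua map in the sense adapted to the construction; concretely, $h_x$ is the restriction to $U$ of the permutation $\tau\mu_x$ of $X$, which coincides with the Hua map of the earlier definition once $\M(U,\tau)$ is known to be a Moufang set.

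The first observation is that, since $\tau$ and $\mu_x$ both interchange $0$ and $\infty$, the permutation $\tau\mu_x$ fixes $0$ and $\infty$, hence restricts to a bijection $\restr{h_x}{U}$ of $U$ fixing $0$. The conceptual heart is then the following lemma, to be applied with $g=h_x$: \emph{for a permutation $g$ of $X$ fixing $0$ and $\infty$, the restriction $\restr{g}{U}$ is an automorphism of $U$ if and only if $g$ normalizes $U_\infty$.} To prove it I would use that $u\mapsto\alpha_u$ is a group isomorphism from $U$ onto $U_\infty$. If $\restr{g}{U}$ is additive, then for each $u$ the permutation $\alpha_u^g$ fixes $\infty$ (as $g$, $g^{-1}$ and $\alpha_u$ do) and satisfies $v\alpha_u^g=(vg^{-1}+u)g=v+ug$ for all $v\in U$ (using additivity of $\restr{g}{U}$, that $g^{-1}$ fixes $0$, and $g^{-1}g=\id$), so $\alpha_u^g=\alpha_{ug}\in U_\infty$; since $\restr{g}{U}$ is onto, $U_\infty^g=U_\infty$. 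Conversely, if $g$ normalizes $U_\infty$ then $\alpha_u^g\in U_\infty$, and evaluating at $0$ gives $\alpha_u^g=\alpha_{ug}$; conjugating $\alpha_{u+v}=\alpha_u\alpha_v$ by $g$ and using injectivity of $u\mapsto\alpha_u$ then shows $(u+v)g=ug+vg$, so $\restr{g}{U}$ is a (bijective) homomorphism. This yields: $\restr{h_x}{U}$ is an automorphism of $U$ $\iff$ $h_x$ normalizes $U_\infty$.

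It remains to match ``$h_x$ normalizes $U_\infty$ for all $x$'' with the criterion ``$U_\infty^{\mu_x}=U_0$ for all $x$''. Here $U_\infty^{h_x}=U_\infty^{\tau\mu_x}=(U_\infty^\tau)^{\mu_x}=U_0^{\mu_x}$, so $h_x$ normalizes $U_\infty$ precisely when $U_0^{\mu_x}=U_\infty$, equivalently $U_0=U_\infty^{\mu_x^{-1}}$. Now $\mu_x^{-1}$ is again a $\mu$-map: writing $\mu_x=u_1\alpha_xu_2$ with $u_1,u_2\in U_0$ gives $\mu_x^{-1}=u_2^{-1}\alpha_{-x}u_1^{-1}\in U_0\alpha_{-x}U_0$, and this permutation still interchanges $0$ and $\infty$, so $\mu_x^{-1}=\mu_{-x}$ by the uniqueness in the definition of $\mu$-maps. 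Since $x\mapsto-x$ is a bijection of $X\setminus\{0,\infty\}$, the family $\{\,U_0=U_\infty^{\mu_x^{-1}}\,\}_x$ is exactly the family $\{\,U_\infty^{\mu_x}=U_0\,\}_x$, which closes the equivalence.

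The step needing most care is this last matching, as it is sensitive to the convention chosen for the construction-adapted $h_x$: with $h_x=\tau\mu_x$ one needs the identity $\mu_x^{-1}=\mu_{-x}$ together with the symmetry of the index set, as above; with the equally natural convention $h_x=\mu_x\tau^{-1}$ one instead computes $U_\infty^{h_x}=(U_\infty^{\mu_x})^{\tau^{-1}}$ and the criterion $U_\infty^{\mu_x}=U_\infty^\tau=U_0$ drops out directly. Everything else is a routine unwinding of \ref{axiom:M1} for the pre-Moufang set $\M(U,\tau)$ through the isomorphism $U\cong U_\infty$, and it uses no property of $\tau$ beyond interchanging $0$ and $\infty$.
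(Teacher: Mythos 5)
Your reduction is sound in outline, and since the paper disposes of this theorem by simply citing Theorem~3.2 of De Medts--Weiss, giving an actual argument that pulls it back to the preceding criterion ($U^{\mu_x}=U^\tau$ for all $x$) is a legitimate, and essentially the standard, route. Your auxiliary lemma --- for a permutation $g$ of $X$ fixing $0$ and $\infty$, the restriction $\restr{g}{U}$ is an automorphism of $U$ if and only if $g$ normalizes $U_\infty$ --- is correct, and so are the identity $\mu_x^{-1}=\mu_{-x}$ (which indeed only needs the pre-Moufang structure) and the reindexing $x\mapsto-x$.

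The one genuine issue is which map the theorem calls $h_x$. The sentence just before the statement warns that this criterion ``requires a different definition of the Hua maps (which coincides when we know we have a Moufang set)'': in the construction setting the Hua map is not $\tau\mu_x$ but the explicit word in $\alpha$'s and $\tau$, the analogue of the definition attached to \autoref{constr:MUtau}, namely $h_x=\tau\alpha_x\tau^{-1}\alpha_{-(x\tau^{-1})}\tau\,\alpha_{-\til x}$. The equality $h_x=\tau\mu_x$ is exactly \autoref{lem:mu}\ref{itm:muform2}, whose proof uses \ref{axiom:M2}, so before $\M(U,\tau)$ is known to be a Moufang set the two permutations can differ (cf.\ \autoref{remark:mu_hua_construction}); note that $\tau^{-1}h_x$ lies in $U_\infty U_0U_\infty$ rather than in $U_0\alpha_xU_0$, so the uniqueness of $\mu_x$ gives you nothing here. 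Consequently your argument proves the equivalence under the hypothesis ``$\restr{(\tau\mu_x)}{U}$ is an automorphism for all $x$'', and in the nontrivial direction (automorphisms imply Moufang) that is a different assumption from the one in the statement. The repair is short: your automorphism-versus-normalization lemma applies verbatim to the product-formula $h_x$ (it also fixes $0$ and $\infty$), and a direct conjugation computation --- the Moufang-set analogue of \autoref{lem:gleq} --- shows, writing $\gamma_y:=\alpha_y^\tau$, that $U_\infty^{h_x}=U_\infty\iff U_x=U_\infty^{\gamma_{x\tau^{-1}}}\iff U_0^{\mu_x}=U_\infty$, which is precisely the condition you derived for $\tau\mu_x$. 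With that supplement your proof does give the stated theorem; your closing paragraph, which only weighs the conventions $\tau\mu_x$ and $\mu_x\tau^{-1}$ against each other, overlooks exactly this point.
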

\begin{proof}
	This is Theorem~3.2 in \citenobackref{DMWeissMSJordanAlgebras}.
\end{proof}

\subsection{Special Moufang sets}

Suppose we have a Moufang set $\M$. We generally write $\alpha_x$ for the unique element of $U_\infty$ such that $0\alpha_x = x$. We write $-x:=0\alpha_x^{-1}$, so $\alpha_x^{-1} = \alpha_{-x}$.

\begin{definition}
	A Moufang set $\M(U,\tau)$ is \define[Moufang set!special Moufang set]{special} if $(-x)\tau = -(x\tau)$ for all $x\in X\setminus\{0,\infty\}$.
\end{definition}

One reason to study special Moufang sets is because it puts extra restrictions on the structure of the Moufang set. One main motivation is given by the following conjecture:

\begin{conjecture}
	A special Moufang set $\M$ has abelian root groups.
\end{conjecture}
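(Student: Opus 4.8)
Since this is a (still open) conjecture rather than a theorem, what follows is a plan that would establish it — realistically yielding the full statement only under a torsion hypothesis — together with an indication of where the genuine obstruction lies. The plan is to fix the two base points $0,\infty$, replace $\M$ by $\M(U,\tau)$ with $U=U_\infty$ and $\tau$ a $\mu$-map (so that each Hua map $h_a=\tau\mu_a$ restricts to an automorphism of $U$ for $a\in X\setminus\{0,\infty\}$, by the theorem characterizing when $\M(U,\tau)$ is Moufang), and to write the operation of $U$ additively without assuming commutativity. The special hypothesis $(-x)\tau=-(x\tau)$ says exactly that $\tau$ commutes with the inversion map $x\mapsto -x$; combined with the fact that each $h_a$ is already an automorphism, this propagates to a collection of ``sign-free'' identities for the $\mu$- and Hua maps. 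The aim is to manipulate these identities inside $U$ until the commutator $[a,b]=-a-b+a+b$ is forced to vanish for all $a,b\in U$.

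Concretely, I would first collect the standard machinery: $\mu_x^{-1}=\mu_{-x}$, the conjugation rule $\mu_{xh_a}=h_a^{-1}\mu_x h_a$ (valid because $h_a\in H=G_{0,\infty}$), and the Hua identity relating $a+b$, the ``inverse'' of $b$ computed in $U_0$ and transported back to $U$ by $\tau$, and a Hua map; specialness collapses the sign ambiguities that normally appear when these are iterated. Next I would bilinearize: introduce the defect $\mu_{a,b}$ of $a\mapsto\mu_a$ from additivity, and compare $h_{a+b}$ with $h_a$ and $h_b$. Pushing the resulting relations through — a long but essentially formal computation using only that the $h_a$ are automorphisms together with the identities above — should yield that every commutator $[a,b]$ is annihilated by $6$ and lies in a restricted, central-type subgroup of $U$. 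When $2$ and $3$ are invertible on $U$ (for instance when $U$ has no $2$-torsion and no $3$-torsion), dividing by $6$ then gives $[a,b]=0$; the same conclusion also follows under a suitable chain condition on $U$, since the descending information on commutators must stabilize.

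The real difficulty, and the reason the statement remains a conjecture, is the small-characteristic case: if $U$ has $2$-torsion or $3$-torsion, the identities only bound the commutators rather than kill them, and these methods — which use essentially only the two-point stabilizer $H$ — do not seem strong enough to exclude a genuinely non-abelian configuration. Removing this obstruction would presumably require exploiting the action of the full little projective group $G=\langle U_\infty,\tau\rangle$, or some rigidity of the Hua maps specific to characteristic $2$ and $3$, rather than the additive identities alone. Accordingly, the honest outcome of this plan is a proof of the conjecture under a ``no $2$- and $3$-torsion'' (or finiteness) hypothesis, with characteristic $2$ and $3$ left as the essential open case.
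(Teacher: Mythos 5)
This statement is labelled a conjecture in the paper precisely because no proof is known: the paper offers no argument for it (it only records the converse direction, due to Segev, that abelian root groups force specialness for proper Moufang sets), so there is nothing on the paper's side to compare your attempt with. Your proposal, by its own admission, is a plan rather than a proof, and it does not establish the statement as given.

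The genuine gap is in the central step of your sketch: the claim that ``pushing the resulting relations through'' the bilinearized Hua identities yields that every commutator $[a,b]$ is annihilated by $6$ and lies in a central-type subgroup of $U$. This is asserted, not derived, and it is not a known consequence of the standard $\mu$-map machinery. The identities you list ($\mu_x^{-1}=\mu_{-x}$, $\mu_{xh_a}=\mu_x^{h_a}$, the Hua identity, specialness removing sign ambiguities) are exactly the toolkit used in the existing partial results on this conjecture, and those results are obtained by considerably more delicate arguments (and under hypotheses of a different nature, e.g.\ on involutions in the root groups or on the structure of the Hua subgroup), not by a formal computation bounding commutator torsion by $6$. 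In particular, even your weakened claim --- abelianness under a ``no $2$- and $3$-torsion'' hypothesis --- is not justified by the proposal and is not, to my knowledge, a theorem one can cite; so the proposal neither proves the conjecture nor delivers the partial result it promises. If you want to contribute here, the honest framing is to isolate and actually prove a precise identity on commutators (with all quantifiers and hypotheses stated), rather than to gesture at a ``long but essentially formal computation'' whose existence is exactly what is in doubt.
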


There is also a converse to this, which was proven by Y.~Segev in \cite{SegevAbelianSpecial}:

\begin{theorem}
	If $\M(U,\tau)$ is a Moufang set with $U$ abelian, then either $H=1$ or $\M(U,\tau)$ is special.
\end{theorem}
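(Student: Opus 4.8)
The plan is to fix a swapper of the form $\tau=\mu_e$ for some $e\in X\setminus\{0,\infty\}$, to note that specialness of $\M$ does not depend on which swapper is used to test it (any swapper is $\tau h$ with $h\in H$, and $h$ acts on $U$ by a group automorphism, so $(-x)(\tau h)=-(x(\tau h))$ iff $(-x)\tau=-(x\tau)$), and then to show that the \emph{defect} $\delta(u):=(-u)\tau+(u\tau)$ vanishes for every $u\in U\setminus\{0\}$, writing $U$ additively. Since $U$ is abelian the endomorphism ring $E$ of $(U,+)$ has $\operatorname{id}\in E$, and the hypothesis that $\M$ is a Moufang set says exactly that every Hua map restricts to an element of $E^{\times}$; in particular $H\leq E^{\times}$ and $\tau^{2}=\mu_e^{2}=h_e\in H$.

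First I would record two descriptions of $\mu_x$ coming from the conjugation property of Moufang sets (so that $\mu_x$ conjugates $U_\infty$ onto $U_0$): setting $\phi_x:=\mu_x\tau^{-1}$ and $\psi_x:=\tau^{-1}\mu_x$, extended to fix $\infty$, one checks $\phi_x=h_{-x}^{-1}$ and $\psi_x=h_e^{-1}h_x$ (using $h_x=\tau\mu_x$ and the standard fact $\mu_x^{-1}=\mu_{-x}$), so $\phi_x,\psi_x\in E^{\times}$ and $\mu_x=\phi_x\tau=\tau\psi_x$. Equating the two resulting expressions for $u\mu_x$ with $u\neq0$ gives the identity $(u\,h_x)\tau=(u\tau)\,h_{-x}^{-1}h_e$, and evaluating this at $u$ and at $-u$ and subtracting (legitimate since $h_x$ and $h_{-x}^{-1}h_e$ are additive and $U$ is abelian) yields the transformation law
\[
\delta(u\,h_x)=\delta(u)\,h_{-x}^{-1}h_e\qquad(u,x\in U\setminus\{0\}).
\]
Together with the evident evenness $\delta(-u)=\delta(u)$, this already shows that $h_{-x}^{-1}h_e$ maps the image of $\delta$ into itself for each $x$.

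The final step---deducing $\delta\equiv0$ from $H\neq1$---is the hard part and carries the real content. The transformation law alone is empty when $H=1$, which is precisely the case in which non-special behaviour is still possible, so the dichotomy in the statement is unavoidable; to close the argument one must exploit how $h_x$ depends on the parameter $x$. The route I would take is to linearize: from $\mu_x^{\,h}=\mu_{xh}$ for $h\in H$ (so $H$ permutes the $\mu$-maps exactly as it permutes $X\setminus\{0,\infty\}$), together with the identities above, one extracts a bi-additive polarization of the assignment $x\mapsto h_x$, re-expresses $\delta(u)$ through the Hua maps by means of this polarization, and then, choosing $a$ with $h_a\neq\operatorname{id}$ and hence $u\,h_a\neq u$ for some $u$, plays the transformation law and the polarization identities against evenness to force $\delta(u)$ into the kernel of a unit of $E$, i.e.\ to make it zero. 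In effect this recovers enough of a quadratic Jordan--division structure on $U$ to exclude a nonzero defect; producing the polarization and carrying out that cancellation cleanly is, I expect, the main obstacle---everything preceding it is formal.
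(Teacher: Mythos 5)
Your preliminary identities are essentially correct: with $\tau=\mu_e$ one indeed has $\mu_x=h_{-x}^{-1}\tau=\tau h_e^{-1}h_x$, and from this the covariance law $\delta(uh_x)=\delta(u)\,h_{-x}^{-1}h_e$ together with $\delta(-u)=\delta(u)$ follows exactly as you say. (A small wrinkle in the reduction: the given $\tau$ of $\M(U,\tau)$ need not lie in $G$, so it is not literally of the form $\mu_e h$ with $h\in H$; one should instead write $\tau=g\mu_e$ with $g$ a permutation fixing $0,\infty$ and normalizing $U_\infty$, and check that $g$ commutes with negation -- this is reparable.) But the proof stops exactly where the theorem begins. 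The statement you need, that $H\neq1$ forces $\delta\equiv0$, is the entire content of the result, and your text leaves it open. The replacement you sketch -- a ``bi-additive polarization of $x\mapsto h_x$'' from which $\delta$ is to be recomputed and then cancelled -- is not available at this point and is in fact circular: in the known theory, biadditivity of $(x,y)\mapsto h_{x+y}-h_x-h_y$ is either imposed as a hypothesis (as in the characterization of Moufang sets from Jordan division algebras quoted in the preliminaries of this paper) or is derived from the De~Medts--Segev identities, whose proofs assume the Moufang set is \emph{special} and the root groups uniquely $2$-divisible -- precisely what you are trying to prove, and unique $2$- or $3$-divisibility of $U$ is nowhere among your hypotheses, so the linearization device cannot even be launched.

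So there is a genuine gap, and it is not a detail: the covariance law and evenness of the defect are soft facts that constrain $\delta$ without killing it, whereas the dichotomy ``$H=1$ or special'' is a deep theorem of Segev, proved in \cite{SegevAbelianSpecial} by a long and delicate argument specific to abelian root groups. The present paper does not reprove it -- it only cites Segev -- so there is no short argument in the text that your sketch approximates. As written, your proposal establishes only the reduction to a $\mu$-map and the transformation behaviour of $\delta$ under Hua maps; the cancellation step that would force $\delta(u)=0$ is asserted as a hope, not carried out, and the route proposed for it would presuppose the conclusion.
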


As it is likely that having a special Moufang set implies having abelian root groups, it is interesting to see what we can prove if we assume both. The following proposition gives a short list of identities one can prove in this case.

\begin{proposition}
	Let $\M$ be a special Moufang set with abelian root groups. Then
	\begin{romenumerate}
		\item $\mu_x^2 = \id$ for all $x\in X\setminus\{0,\infty\}$;
		\item $h_x = h_{-x}$ for all $x\in X\setminus\{0,\infty\}$;
		\item $h_yh_xh_y = h_{xh_y}$ for all $x\in X\setminus\{0,\infty\}$;
		\item $\mu_x\mu_{x\alpha_y}\mu_y = \mu_y\mu_{x\alpha_y}\mu_x$ for all $x,y\in X\setminus\{0,\infty\}$ with $y\neq -x$.
	\end{romenumerate}
\end{proposition}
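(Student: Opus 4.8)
The plan is to fix $\tau$ to be a $\mu$-map, $\tau=\mu_e$, and to begin by recording two identities that hold in \emph{any} (pre-)Moufang set, needing neither specialness nor commutativity. \textbf{(a)} $\mu_x^{-1}=\mu_{-x}$: since $\mu_x\in U_0\alpha_xU_0$ and $U_0$ is a group, $\mu_x^{-1}\in U_0\alpha_x^{-1}U_0=U_0\alpha_{-x}U_0$ and interchanges $0$ and $\infty$, and the element of that double coset with this property is unique. \textbf{(b)} For $g\in G_{0,\infty}$ we have $\mu_x^{g}=\mu_{xg}$: conjugation by $g$ carries $\mu_x$ into $U_0^{g}\alpha_x^{g}U_0^{g}=U_0\alpha_{xg}U_0$ by \ref{axiom:M2} (as $g$ fixes $0$, so $0\alpha_x^{g}=xg$), again an element interchanging $0$ and $\infty$; apply uniqueness once more. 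From (a), $\mu_x^2=\id\iff\mu_x=\mu_{-x}\iff\tau\mu_x=\tau\mu_{-x}\iff h_x=h_{-x}$, so (i) and (ii) are equivalent and both reduce to the claim $\mu_x=\mu_{-x}$.

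To prove that claim, introduce the negation map $\nu\colon X\to X$ fixing $\infty$ and acting by $z\mapsto-z=0\alpha_z^{-1}$ on $X\setminus\{\infty\}$; note $\nu$ is always a permutation with $\nu^{-1}=\nu$. Commutativity of $U_\infty$ is exactly what is needed here: since the set-map $z\mapsto-z$ corresponds to inversion on $U_\infty$, commutativity makes inversion an automorphism, so $\nu$ normalises $U_\infty$ with $\alpha_z^{\nu}=\alpha_{-z}$; and since every $h_x$ acts on $U_\infty$ by conjugation as a group automorphism (it fixes $0$ and $\infty$), and automorphisms commute with inversion, each $h_x$ satisfies $(-z)h_x=-(zh_x)$, i.e.\ commutes with $\nu$. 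On the other hand, the special condition $(-z)\tau=-(z\tau)$ rewrites as $\nu\tau=\tau\nu$, and together with $U_0=U_\infty^{\tau}$ this yields $U_0^{\nu}=U_0$ as well. Now two things follow: first $\mu_x=\tau^{-1}h_x$ commutes with $\nu$ (a product of two maps that do), so $\mu_x^{\nu}=\mu_x$; second $\mu_x^{\nu}$ lies in $U_0^{\nu}\alpha_x^{\nu}U_0^{\nu}=U_0\alpha_{-x}U_0$ and interchanges $0$ and $\infty$, so $\mu_x^{\nu}=\mu_{-x}$ by uniqueness. Hence $\mu_x=\mu_{-x}$, which proves (i) and (ii).

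Statement (iii) is then formal. Applying (i) with $x=e$ gives $\tau^2=\id$, hence $\tau^{-1}=\tau$ and $h_y^{-1}=\mu_y^{-1}\tau^{-1}=\mu_y\tau$ (using $\mu_y^{-1}=\mu_y$). By (b), $\mu_{xh_y}=h_y^{-1}\mu_xh_y$, so
\[h_{xh_y}=\tau\mu_{xh_y}=\tau\,h_y^{-1}\mu_xh_y=\tau\mu_y\tau\mu_xh_y=(\tau\mu_y)(\tau\mu_x)h_y=h_yh_xh_y\,.\]

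For (iv), write $x\alpha_y=:x+y$, which is symmetric in $x$ and $y$ because $U_\infty$ is abelian and is nonzero precisely because $y\neq-x$, so $\mu_{x+y}$ is defined. Using (a), $-(x+y)=(-x)+(-y)$ and (i), one gets $(\mu_x\mu_{x+y}\mu_y)^{-1}=\mu_{-y}\mu_{-(x+y)}\mu_{-x}=\mu_y\mu_{x+y}\mu_x$, so (iv) is equivalent to $(\mu_x\mu_{x+y}\mu_y)^2=\id$. Since $g:=\mu_x\mu_{x+y}\mu_y$ is a product of three maps each interchanging $0$ and $\infty$, $g$ itself does, so $g^2\in H$; I would finish by showing $g$ is again a $\mu$-map — as one sees in the $\PSL_2$ example, where $g=\mu_w$ for a harmonic-type $w$ — so that $g^2=\id$ by (i). Verifying that $g$ really lies in a single double coset $U_0\alpha_wU_0$, or else a bare-hands expansion of $g$ in coset representatives, is the step I expect to be the main obstacle: it is the only one requiring the finer $\mu$-map calculus of \cite{DMSegevCourseMS} rather than the soft arguments used above.
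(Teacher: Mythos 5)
Items (i)--(iii) of your proposal are correct. The negation-map argument is sound: $\nu$ normalises $U_\infty$ with $\alpha_z^\nu=\alpha_{-z}$ precisely because $U_\infty$ is abelian, it commutes with $\tau$ precisely because $\M$ is special, and it commutes with every Hua map for free (conjugation by $h_x$ is an automorphism of $U_\infty$, and automorphisms respect inversion); conjugating the double coset $U_0\alpha_xU_0$ then forces $\mu_x=\mu_x^\nu=\mu_{-x}$, giving (i) and (ii), and (iii) follows formally from $\mu_{xh_y}=\mu_x^{h_y}$ as you wrote. This is a genuinely self-contained and softer route than the one the paper relies on: the paper simply cites \cite{DMSegevIdentitiesMS}, and its own local analogues (\autoref{prop:mu-involution}, building on the lemma preceding it) are obtained by case-by-case computations with explicit $\mu$-map expressions rather than by a symmetry argument.

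However, (iv) is not proved, and the missing step is not a routine verification but the substantive content of the statement. Your reduction of (iv) to $(\mu_x\mu_{x+y}\mu_y)^2=\id$ is correct, but the deferred claim --- that $g=\mu_x\mu_{x+y}\mu_y$ is again a $\mu$-map --- does not follow from the double-coset/uniqueness arguments you have been using: a priori $g$ only lies in $U_0\alpha_xU_0\,U_0\alpha_{x+y}U_0\,U_0\alpha_yU_0$, and there is no soft reason for this product to collapse into a single coset $U_0\alpha_wU_0$; likewise $g^2\in H$ gives nothing by itself, since $H$ has no a priori exponent bound. What is needed is the finer $\mu$-map calculus: the explicit factorisation of $\mu_x$ in \autoref{lem:mu}\ref{itm:muform2} and the resulting ``sum formula'' $\mu_y\mu_z\mu_{-x}=\mu_{y\alpha_{-x}}$ of \autoref{prop:sumform} (in the special abelian setting this is \autoref{lem:specialsum}; in the Moufang-set literature it is the content of \S5 of \cite{DMSegevIdentitiesMS}), which is exactly how the paper derives the local analogue \autoref{prop:mu-involution}\ref{itm:mucommute}, including the identification $\mu_x\mu_{x\alpha_y}\mu_y=\mu_{(x\tau\alpha_{y\tau})\tau}$. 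Until that identity (or an equivalent bare-hands expansion of $g$) is supplied, item (iv) remains unproven.
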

\begin{proof}\preenum
	\begin{romenumerate}
		\item This is Lemma~5.1 in \cite{DMSegevIdentitiesMS}.
		\item This is Proposition~5.2(1) in \cite{DMSegevIdentitiesMS}.
		\item This is Proposition~5.2(4) in \cite{DMSegevIdentitiesMS}.
		\item This is Proposition~5.8(1) in \cite{DMSegevIdentitiesMS}.\qedhere
	\end{romenumerate}
\end{proof}

\subsection{Homomorphisms of Moufang sets}

Homomorphisms of Moufang sets have not gotten much attention so far. A short attempt was made by T.~De Medts in an unpublished note \cite{TDMcategoryMoufang}, but he was not satisfied with the definition and abandoned the idea. O.~Loos picked up his idea, and created a categorical framework for Moufang sets in \cite{LoosDivpairs}.

\begin{definition}
	Let $\M=(X,(U_x))$ and $\M'=(X',(U'_x))$ be two Moufang sets. A \define[Moufang set!homomorphism]{homomorphism} is an injective map $\phi\colon X\to X'$ such that
	\[U_x\phi\subset\phi U'_{x\phi}\text{ for all $x\in X$.}\]
	We also write $\phi\colon\M\to\M'$. The category $\Mou$\notatlink{Mou} is the category with Moufang sets as objects and homomorphisms as morphisms.
	
	A homomorphism $\phi$ is an \define[Moufang set!isomorphism]{isomorphism} if there exists a homomorphism of Moufang sets $\phi^{-1}\colon\M'\to\M$ such that $\phi\phi^{-1} = \phi^{-1}\phi=\id$.
\end{definition}

T.~De Medts was dissatisfied with the definition because of the injectivity requirement. If injectivity is not required, one can also get a constant map, but a non-injective map will always be constant. In short, there are no interesting quotients in this category.

When $\phi\colon\M\to\M'$ is a homomorphism of Moufang sets, O.~Loos proved for each $x\in X$ the existence of a unique map $\theta_x\colon U_x\to U'_{x\phi}$ such that
\[u\phi = \phi \theta_x(u)\text{ for all $u\in U_x$,}\]
and using these homomorphisms he also proved the following:
\begin{proposition}
	Let $\phi\colon X\to X'$ be a map. Then the following are equivalent:
	\begin{romenumerate}
		\item $\phi$ is an isomorphism of Moufang sets;
		\item $\phi$ is a bijective homomorphism of Moufang sets;
		\item $\phi$ is a bijective map and $U_x = \phi U'_{x\phi}\phi^{-1}$ for all $x\in X$.
	\end{romenumerate}
\end{proposition}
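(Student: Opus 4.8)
The plan is to establish the cycle of implications (i)~$\Rightarrow$~(ii)~$\Rightarrow$~(iii)~$\Rightarrow$~(i), of which only the middle one requires any real argument. Throughout, I will use that a bijection $\phi\colon X\to X'$ induces a group isomorphism $\Sym(X)\to\Sym(X')$ by conjugation, and I will compute with the right action: for $u\in U_x$ and $y\in X$ one has $(y\phi)(\phi^{-1}u\phi)=(yu)\phi$.

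The implication (i)~$\Rightarrow$~(ii) is immediate from the definitions: if $\phi^{-1}\colon\M'\to\M$ is an inverse homomorphism, then $\phi\phi^{-1}=\phi^{-1}\phi=\id$ forces $\phi$ to be a bijective map, and $\phi$ is a homomorphism by hypothesis. For (iii)~$\Rightarrow$~(i), assume $\phi$ is bijective with $U_x=\phi U'_{x\phi}\phi^{-1}$ for all $x\in X$; conjugating, this reads $\phi^{-1}U_x\phi=U'_{x\phi}$. In particular $U_x\phi\subseteq\phi U'_{x\phi}$, so $\phi$ is a homomorphism $\M\to\M'$; and reading the same identity backwards, for $x'=x\phi\in X'$ we get $U'_{x'}\phi^{-1}=\phi^{-1}U_x=\phi^{-1}U_{x'\phi^{-1}}$, so $\phi^{-1}\colon X'\to X$ is a homomorphism $\M'\to\M$. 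Since $\phi\phi^{-1}=\phi^{-1}\phi=\id$, $\phi$ is an isomorphism.

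The substantive step is (ii)~$\Rightarrow$~(iii). Let $\phi$ be a bijective homomorphism. Then $\phi^{-1}U_x\phi$ is a subgroup of $\Sym(X')$, and the homomorphism condition says exactly $\phi^{-1}U_x\phi\subseteq U'_{x\phi}$; indeed the maps $\theta_x$ mentioned above are here just the conjugations $u\mapsto\phi^{-1}u\phi$. I would then show $\phi^{-1}U_x\phi$ is \emph{already} sharply transitive on $X'\setminus\{x\phi\}$: since $U_x$ fixes $x$, the bijection $\phi$ restricts to a bijection $X\setminus\{x\}\to X'\setminus\{x\phi\}$, and the formula $(y\phi)(\phi^{-1}u\phi)=(yu)\phi$ transports the sharp transitivity of $U_x$ on $X\setminus\{x\}$ (axiom \ref{axiom:M1}) across it. Now I invoke the elementary fact that a sharply transitive subgroup $H$ of a sharply transitive permutation group $K$ on a set $\Omega$ must equal $K$: fixing $\omega_0\in\Omega$, any $k\in K$ agrees at $\omega_0$ with the unique $h\in H$ sending $\omega_0$ to $\omega_0 k$, and since both lie in $K$ acting sharply transitively, $k=h$. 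Applying this with $H=\phi^{-1}U_x\phi$ and $K=U'_{x\phi}$ gives $\phi^{-1}U_x\phi=U'_{x\phi}$, i.e.\ $U_x=\phi U'_{x\phi}\phi^{-1}$.

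The one point needing care is this last passage, which must be run set-theoretically rather than by counting, since $X$ may be infinite. I also note that axiom \ref{axiom:M2} is never used, so the statement in fact holds for pre-Moufang sets, with only \ref{axiom:M1} entering.
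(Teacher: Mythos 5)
Your proof is correct. All three implications check out: (i)~$\Rightarrow$~(ii) and (iii)~$\Rightarrow$~(i) are indeed routine unwindings of the definitions (in the latter, the hypothesis $U_x=\phi U'_{x\phi}\phi^{-1}$ gives $U_x\phi=\phi U'_{x\phi}$ and, read at $x=x'\phi^{-1}$, the corresponding equality for $\phi^{-1}$, so both directions are homomorphisms), and the substantive step (ii)~$\Rightarrow$~(iii) is handled properly: the homomorphism condition $U_x\phi\subset\phi U'_{x\phi}$ becomes $\phi^{-1}U_x\phi\subseteq U'_{x\phi}$, conjugation by the bijection $\phi$ transports sharp transitivity of $U_x$ on $X\setminus\{x\}$ to sharp transitivity of $\phi^{-1}U_x\phi$ on $X'\setminus\{x\phi\}$, and your set-theoretic argument that a transitive subgroup of a sharply transitive permutation group must be the whole group forces equality. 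For comparison: the dissertation gives no proof of this proposition at all — it simply cites Lemma~2.3 of Loos's paper on division pairs — so your argument is a genuinely self-contained alternative rather than a variant of the paper's route. What your approach buys is exactly what you point out at the end: the only input is \ref{axiom:M1} for both Moufang sets, so the equivalence already holds at the level of pre-Moufang sets, and the key lemma (sharply transitive overgroup versus transitive subgroup) is elementary and reusable; this is also, in spirit, the mechanism behind the analogous statements for local Moufang sets proved later in the paper via the $\theta$-maps, which in your bijective setting are precisely the conjugations $u\mapsto\phi^{-1}u\phi$, as you note.
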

\begin{proof}
	See Lemma~2.3 in \citenobackref{LoosDivpairs}.
\end{proof}

\subsection{Examples of Moufang sets}

We now give some examples of Moufang sets. There are many different kinds of Moufang sets, and often also many different descriptions of these Moufang sets. The following examples are ones that we have extended to the setting of local Moufang sets in \autoref{part:examples}.

\subsubsection{Projective Moufang sets}

The easiest class of Moufang sets corresponds to projective lines over fields or skew fields. Let $K$ be a (skew) field and set
\[U = (K,+)\text{ and }\tau\colon U\setminus\{0\}\to U\setminus\{0\}\colon x\to -x^{-1}\;.\]
Then $\M(U,\tau)$ is a Moufang set we denote by $\M(K)$ with Hua maps given by $h_x\colon y\mapsto xyx$. It is possible to consider the underlying set $U\cup\{\infty\}$ as $\P^1(K)$, the projective line over $K$ by the identification 
\[x\mapsto [1,x]\text{ and }\infty\mapsto[0,1]\;.\]
With this identification the little projective group becomes $\PSL_2(K)$. We call this Moufang set a \define[Moufang set!projective Moufang set]{projective Moufang set}.

In \cite{DMWeissMSJordanAlgebras}, the projective Moufang sets over fields of characteristic different from $2$ are characterized up to isomorphism:

\begin{theorem}
	Suppose $\M$ is a special Moufang set with abelian, uniquely $2$-divisible root groups and an abelian Hua subgroup. Then there is a field $K$ with $\characteristic(K)\neq2$ such that $\M\cong\M(K)$.
\end{theorem}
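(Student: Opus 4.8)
The plan is to reconstruct a commutative field $K$ directly from $\M$: the additive group will be the root group $U_\infty$, and the multiplication will be recovered from the Hua maps by polarization. Once $K$ is built, one checks that $\M$ is literally the construction $\M(K)$ from the subsection on projective Moufang sets.

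Fix two points $0,\infty\in X$ and write $U:=U_\infty$ additively; this is an abelian group by hypothesis, with identity $0$, and for $x\neq\infty$ let $\alpha_x$ denote the unique element of $U$ with $0\alpha_x=x$. Choose any $e\in X\setminus\{0,\infty\}$ and put $\tau:=\mu_e$. Since $\M$ is special with abelian root groups, the preceding proposition gives $\mu_e^2=\id$, so $\tau$ is an involution swapping $0$ and $\infty$ and $\M=\M(U,\tau)$; moreover the Hua maps $h_x=\tau\mu_x$ lie in the Hua subgroup $H$, which is abelian by hypothesis, they fix both $0$ and $\infty$, and hence (normalizing $U_\infty$ by \ref{axiom:M2} and fixing $0$) each $h_x$ restricts to a group automorphism of $(U,+)$, with $h_e=\id$. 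Extend the assignment by setting $h_0:=0\in\End(U)$. The technical core of the argument is then the following claim, to be extracted from the $\mu$-map and Hua-map identities for special Moufang sets with abelian root groups (the quoted proposition together with the relations in \cite{DMSegevCourseMS}) and using unique $2$-divisibility of $U$ to divide by $2$: the map $x\mapsto h_x$ is a \emph{quadratic} map $U\to\End(U)$, the formula
\[x\cdot y:=\tfrac12\bigl(h_{x+y}-h_x-h_y\bigr)(e)\]
defines a commutative, biadditive multiplication on $K:=U$ with two-sided identity $e$, and one has $h_x(y)=(x\cdot x)\cdot y$ for all $x,y$.

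Granting this claim, the ring axioms follow quickly: biadditivity gives distributivity, symmetry gives commutativity, unique $2$-divisibility of $(U,+)$ forces $\characteristic(K)\neq2$, and since $h_x$ is a bijection of $U$ for $x\neq0$, the identity $h_x(y)=(x\cdot x)\cdot y$ shows that multiplication by $x\cdot x$ is bijective, so $x\cdot x$ — and hence $x$ — is invertible. What remains, and what I expect to be the main obstacle, is \emph{associativity} of $\cdot$: one must deduce $(x\cdot y)\cdot z=x\cdot(y\cdot z)$ from the fundamental identity $h_{xh_y}=h_yh_xh_y$ of the quoted proposition \emph{together with} the commutativity of $H$, which collapses $h_yh_xh_y$ to the shape required by the polarized multiplication. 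It is precisely here that all three hypotheses — special, abelian root groups, abelian Hua subgroup — are used simultaneously, and pushing this computation through cleanly is the heart of the proof.

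Finally, with $K$ a field of characteristic $\neq2$, one checks that $\tau=\mu_e$ acts on $U\setminus\{0\}$ as $x\mapsto-x^{-1}$ (a routine computation with the defining relations of \cite{DMSegevCourseMS}), so that $\M=\M(U,\tau)$ is exactly the construction $\M(K)$; the identity map on $U$ together with $x\mapsto[1,x]$ and $\infty\mapsto[0,1]$ is then a bijective homomorphism $\M\to\M(K)$, hence an isomorphism.
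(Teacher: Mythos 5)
First, note that the paper does not prove this statement itself: it is quoted with a pointer to Theorem~6.1 of \cite{DMWeissMSJordanAlgebras}, and the thesis reruns that method in the local setting in \autoref{constr:ring}--\autoref{thm:Rlocal}. Measured against that argument, your sketch has a genuine gap at exactly the point you label ``the technical core''. You define $x\cdot y:=\tfrac12 h_{x,y}(e)$ with $h_{x,y}=h_{x+y}-h_x-h_y$, so biadditivity of your product \emph{is} the statement that $x\mapsto h_x$ is a quadratic map, and that is not something one can ``extract'' routinely from the special/abelian identities: in the paper's own preliminaries the Jordan-algebra characterization (Corollary~5.12 of \cite{DMSegevIdentitiesMS}) takes biadditivity of $(x,y)\mapsto h_{x,y}$ as an explicit \emph{hypothesis}, precisely because it does not follow from specialness and abelian root groups alone. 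Under the extra assumption of an abelian Hua subgroup it is eventually true, but only as a by-product of the field reconstruction you are attempting, so deferring it is essentially circular unless you give a direct proof, and none is indicated. The same applies to the unproved claims that $e$ is a two-sided identity for your product and that $h_x(y)=(x\cdot x)\cdot y$, and your route to associativity via $h_{xh_y}=h_yh_xh_y$ plus commutativity of $H$ is only announced. Finally, the last step is not quite routine in your setup: since your inverses are abstract ring inverses, identifying $\tau$ with $x\mapsto -x^{-1}$ requires first proving $x\cdot\bigl((-x)\mu_e\bigr)=e$, a computation of the same weight as $ex=x$.

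The actual proof sidesteps the quadraticity issue by polarizing differently: the product is $xy:=\tfrac12\bigl(xh_{e+y}-xh_y-x\bigr)$ (with separate cases when $y=0$ or $y=-e$), i.e.\ $e$ is inserted into the \emph{index} of the Hua map and the resulting operator $R_y=h_{e+y}-h_y-\id$ is applied to $x$. Then additivity in $x$ is automatic, because $R_y$ is a sum of endomorphisms of $(U,+)$; the abelian Hua subgroup gives $R_yR_z=R_zR_y$, hence the interchange law $(xy)z=(xz)y$; the only genuinely computational input is $ex=x$, proved from the special-Moufang identity $x\mu_{x+y}=-y-x+x\mu_y-y$ (cf.\ \autoref{lem:specialsum} and \autoref{le:monoid}); commutativity (put $x=e$), associativity and the remaining distributive law then follow formally; invertibility comes with the explicit formula $x^{-1}=(-x)\mu_e$ (cf.\ \autoref{prop:inverse}), and that same formula yields $x\tau=-x^{-1}$ and the identification with $\M(K)$. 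If you want to keep your symmetric definition of the multiplication, you must first prove that $h$ is quadratic, which in this setting amounts to redoing the cited argument.
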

\begin{proof}
	This is Theorem~6.1 in \citenobackref{DMWeissMSJordanAlgebras}.
\end{proof}

The case where $U$ is not uniquely $2$-divisible has been studied by M.~Gr\"uninger in \cite{GruningerPSL2}. In this case, $\M$ arises from a specific Jordan division algebra.

\subsubsection{Moufang sets from Jordan division algebras}

A more general class of Moufang sets arises from Jordan division algebras. Let $(J,W,1)$ be a Jordan division algebra, then set
\[U = (J,+)\text{ and }\tau\colon U\setminus\{0\}\to U\setminus\{0\}\colon x\to -x^{-1}\;.\]
Again $\M(U,\tau)$ is a Moufang set, and we denote it by $\M(J)$. One interesting property of these Moufang sets is that we can retrieve $W$ from the Moufang set, as
\[h_x = W_x\text{ for all $x\in U\setminus\{0\}$.}\]
This would suggest that, under some conditions, we should be able to get a Jordan division algebra from a Moufang set. This will be the case: assume $\M(U,\tau)$ is a Moufang set with $\tau$ a $\mu$-map, and suppose $U$ is an abelian, uniquely $2$- and $3$-divisible group. For all $x,y\in U$ we set
\[h_{x,y} := h_{x+y}-h_x-h_y\;,\]
where we set $h_0:=0$. Then the following theorem characterizes Moufang sets coming from most Jordan division algebras originally proved in \citenobackref{DMWeissMSJordanAlgebras}:

\begin{theorem}
	If $\M(U,\tau)$ is a Moufang set with $\tau=\mu_e$ and $U$ an abelian, uniquely $2$- and $3$-divisible group. If the map $(x,y)\mapsto h_{x,y}$ is biadditive, then $(U,h,e)$ is a Jordan division algebra.
\end{theorem}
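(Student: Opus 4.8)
The plan is to show directly that $(U,h,e)$ --- with quadratic map $W_x:=\restr{h_x}{U}$ for $x\neq0$ and $W_0:=0$, and with $e$ as the identity element --- satisfies the axioms \ref{axiom:JA1}--\ref{axiom:JA4} of a quadratic Jordan algebra, and then to observe that division is automatic.

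First I would check that $W$ is well defined as a map into $\End(U)$. Since $\tau=\mu_e$ and each $\mu_x$ interchanges $0$ and $\infty$, the element $h_x=\tau\mu_x$ satisfies $U_\infty^{h_x}=U_\infty$; thus conjugation by $h_x$ is an automorphism of the group $U_\infty$, and under the identification $U\cong U_\infty$, $y\mapsto\alpha_y$, this automorphism is exactly $\restr{h_x}{U}$. Hence $W_x$ is an additive automorphism of $(U,+)$, so in particular $W_x$ is invertible; once $(U,W,e)$ is known to be a Jordan algebra, this says every nonzero element is invertible, i.e.\ it is a Jordan division algebra. Because $U$ is uniquely $2$- and $3$-divisible it is a $\Z[1/6]$-module, and we regard $W$ as valued in $\End_{\Z[1/6]}(U)$. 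The biadditivity hypothesis then upgrades $W_{x,y}:=W_{x+y}-W_x-W_y=h_{x,y}$ to a $\Z[1/6]$-bilinear map, and the Moufang-set identity $h_{x\cdot n}=n^2h_x$ (with $x\cdot n=0\alpha_x^n$) together with unique divisibility gives the homogeneity $W_{\lambda x}=\lambda^2W_x$; so $W$ is a quadratic map.

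Next I would reduce to the special case and verify the remaining axioms. By the theorem of Segev quoted above, either $H=1$ or $\M$ is special; but if $H=1$ then $\mu_e\mu_x\in H$ is trivial, so $h_x=\tau\mu_x=\id$ for all $x$, whence for any $x,y$ with $x,y,x+y\neq0$ one gets $h_{x,y}=-\id$ on $U$, and biadditivity forces $-\id=-2\,\id$, hence $U=0$, contradicting $\abs{X}>2$. Thus $\M$ is special with abelian root groups, and we may use the identities $\mu_x^2=\id$, $h_x=h_{-x}$ and $h_yh_xh_y=h_{xh_y}$ valid for such Moufang sets. Now \ref{axiom:JA1} is immediate: $W_e=h_e=\tau\mu_e=\mu_e^2=\id$. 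Axiom \ref{axiom:JA2}, $W_xW_yW_x=W_{yW_x}$, is precisely $h_yh_xh_y=h_{xh_y}$ after interchanging the names $x$ and $y$, the cases with an argument equal to $0$ being absorbed by the convention $W_0:=0$. For \ref{axiom:JA3} I would linearize \ref{axiom:JA2}: substituting $x+z$ for $x$ in $h_xh_yh_x=h_{yh_x}$, expanding each side with $W_{a+c}=W_a+W_c+W_{a,c}$ and collecting the summand homogeneous of degree one in $z$, one obtains
\[
W_xW_yW_{x,z}+W_{x,z}W_yW_x=W_{yW_x,\,yW_{x,z}}\;;
\]
rewriting this via the operators $V$ (using $zV_{x,y}=xW_{z,y}$ and the symmetry $W_{z,y}=W_{y,z}$) is exactly $W_yV_{x,y}=V_{y,x}W_y$, and here the biadditivity of $(x,y)\mapsto h_{x,y}$ is precisely what legitimizes extracting the degree-one component. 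Finally \ref{axiom:JA4} asks that \ref{axiom:JA2}--\ref{axiom:JA3} survive every scalar extension; since $2$ and $3$ are invertible in $U$, all the linearizations of these identities are available, so they are consequences of fully multilinear identities and therefore hold functorially. Hence $(U,W,e)$ is a quadratic Jordan algebra, and by the first part it is a division algebra.

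The main obstacle is the step from the ``bare'' Moufang-set identities --- statements about the permutations $h_x$ and the induced automorphisms of $U$ --- to the full strength demanded by \ref{axiom:JA3}--\ref{axiom:JA4}: all linearizations, and stability under scalar extension. Pinning down exactly which multilinear identities are needed, and checking that unique $2$- and $3$-divisibility supplies each of them (equivalently, that over $\Z[1/6]$ the quadratic axioms collapse to the single linear Jordan identity, which the computation above delivers), is where the genuine work lies; the homogeneity $W_{\lambda x}=\lambda^2W_x$ and the bookkeeping in the linearization of \ref{axiom:JA2} are the likeliest places for a subtlety to hide.
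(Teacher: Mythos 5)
Your overall architecture (reduce to the special case via Segev's theorem, read off (JA1) and (JA2) from the quoted Hua-map identities, get division from invertibility of the Hua maps) matches the cited source --- note that the thesis does not reprove this statement but quotes Corollary~5.12 of De Medts--Segev, whose method is mirrored in Chapter~7 here --- but your decisive step, axiom (JA3), has a genuine gap. The linearization you display is correct: the degree-one-in-$z$ part of $W_{x+z}W_yW_{x+z}=W_{yW_{x+z}}$ is indeed $W_xW_yW_{x,z}+W_{x,z}W_yW_x=W_{yW_x,\,yW_{x,z}}$, and unique $2$- and $3$-divisibility lets you extract it. But this identity is \emph{not} (JA3) ``rewritten via $V$'': it is homogeneous of degree $3$ in $x$, $2$ in $y$ and $1$ in $z$ (total degree $6$ in the parameters), whereas $W_yV_{x,y}=V_{y,x}W_y$ has degree $1$ in $x$ and $3$ in $y$ (total degree $4$); no renaming or specialization ($z=e$, $z=y$, \dots) turns the one into the other. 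More fundamentally, (JA3) is not obtained in the literature as a formal consequence of (JA1), (JA2) and their linearizations: the fundamental formula does not control how the bilinearization $h_{x,y}$ interacts with the triple product. That is exactly where the real work lies in the cited proof and in the parallel local computation here: one feeds in Moufang-set identities beyond $h_yh_xh_y=h_{xh_y}$, chiefly the sum formula $x\mu_{x+y}=-y\cdot2-x+x\mu_y$ (the Moufang-set version of \autoref{lem:specialsum}) and the commutation relation $\mu_x\mu_{x\alpha_y}\mu_y=\mu_y\mu_{x\alpha_y}\mu_x$ (Proposition~5.8(1) of De Medts--Segev, quoted in the preliminaries), from which identities such as $t\mu_{t,x}=-x\cdot2$ and $\mu_s\mu_{s,t}\mu_t=\mu_t\mu_{s,t}\mu_s$ are derived before a long chain of linearizations produces the (JA3)-type axioms (compare \autoref{lem:JMS_identities}, \autoref{prop:JMS_JP1_units} and \autoref{prop:JMS_JP2_units}). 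Tellingly, the redundancy among the axioms runs opposite to what your plan assumes: with no $2$-torsion it is the fundamental formula that follows from the remaining axioms (\autoref{prop:JPsufficientaxioms}), not conversely, so a proof exploiting only (JA2) and its linearizations cannot close.

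Two smaller points. The homogeneity $W_{x\cdot n}=n^2W_x$ is not a free-standing Moufang-set fact: biadditivity alone only gives $W_{x\cdot n}=W_x\cdot n+W_{x,x}\cdot\tfrac{n(n-1)}{2}$, and you need $h_x=h_{-x}$ (hence specialness with abelian root groups) to get $W_{x,x}=2W_x$; so the reduction to the special case must precede that step --- a reordering, not an error. The parts that do work are (JA1); (JA2) as the quoted identity $h_yh_xh_y=h_{xh_y}$; the exclusion of $H=1$ via biadditivity; invertibility of every $W_x$, which gives division once Jordan-ness is known; and, once \emph{all} identities including (JA3) and their full linearizations are available, the strictness argument over $\Z[1/6]$. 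As written, however, the proof cannot be completed without supplying (JA3) from the additional $\mu$-map identities indicated above.
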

\begin{proof}
	This is Corollary~5.12 in \citenobackref{DMSegevIdentitiesMS}.
\end{proof}

It is interesting to remark that this correspondence can also be made with Jordan pairs. In \cite{LoosRogdiv}, O.~Loos introduced the notion of \define{division pairs}, and in \cite{LoosDivpairs} he showed that every division pair gives rise to a Moufang set. As Jordan division pairs are also division pairs, we also get a Moufang set from Jordan division pairs.

\subsubsection{Orthogonal Moufang sets}

One specific type of Moufang sets arising from Jordan division algebras can also be described by quadratic forms. Let $K$ be a field and $W$ a $K$-vector space. A \emph{quadratic form} is a quadratic map $q\colon W\to K$ with corresponding bilinear form $f$ defined by
\[f(x,y) := q(x+y)-q(x)-q(y)\;.\]
We assume $q$ is \emph{anisotropic}, i.e.\ $q(x)=0\iff x=0$. We set
\begin{align*}
	U &= \{(x,t)\in W\times K\mid q(x)=t\}	\\
	\tau&\colon U\setminus\{(0,0)\}\to U\setminus\{(0,0)\}\colon (x,t)\mapsto(xt^{-1},t^{-1})\;.
\end{align*}
Now $U$ is an abelian group with group operation 
\[(x,s)+(y,t) = (x+y,s+t+f(x,y))\;.\]
Furthermore, $\M(U,\tau)$ is a Moufang set we denote by $\M(W,q)$.

\subsubsection{Hermitian Moufang sets}

Hermitian Moufang sets arise from Hermitian forms. Let $K$ be a (skew) field with involution $\ast$ and $W$ a right $K$-vector space. We write 
\[\Lambda = \{t-t^\ast\mid t\in K\}\;.\]
A map $h\colon W\times W\to K$ is \emph{Hermitian} if for all $x,y\in W$ and $s,t\in K$
\[h(xt,ys) = t^\ast h(x,y)s\text{ and }h(x,y)^\ast=h(y,x).\]
A \emph{$\Lambda$-quadratic form} is a map $q\colon W\to K/\Lambda$ such that there is a hermitian map $h$ with
\[q(x+y) = q(x)+q(y)+h(x,y)\text{ and }q(xt)=t^\ast q(x)t\]
for all $x,y\in W$ and $t\in K$. We call $q$ \define[L-quadratic form@$\Lambda$-quadratic form!anisotropic ---]{anisotropic} if $q(x)=0\iff x=0$. We now set
\begin{align*}
	U &= \{(x,t)\in W\times K\mid q(x)-t=\Lambda\}	\\
	\tau&\colon U\setminus\{(0,0)\}\to U\setminus\{(0,0)\}\colon (x,t)\mapsto(xt^{-1},t^{-1})\;.
\end{align*}
Again $U$ is a group with
\[(x,s)\cdot(y,t) = (x+y,s+t+h(y,x))\;.\]
If $h$ is not symmetric, $U$ is not abelian. We get a Moufang set $\M(U,\tau)$ which we denote by $\M(W,q)$.

	\part{The theory of local Moufang sets}\label{part:theory}

	\chapter[Definitions and first properties]{Definitions and\\ first properties}\label{chap:chap2_definitions}
	We define local Moufang sets and try to develop some of the basic theory analogous to the theory of Moufang sets. We soon need the notion of units in local Moufang sets, and we use these to define $\mu$-maps an Hua maps. Finally, we prove that the Hua subgroup equals the two-point stabilizer in local Moufang sets, a result that was known for Moufang sets.

\section{Local Moufang sets}	
\subsection{Defining a local Moufang set}

Local Moufang sets are generalizations of Moufang sets which encompass a larger class of groups with very nice actions. While Moufang sets act on sets, local Moufang sets need a richer structure to act on: sets with an equivalence relation. Informally, this equivalence relations describes when two points are `close'. This idea already pops up in \cite{BartoloneSperaPGL2L}, even though I did not know of this article when I discovered the approach.

\begin{definition}
	A \define{local Moufang set} $\M$\notatlink{M} consists of the following data:
	\begin{nrenumerate}
		\item a set with equivalence relation $(X,\sim)$ with $\abs{\class{X}}>2$;
		\item for each $x\in X$ a \define{root group} $U_x\notatlink{Ux}\leq\Sym(X,\sim)$.
	\end{nrenumerate}
	The \define{little projective group} is the group generated by the root groups, and is usually denoted by $G\notatlink{G}:=\langle U_x\mid x\in X\rangle$. This data must satisfy the following axioms:
	\begin{manualenumerate}[label=\textnormal{(LM\arabic*)},start=0,labelwidth=\widthof{(LM0')}]
		\setcounter{enumi}{-1}
		\item 	If $x\sim y$ for $x,y\in X$, then $\induced{U_x}=\induced{U_y}$\label{axiom:LM0}.
		\item 	For $x\in X$, $U_x$ fixes $x$ and acts sharply transitively on $X\setminus\class{x}$\label{axiom:LM1}.
		\item[\mylabel{axiom:LM1'}{\textnormal{(LM1')}}]%
			For $\class{x}\in \class{X}$, $\induced{U_x}$ fixes $\class{x}$ and acts sharply transitively on $\class{X}\setminus\{\class{x}\}$.
		\item	 For $x\in X$ and $g\in G$, we have $U_x^g = U_{xg}$\label{axiom:LM2}.
	\end{manualenumerate}
	We use the notation $U_{\class{x}}\notatlink{Uxbar}:=\induced{U_x}$, which is justified by \ref{axiom:LM0}.
\end{definition}

It is worth noting that from these axioms, we also get
\begin{manualenumerate}[label=\textnormal{(LM2')},labelwidth=\widthof{(LM0')}]
	\item For $\class{x}\in \class{X}$ and $g\in G$, we have $U_{\class{x}}^g = U_{\class{xg}}$;\label{axiom:LM2'}
\end{manualenumerate}
this follows from \ref{axiom:LM2} and the fact that we are working with the induced action.
By \ref{axiom:LM0} the group $U_{\class{x}}$ only depends on $\class{x}$, and \ref{axiom:LM1'} and \ref{axiom:LM2'} precisely state that $(\class{X},(U_{\class{x}})_{\class{x}\in \class{X}})$ is a Moufang set.

\begin{definition}
	Let $\M=(X,(U_x)_{x\in X})$ be a local Moufang set. The \define{quotient Moufang set} is the Moufang set $(\class{X},(U_{\class{x}})_{\class{x}\in \class{X}})$ we denote by $\induced{M}$\notatlink{Mbar}.
\end{definition}

The fact that this Moufang set is indeed a quotient of $\M$, for some sensible notion of quotient, will be shown in \autoref{prop:quotientMS}.

As in the case of Moufang sets, the axiom \ref{axiom:LM2} is the most restrictive. Hence, we also name the structures which do not necessarily satisfy \ref{axiom:LM2}.

\begin{definition}
	A \define{local pre-Moufang set} consists of the same data as a local Moufang set, but need not satisfy \ref{axiom:LM2}.
\end{definition}

As we intended to generalize Moufang sets, it is worthwhile to notice when a local Moufang set is in fact a Moufang set. The structure we added is the equivalence relation, so as one would expect, we get a Moufang set when this equivalence relation is trivial.

\begin{proposition}
	Let $\M$ be a local (pre-)Moufang set acting on a set $(X,\sim)$. Then $\M$ is a (pre-)Moufang set if and only if $\sim$ is the identity relation.
\end{proposition}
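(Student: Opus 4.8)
The plan is to prove the two implications separately; the forward direction (identity relation $\Rightarrow$ (pre-)Moufang set) is a routine translation of the axioms, while the converse is where the content lies.

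For the forward direction, I would assume $\sim$ is the identity relation. Then each class $\class{x}$ equals $\{x\}$, so $\class{X}$ is canonically identified with $X$; moreover every bijection of $X$ preserves the identity relation, so under this identification $\Sym(X,\sim)=\Sym(X)$ and $\induced{g}=g$. I would then just check that, after these identifications, $\abs{\class{X}}>2$ becomes $\abs{X}>2$, axiom \ref{axiom:LM0} is vacuous, \ref{axiom:LM1} reads verbatim as \ref{axiom:M1} since $X\setminus\class{x}=X\setminus\{x\}$, and \ref{axiom:LM2} reads verbatim as \ref{axiom:M2}; hence $\M$ is a Moufang set, and in the pre-Moufang case only the translation \ref{axiom:LM1}$\leftrightarrow$\ref{axiom:M1} is used.

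For the converse, suppose $\M$ is a (pre-)Moufang set, so \ref{axiom:M1} holds: each $U_x$ fixes $x$ and acts sharply transitively on $X\setminus\{x\}$. The crucial point is that $U_x\leq\Sym(X,\sim)$ together with $U_x$ fixing $x$ forces $U_x$ to stabilize the set $\class{x}$, since any $u\in U_x$ satisfies $\class{x}u=\class{xu}=\class{x}$. Now suppose, for contradiction, that some $y\in\class{x}$ has $y\neq x$. Since $\abs{\class{X}}>2$, there is a point $z$ lying in a class other than $\class{x}$, so $z\notin\class{x}$ and in particular $z\neq x$; by \ref{axiom:M1} there is then $u\in U_x$ with $yu=z$, which contradicts $yu\in\class{x}$. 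Hence $\class{x}=\{x\}$ for all $x\in X$, i.e.\ $\sim$ is the identity relation.

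The only thing requiring care is to keep the two sharp-transitivity conditions distinct: \ref{axiom:LM1} asks for transitivity on $X\setminus\class{x}$ while \ref{axiom:M1} asks for transitivity on $X\setminus\{x\}$, and the whole argument is that a $U_x$ confined to permuting $\class{x}$ within itself cannot act transitively on the strictly larger set $X\setminus\{x\}$ unless $\class{x}$ is already a singleton. Note that \ref{axiom:LM2} is never invoked, which is why the statement holds uniformly for local (pre-)Moufang sets and (pre-)Moufang sets.
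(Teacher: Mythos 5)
Your proof is correct and takes essentially the same approach as the paper: the forward direction is the same axiom-by-axiom translation, and your converse is a contradiction-style rephrasing of the paper's orbit comparison (the paper equates $y^{U_x}=X\setminus\{x\}$ from \ref{axiom:M1} with $y^{U_x}=X\setminus\class{x}$ from \ref{axiom:LM1}, while you map a hypothetical second point of $\class{x}$ out of the class using only that $U_x\leq\Sym(X,\sim)$ fixes $x$). Both rest on the same observation that a root group confined to preserving $\class{x}$ cannot act transitively on $X\setminus\{x\}$ unless $\class{x}=\{x\}$.
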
%
\begin{proof}%
	If $\sim$ is the identity relation, \ref{axiom:LM1} reduces to \ref{axiom:M1}. Hence $\M$ is a pre-Moufang set. Conversely, assume $\M$ is a pre-Moufang set. Take any $x\in X$ and any $y\in X\setminus\class{x}$. By \ref{axiom:M1}, we find $y^{U_x} = X\setminus\{x\}$, but by \ref{axiom:LM1}, we get $y^{U_x} = X\setminus\class{x}$. Hence $\class{x}=\{x\}$ for all $x\in X$, so $\sim$ is the identity relation. Finally, \ref{axiom:LM2} actually coincides with \ref{axiom:M2}, so the same statement holds for Moufang sets and local Moufang sets.
\end{proof}

\subsection{Generation and transitivity}

The little projective group $G$ may seem hard to grasp, but it turns out that it is sufficient to take two root groups in `general position' to generate all of $G$.

\begin{proposition}\label{prop:UxUy}
	Let $\M$ be a local Moufang set, and $x,y\in X$ with $x\nsim y$. Then $\langle U_x,U_y \rangle = G$.
\end{proposition}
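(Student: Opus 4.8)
The plan is to prove $H := \langle U_x, U_y\rangle = G$ by showing that $H$ contains every root group $U_z$ for $z\in X$; since $G = \langle U_z\mid z\in X\rangle$ by definition and clearly $H\leq G$, this yields the claimed equality.

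First I would exploit \ref{axiom:LM2} together with $U_y\leq H$: for every $u\in U_y$ we have $U_x^u = U_{xu}$, and since $U_x\leq H$ and $u\in H$, this conjugate lies in $H$. Now $x\nsim y$ means $x\in X\setminus\class{y}$, so by \ref{axiom:LM1} the sharply transitive action of $U_y$ on $X\setminus\class{y}$ gives $\{\,xu\mid u\in U_y\,\} = X\setminus\class{y}$. Hence $U_z\leq H$ for all $z\in X\setminus\class{y}$. Interchanging the roles of $x$ and $y$ (using $U_x\leq H$ and \ref{axiom:LM1} for $U_x$) yields likewise $U_z\leq H$ for all $z\in X\setminus\class{x}$.

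To conclude, I would use that $x\nsim y$ forces $\class{x}\neq\class{y}$, hence $\class{x}\cap\class{y}=\emptyset$, so $(X\setminus\class{x})\cup(X\setminus\class{y}) = X$. Combining the two cases, $U_z\leq H$ for every $z\in X$, whence $G\leq H$ and therefore $H=G$.

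I do not anticipate any real obstacle here; the only subtlety — and the only place where the argument genuinely differs from its Moufang-set counterpart — is that a single root group $U_y\leq H$ only recovers the root groups at points \emph{outside} $\class{y}$, so one really needs both $U_x$ and $U_y$ together with the disjointness of $\class{x}$ and $\class{y}$, which is exactly what the hypothesis $x\nsim y$ provides.
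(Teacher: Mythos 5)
Your proof is correct and follows essentially the same route as the paper: conjugate $U_x$ by elements of $U_y$ (and vice versa) using \ref{axiom:LM1} for transitivity and \ref{axiom:LM2} for $U_x^g=U_{xg}$, then observe every $z$ lies outside $\class{x}$ or outside $\class{y}$. The paper just phrases the case split as $z\sim x$ versus $z\nsim x$ instead of taking the union of the two orbits, which is the same argument.
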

\begin{proof}
	It is sufficient to show that $U_z\subset\langle U_x,U_y\rangle$ for any $z\in X$. Assume first that $z\sim x$. Then $z\nsim y$, so by \ref{axiom:LM1} there is an element $g\in U_y$ such that $xg=z$. By \ref{axiom:LM2}, $U_z = U_x^g \subset \langle U_x,U_y\rangle$. Similarly, if $z\nsim x$, then there is an element $g\in U_x$ such that $yg = z$, so $U_z = U_y^g \subset \langle U_x,U_y\rangle$.
\end{proof}

This means that it is sufficient to give the set with its equivalence relation and two such root groups to generate all the data of a local Moufang set. A next natural question is whether or not it makes a difference which two such root groups we take. It turns out that every two non-equivalent points play the same role:

\begin{proposition}\label{prop:twotrans}
	Let $\M$ be a local Moufang set. The little projective group $G$ acts transitively on $\{(x,y)\in X^2\mid x\nsim y\}$.
\end{proposition}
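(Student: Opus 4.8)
The plan is to deduce transitivity on non-equivalent pairs from two successive applications of the sharp transitivity built into \ref{axiom:LM1}, without appealing to the quotient Moufang set. The first step is to check that $G$ already acts transitively on $X$: given $x,x'\in X$, the hypothesis $\abs{\class{X}}>2$ supplies an equivalence class different from both $\class{x}$ and $\class{x'}$, and choosing a representative $z$ of it gives $z\nsim x$ and $z\nsim x'$, i.e.\ $x,x'\in X\setminus\class{z}$. Since $U_z$ acts (sharply) transitively on $X\setminus\class{z}$ by \ref{axiom:LM1}, there is $g_1\in U_z\leq G$ with $xg_1=x'$.

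The second step handles pairs. Given non-equivalent pairs $(x,y)$ and $(x',y')$, I would first use the previous step to choose $g_1\in G$ with $xg_1=x'$. Since each root group lies in $\Sym(X,\sim)$ we have $G\leq\Sym(X,\sim)$, so $g_1$ respects $\sim$ and hence also $\nsim$; therefore $yg_1\nsim xg_1=x'$, which places both $yg_1$ and $y'$ in $X\setminus\class{x'}$. Applying \ref{axiom:LM1} to $U_{x'}$ gives $g_2\in U_{x'}$ with $(yg_1)g_2=y'$, and $g_2$ fixes $x'$ because $g_2\in U_{x'}$. Then $g:=g_1g_2\in G$ satisfies $xg=x'$ and $yg=y'$, establishing transitivity.

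No step is genuinely difficult. The only points needing care are that the cardinality hypothesis $\abs{\class{X}}>2$ is exactly what lets us produce the auxiliary point $z$ avoiding two prescribed classes, and that $G$ preserves $\sim$ (hence $\nsim$), which is what makes the second root-group step legitimate. An alternative would be to first match the pairs modulo $\sim$ using the $2$-transitivity of $\induced{G}$ on $\class{X}$ (available because $\induced{\M}$ is a Moufang set) and then correct inside each class, but the direct two-step argument is shorter and self-contained, so that is the route I would take.
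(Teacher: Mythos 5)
Your argument is correct and is essentially the paper's own proof: pick an auxiliary point $z$ whose class avoids both $\class{x}$ and $\class{x'}$ (possible since $\abs{\class{X}}>2$), use $U_z$ to send $x$ to $x'$, then use $U_{x'}$ to move the image of $y$ to $y'$ while fixing $x'$. The only difference is that you spell out explicitly that $G\leq\Sym(X,\sim)$ preserves $\nsim$, which the paper leaves implicit when it writes "$(x,y)\cdot g=(x',y'')$ for some $y''\nsim x'$".
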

\begin{proof}
	Let $(x,y)$ and $(x',y')$ be such pairs. We will first map $x$ to $x'$. Since $\abs{\class{X}}>2$, there is a point $z\in X$ such that $x\nsim z$ and $x'\nsim z$. By \ref{axiom:LM1}, there is an element $g\in U_z$ s.t.\ $x\cdot g=x'$, and hence $(x,y)\cdot g = (x',y'')$ for some $y''\nsim x'$.
	
	So, after renaming, we have reduced the question to finding an element mapping $(x,y)$ to $(x,y')$. Since $y\nsim x$ and $y'\nsim x$, there is an element $g\in U_x$ mapping $y$ to $y'$, hence $(x,y)\cdot g = (x,y')$.
\end{proof}

These two propositions indicate that we can choose two points and do computations involving the two corresponding root groups.

\begin{definition}
	Let $(X,\sim)$ be a set with equivalence relation. We say a tuple of points $(x,y)$ in $X$ is a \define{basis} if $x\nsim y$.
\end{definition}

In the theory of Moufang sets, there are big differences between Moufang sets that act sharply transitive on the bases, and those whose transitive action is not sharp.

\begin{definition}
	When the little projective group of a local Moufang set acts sharply transitively on $\{(x,y)\in X^2\mid x\nsim y\}$, we call it \define[local Moufang set!improper local Moufang set]{improper}. Otherwise, we say we have a \define[local Moufang set!proper local Moufang set]{proper} local Moufang set.
\end{definition}

To simplify matter, we will henceforth fix a basis.

\begin{notation}\preenum
	\begin{itemize}
		\item We fix a basis $(0,\infty)$ in $X$\notatlink{0infty}.
		\item For any $x\nsim\infty$, we write $\alpha_x$\notatlink{alphax} for the unique element of $U_\infty$ mapping $0$ to $x$ (which exists by \ref{axiom:LM1}).
		\item For $x\nsim\infty$, we set $-x:=0\cdot\alpha_x^{-1}$.
	\end{itemize}
\end{notation}

As a consequence of this notation, we have $\alpha_0 = \id$ and $\alpha_x^{-1} = \alpha_{-x}$ for $x\nsim\infty$.

\subsection{Units}

In this paragraph, we will assume we have a local Moufang set with fixed basis $(0,\infty)$.

\begin{definition}
	We call $x\in X$ a \defin{unit} if $x\nsim0$ and $x\nsim\infty$. We denote $U_\infty^\times=\{\alpha_x\mid x\text{ a unit}\}$\notatlink{Uinftimes} and $U_\infty^\circ=U_\infty\backslash U_\infty^\times$\notatlink{Uinfcirc}.
\end{definition}

While the name `unit' may seem odd at this time, it originates from the projective local Moufang sets, where these `units' correspond to invertible elements (see \autoref{prop:inverse}). These elements will pop up everywhere in the theory of local Moufang sets. Note that the definition of a unit depends on the choice of basis.

There are a few other ways of characterizing the units, based on their corresponding elements of $U_\infty$.

\begin{proposition}\label{prop:unit}
	Let $\M$ be a local Moufang set, and $x\in X\setminus\class{\infty}$. Then the following are equivalent:
	\begin{romenumerate}
		\item $x$ is a unit\label{itm:unit1};
		\item $\induced{\alpha_x}$ does not fix $\class{0}$\label{itm:unit2};
		\item $\induced{\alpha_x}$ does not fix any element of $\class{X} \setminus \class{\infty}$\label{itm:unit3}.
	\end{romenumerate}
\end{proposition}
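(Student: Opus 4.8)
The plan is to prove the chain of implications $\ref{itm:unit1}\Rightarrow\ref{itm:unit3}\Rightarrow\ref{itm:unit2}\Rightarrow\ref{itm:unit1}$, working throughout in the quotient Moufang set $\induced{\M}$, whose root groups are the induced groups $U_{\class x}$. The key observation is that the induced map $\induced{\alpha_x}$ is just the unique element of $U_{\class\infty}$ sending $\class 0$ to $\class x$ (this is immediate from the definitions: $\alpha_x\in U_\infty$ sends $0$ to $x$, so its image in $U_{\class\infty}$ sends $\class 0$ to $\class x$, and uniqueness comes from sharp transitivity of $U_{\class\infty}$ on $\class X\setminus\{\class\infty\}$, i.e.\ from \ref{axiom:M1} for $\induced{\M}$). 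So $\induced{\alpha_x} = \alpha_{\class x}$ in the notation for the quotient Moufang set.

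For $\ref{itm:unit2}\Rightarrow\ref{itm:unit1}$: suppose $x$ is not a unit. Since $x\notin\class\infty$ by hypothesis, the only remaining possibility is $x\sim 0$, i.e.\ $\class x=\class 0$. Then $\induced{\alpha_x}=\alpha_{\class x}$ is the element of $U_{\class\infty}$ sending $\class 0$ to $\class x=\class 0$; by sharp transitivity this forces $\induced{\alpha_x}=\id$, which in particular fixes $\class 0$. Contrapositive gives $\ref{itm:unit2}\Rightarrow\ref{itm:unit1}$.

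For $\ref{itm:unit1}\Rightarrow\ref{itm:unit3}$: assume $x$ is a unit, so $\class x\neq\class 0$ and $\class x\neq\class\infty$, meaning $\class x$ is a "unit" in $\induced{\M}$ for the basis $(\class 0,\class\infty)$. We must show $\alpha_{\class x}$ fixes no element of $\class X\setminus\{\class\infty\}$. Since $\alpha_{\class x}\in U_{\class\infty}$ and $U_{\class\infty}$ acts sharply transitively on $\class X\setminus\{\class\infty\}$, any nonidentity element of $U_{\class\infty}$ is fixed-point-free on $\class X\setminus\{\class\infty\}$; and $\alpha_{\class x}\neq\id$ because it sends $\class 0$ to $\class x\neq\class 0$. (It also fixes $\class\infty$, but that is excluded from the set in question.) Finally $\ref{itm:unit3}\Rightarrow\ref{itm:unit2}$ is trivial, as $\class 0\in\class X\setminus\{\class\infty\}$.

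The argument is essentially a routine translation into the quotient, so there is no serious obstacle; the only point requiring a little care is the identification $\induced{\alpha_x}=\alpha_{\class x}$ and the repeated appeal to the fact that, in a Moufang set, sharp transitivity of a root group on its complement means every nontrivial root-group element is free of fixed points there. One should also note explicitly at the outset that $x\notin\class\infty$ together with "$x$ not a unit" leaves only $x\sim 0$, which is what makes \ref{itm:unit1} equivalent to the two conditions about $\induced{\alpha_x}$ rather than strictly stronger.
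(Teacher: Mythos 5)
Your proof is correct and uses the same essential ingredients as the paper: the observation that $\induced{\alpha_x}$ sends $\class{0}$ to $\class{x}$, and the sharp transitivity \ref{axiom:LM1'} of $U_{\class{\infty}}$ on $\class{X}\setminus\{\class{\infty}\}$, which forces a nontrivial element to be fixed-point-free there. The only cosmetic difference is that you organize it as a cycle of implications while the paper proves the two equivalences \ref{itm:unit1}$\Leftrightarrow$\ref{itm:unit2} and \ref{itm:unit2}$\Leftrightarrow$\ref{itm:unit3} directly.
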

\begin{proof}\preenum
	\begin{itemize}[labelindent=5em,leftmargin=*]
	\item[{\ref{itm:unit1} $\Leftrightarrow$ \ref{itm:unit2}}.] We have $x = 0\alpha_x\in \class{0}\induced{\alpha_x}$, so
	\[x\text{ is a unit}\iff x\nsim0 \iff x\not\in \class{0}\iff \class{0}\neq\class{0} \cdot \induced{\alpha_x}\,.\]	
	\item[{\ref{itm:unit2} $\Leftrightarrow$ \ref{itm:unit3}}.] The induced permutation $\induced{\alpha_x}$ is contained in $U_{\class{\infty}}$. By \ref{axiom:LM1'}, this element fixes either all elements or no elements of $\class{X}\setminus\{\class{\infty}\}$.
    \qedhere
    \end{itemize}
\end{proof}

\begin{corollary}
	Let $\M$ be a local Moufang set, and $x\in X$ with $x\nsim\infty$. Then $x$ is a unit if and only if $-x$ is a unit.
\end{corollary}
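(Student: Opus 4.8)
The statement to prove is the corollary: for a local Moufang set with basis $(0,\infty)$ and $x \in X$ with $x \nsim \infty$, the element $x$ is a unit if and only if $-x$ is a unit. The plan is to reduce everything to the characterization of units given in \autoref{prop:unit}, specifically the equivalence of \ref{itm:unit1} and \ref{itm:unit2}: $x$ is a unit if and only if $\induced{\alpha_x}$ does not fix $\class 0$. Since $-x = 0\alpha_x^{-1}$ and $\alpha_{-x} = \alpha_x^{-1}$ (noted just after the definition of $-x$), applying the same characterization to $-x$ says: $-x$ is a unit if and only if $\induced{\alpha_x^{-1}}$ does not fix $\class 0$.

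So the whole proof comes down to the elementary observation that a permutation $g$ of $\class X$ fixes $\class 0$ if and only if $g^{-1}$ fixes $\class 0$, applied to $g = \induced{\alpha_x}$ (and using that $\induced{\alpha_x^{-1}} = (\induced{\alpha_x})^{-1}$, since passing to the induced action is a group homomorphism). First I would note $\class 0 \cdot \induced{\alpha_x} = \class 0 \iff \class 0 = \class 0 \cdot (\induced{\alpha_x})^{-1}$. Then:
\[
x \text{ is a unit} \iff \induced{\alpha_x} \text{ does not fix } \class 0 \iff (\induced{\alpha_x})^{-1} \text{ does not fix } \class 0 \iff \induced{\alpha_{-x}} \text{ does not fix } \class 0 \iff -x \text{ is a unit},
\]
where the outer equivalences are \autoref{prop:unit}, the middle one is the trivial fact about inverses, and the third uses $\alpha_{-x} = \alpha_x^{-1}$ together with functoriality of the induced action.

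There is essentially no obstacle here — the corollary is a one-line consequence of \autoref{prop:unit} once one unwinds the notation. The only mild point of care is to make sure one may indeed write $\induced{\alpha_x^{-1}} = (\induced{\alpha_x})^{-1}$; this holds because $\Sym(X,\sim) \to \Sym(\class X)\colon g \mapsto \induced g$ is a group homomorphism, which is implicit in the setup (it is exactly what makes the notation $U_{\class x} = \induced{U_x}$ and axiom \ref{axiom:LM2'} make sense). Alternatively, one could instead use the equivalence \ref{itm:unit1} $\iff$ \ref{itm:unit3}: since $\induced{\alpha_x}$ fixes some element of $\class X \setminus \class\infty$ iff it fixes all of them, and a permutation and its inverse have the same fixed points, the argument runs identically. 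I would present the first version as it is the most direct.
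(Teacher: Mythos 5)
Your proposal is correct and matches the paper's own argument: the paper also deduces the corollary from \autoref{prop:unit}\ref{itm:unit2} together with the identity $\induced{\alpha_{-x}} = \induced{\alpha_x}^{-1}$, which is exactly your reduction. Your extra remark that the induced-action map $g\mapsto\induced{g}$ is a group homomorphism just makes explicit what the paper leaves implicit.
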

\begin{proof}
	This follows from \autoref{prop:unit}\ref{itm:unit2}, as $\induced{\alpha_{-x}} = \induced{\alpha_x}^{-1}$.
\end{proof}

Using the equivalent descriptions of units, we can now give another description of $U_\infty^\times$ and $U_\infty^\circ$:

\begin{corollary}
	The following equalities hold:
	\begin{align*}
		U_\infty^\times &= \{g\in U_\infty \mid \induced{g}\text{\textnormal{ does not fix any element of $\class{X} \setminus \class{\infty}$}}\} \\
			&= \{g\in U_\infty \mid \induced{g}\neq\id\}\;, \\
		U_\infty^\circ &= \{g\in U_\infty \mid \induced{g}\text{\textnormal{ fixes $\class{X}$}}\}
			= \{g\in U_\infty \mid \induced{g}=\id\}\;.
	\end{align*}
\end{corollary}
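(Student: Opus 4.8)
The plan is to deduce all four equalities from \autoref{prop:unit} together with sharp transitivity of the quotient Moufang set. First I would record the obvious bookkeeping fact that, since $0\nsim\infty$, axiom \ref{axiom:LM1} makes $x\mapsto\alpha_x$ a bijection from $X\setminus\class{\infty}$ onto $U_\infty$; so it suffices to decide, for each $x\nsim\infty$, to which of the displayed sets the element $\alpha_x$ belongs.

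For the first line, $\alpha_x\in U_\infty^\times$ means by definition that $x$ is a unit, and by \autoref{prop:unit}\ref{itm:unit3} this is equivalent to $\induced{\alpha_x}$ fixing no element of $\class{X}\setminus\class{\infty}$; that is the first equality. For the second equality, I would note that $\induced{\alpha_x}\in U_{\class{\infty}}$, so by \ref{axiom:LM1'} it fixes $\class{\infty}$ and acts sharply transitively on $\class{X}\setminus\{\class{\infty}\}$. Hence there is a clean dichotomy: either $\induced{\alpha_x}=\id$, in which case it fixes all of $\class{X}$, or $\induced{\alpha_x}\neq\id$, in which case it fixes no point of $\class{X}\setminus\class{\infty}$. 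This turns ``fixes no element of $\class{X}\setminus\class{\infty}$'' into ``$\induced{\alpha_x}\neq\id$'', completing the first line.

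For the second line I would simply pass to complements in $U_\infty$, using $U_\infty^\circ=U_\infty\setminus U_\infty^\times$: an element $g\in U_\infty$ lies in $U_\infty^\circ$ exactly when it fails the first-line condition, i.e.\ when $\induced{g}$ fixes some point of $\class{X}\setminus\class{\infty}$. By the same dichotomy this occurs precisely when $\induced{g}=\id$, and since $\induced{g}$ automatically fixes $\class{\infty}$ this is the same as $\induced{g}$ fixing all of $\class{X}$; both descriptions of $U_\infty^\circ$ then follow at once.

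The whole argument is essentially bookkeeping with the definitions and \autoref{prop:unit}. The only point that needs (minimal) care is the sharp-transitivity dichotomy in the quotient Moufang set, and that is exactly the reasoning already used for the implication \ref{itm:unit2} $\Leftrightarrow$ \ref{itm:unit3} in the proof of \autoref{prop:unit}, so I do not anticipate any genuine obstacle.
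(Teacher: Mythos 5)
Your proof is correct and follows essentially the same route as the paper: the first equality via \autoref{prop:unit}\ref{itm:unit3}, and the second via the sharp-transitivity dichotomy of \ref{axiom:LM1'} (an element of $U_{\class{\infty}}$ fixing one point of $\class{X}\setminus\{\class{\infty}\}$ must be the identity), with the $U_\infty^\circ$ statements obtained by complementation. The extra bookkeeping about the bijection $x\mapsto\alpha_x$ is fine but not a genuinely different argument.
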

\begin{proof}
	The first equality is immediate using \autoref{prop:unit}\ref{itm:unit3}. By \ref{axiom:LM1'}, $\induced{g}$ fixes one element of $\class{X} \setminus \class{\infty}$ if and only if it fixes all of $\class{X}$, which proves the second equality.
\end{proof}

As some of these descriptions do not depend on the choice of basis, we can use them in all root groups:

\begin{definition}
	Let $x\in X$, then we define 
	\begin{align*}
		U_x^\times &= \{g\in U_x \mid \induced{g}\neq\id\}\notatlink{Uxtimes} \\
		U_x^\circ &= U_x\backslash U_x^\times = \Ker(U_x\to U_{\class{x}})\notatlink{Uxcirc}\;.
	\end{align*}
\end{definition}

As for $\infty$, the elements of $U_x^\times$ are precisely those that do not fix $\class{y}$ for any $y\nsim x$. By these characterizations, we can see that $(U_x^\times)^g = U_{x\cdot g}^\times$ and $(U_x^\circ)^g = U_{x\cdot g}^\circ$.
	

\section{\texorpdfstring{$\mu$}{\textit{\textmugreek}}-maps and Hua maps}	
\subsection{\texorpdfstring{$\mu$}{\textit{\textmugreek}}-maps}

In a local Moufang set with a fixed basis $(0,\infty)$, we know by \autoref{prop:twotrans} that there must be an element of $G$ swapping these two points. We look at the double cosets $U_0\alpha_x U_0$, and find that there often is an element switching our two points.

\begin{proposition}\label{prop:mu_defin}
	For each unit $x\in X$, there is a unique element $\mu_x\in U_0\alpha_x U_0$ such that $0\mu_x=\infty$ and $\infty\mu_x=0$;
	it is called the \define[m-map@$\mu$-map]{$\mu$-map} corresponding to $x$.
	
	Moreover, $\mu_x\notatlink{mux} = g\alpha_x h\in U_0^\times\alpha_x U_0^\times$, where $g$ is the unique element of $U_0$ mapping $\infty$ to $-x$ and $h$ is the unique element of $U_0$ mapping $x$ to $\infty$.
\end{proposition}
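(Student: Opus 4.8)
The plan is to follow the template of the corresponding statement for Moufang sets (the proof of Proposition~4.1.1 in \cite{DMSegevCourseMS}): I would produce $\mu_x$ explicitly as a product $g\alpha_x h$, check directly that it swaps $0$ and $\infty$, deduce uniqueness from sharp transitivity, and only then verify that the two $U_0$-factors actually lie in $U_0^\times$. The first two steps use nothing beyond~\ref{axiom:LM1}; the genuinely new content is the last step, where the equivalence relation enters.

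For existence, I would first note that since $x$ is a unit we have $x\nsim 0$, and $\infty\nsim 0$ since $(0,\infty)$ is a basis, so $x$ and $\infty$ both lie in $X\setminus\class{0}$; by the sharp transitivity of $U_0$ on $X\setminus\class{0}$ there is a unique $h\in U_0$ with $xh=\infty$. The corollary to \autoref{prop:unit} gives that $-x$ is also a unit, so $-x\nsim 0$, and there is a unique $g\in U_0$ with $\infty g=-x$. Setting $\mu_x:=g\alpha_x h\in U_0\alpha_x U_0$, one computes $0\mu_x = 0\alpha_x h = xh = \infty$ (as $g$ fixes $0$) and $\infty\mu_x = (\infty g)\alpha_x h = (-x)\alpha_x h = 0h = 0$ (using $-x = 0\alpha_x^{-1}$, so $(-x)\alpha_x=0$, and that $h$ fixes $0$). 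So $\mu_x$ has the claimed action.

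For uniqueness, I would take any $\mu = g'\alpha_x h'$ with $g',h'\in U_0$ that swaps $0$ and $\infty$. From $0\mu = xh' = \infty$ and sharp transitivity of $U_0$ on $X\setminus\class{0}$ we get $h'=h$; cancelling $h$ (which fixes $0$) in $\infty\mu = 0$ yields $\infty g'\alpha_x = 0$, i.e.\ $\infty g' = 0\alpha_x^{-1} = -x$, hence $g'=g$ by sharp transitivity again. Thus $\mu=\mu_x$, and in particular the representative $g\alpha_x h$ is the one described.

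It then remains to show $g,h\in U_0^\times$, i.e.\ that $\induced{g}$ and $\induced{h}$ are nontrivial on $\class{X}$. If $\induced{h}=\id$ then $\class{\infty} = \class{xh} = \class{x}\,\induced{h} = \class{x}$, contradicting $x\nsim\infty$ (since $x$ is a unit); so $h\in U_0^\times$. If $\induced{g}=\id$ then $\class{-x} = \class{\infty g} = \class{\infty}\,\induced{g} = \class{\infty}$, contradicting $-x\nsim\infty$; so $g\in U_0^\times$. Hence $\mu_x\in U_0^\times\alpha_x U_0^\times$. I do not expect a real obstacle: everything reduces to sharp transitivity of $U_0$ on $X\setminus\class{0}$ and to working in the quotient Moufang set $(\class{X},(U_{\class{x}}))$. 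The only points that need care are invoking unit-ness of \emph{both} $x$ and $-x$ at the right moments, and recalling that the description of $U_0^\times$ used in the final step rests on~\ref{axiom:LM1'} and~\ref{axiom:LM0}.
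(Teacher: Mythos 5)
Your proof is correct and follows essentially the same route as the paper: produce $g,h$ via sharp transitivity of $U_0$ on $X\setminus\class{0}$, read off uniqueness from the same sharp transitivity, and deduce $g,h\in U_0^\times$ from $x\nsim\infty$ and $-x\nsim\infty$ using the induced action on $\class{X}$. The paper merely compresses your existence-plus-uniqueness steps into one translation of the two conditions on an arbitrary element $g\alpha_x h\in U_0\alpha_x U_0$; the content is identical.
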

\begin{proof}
	Let $g\alpha_xh$ be an element of $U_0\alpha_x U_0$, then the conditions translate to
	\[\infty=0g\alpha_xh=xh\quad\text{and}\quad \infty=0h^{-1}\alpha^{-1}_x g^{-1}=(-x) g^{-1}\,,\]
	so $g$ is the unique element of $U_0$ mapping $\infty$ to $-x$, and $h$ is the unique element of $U_0$ mapping $x$ to $\infty$. Since $x\nsim\infty$, both $g$ and $h$ are in $U_0^\times$, and since both $g$ and $h$ are unique, so is $\mu_x$.
\end{proof}

Some properties of the $\mu$-maps:

\begin{lemma}Let $e$\notatlink{e} be a unit and write $\tau = \mu_e$. Then
	\begin{romenumerate}
		\item $U_0^\tau = U_\infty$ and $U_\infty^\tau = U_0$.\label{itm:Utau}
		\item $G = \langle U_0,U_\infty\rangle = \langle U_\infty, \tau\rangle$.
		\item Let $x \in X$. Then $x$ is a unit if and only if $x\tau$ is a unit.
	\end{romenumerate}
\end{lemma}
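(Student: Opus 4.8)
The plan is to treat the three parts in turn, leaning on the single observation that $\tau=\mu_e$ lies in $G$ — indeed, by \autoref{prop:mu_defin} we may write $\tau=g\alpha_eh$ with $g,h\in U_0^\times$ and $\alpha_e\in U_\infty$ — together with the defining properties $0\tau=\infty$ and $\infty\tau=0$. Granting this, part (i) is immediate from \ref{axiom:LM2}: since $\tau\in G$, we get $U_0^\tau=U_{0\tau}=U_\infty$ and $U_\infty^\tau=U_{\infty\tau}=U_0$.

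For part (ii), the equality $G=\langle U_0,U_\infty\rangle$ is exactly \autoref{prop:UxUy} applied to the pair $(0,\infty)$, which is a basis since $0\nsim\infty$. For the second equality I would argue by mutual inclusion: from $\tau=g\alpha_eh\in\langle U_0,U_\infty\rangle$ we get $\langle U_\infty,\tau\rangle\subseteq\langle U_0,U_\infty\rangle$, while part (i) gives $U_0=\tau^{-1}U_\infty\tau\subseteq\langle U_\infty,\tau\rangle$, which yields the reverse inclusion.

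For part (iii), the key point is that $\tau\in\Sym(X,\sim)$ preserves $\sim$, hence induces a bijection $\induced{\tau}$ of $\class{X}$, and this bijection interchanges $\class{0}$ and $\class{\infty}$ because $0\tau=\infty$ and $\infty\tau=0$. Now $x$ is a unit precisely when $\class{x}\neq\class{0}$ and $\class{x}\neq\class{\infty}$; applying the bijection $\induced{\tau}$, which sends $\class{x}$ to $\class{x\tau}$, turns these conditions into $\class{x\tau}\neq\class{\infty}$ and $\class{x\tau}\neq\class{0}$, i.e.\ $x\tau$ is a unit. The converse follows by the same argument with $\tau$ replaced by $\tau^{-1}\in G$, whose induced action also swaps $\class{0}$ and $\class{\infty}$; equivalently, apply the forward implication to the unit $x\tau$ and the element $\tau^{-1}$.

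I do not expect a genuine obstacle here: all three statements rest on $\tau$ being an element of $G$ (so that \ref{axiom:LM2} applies) together with the elementary fact that $\induced{\tau}$ is a $\sim$-preserving bijection of $\class{X}$ swapping $\class{0}$ and $\class{\infty}$. The only thing worth flagging is that part (iii) does not actually use that $\tau$ is a $\mu$-map — only that it interchanges $0$ and $\infty$ — so the same conclusion would hold for any such element of $G$.
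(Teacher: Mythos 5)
Your proof is correct and follows essentially the same route as the paper: (i) is (LM2) applied to $\tau=\mu_e\in G$ via \autoref{prop:mu_defin}, (ii) combines \autoref{prop:UxUy} with (i), and (iii) uses only that $\tau$ preserves $\sim$ and swaps $0$ and $\infty$, exactly as in the paper. Your closing remark that (iii) needs only an equivalence-preserving element interchanging $0$ and $\infty$, not specifically a $\mu$-map, is also accurate.
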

\begin{proof}\preenum
	\begin{romenumerate}
		\item This follows immediately from \ref{axiom:LM2} and \autoref{prop:mu_defin}.
		\item The fact that $G = \langle U_0,U_\infty \rangle$ already follows from \autoref{prop:UxUy}, and the second equality $\langle U_0,U_\infty\rangle = \langle U_\infty, \tau\rangle$ then follows from~\ref{itm:Utau}.
		\item Since $\tau$ preserves the equivalence and switches $0$ and $\infty$, we have $x\sim0\iff x\tau\sim\infty$ and $x\sim\infty\iff x\tau\sim0$.
        \qedhere
	\end{romenumerate}
\end{proof}

Before, we noticed that two root groups were sufficient to generate all root groups. Now, we have seen that one root group and a $\mu$-map suffices.

\begin{notation}\preenum
	\begin{itemize}
		\item We henceforth fix a $\mu$-map and call it $\tau$\notatlink{tau} (one always exists as $\abs{\class{X}}>2$).
		\item For each $x\nsim\infty$, we define $\gamma_x := \alpha_x^\tau\in U_0$\notatlink{gammax}, which is the unique element of $U_0$ mapping $\infty$ to $x\tau$.
	\end{itemize}
\end{notation}

Many of the following identities will be crucial in later calculations.

\begin{lemma}\label{lem:mu}Let $x$ be a unit, and set $\til x := (-(x\tau^{-1}))\tau$\notatlink{tilx}.
Then
	\begin{romenumerate}
		\item $\mu_x$ does not depend on the choice of $\tau$; \label{itm:muindep}
		\item $\mu_x = \alpha_{(-x)\tau^{-1}}^\tau\,\alpha_x\,\alpha_{-(x\tau^{-1})}^\tau$; \label{itm:muform}
		\item $\mu_{-x} = \mu_x^{-1}$; \label{itm:muinv}
		\item $\mu_{x\tau} = \mu_{-x}^\tau$; \label{itm:mutau}
		\item $\mu_{x} = \alpha_x\alpha_{-(x\tau^{-1})}^\tau\,\alpha_{-\til x}$; \label{itm:muform2}
		\item $\til x = -((-x)\mu_x)$;\label{itm:tilmu}
		\item $\til x$ does not depend on the choice of $\tau$.
		\item $\mu_{-x} = \alpha_{-\til x}\mu_{-x}\alpha_x\mu_{-x}\alpha_{\til -x}$. \label{itm:muform3}
	\end{romenumerate}
\end{lemma}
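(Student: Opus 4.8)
The plan is to obtain identity (viii) as a purely formal consequence of items (ii), (iii) and (v), together with the bookkeeping rules $\alpha_y^{-1}=\alpha_{-y}$ and $\gamma_y:=\alpha_y^\tau$ (so that $\gamma_y^{-1}=\gamma_{-y}$ and $(\alpha_y^\tau)^{-1}=\alpha_{-y}^\tau$). First I would record that, since $x$ is a unit, so are $-x$, $x\tau^{-1}$, $\til x$ and $\til{-x}$: this uses the corollary to \autoref{prop:unit} together with the fact that $\tau$ preserves $\sim$ and swaps $0$ and $\infty$. Consequently $\alpha_{\til x}$, $\alpha_{\til{-x}}$ and $\mu_{-x}$ are all defined, and items (ii) and (v) may be applied with $x$ replaced by $-x$.

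The core of the argument is to evaluate the inner product $\mu_{-x}\alpha_x\mu_{-x}$ and recognise it. For the left-hand factor I would invert the formula (v), which together with (iii) gives
\[\mu_{-x}=\mu_x^{-1}=\bigl(\alpha_x\,\gamma_{-(x\tau^{-1})}\,\alpha_{-\til x}\bigr)^{-1}=\alpha_{\til x}\,\gamma_{x\tau^{-1}}\,\alpha_{-x}\;.\]
For the right-hand factor I would use (v) applied to $-x$, namely $\mu_{-x}=\alpha_{-x}\,\gamma_{-((-x)\tau^{-1})}\,\alpha_{-\til{-x}}$. Multiplying the two expressions around $\alpha_x$, the three adjacent factors $\alpha_{-x}\alpha_x\alpha_{-x}$ collapse to $\alpha_{-x}$, leaving
\[\mu_{-x}\alpha_x\mu_{-x}=\alpha_{\til x}\,\bigl(\gamma_{x\tau^{-1}}\,\alpha_{-x}\,\gamma_{-((-x)\tau^{-1})}\bigr)\,\alpha_{-\til{-x}}\;.\]
By item (ii) applied to $-x$ (using $-(-x)=x$), the bracketed factor is precisely $\mu_{-x}$; hence $\mu_{-x}\alpha_x\mu_{-x}=\alpha_{\til x}\,\mu_{-x}\,\alpha_{-\til{-x}}$. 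Multiplying on the left by $\alpha_{-\til x}=\alpha_{\til x}^{-1}$ and on the right by $\alpha_{\til{-x}}=\alpha_{-\til{-x}}^{-1}$ then gives $\alpha_{-\til x}\,\mu_{-x}\,\alpha_x\,\mu_{-x}\,\alpha_{\til{-x}}=\mu_{-x}$, which is exactly identity (viii).

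The only real difficulty is the bookkeeping: one must keep $\til{-x}$ and $-\til x$ (and $(-x)\tau^{-1}$ and $-(x\tau^{-1})$) strictly separate, since without a specialness assumption they need not coincide, and one must invert the three-term product in (v) carefully via $\gamma_y^{-1}=\gamma_{-y}$ and $\alpha_y^{-1}=\alpha_{-y}$. Once the indices are tracked honestly, every cancellation is forced, and no geometric input beyond items (ii), (iii) and (v) is required.
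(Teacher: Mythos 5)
Your argument for item (viii) is correct: inverting (v), applying (v) to $-x$, and recognising the middle block $\gamma_{x\tau^{-1}}\,\alpha_{-x}\,\gamma_{-((-x)\tau^{-1})}$ as $\mu_{-x}$ via (ii) applied to $-x$ does force all the cancellations, and your unit bookkeeping (that $-x$, $x\tau^{-1}$, $\til x$ and $\til{-x}$ are units) is exactly what is needed for the various $\alpha$'s, $\gamma$'s and $\mu_{-x}$ to be defined. Note that you only treat (viii), taking (ii), (iii), (v) as given; that matches the lemma's internal logical order, but a proof of the whole statement would still need (i)--(vii). Your route for (viii) is genuinely different from the paper's: there, one uses the fact that identity (v) is independent of the choice of $\tau$, substitutes $-x$ for $x$ and $\mu_{-x}$ for $\tau$ (legitimate because $\mu_{-x}$ is itself a $\mu$-map interchanging $0$ and $\infty$), rewrites $-((-x)\mu_{-x}^{-1})$ as $\til x$ via (vi), and then rearranges. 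You instead keep the fixed $\tau$ throughout and deduce (viii) purely from (ii), (iii), (v) by evaluating $\mu_{-x}\alpha_x\mu_{-x}$, so your version needs neither (vi), (vii) nor the observation that $\mu$-maps are admissible substitutes for $\tau$, and it makes every cancellation explicit. What the paper's substitution trick buys is that it exhibits a technique reused later (e.g.\ in the proofs of \autoref{prop:sumform} and \autoref{prop:specialmu}), whereas your computation is the more self-contained and elementary derivation of this particular identity; both are valid.
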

\begin{proof}\preenum
	\begin{romenumerate}
		\item This follows from the definition of $\mu_x$.
		\item By \autoref{prop:mu_defin}, $\mu_x = g\alpha_x h$, where $g$ is the unique element of $U_0$ mapping $\infty$ to $-x$ and $h$ is the unique element of $U_0$ mapping $x$ to $\infty$. Hence $g = \gamma_{(-x)\tau^{-1}} = \alpha_{(-x)\tau^{-1}}^\tau$ and $h = \gamma_{x\tau^{-1}}^{-1} = \alpha_{x\tau^{-1}}^{-\tau} = \alpha_{-(x\tau^{-1})}^\tau$.
		\item As $\mu_x$ swaps $0$ and $\infty$, so does $\mu_x^{-1}\in U_0\alpha_x^{-1} U_0 = U_0\alpha_{-x} U_0$. Since $\mu_{-x}$ is the unique such element in $U_0\alpha_{-x} U_0$, we must have $\mu_x^{-1}=\mu_{-x}$.
		\item By \ref{itm:muindep}, we can use \ref{itm:muform} with $\tau^{-1}$ in place of $\tau$ for the right-hand side, so we get
		\begin{align*}
			&\mu_{x\tau} = \mu_{-x}^\tau \\
			&\iff
			\alpha_{(-(x\tau))\tau^{-1}}^\tau\,\alpha_{x\tau}\,\alpha_{-x}^\tau = 
			\bigl(\alpha_{x\tau}^{\tau^{-1}}\,\alpha_{-x}\,\alpha_{-((-x)\tau)}^{\tau^{-1}}\bigr)^\tau \\
			&\iff \alpha_{(-(x\tau))\tau^{-1}}^\tau\,\alpha_{x\tau}\,\alpha_{-x}^\tau = 
			\alpha_{x\tau}\,\alpha_{-x}^\tau\,\alpha_{-((-x)\tau)}\\
			&\iff \alpha_{-x}^{-\tau}\alpha_{x\tau}^{-1}\alpha_{(-(x\tau))\tau^{-1}}^\tau\,\alpha_{x\tau}\,\alpha_{-x}^\tau = \alpha_{-((-x)\tau)} \\
			&\iff \alpha_{-x}^{-\tau}\alpha_{x\tau}^{-1}\alpha_{-((-(x\tau))\tau^{-1})}^\tau\,\alpha_{x\tau}\,\alpha_{-x}^\tau = 
			\alpha_{(-x)\tau}\,.
		\end{align*}
		By \ref{axiom:LM2}, the left-hand side belongs to 
		\[U_0^{\alpha_{x\tau}\,\alpha_{-x}^\tau} = U_{0\cdot \alpha_{x\tau}\,\alpha_{-x}^\tau} = U_\infty\;,\]
		so the left-hand side is equal to $\alpha_y$ for
		\begin{align*}
			y &= 0\cdot\alpha_{-x}^{-\tau}\alpha_{x\tau}^{-1}\alpha_{-((-(x\tau))\tau^{-1})}^\tau\,\alpha_{x\tau}\,\alpha_{-x}^\tau \\
				&= -(x\tau)\cdot\tau^{-1}\alpha_{-((-(x\tau))\tau^{-1})}\tau\,\alpha_{x\tau}\,\alpha_{-x}^\tau \\
				&= \infty\cdot\alpha_{-x}^\tau = (-x)\tau .
		\end{align*}
		Hence the last of the equivalent equalities holds, and indeed $\mu_{x\tau} = \mu_{-x}^\tau$.
		\item By \ref{itm:muform}, we have
		\[\mu_{x\tau} = \alpha_{(-(x\tau))\tau^{-1}}^\tau\,\alpha_{x\tau}\,\alpha_{-x}^\tau\,,\]
		and hence by \ref{itm:mutau}, 
		\[\mu_x = \mu_{x\tau}^{-\tau^{-1}} = \alpha_{x}\,\alpha_{-x\tau}^{\tau^{-1}}\,\alpha_{-(-(x\tau))\tau^{-1}}\,.\]
		Since $\mu_x$ does not depend on the choice of $\tau$, we can replace $\tau$ by $\tau^{-1}$ in the right-hand side, which gives the required identity.
		\item When we apply both sides of the identity \ref{itm:muform2} to $-x$, we get
		\[(-x)\cdot\mu_x = (-x)\cdot\alpha_x\alpha_{-(x\tau^{-1})}^\tau\,\alpha_{-\til x} = 0\cdot\alpha_{-\til x} = -\til x\,\]
		which gives $\til x = - ((-x)\mu_x)$.
		\item Since $\mu_x$ does not depend on the choice of $\tau$, and we have just shown $\til x = - ((-x)\mu_x)$, we conclude that $\til x$ does not depend on the choice of $\tau$ either.
		\item The equation \ref{itm:muform2} does not depend on the choice of $\tau$. If we substitute $-x$ for $x$ and $\mu_{-x}$ for $\tau$, then we get, using \ref{itm:tilmu},
		\[\mu_{-x} = \alpha_{-x}\mu_x\alpha_{-((-x)\mu_x)}\mu_{-x}\alpha_{-\til -x} = \alpha_{-x}\mu_x\alpha_{\til x}\mu_{-x}\alpha_{-\til -x}\,.\]
		Moving all terms except $\mu_{-x}$ from the right hand side to the left hand side gives the result.\qedhere
	\end{romenumerate}
\end{proof}

\begin{proposition}\label{prop:sumform}
	Let $x,y\in X$ be units such that $x\nsim y$. Then 
	\[z := x\tau^{-1}\alpha_{-(y\tau^{-1})}\tau\]
	is independent on the choice of $\tau$. Furthermore, 
	\[ z = x\alpha_{-y}\mu_y\alpha_{\til y} \text{ and } \til z = y\alpha_{-x}\mu_x\alpha_{\til x}\;. \]
	Finally, $\mu_y\mu_z\mu_{-x} = \mu_{y\alpha_{-x}}$.
\end{proposition}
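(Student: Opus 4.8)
\section*{Proof proposal}

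The plan is to verify that $g:=\mu_y\mu_z\mu_{-x}$ has the two defining properties of $\mu_{y\alpha_{-x}}$ from \autoref{prop:mu_defin} and then to invoke the uniqueness stated there. First one checks that $y\alpha_{-x}$ is a unit, so that $\mu_{y\alpha_{-x}}$ is defined: $y\alpha_{-x}\nsim\infty$ since $\alpha_{-x}$ fixes the class of $\infty$, and $y\alpha_{-x}\nsim 0$ since $x\nsim y$ (argue with $\induced{\alpha_x}$ as in the discussion of units). Then $\mu_{y\alpha_{-x}}$ is the \emph{unique} element of $U_0\,\alpha_{y\alpha_{-x}}\,U_0$ interchanging $0$ and $\infty$. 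That $g$ interchanges $0$ and $\infty$ is immediate, since each of $\mu_y$, $\mu_z$, $\mu_{-x}$ does by \autoref{prop:mu_defin}. So everything comes down to showing $g\in U_0\,\alpha_{y\alpha_{-x}}\,U_0$.

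For this I would first record the consequences of the definition $z=x\tau^{-1}\alpha_{-(y\tau^{-1})}\tau$ that make the computation manageable. Using $\alpha_c\tau=\tau\gamma_c$ one gets $z=x\gamma_{-(y\tau^{-1})}$ and, by the symmetric computation, $\til z=y\gamma_{-(x\tau^{-1})}$ (these also follow from the formulas $z=x\alpha_{-y}\mu_y\alpha_{\til y}$, $\til z=y\alpha_{-x}\mu_x\alpha_{\til x}$ already proven in this proposition); moreover $z\tau^{-1}=x\tau^{-1}\alpha_{-(y\tau^{-1})}$ yields $\gamma_{-(z\tau^{-1})}=\gamma_{y\tau^{-1}}\gamma_{-(x\tau^{-1})}$. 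Now substitute the decompositions $\mu_w=\gamma_{(-w)\tau^{-1}}\,\alpha_w\,\gamma_{-(w\tau^{-1})}$ of \autoref{lem:mu}\ref{itm:muform} for $w\in\{y,z,-x\}$ into $g$. The right $\gamma$-factor $\gamma_{-(z\tau^{-1})}$ of $\mu_z$ then meets the left $\gamma$-factor $\gamma_{x\tau^{-1}}$ of $\mu_{-x}$, and $\gamma_{-(z\tau^{-1})}\gamma_{x\tau^{-1}}=\gamma_{y\tau^{-1}}\gamma_{-(x\tau^{-1})}\gamma_{x\tau^{-1}}=\gamma_{y\tau^{-1}}$; the remaining inner $U_0$-factors are then pushed through the $U_\infty$-factors by means of \ref{axiom:LM2} until $g$ is brought to the shape $u_1\,\alpha_y\alpha_{-x}\,u_2$ with $u_1,u_2\in U_0$. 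Since $\alpha_y\alpha_{-x}=\alpha_{y\alpha_{-x}}$, this is precisely the required membership, and \autoref{prop:mu_defin} then gives $g=\mu_{y\alpha_{-x}}$. The awkward left factor $\gamma_{(-z)\tau^{-1}}$ of $\mu_z$, which does not simplify directly, is best handled by rewriting $\mu_z=\mu_{\til z}^{-1}$ — a consequence of \autoref{lem:mu}\ref{itm:mutau},\ref{itm:muinv} — and then using the decomposition of $\mu_{\til z}$ together with $\til z=y\gamma_{-(x\tau^{-1})}$.

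The main obstacle is exactly this amalgamation of the threefold product. Because the local Moufang set is not assumed special, the twisted points $\til x$, $\til y$, $\til z$ cannot be replaced by $-x$, $-y$, $-z$, and neither $U_0$ nor $U_\infty$ need be abelian; consequently every re-bracketing and every interchange of two root-group elements must be justified through \ref{axiom:LM2} and the $\mu$-map identities of \autoref{lem:mu}, and the genuine work is the careful bookkeeping of which conjugate of which root-group element lands where, so that in the end nothing survives between the two outer $U_0$-factors except $\alpha_{y\alpha_{-x}}$.
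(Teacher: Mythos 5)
Your overall strategy for the last identity (show $g:=\mu_y\mu_z\mu_{-x}$ swaps $0$ and $\infty$ and lies in $U_0\,\alpha_{y\alpha_{-x}}\,U_0$, then quote the uniqueness in \autoref{prop:mu_defin}) is legitimate in principle, but the mechanism you propose for the double-coset membership does not work, and this is exactly where the real content of the proof lies. First, ``pushing the remaining inner $U_0$-factors through the $U_\infty$-factors by means of \ref{axiom:LM2}'' is not an available move: conjugating $\gamma\in U_0$ by $\alpha_c\in U_\infty$ lands in $U_0^{\alpha_c}=U_c$, which is not $U_0$ (even when $c\sim 0$), so such shuffling cannot by itself bring the word into the shape $u_1\,\alpha_y\alpha_{-x}\,u_2$ with $u_1,u_2\in U_0$. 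Second, your two simplifications are mutually incompatible: the collapse $\gamma_{-(z\tau^{-1})}\gamma_{x\tau^{-1}}=\gamma_{y\tau^{-1}}$ uses the decomposition $\mu_z=\gamma_{(-z)\tau^{-1}}\alpha_z\gamma_{-(z\tau^{-1})}$, while your cure for the awkward factor $\gamma_{(-z)\tau^{-1}}$ replaces $\mu_z$ by $\mu_{\til z}^{-1}$, whose decomposition has right factor $\gamma_{-((-\til z)\tau^{-1})}$ instead of $\gamma_{-(z\tau^{-1})}$ --- so whichever form you choose, one junction simplifies and the other produces a factor ($\gamma_{(-z)\tau^{-1}}$ resp.\ $\gamma_{-((-\til z)\tau^{-1})}$) that has no usable relation to the rest in a non-special local Moufang set. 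The paper's proof avoids this by working with the $U_\infty U_0 U_\infty$-type decomposition of \autoref{lem:mu}\ref{itm:muform2} for $\mu_z$, rewriting its interior $U_0$-factors via \autoref{lem:mu}\ref{itm:muform2} applied with $\mu_{-y}$ and $\mu_x$ in place of $\tau$, conjugating the outer pieces by $\mu_{-y}$ and $\mu_{-x}$, and finally collapsing with \autoref{lem:mu}\ref{itm:muform3}; none of these ingredients appear in your plan, and without something equivalent the ``careful bookkeeping'' you defer to cannot terminate.

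A smaller point: the first two assertions of the proposition (independence of $\tau$, and the formulas $z=x\alpha_{-y}\mu_y\alpha_{\til y}$, $\til z=y\alpha_{-x}\mu_x\alpha_{\til x}$) are part of the statement, so you cannot treat them as ``already proven''; your remarks $z=x\gamma_{-(y\tau^{-1})}$ and $\til z=y\gamma_{-(x\tau^{-1})}$ are the right starting point, but you still need \autoref{lem:mu}\ref{itm:muform2} to convert them into the stated $\tau$-free expressions, and the observation that $-(z\tau^{-1})=0\alpha_{y\tau^{-1}}\alpha_{-(x\tau^{-1})}$ to justify the ``symmetric computation'' for $\til z$.
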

\begin{proof}
	By \autoref{lem:mu}\ref{itm:muform2} we get $z = x\alpha_{-(y\tau^{-1})}^\tau = x\alpha_{-y}\mu_{y}\alpha_{\til y}$, so it does not depend on the choice of $\tau$. Now we have $z = 0\cdot\alpha_{x\tau^{-1}}\alpha_{-(y\tau^{-1})}\tau$, so
    \[ \til z = 0\cdot(\alpha_{x\tau^{-1}}\alpha_{-(y\tau^{-1})})^{-1}\tau = 0\cdot\alpha_{y\tau^{-1}}\alpha_{-(x\tau^{-1})}\tau . \]
    This coincides with our definition of $z$ with $x$ and $y$ interchanged, so $\til z = y\alpha_{-x}\mu_{x}\alpha_{\til x}$.
    
     For the final equality, we repeatedly use \autoref{lem:mu}\ref{itm:muform2}.
	\begin{align*}
		\mu_z 	&= \alpha_z\alpha_{-(z\tau^{-1})}^{\tau}\alpha_{-\til z} \\
			&= \alpha_z\alpha_{x\tau^{-1}\alpha_{-(y\tau^{-1})}}^{-\tau}\alpha_{-\til z} \\
			&= \alpha_z(\alpha_{x\tau^{-1}}\alpha_{-(y\tau^{-1})})^{-\tau}\alpha_{-\til z} \\
			&= \alpha_z(\alpha_{-(y\tau^{-1})}^{-1}\alpha_{-(x\tau^{-1})})^{\tau}\alpha_{-\til z} \\
			&= \alpha_z\alpha_{-(y\tau^{-1})}^{-\tau}\alpha_{-(x\tau^{-1})}^{\tau}\alpha_{-\til z} \\
			&= \alpha_z\alpha_{-\til y}\mu_{-y}\alpha_y\alpha_{-x}\mu_x\alpha_{\til x}\alpha_{-\til z} \\
			&= \alpha_{x\alpha_{-y}\mu_y\alpha_{\til y}}\alpha_{-\til y}\mu_{-y}\alpha_y\alpha_{-x}\mu_x(\alpha_{y\alpha_{-x}\mu_x\alpha_{\til x}}\alpha_{-\til x})^{-1} \\
			&= \alpha_{x\alpha_{-y}\mu_y}\mu_{-y}\alpha_y\alpha_{-x}\mu_x\alpha_{-(y\alpha_{-x}\mu_x)}
	\intertext{Hence, again using \autoref{lem:mu}\ref{itm:muform2} but replacing $\tau$ by $\mu_{-x}$ and $\mu_{-y}$, we get}
		\mu_y\mu_z\mu_{-x} &= \mu_y\alpha_{x\alpha_{-y}\mu_y}\mu_{-y}\alpha_y\alpha_{-x}\mu_x\alpha_{-(y\alpha_{-x}\mu_x)}\mu_{-x} \\
			&= \alpha_{x\alpha_{-y}\mu_y}^{\mu_{-y}}\alpha_y\alpha_{-x}\alpha_{-(y\alpha_{-x}\mu_x)}^{\mu_{-x}} \\
			&= \alpha_{-\til(x\alpha_{-y})}\mu_{-(x\alpha_{-y})}\alpha_{x\alpha_{-y}}\alpha_y\alpha_{-x}\alpha_{-(y\alpha_{-x})}\mu_{y\alpha_{-x}}\alpha_{\til(y\alpha_{-x})} \\
			&= \alpha_{-\til(x\alpha_{-y})}\mu_{-(x\alpha_{-y})}\alpha_{x\alpha_{-y}}\mu_{-(x\alpha_{-y})}\alpha_{\til-(x\alpha_{-y})} \\
			&= \mu_{-(x\alpha_{-y})} = \mu_{y\alpha_{-x}}\;,
	\end{align*}
	using \autoref{lem:mu}\ref{itm:muform3} for the second to last equality.
\end{proof}

\subsection{Hua maps}

As $\mu$-maps swap $0$ and $\infty$, products of an even number of $\mu$-maps fix $0$ and $\infty$. The Hua maps are a specific case of these maps.

\begin{definition}
	The \define{Hua map} $h_{x,\tau}$ corresponding to a unit $x$ is the element
	\[h_{x,\tau}\notatlink{hxtau} = \tau\mu_x = \tau\alpha_x\tau^{-1}\alpha_{-(x\tau^{-1})}\tau\,\alpha_{-\til x}\in G\,.\]
\end{definition}

\begin{remark}
	Since the $\mu$-maps did not depend on the choice of $\tau$, the Hua maps do. This is made clear by the inclusion of $\tau$ in the notation $h_{x,\tau}$. When it is clear (or irrelevant) which $\tau$ is used, we will omit this addition and simply write $h_x$\notatlink{hx}.
\end{remark}

Some basic properties of the Hua maps are the following:
\begin{lemma}\label{lem:hua} Let $x,y \in X$ be units. Then
	\begin{multicols}{2}
	\begin{romenumerate}
		\item $h_{x,\tau^{-1}} = h_{x\tau,\tau}^{-1}$;
		\item $\mu_{xh_y} = \mu_x^{h_y}$;
		\item $h_{x\tau} = h_{-x}^\tau$;\label{itm:huatau}
		\item $h_{x h_y} = h_{-y}h_{x\tau}^{-1}h_y$\label{itm:huahua}.
	\end{romenumerate}
	\end{multicols}
\end{lemma}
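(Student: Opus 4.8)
The plan is to derive all four identities from the $\mu$-map relations already proved in \autoref{lem:mu} together with the uniqueness characterisation of $\mu$-maps in \autoref{prop:mu_defin}; the only step that needs a genuinely new argument is the conjugation formula in part~(ii), which I would establish first, and then feed into part~(iv).

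\emph{Part (ii): $\mu_{xh_y} = \mu_x^{h_y}$.} The crucial point is that $h_y = \tau\mu_y$ is a product of two elements each of which swaps $0$ and $\infty$, so $h_y$ (and hence $h_y^{-1}$) fixes both $0$ and $\infty$. In particular $xh_y$ is again a unit whenever $x$ is, so $\mu_{xh_y}$ is defined. By \ref{axiom:LM2} we then have $U_0^{h_y} = U_{0h_y} = U_0$ and $U_\infty^{h_y} = U_{\infty h_y} = U_\infty$; and since $\alpha_x^{h_y}\in U_\infty$ sends $0 = 0h_y^{-1}$ to $0\alpha_x h_y = xh_y$, we get $\alpha_x^{h_y} = \alpha_{xh_y}$. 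Conjugating the double coset therefore yields $(U_0\alpha_x U_0)^{h_y} = U_0\alpha_{xh_y}U_0$, and $\mu_x^{h_y}$ still swaps $0$ and $\infty$ (because $\mu_x$ does and $h_y$ fixes both). By the uniqueness clause of \autoref{prop:mu_defin}, $\mu_x^{h_y}$ is forced to equal $\mu_{xh_y}$.

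\emph{Parts (i) and (iii).} These are short rearrangements using $\mu_{x\tau} = \mu_{-x}^{\tau}$ (\autoref{lem:mu}\ref{itm:mutau}), $\mu_{-x} = \mu_x^{-1}$ (\autoref{lem:mu}\ref{itm:muinv}), and independence of $\mu_x$ from the chosen $\mu$-map (\autoref{lem:mu}\ref{itm:muindep}), so that $h_{x,\tau^{-1}} = \tau^{-1}\mu_x$. For (i): $h_{x\tau,\tau}^{-1} = \mu_{x\tau}^{-1}\tau^{-1} = (\mu_{-x}^{\tau})^{-1}\tau^{-1} = \tau^{-1}\mu_x\tau\cdot\tau^{-1} = \tau^{-1}\mu_x = h_{x,\tau^{-1}}$. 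For (iii): $h_{x\tau} = \tau\mu_{x\tau} = \tau\mu_{-x}^{\tau} = \mu_{-x}\tau$, while $h_{-x}^{\tau} = \tau^{-1}(\tau\mu_{-x})\tau = \mu_{-x}\tau$, so the two sides agree.

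\emph{Part (iv).} Combine (ii) with (i): $h_{xh_y} = \tau\mu_{xh_y} = \tau\mu_x^{h_y} = \tau h_y^{-1}\mu_x h_y$. Writing $h_y^{-1} = \mu_y^{-1}\tau^{-1} = \mu_{-y}\tau^{-1}$ gives $h_{xh_y} = (\tau\mu_{-y})(\tau^{-1}\mu_x)h_y = h_{-y}\,h_{x\tau}^{-1}\,h_y$, where the final equality uses $h_{x\tau}^{-1} = h_{x\tau,\tau}^{-1} = \tau^{-1}\mu_x$ from part~(i). The main obstacle is part~(ii): one must carefully check that $h_y$ stabilises both base points, so that conjugation preserves the double coset $U_0\alpha_x U_0$ \emph{and} the swap property, and that the middle factor transforms as $\alpha_x^{h_y} = \alpha_{xh_y}$. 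Once that is in place, the uniqueness in \autoref{prop:mu_defin} finishes (ii), and (i), (iii), (iv) are purely formal.
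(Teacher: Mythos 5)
Your proposal is correct, and parts (i), (iii) and (iv) run along essentially the same lines as the paper: (i) and (iii) are the same rearrangements via $\mu_{x\tau}=\mu_{-x}^\tau$, $\mu_{-x}=\mu_x^{-1}$ and the independence of $\mu_x$ from the choice of $\tau$, and your (iv), which chains (ii) and (i) after writing $h_y^{-1}=\mu_{-y}\tau^{-1}$, is the same computation the paper performs by applying \autoref{lem:mu}\ref{itm:mutau} twice and inserting $\tau^{-1}\tau$. Where you genuinely diverge is (ii): the paper gets it in one line by applying \autoref{lem:mu}\ref{itm:mutau} twice, once with $\tau$ and once with $\mu_y$ playing the role of $\tau$ (i.e.\ $\mu_{xh_y}=\mu_{x\tau\mu_y}=\mu_{x\tau}^{-\mu_y}=(\mu_x^{-\tau})^{-\mu_y}=\mu_x^{\tau\mu_y}$), exploiting that the identity holds for any $\mu$-map in place of $\tau$. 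You instead argue structurally: $h_y$ fixes $0$ and $\infty$, so by \ref{axiom:LM2} it normalizes $U_0$ and $U_\infty$, conjugation sends $\alpha_x$ to $\alpha_{xh_y}$ (this is in effect \autoref{lem:huaAut}, which you re-derive rather than cite — harmless, since your derivation does not use \autoref{lem:hua}), hence it sends the double coset $U_0\alpha_xU_0$ to $U_0\alpha_{xh_y}U_0$ while preserving the swap of $0$ and $\infty$, and the uniqueness clause of \autoref{prop:mu_defin} forces $\mu_x^{h_y}=\mu_{xh_y}$. Both arguments are valid in a local Moufang set; the paper's is shorter and purely formal in the $\mu$-map calculus, while yours makes visible the general principle that conjugation by any element of $G_{0,\infty}$ permutes the $\mu$-maps according to its action on units, at the cost of invoking \ref{axiom:LM2} directly (which is immaterial here, since the lemma is stated for a genuine local Moufang set).
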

\begin{proof}\preenum
	\begin{romenumerate}
		\item We have $h_{x,\tau^{-1}} = h_{x\tau,\tau}^{-1}$ if and only if $\tau^{-1}\mu_x = (\tau\mu_{x\tau})^{-1}$, which holds by \autoref{lem:mu}\ref{itm:mutau}.
		\item Applying \autoref{lem:mu}\ref{itm:mutau} twice (with $\tau$ and $\mu_y$), we get
			\[\mu_{xh_y} = \mu_{x\tau\mu_y} = \mu_{x\tau}^{-\mu_y} = (\mu_x^{-\tau})^{-\mu_y} = \mu_x^{\tau\mu_y} = \mu_x^{h_y}\,.\]
		\item Using \autoref{lem:mu}\ref{itm:mutau}, we get
			\[h_{x\tau} = \tau\mu_{x\tau} = \tau\mu_{-x}^\tau = (\tau\mu_{-x})^\tau = h_{-x}^\tau\,.\]
		\item Using \autoref{lem:mu}\ref{itm:mutau} twice, and inserting $\tau^{-1}\tau$, we get
			\begin{align*}
				h_{x h_y} &= \tau\mu_{x\tau\mu_y} = \tau\mu_{-y}\mu^{-1}_{x\tau}\mu_y \\
					&= \tau\mu_{-y}\,\mu^{-1}_{x\tau}\tau^{-1}\,\tau\mu_y = h_{-y}h_{x\tau}^{-1}h_y\,.\qedhere
			\end{align*}
	\end{romenumerate}
\end{proof}

The action of Hua maps on $U_\infty$ by conjugation behaves well with respect to the action on $X$:
\begin{lemma}\label{lem:huaAut}
	Let $x$ be a unit. Then for any $y\in X\setminus\overline{\infty}$, we have $\alpha_y^{h_x} = \alpha_{yh_x}$.
	In particular, $(\alpha_y\alpha_z)^{h_x} = \alpha_{yh_x}\alpha_{zh_x}$ for all $y,z\in X\setminus\overline{\infty}$.
\end{lemma}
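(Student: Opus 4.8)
The plan is to exploit the two facts that make Hua maps convenient: $h_x = \tau\mu_x$ lies in $G$, and it fixes both $0$ and $\infty$ (indeed $\tau$ swaps $0$ and $\infty$, and so does $\mu_x$ by \autoref{prop:mu_defin}, so their product fixes each). First I would invoke \ref{axiom:LM2}: since $h_x\in G$ fixes $\infty$, we get $U_\infty^{h_x} = U_{\infty h_x} = U_\infty$, so $\alpha_y^{h_x}$ is again an element of $U_\infty$. It therefore suffices to identify this element by its action on $0$, using that $U_\infty$ acts sharply transitively on $X\setminus\overline{\infty}$ by \ref{axiom:LM1}.

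Next I would compute $0\cdot\alpha_y^{h_x} = 0\cdot h_x^{-1}\alpha_y h_x = 0\cdot\alpha_y h_x = y\cdot h_x$, using that $h_x$ fixes $0$ (hence also $0h_x^{-1}=0$) and the definition $0\alpha_y = y$. Before concluding, I would check that $\alpha_{yh_x}$ is actually defined, i.e.\ that $yh_x\nsim\infty$: this holds because $h_x\in\Sym(X,\sim)$ preserves $\sim$ and fixes $\infty$, so from $y\nsim\infty$ we get $yh_x\nsim\infty$. Then $\alpha_y^{h_x}$ is the unique element of $U_\infty$ sending $0$ to $yh_x$, which is exactly $\alpha_{yh_x}$ by definition, giving $\alpha_y^{h_x} = \alpha_{yh_x}$.

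For the ``in particular'' clause, I would simply use that conjugation by a fixed group element is a group homomorphism, so
\[
(\alpha_y\alpha_z)^{h_x} = \alpha_y^{h_x}\alpha_z^{h_x} = \alpha_{yh_x}\alpha_{zh_x}
\]
for all $y,z\in X\setminus\overline{\infty}$, where the last equality applies the first part twice.

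There is no real obstacle here; the only point requiring a moment's care is the well-definedness check that $yh_x\nsim\infty$, which is what makes the notation $\alpha_{yh_x}$ legitimate. Everything else is a direct application of \ref{axiom:LM1} and \ref{axiom:LM2} together with the fact that $h_x$ stabilizes the basis $(0,\infty)$.
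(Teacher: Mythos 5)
Your proof is correct and follows essentially the same route as the paper: use \ref{axiom:LM2} (via the Hua map fixing $\infty$) to see $\alpha_y^{h_x}\in U_\infty$, identify it by its action on $0$, and deduce the second identity from conjugation being a homomorphism. The only difference is that you spell out the well-definedness check $yh_x\nsim\infty$, which the paper leaves implicit.
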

\begin{proof}
	Since Hua maps normalize $U_\infty$, we have $\alpha_y^{h_x}\in U_\infty$. Since 
	\[0\alpha_y^{h_x} = 0h_x^{-1}\alpha_yh_x = yh_x\;,\]
	the first equality holds. The second identity now follows immediately.
\end{proof}

In particular, the Hua maps induce automorphisms of $U_\infty$:
\begin{corollary}\label{cor:huaaut}
	For any unit $x$, permutation of $U_\infty$ defined by $\alpha_a\mapsto\alpha_{ah_x}$ is an automorphism.
\end{corollary}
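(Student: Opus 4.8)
The plan is to recognise that the stated permutation is simply conjugation by $h_x$ restricted to $U_\infty$, and then invoke the fact that conjugation by any group element is an automorphism. Concretely, \autoref{lem:huaAut} tells us that $\alpha_a^{h_x} = \alpha_{ah_x}$ for every $a \in X \setminus \overline{\infty}$, so the assignment $\alpha_a \mapsto \alpha_{ah_x}$ on $U_\infty$ agrees with the map $g \mapsto g^{h_x}$. Thus the whole corollary reduces to the observation that $h_x$ normalises $U_\infty$, so that this conjugation restricts to a self-map of $U_\infty$, which is then automatically a group automorphism.

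First I would record why $h_x$ normalises $U_\infty$: since $\mu_x$ swaps $0$ and $\infty$, axiom \ref{axiom:LM2} gives $U_0^{\mu_x} = U_{0\mu_x} = U_\infty$, and combining this with $U_\infty^\tau = U_0$ we get $U_\infty^{h_x} = U_\infty^{\tau\mu_x} = U_0^{\mu_x} = U_\infty$ (this is exactly the normalisation already used in the proof of \autoref{lem:huaAut}). Next, conjugation by $h_x$ is an automorphism of the little projective group $G$; restricting an automorphism of $G$ to any subgroup it normalises yields an automorphism of that subgroup, and applying this to $U_\infty$ gives precisely that $\alpha_a \mapsto \alpha_{ah_x}$ is an automorphism of $U_\infty$.

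There is essentially no obstacle: this is an immediate consequence of \autoref{lem:huaAut}. The only point requiring a word of care is the identification of the two descriptions of the map — in particular, that for $y,z \in X\setminus\overline{\infty}$ the product $\alpha_y\alpha_z$ is again of the form $\alpha_w$ (namely $w = 0\alpha_y\alpha_z$), so that the multiplicativity statement $(\alpha_y\alpha_z)^{h_x} = \alpha_{yh_x}\alpha_{zh_x}$ of \autoref{lem:huaAut} genuinely expresses that $\alpha_a \mapsto \alpha_{ah_x}$ preserves the group operation; bijectivity is then inherited from the conjugation automorphism of $G$.
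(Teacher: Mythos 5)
Your proposal is correct and follows exactly the route the paper intends: the map $\alpha_a\mapsto\alpha_{ah_x}$ is conjugation by $h_x$ restricted to $U_\infty$ (which $h_x$ normalises, as already used in the proof of \autoref{lem:huaAut}), hence an automorphism. The paper treats this as an immediate consequence of \autoref{lem:huaAut} with no further argument, so your write-up matches its reasoning.
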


Finally, we consider the group generated by Hua maps. To be consistent with a later definition, we define it as the group of products of an even amount of $\mu$-maps:

\begin{definition}
	The \define{Hua subgroup} is
	\[H := \langle \mu_x\mu_y\mid x,y\text{ units}\rangle\,.\notatlink{H}\]
\end{definition}
As $\tau$ is a $\mu$-map, we also have $H = \langle h_x\mid x\text{ a unit}\rangle$. Note that $H\leq G_{0,\infty}$, where $G_{0,\infty}$ is the two-point stabilizer of $0$ and $\infty$. In fact, $H=G_{0,\infty}$, as we will show in \autoref{thm:hua2pt}.


\section{The Hua subgroup and the two-point stabilizer}	
\subsection{Quasi-invertibility}

We define quasi-invertibility and the left and right quasi-inverse similar to \cite[\textsection 4]{LoosRogdiv}. These notions, and the identity that follows from them, will be important in the next two subsections.

\begin{definition}
	A couple $(x,y)\in X^2$ s.t.\ $x\nsim\infty$ and $y\nsim\infty$ is \define{quasi-invertible} if one of the following is satisfied: $x\tau \nsim -y$, $x\sim 0$ or $y\sim 0$.
\end{definition}

If $(x,y)$ is quasi-invertible, we can associate two other elements to the pair:

\begin{definition}
	Let $(x,y)$ be quasi-invertible, then we define the \define{left quasi-inverse} and \define{right quasi-inverse} as
	\[{}^xy\notatlink{xy} = (-y)\cdot\alpha_{-x}^\tau\qquad\text{ and }\qquad x^y\notatlink{xy2} = -(x\cdot\alpha_y^{\tau^{-1}})\;.\]
\end{definition}

Note that the condition for quasi-invertibility ensures that the left and right quasi-inverse are not in $\class{\infty}$. Furthermore, $x\sim 0\Leftrightarrow x^y\sim0$ and $y\sim 0\Leftrightarrow {}^xy\sim0$.

\begin{proposition}\label{prop:quasi_inv}
	Let $(x,y)$ be quasi-invertible (so $x\nsim\infty\nsim y$) with $y\nsim0$, then
	\[\alpha_{{}^x\!y}\,\alpha_x^\tau\,\alpha_y\,\alpha_{x^y}^\tau = \mu_{{}^x\!y}\,\mu_y\;.\]
\end{proposition}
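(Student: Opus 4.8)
The plan is to prove the identity by a direct computation with the explicit forms of the $\mu$-maps from \autoref{lem:mu}. Throughout one uses the standing relations $\alpha_a^{-1}=\alpha_{-a}$, $\gamma_a:=\alpha_a^\tau$, $\gamma_a^{-1}=\gamma_{-a}$ and $\gamma_a\gamma_b=\gamma_{0\alpha_a\alpha_b}$, together with the definitions ${}^x\!y=(-y)\gamma_{-x}$ and $x^y=-(x\,\alpha_y^{\tau^{-1}})$. The quasi-invertibility of $(x,y)$ together with $y\nsim 0$ is exactly what makes ${}^x\!y\nsim\infty$, $x^y\nsim\infty$ and ${}^x\!y\nsim 0$ hold, so that every symbol $\alpha_{{}^x\!y}$, $\gamma_{x^y}$, $\mu_{{}^x\!y}$, $\til{{}^x\!y}$ appearing below is defined.

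A useful preliminary observation is that the left-hand side $g:=\alpha_{{}^x\!y}\,\alpha_x^\tau\,\alpha_y\,\alpha_{x^y}^\tau$ fixes both $0$ and $\infty$: $\alpha_{{}^x\!y}$ sends $0$ to ${}^x\!y$, then $\gamma_x$ sends ${}^x\!y$ to $-y$ (using $\gamma_{-x}\gamma_x=\id$), then $\alpha_y$ sends $-y$ to $0$, and $\gamma_{x^y}$ fixes $0$; dually $\alpha_{{}^x\!y}$ fixes $\infty$, $\gamma_x$ sends $\infty$ to $x\tau$, $\alpha_y$ sends $x\tau$ to $(-(x^y))\tau$ (because $-(x^y)=x\alpha_y^{\tau^{-1}}$, so $(-(x^y))\tau=x\tau\,\alpha_y$), and $\gamma_{x^y}$ sends $(-(x^y))\tau$ back to $\infty$ (since $\gamma_{x^y}^{-1}=\gamma_{-(x^y)}$ sends $\infty$ to $(-(x^y))\tau$). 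As $\mu_y^{-1}=\mu_{-y}$ interchanges $0$ and $\infty$ (\autoref{lem:mu}), $g\mu_y^{-1}$ interchanges $0$ and $\infty$; hence, by the uniqueness statement in \autoref{prop:mu_defin}, it suffices to show $g\mu_y^{-1}\in U_0\,\alpha_{{}^x\!y}\,U_0$, which I would do by establishing the stronger equality $g=\mu_{{}^x\!y}\mu_y$ outright, as follows.

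Expand $\mu_{{}^x\!y}=\alpha_{{}^x\!y}\,\gamma_{-({}^x\!y\tau^{-1})}\,\alpha_{-\til{{}^x\!y}}$ and $\mu_y=\gamma_{(-y)\tau^{-1}}\,\alpha_y\,\gamma_{-(y\tau^{-1})}$ (\autoref{lem:mu}, parts (v) and (ii)) and telescope. Cancelling $\alpha_{{}^x\!y}$ from $\alpha_{{}^x\!y}\gamma_x\alpha_y\gamma_{x^y}=\mu_{{}^x\!y}\mu_y$ and then the factor $\gamma_{-({}^x\!y\tau^{-1})}$, the key input is the identity
\[\alpha_{{}^x\!y\tau^{-1}}=\alpha_{(-y)\tau^{-1}}\,\alpha_{-x},\]
which follows from ${}^x\!y\tau^{-1}=\bigl((-y)\tau^{-1}\bigr)\alpha_{-x}$ (using $\gamma_{-x}\tau^{-1}=\tau^{-1}\alpha_{-x}$) and the sharp transitivity of $U_\infty$ from \ref{axiom:LM1}, and which gives $\gamma_{-({}^x\!y\tau^{-1})}^{-1}\gamma_x=\gamma_{(-y)\tau^{-1}}$. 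After also cancelling $\gamma_{(-y)\tau^{-1}}$ and recognising $\gamma_{(-y)\tau^{-1}}\,\alpha_y\,\gamma_{-(y\tau^{-1})}=\mu_y$ once more, the claim collapses to the single equality
\[\alpha_{-\til{{}^x\!y}}^{\,\mu_y}=\gamma_{(y\tau^{-1})\alpha_{x^y}}\]
between two elements of the one root group $U_\infty^{\mu_y}=U_0$, which I would settle by evaluating both sides at $\infty$ and substituting $\til{{}^x\!y}=\bigl(x\,\alpha_{-((-y)\tau^{-1})}\bigr)\tau$ (again obtained from the identity above) and $x^y=-(x\,\alpha_y^{\tau^{-1}})$, so that it becomes a statement about the action of the single $\mu$-map $\mu_y$ on one point.

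The hard part is this last reduction, and more generally the index bookkeeping throughout the telescoping. Because $\tau$ is \emph{not} assumed special, one cannot rewrite $(-a)\tau^{-1}$ as $-(a\tau^{-1})$, so the twisted points $\til{{}^x\!y}$ and the various $\tau^{\pm 1}$-images must be carried through every manipulation untouched, and after each cancellation one has to re-verify that the point indexing the relevant $\alpha$ or $\gamma$ still lies outside $\class{\infty}$ — which is precisely where the quasi-invertibility hypothesis is used over and over. Arranging the indices so that the conjugated root-group elements telescope into a member of $U_0\,\alpha_{{}^x\!y}\,U_0$ is the delicate combinatorial core of the argument.
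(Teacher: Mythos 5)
Your reductions are sound and in fact retrace the paper's own route: you expand $\mu_{{}^x\!y}$ and $\mu_y$ via \autoref{lem:mu}\ref{itm:muform2} and \ref{itm:muform}, your key identity $\alpha_{{}^x\!y\tau^{-1}}=\alpha_{(-y)\tau^{-1}}\alpha_{-x}$ is exactly the paper's $\alpha_{-({}^x\!y\tau^{-1})}=\alpha_x\alpha_{-((-y)\tau^{-1})}$ in inverted form, and after telescoping the proposition is indeed equivalent to the single root-group equality $\alpha_{-\til{}^x\!y}^{\,\mu_y}=\gamma_{(y\tau^{-1})\alpha_{x^y}}$ in $U_0$ (the paper phrases the same equation as: a certain product lies in $U_\infty$, by \ref{axiom:LM2}, and fixes $0$). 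Up to that point everything is correct, including $U_\infty^{\mu_y}=U_{\infty\mu_y}=U_0$ and the fact that two elements of $U_0$ agreeing at one point of $X\setminus\class{0}$ coincide, by \ref{axiom:LM1}.

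The gap is the final verification. Evaluating both sides at $\infty$ turns the claim into $(-\til{}^x\!y)\mu_y=\bigl((y\tau^{-1})\alpha_{x^y}\bigr)\tau$, and that left-hand side is not computable by the substitutions you list: whichever product formula for $\mu_y$ you use, the first conjugated factor forces you to evaluate $(-\til{}^x\!y)\tau^{-1}$, and since the local Moufang set is not assumed special no identity moves the minus sign past $\tau$; the relation $\til z=-((-z)\mu_z)$ only reintroduces $\mu_{{}^x\!y}$ and is circular here. The repair is to test the $U_0$-identity at a different point: with $p:=\til{}^x\!y\,\mu_y$ (note $p\nsim0$), the left-hand side sends $p$ to $\infty$, so the claim becomes $\til{}^x\!y\,\mu_y=\bigl(-((y\tau^{-1})\alpha_{x^y})\bigr)\tau$, and this \emph{is} a routine telescoping computation: since $\til{}^x\!y=x\alpha_{(-y)\tau^{-1}}^{-1}\tau$ carries $\tau$ outermost, the first factor $\gamma_{(-y)\tau^{-1}}=\tau^{-1}\alpha_{(-y)\tau^{-1}}\tau$ of $\mu_y$ cancels it, the running point stays of the form $(\cdot)\tau$, and one lands on the right-hand side using $x\alpha_y^{\tau^{-1}}=-x^y$. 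This is precisely the paper's concluding calculation (which evaluates the reduced product at $0$, amounting to the same thing); with that one change of base point your argument closes.
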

\begin{proof}
	By the observation above, ${}^xy\nsim0$, so the right-hand side is defined. By \autoref{lem:mu}\ref{itm:muform2}, we have $\mu_{{}^x\!y} = \alpha_{{}^x\!y}\,\alpha_{-({}^x\!y\tau^{-1})}^\tau\,\alpha_{-\til{}^x\!y}$. Furthermore, by the definition of ${}^xy$, we have 
	\[-({}^x\!y\tau^{-1})=-(0\cdot\alpha_{(-y)\tau^{-1}}\alpha_{-x}) = 0\cdot\alpha_x\alpha_{-(-y)\tau^{-1}}\;,\]
	so $\alpha_{-({}^x\!y\tau^{-1})} = \alpha_x\alpha_{-(-y)\tau^{-1}}$. Plugging these into our equality gives
	\begin{align*}
		&\alpha_{{}^x\!y}\,\alpha_x^\tau\,\alpha_y\,\alpha_{x^y}^\tau = \mu_{{}^x\!y}\,\mu_y \\
		&\iff \alpha_y\,\alpha_{x^y}^\tau = \alpha_{-(-y)\tau^{-1}}^\tau\,\alpha_{-\til{}^x\!y}\,\mu_y \\
		&\iff \alpha_{\til{}^x\!y}\alpha_{(-y)\tau^{-1}}^\tau\alpha_y\,\alpha_{x^y}^\tau= \mu_y\;.
	\end{align*}
	Now we use \autoref{lem:mu}\ref{itm:muform} to find $\mu_y = \alpha_{(-y)\tau^{-1}}^\tau\alpha_y\alpha_{-(y\tau^{-1})}^\tau$, changing the identity to prove to
	\begin{align*}
		&\alpha_{{}^x\!y}\,\alpha_x^\tau\,\alpha_y\,\alpha_{x^y}^\tau = \mu_{{}^x\!y}\,\mu_y \\
		&\iff \alpha_{\til{}^x\!y}\alpha_{(-y)\tau^{-1}}^\tau\alpha_y\,\alpha_{x^y}^\tau= \alpha_{(-y)\tau^{-1}}^\tau\alpha_y\alpha_{-(y\tau^{-1})}^\tau \\ 
		&\iff \alpha_{\til{}^x\!y}\alpha_{(-y)\tau^{-1}}^\tau\alpha_y\,\alpha_{x^y}^\tau\alpha_{y\tau^{-1}}^\tau\alpha_y^{-1}\alpha_{(-y)\tau^{-1}}^{-\tau} = \id_{X} \;.
	\end{align*}
	Now we have 
	\[\alpha_{(-y)\tau^{-1}}^\tau\alpha_y\,\alpha_{x^y}^\tau\alpha_{y\tau^{-1}}^\tau\alpha_y^{-1}\alpha_{(-y)\tau^{-1}}^{-\tau}\in U_0^{\alpha_y^{-1}\alpha_{(-y)\tau^{-1}}^{-\tau}} = U_{0\cdot\alpha_y^{-1}\alpha_{(-y)\tau^{-1}}^{-\tau}}\]
	by \ref{axiom:LM2}, and
	\begin{align*}
		0\cdot\alpha_y^{-1}\alpha_{(-y)\tau^{-1}}^{-\tau} &= (-y)\cdot\tau^{-1}\alpha_{-(-y)\tau^{-1}}\tau \\
			&= (-y)\tau^{-1}\cdot\alpha_{-(-y)\tau^{-1}}\tau = 0\tau = \infty\;,
	\end{align*}
	so the left-hand side of our last identity to prove is an element of $U_\infty$. To prove it is the identity, it is now sufficient to prove that it maps $0$ to $0$, by \ref{axiom:LM1}. Note first that $\til{}^x\!y = x\alpha_{(-y)\tau^{-1}}^{-1}\tau$. We have
	\begin{align*}
		& 0\cdot\alpha_{\til{}^x\!y}\alpha_{(-y)\tau^{-1}}^\tau\alpha_y\,\alpha_{x^y}^\tau\alpha_{y\tau^{-1}}^\tau\alpha_y^{-1}\alpha_{(-y)\tau^{-1}}^{-\tau} \\
		&= x\alpha_{(-y)\tau^{-1}}^{-1}\tau\tau^{-1}\alpha_{(-y)\tau^{-1}}\tau\alpha_y\,\alpha_{x^y}^\tau\alpha_{y\tau^{-1}}^\tau\alpha_y^{-1}\alpha_{(-y)\tau^{-1}}^{-\tau} \\
		&= x\cdot\alpha_y^{\tau^{-1}}\alpha_{x^y}\tau\alpha_{y\tau^{-1}}^\tau\alpha_y^{-1}\alpha_{(-y)\tau^{-1}}^{-\tau} \\
		&= (-x^y)\cdot\alpha_{x^y}\tau\alpha_{y\tau^{-1}}^\tau\alpha_y^{-1}\alpha_{(-y)\tau^{-1}}^{-\tau} \\
		&= 0\cdot\alpha_{y\tau^{-1}}\tau\alpha_{-y}\alpha_{-(-y)\tau^{-1}}^\tau = y\cdot\alpha_{-y}\alpha_{-(-y)\tau^{-1}}^\tau \\
		&= 0\cdot\alpha_{-(-y)\tau^{-1}}^\tau = 0\;,
	\end{align*}
	so the identity holds!
\end{proof}

\begin{remark}
	We could prove a similar identity to the one in the above lemma, in the case where $x\nsim 0$. In this case, we have
	\[\alpha_{{}^x\!y}\,\alpha_x^\tau\,\alpha_y\,\alpha_{x^y}^\tau = \mu_{x}\,\mu_{x^y}\;.\]
\end{remark}

\subsection{A Bruhat decomposition of \texorpdfstring{$G$}{G}}

By refining the argument in \autoref{prop:twotrans}, we will be able to obtain decompositions of the little projective group $G$ which resemble the Bruhat decomposition. This is based on a case distinction depending on where the basis $(0,\infty)$ is mapped to by a given element.

\begin{proposition}
	The little projective group $G$ can be split into a disjoint union 
	\[G = U_0 G_{0,\infty} U_\infty \,\cup\, U_0 G_{0,\infty} \tau U_0^\circ\;.\]
	Furthermore, the decomposition of an element of $G$ is unique in each of the cases.
\end{proposition}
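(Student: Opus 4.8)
The strategy is to push the argument of \autoref{prop:twotrans} one step further, but now keeping track of all the data, not just transitivity. The key observation is that for any $g\in G$, the pair $(0g,\infty g)$ is a basis (i.e.\ $0g\nsim\infty g$ since $g$ preserves $\sim$), and we reduce $g$ to a normal form depending on whether $\infty g\sim\infty$ or $\infty g\nsim\infty$. The proof splits into these two cases. First I would treat the case $\infty g\sim\infty$. Then, since $\infty g\sim\infty$, the point $\infty g$ is moved to $\infty$ by some unique element $u_1\in U_\infty^\circ$ (by \ref{axiom:LM1'} applied to the quotient, or rather: $U_\infty^\circ$ acts sharply transitively on $\class{\infty}\setminus\{\infty\}$ — this needs a small remark). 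Actually more carefully: there is a unique $v\in U_\infty$ with $\infty gv^{-1}=\infty$? No — $U_\infty$ fixes $\infty$, so that is automatic; the issue is $0g$. Let me instead argue: pick $u\in U_\infty$ with $0gu^{-1}\in$ a convenient orbit. The cleanest route: since $0g\nsim\infty g\sim\infty$, we have $0g\nsim\infty$, so there is a unique $\alpha\in U_\infty$ with $0\alpha=0g$, hence $g\alpha^{-1}$ fixes $0$; write $g=\alpha^{-1}\cdot(g\alpha^{-1})^{-1}$... I would organize it as: find $v\in U_0$ and $w\in U_\infty$ so that $vgw$ fixes both $0$ and $\infty$, landing $g\in U_0 G_{0,\infty}U_\infty$.

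For the second case, $\infty g\nsim\infty$: then there is a unique $\alpha\in U_\infty$ with $0\alpha=\infty g$ (valid since $\infty g\nsim\infty$). Consider $g\alpha^{-1}\tau^{-1}$ — wait, I want to bring $\infty g$ back to $\infty$. Use $\tau$: $\infty\tau=0$, so I seek to map $\infty g\mapsto 0\mapsto\infty$. Concretely, there is $u\in U_0$ with $\infty g\cdot u=0$ when $\infty g\nsim 0$; if $\infty g\sim 0$ we need a $U_0^\circ$-type adjustment. The right bookkeeping is: write $g=v\,h\,\tau\,u$ with $v\in U_0$, $h\in G_{0,\infty}$, $u\in U_0^\circ$, by successively normalizing the images of $0$ and $\infty$. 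The factor $\tau$ appears precisely because in this case $g$ swaps the two $\sim$-classes $\class{0}$ and $\class{\infty}$ in a way that no product from $U_0$ and $U_\infty$ alone can, and $\tau$ is our fixed class-swapping element. The disjointness of the two families follows by looking at the image of $\class{\infty}$ under $\induced g$: in $U_0 G_{0,\infty}U_\infty$ we get $\class{\infty}\induced g=\class{\infty}$, whereas in $U_0 G_{0,\infty}\tau U_0^\circ$ we get $\class{\infty}\induced g=\class{0}\neq\class{\infty}$.

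For uniqueness within each family, I would argue as usual for Bruhat-type decompositions: if $v_1 h_1 w_1=v_2 h_2 w_2$ with $v_i\in U_0$, $h_i\in G_{0,\infty}$, $w_i\in U_\infty$, then evaluating at $\infty$ gives $\infty v_1=\infty v_2$, but $\infty v_i\in\class{\infty}$... hmm, $U_0$ does not fix $\infty$. Let me instead evaluate at $0$: $0 v_i h_i w_i = 0 h_i w_i = 0 w_i$ is independent of $h_i$... and $w_i\in U_\infty$ fixes... no. The correct pins: $v_i=\alpha_{?}$-type elements determined by where $\infty$ goes, $w_i$ by where $0$ goes, then $h_i$ by what's left. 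I'd use \ref{axiom:LM1} and \ref{axiom:LM2} (as in \autoref{prop:mu_defin}) to pin down each factor from the images of $0$ and $\infty$, and sharp transitivity of the root groups to conclude $v_1=v_2$, $w_1=w_2$, hence $h_1=h_2$; similarly for the $\tau$-family, using that $U_0^\circ$ acts sharply transitively on $\class{0}\setminus\{0\}$ and that $\tau$ is fixed. \textbf{The main obstacle} I anticipate is the careful handling of the "$\circ$" subgroups: when the image of $0$ or $\infty$ under $g$ happens to be $\sim$-equivalent (but not equal) to $0$ or $\infty$, the ordinary root-group elements suffice but one must verify the correct factor lands in $U_0^\circ$ rather than all of $U_0$, and conversely that a $U_\infty$ factor (not $U_\infty^\circ$) is what is needed on the other side — getting the asymmetry between the two cases exactly right, and matching it to the statement $U_0 G_{0,\infty}U_\infty \,\cup\, U_0 G_{0,\infty}\tau U_0^\circ$, is the delicate point.
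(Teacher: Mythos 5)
Your case split is on the wrong point, and this is a genuine error rather than a cosmetic one. For the stated decomposition $G = U_0 G_{0,\infty} U_\infty \cup U_0 G_{0,\infty}\tau U_0^\circ$ the invariant separating the two pieces is the image of $0$, not of $\infty$: if $g = vhw$ with $v\in U_0$, $h\in G_{0,\infty}$, $w\in U_\infty$, then $0g = 0w\nsim\infty$, whereas if $g = vh\tau u$ with $u\in U_0^\circ$, then $0g = \infty u\sim\infty$. The image of $\infty$ does not discriminate between the pieces: any $g\in U_0^\times$ has $\infty g\nsim\infty$, yet it lies in $U_0 G_{0,\infty}U_\infty$ and cannot be written in the form $U_0 G_{0,\infty}\tau U_0^\circ$. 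So in your second case ($\infty g\nsim\infty$) the normalization you aim for is impossible for such $g$, and your disjointness argument is false as stated: an element of $U_0 G_{0,\infty}U_\infty$ need not fix $\class{\infty}$ under the induced action (again $g\in U_0^\times$ is a counterexample); the correct statement is that $\class{0}\,\induced{g}=\class{\infty}$ holds exactly on the $\tau$-piece. Your split on $\infty g$ is in fact the one that produces the \emph{other} decomposition $U_\infty G_{0,\infty}U_0^\circ\cup U_\infty G_{0,\infty}\tau U_\infty$, which the paper mentions only as a variant after the proof.

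The paper's argument splits on $0g$. If $0g\nsim\infty$, take the unique $u_\infty\in U_\infty$ with $0g\cdot u_\infty = 0$ and then the unique $u_0\in U_0$ sending $\infty g u_\infty$ to $\infty$; then $gu_\infty u_0 = h\in G_{0,\infty}$ and $g = u_0^{-h^{-1}}h\,u_\infty^{-1}\in U_0 G_{0,\infty}U_\infty$. If $0g\sim\infty$, the element moving $0g$ to $\infty$ is the unique $u_0\in U_0$ with $0g\cdot u_0=\infty$, and it lies in $U_0^\circ$ because $\induced{u_0}$ fixes $\class{\infty}$ --- note this is $U_0^\circ$, not $U_\infty^\circ$ as in your first attempt: $U_\infty^\circ$ fixes $\infty$ and there is no reason for it to act transitively on $\class{\infty}\setminus\{\infty\}$. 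Then $gu_0\tau^{-1}$ fixes $0$, one further $U_0$-correction $u_0'$ lands in $G_{0,\infty}$, and $g = u_0'^{-h^{-1}}h\,\tau\, u_0^{-1}$ with $u_0^{-1}\in U_0^\circ$. Uniqueness in each piece then comes for free, since $u_\infty$, $u_0$, $u_0'$ are pinned down by sharp transitivity of the root groups, as you anticipated; but without the correct invariant the "delicate point" you flag (which factor lands in the $\circ$-subgroup, and on which side) cannot be resolved.
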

\begin{proof}
	Let $g$ be in $G$ and denote $(x,y) = (0,\infty)\cdot g$. Now there are two mutually exclusive cases:
	\begin{itemize}[labelindent=3em, leftmargin=*]
		\item[$x\sim \infty$:] In this case there is a unique $u_0\in U_0$ such that $x\cdot u_0 = \infty$. Since $\induced{u_0}$ fixes $\class{\infty}$, we have $u_0\in U_0^\circ$. We get 
		\[(x,y)\cdot u_0\tau^{-1} = (0,y')\]
		for some $y'\nsim 0$. Hence there is a unique $u'_0\in U_0$ such that $y'\cdot u'_0 = \infty$, so we get
		\begin{align*}
			(0,\infty)\cdot gu_0\tau^{-1} u'_0 &= (x,y)\cdot u_0\tau^{-1} u'_0 \\
				&= (0,y')\cdot u'_0 = (0,\infty)\;,
		\end{align*}
		so $gu_0\tau^{-1} u'_0=h \in G_{0,\infty}$. Hence 
		\[g =  u'^{-h^{-1}}_0h\tau u_0^{-1} \in U_0 G_{0,\infty}\tau U_0^\circ\;.\]
		Note that since $u_0$ and $ u'_0$ are unique, so is $h$, and hence the entire decomposition.
		\item[$x\nsim \infty$:] By \ref{axiom:LM1}, we find a unique $u_\infty\in U_\infty$ such that $x\cdot u_\infty = 0$, so $(x,y)\cdot u_\infty = (0,y')$. Next, we find a unique $u_0\in U_0$ such that $y'\cdot u_0 = \infty$, so
		\[(0,\infty)\cdot g u_\infty u_0 = (x,y)\cdot u_\infty u_0 = (0,y')\cdot u_0 = (0,\infty)\;.\]
		This means $g u_\infty u_0 = h\in G_{0,\infty}$, so 
		\[g = u_0^{-h^{-1}} h u^{-1}_\infty\in U_0 G_{0,\infty} U_\infty\;.\]
		Again, since $u_0$ and $u_\infty$ are unique, so is $h$ and the entire decomposition.
		\qedhere
	\end{itemize}
\end{proof}

By making similar case distinctions, one can get different decompositions, for example
\[ G = U_\infty G_{0,\infty} U_0^\circ \,\cup\, U_\infty G_{0,\infty} \tau U_\infty\,.\]
For this decomposition, we would separate two cases: $\infty\cdot g \sim\infty$ or $\infty\cdot g \nsim\infty$. In particular, we can check that if we take $g\in U_0^\circ U_\infty$, we always end up in the first component $U_\infty G_{0,\infty} U_0^\circ$, i.e.\@
\begin{equation}\label{eq:UoU}
    U_0^\circ U_\infty \subset U_\infty G_{0,\infty} U_0^\circ\,.
\end{equation}
In \autoref{ss:hua} below, we will show that $G_{0,\infty} = H$.
A first step towards this consists of showing that the inclusion~\eqref{eq:UoU} holds with $G_{0,\infty}$ replaced by $H$.
To obtain this, we will need the notion of quasi-invertibility we introduced.

\begin{proposition}\label{prop:huaincl}
	In a local Moufang set, $U_0^\circ U_\infty \subset U_\infty H U_0^\circ$.
\end{proposition}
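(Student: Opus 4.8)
The plan is to translate the inclusion into the $\alpha$/$\gamma$-notation and then feed it into Proposition~\ref{prop:quasi_inv}. Recall that $U_\infty=\{\alpha_y\mid y\nsim\infty\}$ and $U_0=U_\infty^\tau=\{\gamma_c\mid c\nsim\infty\}$; since $\tau$ preserves $\sim$ and $\gamma_a=\alpha_a^\tau$, we have $\gamma_a\in U_0^\circ$ precisely when $a\sim 0$. So it suffices to prove $\gamma_a\,\alpha_y\in U_\infty HU_0^\circ$ for every $a\sim 0$ and every $y\nsim\infty$, and I would split this according to whether $y$ is a unit.

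If $y$ is a unit, then $(a,y)$ is quasi-invertible (because $a\sim 0$), and since $y\nsim 0$ Proposition~\ref{prop:quasi_inv} yields
\[\alpha_{{}^ay}\,\gamma_a\,\alpha_y\,\gamma_{a^y}=\mu_{{}^ay}\,\mu_y\;.\]
Here ${}^ay\nsim\infty$ and ${}^ay\nsim 0$ (the latter since $y\nsim 0$), so ${}^ay$ is a unit and $\mu_{{}^ay}\mu_y\in H$; and $a^y\sim 0$ since $a\sim 0$, so $\gamma_{a^y}\in U_0^\circ$. Since $U_\infty$ and $U_0^\circ$ are subgroups, solving for $\gamma_a\alpha_y$ gives
\[\gamma_a\,\alpha_y=\alpha_{{}^ay}^{-1}\,\bigl(\mu_{{}^ay}\mu_y\bigr)\,\gamma_{a^y}^{-1}\in U_\infty\,H\,U_0^\circ\;.\]

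The case $y\sim 0$ is the one real obstacle, since Proposition~\ref{prop:quasi_inv} requires $y\nsim 0$. I would handle it by absorbing a unit: fix a unit $e$, so that $\alpha_{y+e}=\alpha_y\alpha_e$, and note $y+e$ is a unit because $\overline{\alpha_{y+e}}=\overline{\alpha_y}\,\overline{\alpha_e}=\overline{\alpha_e}$ does not fix $\overline 0$ (Proposition~\ref{prop:unit}). Then $\alpha_y=\alpha_{y+e}\alpha_{-e}$ with $y+e$ and $-e$ units, so by the previous case $\gamma_a\alpha_{y+e}=\alpha_p\,h_1\,\gamma_q$ with $h_1\in H$ and $q\sim 0$, and then again $\gamma_q\alpha_{-e}=\alpha_{p'}\,h_2\,\gamma_{q'}$ with $h_2\in H$ and $q'\sim 0$. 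Thus $\gamma_a\alpha_y=\alpha_p\,h_1\,\alpha_{p'}\,h_2\,\gamma_{q'}$, and since $H$ normalises $U_\infty$ (each Hua map does by Lemma~\ref{lem:huaAut}, and $H=\langle h_x\mid x\text{ a unit}\rangle$) we can write $h_1\alpha_{p'}=\alpha_{p''}h_1$, giving $\gamma_a\alpha_y=(\alpha_p\alpha_{p''})(h_1h_2)\gamma_{q'}\in U_\infty HU_0^\circ$.

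Alternatively, one can close the argument using the uniqueness of the Bruhat-type factorisation $G=U_\infty G_{0,\infty}U_0^\circ\cup U_\infty G_{0,\infty}\tau U_\infty$: by~\eqref{eq:UoU} the element $\gamma_a\alpha_y$ lies in $U_\infty G_{0,\infty}U_0^\circ$, and once we have produced any factorisation $\alpha_p\,h\,\gamma_q$ of it with $h\in H\subset G_{0,\infty}$ and $q\sim 0$, uniqueness forces its $G_{0,\infty}$-component to lie in $H$. Either way, the substance of the proof is Proposition~\ref{prop:quasi_inv} together with the unit-absorption trick needed for $y\sim 0$; everything else is bookkeeping with the $\alpha$/$\gamma$ notation.
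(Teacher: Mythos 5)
Your proof is correct and follows essentially the same route as the paper: the unit case is exactly the application of \autoref{prop:quasi_inv} to the quasi-invertible pair, and the case $y\sim 0$ is handled, as in the paper, by splitting $\alpha_y$ into a product of two unit elements, applying the unit case twice, and commuting $H$ past $U_\infty$ (your "alternative closing" via the Bruhat decomposition is redundant once a factorisation with $h\in H$ is in hand, but it is harmless).
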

\begin{proof}
	We will show this in two steps. First, we will prove that
	\begin{equation}\label{eq:half}
		U_0^\circ U_\infty^\times \subset U_\infty^\times H U_0^\circ\,,
	\end{equation}
	and from that we will deduce the general inclusion.
	
	So take an arbitrary element of $U_0^\circ U_\infty^\times$, and denote it by $\alpha_x^\tau\alpha_y$. Then $x\sim0$ and $\infty\nsim y\nsim0$, so $(x,y)$ is quasi-invertible. By \autoref{prop:quasi_inv}, we have
	\[\alpha_x^\tau\alpha_y = \alpha_{-\prescript{x}{}y}\,\mu_{\prescript{x}{}y}\,\mu_y\,\alpha_{-x^y}^\tau \in U^\times_\infty H U_0^\circ\,,\]
	since $0\nsim{-}\prescript{x}{}y\nsim\infty$ and $-x^y\sim 0$. This proves~\eqref{eq:half}.
	
	For the general inclusion, we use the fact that if we have $\alpha_y\in U_\infty^\circ$, we can split it up as $\alpha_y = \alpha_{y'}\alpha_{e}$, for units $y'$ and $e$;
	indeed, units exist, and if $e$ is a unit, then $\alpha_y\alpha_e^{-1}$ does not fix $\class{0}$, so $y'$ is also a unit. So we get
	\begin{align*}
		U_0^\circ U_\infty^\circ &\subset U_0^\circ U_\infty^\times U_\infty^\times \subset U_\infty^\times H U_0^\circ U_\infty^\times \\
					&\subset  U_\infty^\times H  U_\infty^\times H U_0^\circ = U_\infty^\times U_\infty^\times H U_0^\circ\subset U_\infty H U_0^\circ\,,
	\end{align*}
	where we have used~\eqref{eq:half} twice, as well as $HU_\infty^\times = U_\infty^\times H$.
	
	Putting these two inclusions together, we get
	\[U_0^\circ U_\infty = U_0^\circ U_\infty^\times\cup U_0^\circ U_\infty^\circ \subset U_\infty^\times H U_0^\circ\cup U_\infty H U_0^\circ = U_\infty H U_0^\circ\,.\qedhere\]
\end{proof}

\subsection{The Hua subgroup is the two-point stabilizer}\label{ss:hua}

In the case of Moufang sets, one can use the Bruhat decomposition to prove that $G_{0,\infty} = H$, and as a consequence that the point stabilizer $G_0 = U_0 H$.
In the case of local Moufang sets, the additional $U_0^\circ$ in the decomposition seems to cause further difficulties in the proof.
However, using \autoref{prop:huaincl}, we will be able to resolve these difficulties,
and we will again be able to prove that the Hua subgroup coincides with the full two-point stabilizer of $0$ and $\infty$ in $G$.

\begin{theorem}\label{thm:hua2pt}
	For a local Moufang set $\M$, we have the decomposition 
	\[G = U_0 H U_\infty \,\cup\, U_0 H \tau U_0^\circ\]
	and hence $G_0 = U_0 H$ and $H = G_{0,\infty}$\notatlink{H}.
\end{theorem}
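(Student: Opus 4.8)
The plan is to prove the stated decomposition $G = U_0 H U_\infty \cup U_0 H\tau U_0^\circ$ first — write $S$ for its right‑hand side — and then read off the two stabilizer equalities by comparing with the Bruhat decomposition $G = U_0 G_{0,\infty}U_\infty \cup U_0 G_{0,\infty}\tau U_0^\circ$ established above, together with the uniqueness of the expressions occurring there. Indeed, granting $G=S$: an element $g\in G_{0,\infty}$ satisfies $0g=0\nsim\infty$, so it cannot lie in $U_0 H\tau U_0^\circ$ (every element of that set carries $0$ into $\class{\infty}$, since $H$ fixes $0$ and $U_0^\circ$ fixes $\class{\infty}$), hence $g=u_0 h v_\infty$ with $u_0\in U_0$, $h\in H$, $v_\infty\in U_\infty$; as $H\le G_{0,\infty}$, this and $g=1\cdot g\cdot 1$ are two $U_0 G_{0,\infty}U_\infty$‑expressions of $g$, so uniqueness forces $u_0=v_\infty=1$ and $g=h\in H$. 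Thus $G_{0,\infty}=H$. Intersecting $G=S$ with $G_0$ — the $\tau$‑coset again contributes nothing, and $u_0 h v_\infty$ fixes $0$ exactly when $v_\infty=1$ by sharp transitivity of $U_\infty$ on $X\setminus\class{\infty}\ni 0$ — gives $G_0=U_0 H$.

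To prove $G=S$ I would use $G=\langle U_\infty,\tau\rangle$ and build elements of $G$ as words in $U_\infty\cup\{\tau,\tau^{-1}\}$, so it suffices to check $1\in S$ and that $S$ is stable under (left) multiplication by $U_\infty$ and by $\tau^{\pm1}$. Several structural facts streamline this: $H$ normalizes $U_\infty$, $U_0$ and $U_0^\circ$ (as $H\le G_{0,\infty}$ and conjugation by a point stabilizer respects the $\circ$‑parts); $\tau H=H\tau$ (from \autoref{lem:hua}) and $\tau^2=h_{e}\in H$ when $\tau=\mu_e$; and $U_\infty^\tau=U_0$ with $(U_0^\circ)^\tau=U_\infty^\circ$, whence $\tau U_0=U_\infty\tau$, $\tau U_\infty=U_0\tau$ and $U_0^\circ\tau=\tau U_\infty^\circ$. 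These give $HS=SH=S$, and after absorbing $H$‑ and $U_0$‑factors into the appropriate components and using \autoref{prop:huaincl} in the form $U_0^\circ U_\infty\subseteq U_\infty H U_0^\circ$ (and its $\tau$‑conjugate $U_\infty^\circ U_0\subseteq U_0 H U_\infty^\circ$) for the terms carrying a $\circ$‑factor, the whole stability question collapses to a single mixed‑product inclusion, $U_\infty U_0\subseteq S$, which via \autoref{prop:mu_defin} ($\mu_e\in U_0^\times\alpha_e U_0^\times$) and $\mu_e\in\tau H$ can equivalently be phrased through the double cosets $U_0\,\alpha_x\,U_0$.

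The inclusion $U_\infty U_0\subseteq S$ is where the real work lies, and it is the main obstacle. Splitting $U_\infty=U_\infty^\circ\sqcup U_\infty^\times$ and $U_0=U_0^\circ\sqcup U_0^\times$, the products with a $\circ$‑factor are dispatched by \autoref{prop:huaincl} and its $\tau$‑conjugate (inverting where convenient, e.g.\ $U_\infty U_0^\circ=(U_0^\circ U_\infty)^{-1}\subseteq(U_\infty H U_0^\circ)^{-1}=U_0^\circ H U_\infty\subseteq S$). The essential case is $U_\infty^\times U_0^\times$: writing $\alpha_x=g^{-1}\mu_x h^{-1}$ with $g,h\in U_0^\times$ (again \autoref{prop:mu_defin}) and $\gamma_a=\tau^{-1}\alpha_a\tau$, one rearranges such a product so that the quasi‑invertibility identity $\alpha_{{}^x\!y}\,\alpha_x^\tau\,\alpha_y\,\alpha_{x^y}^\tau=\mu_{{}^x\!y}\,\mu_y$ of \autoref{prop:quasi_inv} applies and straightens the relevant factor $\alpha_x^\tau\alpha_y=\gamma_x\alpha_y$ into $U_\infty H U_0\subseteq S$. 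The genuine difficulty is that this identity fails precisely for the non‑quasi‑invertible couples — $x,y$ both units with $x\tau\sim -y$ — which must be handled separately, by factoring $\alpha_y$ through an auxiliary unit so as to reduce to quasi‑invertible couples (or via the $x\nsim 0$ variant noted in the Remark after \autoref{prop:quasi_inv}), and then verifying that the reduction terminates without circularity. Making this last step go through is exactly what the preceding development of quasi‑invertibility and of \autoref{prop:huaincl} was set up to achieve.
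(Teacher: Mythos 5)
Your derivation of $G_0=U_0H$ and $H=G_{0,\infty}$ from the decomposition is fine, and your overall scheme (show the right-hand side $S$ is stable under multiplication by the generators $U_\infty$ and $\tau^{\pm1}$, using that $H$ normalizes the root groups, $\tau H=H\tau$, $\tau^2\in H$, and \autoref{prop:huaincl}) is in spirit the paper's proof; that the paper multiplies on the right and you on the left is immaterial. The gap sits exactly where you locate the ``real work'': the case $U_\infty^\times U_0^\times$ of your inclusion $U_\infty U_0\subseteq S$. Two concrete problems. First, after applying \autoref{prop:quasi_inv} to a factor $\gamma_x\alpha_y$ you land in $U_\infty H U_0$, and the inclusion $U_\infty HU_0\subseteq S$ you then invoke is not available: since $H$ normalizes $U_0$, $U_\infty HU_0=U_\infty U_0H$, so (using $SH=S$) this inclusion is equivalent to $U_\infty U_0\subseteq S$, i.e.\ to the very statement you are proving; the identity outputs the factors in the wrong order, and only $U_0HU_\infty\subseteq S$ is known, so it helps only after inverting --- and then only for quasi-invertible pairs. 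Second, the non-quasi-invertible case is not repaired by the proposed factoring: if you split off $\alpha_{y_1}\in U_\infty^\circ$ (the situation covered by the Remark), then $-x^{y_1}\sim x$ and $y_2\sim y$, so the leftover pair satisfies $(-x^{y_1})\tau\sim x\tau\sim -y\sim -y_2$ and is again non-quasi-invertible --- the reduction never terminates; and if you insist on an auxiliary \emph{unit} $y_1$ with $x\tau\nsim-y_1$, such a $y_1$ need not exist (when $\abs{\class{X}}=3$ all units are equivalent). Nor can \autoref{prop:huaincl} rescue this: it only concerns products with a $U_0^\circ$-factor, where quasi-invertibility is automatic, so it was never ``set up'' to handle pairs of units.

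The unit--unit case in fact needs no quasi-invertibility at all, and this is how the paper deals with it. For $\alpha_a\in U_\infty^\times$ write $\alpha_a=g^{-1}\mu_ah^{-1}$ with $g,h\in U_0^\times$ (\autoref{prop:mu_defin}); since $\mu_a\tau^{-1}\in H$, for any $\gamma\in U_0$ one gets $\alpha_a\gamma=g^{-1}(\mu_a\tau^{-1})\,\tau(h^{-1}\gamma)\in U_0H\tau U_0$, which lies in $S$ when $h^{-1}\gamma\in U_0^\circ$, while if $h^{-1}\gamma=\gamma_d\in U_0^\times$ one more application of \autoref{prop:mu_defin} gives $\tau\gamma_d=\alpha_d\tau=g_d^{-1}(\mu_d\tau)(h_d^{-1})^\tau\in U_0HU_\infty$, hence $\alpha_a\gamma\in U_0HU_\infty\subseteq S$. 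This is exactly the manipulation the paper performs in part (1) of its proof for $K\alpha_a\tau$ with $\alpha_a\in U_\infty^\times$; quasi-invertibility enters the theorem only through the already-proved \autoref{prop:huaincl}, used verbatim for $K\tau U_0^\circ U_\infty$. The paper's two-piece set $KU_\infty\cup K\tau U_0^\circ$ is designed precisely so that a non-quasi-invertible pair of units never has to be confronted; your reduction to $U_\infty U_0\subseteq S$ recreates that confrontation and, as written, cannot resolve it.
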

\begin{proof}
    Let $K = U_0 H = HU_0$. We will examine the set 
    \[Q = K U_\infty\cup K\tau U_0^\circ\;;\]
    our aim is to prove that it equals $G$.
    More precisely, we will show that $Q\langle U_\infty,\tau\rangle = QG\subset Q$, from which $Q=G$ will follow immediately.
    We will do this for each of the two pieces of $Q$ separately.\vspace{1ex}
    \begin{nrenumerate}
        \item
            We will first show that $K U_\infty G\subset Q$.
            It is immediate that 
            \[K U_\infty U_\infty = K U_\infty\subset Q\;,\]
            so all we need to prove is that $K U_\infty\tau\subset Q$, or equivalently, that $K \alpha_a\tau\subset Q$ for all $\alpha_a\in U_\infty$. Assume first that $\alpha_a\in U_\infty^\circ$; then $\alpha_a^\tau\in U_0^\circ$, so 
            \[K \alpha_a\tau = K \tau\alpha_a^\tau\subset K\tau U_0^\circ\subset Q\,.\]
            Finally, when $\alpha_a\in U_\infty\backslash U_\infty^\circ = U_\infty^\times$, we have 
            \[\mu_{a\tau} \alpha_{a}^\tau = \alpha_{(-a\tau)\tau^{-1}}^\tau\,\alpha_{a\tau}\]
            by \autoref{lem:mu}\ref{itm:muform}. Since $\mu_{a\tau}\tau^{-1}\in H\subset K$, this implies
            \[K \alpha_a\tau = K \tau\alpha_a^\tau = K\mu_{a\tau}\tau^{-1}\tau\alpha_a^\tau = K\alpha_{(-a\tau)\tau^{-1}}^\tau\,\alpha_{a\tau}\subset K U_\infty\,.\]
        \item
            We will now show that $K \tau U_0^\circ G\subset Q$. We have 
            \[K \tau U_0^\circ \tau = K \tau^2 U_\infty^\circ\subset KHU_\infty = K U_\infty\subset Q\;,\]
            so we need to prove that $K \tau U_0^\circ U_\infty\subset Q$.
            We now invoke \autoref{prop:huaincl}, and we get
            \[K\tau U_0^\circ U_\infty\subset K\tau U_\infty H U_0^\circ = KU_0H\tau U_0^\circ = K\tau U_0^\circ\subset Q.\]
    \end{nrenumerate}
	We conclude that $QG\subset Q$, and hence $Q=G$ as claimed.
	
	We now know that $G = U_0 H U_\infty\cup U_0 H\tau U_0^\circ$. If we take an element $g$ in the point stabilizer $G_0$ and look at the two possibilities of decomposing $g$, we get $g\in U_0 H$, so $G_0 = U_0 H$. If we assume in addition that $g$~fixes $\infty$, we also see that the factor in $U_0$ must be trivial, hence $G_{0,\infty} = H$.
\end{proof}

We can use this to give a different definition of improper local Moufang sets:
\begin{corollary}
	For a local Moufang set $\M$, the following are equivalent:
	\begin{romenumerate}
		\item $\M$ is improper;
		\item $H$ is trivial;
		\item $\mu_x=\mu_y$ for all units $x$ and $y$.
	\end{romenumerate}
\end{corollary}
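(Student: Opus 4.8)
The plan is to split the chain of equivalences into two halves: first show that (i) and (ii) are equivalent as an immediate consequence of \autoref{thm:hua2pt}, and then show (ii)$\Leftrightarrow$(iii) by an elementary manipulation of $\mu$-maps. For the first half, recall that by \autoref{prop:twotrans} the little projective group $G$ always acts transitively on the set of bases $\{(x,y)\in X^2\mid x\nsim y\}$; hence this action is \emph{sharply} transitive precisely when it is free, which (given transitivity) is equivalent to the stabilizer of a single basis being trivial. The stabilizer of $(0,\infty)$ under this action is exactly the two-point stabilizer $G_{0,\infty}$, and \autoref{thm:hua2pt} identifies $G_{0,\infty}$ with the Hua subgroup $H$. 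Therefore $\M$ is improper if and only if $H$ is trivial, which is (i)$\Leftrightarrow$(ii).

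For (ii)$\Rightarrow$(iii), I would start from $H=\langle\mu_x\mu_y\mid x,y\text{ units}\rangle=1$, so that $\mu_x\mu_y=\id$ for all units $x,y$, i.e.\ $\mu_x=\mu_y^{-1}=\mu_{-y}$ using \autoref{lem:mu}\ref{itm:muinv}. Since $-y$ is a unit whenever $y$ is, and $-y$ runs over all units as $y$ does, this forces $\mu_x=\mu_z$ for all units $x,z$, which is (iii). For the converse (iii)$\Rightarrow$(ii), observe that $-x$ is a unit whenever $x$ is, so (iii) combined with $\mu_{-x}=\mu_x^{-1}$ (\autoref{lem:mu}\ref{itm:muinv}) gives $\mu_x=\mu_x^{-1}$, hence $\mu_x^2=\id$ for every unit $x$. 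Then every generator of $H$ satisfies $\mu_x\mu_y=\mu_x\mu_x=\mu_x^2=\id$, so $H$ is trivial, giving (ii).

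I do not expect any genuine obstacle here; the statement is essentially a corollary bookkeeping exercise once \autoref{thm:hua2pt} is in hand. The only step that requires a moment of care is the first equivalence, where one must remember to decompose ``sharply transitive'' into ``transitive'' (already supplied by \autoref{prop:twotrans}) plus ``free'', the latter reducing to triviality of one point stabilizer by transitivity; after that, \autoref{thm:hua2pt} does all the work, and the remaining implications are one-line consequences of the identity $\mu_{-x}=\mu_x^{-1}$.
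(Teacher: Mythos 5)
Your proof is correct, and it is exactly the argument the paper intends: the corollary is stated without proof as an immediate consequence of \autoref{thm:hua2pt} (identifying $G_{0,\infty}$ with $H$, so that sharp transitivity on bases amounts to triviality of $H$), with the equivalence to (iii) following from $\mu_{-x}=\mu_x^{-1}$ and the fact that $-x$ is a unit whenever $x$ is. No gaps; the only care needed is the transitive-plus-trivial-stabilizer decomposition of sharp transitivity, which you handle correctly via \autoref{prop:twotrans}.
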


	\chapter{Main construction}\label{chap:chap3_constr}
	In the theory of Moufang sets, T.~De Medts and R.~Weiss found a way to construct Moufang sets using a minimal amount of information. We generalize this construction to local Moufang sets, requiring a set with equivalence relation, a group and one permutation, satisfying some basic properties. We generate the data for a local pre-Moufang set, but without extra assumptions, this is not necessarily a local Moufang set. In \autoref{cor:construction_equivalentconditions}, we prove some useful necessary and sufficient conditions to ensure we get a local Moufang set.

\section{How to construct a local Moufang set}

We already know that, if we have a local Moufang set, then $G = \langle U_\infty, \tau\rangle$. We now consider the converse: given a group $U$ and a permutation $\tau$, both acting faithfully on a set with an equivalence relation, we will try to construct a local Moufang set.
Of course, we will need additional conditions on $U$ and $\tau$.

\begin{importantconstruction}\label{constr:MUtau}
	The construction requires some data to start with. We need
	\begin{nrenumerate}
		\item a set with an equivalence relation $(X,\sim)$, such that $\abs{\class{X}}>2$;
		\item a group $U\leq\Sym(X,\sim)$, and an element $\tau\in\Sym(X,\sim)$.
	\end{nrenumerate}
	The action of $U$ and $\tau$ will have to be sufficiently nice in order to do the construction.
	\begin{manualenumerate}[label=\textnormal{(C\arabic*)},labelwidth=\widthof{(C1')}]
		\item $U$ has a fixed point we call $\infty$, and acts sharply transitively on $X\setminus\class{\infty}$\label{axiom:C1}.
		\item[\mylabel{axiom:C1'}{\textnormal{(C1')}}] The induced action of $U$ on $\class{X}$ is sharply transitive on $\class{X}\setminus\{\class{\infty}\}$.
		\item $\infty\tau\nsim\infty$ and $\infty\tau^2=\infty$. We write $0:=\infty\tau$\label{axiom:C2}.
	\end{manualenumerate}
	In this construction, we now define the following objects:
	\begin{itemize}
		\item For $x\nsim\infty$, we let $\alpha_x$\notatlink{alphax} be the unique element of $U$ mapping $0$ to $x$ (this exists by \ref{axiom:C1}).
		\item For $x\nsim\infty$, we write $\gamma_x:=\alpha_x^\tau$\notatlink{gammax}, which then maps $\infty$ to $x\tau$.
		\item We set $U_\infty := U$ and $U_0:=U_\infty^\tau$. The other root groups are defined as
		\[U_x:=U_0^{\alpha_x} \text{ for $x\nsim\infty$,}\qquad U_x:=U_\infty^{\gamma_{x\tau^{-1}}} \ \text{for $x\sim\infty$}.\notatlink{Ux}\]
	\end{itemize}
	This gives us all the data that is needed for a local Moufang set; we denote the result of this construction by $\M(U,\tau)$\notatlink{M(U,tau)}.
\end{importantconstruction}

This construction does not usually give a local Moufang set, but it always is a local pre-Moufang set.

\begin{proposition}\label{prop:constr}
	The construction $\M(U,\tau)$ is a local pre-Moufang set.
\end{proposition}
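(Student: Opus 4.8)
The plan is to verify the three axioms \ref{axiom:LM0}, \ref{axiom:LM1}, \ref{axiom:LM1'} directly from the construction of $\M(U,\tau)$; axiom \ref{axiom:LM2} is not required for a local pre-Moufang set. Two elementary facts will be used throughout. First, $g\mapsto\induced g$ is a group homomorphism $\Sym(X,\sim)\to\Sym(\class X)$, so $\induced{U_x^g}=\induced{U_x}^{\,\induced g}$. Second, if $K\le\Sym(X,\sim)$ fixes a point $p$ and acts sharply transitively on $X\setminus\class p$, and $g\in\Sym(X,\sim)$, then $K^g$ fixes $pg$ and acts sharply transitively on $X\setminus\class{pg}$, since $g$ maps $X\setminus\class p$ bijectively onto $X\setminus\class{pg}$; the same statement holds verbatim with $\class X$ in place of $X$.

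For \ref{axiom:LM1}: by \ref{axiom:C1}, $U_\infty=U$ fixes $\infty$ and acts sharply transitively on $X\setminus\class\infty$. Since $\tau\in\Sym(X,\sim)$ and $\infty\tau=0$, conjugating by $\tau$ gives \ref{axiom:LM1} for $U_0=U_\infty^\tau$ at the point $0$. For $x\nsim\infty$ the element $\alpha_x\in U\le\Sym(X,\sim)$ maps $0$ to $x$, so conjugating $U_0$ by $\alpha_x$ gives \ref{axiom:LM1} for $U_x=U_0^{\alpha_x}$. For $x\sim\infty$ we have $x\tau^{-1}\sim0\nsim\infty$, so $\alpha_{x\tau^{-1}}$ and $\gamma_{x\tau^{-1}}=\alpha_{x\tau^{-1}}^\tau$ are defined and $\infty\gamma_{x\tau^{-1}}=(x\tau^{-1})\tau=x$; conjugating $U_\infty$ by $\gamma_{x\tau^{-1}}$ gives \ref{axiom:LM1} for $U_x=U_\infty^{\gamma_{x\tau^{-1}}}$. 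One checks in passing that the two descriptions of $U_x$ coincide at the overlap points $x\in\{0,\infty\}$, using $\alpha_0=\gamma_0=\id$.

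The crux lies in \ref{axiom:LM0}, and it rests on the observation that $\induced{\alpha_x}$ depends only on $\class x$. Indeed, for $x\nsim\infty$ we have $\induced{\alpha_x}\in\induced U$ and $\class 0\cdot\induced{\alpha_x}=\class x$, so by \ref{axiom:C1'} (sharp transitivity of $\induced U$ on $\class X\setminus\{\class\infty\}$) it is the \emph{unique} element of $\induced U$ with this property; write it $\induced\alpha_{\class x}$, so $\induced\alpha_{\class 0}=\id$. Consequently, for $x\sim\infty$ we get $\class{x\tau^{-1}}=\class 0$, hence $\induced{\alpha_{x\tau^{-1}}}=\id$ and $\induced{\gamma_{x\tau^{-1}}}=\induced{\alpha_{x\tau^{-1}}}^{\,\induced\tau}=\id$. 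Now if $x\sim y$: when $x,y\sim\infty$ then $\induced{U_x}=\induced{U_\infty}^{\,\id}=\induced U=\induced{U_y}$; when $x,y\nsim\infty$ then $\class x=\class y$, so $\induced{U_x}=\induced{U_0}^{\,\induced\alpha_{\class x}}=\induced{U_0}^{\,\induced\alpha_{\class y}}=\induced{U_y}$. This proves \ref{axiom:LM0} and legitimises the notation $U_{\class x}:=\induced{U_x}$.

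Finally, \ref{axiom:LM1'}: by \ref{axiom:C1'}, $\induced{U_\infty}=\induced U$ fixes $\class\infty$ and acts sharply transitively on $\class X\setminus\{\class\infty\}$. For any $\class x\neq\class\infty$, the homomorphism property gives $\induced{U_x}=\induced{U_0}^{\,\induced\alpha_{\class x}}=\induced U^{\,\induced\tau\,\induced\alpha_{\class x}}$, and $\induced\tau\,\induced\alpha_{\class x}\in\Sym(\class X)$ sends $\class\infty$ to $\class x$, so the conjugation fact on $\class X$ shows $\induced{U_x}$ fixes $\class x$ and acts sharply transitively on $\class X\setminus\{\class x\}$. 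Together with $\abs{\class X}>2$, this shows $\M(U,\tau)$ is a local pre-Moufang set. The only real work is the bookkeeping for the two exceptional families of points (those $\sim0$ and those $\sim\infty$), and the single load-bearing point is that $\induced{\alpha_x}$ collapses to the identity when $x\sim0$ — which is precisely where \ref{axiom:C1'} is used; all the rest is transport of structure by conjugation.
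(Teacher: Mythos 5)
Your proof is correct and follows essentially the same route as the paper: \ref{axiom:LM1} and \ref{axiom:LM1'} are obtained by transporting the sharply transitive actions of $U$ (and of $\induced{U}$) via conjugation by elements moving $\infty$ to $x$, and \ref{axiom:LM0} hinges, exactly as in the paper, on using \ref{axiom:C1'} to show that $\induced{\alpha_x}$ is trivial when $x\sim 0$ and hence $\induced{\gamma_{x\tau^{-1}}}$ is trivial when $x\sim\infty$. The only cosmetic remark is that the fact that $\induced{U}$ fixes $\class{\infty}$ comes from \ref{axiom:C1} (since $U$ fixes $\infty$ and preserves $\sim$) rather than from \ref{axiom:C1'} itself, which is how the paper phrases it.
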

\begin{proof}
	We first show \ref{axiom:LM0}. Let $x\sim y$ and suppose they are not equivalent to $\infty$. Then $U_x^{\alpha_x^{-1}\alpha_y} = U_y$ by definition. Now $\alpha_x^{-1}\alpha_y$ is in $U$, and fixes $\class{x}$. By \ref{axiom:C1'}, this implies that $\induced{\alpha_x^{-1}\alpha_y}=\id$, so the induced action of $U_x$ is the same as that of $U_y$.
	If $x\sim \infty$, we have $U_x = U_\infty^{\gamma_{x\tau^{-1}}}$. We now want to see what the induced action of $\gamma_{x\tau^{-1}}$ is. We have
	\[\induced{\gamma_{x\tau^{-1}}} = \induced{\tau^{-1}}\,\induced{\alpha_{x\tau^{-1}}}\,\induced{\tau}\;.\]
	Remark that $x\tau^{-1}\sim 0$, so 
	\[0\alpha_{x\tau^{-1}} = x\tau^{-1} \sim 0\;,\]
	hence $\alpha_{x\tau^{-1}}$ fixes $\class{0}$. By \ref{axiom:C1'}, $\induced{\alpha_{x\tau^{-1}}}=\id$, so
	\[\induced{\gamma_{x\tau^{-1}}} = \induced{\tau^{-1}}\induced{\tau} = \id\;,\]
	and hence $\induced{U_x} = \induced{U_\infty}$. Now for all $x\sim y\sim \infty$, we have $\induced{U_x} = \induced{U_\infty} = \induced{U_y}$.
	
	By \ref{axiom:C1}, \ref{axiom:LM1} holds for $U_\infty$. Now, by definition, any other $U_x$ is equal to $U_\infty^g=g^{-1}U_\infty g$ for some $g$ with $\infty g = x$.
	It follows that each $U_x$ fixes $x$ and acts sharply transitively on $(X\setminus\class{\infty})g = X\setminus\class{x}$,
	so \ref{axiom:LM1} holds for all root groups.
	
	Similarly, \ref{axiom:LM1'} holds for $U_{\class{\infty}}$ because of \ref{axiom:C1'} and since $U_{\class{\infty}}$ fixes $\class{\infty}$ because $U_\infty$ fixes $\infty$.
	As before, any $U_x$ is equal to $U_\infty^g=g^{-1}U_\infty g$ for some $g$ with $\infty g = x$, so $U_{\class{x}}$ is the induced action of $U_\infty^g$ on $\class{X}$.
	This implies that $U_{\class{x}}$ fixes $\class{\infty}g = \class{x}$ and acts sharply transitively on $(\class{X}\setminus\{\class{\infty}\})g = \class{X}\setminus\{\class{x}\}$.
\end{proof}

In a local pre-Moufang set given by the construction, we now introduce some notation that we already know from the theory developed in \autoref{chap:chap2_definitions}.

\begin{notation}\preenum
	\begin{itemize}
		\item We call $x\in X$ a unit if $x\nsim0$ and $x\nsim\infty$.
		\item For $x\nsim\infty$, we set $-x:=0\alpha_x^{-1}$.
		\item For a unit $x$, we define the $\mu$-map to be 
			\[\mu_x:=\gamma_{(-x)\tau^{-1}}\alpha_x\gamma_{-(x\tau^{-1})}\;.\notatlink{mux}\]
		\item For a unit $x$, we define the Hua map 
			\[h_x:=\tau\alpha_x\tau^{-1}\alpha_{-(x\tau^{-1})}\tau\alpha_{-(-(x\tau^{-1}))\tau}\;.\notatlink{hx}\]
		\item We set $H := \langle \mu_x\mu_y\mid x,y\text{ units}\rangle$\notatlink{H}.
		\item We set $G := \langle U_x\mid x\in X\rangle$\notatlink{G}.
		\item We set $U_0^\circ = \{\gamma_x\in U_0\mid x\sim0\}$.
	\end{itemize}
\end{notation}

\begin{remark}\label{remark:mu_hua_construction}
	The $\mu$-maps in this context are identical to the $\mu$-maps as in \autoref{prop:mu_defin}. As a consequence, \autoref{lem:mu}\ref{itm:muform} and \ref{itm:muinv} still hold for these $\mu$-maps. 
	
	On the other hand, we cannot ensure $h_x = \tau\mu_x$, as this requires \autoref{lem:mu}\ref{itm:muform2}. The proof of this identity requires \ref{axiom:LM2} in the proof of \autoref{lem:mu}\ref{itm:mutau}, though it can be shown using weaker assumptions. This is precisely what we will do in the next section to characterize when $\M(U,\tau)$ gives rise to a local Moufang set.
\end{remark}

\section{Conditions to satisfy \texorpdfstring{\ref{axiom:LM2}}{(LM2)}}

To ensure that \autoref{constr:MUtau} gives a local Moufang set, we will need more information about the action of the Hua maps.
We will first prove a few lemmas.
Throughout this section, we let $\M(U,\tau)$ be as in \autoref{constr:MUtau}.

\begin{lemma}
	Let $x\in X$ be a unit. Then the following are equivalent:
	\begin{romenumerate}
		\item $U_\infty^{h_x} = U_\infty$; \label{lem:loceq2}
		\item $U_\infty^{\gamma_{x\tau^{-1}}} = U_x$. \label{lem:loceq3}
		\item $U_0^{\mu_x} = U_\infty$; \label{lem:loceq4}
	\end{romenumerate}
\end{lemma}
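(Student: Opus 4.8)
The plan is to express everything through the explicit formulas for $\mu_x$ and $h_x$ from \autoref{constr:MUtau}, using that the leading $\tau$ turns $U_\infty$ into $U_0$, that every $\gamma$-factor normalises $U_0$, and that every $\alpha$-factor normalises $U_\infty$; I will show that \ref{lem:loceq2} and \ref{lem:loceq3} are each equivalent to \ref{lem:loceq4}. First I would simplify the Hua map: since $\gamma_y=\alpha_y^\tau=\tau^{-1}\alpha_y\tau$, the middle of its defining expression collapses and $h_x=\tau\,\alpha_x\,\gamma_{-(x\tau^{-1})}\,\alpha_{-\til x}$, where $\til x=(-(x\tau^{-1}))\tau$. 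The defining formula $\mu_x=\gamma_{(-x)\tau^{-1}}\,\alpha_x\,\gamma_{-(x\tau^{-1})}$ (this is \autoref{lem:mu}\ref{itm:muform}) gives $\alpha_x\gamma_{-(x\tau^{-1})}=\gamma_{(-x)\tau^{-1}}^{-1}\mu_x=\gamma_{-((-x)\tau^{-1})}\mu_x$, using $\gamma_w^{-1}=\gamma_{-w}$, and substituting this back yields the key identity
\[ h_x=\tau\,\gamma_{-((-x)\tau^{-1})}\,\mu_x\,\alpha_{-\til x}\,. \]
All subscripts here are legitimate, because $x$ being a unit forces $x\tau^{-1}$, $(-x)\tau^{-1}$, their negatives and $\til x$ to be units too, hence all $\nsim\infty$; moreover $\gamma_{-((-x)\tau^{-1})}\in U_0$ and $\alpha_{-\til x}\in U_\infty$. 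This identity is the correct substitute for the relation $h_x=\tau\mu_x$ that would hold in a genuine local Moufang set but is exactly what we cannot assume here.

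For \ref{lem:loceq2} $\Leftrightarrow$ \ref{lem:loceq4}, I would conjugate $U_\infty$ by $h_x$ in this form: $\tau$ sends $U_\infty$ to $U_0$, the factor $\gamma_{-((-x)\tau^{-1})}\in U_0$ fixes $U_0$, $\mu_x$ sends it to $U_0^{\mu_x}$, and $\alpha_{-\til x}\in U_\infty$ conjugates the result, so $U_\infty^{h_x}=(U_0^{\mu_x})^{\alpha_{-\til x}}$. Hence $U_\infty^{h_x}=U_\infty$ holds if and only if $U_0^{\mu_x}=U_\infty^{\alpha_{\til x}}=U_\infty$, which is precisely \ref{lem:loceq4}.

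For \ref{lem:loceq3} $\Leftrightarrow$ \ref{lem:loceq4}, I would apply the $\mu$-formula to the unit $-x$ (with $-(-x)=x$) to get $\mu_{-x}=\gamma_{x\tau^{-1}}\,\alpha_{-x}\,\gamma_{-((-x)\tau^{-1})}$, and then solve for the leading factor, obtaining $\gamma_{x\tau^{-1}}=\mu_{-x}\,\gamma_{(-x)\tau^{-1}}\,\alpha_x$ (using $\gamma_w^{-1}=\gamma_{-w}$ and $\alpha_{-x}^{-1}=\alpha_x$). Since $\mu_{-x}=\mu_x^{-1}$ remains valid in this setting (\autoref{lem:mu}\ref{itm:muinv}, still available by \autoref{remark:mu_hua_construction}) and $U_x:=U_0^{\alpha_x}$ by definition, conjugating $U_\infty$ by $\gamma_{x\tau^{-1}}$ gives $U_\infty^{\gamma_{x\tau^{-1}}}=\bigl((U_\infty^{\mu_x^{-1}})^{\gamma_{(-x)\tau^{-1}}}\bigr)^{\alpha_x}$, where $\gamma_{(-x)\tau^{-1}}\in U_0$. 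Now \ref{lem:loceq4}, i.e.\ $U_0^{\mu_x}=U_\infty$, is equivalent to $U_\infty^{\mu_x^{-1}}=U_0$; if it holds then $U_\infty^{\gamma_{x\tau^{-1}}}=U_0^{\alpha_x}=U_x$, and conversely, from $U_\infty^{\gamma_{x\tau^{-1}}}=U_0^{\alpha_x}$ one cancels the outer $\alpha_x$ and the $U_0$-factor $\gamma_{(-x)\tau^{-1}}$ to recover $U_\infty^{\mu_x^{-1}}=U_0$.

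I do not expect a genuine obstacle: the whole proof is bookkeeping with the explicit generators. The only points that need care are that the construction's $h_x$ is not defined as $\tau\mu_x$ — so one must work with the corrected identity $h_x=\tau\gamma_{-((-x)\tau^{-1})}\mu_x\alpha_{-\til x}$ — and the routine verification that every $\gamma$-factor appearing lies in $U_0$ and every $\alpha$-factor in $U_\infty$, so that the successive conjugations telescope down to $U_0^{\mu_x}$.
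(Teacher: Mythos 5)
Your proof is correct, and it is essentially the same argument as the paper's: pure conjugation bookkeeping with the explicit formulas for $h_x$, $\mu_x$ and the definitions of the root groups in $\M(U,\tau)$, using that $\gamma$-factors lie in $U_0$ and $\alpha$-factors in $U_\infty$. The only difference is organizational — the paper proves \ref{lem:loceq2}$\Leftrightarrow$\ref{lem:loceq3} by peeling the factors of $h_x$ directly and then gets \ref{lem:loceq3}$\Leftrightarrow$\ref{lem:loceq4} from the one-line identity $U_0^{\mu_x}=U_x^{\gamma_{-(x\tau^{-1})}}$, whereas you pivot both equivalences through \ref{lem:loceq4} via the rewritten identity $h_x=\tau\gamma_{-((-x)\tau^{-1})}\mu_x\alpha_{-\til x}$ and the formula for $\mu_{-x}=\mu_x^{-1}$; both routes are valid and of the same nature.
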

\begin{proof}\preenum
	\begin{itemize}[labelindent=4em, leftmargin=*]
		\item[\ref{lem:loceq2}$\Leftrightarrow$\ref{lem:loceq3}.] We have
			\begin{align*}
				&U_\infty^{\tau\alpha_x\tau^{-1}\alpha_{-(x\tau^{-1})}\tau\alpha_{-(-(x\tau^{-1}))\tau}} = U_\infty \\
				&\iff U_0^{\alpha_x\tau^{-1}\alpha_{-(x\tau^{-1})}\tau} = U_\infty^{\alpha_{(-(x\tau^{-1}))\tau}} \\
				&\iff U_x^{\tau^{-1}\alpha^{-1}_{x\tau^{-1}}\tau} = U_\infty \\
				&\iff U_x = U_\infty^{\gamma_{x\tau^{-1}}}\,,
			\end{align*}
		where we only use the definitions of the root groups in $\M(U,\tau)$.
		\item[\ref{lem:loceq3}$\Leftrightarrow$\ref{lem:loceq4}.] We have $U_0^{\mu_x} = U_0^{\gamma_{(-x)\tau^{-1}}\alpha_x\gamma_{-(x\tau^{-1})}} = 	U_x^{\gamma_{-(x\tau^{-1})}}$, so the equivalence follows.\qedhere
	\end{itemize}
\end{proof}

\begin{lemma}\label{lem:gleq}
	The following are equivalent:
	\begin{romenumerate}
		\item $U_\infty^{h_x} = U_\infty$ for all units $x\in X$; \label{lem:gleq2}
		\item $U_\infty^{\gamma_{x\tau^{-1}}} = U_x$ for all units $x\in X$; \label{lem:gleq3}
		\item $U_0^{\mu_x} = U_\infty$ for all units $x\in X$; \label{lem:gleq4}
		\item $U_0 = U_\infty^{\mu_x}$ for all units $x\in X$. \label{lem:gleq5}
	\end{romenumerate}
\end{lemma}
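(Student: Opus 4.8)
The statement is a chain of four equivalences, and the previous lemma already gives, for a \emph{single} unit $x$, the equivalence of \ref{lem:loceq2}, \ref{lem:loceq3} and \ref{lem:loceq4}. So the first move is simply to invoke that lemma pointwise: quantifying each of those three conditions over all units $x$ immediately yields \ref{lem:gleq2}$\Leftrightarrow$\ref{lem:gleq3}$\Leftrightarrow$\ref{lem:gleq4}. Thus the only real content is to connect \ref{lem:gleq4} with \ref{lem:gleq5}, i.e.\ to show that ``$U_0^{\mu_x}=U_\infty$ for all units $x$'' is equivalent to ``$U_\infty^{\mu_x}=U_0$ for all units $x$.''

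For that equivalence I would exploit the symmetry built into the $\mu$-maps. The key facts I expect to use are $\mu_x^{-1}=\mu_{-x}$ (\autoref{lem:mu}\ref{itm:muinv}, which still holds in the construction by \autoref{remark:mu_hua_construction}), together with the fact that $x$ is a unit iff $-x$ is a unit, and $\mu_x$ swaps $0$ and $\infty$. Suppose \ref{lem:gleq4} holds, so $U_0^{\mu_x}=U_\infty$ for every unit $x$. Applying this with $-x$ in place of $x$ gives $U_0^{\mu_{-x}}=U_\infty$, i.e.\ $U_0^{\mu_x^{-1}}=U_\infty$, which is exactly $U_\infty^{\mu_x}=U_0$; hence \ref{lem:gleq5}. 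The reverse implication is identical with the roles interchanged. This direction is short; the only thing to be careful about is that the quantifier ``for all units $x$'' is preserved under $x\mapsto -x$, which is where the corollary on units (just after \autoref{prop:unit}) is needed.

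An alternative, slightly more structural route for \ref{lem:gleq4}$\Leftrightarrow$\ref{lem:gleq5} is to note that $\mu_x\in U_0\alpha_xU_0$ conjugates $U_0$ into $U_0^{\alpha_x}=U_x$ composed with an inner automorphism by elements of $U_0$, so $U_0^{\mu_x}$ and $U_\infty^{\mu_x}$ are linked through the root group $U_x$; but the quickest and cleanest argument is really the inversion trick above, so that is what I would write down.

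The main (and essentially only) obstacle is bookkeeping rather than mathematics: making sure that in the construction setting we are allowed to use $\mu_x^{-1}=\mu_{-x}$ and the characterization of units, both of which are flagged as still valid in \autoref{remark:mu_hua_construction} and follow from \autoref{prop:mu_defin}/\autoref{lem:mu}\ref{itm:muform},\ref{itm:muinv}. Once those are in hand, the proof is a three-line invocation of the preceding lemma plus a one-line inversion argument, so I would keep it short and not belabor it.
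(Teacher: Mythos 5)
Your proposal is correct and matches the paper's proof: the equivalence of (i)–(iii) is quoted from the preceding lemma, and (iii)$\Leftrightarrow$(iv) is obtained exactly as in the paper by replacing $x$ with $-x$ and using $\mu_{-x}=\mu_x^{-1}$ (together with the fact that $-x$ is again a unit).
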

\begin{proof}
	The equivalence of \ref{lem:gleq2}-\ref{lem:gleq4} is immediate from the previous lemma. The equivalence between \ref{lem:gleq4} and \ref{lem:gleq5} follows by replacing $x$ with $-x$ and noting that $\mu_{-x} = \mu_x^{-1}$.
\end{proof}

\begin{lemma}\label{lem:huafix}
	Assume that $h_x$ normalizes $U$ for all units $x\in X$. Then
	\begin{romenumerate}
        \item $U^h = U$ for all $h\in H$; \label{itm:huafix}
        \item $U_0^\circ U_\infty \subset U_\infty H U_0^\circ$.
	\end{romenumerate}
\end{lemma}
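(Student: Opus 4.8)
For part~(i) the plan is to read it straight off \autoref{lem:gleq}: the hypothesis is exactly condition \ref{lem:gleq2} (recall $U=U_\infty$ in \autoref{constr:MUtau}), so \autoref{lem:gleq} delivers conditions \ref{lem:gleq4} and \ref{lem:gleq5} as well, i.e.\ $U_0^{\mu_x}=U_\infty$ and $U_\infty^{\mu_x}=U_0$ for every unit $x$. Then for units $x,y$ one computes $U^{\mu_x\mu_y}=U_\infty^{\mu_x\mu_y}=U_0^{\mu_y}=U_\infty=U$, and since the elements $\mu_x\mu_y$ generate $H$, every $h\in H$ normalizes $U$.

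For part~(ii) the plan is to transplant the proof of \autoref{prop:huaincl} into \autoref{constr:MUtau}. That proof rests on exactly three facts: units exist, $HU_\infty^\times=U_\infty^\times H$, and \autoref{prop:quasi_inv}. The first holds because $\abs{\class{X}}>2$, so there is a class distinct from $\class{0}$ and $\class{\infty}$ and its points are units. The second follows from part~(i): $H$ normalizes $U_\infty$ and hence permutes $U_\infty^\times=\{g\in U_\infty\mid\induced{g}\neq\id\}$, since passing to the induced action on $\class{X}$ is a group homomorphism. So everything reduces to proving \autoref{prop:quasi_inv} for $\M(U,\tau)$ under the present hypothesis; granting that, the inclusions $U_0^\circ U_\infty^\times\subset U_\infty^\times HU_0^\circ$ and then $U_0^\circ U_\infty\subset U_\infty HU_0^\circ$ follow word for word as in \autoref{prop:huaincl} (including the device of writing an element of $U_\infty^\circ$ as $\alpha_{y'}\alpha_e$ with $y',e$ units).

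To obtain \autoref{prop:quasi_inv} in the construction I would inspect its proof. It uses \autoref{lem:mu}\ref{itm:muform} and \ref{itm:muinv} (available here by \autoref{remark:mu_hua_construction}) and \ref{axiom:LM1} (available by \autoref{prop:constr}), together with two ingredients that are not free in \autoref{constr:MUtau}: the identity \autoref{lem:mu}\ref{itm:muform2}, and one appeal to \ref{axiom:LM2} of the form $U_0^{g}=U_{0g}=U_\infty$ for a specific $g$ with $0g=\infty$. Both I would recover from the hypothesis. For the \ref{axiom:LM2} step: after substituting the explicit formula \autoref{lem:mu}\ref{itm:muform} for the $\mu$-map occurring in $g$, the element $g$ becomes a product $\gamma_c\mu_z$ with $\gamma_c\in U_0$ and $z$ a unit, so $U_0^{g}=U_0^{\mu_z}=U_\infty$ by \autoref{lem:gleq}\ref{lem:gleq4}. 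For \autoref{lem:mu}\ref{itm:muform2} I would first re-establish \autoref{lem:mu}\ref{itm:mutau}, namely $\mu_{x\tau}=\mu_{-x}^\tau$: its proof in \autoref{lem:mu} invokes \ref{axiom:LM2} exactly once, again in the shape $U_0^g=U_\infty$ with $0g=\infty$, and the same reduction applies (absorb a $U_0$-factor using \autoref{lem:mu}\ref{itm:muform}, then apply \autoref{lem:gleq}\ref{lem:gleq4} to the unit $x\tau$, using that $x\tau$ is a unit whenever $x$ is). Granting \ref{itm:mutau}, the passage from \ref{itm:muform} to \ref{itm:muform2} in \autoref{lem:mu} is a purely formal manipulation and survives unchanged. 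Thus under the hypothesis \autoref{lem:mu}\ref{itm:mutau}, \ref{itm:muform2} and \autoref{prop:quasi_inv} all hold for $\M(U,\tau)$, which is all part~(ii) needs.

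The step I expect to be the main obstacle is precisely this bookkeeping: \ref{axiom:LM2} is the very axiom one is trying to characterize, so each of its uses in the arguments of \autoref{chap:chap2_definitions} must be located and replaced by a consequence of the hypothesis. What keeps this routine rather than delicate is the uniform shape of those uses --- each says ``$U_0$ conjugated by some element sending $0$ to $\infty$ equals $U_\infty$'' --- and cancelling a root-group factor via the explicit $\mu$-map formula \autoref{lem:mu}\ref{itm:muform} turns every instance into $U_0^{\mu_z}=U_\infty$ for a unit $z$, i.e.\ \autoref{lem:gleq}\ref{lem:gleq4}. Concretely I would isolate, as a preliminary claim, that under the hypothesis \autoref{lem:mu}\ref{itm:mutau}, \ref{itm:muform2} and \autoref{prop:quasi_inv} hold in $\M(U,\tau)$, prove it by this traced replacement, and then deduce~(ii) from the proof of \autoref{prop:huaincl} verbatim.
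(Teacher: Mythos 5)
Your proposal is correct and follows essentially the same route as the paper: part~(i) is the same two-line argument via \autoref{lem:gleq}, and part~(ii) is the same ``follow \autoref{prop:huaincl} mutatis mutandis'' strategy, tracing the only two appeals to \ref{axiom:LM2} to the proofs of \autoref{lem:mu}\ref{itm:mutau} and \autoref{prop:quasi_inv} and replacing them using the hypothesis. The only (cosmetic) difference is mechanical: you absorb the relevant conjugating elements into a product $\gamma_c\mu_z$ via \autoref{lem:mu}\ref{itm:muform} and invoke \autoref{lem:gleq}\ref{lem:gleq4}, whereas the paper rewrites them through the definitions of the root groups in \autoref{constr:MUtau} and uses \autoref{lem:gleq}\ref{lem:gleq3} --- equivalent by \autoref{lem:gleq}.
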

\begin{proof}\preenum
	\begin{romenumerate}
        \item
            By \autoref{lem:gleq}, $U_0^{\mu_x} = U_\infty$ and $U_0 = U_\infty^{\mu_x}$ for all units $x$.
            Now $H$ is generated by all products of two $\mu$-maps, which all normalize $U_\infty$, so any element of $H$ normalizes $U_\infty$.
        \item
            We can follow the proof of \autoref{prop:huaincl} mutatis mutandis.
            Note that we used \ref{axiom:LM2} only twice: once in the proof of \autoref{lem:mu}\ref{itm:mutau}, and once in the proof of \autoref{prop:quasi_inv}.
            
            In the proof of \autoref{lem:mu}\ref{itm:mutau}, we need that
            \[U_0^{\alpha_{x\tau}\,\alpha_{-x}^\tau} = U_\infty\]
            for any unit $x$. Now, by the definitions of the root groups in the construction, we get
            \begin{align*}
            	U_0^{\alpha_{x\tau}\,\alpha_{-x}^\tau} = U_\infty &\iff U_{x\tau}^{\alpha_x^{-\tau}} = U_\infty \\
			&\iff U_{x\tau} = U_\infty^{\gamma_x} \\
			&\iff U_{x\tau} = U_\infty^{\gamma_{(x\tau)\tau^{-1}}}\,.
            \end{align*}
            This final identity is true by the assumption and \autoref{lem:gleq}.
            
            In the proof of \autoref{prop:quasi_inv}, we need that
            \[U_0^{\alpha_y^{-1}\alpha_{(-y)\tau^{-1}}^{-\tau}} = U_\infty\]
            for any unit $y$. Again, by the definitions of the root groups in the construction, we get
            \begin{align*}
            	U_0^{\alpha_y^{-1}\alpha_{(-y)\tau^{-1}}^{-\tau}} = U_\infty &\iff U_{-y} = U_\infty^{\alpha_{(-y)\tau^{-1}}^\tau} \\
			&\iff U_{-y} = U_\infty^{\gamma_{(-y)\tau^{-1}}}\,,
            \end{align*}
            and the final equality again holds by \autoref{lem:gleq}.
        \qedhere
	\end{romenumerate}
\end{proof}

This additional assumption will also be sufficient to ensure that the construction is a local Moufang set.

\begin{theorem}\label{thm:constrMouf}
	Let $\M(U,\tau)$ be as in \autoref{constr:MUtau}. Then $\M(U,\tau)$ is a local Moufang set if and only if $h_x$ normalizes $U$ for all units $x$.
\end{theorem}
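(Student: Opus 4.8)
The plan is to treat the two implications separately: the forward one is immediate, while the converse uses essentially everything that \autoref{lem:gleq} and \autoref{lem:huafix} have already prepared. For necessity, suppose $\M(U,\tau)$ is a local Moufang set. Then \ref{axiom:LM2} holds, so the whole of \autoref{chap:chap2_definitions} applies; in particular \autoref{lem:mu}\ref{itm:muform2} is available, and comparing it with the definition of $h_x$ in \autoref{constr:MUtau} gives $h_x=\tau\mu_x$ for every unit $x$. Then $U^{h_x}=U_\infty^{\tau\mu_x}=(U_\infty^\tau)^{\mu_x}=U_0^{\mu_x}$, and since $\mu_x\in G$ (a product of root-group elements) swaps $0$ and $\infty$ by \autoref{prop:mu_defin}, \ref{axiom:LM2} gives $U_0^{\mu_x}=U_{\infty}=U$. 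Hence $h_x$ normalizes $U$; note that this argument never needs $\tau\in G$.

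For sufficiency, assume $h_x$ normalizes $U$ for all units $x$. By \autoref{lem:gleq} this yields $U_\infty^{\gamma_{x\tau^{-1}}}=U_x$, $U_0^{\mu_x}=U_\infty$ and $U_\infty^{\mu_x}=U_0$ for all units $x$, and by \autoref{lem:huafix} it yields $U^h=U$ for all $h\in H$ together with $U_0^\circ U_\infty\subseteq U_\infty H U_0^\circ$. From these I would first record two cheap consequences: that $H$ also normalizes $U_0$ (since $\mu_x$ interchanges $U_0$ and $U_\infty$, a product of an even number of $\mu$-maps normalizes each), and the dual inclusion $U_\infty^\circ U_0\subseteq U_0 H U_\infty^\circ$, obtained by conjugating $U_0^\circ U_\infty\subseteq U_\infty H U_0^\circ$ by a fixed $\mu$-map $\mu_e$ — which lies in $G$, swaps $0$ and $\infty$, interchanges $U_0\leftrightarrow U_\infty$ and $U_0^\circ\leftrightarrow U_\infty^\circ$, and normalizes $H$. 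The one point that is genuinely new compared with the Moufang-set proof, namely that the present hypothesis is enough to push \autoref{lem:mu}\ref{itm:mutau} and \autoref{prop:quasi_inv} through without \ref{axiom:LM2}, is already folded into the proof of \autoref{lem:huafix}, so nothing extra is needed there.

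It then remains to establish \ref{axiom:LM2}. Here I would observe that $S:=\{g\in G\mid U_x^{\,g}=U_{xg}\text{ for all }x\in X\}$ is a subgroup of $G$, and that $G=\langle U_0,U_\infty\rangle$ because every root group lies in this subgroup; so it suffices to show $U_\infty\subseteq S$ and $U_0\subseteq S$. For $\alpha_a\in U_\infty$: when $x\nsim\infty$, both $\alpha_x\alpha_a$ and $\alpha_{x\alpha_a}$ are the element of $U_\infty$ carrying $0$ to $x\alpha_a$, hence equal, so $U_x^{\,\alpha_a}=U_0^{\alpha_x\alpha_a}=U_{x\alpha_a}$; when $x\sim\infty$, write $U_x=U_\infty^{\gamma_{x\tau^{-1}}}$ and, using $U_0^\circ U_\infty\subseteq U_\infty H U_0^\circ$, write $\gamma_{x\tau^{-1}}\alpha_a=\alpha_b h\gamma_c$ with $c\sim0$; then $U_x^{\,\alpha_a}=U_\infty^{\alpha_b h\gamma_c}=U_\infty^{\gamma_c}=U_{c\tau}$, and evaluating $\gamma_{x\tau^{-1}}\alpha_a$ and $\alpha_b h\gamma_c$ at $\infty$ forces $c\tau=x\alpha_a$, so $U_x^{\,\alpha_a}=U_{x\alpha_a}$. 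The check $U_0\subseteq S$ is the mirror image: for $\gamma_a\in U_0$ and $x\nsim0$ one has $U_x=U_\infty^{\gamma_{x\tau^{-1}}}$ (by \autoref{lem:gleq} if $x$ is a unit, by definition if $x\sim\infty$) and $\gamma_{x\tau^{-1}}\gamma_a=\gamma_{(x\gamma_a)\tau^{-1}}$, giving $U_x^{\,\gamma_a}=U_{x\gamma_a}$; for $x\sim0$ one has $U_x=U_0^{\alpha_x}$, and writing $\alpha_x\gamma_a=\gamma_c h\alpha_d$ with $d\sim0$ via the dual inclusion yields $U_x^{\,\gamma_a}=U_0^{\gamma_c h\alpha_d}=U_0^{\alpha_d}=U_d$ with $d=x\gamma_a$ (because $\gamma_c$ and $h$ fix $0$). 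Thus $S=G$, i.e.\ \ref{axiom:LM2} holds, so $\M(U,\tau)$ is a local Moufang set.

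The obstacle I anticipate is organizational rather than computational: in \autoref{constr:MUtau} the permutation $\tau$ need not lie in $G$, so one cannot argue with $\langle U_\infty,\tau\rangle$ the way one does in the Moufang-set case and must instead work throughout with $\langle U_0,U_\infty\rangle$, substituting a $\mu$-map for $\tau$ wherever a point-swapping element of $G$ is required and re-deriving the $U_0$-side analogue of \autoref{lem:huafix}. Once that bookkeeping is in place, the verification of \ref{axiom:LM2} is just the orbit-tracking computation sketched above, and no new identity beyond those assembled in \autoref{lem:gleq} and \autoref{lem:huafix} is needed.
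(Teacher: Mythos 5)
Your proof is correct, and its skeleton is the one the paper uses: the pre-Moufang part is \autoref{prop:constr}, and \ref{axiom:LM2} is reduced to conjugation by a generating set of $G$, with \autoref{lem:gleq} and \autoref{lem:huafix} doing the real work. The difference is in the choice of generators. The paper takes $G=\langle U_\infty,\mu_e\rangle$ (possible since $U_0=U_\infty^{\mu_e}$ under the hypothesis) and, besides the $\alpha_y$-cases you also do, verifies conjugation by the single map $\mu_e$ directly, by identifying $\alpha_x^{\mu_e}$ as some $\gamma_z$ with $z\tau=x\mu_e$ (resp.\ $\gamma_{x\tau^{-1}}^{\mu_e}$ as some $\alpha_z$) and then invoking \autoref{lem:gleq}\ref{lem:gleq3} or the definition of the root group; this avoids any ``dual'' Bruhat-type inclusion. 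You instead take $G=\langle U_0,U_\infty\rangle$ and mirror the $U_\infty$-computation on the $U_0$ side, which forces you to first derive $U_\infty^\circ U_0\subseteq U_0HU_\infty^\circ$ by conjugating \autoref{lem:huafix} by $\mu_e$ (using $\mu_e^{-1}=\mu_{-e}$, $U_0^{\mu_e}=U_\infty$, preservation of the $\circ$-parts, and $H^{\mu_e}=H$) together with the fact that $H$ normalizes $U_0$; all of these are justified under the hypothesis, so the extra bookkeeping is harmless, just slightly longer than the paper's route. One small remark on your necessity direction: you obtain $h_x=\tau\mu_x$ from \autoref{lem:mu}\ref{itm:muform2}, which is stated in \autoref{chap:chap2_definitions} for $\tau$ a $\mu$-map, whereas the construction's $\tau$ need not lie in $G$; the proof does go through because $U_0=U_\infty^\tau$ by definition and \ref{axiom:LM2} is being assumed (as \autoref{remark:mu_hua_construction} indicates), but the paper sidesteps this entirely by using the unconditional equivalence $U_0^{\mu_x}=U_\infty\iff U_\infty^{h_x}=U_\infty$ of \autoref{lem:gleq}, which you could cite instead.
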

\begin{proof}
	Assume first that $\M(U,\tau)$ is a local Moufang set. By \ref{axiom:LM2}, all $\mu$-maps send $U_0$ to $U_\infty$. By \autoref{lem:gleq}, this implies that all Hua maps normalize $U = U_\infty$.

	For the converse, we have already shown that $\M(U,\tau)$ is a local pre-Moufang set, so what remains to show is \ref{axiom:LM2}. Fix some unit $e\in X$ and write $\mu = \mu_e$. Then, by our assumptions and by \autoref{lem:gleq}, $U_x\subset\langle U_\infty, U_0\rangle = \langle U_\infty, \mu\rangle$ for any $x\in X$, so $G = \langle U_\infty,\mu\rangle$. In order to show that $U_x^g = U_{xg}$ for all $g\in G$ and all $x\in X$, it is sufficient to show that $U_x^\mu = U_{x\mu}$ and $U_x^{\alpha_y} = U_{x\alpha_y}$ for any $y\nsim\infty$.
	
	We start by showing $U_x^{\alpha_y} = U_{x\alpha_y}$ for all $x\in X$ and $y\in X$ s.t.\ $y\nsim\infty$.
	We distinguish two cases.
	\begin{itemize}[labelindent=3em, leftmargin=*]
		\item[{$x\nsim\infty$}:] In this case, we have $U_x = U_0^{\alpha_x}$ by definition. Since $U_\infty$ is a group, we have $\alpha_x\alpha_y = \alpha_z$ for some $z$, and by looking at the image of $0$, we find $z = x\alpha_y$, so indeed $U_x^{\alpha_y}= U_0^{\alpha_x\alpha_y} = U_0^{\alpha_{x\alpha_y}} = U_{x\alpha_y}$.
	
		\item[{$x\sim\infty$}:] By definition, $U_x^{\alpha_y} = U_\infty^{\gamma_{x\tau^{-1}}\alpha_y}$. Since $\gamma_{x\tau^{-1}}\alpha_y\in U_0^\circ U_\infty$, we have $\gamma_{x\tau^{-1}}\alpha_y = u h\gamma_z$ for some $u\in U_\infty$, $z\sim0$ and $h\in H$. By calculating the image of $\infty$, we see $z\tau = x\alpha_y$. So we get 
		\begin{align*}
			U_x^{\alpha_y} &= U_\infty^{\gamma_{x\tau^{-1}}\alpha_y} = U_\infty^{u h\gamma_{x\alpha_y\tau^{-1}}} = U_\infty^{h\gamma_{x\alpha_y\tau^{-1}}} \\
				&= U_\infty^{\gamma_{x\alpha_y\tau^{-1}}} = U_{x\alpha_y}\,,
		\end{align*}
		where we used \autoref{lem:huafix}\ref{itm:huafix}.
	\end{itemize}
	Secondly, we need to show that $U_x^\mu = U_{x\mu}$ for all $x\in X$; we make the same case distinction.
	\begin{itemize}[labelindent=3em, leftmargin=*]
		\item[{$x\nsim\infty$}:] We have $U_x^\mu = U_0^{\alpha_x\mu} = U_0^{\mu\alpha_x^\mu} = U_\infty^{\alpha_x^\mu}$. Now $\alpha_x^\mu\in U_0$, so $\alpha_x^\mu = \gamma_z$ for some $z$, and by checking the image of $\infty$ we find $z\tau = x\mu$. If $x\mu\sim\infty$, we can use the definition of a root group to find $U_\infty^{\alpha_x^\mu} = U_\infty^{\gamma_{x\mu\tau^{-1}}} = U_{x\mu}$. If $x\mu\nsim\infty$, $x\mu$ is a unit (as $x\nsim\infty$, we also have $x\mu\nsim 0$), so we have $U_\infty^{\alpha_x^\mu}=U_\infty^{\gamma_{x\mu\tau^{-1}}} = U_{x\mu}$ by \autoref{lem:gleq}\ref{lem:gleq3}.
	
		\item[{$x\sim\infty$}:] We have $U_x^\mu = U_\infty^{\gamma_{x\tau^{-1}}\mu} = U_\infty^{\mu\gamma_{x\tau^{-1}}^\mu} = U_0^{\gamma_{x\tau^{-1}}^\mu}$. Now $\gamma_{x\tau^{-1}}^\mu\in U_\infty$, so $\gamma_{x\tau^{-1}}^\mu = \alpha_z$ for some $z$, and by checking the image of $0$ we find $x\mu = z$. Notice that since $x\sim \infty$, we have $x\mu\sim0$, so we can use the definition of the root group of $x\mu$ to get $U_0^{\gamma_{x\tau^{-1}}^\mu} = U_0^{\alpha_{x\mu}} = U_{x\mu}$.\qedhere
	\end{itemize}
\end{proof}

Combining the previous theorem with \autoref{lem:gleq} gives more equivalent conditions:

\begin{corollary}\label{cor:construction_equivalentconditions}
	Let $\M(U,\tau)$\notatlink{M(U,tau)} be as in \autoref{constr:MUtau}. Then the following are equivalent:
	\begin{romenumerate}
		\item $\M(U,\tau)$ is a local Moufang set;
		\item $U^{h_x} = U$ for all units $x\in X$;
		\item $U^\tau = U^{\mu_x}$ for all units $x\in X$.
	\end{romenumerate}
\end{corollary}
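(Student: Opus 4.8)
The plan is to derive this corollary directly from \autoref{thm:constrMouf} together with \autoref{lem:gleq}, since the theorem already establishes the equivalence of (i) with "$h_x$ normalizes $U$ for all units $x$". Writing $U = U_\infty$, the phrase "$h_x$ normalizes $U$" is literally "$U^{h_x} = U$", which is statement (ii). So the equivalence (i)$\iff$(ii) is just a restatement of \autoref{thm:constrMouf}, requiring essentially no work beyond noting the notational identification.

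The remaining task is to connect (ii) with (iii). Here I would invoke \autoref{lem:gleq}: that lemma lists four equivalent conditions, among which are "$U_\infty^{h_x} = U_\infty$ for all units $x$" (its item \ref{lem:gleq2}, which is our (ii)) and "$U_0^{\mu_x} = U_\infty$ for all units $x$" (its item \ref{lem:gleq4}). So by \autoref{lem:gleq}, condition (ii) is equivalent to $U_0^{\mu_x} = U_\infty$ for all units $x$. Now recall that by definition of the construction $U_0 = U_\infty^\tau = U^\tau$, so $U_0^{\mu_x} = U^{\tau\mu_x}$; more simply, $U_0^{\mu_x} = U_\infty$ exactly says $U^\tau \cdot {}^{\mu_x}$... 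I should be careful: $U_0 = U^\tau$, so $U_0^{\mu_x} = (U^\tau)^{\mu_x}$, and saying this equals $U_\infty = U$ is equivalent to $U^\tau = U^{\mu_x^{-1}}$. Since $\mu_{-x} = \mu_x^{-1}$ and $x$ is a unit iff $-x$ is a unit, ranging over all units makes "$U^\tau = U^{\mu_x^{-1}}$ for all units $x$" equivalent to "$U^\tau = U^{\mu_x}$ for all units $x$", which is (iii). Alternatively one can use \autoref{lem:gleq}\ref{lem:gleq5}, namely $U_0 = U_\infty^{\mu_x}$, i.e.\ $U^\tau = U^{\mu_x}$, directly.

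So the proof is essentially a three-line assembly. The only thing to watch is the bookkeeping with $\tau$ versus $\mu_x$ and the inverse: I want to make sure the quantified statement over all units is phrased so that the $\mu_x$ vs.\ $\mu_x^{-1}$ discrepancy washes out, which it does precisely because $\mu_{-x} = \mu_x^{-1}$ (\autoref{lem:mu}\ref{itm:muinv}) and the set of units is closed under $x \mapsto -x$. I do not anticipate any genuine obstacle here; this corollary is pure repackaging of \autoref{thm:constrMouf} and \autoref{lem:gleq}, and the proof I would write is simply: "(i)$\iff$(ii) is \autoref{thm:constrMouf} restated with $U = U_\infty$; (ii)$\iff$(iii) follows from \autoref{lem:gleq} since $U_0 = U^\tau$ and $\mu_x^{-1} = \mu_{-x}$ with $-x$ a unit whenever $x$ is."

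\begin{proof}
	The equivalence of (i) and (ii) is exactly the statement of \autoref{thm:constrMouf}: in \autoref{constr:MUtau} we have $U = U_\infty$, so "$h_x$ normalizes $U$ for all units $x$" means precisely $U^{h_x} = U$ for all units $x$.

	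For the equivalence of (ii) and (iii), recall from \autoref{constr:MUtau} that $U_0 = U_\infty^\tau = U^\tau$. By \autoref{lem:gleq}, condition (ii)—which is \autoref{lem:gleq}\ref{lem:gleq2}—is equivalent to \autoref{lem:gleq}\ref{lem:gleq5}, namely $U_0 = U_\infty^{\mu_x}$ for all units $x$. Since $U_0 = U^\tau$ and $U_\infty = U$, this reads $U^\tau = U^{\mu_x}$ for all units $x$, which is exactly (iii).
\end{proof}
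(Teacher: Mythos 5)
Your proof is correct and matches the paper's approach exactly: the paper states this corollary with only the remark that it follows by combining \autoref{thm:constrMouf} with \autoref{lem:gleq}, which is precisely your assembly of (i)$\iff$(ii) from the theorem and (ii)$\iff$(iii) from \autoref{lem:gleq}\ref{lem:gleq5} using $U_0 = U^\tau$. Your choice to cite item \ref{lem:gleq5} rather than \ref{lem:gleq4} neatly avoids the $\mu_x$ versus $\mu_x^{-1}$ bookkeeping you worried about.
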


\begin{remark}\label{rem:constr_UU'}
	In \autoref{constr:MUtau}, we started with a root group and a specific element $\tau$. We could also construct a local Moufang set using two root groups corresponding to elements that are not equivalent. I.e.\ assume we have the following data:
	\begin{nrenumerate}
		\item a set with an equivalence relation $(X,\sim)$, such that $\abs{\class{X}}>2$;
		\item two groups $U,U'\leq\Sym(X,\sim)$;
	\end{nrenumerate}
	and assume these satisfy
	\begin{romenumerate}
		\item $U$ fixes an element we call $\infty$ and acts sharply transitively on $X\setminus\class{\infty}$;
		\item[\textnormal{(\hspace{-0.2ex}\textit{i}'\hspace{-0.1ex})}] $\induced{U}$ acts sharply transitively on $\class{X}\setminus\{\class{\infty}\}$;
		\item $U'$ fixes an element we call $0$ and acts sharply transitively on $X\setminus\class{0}$;
		\item[\textnormal{(\hspace{-0.2ex}\textit{ii}'\hspace{-0.1ex})}] $\induced{U'}$ acts sharply transitively on $\class{X}\setminus\{\class{0}\}$;
		\item $0\nsim\infty$.
	\end{romenumerate}
	With this data, we set $U_\infty := U$ and $U_0:= U'$. We now define
	\[U_x:=U_0^{\alpha_x}\text{ for $x\nsim\infty$}\qquad U_x:=U_\infty^{\zeta_x}\text{ for $x\sim\infty$}\;,\]
	where $\alpha_x\in U_\infty$ is the unique element mapping $0$ to $x$, and $\zeta_x\in U_0$ is the unique element mapping $\infty$ to $x$.
	
	This again gives rise to a local pre-Moufang set, and we can still use the same argument as \autoref{prop:mu_defin} to get $\mu$-maps. We could then prove that this construction gives a local Moufang set if and only if $U = U'^\mu$ for all $\mu$-maps. In the examples, we will sometimes use this approach by defining two root groups, as in \autoref{chap:chap7_jordan}, but we will always return to \autoref{constr:MUtau} and \autoref{thm:constrMouf} to prove we have a local Moufang set.
\end{remark}
	\chapter[Categorical notions]{Categorical\\ notions}\label{chap:chap4_category}
	In this chapter we develop the categorical theory of local Moufang sets. We introduce the natural substructures we call local Moufang subsets, homomorphisms of local Moufang sets, and this gives rise to quotients and finally the category of local Moufang sets. We use these notions to determine when inverse limits of local Moufang sets exist, and to construct these inverse limits. We will use these inverse limits in \autoref{sec:Serre}.

\section{The category of local Moufang sets}	
\subsection{Local Moufang subsets}

It is natural to look for substructures in a local Moufang set $\M=(X,(U_x)_{x\in X})$ that are also local Moufang sets. Let $Y\subset X$ and assume $\abs{\class{Y}}>2$. By changing the basis in $X$, we can assume $0,\infty\in Y$. Without loss of generality, we add this as an assumption. We want to use \autoref{constr:MUtau} to ensure our subset gives rise to a local Moufang set, so we need a root group and a $\tau$.

\begin{definition}
	Let $\M=(X,(U_x)_{x\in X})$ be a local Moufang set and $Y\subset X$ such that $0,\infty\in Y$ and $\abs{\class{Y}}>2$. If
	\begin{manualenumerate}[label=\textrm{(S\arabic*)}]
		\item $V := \{u\in U_\infty\mid 0u\in Y\}$ is a group;\label{axiom:S1}
		\item $Yv \subset Y$ for all $v\in V$;\label{axiom:S2}
		\item there is a unit $y\in Y$ such that $Y\mu_y = Y$,\label{axiom:S3}
	\end{manualenumerate}
	then $\M(V,\mu_y)$ is a \define{local Moufang subset} of $\M$. We call $\M(V,\mu_y)$ the local Moufang subset induced by $Y$.
\end{definition}

\begin{proposition}
	If $\M(V,\mu_y)$ is a local Moufang subset of $\M$, then it is a local Moufang set. For all units $y'\in Y$, $Y\mu_{y'}=Y$.
\end{proposition}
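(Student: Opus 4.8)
The approach is to exhibit $\M(V,\mu_y)$ as an instance of \autoref{constr:MUtau} and then invoke \autoref{thm:constrMouf}; the identity $Y\mu_{y'}=Y$ will be needed for the last step, so I would prove it first. To begin, the data $(Y,\sim)$ with $\abs{\class{Y}}>2$, the group $V$, and $\tau:=\mu_y$ satisfy the hypotheses (C1), (C1') and (C2) of \autoref{constr:MUtau}: for $a\in Y\setminus\class{\infty}$ the unique element $\alpha_a\in U_\infty$ with $0\alpha_a=a$ already lies in $V$, so by \ref{axiom:S1} and \ref{axiom:S2} the sharply transitive actions of $U_\infty$ on $X\setminus\class{\infty}$ and of $U_{\class{\infty}}$ on $\class{X}\setminus\{\class{\infty}\}$ restrict to sharply transitive actions of $V$ on $Y\setminus\class{\infty}$ and on $\class{Y}\setminus\{\class{\infty}\}$; and $\mu_y$ swaps $0$ and $\infty$. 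Hence $\M(V,\mu_y)$ is a local pre-Moufang set by \autoref{prop:constr}, and by \autoref{thm:constrMouf} it is a local Moufang set provided every Hua map $h_{y'}$ of $\M(V,\mu_y)$, for $y'$ a unit of $Y$, normalizes $V$.

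Next I would prove $Y\mu_{y'}=Y$ for every unit $y'\in Y$. Write $\tau=\mu_y$; by \ref{axiom:S3} the restriction $\tau|_Y$ is a bijection of $Y$, hence so is $\tau^{-1}|_Y$, and whenever $a\in Y\setminus\class{\infty}$ we have $\alpha_a\in V$, so $-a=0\alpha_a^{-1}\in Y$ by \ref{axiom:S2}. By \autoref{lem:mu}\ref{itm:muform}, which is available here by \autoref{remark:mu_hua_construction}, $\mu_{y'}=\tau^{-1}\alpha_{(-y')\tau^{-1}}\tau\,\alpha_{y'}\,\tau^{-1}\alpha_{-(y'\tau^{-1})}\tau$. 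Every subscript of an $\alpha$ occurring here --- namely $y'$, $-y'$, their images under $\tau^{\pm1}$, and the negatives of those --- is a unit lying in $Y$ by the two observations just made, so each $\alpha$-factor lies in $V$ and each factor in the product maps $Y$ bijectively onto $Y$; therefore $Y\mu_{y'}=Y$.

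For the final step, note that in a genuine local Moufang set the Hua-map formula of \autoref{constr:MUtau} coincides with $\tau\mu_x$ (by \autoref{lem:mu}\ref{itm:muform2}, cf.\ \autoref{remark:mu_hua_construction}), so the Hua map $h_{y'}$ of $\M(V,\mu_y)$ equals, as a permutation of $Y$, the restriction to $Y$ of $h_{y',\tau}=\mu_y\mu_{y'}\in H$. For any $h\in H$ we have $h\in G_{0,\infty}$ and $h$ normalizes $U_\infty$, so $\alpha_a^h=\alpha_{ah}$ for every $a\in X\setminus\class{\infty}$ (as in \autoref{lem:huaAut}). Since $V=\{\alpha_a\mid a\in Y\setminus\class{\infty}\}$, we get $V^h=\{\alpha_{ah}\mid a\in Y\setminus\class{\infty}\}=\{\alpha_b\mid b\in Yh\setminus\class{\infty}\}$, and $Yh=Y\mu_y\mu_{y'}=Y$ by \ref{axiom:S3} and the previous paragraph, so $V^h=V$. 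Restricting to $Y$ --- the restriction map on permutations fixing $Y$ setwise being a group homomorphism --- shows $h_{y'}$ normalizes $V$ in $\Sym(Y,\sim)$, and \autoref{thm:constrMouf} then finishes the proof.

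The only delicate parts are the bookkeeping in the middle paragraph (checking that every subscript appearing in the $\mu$-map formula really lands in $Y$) and, in the last paragraph, the reformulation of ``$h$ normalizes $V$'' as ``$Yh$ and $Y$ agree off $\class{\infty}$'', which rests on the identity $\alpha_a^h=\alpha_{ah}$ for $h$ in the Hua subgroup. With \autoref{constr:MUtau} and \autoref{thm:constrMouf} in hand the rest is routine.
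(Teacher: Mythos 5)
Your proof is correct and follows essentially the same route as the paper's: verify \ref{axiom:C1}, \ref{axiom:C1'}, \ref{axiom:C2}, show that $Y\mu_{y'}=Y$ for every unit $y'\in Y$, and then conclude via the criteria of \autoref{chap:chap3_constr}. The only cosmetic differences are that you obtain $Y\mu_{y'}=Y$ from the formula in \autoref{lem:mu}\ref{itm:muform} rather than from the decomposition $\mu_{y'}=g\alpha_{y'}h$ with $g,h\in V_0=V^{\mu_y}$ used in the paper, and you check the Hua-map condition of \autoref{thm:constrMouf} by conjugating inside the ambient $\M$ (using $\alpha_a^h=\alpha_{ah}$ and $Yh=Y$), whereas the paper directly computes $V^{\mu_{y'}}=\{u\in U_0\mid\infty u\in Y\}=V^{\mu_y}$ and appeals to \autoref{cor:construction_equivalentconditions}.
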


\begin{proof}
	We first remark that the conditions \ref{axiom:C1}, \ref{axiom:C1'} and \ref{axiom:C2} are satisfied: the first and the second follow from the corresponding properties of $\M$, while the third is immediate as $\mu_y$ is a $\mu$-map of $\M$. We also observe that for any $v\in V$ we also get $Yv^{-1}\subset Y$, so $Yv=Y$. 
	
	To finish the proof that we get a local Moufang set, we first observe that
	\begin{align*}
		V_0	&= V^{\mu_y} = \{u^{\mu_y}\mid u\in U_\infty, 0u\in Y\} \\
			&= \{u\in U_0\mid \infty u\in Y\mu_y\} = \{u\in U_0\mid \infty u\in Y\}\;,
	\end{align*}
	and $Yv^{\mu_y} = Yv\mu_y = Y\mu_y=Y$. Now for any unit $y'\in Y$, the map $\mu_{y'}$ preserves $Y$ (as it is a composition of elements of $V$ and $V_0$ which both preserve $Y$), and it is the restriction of $\mu_{y'}$ in $\M$. Hence by the same argument as for $\mu_y$, we get 
	\[V^{\mu_{y'}} = \{u\in U_0\mid \infty u\in Y\} = V_0\;,\]
	and this shows $\M(V,\mu_y)$ is a local Moufang set by \autoref{cor:construction_equivalentconditions}.
\end{proof}


\subsection{Morphisms of local Moufang sets}

Next, we would like to define homomorphisms between local Moufang sets. Let $\M_1 = (X,(U_x)_{x\in X})$ and $\M_2 = (Y,(V_y)_{y\in Y})$ be two local Moufang sets (where we denote both equivalences on $X$ and $Y$ by $\sim$). A morphism from $\M_1$ to $\M_2$ will be induced by an equivalence-preserving map from $X$ to $Y$, i.e.\ $\phi\colon X\to Y$ such that $x\sim x'\implies x\phi\sim x'\phi$. Furthermore, we want to ensure that $\phi$ interacts nicely with the root groups. We demand $U_x\phi\subset \phi V_{x\phi}$ for all $x\in X$. This corresponds to the natural definition for Moufang sets introduced by De Medts in \cite{TDMcategoryMoufang}, although he thought it to be a `bad attempt' at the time, since homomorphisms of Moufang sets are always injective. O.\ Loos observed that this definition did make sense in \cite{LoosRogdiv}. The same `problem' arises here:

\begin{lemma}
	$\M_1 = (X,(U_x)_{x\in X})$ and $\M_2 = (Y,(V_y)_{y\in Y})$ be local Moufang sets and $\phi\colon X\to Y$ an equivalence-preserving map such that $U_x\phi\subset \phi V_{x\phi}$ for all $x\in X$. Then one of the two following cases occurs:
	\begin{romenumerate}
		\item $x\phi\sim x'\phi\implies x\sim x'$ for all $x,x'\in X$;
		\item $x\phi\sim x'\phi$ for all $x,x'\in X$.
	\end{romenumerate}
\end{lemma}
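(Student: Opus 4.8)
The plan is to assume case~(i) fails and derive case~(ii). So suppose there are $a,a'\in X$ with $a\phi\sim a'\phi$ but $a\nsim a'$; set $c:=a\phi$ and consider the preimage of the class of $c$,
\[ S := \{x\in X \mid x\phi\sim c\}\,. \]
Since $\phi$ preserves $\sim$, the set $S$ is a union of $\sim$-classes, and by assumption it contains the two \emph{distinct} classes $\class{a}$ and $\class{a'}$; in particular it is not contained in a single class.

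The key step I would establish is that $S$ is invariant under the root group $U_z$ for every $z\in S$. Indeed, for $u\in U_z$ the hypothesis $U_z\phi\subset\phi V_{z\phi}$ gives $u\phi=\phi v$ for some $v\in V_{z\phi}$, so applying this to $x_0\in S$ yields $x_0u\phi=x_0\phi v$ and hence $\class{x_0u\phi}=\class{x_0\phi}\,\induced{v}=\class{c}\,\induced{v}$; since $v$ fixes $z\phi$ by \ref{axiom:LM1}, the element $\induced{v}$ fixes $\class{z\phi}=\class{c}$, so $\class{x_0u\phi}=\class{c}$, i.e.\ $x_0u\in S$. With this in hand the conclusion follows from sharp transitivity: fixing $z\in S$ (say $z=a$) and choosing $x_0\in S$ with $x_0\nsim z$ — possible because $S$ meets two distinct classes — the $U_z$-orbit of $x_0$ is all of $X\setminus\class{z}$ by \ref{axiom:LM1} and stays inside $S$ by the invariance just shown; together with $\class{z}\subset S$ this forces $S=X$, that is, $x\phi\sim c$ for every $x\in X$, which is case~(ii).

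Conceptually this is nothing but the fact that $\phi$ induces a (not necessarily injective) map $\induced{\phi}\colon\class{X}\to\class{Y}$ between the quotient Moufang sets satisfying $U_{\class{x}}\induced{\phi}\subset\induced{\phi}\,V_{\class{x}\induced{\phi}}$, and that such a map is forced to be injective or constant — exactly the dichotomy already known for homomorphisms of Moufang sets. I do not expect a genuine obstacle here; the only points requiring care are that $S$ is really a union of $\sim$-classes containing at least two of them (so that a point $x_0\in S$ with $x_0\nsim z$ exists), and that the $U_z$-invariance of $S$ rests precisely on the root group $V_{z\phi}$ fixing its base point $z\phi$.
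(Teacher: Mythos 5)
Your proof is correct and is essentially the paper's argument in different packaging: the paper fixes the pair $x\nsim x'$ with $x\phi\sim x'\phi$, takes an arbitrary $\tilde{x}\nsim x$, and uses the element $u\in U_x$ with $x'u=\tilde{x}$ together with $u\phi=\phi v$, $v\in V_{x\phi}$ fixing $x\phi$, to conclude $\tilde{x}\phi\sim x\phi$ — exactly the computation underlying your $U_z$-invariance of $S$ and the orbit argument. So no new idea is involved; your invariant-set formulation is just a restatement of the same use of sharp transitivity of $U_x$ on $X\setminus\class{x}$ and the defining condition $U_x\phi\subset\phi V_{x\phi}$.
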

\begin{proof}
	Suppose there are $x,x'\in X$ such that $x\phi\sim x'\phi$ and $x\nsim x'$. Take any $\tilde{x}\in X$, then without loss of generality, we can assume $\tilde{x}\nsim x$. Hence there is an element $u\in U_x$ such that $x'u = \tilde{x}$. By the assumption, there is an element $v\in V_{x\phi}$ such that $u\phi = \phi v$, so
	\[\tilde{x}\phi = x'u\phi = x'\phi v\sim x\phi v = x\phi\;,\]
	hence $\tilde{x}\phi\sim x\phi$, so indeed $x\phi\sim x'\phi$ for all $x,x'\in X$.
\end{proof}

We want to avoid the second case, as this would mean that the equivalence on $\Im(\phi)$ has only one equivalence class (and we would like the image to be a local Moufang set). Hence the following definition for homomorphisms of local Moufang sets:

\begin{definition}
	A \define[homomorphism]{homomorphism} between two local Moufang sets $\M_1 = (X,(U_x)_{x\in X})$ and $\M_2 = (Y,(V_y)_{y\in Y})$ is a map $\phi\colon X\to Y$\notatlink{phi} such that $x\sim x'$ if and only if $x\phi\sim x'\phi$ for all $x,x'\in X$ and
	\begin{align}
		U_x\phi&\subset\phi V_{x\phi}\text{ for all }x\in X\;.\label{eq:morphism}\tag{H}
	\end{align}
\end{definition}

Note that the identity map on $X$ is an identity homomorphism of $\M$ and that the composition of two homomorphisms is again a homomorphism.

\begin{remark}\label{rem:induced_inj_map}
	The assumption on $\phi$ saying that $x\sim x'$ if and only if $x\phi\sim x'\phi$ for all $x,x'\in X$ implies that there is a well-defined injective map $\induced{\phi}\colon \class{X}\to \class{Y}\colon \class{x}\to \class{x\phi}$\notatlink{phibar}. If we have a composition of homomorphisms of local Moufang sets, we immediately get 
	\[\induced{\phi\circ\psi} = \induced{\phi}\circ\induced{\psi}\;.\]
\end{remark}

If the local Moufang sets are in fact Moufang sets, both types of homomorphisms coincide.

\begin{proposition}\label{prop:hom_mouf_set}
	Let $\M$ and $\M'$ be Moufang sets. Then $\phi\colon\M\to\M'$ is a homomorphism of local Moufang sets if and only if it is a homomorphism of Moufang sets.
\end{proposition}
\begin{proof}
	The condition \eqref{eq:morphism} coincides for both homomorphisms of local Moufang sets and of Moufang sets. Now, as $\M$ and $\M'$ are Moufang sets, the corresponding equivalence relation on the sets is the identity relation. This means
	\begin{align*}
		(x\sim x' \iff x\phi\sim x'\phi) &\Longleftrightarrow (x=x' \iff x\phi = x'\phi) \\
						 &\Longleftrightarrow \text{$\phi$ is an injective map}\;.
	\end{align*}
	Hence the second condition for being a homomorphism of local Moufang sets is equivalent to being a homomorphism of Moufang sets.
\end{proof}

From the map $\phi$, we immediately get homomorphisms between the root groups:

\begin{lemma}\label{lem:theta_maps}
	Let $\phi$ be a homomorphism from $\M_1 = (X,(U_x)_{x\in X})$ to $\M_2 = (Y,(V_y)_{y\in Y})$.
	\begin{romenumerate}
		\item For all $x\in X$ we get a homomorphism $\theta_x\colon U_x\to V_{x\phi}$\notatlink{thetax} such that $\theta_x(u)$ is the unique element of $V_{x\phi}$ for which
		\begin{align}
			u\phi &= \phi\theta_x(u)\;.\label{eq:morphism_theta}\tag{H'}
		\end{align}
		\item Let $x\in X$, $u\in U_x$, $v\in V_{x\phi}$ and $x'\in X\setminus\class{x}$. If $x'u\phi = x'\phi v$, then $v = \theta_x(u)$, so $u\phi = \phi v$.\label{itm:unique_theta}
	\end{romenumerate}
	Now choose the bases in $X$ and $Y$ such that $0_Y = 0_X\phi$ and $\infty_Y = \infty_X\phi$ (we often just write $0$ and $\infty$, so $0\phi=0$ and $\infty\phi = \infty$). Let $x\in X$.
	\begin{romenumerate}[resume]
		\item If $x\nsim\infty$, we have $\theta_\infty(\alpha_x) = \alpha_{x\phi}$ and $(-x)\phi = -(x\phi)$.
		\item If $x$ is a unit, then so is $x\phi$ and $\mu_x\phi = \phi\mu_{x\phi}$. If we have the unique expression $\mu_x = g\alpha_x h$, for $g,h\in U_0$, then \label{itm:morphism_mu}
		\[\mu_{x\phi} = \theta_0(g)\alpha_{x\phi}\theta_0(h)\;.\]
		\item If $x$ is a unit, then $(\til x)\phi = \til(x\phi)$.
		\item Denoting $H_1$ and $H_2$ for the Hua subgroups of $\M_1$ and $\M_2$, we have $H_1\phi\subset \phi H_2$.
	\end{romenumerate}
\end{lemma}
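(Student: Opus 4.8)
\textbf{Parts (i)--(ii): existence and uniqueness of the $\theta_x$.}
The plan is to build $\theta_x(u)$ pointwise. Fix $x\in X$ and $u\in U_x$. By \eqref{eq:morphism} we have $u\phi\in U_x\phi\subset\phi V_{x\phi}$, so there exists $v\in V_{x\phi}$ with $u\phi=\phi v$; set $\theta_x(u):=v$. For uniqueness (which simultaneously proves (ii)), suppose $v,v'\in V_{x\phi}$ satisfy $u\phi=\phi v=\phi v'$, or more generally that $x'u\phi=x'\phi v$ for some $x'\in X\setminus\class{x}$. Then $x'\phi\cdot vv'^{-1}=x'\phi$, and since $x'\nsim x$ we have $x'\phi\nsim x\phi$ (this uses the "if and only if" in the definition of homomorphism), so $x'\phi\notin\class{x\phi}$. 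But $vv'^{-1}\in V_{x\phi}$ fixes $x\phi$ and, by \ref{axiom:LM1} applied to $\M_2$, acts sharply transitively on $Y\setminus\class{x\phi}$; a nontrivial element of a sharply transitive action fixes nothing, so $vv'^{-1}=\id$. This gives uniqueness. That $\theta_x$ is a group homomorphism is then immediate: $\phi\,\theta_x(uw)=(uw)\phi=u\phi\,w\phi\cdot(\text{reassociate})=\phi\,\theta_x(u)\theta_x(w)$ — more precisely, from $u\phi=\phi\theta_x(u)$ and $w\phi=\phi\theta_x(w)$ one computes $(uw)\phi = (u\phi)w$ acting on points\,\dots\ rather, one checks $\phi\,\theta_x(u)\theta_x(w)$ and $\phi\,\theta_x(uw)$ agree by applying both to any $x'\nsim x$ and invoking the uniqueness in (ii).

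\textbf{Parts (iii)--(v): compatibility with $\alpha_x$, $-x$, $\mu_x$, $\til x$.}
Now normalize bases so $0\phi=0$, $\infty\phi=\infty$. For (iii): $\alpha_x\in U_\infty$ maps $0$ to $x$, so $\phi\,\theta_\infty(\alpha_x)$ maps $0\phi=0$ to $x\phi$; since $\theta_\infty(\alpha_x)\in V_\infty=V_{\infty\phi}$, uniqueness of $\alpha_{x\phi}$ gives $\theta_\infty(\alpha_x)=\alpha_{x\phi}$. Applying this to $\alpha_x^{-1}=\alpha_{-x}$ and using that $\theta_\infty$ is a homomorphism yields $\alpha_{(-x)\phi}=\alpha_{x\phi}^{-1}=\alpha_{-(x\phi)}$, hence $(-x)\phi=-(x\phi)$. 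For (iv): first, $x$ a unit means $x\nsim 0$ and $x\nsim\infty$, so $x\phi\nsim 0\phi=0$ and $x\phi\nsim\infty$, i.e.\ $x\phi$ is a unit. Write $\mu_x=g\alpha_x h$ with $g,h\in U_0^\times$ as in \autoref{prop:mu_defin}. Then $\mu_x\phi=g\alpha_x h\phi=\phi\,\theta_0(g)\theta_\infty(\alpha_x)\theta_0(h)=\phi\,\theta_0(g)\alpha_{x\phi}\theta_0(h)$, using \eqref{eq:morphism_theta} three times and part (iii). The product $\theta_0(g)\alpha_{x\phi}\theta_0(h)$ lies in $V_0\alpha_{x\phi}V_0$ and, since $g$ maps $\infty$ to $-x$ and $h$ maps $x$ to $\infty$ in $\M_1$, one checks (applying $\phi$ and using (iii)) that $\theta_0(g)$ maps $\infty$ to $-(x\phi)$ and $\theta_0(h)$ maps $x\phi$ to $\infty$; by the uniqueness in \autoref{prop:mu_defin} this product equals $\mu_{x\phi}$. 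Hence $\mu_x\phi=\phi\,\mu_{x\phi}$ and the displayed formula holds. For (v): by \autoref{lem:mu}\ref{itm:tilmu}, $\til x=-((-x)\mu_x)$, so $(\til x)\phi=-\bigl(((-x)\mu_x)\phi\bigr)=-\bigl((-x)\phi\,\mu_{x\phi}\bigr)=-\bigl((-(x\phi))\mu_{x\phi}\bigr)=\til{(x\phi)}$, using (iv), (iii) and \autoref{lem:mu}\ref{itm:tilmu} again.

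\textbf{Part (vi): $H_1\phi\subset\phi H_2$.}
Recall $H_1=\langle\mu_x\mu_y\mid x,y\text{ units in }X\rangle$. For units $x,y$ in $X$, parts (iv) give $\mu_x\phi=\phi\mu_{x\phi}$ and $\mu_y\phi=\phi\mu_{y\phi}$ with $x\phi,y\phi$ units in $Y$, so $(\mu_x\mu_y)\phi=\phi\,\mu_{x\phi}\mu_{y\phi}\in\phi H_2$. Since every element of $H_1$ is a finite product of such generators, an easy induction (using $\phi h_1\phi=\phi(h\phi)$ repeatedly) gives $h\phi\in\phi H_2$ for all $h\in H_1$, i.e.\ $H_1\phi\subset\phi H_2$.

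\textbf{Main obstacle.}
The only genuinely delicate point is the uniqueness argument underlying (ii): one must be careful that $x'\nsim x$ really does force $x'\phi\nsim x\phi$ (so that $x'\phi$ lies outside $\class{x\phi}$, where sharp transitivity bites) — this is exactly the reason the definition of homomorphism demands the \emph{two-sided} implication $x\sim x'\iff x\phi\sim x'\phi$ rather than just one direction. Everything else is bookkeeping: pushing the defining relation $u\phi=\phi\theta_x(u)$ through the explicit formulas for $\alpha_x$, $\mu_x$ and $\til x$, and invoking the relevant uniqueness statements from \autoref{prop:mu_defin} and \autoref{lem:mu} on the $\M_2$ side.
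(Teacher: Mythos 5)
Your proposal is correct and follows essentially the same route as the paper: define $\theta_x(u)$ via \eqref{eq:morphism}, get uniqueness (and hence part (ii) and the homomorphism property) from sharp transitivity of $V_{x\phi}$ on $Y\setminus\class{x\phi}$ using the two-sided equivalence condition on $\phi$, and then derive (iii)--(vi) by pushing $u\phi=\phi\theta_x(u)$ through the uniqueness characterizations in \autoref{prop:mu_defin} and \autoref{lem:mu}\ref{itm:tilmu}. Apart from a couple of garbled phrases (the aborted computation for $\theta_x(uw)$ and the expression ``$\phi h_1\phi=\phi(h\phi)$''), which you then replace by the correct argument, nothing is missing.
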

\begin{proof}\preenum
	\begin{romenumerate}
		\item Take $u\in U_x$. By \eqref{eq:morphism}, there is an element $v\in V_{x\phi}$ such that $u\phi = \phi v$. Now take any $x'\nsim x$, then $x'u\phi = x'\phi v$, so $v$ is the unique element of $V_{x\phi}$ mapping $x'\phi$ to $x'u\phi$ (observe that $x'\phi\nsim x\phi$). Hence $v$ is uniquely determined by $u$, so we can set $\theta_x(u) = v$. Now $\theta_x(\id_X)$ must map $x'\phi$ to $x'\phi$, hence $\theta_x(\id_X) = \id_Y$. Finally
		\[\phi\theta_x(uu') = uu'\phi = u\phi\theta_x(u') = \phi\theta_x(u)\theta_x(u')\;,\]
		so by unicity, $\theta_x(uu') = \theta_x(u)\theta_x(u')$. Hence $\theta_x$ is a group homomorphism.
		\item This follows from the first statement.
		\item By the definition of a homomorphism, $x\nsim \infty\implies x\phi\nsim \infty$. By \eqref{eq:morphism_theta}, $\theta_\infty(\alpha_x)$ maps $0\phi$ to $0\alpha_x\phi = x\phi$, hence $\theta_\infty(\alpha_x) = \alpha_{x\phi}$. As $\theta_x$ is a group homomorphism, 
		\[(-x)\phi = 0\alpha_x^{-1}\phi = 0\phi\alpha_{x\phi}^{-1} = -(x\phi)\;.\]
		\item By the assumptions on $\phi$, $x\nsim 0\implies x\phi\nsim 0$ (and as before $x\phi\nsim \infty$), so $x\phi$ is a unit. Now, by \autoref{prop:mu_defin}, $\mu_x = g\alpha_x h$, with $g, h\in U_0$ the unique elements such that $\infty g = -x$ and $xh=\infty$. Hence
		\[\mu_x\phi = g\alpha_x h\phi = \phi\theta_0(g)\alpha_{x\phi}\theta_0(h)\;,\]
		and $\theta_0(g), \theta_0(h)\in V_0$ are the unique elements such that 
		\[\infty \theta_0(g) = -(x\phi)\text{ and }x\phi\theta_0(h)=\infty\;.\]
		Hence, again by \autoref{prop:mu_defin}, $\theta_0(g)\alpha_{x\phi}\theta_0(h) = \mu_{x\phi}$, so indeed $\mu_x\phi = \phi\mu_{x\phi}$.
		\item By \autoref{lem:mu}\ref{itm:tilmu}, $\til x = -((-x)\mu_x)$, so 
		\begin{align*}
			(\til x)\phi 	&= (-((-x)\mu_x))\phi = -((-x)\mu_x\phi) \\
					&= -((-x\phi)\mu_{x\phi}) = \til(x\phi)\;.
		\end{align*}
		\item As the Hua subgroups are generated by $\mu$-maps, this follows from \ref{itm:morphism_mu}.\qedhere
	\end{romenumerate}
\end{proof}

If we have a homomorphism from $\M_1$ to $\M_2$, we will henceforth assume that we chose $0$ and $\infty$ in $\M_2$ in such a way that $0\phi = 0$ and $\infty\phi = \infty$. It turns out that there is a tight connection between the different $\theta$-maps: they are all `conjugate' in the following way:

\begin{lemma}\label{lem:diagram_thetas}
	Let $x\in X$ and $y,z\in X\setminus\class{x}$. Let $g\in U_x$ be the unique element such that $yg=z$. Then the following diagram commutes:
	\begin{center}
		\begin{tikzpicture}[regulararrow]
			\node(LT) {$U_y$};
			\node[right of=LT, node distance=3cm](RT) {$U_z$};
			\node[below of=LT, node distance=1.5cm](LB) {$V_{y\phi}$};
			\node[right of=LB, node distance=3cm](RB) {$V_{z\phi}$};
			\draw[->] (LT) -- node[above]() {$u\mapsto u^g$} (RT);
			\draw[->] (LB) -- node[above]() {$v\mapsto v^{\theta_x(g)}$} (RB);
			\draw[->] (LT) -- node[left]() {$\theta_y$} (LB);
			\draw[->] (RT) -- node[right]() {$\theta_z$} (RB);
		\end{tikzpicture}
	\end{center}
	i.e.\ $\theta_z(u^g) = \theta_y(u)^{\theta_x(g)}$.
\end{lemma}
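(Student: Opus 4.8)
The plan is to verify the defining property of $\theta_z$ stated in \autoref{lem:theta_maps}\ref{itm:unique_theta}: if $u'\in U_z$, $v'\in V_{z\phi}$ and some point $x'\in X\setminus\class{z}$ satisfies $x'u'\phi = x'\phi v'$, then $v' = \theta_z(u')$. So I would take $u\in U_y$ arbitrary, set $u' := u^g \in U_z$ (this is in $U_z$ by \ref{axiom:LM2}, since $yg=z$ gives $U_y^g = U_{yg} = U_z$), and set $v' := \theta_y(u)^{\theta_x(g)} \in V_{z\phi}$ (this lies in $V_{z\phi}$ since $\theta_x(g) \in V_{x\phi}$ and $(y\phi)\theta_x(g) = z\phi$ by \eqref{eq:morphism_theta}, so $V_{y\phi}^{\theta_x(g)} = V_{z\phi}$ by \ref{axiom:LM2} in $\M_2$). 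Then it suffices to exhibit one point $x' \in X\setminus\class{z}$ with $x' (u^g)\phi = x'\phi\, v'$.

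The natural choice is to find the test point by pushing forward a point that works for $\theta_y$. First I would pick some $w \in X\setminus(\class{y}\cup\class{z})$ — this is possible since $\abs{\class{X}}>2$. Set $x' := w g \in X$; note $x' \nsim z$ because $w \nsim y$ and $g$ maps $\class{y}$ to itself while permuting the other classes bijectively (indeed $\induced{g}$ fixes $\class{x}$... more carefully: $g\in U_x$, $yg=z$, so $\class{w}g = \class{wg} = \class{x'}$, and $\class{w} \neq \class{y}$ forces... hmm, I actually want $x' \nsim z$, i.e. $\class{wg} \neq \class{z} = \class{yg}$; since $g$ induces an injection on $\class{X}$ and $\class{w}\neq\class{y}$, this holds). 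Now compute: on one hand $w u \phi = w\phi\,\theta_y(u)$ by \eqref{eq:morphism_theta}. On the other hand, I want to track $x'(u^g)\phi = w g g^{-1} u g\, \phi = w u g\, \phi$. Applying \eqref{eq:morphism_theta} to $g\in U_x$ gives $g\phi = \phi\,\theta_x(g)$, so $w u g\,\phi = w u \phi\,\theta_x(g) = w\phi\,\theta_y(u)\,\theta_x(g)$. Meanwhile $x'\phi\,v' = wg\,\phi\,\theta_y(u)^{\theta_x(g)} = w\phi\,\theta_x(g)\,\theta_x(g)^{-1}\theta_y(u)\theta_x(g) = w\phi\,\theta_y(u)\,\theta_x(g)$. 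These agree, so $x'(u^g)\phi = x'\phi\, v'$, and by uniqueness $\theta_z(u^g) = v' = \theta_y(u)^{\theta_x(g)}$, which is exactly the commutativity of the diagram.

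The only genuinely delicate point — the main obstacle, such as it is — is checking that the chosen test point $x'$ lies outside $\class{z}$ so that \autoref{lem:theta_maps}\ref{itm:unique_theta} applies, and relatedly that it lies outside $\class{y}$ so that the computation of $wu\phi$ via $\theta_y$ is legitimate (that computation only needs $w\notin\class{y}$). Both follow from choosing $w \notin \class{y}\cup\class{z}$ and using that $g \in U_x$ induces a permutation of $\class{X}$ fixing $\class{x}$; since $wg = x'$ and $\induced{g}$ is a bijection of $\class{X}$ with $\class{w}\neq\class{y}$, we get $\class{x'} = \class{w}\induced{g} \neq \class{y}\induced{g} = \class{z}$, and moreover $\class{x'}$ could a priori equal $\class{y}$, but that is harmless for the final conclusion — all we really need is $x'\nsim z$ for the uniqueness statement and $w\nsim y$ for the $\theta_y$ identity, both of which are in hand. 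Everything else is a routine chain of applications of \eqref{eq:morphism_theta} and the conjugation axiom \ref{axiom:LM2}. I would write this up in three or four lines of displayed computation preceded by the choice of $w$ and $x'$.
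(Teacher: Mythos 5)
Your argument is correct and is essentially the paper's proof: both rest on applying \eqref{eq:morphism_theta} to $g^{-1}ug$ to get $u^g\phi = \phi\,\theta_y(u)^{\theta_x(g)}$ and then invoking the uniqueness of $\theta_z(u^g)$. The only difference is cosmetic — the paper works with the equality of maps directly, whereas you verify it at a single test point $x'=wg$ (which forces the mild extra step of choosing $w\notin\class{y}\cup\class{z}$, legitimate since $\abs{\class{X}}>2$); this is not needed but does no harm.
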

\begin{proof}
	Remark first that both $\theta_z(u^g)$ and $\theta_y(u)^{\theta_x(g)}$ are in $V_{z\phi}$. By \eqref{eq:morphism_theta}, $\theta_z(u^g)$ is the unique element of $V_{z\phi}$ such that $\phi \theta_z(u^g) = u^g\phi$, but
	\[u^g\phi = gug^{-1}\phi = \phi\theta_x(g)\theta_y(u)\theta_x(g)^{-1} = \phi\theta_y(u)^{\theta_x(g)}\;,\]
	so the desired equality indeed holds.
\end{proof}

One corollary to this is that a similar conjugation property holds for the $\mu$-maps:

\begin{corollary}
	Let $x\in X$ be a unit, then $\theta_0(u^{\mu_x}) = \theta_\infty(u)^{\mu_{x\phi}}$, or equivalently, to following diagram commutes:
	\begin{center}
		\begin{tikzpicture}[regulararrow]
			\node(LT) {$U_0$};
			\node[right of=LT, node distance=3cm](RT) {$U_\infty$};
			\node[below of=LT, node distance=1.5cm](LB) {$V_0$};
			\node[right of=LB, node distance=3cm](RB) {$V_\infty$};
			\draw[->] (LT) -- node[above]() {$u\mapsto u^{\mu_x}$} (RT);
			\draw[->] (LB) -- node[above]() {$v\mapsto v^{\mu_{x\phi}}$} (RB);
			\draw[->] (LT) -- node[left]() {$\theta_0$} (LB);
			\draw[->] (RT) -- node[right]() {$\theta_\infty$} (RB);
		\end{tikzpicture}
	\end{center}
\end{corollary}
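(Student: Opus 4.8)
The plan is to reduce this corollary to the conjugation property for $\theta$-maps in \autoref{lem:diagram_thetas}, applied with a suitable choice of point $x$ and elements $y,z$. The key observation is that $\mu_x$ is, up to left and right multiplication by elements of $U_0$, conjugation by $\alpha_x$, so we need to translate the statement about $\mu_x$ into statements about conjugation by the generators whose behaviour under $\phi$ we already control.

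First I would recall from \autoref{prop:mu_defin} that $\mu_x = g\alpha_x h$ with $g,h\in U_0^\times$ uniquely determined, and from \autoref{lem:theta_maps}\ref{itm:morphism_mu} that $\mu_{x\phi} = \theta_0(g)\alpha_{x\phi}\theta_0(h)$. For $u\in U_0$, conjugation $u\mapsto u^{\mu_x} = u^{g\alpha_x h}$ lands in $U_\infty$ by \ref{axiom:LM2}, since $0\mu_x = \infty$; similarly $v\mapsto v^{\mu_{x\phi}}$ maps $V_0$ into $V_\infty$. So both composites in the diagram are well-defined maps $U_0\to V_\infty$, and it suffices to check they agree on each $u\in U_0$.

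The cleanest way to finish is to use the uniqueness clause \autoref{lem:theta_maps}\ref{itm:unique_theta}: given $u\in U_0$, the element $\theta_\infty(u^{\mu_x})\in V_\infty$ is the unique element of $V_\infty$ satisfying $\phi\,\theta_\infty(u^{\mu_x}) = u^{\mu_x}\phi$. So I would compute $u^{\mu_x}\phi$ directly:
\[
u^{\mu_x}\phi = \mu_x^{-1}u\mu_x\phi = \phi\,\mu_{x\phi}^{-1}\theta_0(u)\mu_{x\phi} = \phi\,\theta_0(u)^{\mu_{x\phi}}\,,
\]
where I used $u\mu_x\phi = u\phi\mu_{x\phi} = \phi\theta_0(u)\mu_{x\phi}$ from \autoref{lem:theta_maps}\ref{itm:morphism_mu} together with $\mu_x\phi = \phi\mu_{x\phi}$, and $\mu_x^{-1}\phi = \phi\mu_{x\phi}^{-1}$ which follows by applying $\phi$ to $\mu_x\mu_x^{-1}=\id$ (or directly from \autoref{lem:mu}\ref{itm:muinv} and \ref{itm:morphism_mu} since $\mu_x^{-1}=\mu_{-x}$ and $(-x)\phi = -(x\phi)$). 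Since $\theta_0(u)^{\mu_{x\phi}}\in V_\infty$ satisfies the defining equation for $\theta_\infty(u^{\mu_x})$, uniqueness gives $\theta_\infty(u^{\mu_x}) = \theta_0(u)^{\mu_{x\phi}}$, which is exactly commutativity of the diagram (after transposing the two composites, which read $\theta_\infty\circ(\cdot^{\mu_x})$ and $(\cdot^{\mu_{x\phi}})\circ\theta_0$).

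I do not expect a serious obstacle here — the statement is essentially a formal consequence of \autoref{lem:theta_maps}\ref{itm:morphism_mu} and the uniqueness of the $\theta$-maps. The only point requiring a little care is making sure that the various conjugations genuinely land in the groups claimed (so that \ref{itm:unique_theta} applies, one needs a point $x'\notin\class{0}$ with $x'\mu_x$ and its $\phi$-image tracked correctly, but $\ref{itm:unique_theta}$ already packages this), and handling the inverse $\mu_x^{-1}$; both are routine given \ref{axiom:LM2} and \autoref{lem:mu}\ref{itm:muinv}. Alternatively one could invoke \autoref{lem:diagram_thetas} directly with $x\mapsto$ a unit and $y,z$ chosen so that conjugation by the connecting element realizes $\mu_x$, but the uniqueness argument above is more transparent and avoids fussing over which element of which root group connects the chosen points.
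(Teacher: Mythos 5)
Your proof is correct, but it takes a different route from the paper. The paper's own argument decomposes $\mu_x = g\alpha_x h$ with $g,h\in U_0$, factors the conjugation map $u\mapsto u^{\mu_x}$ into the three conjugations by $g$, $\alpha_x$ and $h$, and then glues three commuting squares: the first commutes because $\theta_0$ is a group homomorphism, the other two by \autoref{lem:diagram_thetas}; together with $\mu_{x\phi} = \theta_0(g)\alpha_{x\phi}\theta_0(h)$ from \autoref{lem:theta_maps}\ref{itm:morphism_mu} this yields the statement. You instead bypass the decomposition entirely: you use only the intertwining relation $\mu_x\phi = \phi\mu_{x\phi}$ (also from \autoref{lem:theta_maps}\ref{itm:morphism_mu}), its formal consequence $\mu_x^{-1}\phi = \phi\mu_{x\phi}^{-1}$, and the uniqueness clause in the definition of the $\theta$-maps, computing $u^{\mu_x}\phi = \phi\,\theta_0(u)^{\mu_{x\phi}}$ and concluding by uniqueness. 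This is in effect a direct reproof of \autoref{lem:diagram_thetas} with the $\mu$-map playing the role of the connecting element, and it is arguably shorter and more transparent; the paper's version buys reuse of the already-established conjugation lemma at the cost of the bookkeeping with $g$, $\alpha_x$, $h$. Your attention to well-definedness (that $U_0^{\mu_x}=U_\infty$ and $V_0^{\mu_{x\phi}}=V_\infty$ via \ref{axiom:LM2}, so that the uniqueness of $\theta_\infty$ applies) and your remark that the displayed identity $\theta_0(u^{\mu_x}) = \theta_\infty(u)^{\mu_{x\phi}}$ is the transposed form of the diagram (obtained by running the same argument with $u\in U_\infty$, or with $\mu_{-x}=\mu_x^{-1}$) are exactly the points that need saying; no gap remains.
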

\begin{proof}
	We look at the following diagram, with $\mu_x = g\alpha_x h$, for $g,h\in U_0$:
	\begin{center}
		\begin{tikzpicture}[regulararrow]
			\node(T1) {$U_0$};
			\node[right of=T1, node distance=3cm](T2) {$U_0$};
			\node[right of=T2, node distance=3cm](T3) {$U_x$};
			\node[right of=T3, node distance=3cm](T4) {$U_\infty$};
			\node[below of=T1, node distance=1.5cm](B1) {$V_0$};
			\node[right of=B1, node distance=3cm](B2) {$V_0$};
			\node[right of=B2, node distance=3cm](B3) {$V_{x\phi}$};
			\node[right of=B3, node distance=3cm](B4) {$V_\infty$};
			\draw[->] (T1) -- node[above]() {$u\mapsto u^{g}$} (T2);
			\draw[->] (T2) -- node[above]() {$u\mapsto u^{\alpha_x}$} (T3);
			\draw[->] (T3) -- node[above]() {$u\mapsto u^{h}$} (T4);
			\draw[->] (B1) -- node[above]() {$v\mapsto v^{\theta_0(g)}$} (B2);
			\draw[->] (B2) -- node[above]() {$v\mapsto v^{\alpha_{x\phi}}$} (B3);
			\draw[->] (B3) -- node[above]() {$v\mapsto v^{\theta_0(h)}$} (B4);
			\draw[->] (T1) -- node[left]() {$\theta_0$} (B1);
			\draw[->] (T2) -- node[left]() {$\theta_0$} (B2);
			\draw[->] (T3) -- node[left]() {$\theta_x$} (B3);
			\draw[->] (T4) -- node[right]() {$\theta_\infty$} (B4);
		\end{tikzpicture}
	\end{center}
	The first square commutes as $\theta_0$ is a group homomorphism, and the other two squares commute by the previous lemma. Hence the entire diagram commutes. As $\mu_{x\phi} = \theta_0(g)\alpha_{x\phi}\theta_0(h)$, this means the diagram from the statement commutes.
\end{proof}

Using \autoref{lem:diagram_thetas}, we also get some nice connections between properties of the mapping $\phi$ and corresponding properties of the $\theta$-maps.

\begin{corollary}\label{cor:morphism_inj}
	Let $\phi$ be a homomorphism from $\M_1$ to $\M_2$. Then the following are equivalent:
	\begin{romenumerate}
		\item $\phi$ is injective;\label{itm:morphism_inj1}
		\item $\theta_x$ is injective for all $x\in X$;\label{itm:morphism_inj2}
		\item $\theta_x$ is injective for an $x\in X$.\label{itm:morphism_inj3}
	\end{romenumerate}
\end{corollary}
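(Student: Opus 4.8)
The plan is to prove the cycle of implications \ref{itm:morphism_inj1} $\Rightarrow$ \ref{itm:morphism_inj2} $\Rightarrow$ \ref{itm:morphism_inj3} $\Rightarrow$ \ref{itm:morphism_inj1}. The middle implication is trivial (a statement holding for all $x$ certainly holds for some $x$, since $X$ is nonempty), so the real content is in the first and last implications, and the last one is where I expect to spend the most effort.

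For \ref{itm:morphism_inj1} $\Rightarrow$ \ref{itm:morphism_inj2}, fix $x\in X$ and suppose $\theta_x(u)=\theta_x(u')$ for $u,u'\in U_x$. Pick any $y\in X\setminus\class{x}$; this is possible since $\abs{\class{X}}>2$. Then by \eqref{eq:morphism_theta} we have $yu\phi = y\phi\,\theta_x(u) = y\phi\,\theta_x(u') = yu'\phi$, so $(yu)\phi = (yu')\phi$. Since $\phi$ is injective, $yu = yu'$, and then sharp transitivity of $U_x$ on $X\setminus\class{x}$ (axiom \ref{axiom:LM1}) forces $u=u'$. Hence $\theta_x$ is injective.

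For \ref{itm:morphism_inj3} $\Rightarrow$ \ref{itm:morphism_inj1}, suppose $\theta_{x_0}$ is injective for some $x_0\in X$. First I would upgrade this to: $\theta_x$ is injective for every $x\in X$. This follows from \autoref{lem:diagram_thetas}: given any $x\in X$, choose $y,z\in X\setminus\class{x_0}$ appropriately and a suitable conjugating element so that the horizontal maps in the commuting square are the conjugation isomorphisms $u\mapsto u^g$ and $v\mapsto v^{\theta_{x_0}(g)}$ (these are bijections), which shows $\theta_{x}$ (resp. $\theta_{y}$) is injective iff $\theta_{z}$ (resp. the other vertical map) is; chaining this with the transitivity of $G$ on pairs of non-equivalent points (\autoref{prop:twotrans}), together with axiom \ref{axiom:LM2}, propagates injectivity from $x_0$ to all of $X$. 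Actually it is cleaner to use that every $U_x$ is conjugate to $U_\infty$ in $G$ and that $\phi$ intertwines these conjugations via the $\theta$-maps; the point is that all $\theta_x$ are simultaneously injective or not.

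Now assume all $\theta_x$ are injective and suppose $x\phi = x'\phi$ for $x,x'\in X$. I want to conclude $x=x'$. By the homomorphism condition $x\sim x'$, and after moving by an element of $G$ (using \autoref{prop:twotrans} and the fact that the $\theta$-maps are compatible with the $G$-action as in \autoref{lem:diagram_thetas}) I may assume $x,x'\nsim\infty$; then $x\phi = x'\phi$ with both not equivalent to $\infty$. Writing $x = 0\alpha_a$, $x' = 0\alpha_b$ with $\alpha_a,\alpha_b\in U_\infty$, we get $0\alpha_a\phi = 0\phi\,\theta_\infty(\alpha_a) = 0\,\theta_\infty(\alpha_a)$ and similarly for $b$, so $0\,\theta_\infty(\alpha_a) = 0\,\theta_\infty(\alpha_b)$; since $V_\infty$ acts sharply transitively on $Y\setminus\class{\infty}$ and $0\nsim\infty$ in $\M_2$, this gives $\theta_\infty(\alpha_a) = \theta_\infty(\alpha_b)$, whence $\alpha_a = \alpha_b$ by injectivity of $\theta_\infty$, so $x = x'$. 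The main obstacle is the reduction step: making precise that one may assume $x,x'\nsim\infty$ (or handling the case $x\sim\infty$ directly by the analogous argument with $U_0$ and $\gamma$'s), and being careful that applying a group element to reduce the configuration does not disturb injectivity of the relevant $\theta$-map — but \autoref{lem:diagram_thetas} is exactly the tool that makes this bookkeeping go through.
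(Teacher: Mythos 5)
Your proof is correct, and for most of the corollary it runs on the same rails as the paper: (i)$\Rightarrow$(ii) is the paper's argument (via \eqref{eq:morphism_theta} and sharp transitivity of $U_x$, phrased with two elements instead of a trivial kernel), and your transfer of injectivity from one $\theta_{x_0}$ to all $\theta_x$ is exactly the paper's (iii)$\Rightarrow$(ii), resting on \autoref{lem:diagram_thetas} with an auxiliary point $z$ off both classes — a single application of that lemma suffices, so the ``chaining'' via transitivity of $G$ that you gesture at is unnecessary (and note the lemma only covers conjugation by root-group elements, not arbitrary $g\in G$, so the one-step version is also the cleaner one to cite). Where you genuinely diverge is the final step, deducing injectivity of $\phi$ from injectivity of the $\theta$-maps: the paper argues directly — given $x\phi=x'\phi$, pick $z$ with $x,x'\notin\class{z}$ and $g\in U_z$ with $xg=x'$; then $x\phi\theta_z(g)=x\phi$ forces $\theta_z(g)=\id$ by sharp transitivity of $V_{z\phi}$, hence $g=\id$ and $x=x'$ — with no case distinction and no choice of basis. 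Your route instead normalizes to the basis, reduces to $x,x'\nsim\infty$ by moving with a group element (or treating $x\sim\infty$ with $U_0$), and then uses the $\alpha$-parametrization together with injectivity of $\theta_\infty$. This works — the equivariance $xg\phi=x\phi\theta_z(g)$ shows the hypothesis $x\phi=x'\phi$ survives the move, and since you have already made all $\theta_x$ injective nothing is ``disturbed'' — but it buys you nothing over the paper's version and costs you the reduction bookkeeping you yourself flag as the main obstacle; the paper's choice of an auxiliary root group $U_z$ is precisely the trick that makes that bookkeeping evaporate.
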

\begin{proof}\preenum
	\begin{itemize}[labelindent=4.5em, leftmargin=*]
		\item[{\ref{itm:morphism_inj1} $\Rightarrow$ \ref{itm:morphism_inj2}}.] 
		Take any $x\in X$ and assume $\theta_x(u)=\id$ for $u\in U_x$. Then
		\[u\phi = \phi\theta_x(u) = \phi\;,\]
		so as we assume $\phi$ is injective, $u=\id$. Hence $\theta_x$ is injective.
		\item[{\ref{itm:morphism_inj2} $\Rightarrow$ \ref{itm:morphism_inj1}}.]
		Assume we have $x,x'\in X$ such that $x\phi=x'\phi$. Now take $z\in X$ such that $x,x'\not\in \class{z}$. Then there is an element $g\in U_z$ such that $xg=x'$. Hence
		\[x\phi = x'\phi = xg\phi = x\phi\theta_z(g)\;.\]
		As $V_{z\phi}$ must have a unique element mapping $x\phi$ to $x\phi$, we get $\theta_z(g)=\id$. Since $\theta_z$ is injective, this means $g=\id$ and hence $x' = xg = x$, so $\phi$ is injective.
		\item[{\ref{itm:morphism_inj2} $\Rightarrow$ \ref{itm:morphism_inj3}}.]
		This is immediate.
		\item[{\ref{itm:morphism_inj3} $\Rightarrow$ \ref{itm:morphism_inj2}}.]
		Let $x\in X$ be such that $\theta_x$ is injective, and take any $x'\in X$. Now take $z\in X$ such that $x,x'\not\in \class{z}$. Then there is an element $g\in U_z$ such that $xg=x'$. By \autoref{lem:diagram_thetas}, 
		\[\theta_{x'}(u^g)^{\theta_z(g)^{-1}} = \theta_x(u)\;.\]
		Hence if $\theta_{x'}(u)=\id$, then $\theta_x(u^{g^{-1}})=\id$, so $u^{g^{-1}}=\id$ and $u=\id$. This means $\theta_{x'}$ is injective.
		\qedhere
	\end{itemize}
\end{proof}
\begin{corollary}\label{cor:morphism_surj}
	Let $\phi$ be a homomorphism from $\M_1$ to $\M_2$. Then the following are equivalent:
	\begin{romenumerate}
		\item $\phi$ is surjective;\label{itm:morphism_surj1}
		\item $\theta_x$ is surjective for all $x\in X$;\label{itm:morphism_surj2}
		\item $\theta_x$ is surjective for an $x\in X$.\label{itm:morphism_surj3}
	\end{romenumerate}
\end{corollary}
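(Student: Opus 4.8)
The plan is to mirror the proof of \autoref{cor:morphism_inj}, replacing ``injective'' by ``surjective'' throughout, establishing the cycle via the four implications (i)$\Rightarrow$(ii), (ii)$\Rightarrow$(iii), (iii)$\Rightarrow$(ii), (ii)$\Rightarrow$(i).

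For (i)$\Rightarrow$(ii): assume $\phi$ is surjective and fix $x\in X$ together with an arbitrary $v\in V_{x\phi}$. Choose $y'\in Y$ with $y'\nsim x\phi$ (possible since $\abs{\class{Y}}>2$), and use surjectivity of $\phi$ to write $y' = x'\phi$ and $y'v = x''\phi$ for some $x',x''\in X$. Since $\phi$ reflects the equivalence relation and $v$ preserves $Y\setminus\class{x\phi}$, we get $x',x''\notin\class{x}$. Let $u\in U_x$ be the unique element with $x'u = x''$, which exists by \ref{axiom:LM1}. Then $x'u\phi = x''\phi = y'v = x'\phi v$, so $v = \theta_x(u)$ by \autoref{lem:theta_maps}\ref{itm:unique_theta}; hence $\theta_x$ is onto.

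The implication (ii)$\Rightarrow$(iii) is immediate since $X\neq\emptyset$. For (iii)$\Rightarrow$(ii): suppose $\theta_x$ is surjective for a fixed $x$, and let $x'\in X$ be arbitrary. Pick $z\in X$ with $x,x'\notin\class{z}$ (possible since $\abs{\class{X}}>2$) and $g\in U_z$ with $xg=x'$. Applying \autoref{lem:diagram_thetas} (with $z$, $x$, $x'$ playing the roles of ``$x$'', ``$y$'', ``$z$'') gives $\theta_{x'}(u^g) = \theta_x(u)^{\theta_z(g)}$ for all $u\in U_x$. Conjugation by $g$ is an isomorphism $U_x\to U_{x'}$ by \ref{axiom:LM2}, and conjugation by $\theta_z(g)$ is an isomorphism $V_{x\phi}\to V_{x'\phi}$, because $\theta_z(g)$ maps $x\phi$ to $x'\phi$ (as $xg\phi = x\phi\,\theta_z(g)$ and $xg = x'$). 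Surjectivity of $\theta_x$ therefore forces surjectivity of $\theta_{x'}$.

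Finally, (ii)$\Rightarrow$(i) will use only that $\theta_0$ and $\theta_\infty$ are surjective, together with $0\phi=0$ and $\infty\phi=\infty$. Given $y\in Y$ with $y\nsim\infty$, lift the unique $v\in V_\infty$ satisfying $0v = y$ to some $u\in U_\infty$ with $\theta_\infty(u)=v$, and set $x:=0u$; then $x\phi = 0u\phi = 0\phi\,v = 0v = y$. If instead $y\sim\infty$, run the symmetric argument with $V_0$ and the element of $V_0$ sending $\infty$ to $y$, obtaining $x:=\infty u$ with $x\phi = y$. In either case $y\in\Im\phi$, so $\phi$ is onto. The only point requiring a little care is the verification in (iii)$\Rightarrow$(ii) that the two conjugation maps are genuine isomorphisms of the relevant root groups; everything else is a direct transcription of the injective case, so I expect no genuine obstacle.
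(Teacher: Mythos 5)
Your argument is correct and follows essentially the same route as the paper: (i)$\Rightarrow$(ii) by lifting the two points $y'$ and $y'v$ through $\phi$ and applying \autoref{lem:theta_maps}\ref{itm:unique_theta}, (iii)$\Rightarrow$(ii) via the conjugation formula of \autoref{lem:diagram_thetas}, and (ii)$\Rightarrow$(i) by lifting a root-group element through a surjective $\theta$-map. The only cosmetic difference is that in (ii)$\Rightarrow$(i) the paper picks an arbitrary $x$ with $x\phi\nsim y$ instead of splitting into the cases $y\nsim\infty$ and $y\sim\infty$ over the fixed basis, but both variants rest on the same lifting idea.
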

\begin{proof}\preenum
	\begin{itemize}[labelindent=4.5em, leftmargin=*]
		\item[{\ref{itm:morphism_inj1} $\Rightarrow$ \ref{itm:morphism_inj2}}.] 
		Take any $x\in X$ and any $v\in V_{x\phi}$. Let $y\in Y\setminus\class{x\phi}$, then $v$ is the unique element of $V_{x\phi}$ mapping $y$ to $yv$. As $\phi$ is surjective, there are $z,z'\in X$ such that $z\phi = y$ and $z'\phi = yv$. As $y$ and $yv$ are not equivalent to $x\phi$, we know $z$ and $z'$ are not equivalent to $x$. Hence there is an element $u\in U_x$ mapping $z$ to $z'$. We get
		\[yv = z'\phi = zu\phi = z\phi\theta_x(u) = y\theta_x(u)\;,\]
		so $v = \theta_x(u)$ and $\theta_x$ is surjective.
		\item[{\ref{itm:morphism_inj2} $\Rightarrow$ \ref{itm:morphism_inj1}}.]
		Take $y\in Y$. Now there is a point $x$ such that $x\phi\nsim y$. Take any $x'\nsim x$, then there is an element $v\in V_{x\phi}$ such that $x'\phi v = y$. As $\theta_x$ is assumes surjective, there is an element $u\in U_x$ such that $v = \theta_x(u)$. Hence
		\[y = x'\phi v = x'\phi\theta_x(u) = x'u\phi\;,\]
		so $y$ is in the image of $\phi$ and $\phi$ is surjective.
		\item[{\ref{itm:morphism_inj2} $\Rightarrow$ \ref{itm:morphism_inj3}}.]
		This is immediate.
		\item[{\ref{itm:morphism_inj3} $\Rightarrow$ \ref{itm:morphism_inj2}}.]
		Let $x\in X$ be such that $\theta_x$ is surjective, and take any $x'\in X$. Now take $z\in X$ such that $x,x'\not\in \class{z}$. Then there is an element $g\in U_z$ such that $xg=x'$. By \autoref{lem:diagram_thetas}, $\theta_{x'}(u^g) = \theta_x(u)^{\theta_z(g)}$. Now if $v'\in V_{x'\phi}$, there is an element $v\in V_{x\phi}$ such that $v' = v^{\theta_z(g)}$, and there is an element $u\in U_x$ such that $\theta_x(u) = v$. Hence $\theta_{x'}(u^g) = v'$, so $\theta_{x'}$ is surjective.
		\qedhere
	\end{itemize}
\end{proof}

Combining these shows that the inverse map to a bijective homomorphisms is again a homomorphism.

\begin{corollary}\label{cor:isomorphism}
	If $\phi$ is a bijective homomorphism from $\M_1$ to $\M_2$, then $\phi^{-1}$ is a homomorphism from $\M_2$ to $\M_1$. In this case the $\theta$-maps are group isomorphisms.
\end{corollary}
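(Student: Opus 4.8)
The plan is to verify that $\phi^{-1}$ satisfies both defining conditions of a homomorphism of local Moufang sets, namely that it preserves and reflects the equivalence relation, and that it satisfies the root-group containment \eqref{eq:morphism}. Since $\phi$ is bijective, by \autoref{cor:morphism_inj} and \autoref{cor:morphism_surj} each $\theta_x\colon U_x\to V_{x\phi}$ is both injective and surjective, hence a group isomorphism; this immediately gives the last sentence of the statement once we know $\phi^{-1}$ is a homomorphism.

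First I would deal with the equivalence relation. For $y,y'\in Y$, write $y=x\phi$ and $y'=x'\phi$ using surjectivity of $\phi$. Then $y\sim y' \iff x\phi\sim x'\phi \iff x\sim x'$ (the defining property of $\phi$) $\iff y\phi^{-1}\sim y'\phi^{-1}$. So $\phi^{-1}$ reflects and preserves $\sim$.

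Next I would check \eqref{eq:morphism} for $\phi^{-1}$, i.e.\ that $V_y\phi^{-1}\subset\phi^{-1}U_{y\phi^{-1}}$ for all $y\in Y$. Write $y=x\phi$, so $y\phi^{-1}=x$ (using bijectivity). Given $v\in V_y=V_{x\phi}$, since $\theta_x$ is surjective there is $u\in U_x$ with $\theta_x(u)=v$, and then by \eqref{eq:morphism_theta} we have $u\phi=\phi v$, hence composing with $\phi^{-1}$ on both sides gives $\phi^{-1}u = v\phi^{-1}$, i.e.\ $v\phi^{-1}=\phi^{-1}u\in\phi^{-1}U_{x}=\phi^{-1}U_{y\phi^{-1}}$. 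This establishes the containment, so $\phi^{-1}$ is a homomorphism from $\M_2$ to $\M_1$. Since $\phi\phi^{-1}=\id$ and $\phi^{-1}\phi=\id$, $\phi$ is an isomorphism; and because each $\theta_x$ is a bijective group homomorphism, it is a group isomorphism (with inverse the $\theta$-map attached to $\phi^{-1}$ — one can note that applying \eqref{eq:morphism_theta} to $\phi^{-1}$ at the point $y=x\phi$ recovers the inverse of $\theta_x$, but this is not needed for the statement).

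I do not expect any serious obstacle here: the argument is a routine unwinding of the definitions together with the already-proved equivalences in \autoref{cor:morphism_inj} and \autoref{cor:morphism_surj}. The only point requiring a little care is keeping track of the identification of bases: we have arranged $0\phi=0$, $\infty\phi=\infty$, and bijectivity ensures that the chosen basis points in $X$ and $Y$ correspond under $\phi^{-1}$ as well, so no re-choice of basis is needed when invoking the earlier lemmas for $\phi^{-1}$.
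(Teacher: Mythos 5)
Your proof is correct and follows essentially the same route as the paper: bijectivity of $\phi$ handles the equivalence condition directly, and surjectivity of the $\theta$-maps (via \autoref{cor:morphism_surj}) together with \eqref{eq:morphism_theta} upgrades the containment $U_x\phi\subset\phi V_{x\phi}$ to an equality, which after composing with $\phi^{-1}$ gives $V_y\phi^{-1}\subset\phi^{-1}U_{y\phi^{-1}}$; the final claim about the $\theta$-maps being isomorphisms is exactly the paper's appeal to \autoref{cor:morphism_inj} and \autoref{cor:morphism_surj}.
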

\begin{proof}
	Clearly, as $x\sim x'\iff x\phi\sim x'\phi$ and $\phi$ is bijective, we get $x\phi^{-1}\sim x'\phi^{-1}\iff x\sim x'$.
	
	As $\phi$ is surjective, \eqref{eq:morphism_theta} implies that $U_x\phi = \phi V_{x\phi}$ for all $x\in X$. Hence $V_{x\phi}\phi^{-1} = \phi^{-1} U_x$ for all $x\in X$, and as $\phi$ is bijective, 
	\[V_{y}\phi^{-1} = \phi^{-1} U_{y\phi^{-1}}\]
	for all $y\in Y$. In particular, $V_{y}\phi^{-1} \subset \phi^{-1} U_{y\phi^{-1}}$.
	
	By the previous corollaries, the $\theta$-maps are injective and surjective group morphisms, hence they are group isomorphims.
\end{proof}

Using this corollary, we can define isomorphic local Moufang sets

\begin{definition}
	Two local Moufang sets $\M_1$ and $\M_2$ are \define{isomorphic} if and only if there is a bijective homomorphisms between them.
\end{definition}

An equivalent way of defining isomorphisms is the following:

\begin{proposition}\label{prop:isomorphism1}
	Two local Moufang sets $\M_1 = (X,(U_x)_{x\in X})$ and $\M_2 = (Y,(V_y)_{y\in Y})$ are isomorphic if and only if there is a bijective map $\phi\colon X\to Y$ and for each $x\in X$ an isomorphism $\theta_x\colon U_x\to V_{x\phi}$ such that
	\begin{romenumerate}
		\item $x\sim x'\iff x\phi\sim x'\phi$ for all $x,x'\in X$;
		\item $u\phi = \phi\theta_x(u)$ for all $u\in U_x$ and all $x\in X$.
	\end{romenumerate}
\end{proposition}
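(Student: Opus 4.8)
The statement to prove is \autoref{prop:isomorphism1}: two local Moufang sets $\M_1=(X,(U_x))$ and $\M_2=(Y,(V_y))$ are isomorphic if and only if there is a bijection $\phi\colon X\to Y$ together with group isomorphisms $\theta_x\colon U_x\to V_{x\phi}$ satisfying (i) $x\sim x'\iff x\phi\sim x'\phi$ and (ii) $u\phi=\phi\theta_x(u)$ for all $u\in U_x$, $x\in X$.

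\medskip

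The plan is to prove both implications by reducing to the machinery already established. For the forward direction, suppose $\M_1$ and $\M_2$ are isomorphic, i.e.\ there is a bijective homomorphism $\phi\colon\M_1\to\M_2$. By definition of a homomorphism we get condition (i) immediately, and by \autoref{lem:theta_maps}\ref{itm:unique_theta} (more precisely, by the construction of the $\theta$-maps in \autoref{lem:theta_maps}) we get for each $x\in X$ a group homomorphism $\theta_x\colon U_x\to V_{x\phi}$ with $u\phi=\phi\theta_x(u)$, which is condition (ii). Finally, \autoref{cor:isomorphism} tells us that since $\phi$ is a bijective homomorphism, each $\theta_x$ is in fact a group isomorphism. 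This settles the ``only if'' part with essentially no work beyond citing the corollaries.

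\medskip

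For the converse, suppose we are given a bijection $\phi$ and isomorphisms $\theta_x$ satisfying (i) and (ii). I must show $\phi$ is a bijective homomorphism of local Moufang sets, after which \autoref{cor:isomorphism} again finishes the job by guaranteeing $\phi^{-1}$ is also a homomorphism. Condition (i) is exactly the equivalence-preservation requirement in the definition of a homomorphism. It remains to verify \eqref{eq:morphism}, i.e.\ $U_x\phi\subset\phi V_{x\phi}$ for all $x\in X$: but for any $u\in U_x$, condition (ii) gives $u\phi=\phi\theta_x(u)$ with $\theta_x(u)\in V_{x\phi}$, so $U_x\phi\subset\phi V_{x\phi}$ holds. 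Thus $\phi$ is a homomorphism, and it is bijective by hypothesis, so $\M_1\cong\M_2$ by the definition of isomorphic local Moufang sets.

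\medskip

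I expect essentially no serious obstacle here: the proposition is a repackaging of \autoref{lem:theta_maps} and \autoref{cor:isomorphism} into a symmetric ``data-level'' criterion, and the only mild subtlety is making sure the $\theta_x$ produced in the forward direction coincide with the ones appearing in the statement — but this is forced by the uniqueness clause \autoref{lem:theta_maps}\ref{itm:unique_theta}, which says $\theta_x(u)$ is the \emph{unique} element of $V_{x\phi}$ with $u\phi=\phi\theta_x(u)$. Hence the $\theta$-maps in (ii) are determined by $\phi$, and the two formulations genuinely match. A short proof citing \autoref{lem:theta_maps} and \autoref{cor:isomorphism} in each direction is all that is required.
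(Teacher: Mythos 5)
Your proof is correct and follows essentially the same route as the paper: the "if" direction checks that condition (ii) gives $U_x\phi\subset\phi V_{x\phi}$ so that the bijection $\phi$ is a homomorphism (which is already enough for isomorphy by the paper's definition), and the "only if" direction invokes the induced $\theta$-maps of \autoref{lem:theta_maps} together with \autoref{cor:isomorphism} to see they are isomorphisms. The only superfluous element is the appeal to $\phi^{-1}$ being a homomorphism, which is not needed since isomorphy is defined via the existence of a bijective homomorphism.
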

\begin{proof}
	Assume such $\phi$ and $\theta_x$ exist. By the assumptions, $\phi$ is a bijective map. Now take any $x\in X$ and any $u\in U_x$, then 
	\[u\phi = \phi\theta_x(u)\in\phi V_{x\phi}\;,\]
	so $\phi$ is a homomorphism of local Moufang sets.
	
	Conversely, assume $\M_1$ and $\M_2$ are isomorphic. Then the bijective morphism $\phi$ and its induced $\theta$-maps satisfy the conditions.
\end{proof}

We can also check when two local Moufang sets given by \autoref{constr:MUtau} are isomorphic.

\begin{proposition}\label{prop:isomorphism2}
	Let $\M(U,\tau)$ and $\M(V,\tau')$ be two local Moufang sets acting on $X$ and $Y$, constructed using \autoref{constr:MUtau}. If there is a map $\phi\colon X\to Y$ and a group morphism $\theta\colon U\to V$ such that
	\begin{romenumerate}
		\item $x\sim x'\iff x\phi\sim x'\phi$ for all $x,x'\in X$;
		\item $u\phi = \phi\theta(u)$ for all $u\in U$;
		\item $\tau\phi = \phi\tau'$.
	\end{romenumerate}
	then $\phi\colon\M(U,\tau)\to\M(V,\tau')$ is a homomorphism of local Moufang sets. If $\phi$ is a bijection, then $\M(U,\tau)$ and $\M(V,\tau')$ are isomorphic.
\end{proposition}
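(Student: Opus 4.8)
The plan: hypothesis~(i) is exactly the equivalence-compatibility clause in the definition of a homomorphism of local Moufang sets, so the whole content is to verify the root-group inclusion $U_x\phi\subseteq\phi V_{x\phi}$ for every $x\in X$, where $(V_y)_{y\in Y}$ denotes the root groups of $\M(V,\tau')$, with $\infty'$ the fixed point of $V$, $0'=\infty'\tau'$, and $\beta_\bullet,\delta_\bullet$ the analogues for $\M(V,\tau')$ of the permutations $\alpha_\bullet,\gamma_\bullet$ of $\M(U,\tau)$. Since the root groups of both constructions are obtained from the two standard ones ($U_\infty=U$, $U_0=U^\tau$, resp.\ $V_{\infty'}=V$, $V_{0'}=V^{\tau'}$) by conjugating with these permutations, the strategy is: first pin down where $\phi$ sends the distinguished points, then push $\phi$ through each conjugation using that $\theta$ is a group homomorphism together with (ii) and~(iii).

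First I would settle the base points. Since $U=U_\infty$ fixes $\infty$, (ii) forces every $\theta(u)$ to fix $\infty\phi$. On the other hand (i) forbids $\phi$ from identifying two non-equivalent points, so picking $x\nsim x'$ in $X\setminus\class{\infty}$ and the $u_0\in U$ with $xu_0=x'$, the element $\theta(u_0)$ satisfies $\induced{\theta(u_0)}\neq\id$; since the induced action of $V$ on $\class{Y}\setminus\{\class{\infty'}\}$ is sharply transitive, an element fixing $\infty\phi$ and having non-trivial induced action forces $\class{\infty\phi}=\class{\infty'}$, and then $\infty\phi=\infty'$. Applying~(iii) to $\infty$ gives $0\phi=(\infty\tau)\phi=(\infty\phi)\tau'=0'$. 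From $0\phi=0'$ one reads off, for every $z\nsim\infty$, that $\theta(\alpha_z)$ equals the standard element $\beta_{z\phi}$ of $V$ (both are the unique element of $V$ carrying $0'$ to $z\phi$), hence $\theta(\alpha_z)^{\tau'}=\delta_{z\phi}$, and likewise $(z\tau^{-1})\phi=(z\phi)(\tau')^{-1}$; these identities feed the next step.

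Then the case analysis for the root-group condition. For $x=\infty$ it is immediate: $U_\infty\phi=\phi\,\theta(U)\subseteq\phi V=\phi V_{\infty\phi}$. For $x\nsim\infty$ (which includes $x=0$, since $\alpha_0=\id$), write $U_x=U_0^{\alpha_x}$; conjugating~(ii) by $\tau$ via~(iii) gives $U_0\phi\subseteq\phi V_{0'}$, and conjugating once more by $\alpha_x$ — using $\alpha_x\phi=\phi\beta_{x\phi}$ and that $\theta$ respects inverses — places $U_x\phi$ inside $\phi\,V_{0'}^{\beta_{x\phi}}=\phi V_{x\phi}$, the last equality being the defining formula for $V_{x\phi}$. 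For $x\sim\infty$ with $x\neq\infty$, one has $x\tau^{-1}\sim0$, so $\gamma_{x\tau^{-1}}=\tau^{-1}\alpha_{x\tau^{-1}}\tau$ is defined; a short computation from (ii) and~(iii) yields $\gamma_{x\tau^{-1}}\phi=\phi\,\delta_{(x\phi)(\tau')^{-1}}$, and conjugating $U_\infty\phi=\phi\,\theta(U)$ by $\gamma_{x\tau^{-1}}$ accordingly puts $U_x\phi$ inside $\phi\,V^{\delta_{(x\phi)(\tau')^{-1}}}=\phi V_{x\phi}$. Since this covers every $x\in X$, the root-group condition holds and $\phi$ is a homomorphism of local Moufang sets.

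For the last assertion, if $\phi$ is a bijection then \autoref{cor:isomorphism} applies directly: $\phi^{-1}$ is again a homomorphism of local Moufang sets, so $\M(U,\tau)$ and $\M(V,\tau')$ are isomorphic (equivalently, one checks the criterion of \autoref{prop:isomorphism1} with the $\theta$-maps on the root groups assembled from $\theta$ and the conjugating permutations). I expect the base-point matching to be the only genuinely delicate step: it is immediate when $\theta$ is surjective, and in general it has to be extracted from the sharp transitivity of the induced action together with~(iii), precisely to rule out the possibility that $\phi$ lands in the class $\class{\infty'}$ but misses $\infty'$. Once $\infty\phi=\infty'$ and $0\phi=0'$ are secured, everything else is the routine conjugation bookkeeping already seen in \autoref{lem:theta_maps} and \autoref{prop:isomorphism1}.
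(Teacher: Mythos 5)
Your plan is, modulo one step, the same as the paper's: identify $\theta(\alpha_z)$ with the standard elements of $V$, conjugate hypothesis (ii) through $\tau$ and through the defining conjugations of the root groups, and invoke \autoref{cor:isomorphism} for the bijective case. The gap is precisely the step you yourself flag as delicate and then wave through: from ``$\theta(u_0)$ fixes $\infty\phi$ and has nontrivial induced action'' you may conclude $\class{\infty\phi}=\class{\infty'}$, but the further claim ``and then $\infty\phi=\infty'$'' does not follow. Condition \ref{axiom:C1} says nothing about how $V$ acts \emph{inside} $\class{\infty'}$, so an element of $V$ may fix points of $\class{\infty'}$ other than $\infty'$. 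Worse, the base-point matching genuinely cannot be extracted from (i)--(iii): take $R=k[t]/(t^2)$ with $k$ the field of three elements and $\M(U,\tau)=\M(V,\tau')=\M(R)$. Since $\m^2=0$, every element of $U_\infty$ fixes $\class{\infty}$ pointwise, and $(r-t)^{-1}=r^{-1}+t$ for every unit $r$ (units of $k$ square to $1$); using these facts one checks that $\phi\colon[1,r]\mapsto[1,r-t]$, $[m,1]\mapsto[m+t,1]$ together with $\theta=\id$ and $\tau'=\tau$ satisfies (i), (ii), (iii), yet $\infty\phi=[t,1]\neq\infty$. Moreover this $\phi$ is \emph{not} a homomorphism: $U_\infty\phi=\phi U_\infty$, while $V_{\infty\phi}=U_{[t,1]}=U_\infty^{\gamma_{[1,-t]}}\neq U_\infty$ (already $\alpha_1^{\gamma_{[1,-t]}}\notin U_\infty$), so the root-group condition fails at $x=\infty$. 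This example also refutes your parenthetical remark that surjectivity of $\theta$ settles the point, since there $\theta$ is the identity.

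So the base-point compatibility has to be an assumption, not a consequence: one needs $\infty\phi=\infty'$ (which by (iii) gives $0\phi=0'$), and everything downstream in your argument --- the identification of $\theta(\alpha_z)$ with the unique element of $V$ carrying $0'$ to $z\phi$, the inclusion $U_0\phi\subseteq\phi V_{0'}$, and the two conjugation computations placing $U_x\phi$ inside $\phi V_{x\phi}$ in the cases $x\nsim\infty$ and $x\sim\infty$ --- uses it. For what it is worth, the paper's own proof makes the same tacit assumption (the equality $\theta(\alpha_x)=\alpha_{x\phi}$ written there only makes sense when $0\phi=0'$), and in every application of the proposition in this thesis the distinguished points do correspond; the clean repair is to add $\infty\phi=\infty'$ to the hypotheses. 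Once that is in place, your conjugation bookkeeping is correct and coincides with the paper's construction of the maps $\theta_x$, and the bijective case is indeed just \autoref{cor:isomorphism}.
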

\begin{proof}
	With $\theta$, we first construct a group morphism $\theta_x\colon U_x\to V_{x\phi}$ for every $x\in X$. We set $\theta_\infty = \theta$. Next, we set
	\[\theta_0\colon U_0\to V_0\colon u\mapsto \tau'^{-1}\theta_\infty(u^{\tau^{-1}})\tau'\;.\]
	As $\tau^{-1}\phi = \phi\tau'^{-1}$, we have $u\phi=\phi\theta_0(u)$ for all $u\in U_0$.
	Finally, we get two cases:
	\[\theta_x \colon U_x\to V_{x\phi}\colon u\mapsto \begin{cases}
	  	\alpha_{x\phi}^{-1}\theta_0(u^{\alpha_x^{-1}})\alpha_{x\phi} &\text{ for $x\nsim\infty$} \\[0.5ex]
	  	\gamma_{x\phi\tau'^{-1}}^{-1}\theta_\infty(u^{\gamma_{x\tau}^{-1}})\gamma_{x\phi\tau'^{-1}} &\text{ for $x\sim\infty$.}
	  \end{cases}\]
	By $u\phi=\phi\theta_\infty(u)$ for all $u\in U$ and $u\phi=\phi\theta_0(u)$ for all $u\in U_0$, we get $\alpha_x\phi = \phi\alpha_{x\phi}$ and $\gamma_{x\tau^{-1}}\phi = \phi\gamma_{x\phi\tau'^{-1}}^{-1}$. Hence for all $u\in U_x$, we get $u\phi=\phi\theta_x(u)$. As a consequence, $U_x\phi\subset \phi U_{x\phi}$, so $\phi$ is a homomorphism of local Moufang sets.
	
	If $\phi$ is also a bijection, \autoref{cor:isomorphism} shows that $\M(U,\tau)$ and $\M(V,\tau')$ are isomorphic.
\end{proof}

\subsection{Images and quotients}

After defining homomorphisms, we want to check that the image of a homomorphism induces a local Moufang subset.

\begin{proposition}
	Let $\phi$ be a homomorphism from $\M_1$ to $\M_2$. Then $\Im(\phi)$\notatlink{imphi} induces a local Moufang subset of $\M_2$ with root groups $W_y:=\Im(\theta_x)$ for all $y\in\Im(\phi)$ and $x\in X$ such that $x\phi = y$.
\end{proposition}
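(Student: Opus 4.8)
The plan is to verify the three conditions \ref{axiom:S1}, \ref{axiom:S2}, \ref{axiom:S3} from the definition of a local Moufang subset, applied to the set $Y' := \Im(\phi)\subseteq Y$. First I would fix bases with $0\phi=0$ and $\infty\phi=\infty$ so that $0,\infty\in Y'$, and note that since $\phi$ is a homomorphism of local Moufang sets we have $\abs{\class{\Im(\phi)}} = \abs{\Im(\induced\phi)} > 2$ because $\induced\phi$ is injective (see \autoref{rem:induced_inj_map}) and $\abs{\class{X}}>2$. Then, for \ref{axiom:S1}, I would show that $V := \{v\in V_\infty \mid 0v\in \Im(\phi)\}$ equals $\Im(\theta_\infty)$: the inclusion $\Im(\theta_\infty)\subseteq V$ is immediate since $0\theta_\infty(u) = 0\alpha_x\phi$-type computations give $0\theta_\infty(u)\in\Im(\phi)$; conversely, if $0v = x\phi$ for some $x\in X$ with $x\nsim\infty$, then by \autoref{lem:theta_maps} $\theta_\infty(\alpha_x) = \alpha_{x\phi}$ is the unique element of $V_\infty$ sending $0$ to $x\phi$, hence $v = \theta_\infty(\alpha_x)\in\Im(\theta_\infty)$. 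Since $\theta_\infty$ is a group homomorphism, $V = \Im(\theta_\infty)$ is a group, giving \ref{axiom:S1}.

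Next, for \ref{axiom:S2} I would check $\Im(\phi)v\subseteq\Im(\phi)$ for all $v\in V$. Writing $v = \theta_\infty(u)$ with $u\in U_\infty$, and taking any $y = x\phi\in\Im(\phi)$, we get $yv = x\phi\theta_\infty(u) = xu\phi\in\Im(\phi)$ by \eqref{eq:morphism_theta}. For \ref{axiom:S3}, I would use that $\M_1$ has units (since $\abs{\class{X}}>2$), pick a unit $x\in X$; by \autoref{lem:theta_maps}\ref{itm:morphism_mu}, $x\phi$ is a unit of $\M_2$ and $\mu_x\phi = \phi\mu_{x\phi}$, so $\Im(\phi)\mu_{x\phi} = \Im(\phi)$ follows by the same computation as for \ref{axiom:S2} (applied to the map $\mu_{x\phi}$ and using $y\mu_{x\phi}\phi^{-1}$-style reasoning, or directly: $x'\phi\mu_{x\phi} = x'\mu_x\phi\in\Im(\phi)$ for $x'\not\sim 0,\infty$, and handling the cases $x'\sim 0$, $x'\sim\infty$ separately using that $\mu_{x\phi}$ swaps $\class0$ and $\class\infty$ together with $\induced\phi$ being injective). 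Hence $\M(V,\mu_{x\phi})$ is a local Moufang subset of $\M_2$, and by the preceding proposition it is in fact a local Moufang set.

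Finally, I would identify the root groups. The construction of the local Moufang subset $\M(V,\mu_{x\phi})$ sets $W_\infty = V = \Im(\theta_\infty)$ and $W_0 = V^{\mu_{x\phi}}$; using the conjugation diagram from the corollary to \autoref{lem:diagram_thetas}, $\theta_0(u^{\mu_x}) = \theta_\infty(u)^{\mu_{x\phi}}$, so $W_0 = \Im(\theta_\infty)^{\mu_{x\phi}} = \Im(\theta_0)$. For a general $y\in\Im(\phi)$ with $y = x'\phi$, the root group of the subset is obtained by conjugating $W_\infty$ or $W_0$ by the appropriate element of $V$, and \autoref{lem:diagram_thetas} then shows this conjugate equals $\Im(\theta_{x'})$. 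One must check this is well-defined, i.e.\ independent of the choice of $x'$ with $x'\phi = y$: if $x'\phi = x''\phi = y$, then since $\phi$ need not be injective there is $g$ in some root group with $x'g = x''$ and $\theta_{x'\phi}$-compatibility via \autoref{lem:diagram_thetas} forces $\Im(\theta_{x'}) = \Im(\theta_{x''})$ (as $\theta_{\cdot}(g)$ is conjugation by an element, hence an inner automorphism fixing the image).

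The main obstacle I anticipate is the well-definedness of $W_y := \Im(\theta_x)$ over all preimages $x$ of $y$, together with checking that these $\Im(\theta_x)$ genuinely coincide with the root groups produced by \autoref{constr:MUtau} applied to $(V,\mu_{x\phi})$ — this requires carefully tracking the conjugating elements and invoking \autoref{lem:diagram_thetas} (and its $\mu$-map corollary) at each step, rather than any deep new idea. The rest is routine bookkeeping with \eqref{eq:morphism_theta}.
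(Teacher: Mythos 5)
Your route is the same as the paper's: identify $V=\{v\in V_\infty\mid 0v\in\Im(\phi)\}$ with $\Im(\theta_\infty)$, verify \ref{axiom:S1}--\ref{axiom:S3} using \eqref{eq:morphism_theta} and the fact that $\mu_x\phi=\phi\mu_{x\phi}$, and settle well-definedness of $W_y=\Im(\theta_x)$ via \autoref{lem:diagram_thetas}. However, the justification you give for well-definedness --- the one step you yourself flag as the main obstacle --- does not hold up as written. From $x'g=x''$ with $g\in U_z$, \autoref{lem:diagram_thetas} gives $\theta_{x''}(u^g)=\theta_{x'}(u)^{\theta_z(g)}$, and ``conjugation by $\theta_z(g)$ is an inner automorphism, hence fixes the image'' is not an argument: a nontrivial conjugation would only show that $\Im(\theta_{x'})$ and $\Im(\theta_{x''})$ are conjugate subgroups, not equal ones. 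What actually saves the step is that $\theta_z(g)$ is the identity: it fixes $x'\phi$, because $x'\phi\,\theta_z(g)=x'g\phi=x''\phi=x'\phi$, and $x'\phi\nsim z\phi$, so by \ref{axiom:LM1} the only element of $V_{z\phi}$ fixing $x'\phi$ is $\id$. Then $\theta_{x''}(u^g)=\theta_{x'}(u)$, and since $u\mapsto u^g$ is a bijection $U_{x'}\to U_{x''}$, the images coincide --- which is exactly the paper's argument.

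Two smaller remarks. For \ref{axiom:S3} you need the equality $\Im(\phi)\mu_{x\phi}=\Im(\phi)$, while your computation only yields the inclusion $\Im(\phi)\mu_{x\phi}\subseteq\Im(\phi)$; run the same computation for $\mu_{x\phi}^{-1}=\mu_{(-x)\phi}$ to get the reverse inclusion, as the paper does. Also, no case distinction on $x'\sim 0$ or $x'\sim\infty$ is needed there, since $\mu_x\phi=\phi\mu_{x\phi}$ from \autoref{lem:theta_maps} is an identity of maps on all of $X$. Your extra remarks about matching the $\Im(\theta_{x'})$ with the root groups produced by \autoref{constr:MUtau} are fine and go slightly beyond what the paper spells out, but they rest on the same conjugation lemma and need the same $\theta_z(g)=\id$ observation to be complete.
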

\begin{proof}
	We first prove that the proposed root groups are well-defined. Assume $y = x\phi = x'\phi$ for $x,x'\in X$. There is a point $z\in X$ not equivalent to $x$ and $x'$ and an element $g\in U_z$ with $xg=g'$. By \autoref{lem:diagram_thetas}, 
	\[\theta_x(u) = \theta_{x'}(u^g)\]
	for all $u\in U_x$. As $u\mapsto u^g$ is a bijection, this means $\Im(\theta_x) = \Im(\theta_{x'})$.
	
	Now we check what the induced local Moufang subset would be.
	\begin{align*}
		\{v\in V_\infty\mid 0v\in \Im(\phi)\} &= \{v\in V_\infty\mid\exists x\in X\colon 0v = x\phi\} \\
			&= \{v\in V_\infty\mid 0v = 0\alpha_x\phi\text{ for some $\alpha_x\in U_\infty$}\} \\
			&= \{\theta_\infty(u)\mid u\in U_\infty\} = \Im(\theta_\infty) = W_\infty
	\end{align*}
	As this is the image of a group homomorphism, it is a group, so \ref{axiom:S1} holds. Secondly, if $w\in W_\infty$, there is an element $u\in U_\infty$ such that
	\[x\phi w = x\phi\theta_\infty(u) = xu\phi\in\Im(\phi)\;,\]
	so $\Im(\phi)w\subset \Im(\phi)$ and \ref{axiom:S2} is satisfied.
	Finally, take any unit $x'\in X$, then $x'\phi$ is a unit in $\M_2$ and we get
	\begin{align*}
		&x\phi\mu_{x'\phi} = x\mu_{x'}\phi\in\Im(\phi) \\
		&x\phi\mu_{x'\phi}^{-1} = x\phi\mu_{-x'\phi} = x\mu_{-x'}\phi\in\Im(\phi)\;,
	\end{align*}
	so $\Im(\phi)\mu_{x'\phi} = \Im(\phi)$, proving \ref{axiom:S3}.
\end{proof}

It seems plausible that we could define the kernel of a homomorphism, and study what type of substructure would give rise to natural quotients of local Moufang sets. At this point, we avoid this and simply define quotients of local Moufang sets as the images of surjective homomorphisms:

\begin{definition}
	We call a local Moufang set $\M_2$ a \define{quotient} of another local Moufang set $\M_1$ if there is a surjective homomorphism from $\M_1$ to $\M_2$.
\end{definition}

When we first defined local Moufang sets, we also observed that there is natural Moufang set induced by every local Moufang set $\M$. We called this Moufang set the quotient Moufang set of $\M$, which seems to indicate that this is indeed a quotient.

\begin{proposition}\label{prop:quotientMS}
	Let $\M = (X,\{U_x\}_{x\in X})$ be a local Moufang set. Then the map $\pi\colon X\to \class{X}\colon x\mapsto \class{x}$ is a surjective homomorphism to the quotient Moufang set $\induced{\M}$. Hence $\induced{\M}$ is a quotient of $\M$. For all $x\in X$, the induced group homomorphism $\theta_x$ is given by $\theta_x(u) = \induced{u}$.
\end{proposition}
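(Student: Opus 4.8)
The plan is to verify directly that $\pi\colon X\to\class{X}$ satisfies the definition of a homomorphism of local Moufang sets, and then to identify the induced $\theta$-maps. There are really three things to check: that $\pi$ respects the equivalence relation in the strong biconditional sense, that $\pi$ satisfies the condition \eqref{eq:morphism}, and that $\pi$ is surjective; the formula for $\theta_x$ then drops out of the uniqueness statement in \autoref{lem:theta_maps}\ref{itm:unique_theta}.

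First I would deal with the equivalence condition. Here the target $\induced{\M}$ is a Moufang set, so its equivalence relation is the identity relation on $\class{X}$; thus $\class{x}\sim\class{x'}$ in $\induced{\M}$ means precisely $\class{x}=\class{x'}$, i.e.\ $x\sim x'$ in $\M$. So the required biconditional ``$x\sim x'$ iff $x\pi\sim x'\pi$'' is immediate, essentially by the definition of the quotient set. Surjectivity of $\pi$ is clear since every equivalence class is of the form $\class{x}$ for some $x\in X$. The main step is the root-group condition: I need $U_x\pi\subset\pi\, U_{\class{x}}$ for every $x\in X$, where $U_{\class{x}}=\induced{U_x}$ is the root group of $\induced{\M}$ at $\class{x}$ by construction. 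But for $u\in U_x\leq\Sym(X,\sim)$, the induced bijection $\induced{u}$ of $\class{X}$ is by definition the unique map making $u\pi=\pi\,\induced{u}$ commute (this is exactly the defining property of the induced action recalled in the Preliminaries, and $\induced{u}\in U_{\class{x}}$ since $u\in U_x$). Hence $U_x\pi\subset\pi\,U_{\class{x}}$, as needed. This also shows $\pi$ is a homomorphism, so by the definition of quotient, $\induced{\M}$ is a quotient of $\M$.

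Finally, for the formula $\theta_x(u)=\induced{u}$: by \autoref{lem:theta_maps}\ref{itm:morphism_mu}\ (really part (i) of that lemma) $\theta_x(u)$ is the \emph{unique} element of $U_{\class{x}}$ with $u\pi=\pi\,\theta_x(u)$. Since $\induced{u}\in U_{\class{x}}$ and $u\pi=\pi\,\induced{u}$, uniqueness forces $\theta_x(u)=\induced{u}$. Alternatively one can invoke \autoref{lem:theta_maps}\ref{itm:unique_theta} by picking any $x'\nsim x$ and checking $\class{x'}\,\induced{u}=\class{x'u}=\class{x'}\,\induced{u}$ trivially.

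I do not expect a genuine obstacle here; the only subtlety is purely bookkeeping — keeping straight that the ``$\sim$'' on $\class{X}$ is equality because $\induced{\M}$ is an honest Moufang set, and being careful that $U_{\class{x}}$ in $\induced{\M}$ is \emph{defined} to be $\induced{U_x}$, so that the containment $\induced{u}\in U_{\class{x}}$ is a tautology rather than something requiring proof. Once those identifications are in place, every clause of the definition of homomorphism is verified by unwinding definitions, and the $\theta_x$-formula is forced by uniqueness.
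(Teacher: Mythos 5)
Your proposal is correct and follows essentially the same route as the paper: surjectivity is immediate, the equivalence biconditional is trivial because the quotient Moufang set carries the identity relation, and the root-group condition follows from $u\pi = \pi\,\induced{u}$ with $\induced{u}\in U_{\class{x}}=\induced{U_x}$ by definition, after which uniqueness of the $\theta$-maps forces $\theta_x(u)=\induced{u}$. The only nitpick is the mislabelled reference to \autoref{lem:theta_maps} (the uniqueness you need is part (i)/(ii), not the $\mu$-map item), which does not affect the argument.
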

\begin{proof}
	As $\pi$ is clearly surjective, we only need to check if $\pi$ is a homomorphism of local Moufang sets. First observe that $x\sim x'$ if and only if $\class{x}=\class{x'}$, and this is equivalent to $x\pi\sim x'\pi$, as the equivalence on the quotient Moufang set is equality. Next take any $x\in X$ and any $u\in U_x$. Then for the induced action $\induced{u}$ of $u$ on $\class{X}$, we get $u\pi = \pi\induced{u}$, so $U_x\pi\subset \pi U_{\class{x}}$.
\end{proof}

The quotient Moufang set of a local Moufang set has more useful properties in connection to homomorphisms. For one, any homomorphism $\phi$ induces an injective homomorphism of the quotient Moufang sets:

\begin{proposition}\label{prop:induced_quotient_morphism}
	If $\phi$ is a homomorphism from $\M_1$ to $\M_2$, then $\induced{\phi}$ is an injective homomorphism from $\induced{\M_1}$ to $\induced{\M_2}$.
\end{proposition}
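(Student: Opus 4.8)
The statement asks us to show that if $\phi\colon\M_1\to\M_2$ is a homomorphism of local Moufang sets, then the induced map $\induced\phi\colon\class{X}\to\class{Y}$, $\class x\mapsto\class{x\phi}$, is an injective homomorphism of the quotient Moufang sets $\induced{\M_1}=(\class X,(U_{\class x}))$ and $\induced{\M_2}=(\class Y,(V_{\class y}))$. The plan is to verify the two defining conditions of a homomorphism of (local) Moufang sets, in the form given in \autoref{prop:hom_mouf_set} / the definition of homomorphism: first that $\induced\phi$ respects the equivalence relations (which on a Moufang set is just equality, i.e.\ injectivity), and second that $U_{\class x}\,\induced\phi\subset\induced\phi\,V_{\class{x}\induced\phi}$ for all $\class x\in\class X$.

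For the first condition, injectivity of $\induced\phi$ is exactly the content of \autoref{rem:induced_inj_map}: the defining property of a homomorphism of local Moufang sets is that $x\sim x'\iff x\phi\sim x'\phi$, so $\induced\phi$ is a well-defined \emph{injective} map $\class X\to\class Y$. Since the equivalence relations on $\induced{\M_1}$ and $\induced{\M_2}$ are equality, "$\induced\phi$ respects the equivalence" and "$\induced\phi$ is injective" coincide, and both are immediate. So the only real work is the compatibility with root groups.

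For the second condition, I would start from the relation $U_x\phi\subset\phi V_{x\phi}$, or more precisely the $\theta$-maps of \autoref{lem:theta_maps}: for each $u\in U_x$ we have $u\phi=\phi\,\theta_x(u)$ with $\theta_x(u)\in V_{x\phi}$. Passing to the induced actions on the quotients, I want to deduce $\induced{u}\,\induced\phi=\induced\phi\,\induced{\theta_x(u)}$ as maps $\class X\to\class Y$. This is a diagram chase: for $\class{x'}\in\class X$, pick a representative $x'$ and compute $\class{x'}\cdot(\induced u\,\induced\phi)=\class{x'u}\cdot\induced\phi=\class{x'u\phi}=\class{x'\phi\,\theta_x(u)}=\class{x'\phi}\cdot\induced{\theta_x(u)}=\class{x'}\cdot(\induced\phi\,\induced{\theta_x(u)})$, using at each step only that $\phi$ and the group elements are equivalence-preserving so they descend to the quotient. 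Since every element of $U_{\class x}=\induced{U_x}$ is of the form $\induced u$ for some $u\in U_x$ (by \ref{axiom:LM0}, $U_{\class x}$ is \emph{defined} as $\induced{U_x}$), and $\induced{\theta_x(u)}\in\induced{V_{x\phi}}=V_{\class{x\phi}}=V_{\class x\,\induced\phi}$, we get $U_{\class x}\,\induced\phi\subset\induced\phi\,V_{\class x\,\induced\phi}$, which is condition~\eqref{eq:morphism} for $\induced\phi$.

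I do not anticipate a genuine obstacle here — the statement is essentially a bookkeeping consequence of the definitions plus \autoref{rem:induced_inj_map} and the existence of the $\theta$-maps. The one point requiring a little care is making sure the passage "from $u\phi=\phi\,\theta_x(u)$ to the analogous identity on the quotients" is well-defined, i.e.\ that the induced action $\induced u$ of $u$ on $\class X$ really satisfies $\induced u\,\induced\phi=\induced\phi\,\induced{\theta_x(u)}$ independently of representatives; but this is immediate because $\phi$, $u$, and $\theta_x(u)$ all lie in the relevant $\Sym(\,\cdot\,,\sim)$ and hence have well-defined induced actions on the equivalence classes, and composition of induced actions is the induced action of the composition. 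So the proof reduces to: (1) cite \autoref{rem:induced_inj_map} for injectivity; (2) for $\class x\in\class X$ and $u\in U_x$, chase the identity $u\phi=\phi\,\theta_x(u)$ down to $\induced u\,\induced\phi=\induced\phi\,\induced{\theta_x(u)}$; (3) conclude $U_{\class x}\,\induced\phi\subset\induced\phi\,V_{\class x\,\induced\phi}$, hence $\induced\phi$ is a homomorphism of Moufang sets, and it is injective.
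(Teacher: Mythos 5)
Your proposal is correct and follows essentially the same route as the paper's proof: injectivity via \autoref{rem:induced_inj_map}, then the class-by-class computation $\class{x'}\,\induced{u}\,\induced{\phi} = \class{x'u\phi} = \class{x'\phi\theta_x(u)} = \class{x'}\,\induced{\phi}\,\induced{\theta_x(u)}$ to conclude $U_{\class{x}}\induced{\phi}\subset\induced{\phi}V_{\class{x\phi}}$. No gaps.
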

\begin{proof}
	By \autoref{rem:induced_inj_map}, $\induced{\phi}$ is an injective map from $\class{X}$ to $\class{Y}$. As the equivalence in this case is equality, we get $\class{x}=\class{x'}\iff\class{x}\,\induced{\phi}=\class{x'}\,\induced{\phi}$. Next, let $\induced{u}$ be in $U_{\class{x}}$ for some $\class{x}\in \class{X}$. Then for any $\class{x'}\in \class{X}$, we have
	\[\class{x'}\,\induced{u}\,\induced{\phi} = \class{x'u\phi} = \class{x'\phi\theta_x(u)} = \class{x'}\,\induced{\phi}\,\induced{\theta_x(u)}\;,\]
	so $\induced{u}\,\induced{\phi} = \induced{\phi}\,\induced{\theta_x(u)}\in\induced{\phi}V_{\class{x\phi}}$, hence $U_{\class{x}}\induced{\phi}\subset\induced{\phi}V_{\class{x\phi}}$ and $\induced{\phi}$ is a homomorphism.
\end{proof}

As a consequence of this, we find that the quotient Moufang set is the universal quotient of $\M$ in the following sense:

\begin{proposition}\label{prop:surjective_induces_iso}
	Let $\phi\colon\M_1\to\M_2$ be a surjective homomorphism. Then $\induced{\phi}\colon\induced{\M_1}\to\induced{\M_2}$ is an isomorphism such that
	\begin{center}
		\begin{tikzpicture}[regulararrow]
			\node(T1) {$\M_1$};
			\node[right of=T1, node distance=2.5cm](T2) {$\M_2$};
			\node[below of=T1, node distance=1.5cm](B1) {$\induced{\M_1}$};
			\node[right of=B1, node distance=2.5cm](B2) {$\induced{\M_2}$};
			\draw[->] (T1) -- node[above]() {$\phi$} (T2);
			\draw[->] (T1) -- node[left]() {$\pi_1$} (B1);
			\draw[->] (T2) -- node[left]() {$\pi_2$} (B2);
			\draw[->] (B1) -- node[below]() {$\induced{\phi}$} (B2);
		\end{tikzpicture}
	\end{center}
	commutes.
\end{proposition}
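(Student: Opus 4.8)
The plan is to verify the three claims in turn: that $\induced{\phi}$ is a homomorphism (already known from \autoref{prop:induced_quotient_morphism}), that the square commutes, and that $\induced{\phi}$ is an isomorphism, the last being the only substantial point.

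First I would observe that $\induced{\phi}$ is a homomorphism of Moufang sets by \autoref{prop:induced_quotient_morphism}, and in particular it is injective by \autoref{rem:induced_inj_map} (or simply because homomorphisms of Moufang sets are injective). Next I would check commutativity of the square: for $x\in X$ we have
\[
	x\pi_1\induced{\phi} = \class{x}\,\induced{\phi} = \class{x\phi} = (x\phi)\pi_2 = x\phi\pi_2\;,
\]
so $\pi_1\induced{\phi} = \phi\pi_2$ as maps $X\to\class{Y}$; this is immediate from the definitions of $\pi_1$, $\pi_2$ and $\induced{\phi}$.

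The main point is surjectivity of $\induced{\phi}$, which would then combine with injectivity to give a bijective homomorphism, hence an isomorphism by \autoref{cor:isomorphism} (applied in the category of Moufang sets, or equivalently viewing $\induced{\M_1},\induced{\M_2}$ as local Moufang sets with trivial equivalence relation via \autoref{prop:hom_mouf_set}). For surjectivity: take any $\class{y}\in\class{Y}$, choose a representative $y\in Y$; since $\phi$ is surjective there is $x\in X$ with $x\phi = y$, and then $\class{x}\,\induced{\phi} = \class{x\phi} = \class{y}$. Thus $\induced{\phi}$ is onto $\class{Y}$. Alternatively, and perhaps cleaner, one can argue via the $\theta$-maps: by \autoref{cor:morphism_surj} the maps $\theta_x\colon U_x\to V_{x\phi}$ are all surjective, and the induced maps on the quotient Moufang sets are exactly $\induced{u}\mapsto\induced{\theta_x(u)}$ by \autoref{prop:induced_quotient_morphism}, which are therefore surjective; by \autoref{cor:morphism_surj} applied to the Moufang set homomorphism $\induced{\phi}$, this forces $\induced{\phi}$ to be surjective.

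The hard part is essentially notational rather than mathematical: the surjectivity of the underlying set map is a one-line consequence of surjectivity of $\phi$, and the only care needed is to invoke the right earlier results (\autoref{cor:morphism_surj}, \autoref{cor:isomorphism}, \autoref{prop:hom_mouf_set}) to upgrade ``bijective Moufang set homomorphism'' to ``isomorphism'' and to conclude that the $\theta$-maps of $\induced{\phi}$ are isomorphisms. I would finish by noting that since $\induced{\phi}$ is a bijective homomorphism, \autoref{cor:isomorphism} gives that $\induced{\phi}^{-1}$ is also a homomorphism and the associated group homomorphisms are isomorphisms, so $\induced{\M_1}\cong\induced{\M_2}$, completing the proof.
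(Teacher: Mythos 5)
Your proof is correct and follows essentially the same route as the paper: injectivity of $\induced{\phi}$ from \autoref{prop:induced_quotient_morphism}, surjectivity passed down directly from the surjectivity of $\phi$, and commutativity of the square by unwinding the definitions of $\pi_1$, $\pi_2$ and $\induced{\phi}$. The paper's proof is just a more compressed version of your first argument (it does not spell out the representative-chasing for surjectivity or the detour via the $\theta$-maps), so there is nothing to add.
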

\begin{proof}
	By the previous proposition, $\induced{\phi}\colon \induced{\M_1}\to \induced{\M_2}$ is an injective homomorphism. But as $\phi$ is surjective, so is $\induced{\phi}$, so $\induced{\phi}$ is a bijective homomorphism. By definitions of $\pi_1$, $\pi_2$ and $\induced{\phi}$, we have
	\[(x\phi)\pi_2 = \class{x\phi} = \class{x}\,\induced{\phi} = x\pi_1\induced{\phi}\;,\]
	so the given diagram commutes.
\end{proof}

\subsection{The category \texorpdfstring{$\LMou$}{LMou}}

Now that we have a notion of homomorphisms of local Moufang sets, we can define a category:

\begin{definition}
	The \define{category of local Moufang sets} $\LMou$\notatlink{LMou} is the category with local Moufang sets as objects, and the homomorphisms as defined before as morphisms.
\end{definition}

It is of interesting to observe the relations between $\LMou$, $\Mou$\notatlink{Mou} and $\Set$\notatlink{Set}.

\begin{proposition}\preenum
	\begin{romenumerate}
		\item The mapping $I\colon\Mou\to\LMou\colon\M\mapsto\M$ adding the identity relation as equivalence relation is a full and faithful functor.
		\item The forgetful functor $F\colon\LMou\to\Set\colon \M\mapsto X$ is a faithful functor.
		\item The mapping $Q\colon\LMou\to\Mou\colon\M\mapsto\induced{\M}$ is a functor.
	\end{romenumerate}
\end{proposition}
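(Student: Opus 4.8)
The plan is to verify each of the three statements by reducing it to results already established in this chapter, so the work is almost entirely organizational. For \textnormal{(i)}, I would first check that $I$ is well-defined on objects: a Moufang set $\M$ equipped with the identity relation is a local Moufang set, which is exactly the content of the earlier proposition characterizing when a local Moufang set is a Moufang set (the identity relation makes \ref{axiom:LM1} collapse to \ref{axiom:M1}). On morphisms $I$ sends $\phi$ to $\phi$; by \autoref{prop:hom_mouf_set} a set map $\phi$ between Moufang sets is a homomorphism of Moufang sets if and only if it is a homomorphism of local Moufang sets, so $I(\phi)$ is a legitimate morphism. Since $I$ leaves the underlying map untouched, $I(\id_\M)=\id_{I(\M)}$ and $I$ preserves composition, so $I$ is a functor. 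Fullness and faithfulness both follow from \autoref{prop:hom_mouf_set} as well: the induced map $\Hom_{\Mou}(\M,\M')\to\Hom_{\LMou}(I\M,I\M')$ is, under the identification of a morphism with its underlying set map, the identity, hence a bijection.

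For \textnormal{(ii)}, the forgetful functor $F$ sends $\M=(X,(U_x))$ to $X$ and a homomorphism to its underlying set map; functoriality is immediate because identities and composition in $\LMou$ are computed on the underlying maps. Faithfulness is the observation that a homomorphism of local Moufang sets carries no data beyond the map $\phi$ itself — the group maps $\theta_x$ are uniquely determined by $\phi$ via \autoref{lem:theta_maps} — so if $F(\phi)=F(\psi)$ then $\phi=\psi$ as morphisms.

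For \textnormal{(iii)}, $Q$ sends $\M$ to its quotient Moufang set $\induced{\M}$ (which is a Moufang set, as noted when $\induced{\M}$ was defined) and a homomorphism $\phi$ to the induced map $\induced{\phi}$. That $\induced{\phi}$ is a morphism in $\Mou$ is \autoref{prop:induced_quotient_morphism} combined with \autoref{prop:hom_mouf_set}. For functoriality: $\induced{\id_X}$ is the identity permutation of $\class{X}$, so $Q(\id_\M)=\id_{\induced{\M}}$; and $Q$ respects composition by the identity $\induced{\phi\circ\psi}=\induced{\phi}\circ\induced{\psi}$ recorded in \autoref{rem:induced_inj_map}.

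The genuinely substantive inputs are \autoref{prop:hom_mouf_set}, \autoref{prop:induced_quotient_morphism}, \autoref{lem:theta_maps} and \autoref{rem:induced_inj_map}, all already proved, so there is no real obstacle here. The one point that deserves a sentence of care is the remark that a morphism of local Moufang sets is determined by its underlying set map; this is precisely what makes $F$ and $I$ faithful and is the conceptual heart of all three verifications.
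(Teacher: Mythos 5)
Your proposal is correct and follows essentially the same route as the paper: the paper's proof also rests on \autoref{prop:hom_mouf_set} for fullness in (i), the triviality of the morphism-level action for faithfulness of $I$ and $F$, and \autoref{prop:induced_quotient_morphism} together with \autoref{rem:induced_inj_map} for (iii). Your extra remarks on object-level well-definedness and on $\theta_x$ being determined by $\phi$ are fine additional care but not a different argument.
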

\begin{proof}\preenum
	\begin{romenumerate}
		\item The induced action on morphisms is $I(\phi) = \phi$. Clearly, this means $I$ is faithful. Now assume $\phi\colon \M\to\M'$ is a homomorphism of local Moufang sets with $\M$ and $\M'$ Moufang sets. By \autoref{prop:hom_mouf_set}, $\phi$ is a homomorphism of Moufang sets, so $\phi$ also exists in $\Mou$. Hence $\phi$ is in the image of $I$ and $I$ is full.
		\item The induced action on morphisms is $F(\phi) = \phi$, hence $F$ is faithful.
		\item The induced action on morphisms is $Q(\phi) = \induced{\phi}$. Then $Q$ is a functor by \autoref{prop:induced_quotient_morphism}, \autoref{prop:hom_mouf_set} and \autoref{rem:induced_inj_map}.
		\qedhere
	\end{romenumerate}
\end{proof}


\section{Inverse limits}	
\subsection{Sets with equivalence relation}

Now that we have defined the category of local Moufang sets, we can take a closer look at inverse limits in this category. We consider an inverse system $(\M_i,\phi_{ij})$ of local Moufang sets. If we want to find an inverse limit of this inverse system, we first need a set with equivalence relation on which to act.

\begin{proposition}\label{prop:inverse_limit_set}
	Let $(\M_i,\phi_{ij})$\notatlink{Miphiij} be an inverse system over $I$, assuming $\M_i$ acts on the set with equivalence relation $(X_i,\sim)$. Then for $(x_i)_i,(y_i)_i\in \varprojlim X_i$\notatlink{limXi} the following are equivalent:
	\begin{romenumerate}
		\item $x_i\sim y_i$ for some $i\in I$;
		\item $x_i\sim y_i$ for all $i\in I$.
	\end{romenumerate}
	If we define $(x_i)_i\sim(y_i)_i$ if one of these conditions is satisfied, we get a set with equivalence relation $(\varprojlim X_i,\sim)$.
\end{proposition}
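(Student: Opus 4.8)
The plan is to exploit two features of the transition maps $\phi_{ij}$. First, each $\phi_{ij}$ is the underlying map of a homomorphism of local Moufang sets, so by the very definition of such a homomorphism it \emph{both preserves and reflects} the equivalence relation: for $a,b\in X_i$ one has $a\sim b$ if and only if $a\phi_{ij}\sim b\phi_{ij}$. Second, $(I,\succcurlyeq)$ is directed, so any two indices admit a common upper bound. Recall also that a point $(x_i)_i\in\varprojlim X_i$ satisfies $x_i\phi_{ij}=x_j$ whenever $i\succcurlyeq j$.

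For the equivalence of (i) and (ii), only the implication (i)$\Rightarrow$(ii) needs an argument, as $I\neq\emptyset$. So suppose $x_{i_0}\sim y_{i_0}$ for some $i_0\in I$, and let $j\in I$ be arbitrary. Choose $k\in I$ with $k\succcurlyeq i_0$ and $k\succcurlyeq j$. Applying $\phi_{ki_0}$ to $(x_i)_i$ and $(y_i)_i$ gives $x_k\phi_{ki_0}=x_{i_0}\sim y_{i_0}=y_k\phi_{ki_0}$, and since $\phi_{ki_0}$ \emph{reflects} the equivalence we conclude $x_k\sim y_k$. Then, since $\phi_{kj}$ \emph{preserves} the equivalence, $x_j=x_k\phi_{kj}\sim y_k\phi_{kj}=y_j$. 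As $j$ was arbitrary, this yields (ii).

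Granting this equivalence, it is then immediate that the relation ``$(x_i)_i\sim(y_i)_i$ if $x_i\sim y_i$ for all $i$'' is an equivalence relation on $\varprojlim X_i$: reflexivity follows from $x_i\sim x_i$ for every $i$, symmetry from the symmetry of $\sim$ on each $X_i$, and transitivity from the transitivity of $\sim$ on each $X_i$, all read directly off the characterization in (ii). I expect no real obstacle here; the only point that requires care is that the argument genuinely uses the \emph{reflecting} half of the homomorphism condition (not merely that $\phi_{ij}$ preserves $\sim$), combined with the directedness of $I$, in order to transport the relation from the chosen index $i_0$ down to an arbitrary index $j$ through a common upper bound $k$.
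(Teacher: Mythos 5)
Your argument is correct and coincides with the paper's own proof: both use a common upper bound $k\succcurlyeq i_0,j$ together with the fact that the homomorphisms $\phi_{ki_0}$ and $\phi_{kj}$ preserve and reflect $\sim$ to transport the relation from $i_0$ to an arbitrary $j$, and then note that the equivalence-relation axioms on $\varprojlim X_i$ follow componentwise. Nothing is missing.
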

\begin{proof}
	Take any $(x_i)_i,(y_i)_i\in \varprojlim X_i$. Clearly, if $x_i\sim y_i$ for all $i\in I$, then $x_i\sim y_i$ for some $i\in I$. Now, assume $x_i\sim y_i$ for some $i\in I$. Take any $j\in I$. As $I$ is directed, there is an index $k\in I$ such that $k\succcurlyeq i$ and $k\succcurlyeq j$. Now $\phi_{ki}$ and $\phi_{kj}$ are homomorphisms of local Moufang sets, so
	\begin{align*}
		x_i\sim y_i 	&\iff x_k\phi_{ki}\sim y_k\phi_{ki}\iff x_k\sim y_k \\
				&\iff x_k\phi_{kj}\sim y_k\phi_{kj}\iff x_j\sim y_j\;.
	\end{align*}
	As $j$ was arbitrary, $x_j\sim y_j$ for all $j\in I$.
	
	The relation $\sim$ on $\varprojlim X_i$ is an equivalence relation since $(X_i,\sim)$ is a set with equivalence relation for all $i\in I$.
\end{proof}

Essentially, we have found a construction for the inverse limit in the category of sets with equivalence relation, where morphisms are mappings which map equivalent points to equivalent points and vice-versa.

\begin{definition}
	Let $(\M_i,\phi_{ij})$ be an inverse system in $\LMou$, where each $\M_i$ acts on $(X_i,\sim)$. We define
	\[\varprojlim (X_i,\sim) := (\varprojlim X_i,\sim)\;,\]
	with $(x_i)_i\sim(y_i)_i$ if $x_i\sim y_i$ for some $i\in I$.
\end{definition}

As was the case for sets, it is possible that $\varprojlim X_i$ is empty, so we cannot be sure that $\abs{\class{\varprojlim X_i}}>2$. This is a possible obstruction for the existence of an inverse limit of $(\M_i,\phi_{ij})$.

\subsection{Root groups and \texorpdfstring{$\mu$}{\textit{\textmugreek}}-maps}

The next thing we need for a local Moufang set is a root groups for every $x\in\varprojlim X_i$. Such $x$ is in fact a tuple $(x_i)_i$, and for each $i\succcurlyeq j$, the map $\phi_{ij}$ induces a map $\theta_{x_i,x_j}\colon U_{x_i}\to U_{x_j}$. With these maps, $(U_{x_i},\theta_{x_i,x_j})$ becomes an inverse system of groups. The inverse limit of these groups will be our candidate root group.

\begin{proposition}\label{prop:inverse_limit_Ux}
	Let $(\M_i,\phi_{ij})$ be an inverse system over $I$. For $x=(x_i)_i\in\varprojlim X_i$, we define
	\[U_x := \varprojlim U_{x_i}\;,\]
	the inverse limit of the inverse system $(U_{x_i},\theta_{x_i,x_j})$. Then the following hold:
	\begin{romenumerate}
		\item $U_x$ acts equivalence-preserving on $\varprojlim X_i$;
		\item $U_x$ acts sharply transitively on $\varprojlim X_i\setminus \class{x}$;\label{itm:inverse_limit_Ux2}
		\item the induced action of $U_x$ on $\class{\varprojlim X_i}$ is sharply transitive on $\class{\varprojlim X_i}\setminus\{\class{x}\}$.
	\end{romenumerate}
\end{proposition}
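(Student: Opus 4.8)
The goal is to show that the inverse limit $U_x := \varprojlim U_{x_i}$, acting on $\varprojlim X_i$, satisfies the three claimed properties. The strategy is to reduce everything to the known structure of each $\M_i$, using the explicit description of inverse limits of sets and groups (\autoref{thm:set_inverse_limit} and \autoref{thm:group_inverse_limit}) together with the compatibility of the $\theta$-maps established in \autoref{lem:theta_maps} and \autoref{lem:diagram_thetas}. Concretely, an element of $U_x$ is a compatible tuple $(u_i)_i$ with $u_i\in U_{x_i}$ and $\theta_{x_i,x_j}(u_i)=u_j$ for $i\succcurlyeq j$; by \eqref{eq:morphism_theta} this means exactly that the $u_i$ act compatibly with the projections $p_i\colon\varprojlim X_i\to X_i$, so $(u_i)_i$ defines a genuine permutation of $\varprojlim X_i$ via $(y_i)_i\cdot(u_i)_i := (y_iu_i)_i$. (One must check the right-hand side is again a compatible tuple: $\phi_{ij}$ is a homomorphism of local Moufang sets, so $y_iu_i\phi_{ij} = y_i\phi_{ij}\theta_{x_i,x_j}(u_i) = y_ju_j$.)

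\emph{First}, for the equivalence-preserving property: if $(y_i)_i\sim(z_i)_i$ then $y_i\sim z_i$ for all $i$ by \autoref{prop:inverse_limit_set}, and since each $u_i\in U_{x_i}\leq\Sym(X_i,\sim)$ preserves $\sim$, we get $y_iu_i\sim z_iu_i$ for all $i$, hence $(y_iu_i)_i\sim(z_iu_i)_i$ again by \autoref{prop:inverse_limit_set}. \emph{Second}, for the fixed point and sharp transitivity on $\varprojlim X_i\setminus\class x$: clearly $x\cdot(u_i)_i = (x_iu_i)_i = (x_i)_i = x$ since each $u_i$ fixes $x_i$. For transitivity, take $y=(y_i)_i$ and $z=(z_i)_i$ both not equivalent to $x$; then $y_i\nsim x_i$ and $z_i\nsim x_i$ for all $i$, so by \ref{axiom:LM1} in $\M_i$ there is a unique $u_i\in U_{x_i}$ with $y_iu_i=z_i$. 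The uniqueness is the crucial point: it forces $\theta_{x_i,x_j}(u_i)=u_j$, because $\theta_{x_i,x_j}(u_i)\in U_{x_j}$ and $y_j\theta_{x_i,x_j}(u_i) = y_i\phi_{ij}\theta_{x_i,x_j}(u_i) = y_iu_i\phi_{ij} = z_i\phi_{ij} = z_j$, so by uniqueness in $\M_j$ it equals $u_j$. Hence $(u_i)_i\in U_x$ and $y\cdot(u_i)_i=z$; sharpness follows since an element of $U_x$ fixing $y$ must fix every $y_i$, forcing each $u_i=\id$ by \ref{axiom:LM1} in $\M_i$. \emph{Third}, for the induced action on $\class{\varprojlim X_i}$: here I would invoke that $(\class{X_i},(U_{\class{x_i}}))$ is the quotient Moufang set $\induced{\M_i}$, that $(\induced{\M_i},\induced{\phi_{ij}})$ forms an inverse system in $\Mou$ by \autoref{prop:induced_quotient_morphism}, and that $\class{\varprojlim X_i}$ maps to $\varprojlim\class{X_i}$; the induced action of $(u_i)_i$ is the tuple $(\induced{u_i})_i$, and by the same reduction (now to \ref{axiom:M1} in each $\induced{\M_i}$) one gets that this acts sharply transitively on the complement of $\class x$. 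Alternatively this follows from the first two parts: the induced action fixes $\class x$ and is transitive on the complement since the action upstairs is, and sharpness downstairs follows because $\varprojlim X_i\setminus\class x \to \class{\varprojlim X_i}\setminus\{\class x\}$ is a union of $\class x$-orbits each mapping onto a point.

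\textbf{The main obstacle.} The genuinely delicate step is verifying that the element-wise transitivity data in the $\M_i$ actually \emph{glues} — i.e.\ that the uniquely-determined permutations $u_i$ form a compatible tuple, so that $U_x$ really is transitive on $\varprojlim X_i\setminus\class x$ rather than merely "transitive levelwise". This is exactly where the sharp transitivity axiom \ref{axiom:LM1} earns its keep, via the chain of equalities $y_j\theta_{x_i,x_j}(u_i) = y_iu_i\phi_{ij} = z_j$; the argument is short but it is the conceptual heart, and it is worth displaying it carefully. A secondary subtlety, which I would only flag rather than resolve here, is that $\varprojlim X_i$ (and hence $\class{\varprojlim X_i}$) could be empty or too small, so that it need not itself be the underlying set of a local Moufang set — but the proposition as stated only asserts the three transitivity-type properties of $U_x$, which hold vacuously or genuinely regardless, and the question of whether $\abs{\class{\varprojlim X_i}}>2$ (needed for the full inverse limit to exist in $\LMou$) is handled separately, as the remarks after \autoref{prop:inverse_limit_set} already indicate.
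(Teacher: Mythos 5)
Your parts (i) and (ii) are exactly the paper's proof: the same componentwise action, the same compatibility check through the $\theta$-maps, and the same use of levelwise sharp transitivity to build the unique compatible tuple $(u_i)_i$ (the paper packages your uniqueness argument $y_j\theta_{x_i,x_j}(u_i)=z_j$ as a citation of \autoref{lem:theta_maps}\ref{itm:unique_theta}). For (iii), your primary route is also in substance the paper's: transitivity of the induced action is inherited from (ii), and sharpness is obtained by picking an index $i$ where two induced elements differ and contradicting \ref{axiom:LM1'} in $\M_i$ (equivalently, \ref{axiom:M1} in $\induced{\M_i}$).

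Two cautions about (iii), though. First, your ``alternative'' argument is not valid: sharp transitivity of the induced action on $\class{\varprojlim X_i}\setminus\{\class{x}\}$ does \emph{not} follow from parts (i) and (ii). Sharpness upstairs only says that the stabilizer in $U_x$ of a point $y\nsim x$ is trivial; an element moving $y$ inside its class could a priori fix $\class{y}$ while acting nontrivially on other classes, and nothing in (i)--(ii) rules this out. This is precisely why \ref{axiom:LM1'} is an independent axiom rather than a consequence of \ref{axiom:LM1} (and why \ref{axiom:C1'} appears as a separate hypothesis in \autoref{constr:MUtau}); the phrase about the quotient map being ``a union of $\class{x}$-orbits each mapping onto a point'' is exactly the content of \ref{axiom:LM1'} at the limit level, which is what you are trying to prove. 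Second, if you carry out the ``same reduction'' inside the quotient Moufang sets to get \emph{transitivity} of the induced action, you would produce a compatible tuple $(\induced{u_i})_i$ of induced maps, and such a tuple need not lift to a compatible tuple $(u_i)_i\in U_x$, since inverse limits do not commute with passing to quotients. So the clean combination is the paper's: take transitivity downstairs from (ii), use the levelwise \ref{axiom:LM1'} (via the identification $\class{(y_i)_i}=(\class{y_i})_i$ from \autoref{prop:inverse_limit_set}, so that $\induced{(u_i)_i}=(\induced{u_i})_i$) only for the sharpness.
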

\begin{proof}\preenum
	\begin{romenumerate}
		\item The action of $(u_i)_i$ on $\prod_i X_i$ is given by $(y_i)_i(u_i)_i := (y_iu_i)_i$. Now if $(y_i)_i\in \varprojlim X_i$, we have
			\[y_iu_i\phi_{ij} = y_i\phi_{ij}\theta_{x_i,x_j}(u_i) = y_ju_j\]
			for all $i\succcurlyeq j$, so $(y_i)_i(u_i)_i\in \varprojlim X_i$. Next, assume $(y_i)_i\sim(z_i)_i$, then there is an index $i\in I$ for which $y_i\sim z_i$, hence $y_iu_i\sim z_iu_i$, so $(y_iu_i)_i\sim(z_iu_i)_i$, which means $(u_i)_i$ preserves the equivalence.
		\item Now, take $(y_i)_i,(z_i)_i\in\varprojlim X_i\setminus\class{x}$. For each $i\in I$, we know $y_i,z_i\in X_i\setminus\class{x_i}$, so there 		is a unique $u_i\in U_{x_i}$ such that $y_iu_i=z_i$. Hence, if there is an element mapping $(y_i)_i$ to $(z_i)_i$, it must be $(u_i)_i$. 		We only need to check that $(u_i)_i\in U_x$, in particular, we need to check $\theta_{x_i,x_j}(u_i)=u_j$ for all $i\succcurlyeq j$. But
			\[y_iu_i\phi_{ij} = z_i\phi_{ij} = z_j = y_ju_j = y_i\phi_{ij}u_j\;,\]
			so as $y_i\nsim x_i$, \autoref{lem:theta_maps}\ref{itm:unique_theta} shows that indeed $\theta_{x_i,x_j}(u_i)=u_j$. Hence we found a unique element of $U_x$ mapping $(y_i)_i$ to $(z_i)_i$, so $U_x$ acts sharply transitively on $\varprojlim X_i\setminus\class{x}$.
		\item By \ref{itm:inverse_limit_Ux2}, the induced action of $U_x$ on $\class{\varprojlim X_i}$ is transitive on
			$\class{\varprojlim X_i}\setminus\{\class{x}\}$. Hence, we only need to check the sharpness of the transitivity. By \autoref{prop:inverse_limit_set}, we have $\class{(y_i)_i} = (\class{y_i})_i$ for all $(y_i)_i\in \varprojlim X_i$, which implies that $\induced{(u_i)_i} = (\induced{u_i})_i$ for all $(u_i)_i\in U_x$.
			Now suppose we have two elements $\class{(y_i)_i},\class{(z_i)_i}\in \class{\varprojlim X_i}\setminus\{\class{x}\}$ and two distinct elements $\induced{(u_i)_i},\induced{(v_i)_i}\in \induced{U_x}$ mapping $\class{(y_i)_i}$ to $\class{(z_i)_i}$.
			As $\induced{(u_i)_i}\neq\induced{(v_i)_i}$, there is an $i\in I$ for which $\induced{u_i}\neq\induced{v_i}$, which means there are two distinct elements of $\induced{U_{x_i}}$ mapping $\class{y_i}$ to $\class{z_i}$, contradicting the fact that $\M_i$ is a local Moufang set. Hence the induced action of $U_x$ on $\class{\varprojlim X_i}$ on $\class{\varprojlim X_i}\setminus\{\class{x}\}$ is sharply transitive.
		\qedhere
	\end{romenumerate}
\end{proof}

By the definition of the $\theta$-maps, we could have the following alternative definition for $U_x$:
\begin{align}
	U_x &= \left\{(u_i)_i\in\prod_i U_{x_i}\;\middle\vert\; u_i\phi_{ij}=\phi_{ij}u_j\text{ for all $i\succcurlyeq j$} \right\}\;. \label{eq:inverse_limit_Ux_alt}
\end{align}

Now, we will need to assume that $\abs{\class{\varprojlim X_i}}>2$. We will fix non-equivalent points $0$ and $\infty$ in $\varprojlim X_i$, and construct what should be the $\mu$-maps. To simplify matters, we can choose $0_i$ and $\infty_i$ in each $X_i$ in such a way that $0 = (0_i)_i$ and $\infty = (\infty_i)_i$.

\begin{proposition}
	Let $(\M_i,\phi_{ij})$ be an inverse system over $I$ and assume $\abs{\class{\varprojlim X_i}}>2$. Fix a basis $(0,\infty)$ in $\varprojlim X_i$ and assume $0 = (0_i)_i$ and $\infty = (\infty_i)_i$.
	\begin{romenumerate}
		\item If $x=(x_i)_i$ is a unit, the element $\mu_x:=(\mu_{x_i})_i$ is in $U_0\alpha_xU_0$ and $U_\infty^{\mu_x} = U_0$, where $\alpha_x$ is the unique element of $U_\infty$ mapping $0$ to $x$.\label{itm:inverse_limit_mu}
	\end{romenumerate}
	Now fix a unit $e=(e_i)_i$ and set $\tau:=\mu_e$.
	\begin{romenumerate}[resume]
		\item $U_\infty$ and $\tau$ satisfy \hyperref[axiom:C1]{\textnormal{(C1-2)}}.
		\item For $x=(x_i)_i\nsim0,\infty$, the element $\mu_x:=(\mu_{x_i})_i$ is the $\mu$-map in \autoref{constr:MUtau} for $\M(U_\infty,\tau)$.
	\end{romenumerate}
\end{proposition}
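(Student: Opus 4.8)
The plan is to prove the three parts by reducing everything to coordinatewise statements in the local Moufang sets $\M_i$ and then invoking the uniqueness of $\mu$-maps (\autoref{prop:mu_defin}) in the local pre-Moufang set $\varprojlim\M_i$, whose root-group structure and sharp transitivity are already supplied by \autoref{prop:inverse_limit_Ux}.

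\textbf{Part (i).} Since $x=(x_i)_i$ is a unit, \autoref{prop:inverse_limit_set} forces $x_i\nsim 0_i$ and $x_i\nsim\infty_i$ for every $i$, so every $\mu_{x_i}$ is defined; write $\mu_{x_i}=g_i\alpha_{x_i}h_i$ with $g_i,h_i\in U_{0_i}$ as in \autoref{prop:mu_defin}, where $g_i$ sends $\infty_i$ to $-x_i$ and $h_i$ sends $x_i$ to $\infty_i$. First I would check that the families $g:=(g_i)_i$, $h:=(h_i)_i$ and $\alpha_x:=(\alpha_{x_i})_i$ are coherent, i.e.\ lie in $U_0=\varprojlim U_{0_i}$, respectively $U_\infty=\varprojlim U_{\infty_i}$. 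For $\alpha_x$ this is the third item of \autoref{lem:theta_maps} ($\theta_\infty(\alpha_{x_i})=\alpha_{x_i\phi_{ij}}=\alpha_{x_j}$). For $g$: apply $\phi_{ij}$ to $\infty_i g_i=-x_i$; since $\infty_i\phi_{ij}=\infty_j$, $(-x_i)\phi_{ij}=-(x_i\phi_{ij})=-x_j$ and $\infty_j\nsim 0_j$, \autoref{lem:theta_maps}\ref{itm:unique_theta} yields $\theta_{0_i,0_j}(g_i)=g_j$; the same argument applied to $x_i h_i=\infty_i$ gives $\theta_{0_i,0_j}(h_i)=h_j$. Because composition in $\varprojlim\M_i$ is coordinatewise, $(\mu_{x_i})_i=g\alpha_x h$ belongs to $U_0\alpha_x U_0$ and swaps $0=(0_i)_i$ with $\infty=(\infty_i)_i$. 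As $\varprojlim\M_i$ is a local pre-Moufang set (its root groups satisfy \ref{axiom:LM1} by \autoref{prop:inverse_limit_Ux}), \autoref{prop:mu_defin} identifies $(\mu_{x_i})_i$ as the unique such element, i.e.\ as its $\mu$-map $\mu_x$; this also confirms that $\alpha_x$ is the element named in the statement. For the identity $U_\infty^{\mu_x}=U_0$ I would use the corollary immediately following \autoref{lem:diagram_thetas} (applied to each $\phi_{ij}$): conjugation by $\mu_{x_i}$ intertwines the $\theta$-maps, so $(u_i)_i\mapsto(u_i^{\mu_{x_i}})_i$ carries $\varprojlim U_{\infty_i}$ into $\varprojlim U_{0_i}$, using in each coordinate that $U_{\infty_i}^{\mu_{x_i}}=U_{0_i}$ by \ref{axiom:LM2} in $\M_i$; the reverse inclusion follows by running the same argument with $-x$ in place of $x$, since $\mu_{-x}=\mu_x^{-1}$ (\autoref{lem:mu}\ref{itm:muinv}) and $-x$ is again a unit.

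\textbf{Parts (ii) and (iii).} For (ii), \ref{axiom:C1} and \ref{axiom:C1'} are precisely \autoref{prop:inverse_limit_Ux}\ref{itm:inverse_limit_Ux2} and its third item, applied with $x=\infty$ (and $U_\infty$ fixes $\infty$ because each $U_{\infty_i}$ fixes $\infty_i$). For \ref{axiom:C2}, $\tau=\mu_e=(\mu_{e_i})_i$ acts coordinatewise, so $\infty\tau=(\infty_i\mu_{e_i})_i=(0_i)_i=0\nsim\infty$ and $\infty\tau^2=0\tau=\infty$; in particular the point called ``$0$'' in \ref{axiom:C2} is our fixed $0$. For (iii): by part (i), $U_\infty^\tau=U_\infty^{\mu_e}=U_0$, so the root group that \autoref{constr:MUtau} attaches to $0$ starting from $(U_\infty,\tau)$ is exactly $U_0$, and the element of $U_\infty$ carrying $0$ to $x$ is exactly $\alpha_x=(\alpha_{x_i})_i$; thus the double coset $U_0\alpha_x U_0$ is the same one considered above. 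By \autoref{remark:mu_hua_construction}, the $\mu$-map of $\M(U_\infty,\tau)$ at $x$ is characterized as the unique element of that double coset swapping $0$ and $\infty$, which by part (i) is $(\mu_{x_i})_i$.

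\textbf{Main obstacle.} The only genuine work is the coherence bookkeeping in part (i): verifying that the coordinatewise families $(g_i)_i$, $(h_i)_i$ and $(u_i^{\mu_{x_i}})_i$ are compatible with the connecting $\theta$-maps of the inverse system. Once \autoref{lem:theta_maps}\ref{itm:unique_theta} and the $\mu$-map compatibility corollary after \autoref{lem:diagram_thetas} are in hand this becomes mechanical, and the remainder is just reading off coordinatewise actions and appealing to the uniqueness in \autoref{prop:mu_defin}; no new identities are needed.
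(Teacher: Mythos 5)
Your proof is correct and follows essentially the same route as the paper: coordinatewise decomposition $\mu_{x_i}=g_i\alpha_{x_i}h_i$ with coherence checked via \autoref{lem:theta_maps}\ref{itm:unique_theta}, compatibility of $\mu$-maps with the connecting homomorphisms to get $U_\infty^{\mu_x}=U_0$ (the paper cites \autoref{lem:theta_maps}\ref{itm:morphism_mu} where you cite the corollary after \autoref{lem:diagram_thetas}, which is the same fact), and the uniqueness of the element of $U_0\alpha_xU_0$ swapping $0$ and $\infty$ for part (iii). No gaps; the differences are only in which equivalent lemma is cited.
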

\begin{proof}\preenum
	\begin{romenumerate}
		\item By \autoref{prop:mu_defin}, for each $i\in I$, we have $\mu_{x_i} = g_i\alpha_{x_i}h_i$ with $g_i\in U_{0_i}$ mapping $\infty_i$ to $-x_i$ and $h_i\in U_{0_i}$ mapping $x_i$ to $\infty_i$. Now set $g = (g_i)_i$ and $h = (h_i)_i$. Now for all $i\succcurlyeq j$, we get 
		\[\infty_ig_i\phi_{ij} = (-x_i)\phi_{ij} = -(x_i\phi_{ij}) = -x_j = \infty_jg_j = \infty_i\phi_{ij}g_j\;,\]
		so by \autoref{lem:theta_maps}\ref{itm:unique_theta} we find $g_i\phi_{ij}=\phi_{ij}g_j$. This shows $g\in U_0$. A similar argument shows $h\in U_0$ and $(\alpha_{x_i})_i \in U_\infty$. Finally $0(\alpha_{x_i})_i = (x_i)_i = x$, so we get $(\alpha_{x_i})_i = \alpha_x$.
		
		By \autoref{lem:theta_maps}\ref{itm:morphism_mu}, $\mu_{x_i}\phi_{ij} = \phi_{ij}\mu_{x_j}$ for all $i\succcurlyeq j$. Hence, for any $u = (u_i)_i\in U_\infty$ we get
		\[\mu_{x_i}^{-1}u_i\mu_{x_i}\phi_{ij} = \phi_{ij}\mu_{x_j}^{-1}u_j\mu_{x_j}\;,\]
		so since $u_i^{\mu_{x_i}}\in U_{0_i}$, we have $u^{\mu_x}\in U_0$. This means $U_\infty^{\mu_x} \subset U_0$. As we can repeat the argument with $\mu_{-x} = \mu_x^{-1}$, we also get the other inclusion.
		\item The conditions \ref{axiom:C1} and \ref{axiom:C1'} are part of \autoref{prop:inverse_limit_Ux}. The third condition \ref{axiom:C2} is satisfied as by \ref{itm:inverse_limit_mu}, $\infty\tau = 0\nsim\infty$ and $\infty\tau^2=\infty$.
		\item By \autoref{remark:mu_hua_construction}, a $\mu$-map constructed in \autoref{constr:MUtau} is still the unique element of $U_0\alpha_xU_0$ interchanging $0$ and $\infty$, and as $(\mu_{x_i})_i$ is such an element, it must coincide with $\mu_x$ from the construction.\qedhere
	\end{romenumerate}
\end{proof}

\subsection{Inverse limit of local Moufang sets}

We can now use \autoref{constr:MUtau} to define the inverse limit of an inverse system of local Moufang sets, when it exists.

\begin{theorem}\label{thm:existence_inverse_limit}
	Let $(\M_i,\phi_{ij})$\notatlink{Miphiij} be an inverse system over $I$ and assume $\abs{\class{\varprojlim X_i}}>2$. Fix a basis $(0,\infty)$ in $\varprojlim X_i$ and assume $0 = (0_i)_i$ and $\infty = (\infty_i)_i$. Set $\tau:=\mu_e$ for some $e\neq0,\infty$ and $U:=\varprojlim U_{\infty_i}$.
	Then $\M(U,\tau)$ is a local Moufang set with for each $x=(x_i)_i$ as root group
	\[U_x := \varprojlim U_{x_i}.\]
	Finally, $\M(U,\tau)$ is the inverse limit $\varprojlim \M_i$\notatlink{limMi} of $(\M_i,\phi_{ij})$ with projection maps 
	\[p_j\colon \varprojlim X_i\to X_j\colon(x_i)_i\mapsto x_j\;.\]
\end{theorem}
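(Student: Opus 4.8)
The plan is to verify the claim in two parts: first that $\M(U,\tau)$ is a well-defined local Moufang set, and then that it carries the universal property of the inverse limit $\varprojlim\M_i$ with the stated projection maps. For the first part, I would invoke \autoref{thm:constrMouf}: we already know from the preceding propositions that $U = \varprojlim U_{\infty_i}$ and $\tau = \mu_e$ satisfy \hyperref[axiom:C1]{(C1-2)} (and that the constructed $\mu$-maps are $\mu_x = (\mu_{x_i})_i$), so it remains to show that each Hua map $h_x$ normalizes $U$. Since $h_x = \tau\mu_x = (\mu_{e_i}\mu_{x_i})_i$ is the pointwise product of Hua maps $h_{x_i}$ of $\M_i$, and each $\M_i$ is a local Moufang set so $U_{\infty_i}^{h_{x_i}} = U_{\infty_i}$ by \autoref{cor:construction_equivalentconditions}, conjugating a tuple $(u_i)_i \in U$ by $h_x$ gives $(u_i^{h_{x_i}})_i$, which again lies in $\prod_i U_{\infty_i}$ and satisfies the compatibility condition of \eqref{eq:inverse_limit_Ux_alt} because the $\theta$-maps commute with the Hua maps (\autoref{lem:theta_maps}\ref{itm:morphism_mu} applied to each $\phi_{ij}$). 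Hence $U^{h_x} = U$, and $\M(U,\tau)$ is a local Moufang set whose root group at $x = (x_i)_i$ is $\varprojlim U_{x_i}$ by \autoref{prop:inverse_limit_Ux}.

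For the second part, I would first check that the projection maps $p_j$ are genuinely homomorphisms of local Moufang sets and that they form a cone, i.e.\ $\phi_{ij}\circ p_i = p_j$ for all $i \succcurlyeq j$. That $p_j$ preserves equivalence in both directions is exactly \autoref{prop:inverse_limit_set}; that $U_x p_j \subset p_j U_{x_j}$ follows from the definition of $U_x$ via \eqref{eq:inverse_limit_Ux_alt}, since for $(u_i)_i \in U_x$ and $(y_i)_i \in \varprojlim X_i$ we have $(y_i)_i (u_i)_i p_j = y_j u_j = (y_i)_i p_j \cdot u_j$, with $u_j \in U_{x_j}$. The cone condition $\phi_{ij}\circ p_i = p_j$ is immediate from $x_i\phi_{ij} = x_j$ for elements of $\varprojlim X_i$.

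It then remains to establish the universal property \ref{axiom:IL2}: given any local Moufang set $\N$ acting on $(Z,\sim)$ with compatible homomorphisms $q_i\colon\N\to\M_i$, there is a unique homomorphism $f\colon\N\to\M(U,\tau)$ with $p_i\circ f = q_i$. On underlying sets, the universal property of $\varprojlim X_i$ in $\Set$ (\autoref{thm:set_inverse_limit}) forces $f\colon z \mapsto (zq_i)_i$, and this is well-defined since the $q_i$ are compatible; it preserves equivalence in both directions because each $q_i$ does and by \autoref{prop:inverse_limit_set}. To see $f$ is a homomorphism of local Moufang sets, one checks $U^{\N}_z f \subset f U_{zf}$ for each $z\in Z$: if $u\in U^{\N}_z$ then each $q_i$ yields $\theta^{(i)}_z(u)\in U_{zq_i}$ with $uq_i = q_i\theta^{(i)}_z(u)$, and the compatibility of the $q_i$ with the $\phi_{ij}$ (\autoref{lem:theta_maps}) ensures $(\theta^{(i)}_z(u))_i$ satisfies the condition of \eqref{eq:inverse_limit_Ux_alt}, hence lies in $U_{zf}$; then $uf = f\cdot(\theta^{(i)}_z(u))_i$. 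Uniqueness of $f$ as a map of sets already pins it down, so uniqueness as a homomorphism is automatic.

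The main obstacle I anticipate is bookkeeping rather than conceptual: carefully managing the web of $\theta$-maps $\theta^{(i)}_z$ induced by the various $q_i$ and verifying that the tuples they produce actually satisfy the inverse-system compatibility in \eqref{eq:inverse_limit_Ux_alt}, using that $q_i = q_j\circ(\text{nothing})$ — rather, that $\phi_{ij}\circ q_i$-type relations hold — so that the induced group homomorphisms compose correctly. Everything else reduces to the already-established \autoref{prop:inverse_limit_set}, \autoref{prop:inverse_limit_Ux}, \autoref{thm:constrMouf}, and the set-level universal property, so the proof is essentially an assembly of these pieces.
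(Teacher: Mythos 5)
Your proposal is correct and follows essentially the same route as the paper: establish the construction hypotheses and the condition of \autoref{cor:construction_equivalentconditions} (the paper cites the preceding proposition giving $U^{\mu_x}=U^\tau$ directly, whereas you re-derive the equivalent normalization statement pointwise via Hua maps — only be aware that in \autoref{constr:MUtau} one cannot yet write $h_x=\tau\mu_x$ before \ref{axiom:LM2} is known, though \autoref{lem:gleq} makes your condition interchangeable with the $\mu$-map one), then verify that the $p_j$ are homomorphisms forming a cone, and finally check the universal property with the forced map $z\mapsto(zq_i)_i$, using the $\theta$-maps and the compatibility criterion for tuples exactly as the paper does.
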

\begin{proof}
	By the previous proposition, we can construct $\M(U,\tau)$ and $U^{\mu_x} = U^\tau$ for all units $x$. Hence, by \autoref{cor:construction_equivalentconditions}, $\M(U,\tau)$ is a local Moufang set. This means that we only need to check that it is the inverse limit of $(\M_i,\phi_{ij})$.
	
	We first prove that the maps $p_j$ are homomorphisms of local Moufang sets. By \autoref{prop:inverse_limit_set}, $x\sim y\iff xp_j\sim yp_j$. Next, take $x=(x_i)_i\in\varprojlim X_i$, and take any $u=(u_i)_i\in U_x$. Then one can check that $up_j = p_ju_j$, so we have $U_xp_j\subset p_j U_{x_j}$. This means that $p_j$ is indeed a homomorphism. Finally, as the maps $p_i$ are the projection maps for the inverse limit in $\Set$, we have $\phi_{ij}\circ p_i = p_j$ in $\Set$. By the faithfulness of the forgetful functor $F\colon\LMou\to\Set$, this identity lifts to $\LMou$. This means \ref{axiom:IL1} is satisfied.
	
	We still need to check the universal property \ref{axiom:IL2}. Assume we have a local Moufang set $\M$ with morphisms $q_i\colon\M\to\M_i$ for all $i\in I$ such that $\phi_{ij}\circ q_i = q_j$ for all $i\succcurlyeq j$. We denote the set with equivalence relation of $\M$ by $X$. We need to define a map $\psi\colon X\to\varprojlim X_i$ such that $xq_i = x\psi p_i$. If $x\psi = (y_i)_i$, then $x\psi p_i = y_i$. This means $y_i$ is uniquely determined to be $xq_i$. Hence if there is a homomorphism $\psi$ such that $q_i =  p_i\circ\psi$, it must be given by
	\[\psi\colon\M\to\varprojlim\M_i\colon x\mapsto (xq_i)_i\;.\]
	We need to check that this map is indeed a homomorphism. Firstly, $xq_i\phi_{ij} = xq_j$ for all $i\succcurlyeq j$, so $\psi$ indeeds maps into $\varprojlim X_i$. Secondly, $\psi$ must preserve equivalence and non-equivalence. We can take any $i\in I$, and find for all $x,y\in X$:
	\[x\sim y \iff xq_i\sim yq_i \iff (xq_i)_i\sim (yq_i)_i\iff x\psi\sim y\psi\;.\]
	Finally, we need $\psi$ to send root groups to root groups. Take any $x\in X$ and look at the root group $V_x$ of $x$ in $\M$. Then as each $q_i$ is a homomorphism, there are maps $\theta_{x,i}\colon V_x\to U_{xq_i}$ such that $vq_i = q_i\theta_{x,i}(v)$ for all $v\in V_x$. We now claim that $v\psi = \psi\big(\theta_{x,i}(v)\big)_i$. Indeed, take any $y\in X$, then
	\[yv\psi = (yvq_i)_i = (yq_i\theta_{x,i}(v))_i = (yq_i)_i\big(\theta_{x,i}(v)\big)_i = y\psi\big(\theta_{x,i}(v)\big)_i\;.\]
	The last thing we need to check is if $\big(\theta_{x,i}(v)\big)_i$ is in $U_{x\psi}$. Take any $y\in X\setminus\class{x}$, then $yq_i\nsim xq_i$. Furthermore
	\[yq_i\theta_{x,i}(v)\phi_{ij} = yvq_i\phi_{ij} = yvq_j = yq_j\theta_{x,j}(v) = yq_i\phi_{ij}\theta_{x,j}(v)\;.\]
	By \autoref{lem:theta_maps}\ref{itm:unique_theta}, this implies $\theta_{x,i}(v)\phi_{ij}=\phi_{ij}\theta_{x,j}(v)$, and this holds for all $i\succcurlyeq j$. Hence we find $\big(\theta_{x,i}(v)\big)_i\in U_{x\psi}$.
\end{proof}

All in all, we found that an inverse system of local Moufang sets has an inverse limit whenever $\varprojlim X_i$ has more than $2$ equivalence classes. One useful sufficient condition for this is the following:

\begin{proposition}\label{prop:inverse_limit_surjective}
	Let $(\M_i,\phi_{ij})$ be an inverse system over $I$. If there is some $j\in I$ for which the map $p_j\colon\varprojlim X_i\to X_j\colon (x_i)_i\to x_j$ is surjective, then we have $\abs{\class{\varprojlim X_i}}>2$, and hence $\varprojlim \M_i$ exists.
\end{proposition}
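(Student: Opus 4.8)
The plan is to reduce the statement to \autoref{thm:existence_inverse_limit}, which already guarantees the existence of $\varprojlim\M_i$ as soon as $\abs{\class{\varprojlim X_i}}>2$. So the only thing that needs to be shown is that $\class{\varprojlim X_i}$ has more than two elements, and the surjectivity of some $p_j$ is exactly the tool to produce enough points.

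First I would use that each $\M_j$ is itself a local Moufang set, so by definition $\abs{\class{X_j}}>2$; hence I can pick three elements $a_j,b_j,c_j\in X_j$ that are pairwise non-equivalent. Since $p_j\colon\varprojlim X_i\to X_j$ is surjective, I can lift these to elements $a,b,c\in\varprojlim X_i$ with $ap_j=a_j$, $bp_j=b_j$, $cp_j=c_j$. The key observation is then that \autoref{prop:inverse_limit_set} characterizes the equivalence on $\varprojlim X_i$ coordinatewise: two tuples are equivalent iff their $j$-th coordinates are equivalent. Therefore $a\nsim b$, $a\nsim c$ and $b\nsim c$, so $\class{a},\class{b},\class{c}$ are three distinct classes and $\abs{\class{\varprojlim X_i}}>2$. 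Applying \autoref{thm:existence_inverse_limit} then gives that $\varprojlim\M_i$ exists (in particular $\varprojlim X_i$ is nonempty, which is automatic once it has three equivalence classes).

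There is essentially no hard step here: everything is a direct consequence of the coordinatewise description of $\sim$ on the inverse limit and of \autoref{thm:existence_inverse_limit}. The only point to be a little careful about is that surjectivity of $p_j$ is used to lift \emph{three} chosen representatives simultaneously — but this is immediate since $p_j$ is surjective as a map of sets and we lift the three points one at a time. I would keep the proof to a few lines and not belabor it.
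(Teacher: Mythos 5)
Your proof is correct and follows essentially the same route as the paper: both arguments derive $\abs{\class{\varprojlim X_i}}>2$ from $\abs{\class{X_j}}>2$ using the surjectivity of $p_j$ together with the coordinatewise description of $\sim$ from \autoref{prop:inverse_limit_set}, and then invoke \autoref{thm:existence_inverse_limit}. The only cosmetic difference is that the paper phrases the counting step by noting that $\pi_j\circ p_j$ is a surjective, class-preserving map onto $\class{X_j}$, whereas you lift three pairwise non-equivalent representatives explicitly; these are the same argument.
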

\begin{proof}
	Denote $\pi_j$ for the natural surjection $\M_j\to\induced{\M_j}\colon x\to\class{x}$. If $p_j$ is surjective, then also $\pi_j\circ p_j$ is surjective. But as $\M_j$ is a local Moufang set, $\abs{\class{X_j}}>2$. By \autoref{prop:inverse_limit_set}, $p_j$ preserves equivalence, and by definition $\pi_j$ also preserves equivalence. Hence $\pi_j\circ p_j$ maps equivalence classes of $\varprojlim X_i$ to equivalence classes of $\class{X_j}$. But equivalence classes in $\class{X_j}$ are singletons. Hence, as $\pi_j\circ p_j$ is surjective, $\abs{\class{X_j}}>2$ implies that $\abs{\class{\varprojlim X_i}}>2$.
\end{proof}

\subsection{Some special cases}

As in the categories of sets, groups..., it is common for an inverse system to be surjective (see \autoref{def:surj_inv_system}). In this case, the quotient Moufang sets of all local Moufang sets are isomorphic.

\begin{proposition}\label{prop:surjective_inv_sys}
	Let $(\M_i,\phi_{ij})$ be a surjective inverse system over $I$. Then $\induced{\M_i}\cong\induced{\M_j}$ for all $i,j\in I$. If we fix some $i\in I$, then we can define surjective homomorphisms $\rho_j\colon\M_j\to\induced{\M_i}$ such that $\rho_k\circ\phi_{jk} = \rho_j$ for all $j\succcurlyeq k$.
\end{proposition}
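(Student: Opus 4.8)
The plan is to exploit the fact that a surjective homomorphism of local Moufang sets induces an isomorphism on the quotient Moufang sets, which was already established in \autoref{prop:surjective_induces_iso}. Since the inverse system is surjective, every structure map $\phi_{jk}\colon\M_j\to\M_k$ (for $j\succcurlyeq k$) is a surjective homomorphism, so by \autoref{prop:surjective_induces_iso} the induced map $\induced{\phi_{jk}}\colon\induced{\M_j}\to\induced{\M_k}$ is an isomorphism. Now, given any two indices $i,j\in I$, use that $(I,\succcurlyeq)$ is directed to find $k\in I$ with $k\succcurlyeq i$ and $k\succcurlyeq j$; then $\induced{\M_i}\cong\induced{\M_k}\cong\induced{\M_j}$ via the isomorphisms $\induced{\phi_{ki}}$ and $\induced{\phi_{kj}}$ (and their inverses). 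This proves the first claim.

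For the second claim, fix $i\in I$. I would like to define, for each $j\in I$, a surjective homomorphism $\rho_j\colon\M_j\to\induced{\M_i}$. The natural candidate is to first take $j$ and $i$, choose via directedness an index $k_j\succcurlyeq i,j$, and compose $\M_j$ with the surjection obtained by mapping up to $k_j$ and then across: more precisely, set $\rho_j := \pi_j\circ(\induced{\phi_{k_j j}})^{-1}\circ\induced{\phi_{k_j i}}$ where $\pi_j\colon\M_j\to\induced{\M_j}$ is the canonical quotient map. One must check this is well-defined independently of the choice of $k_j$: given two upper bounds $k,k'$ of $\{i,j\}$, pick a common upper bound $\ell\succcurlyeq k,k'$ and use the cocycle identity $\induced{\phi_{jk}}\circ\induced{\phi_{k\ell}} = \induced{\phi_{j\ell}}$ together with the functoriality $\induced{\phi\circ\psi} = \induced{\phi}\circ\induced{\psi}$ from \autoref{rem:induced_inj_map} to see that both choices give the same composite isomorphism $\induced{\M_j}\to\induced{\M_i}$. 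Since $\pi_j$ is surjective (\autoref{prop:quotientMS}) and the rest are isomorphisms, $\rho_j$ is a surjective homomorphism.

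It remains to verify the compatibility $\rho_k\circ\phi_{jk} = \rho_j$ for all $j\succcurlyeq k$. Here I would unwind the definitions: $\rho_k\circ\phi_{jk}$ first applies $\phi_{jk}\colon\M_j\to\M_k$, then $\pi_k$, then the canonical quotient isomorphism $\induced{\M_k}\to\induced{\M_i}$. Using \autoref{prop:surjective_induces_iso} (the commuting square $\pi_k\circ\phi_{jk}$ wait — rather $\induced{\phi_{jk}}\circ\pi_j = \pi_k\circ\phi_{jk}$, which is exactly the commuting square in that proposition, read with the roles $\M_1=\M_j$, $\M_2=\M_k$), we can replace $\pi_k\circ\phi_{jk}$ by $\induced{\phi_{jk}}\circ\pi_j$. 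Then the composite becomes $\pi_j$ followed by $\induced{\phi_{jk}}$ followed by the canonical isomorphism $\induced{\M_k}\to\induced{\M_i}$; by the cocycle/functoriality identities this equals the canonical isomorphism $\induced{\M_j}\to\induced{\M_i}$ composed with $\pi_j$, which is precisely $\rho_j$.

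The main obstacle I anticipate is purely bookkeeping: making the "canonical isomorphism $\induced{\M_j}\to\induced{\M_i}$" genuinely canonical, i.e.\ independent of all the auxiliary upper-bound choices, and then checking the compatibility squares commute. None of this is deep — it is the standard argument that a surjective inverse system has a well-defined "common quotient" — but one has to be careful to invoke functoriality of the $\M\mapsto\induced{\M}$ construction on morphisms (\autoref{rem:induced_inj_map}) and the cocycle condition for the $\phi_{ij}$ consistently, rather than just waving at it. A clean way to present it is to first prove a small lemma: for any $j$ there is a unique isomorphism $\sigma_j\colon\induced{\M_j}\to\induced{\M_i}$ such that for every $k$ with $k\succcurlyeq i,j$ one has $\sigma_j\circ\induced{\phi_{kj}} = \sigma_i\circ\induced{\phi_{ki}}$ with $\sigma_i=\id$; existence and uniqueness follow from directedness plus functoriality. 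Then set $\rho_j := \pi_j\circ\sigma_j$ and the two required properties fall out immediately from \autoref{prop:surjective_induces_iso} and the defining property of $\sigma_j$.
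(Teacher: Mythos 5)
Your argument is correct and follows essentially the same route as the paper: both parts rest on \autoref{prop:surjective_induces_iso}, directedness of $I$, independence of the auxiliary upper bound checked via a common upper bound $\ell$ and functoriality of $\M\mapsto\induced{\M}$, and the commuting square $\pi_k\circ\phi_{jk}=\induced{\phi_{jk}}\circ\pi_j$ for the compatibility $\rho_k\circ\phi_{jk}=\rho_j$. The only blemish is notational: in your definition $\rho_j := \pi_j\circ(\induced{\phi_{k_j j}})^{-1}\circ\induced{\phi_{k_j i}}$ you write the composite in diagrammatic (left-to-right) order while elsewhere you read $\circ$ in the standard order, so for consistency with the paper it should be written $\rho_j := \induced{\phi_{k_j i}}\circ\induced{\phi_{k_j j}}^{-1}\circ\pi_j$; the intended map is clearly the right one.
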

\begin{proof}
	If $i\succcurlyeq j$, the surjective homomorphism $\phi_{ij}$ induces an isomorphism $\induced{\phi_{ij}}$ between $\induced{\M_i}$ and $\induced{\M_j}$ by \autoref{prop:surjective_induces_iso}, so if $i$ and $j$ are comparable, $\induced{\M_i}\cong\induced{\M_j}$. Now if $i$ and $j$ are not comparable, there is an index $k\in I$ such that $k\succcurlyeq i$ and $k\succcurlyeq j$. By the previous case, we get $\induced{\M_i}\cong\induced{\M_k}\cong\induced{\M_j}$.
	
	For any $i\in I$, we denote the natural projection of $\M_i$ onto the quotient Moufang set $\induced{\M_i}$ by $\pi_i$. Now, fix $i\in I$ and take any $j\in I$. As $I$ is directed, there is an index $k\in I$ such that $k\succcurlyeq i$ and $k\succcurlyeq j$. Now both $\induced{\phi_{ki}}$ and $\induced{\phi_{kj}}$ are isomorphisms, so we can define $\rho_j := \induced{\phi_{ki}}\circ\induced{\phi_{kj}}^{\,-1}\circ\pi_j$. A priori, the map $\rho_j$ can depend on the choice of $k$, but we claim this is not the case. If $k'\succcurlyeq i$ and $k'\succcurlyeq j$, we could find $\ell\in I$ such that $\ell\succcurlyeq k$ and $\ell\succcurlyeq k'$, and then we get
	\begin{align*}
		\induced{\phi_{ki}}\circ\induced{\phi_{kj}}^{\,-1} &= \induced{\phi_{ki}}\circ\induced{\phi_{\ell k}}\circ\induced{\phi_{\ell k}}^{\,-1} \circ\induced{\phi_{kj}}^{\,-1} \\
		&= \induced{\phi_{ki}\circ\phi_{\ell k}}\circ\induced{\phi_{kj}\circ\phi_{\ell k}}^{\,-1} = \induced{\phi_{\ell i}}\circ\induced{\phi_{\ell j}}^{\,-1}
	\end{align*}
	and similarly $\induced{\phi_{k'i}}\circ\induced{\phi_{k'j}}^{\,-1} = \induced{\phi_{\ell i}}\circ\induced{\phi_{\ell j}}^{\,-1}$, so we find
	\[\induced{\phi_{ki}}\circ\induced{\phi_{kj}}^{\,-1}\circ\pi_j = \induced{\phi_{k'i}}\circ\induced{\phi_{k'j}}^{\,-1}\circ\pi_j\;,\]
	and hence $\rho_j$ does not depend on the choice of $k$. As $\rho_j$ is the composition of isomorphisms and one surjection, it is surjective.
	
	Finally, assume $j\succcurlyeq k$. Now there is an index $\ell\in I$ such that $\ell\succcurlyeq j$ and $\ell\succcurlyeq i$. Hence also $\ell\succcurlyeq k$ and $\phi_{\ell k} = \phi_{jk}\circ\phi_{\ell j}$. We find
	\begin{align*}
		\rho_j  &= \induced{\phi_{\ell i}}\circ\induced{\phi_{\ell j}}^{\,-1}\circ\pi_j 
			= \induced{\phi_{\ell i}}\circ\induced{\phi_{\ell j}}^{\,-1}\circ\induced{\phi_{jk}}^{\,-1}\circ\induced{\phi_{jk}}\circ\pi_j \\
			&= \induced{\phi_{\ell i}}\circ\induced{\phi_{\ell k}}^{\,-1}\circ\pi_k\circ\phi_{jk}
			= \rho_k\circ\phi_{jk}\;,
	\end{align*}
	proving the final statement.
\end{proof}

This means we could extend $(\M_i,\phi_{ij})$ by adding a minimal element to the inverse system. Essentially, we can add a new symbol $0$ to $I$ such that $j\succcurlyeq 0$ for all $j\in I$, and defining $\M_0 := \induced{\M_i}$ and $\phi_{j0} := \rho_j$ for all $j\in I$.

Now, if we also assume $I$ to have a cofinal sequence, then $\varprojlim \M_i$ always exists:

\begin{theorem}\label{thm:cofinal_inverse_limit}
	Let $(\M_i,\phi_{ij})$ be a surjective inverse system over a directed set $I$ with a cofinal sequence $i_1\preccurlyeq i_2\preccurlyeq\cdots$. Then \[p_j\colon \varprojlim X_i\to X_j\]
	is surjective for all $j\in I$. In particular, $\abs{\class{\varprojlim X_i}}>2$ and hence $\varprojlim \M_i$ exists.
\end{theorem}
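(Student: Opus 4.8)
The plan is to reduce the statement to the set-theoretic results already established, namely \autoref{thm:inverse_limit_surjections} together with \autoref{prop:inverse_limit_surjective}. The first step is to pass to the underlying inverse system of sets. Applying the forgetful functor $F\colon\LMou\to\Set$ to $(\M_i,\phi_{ij})$ yields an inverse system $(X_i,\phi_{ij})$ in $\Set$; since a surjective homomorphism of local Moufang sets is by definition one whose underlying set map is surjective (cf.\ \autoref{cor:morphism_surj}), each $\phi_{ij}\colon X_i\to X_j$ is surjective, so $(X_i,\phi_{ij})$ is a surjective inverse system in $\Set$ in the sense of \autoref{def:surj_inv_system}. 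Moreover every $X_i$ is nonempty, since $\abs{\class{X_i}}>2$.

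Next I would invoke \autoref{thm:inverse_limit_surjections}: as $(I,\succcurlyeq)$ has a cofinal sequence, the inverse limit $\varprojlim X_i$ computed in $\Set$ is nonempty and all projection maps $p_j\colon\varprojlim X_i\to X_j$ are surjective. By \autoref{prop:inverse_limit_set} and the definition of $\varprojlim(X_i,\sim)$, the underlying set of the inverse limit in the category of sets with equivalence relation is exactly this $\varprojlim X_i$ and the projection maps coincide, so the surjectivity of each $p_j$ holds in the form needed here.

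Finally, having exhibited a $j$ (in fact every $j$) for which $p_j$ is surjective, \autoref{prop:inverse_limit_surjective} immediately gives $\abs{\class{\varprojlim X_i}}>2$, and hence, by \autoref{thm:existence_inverse_limit}, the inverse limit $\varprojlim\M_i$ exists. There is essentially no obstacle to overcome: the theorem is a packaging of \autoref{thm:inverse_limit_surjections} and \autoref{prop:inverse_limit_surjective}, and the only points needing a line of justification are that surjectivity of a homomorphism of local Moufang sets agrees with surjectivity of the underlying set map, and that the inverse limit in the category of sets with equivalence relation has the expected underlying set.
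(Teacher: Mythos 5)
Your proposal is correct and follows the same route as the paper: the paper's proof consists precisely of invoking \autoref{thm:inverse_limit_surjections} for the surjectivity of the $p_j$ and then \autoref{prop:inverse_limit_surjective} for the existence of $\varprojlim\M_i$. The extra justifications you supply (surjectivity of $\phi_{ij}$ on underlying sets, nonemptiness of the $X_i$, identification of the underlying set of the limit) are sound and merely make explicit what the paper leaves implicit.
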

\begin{proof}
	The surjectivity of $p_j$ follows from \autoref{thm:inverse_limit_surjections}. By \autoref{prop:inverse_limit_surjective} we now get the existence of $\varprojlim \M_i$.
\end{proof}

In particular, if $I = \{1,2,3,\ldots\}$ with the usual order, surjective inverse systems always have inverse limits.
	\chapter[Special local Moufang sets]{Special local\\ Moufang sets}\label{chap:chap5_special}
	Our final theoretical chapter introduces special local Moufang sets, similar to special Moufang sets. We are able to determine conditions that ensure unique $k$-divisibility of elements of the root groups, as well as the entire root groups. We also study special local Moufang sets with abelian root groups, showing that $\mu$-maps are involutions if there are enough classes of units. Finally, we introduce a paired structure in local Moufang sets, which is a first step towards constructing a Jordan pair in \autoref{chap:chap7_jordan}.

\section{Definition and \texorpdfstring{$k$}{\emph{k}}-divisibility}	
\subsection{Definition and immediate consequences}

We recall that a Moufang set is called special if $\til x = -x$ for all $x \in U^*$.
This property was introduced in the context of abstract rank one groups by F.~Timmesfeld \cite[p.~2]{TimmesfeldAR1G},
and has been thoroughly investigated for Moufang sets \cite{DMSegevTentSpecialMS}.
It is a difficult open problem whether special Moufang sets always have abelian root groups.
The converse, namely that proper Moufang sets with abelian root groups are always special, has been shown by Y.~Segev \cite{SegevAbelianSpecial}. We now generalize this notion to the theory of local Moufang sets. In this chapter, we will always assume we have a local Moufang set $\M$ with fixed basis $(0,\infty)$ and a fixed $\tau$, along with all the notation we introduced in \autoref{chap:chap2_definitions}.

\begin{definition}
	A local Moufang set $\M$ is called \define{special} if for all units $x\in X$, $\til x=-x$.
\end{definition}

This is equivalent to
\[(-x)\tau=-(x\tau)\text{ for all units $x\in X$.}\]
Some basic properties follow immediately from \autoref{lem:mu}:

\begin{proposition}\label{prop:specialmu}
	Let $x\in X$ be a unit in a special local Moufang set. Then
	\begin{romenumerate}
		\item $(-y)\mu_x = -(y\mu_x)$ for all units $y\in X$;
		\item $\mu_x = \alpha_x\alpha_{-x\tau^{-1}}^\tau\alpha_x$;\label{prop:specialmu-ii}
		\item $-x = x\mu_x = x\mu_{-x}$;\label{prop:specialmu-iii}
		\item $\mu_x = \alpha_x\alpha_x^{\mu_{\pm x}}\alpha_x$;
		\item $\mu_{-x} = \alpha_x\mu_{-x}\alpha_x\mu_{-x}\alpha_x$\label{prob:specialmu-mu-x}.
	\end{romenumerate}
\end{proposition}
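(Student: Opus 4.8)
The statement to prove is \autoref{prop:specialmu}, consisting of five items about $\mu$-maps in a special local Moufang set. Let me work through a proof plan.

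\textbf{Plan for the proof.} The proof will derive all five items from \autoref{lem:mu} together with the defining special property $\til x = -x$ (equivalently $(-x)\tau = -(x\tau)$ for all units $x$).

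\textbf{Item (i).} By \autoref{lem:mu}\ref{itm:tilmu}, we have $\til x = -((-x)\mu_x)$, and specialness gives $\til x = -x$, so $(-x)\mu_x = x$; more generally we want $(-y)\mu_x = -(y\mu_x)$ for all units $y$. The plan is to use \autoref{lem:mu}\ref{itm:mutau} with $\tau$ replaced by $\mu_x$ (valid since $\mu_x$ is a $\mu$-map and all these identities are independent of the choice of $\tau$, or hold for $h_{y,\mu_x}$): applying \autoref{lem:mu}\ref{itm:tilmu} in the local Moufang set with $\tau := \mu_x$ gives $\widetilde{y}^{\,(\mu_x)} = -((-y)\mu_x)$ where $\widetilde{y}^{\,(\mu_x)} = (-(y\mu_x^{-1}))\mu_x$. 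But by \autoref{lem:mu} part (vii), $\til y$ does not depend on $\tau$, so $\widetilde{y}^{\,(\mu_x)} = \til y = -y$, hence $(-y)\mu_x = -(y\mu_x)$. (One must be slightly careful: replacing $\tau$ by $\mu_x$ requires $y$ to remain a unit with respect to the same basis, which holds since $\mu_x$ fixes $\{0,\infty\}$ setwise.)

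\textbf{Items (ii)--(v).} For (ii), start from \autoref{lem:mu}\ref{itm:muform2}: $\mu_x = \alpha_x \alpha_{-(x\tau^{-1})}^\tau \alpha_{-\til x}$. Since $\M$ is special, $\til x = -x$, so $\alpha_{-\til x} = \alpha_x$, giving $\mu_x = \alpha_x \alpha_{-(x\tau^{-1})}^\tau \alpha_x$, which is (ii). For (iii): by \autoref{lem:mu}\ref{itm:tilmu} and specialness, $-x = \til x = -((-x)\mu_x)$, so $(-x)\mu_x = x$, i.e.\ $x = (-x)\mu_x$; replacing $x$ by $-x$ (still a unit, and $\mu_{-x} = \mu_x^{-1}$ by \autoref{lem:mu}\ref{itm:muinv}) and using $\mu_x^{-1} = \mu_{-x}$ gives the chain $-x = x\mu_{-x}$; and applying $\mu_x$ to (iii) with $-x$ in place of $x$, or directly noting $x\mu_x = -((-(-x))\mu_x)$... — here I would just observe that $(-x)\mu_x = x$ combined with part (i) gives $x\mu_x = -((-x)\mu_x) \cdot$ wait, better: $x\mu_x = -(-x)\mu_x = -((-x)\mu_x) = -x$ by part (i). So $x\mu_x = -x$, and symmetrically $x\mu_{-x} = -x$. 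For (iv): substitute $\tau = \mu_{\pm x}$ into (ii) (or into \autoref{lem:mu}\ref{itm:muform2}), using that $\mu_x$ is independent of $\tau$ and that $x\mu_{\pm x}^{-1} = x\mu_{\mp x} = -x$ by (iii), so $-(x\mu_{\pm x}^{-1}) = x$, hence $\alpha_{-(x\mu_{\pm x}^{-1})}^{\mu_{\pm x}} = \alpha_x^{\mu_{\pm x}}$, yielding $\mu_x = \alpha_x \alpha_x^{\mu_{\pm x}} \alpha_x$. For (v): take (iv) with $-x$ in place of $x$: $\mu_{-x} = \alpha_{-x}\alpha_{-x}^{\mu_{\pm x}}\alpha_{-x}$... that's not quite the claimed form. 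Instead I would start from \autoref{lem:mu}\ref{itm:muform3}, which reads $\mu_{-x} = \alpha_{-\til x}\mu_{-x}\alpha_x\mu_{-x}\alpha_{\til{-x}}$, and apply specialness: $\til x = -x$ so $\alpha_{-\til x} = \alpha_x$, and $\til{-x} = -(-x) = x$ so $\alpha_{\til{-x}} = \alpha_x$, giving exactly $\mu_{-x} = \alpha_x\mu_{-x}\alpha_x\mu_{-x}\alpha_x$.

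\textbf{Main obstacle.} The routine substitutions are straightforward; the one point requiring genuine care is item (i), specifically justifying the substitution $\tau \mapsto \mu_x$ in the identities of \autoref{lem:mu} and confirming that ``$y$ is a unit'' and the meaning of $\til y$ are unchanged under this substitution. This hinges on \autoref{lem:mu} parts (vi) and (vii) (independence of $\til{\phantom{x}}$ from $\tau$) and on $\mu_x$ fixing the basis $\{0,\infty\}$ setwise so that the notion of unit is preserved. Once that is pinned down, everything else is a direct application of the already-established $\mu$-map identities with $\til x$ rewritten as $-x$.
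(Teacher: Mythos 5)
Your proposal is correct and follows essentially the same route as the paper: every item is obtained from \autoref{lem:mu} (parts \ref{itm:tilmu}, \ref{itm:muform2}, \ref{itm:muform3} and the independence of $\til{\phantom{x}}$ from $\tau$) combined with $\til x=-x$, including the substitution $\tau=\mu_{\pm x}$ for (iv) and the $\tau$-independence of $\til y$ for (i). The only cosmetic difference is that you derive $x\mu_x=-x$ in (iii) via item (i) rather than via $\til(-x)$ as in the paper, and your passing mention of \autoref{lem:mu}\ref{itm:mutau} in (i) is unnecessary; the argument you actually run is the paper's.
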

\begin{proof}\preenum
	\begin{romenumerate}
		\item Immediate from the fact that $\til y$ does not depend on the choice of $\tau$ and the definition of special.
		\item This follows immediately from \autoref{lem:mu}\ref{itm:muform2}.
		\item By \autoref{lem:mu}\ref{itm:tilmu}, we have $-x = \til x = -((-x)\mu_x)$, so $x=(-x)\mu_x$ and hence $-x = x\mu_{-x}$. Secondly, we have
		\[x = \til(-x) = (-(-x)\mu_{-x})\mu_x\;,\]
		so $-x = x\mu_{-x} = -(-x)\mu_{-x}$, and hence $-x = x\mu_x$.
		\item Take $\tau = \mu_{\pm x}$ in \ref{prop:specialmu-ii}, and use \ref{prop:specialmu-iii}.
		\item This is a consequence of \autoref{lem:mu}\ref{itm:muform3} and the definition of special.
		\qedhere
	\end{romenumerate}
\end{proof}

Many identities involving $\mu$-maps become simplified for special local Moufang sets. Another important identity for special local Moufang sets is the following.

\begin{lemma}\label{lem:specialsum}
	If $x,y\in X$ are units in a special local Moufang set, and $x\alpha_y$ is a unit. Then
	\[x\mu_{x\alpha_y} = (-y)\alpha_{-x}\alpha_{x\mu_y}\alpha_{-y}\;.\]
\end{lemma}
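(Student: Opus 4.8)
The identity to prove is $x\mu_{x\alpha_y} = (-y)\alpha_{-x}\alpha_{x\mu_y}\alpha_{-y}$, where $x,y$ and $x\alpha_y$ are units. The plan is to reduce this to the sum formula of \autoref{prop:sumform} together with \autoref{lem:mu}\ref{itm:muform2}. Recall from \autoref{prop:sumform} that, setting $z := x\tau^{-1}\alpha_{-(y\tau^{-1})}\tau = x\alpha_{-y}\mu_y\alpha_{\til y}$, we have $\mu_y\mu_z\mu_{-x} = \mu_{y\alpha_{-x}}$, and that $\til z = y\alpha_{-x}\mu_x\alpha_{\til x}$. In the special case, $\til x = -x$ and $\til y = -y$, so these simplify to $z = x\alpha_{-y}\mu_y\alpha_{-y}$ and $\til z = y\alpha_{-x}\mu_x\alpha_{-x}$.

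First I would replace the roles of $x$ and $y$: since the hypotheses are symmetric in the sense that $x\alpha_y$ is a unit iff $(-y)\alpha_{-x}$ relates correctly (one must check $y\alpha_{-x}$ is a unit, which holds because $\mu_x$ preserves units and applying \autoref{prop:specialmu}\ref{prop:specialmu-iii}-type reasoning), I want to apply \autoref{prop:sumform} with $x$ replaced by $-x$ and $y$ replaced by $y$, or some such substitution, so that the right-hand side expression $(-y)\alpha_{-x}\alpha_{x\mu_y}\alpha_{-y}$ becomes recognizable as the "$\til z$" or "$z$" of that proposition. Concretely, $(-y)\alpha_{-x}\alpha_{x\mu_y}\alpha_{-y} = 0\cdot\alpha_{-y}\alpha_{-x}\alpha_{x\mu_y}\alpha_{-y}$; using \autoref{prop:specialmu}\ref{prop:specialmu-ii}, $\mu_{x\mu_y}$ type expansions, or better, rewriting $\alpha_{x\mu_y} = \mu_y^{-1}\alpha_x^{\mu_y}\mu_y$-flavoured manipulations via \autoref{lem:huaAut}, I want to identify this string of $\alpha$'s and one $\mu_y$ with the element $\til z$ (with an appropriate substitution of the parameters). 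Then \autoref{prop:sumform} gives $\til z = y\alpha_{-x}\mu_x\alpha_{\til x}$, and in the special case $\til z = y\alpha_{-x}\mu_x\alpha_{-x}$, which should match $x\mu_{x\alpha_y}$ after noting $x\mu_{x\alpha_y} = x\mu_{x\cdot\alpha_y}$ and expanding $\mu_{x\alpha_y}$ with \autoref{lem:mu}\ref{itm:muform2} and \autoref{prop:specialmu}.

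The cleanest route is probably: start from \autoref{prop:sumform} applied with the substitution $x\mapsto -x$, so $z' := (-x)\alpha_{-y}\mu_y\alpha_{-y}$ and $\til z' = y\alpha_{x}\mu_{-x}\alpha_{x}$ (using $\til(-x) = x$), with final relation $\mu_y\mu_{z'}\mu_{x} = \mu_{y\alpha_{x}}$. One then computes $x\mu_{x\alpha_y}$ directly: apply \autoref{prop:specialmu}\ref{prop:specialmu-iii} to get $x\mu_{x\alpha_y} = $ something, or use \autoref{lem:mu}\ref{itm:muform2} to write $\mu_{x\alpha_y} = \alpha_{x\alpha_y}\alpha_{-(x\alpha_y\tau^{-1})}^\tau\alpha_{-\til(x\alpha_y)}$ and evaluate at $x$. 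Since $x = 0\cdot\alpha_x$, and $\alpha_x\alpha_{x\alpha_y} = \alpha_x\alpha_{x}\alpha_y$ is not quite a group identity — one has to be careful that $\alpha_{x\alpha_y}$ is the element of $U_\infty$ sending $0$ to $x\alpha_y$, which is $\alpha_x\alpha_y$ itself — so $x\alpha_{x\alpha_y}^{-1} = x(\alpha_x\alpha_y)^{-1} = 0\alpha_y^{-1} = -y$, giving a promising start. I would then chase the remaining factors and invoke \autoref{prop:specialmu}\ref{prop:specialmu-iii} (namely $-(x\alpha_y) = x\alpha_y\mu_{x\alpha_y}$ is \emph{not} quite it, rather $x\alpha_y\mu_{x\alpha_y}=-(x\alpha_y)$) and \autoref{lem:huaAut} as needed.

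The main obstacle I expect is bookkeeping: matching the precise $\alpha$-strings on both sides requires repeatedly using that $\alpha$'s multiply inside $U_\infty$ as $\alpha_a\alpha_b = \alpha_{0\alpha_a\alpha_b}$ but that $\alpha_{-(a\tau^{-1})}^\tau$ terms sit in $U_0$ and do not combine so freely, plus the substitutions $\til x = -x$, $\til y = -y$, $\til(x\alpha_y) = -(x\alpha_y)$ must be applied consistently and the $\tau$-dependence of the Hua-type terms tracked. I would organize the computation as a chain of equalities, at each step citing exactly one of \autoref{prop:sumform}, \autoref{lem:mu}\ref{itm:muform2}, \autoref{prop:specialmu}\ref{prop:specialmu-ii}--\ref{prop:specialmu-iii}, or \autoref{lem:huaAut}, rather than trying to do several manipulations at once; this keeps each step verifiable and isolates where the specialness hypothesis $\til x=-x$ is used.
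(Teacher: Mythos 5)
Your overall strategy---feed the specialness simplifications into \autoref{prop:sumform}---is indeed the route the paper takes, but as written your plan is missing the two ideas that actually make it close. First, the substitution that produces $\mu_{x\alpha_y}$ is $x'=x\alpha_y$, $y'=y$ (note $x'\nsim y'$, since otherwise $x\sim 0$): then $x'\alpha_{-y'}=x$ and $\mu_{x'}=\mu_{x\alpha_y}$, which is how the left-hand side of the lemma enters. The substitution you propose, $x\mapsto -x$, instead involves $(-x)\alpha_{-y}=-(y\alpha_x)$ and hence $\mu$-maps attached to $y\alpha_x$; since $U_\infty$ is \emph{not} assumed abelian in this lemma, $y\alpha_x$ and $x\alpha_y$ are different points and you do not reach the stated identity. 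Second, the linchpin is specialness applied to the auxiliary element itself: $z:=x'\alpha_{-y'}\mu_{y'}\alpha_{-y'}$ is a unit, so $z=-\til z$, and it is this equality (not merely $\til x'=-x'$, $\til y'=-y'$ inside the two formulas of \autoref{prop:sumform}) that ties the ``$\mu_{y'}$-expression'' for $z$ to the ``$\mu_{x'}$-expression'' for $\til z$. Concretely, $-\til z=-(y'\alpha_{-x'}\mu_{x'}\alpha_{-x'})=x'\alpha_{(-(y'\alpha_{-x'}))\mu_{x'}}=x'\alpha_{x'\alpha_{-y'}\mu_{x'}}$, so comparing with $z=x'\alpha_{-y'}\mu_{y'}\alpha_{-y'}$ gives $x'\alpha_{-y'}\mu_{y'}=x'\alpha_{x'\alpha_{-y'}\mu_{x'}}\alpha_{y'}$; substituting $x'=x\alpha_y$, $y'=y$ yields $x\mu_y=x\alpha_y\alpha_{x\mu_{x\alpha_y}}\alpha_y$, whence $\alpha_{x\mu_y}=\alpha_x\alpha_y\alpha_{x\mu_{x\alpha_y}}\alpha_y$ by sharp transitivity of $U_\infty$, and solving for $\alpha_{x\mu_{x\alpha_y}}$ and applying to $0$ is exactly the lemma. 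Neither the substitution $x'=x\alpha_y$ nor the use of $\til z=-z$ appears in your plan, and without them the pieces you assemble do not connect.

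There is also a concrete error in your sketched ``direct'' route: you compute $x\alpha_{x\alpha_y}^{-1}=x(\alpha_x\alpha_y)^{-1}=0\alpha_y^{-1}=-y$, but $(\alpha_x\alpha_y)^{-1}=\alpha_{-y}\alpha_{-x}$, so this equals $x\alpha_{-y}\alpha_{-x}$, which is $-y$ only if the two factors may be swapped, i.e.\ only under an abelian-root-group hypothesis that \autoref{lem:specialsum} does not assume (the lemma is proved before, and used independently of, the abelian case). Likewise, expanding $\mu_{x\alpha_y}$ by \autoref{lem:mu}\ref{itm:muform2} (or \autoref{prop:specialmu}\ref{prop:specialmu-ii}) and evaluating at $x$ begins with $x\alpha_{x\alpha_y}=x\alpha_x\alpha_y$ and then hits a $U_0$-factor that cannot be evaluated without further input; this is precisely the obstruction that the detour through $z=-\til z$ is designed to avoid.
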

\begin{proof}
	Let $z=x'\alpha_{-y'}\mu_{y'}\alpha_{\til y'}$ as in \autoref{prop:sumform}. By that proposition, and since we are in a special local Moufang set, we have
	\begin{align*}
		z &= x'\alpha_{-y'}\mu_{y'}\alpha_{-y'} = -\til z = -(y'\alpha_{-x'}\mu_{x'}\alpha_{-x'}) \\
		 &= 0\cdot(\alpha_{'y\alpha_{-x'}\mu_{x'}}\alpha_{-x'})^{-1} = x'\alpha_{-(y'\alpha_{-x'}\mu_{x'})} \\
		 &= x'\alpha_{(-(y'\alpha_{-x'}))\mu_{x'}} = x'\alpha_{x'\alpha_{-y'}\mu_{x'}}
	\end{align*}
	Hence we have $x'\alpha_{-y'}\mu_{y'} =x'\alpha_{x'\alpha_{-y'}\mu_{x'}}\alpha_{y'}$ for units $x',y'\in X$ such that $x'\nsim y'$.
	
	If we now set $x' = x\alpha_y$ and $y' = y$, note that if $x'\sim y'$, we would have $x\sim 0$, so $x$ would not be a unit. We get
	\[x\mu_y = x\alpha_y\alpha_{x\mu_{x\alpha_y}}\alpha_y\quad\text{ so }\quad0\cdot\alpha_{x\mu_y} = 0\cdot\alpha_x\alpha_y\alpha_{x\mu_{x\alpha_y}}\alpha_y\;,\]
	hence $\alpha_{x\mu_y} = \alpha_x\alpha_y\alpha_{x\mu_{x\alpha_y}}\alpha_y$. By rearranging, we get 
	\[\alpha_{x\mu_{x\alpha_y}}=\alpha_{-y}\alpha_{-x}\alpha_{x\mu_y}\alpha_{-y}\;,\]
	which gives the desired formula after applying it to $0$.
\end{proof}

\subsection{Unique \texorpdfstring{$k$}{\emph{k}}-divisibility}

We study the $k$-divisibility of elements, similarly to what has been done before for (ordinary) Moufang sets in \cite[Proposition~4.6]{DMSegevIdentitiesMS}.

\begin{definition}
	For $x\in X\setminus\class{\infty}$ and $k\geq1$, we define $x\cdot k:=0\cdot\alpha_x^k$\notatlink{xn}. We call $y\in X\setminus\class{\infty}$ \define[k-divisible@$k$-divisible]{$k$-divisible} if there is an $x$ such that $y=x\cdot k$. If there is a unique such $x$, we call $y$ \define[k-divisible@$k$-divisible!uniquely k-divisible@uniquely $k$-divisible]{uniquely} $k$-divisible.
\end{definition}

Clearly, $(\alpha_x^k)^{-1} = \alpha_{-x}^k$, so $-(x\cdot k) = (-x)\cdot k$.

\begin{lemma}\label{lem:timesn_sim}
	Let $k\in\N$ and $x,y\in X\setminus\class{\infty}$. If $x\sim y$, then $x\cdot k\sim y\cdot k$.
\end{lemma}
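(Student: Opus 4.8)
The statement to prove is: for $k\in\N$ and $x,y\in X\setminus\class{\infty}$, if $x\sim y$ then $x\cdot k\sim y\cdot k$. Recall $x\cdot k = 0\cdot\alpha_x^k$, so this is really a statement about the induced action of $U_\infty$ on $\class{X}$. The key observation is that when $x\sim y$ with $x,y\nsim\infty$, we have $\alpha_x^{-1}\alpha_y\in U_\infty^\circ$, i.e.\ $\induced{\alpha_x} = \induced{\alpha_y}$; this is because $0\cdot\induced{\alpha_x^{-1}\alpha_y} = \class{y\cdot\alpha_x^{-1}}$ and since $U_\infty$ preserves $\sim$ and $\alpha_x$ maps $0$ to $x$, one checks $\induced{\alpha_x^{-1}\alpha_y}$ fixes $\class{0}$, hence is the identity on $\class{X}$ by \ref{axiom:LM1'}.

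\textbf{Main step.} Given $\induced{\alpha_x} = \induced{\alpha_y}$ as elements of $U_{\class{\infty}}$, we immediately get $\induced{\alpha_x^k} = \induced{\alpha_x}^k = \induced{\alpha_y}^k = \induced{\alpha_y^k}$, and therefore
\[\class{x\cdot k} = \class{0\cdot\alpha_x^k} = \class{0}\cdot\induced{\alpha_x^k} = \class{0}\cdot\induced{\alpha_y^k} = \class{0\cdot\alpha_y^k} = \class{y\cdot k}\;,\]
which is exactly $x\cdot k\sim y\cdot k$. So the whole proof reduces to establishing the claim $x\sim y \implies \induced{\alpha_x}=\induced{\alpha_y}$.

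\textbf{Establishing the claim.} Since $x\sim y$ and $\alpha_x^{-1}\in U_\infty$ preserves the equivalence relation, $y\cdot\alpha_x^{-1}\sim x\cdot\alpha_x^{-1} = 0$, so $y\cdot\alpha_x^{-1}\in\class{0}$. Hence the element $\alpha_x^{-1}\alpha_y\in U_\infty$ maps $0$ into $\class{0}$, which means its induced action $\induced{\alpha_x^{-1}\alpha_y}\in U_{\class{\infty}}$ fixes $\class{0}$. By \ref{axiom:LM1'}, $U_{\class{\infty}}$ acts sharply transitively on $\class{X}\setminus\{\class{\infty}\}$, so an element fixing $\class{0}\neq\class{\infty}$ must be the identity; thus $\induced{\alpha_x^{-1}\alpha_y}=\id$, i.e.\ $\induced{\alpha_x}=\induced{\alpha_y}$, as needed.

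\textbf{Obstacle.} There is essentially no obstacle here — this is a short bookkeeping argument. The only point requiring a little care is making sure the edge cases are covered: one should note that $x\nsim\infty$ is part of the hypothesis so $\alpha_x,\alpha_y$ are defined, and that $k\geq 1$ (the case $k=0$ giving $0\sim 0$ trivially, or being excluded by the convention). I would also remark in passing that this lemma is what makes the notation $x\cdot k$ descend to a well-defined operation on $\class{X}$, matching the group structure already present on the quotient Moufang set.
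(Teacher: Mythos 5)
Your proof is correct and follows essentially the same route as the paper: show that $x\sim y$ forces $\induced{\alpha_x}=\induced{\alpha_y}$ (via sharp transitivity of the induced action on $\class{X}\setminus\{\class{\infty}\}$), then take $k$-th powers and apply to $\class{0}$. The only cosmetic difference is that the paper phrases the first step as $0\alpha_x\alpha_y^{-1}\sim 0$ and cites \ref{axiom:LM2'}, while you work with $\alpha_x^{-1}\alpha_y$ and invoke \ref{axiom:LM1'} directly, which is an equally valid (arguably cleaner) justification of the same fact.
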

\begin{proof}
	First observe that by \ref{axiom:LM2'} we have
	\begin{align*}
		x\sim y &\iff 0\alpha_x\alpha_y^{-1}\sim 0 \iff \class{0}\induced{\alpha}_x\induced{\alpha}_y^{-1} = \class{0} \\
			&\iff \induced{\alpha}_x\induced{\alpha}_y^{-1} = \id \iff \induced{\alpha_x} = \induced{\alpha_y}\;.
	\end{align*}
	Hence we get
	\begin{align*}
		x\sim y &\iff \induced{\alpha_x} = \induced{\alpha_y} \\
			&\implies \induced{\alpha_x}^n = \induced{\alpha_y}^n \\
			&\iff \induced{\alpha_{x\cdot n}} = \induced{\alpha_{y\cdot k}} \iff x\cdot k\sim y\cdot k\;,
	\end{align*}
	so indeed $x\sim y\implies x\cdot k\sim y\cdot k$.
\end{proof}

The following lemma shows a sufficient condition for an element to be (uniquely) $k$-divisible. The proof is by induction, and this induction requires the technical conditions to work.

\begin{lemma}\label{lem:ndiv}
	Let $n\in\N$ and $x$ be a unit in a special local Moufang set. If $x\cdot k$ is a unit for all $1\leq k\leq n$, then the following hold:
	\begin{romenumerate}
		\item $(x\cdot k)\mu_{-x}\cdot k = -x$ for all $1\leq k\leq n$.\label{lem:ndiv-i}
		\item $x\tau\cdot k$ is a unit for all $1\leq k\leq n$.\label{lem:ndiv-ii}
		\item $(x\cdot k)\tau\cdot k = x\tau$ for all $1\leq k\leq n$.\label{lem:ndiv-iii}
		\item For all $1\leq k\leq n$, $y_k := (-x\cdot k)\mu_{-x}$ is the unique element such that $y_k\cdot k = x$, and $y_k\cdot \ell$ is a unit for all $\ell\leq n$.\label{lem:ndiv-iv}
		\item $(x\cdot n)\cdot k$ is a unit for all $1\leq k\leq n$.\label{lem:ndiv-v}
	\end{romenumerate}
\end{lemma}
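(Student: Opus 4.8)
The five statements are tightly interlocked, so the natural approach is a simultaneous induction on $n$. For $n=1$ all claims are either trivial or immediate consequences of the special axiom and \autoref{prop:specialmu} (for instance \ref{lem:ndiv-i} with $k=1$ reads $x\mu_{-x}\cdot 1 = x\mu_{-x} = -x$ by \autoref{prop:specialmu}\ref{prop:specialmu-iii}, and \ref{lem:ndiv-iii} with $k=1$ is $x\tau = x\tau$). So assume the lemma holds for $n-1$ and that $x\cdot k$ is a unit for all $1\le k\le n$; I want to extend each claim to $k=n$.

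First I would establish \ref{lem:ndiv-i} for $k=n$. The key computational identity is \autoref{lem:specialsum}: since $x\cdot k = 0\alpha_x^k$, writing $x\cdot k = (x\cdot(k-1))\alpha_x$ lets me apply \autoref{lem:specialsum} with the pair $(x\cdot(k-1), x)$ (checking $x\cdot(k-1)$ and $x$ are units and that their `sum' $x\cdot k$ is a unit), obtaining a recursion relating $(x\cdot k)\mu_{-x}$-type quantities. Iterating this recursion down from $n$ and using the induction hypothesis on \ref{lem:ndiv-i} and \ref{lem:ndiv-iii} should collapse the expression to $-x$; the bookkeeping here is where most of the work sits. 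Once \ref{lem:ndiv-i} is in hand, \ref{lem:ndiv-ii} and \ref{lem:ndiv-iii} follow by applying $\tau$: since $\til{(\cdot)}=-(\cdot)$, conjugating the $\mu$-map identities by $\tau$ and using \autoref{lem:mu}\ref{itm:mutau} and \autoref{prop:specialmu} translates statements about $x$ into statements about $x\tau$ — in particular \ref{lem:ndiv-ii} says $x\tau\cdot k\nsim 0,\infty$, which I can read off from \ref{lem:ndiv-iii} (an element equal to $x\tau$ after multiplication by $k$ cannot be $\sim 0$, using \autoref{lem:timesn_sim}, nor $\sim\infty$).

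Next, \ref{lem:ndiv-iv}: set $y_k := (-x\cdot k)\mu_{-x}$. From \ref{lem:ndiv-i}, $(-x\cdot k)\mu_{-x}\cdot k = -(-x) = x$ (using $-(x\cdot k)=(-x)\cdot k$ and $(-x)\mu_{-x}$-compatibility), so $y_k$ is \emph{a} $k$-th part of $x$. Uniqueness comes from the sharp transitivity of $U_\infty$ together with the observation that if $y\cdot k = z\cdot k$ then $\induced{\alpha_y}^{\,k} = \induced{\alpha_z}^{\,k}$; but I need the stronger conclusion that $\alpha_y = \alpha_z$ outright, which should follow because the relevant elements lie in $U_\infty^\times$ and applying $\mu_{-x}$-conjugation (an automorphism by \autoref{cor:huaaut}) together with \ref{lem:ndiv-i} pins down $y$. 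The sub-claim that $y_k\cdot\ell$ is a unit for all $\ell\le n$ is then a matter of chasing equivalences: $y_k\cdot\ell \sim 0$ would force $x = y_k\cdot k \sim 0$ via \autoref{lem:timesn_sim}, contradiction, and similarly for $\sim\infty$. Finally \ref{lem:ndiv-v}, that $(x\cdot n)\cdot k$ is a unit for $1\le k\le n$: here $(x\cdot n)\cdot k = 0\alpha_{x\cdot n}^{\,k}$, and I would again argue via \autoref{lem:timesn_sim} and the fact (provable from \ref{lem:ndiv-iv} applied with the roles reversed, or from a direct divisibility count) that $(x\cdot n)\cdot k \sim 0$ would make $x$ itself fail to be a unit.

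\textbf{Main obstacle.} The genuinely delicate point is the inductive step for \ref{lem:ndiv-i}: turning the one-step relation from \autoref{lem:specialsum} into a closed statement about $x\cdot n$ requires carefully tracking which intermediate elements ($x\cdot(k-1)$, $x\cdot k$, their $\mu$-images, their $\tau$-images) are units, and this is exactly why \ref{lem:ndiv-i}--\ref{lem:ndiv-v} must be bundled into one induction rather than proved separately — each clause feeds the unit-hypotheses needed by the others. I expect the algebra to be manageable once the right order of substitution is fixed, but getting that order right (and not circularly invoking the $k=n$ case of a clause while proving it) is the crux.
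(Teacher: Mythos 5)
Your overall architecture (one simultaneous induction on $n$, base case from \autoref{prop:specialmu}, transporting statements with $\tau$ and Hua maps, deducing \ref{lem:ndiv-iv} from \ref{lem:ndiv-i} and \ref{lem:ndiv-iii}) is the same as the paper's, but the step that carries the entire lemma — the inductive step for \ref{lem:ndiv-i} — is not actually proved, and the route you sketch does not obviously lead there. \autoref{lem:specialsum} applied to the pair $(x\cdot(k-1),x)$ gives an identity for $(x\cdot(k-1))\mu_{x\cdot k}$ in terms of $(x\cdot(k-1))\mu_x$, i.e.\ information about the maps $\mu_{x\cdot k}$, whereas \ref{lem:ndiv-i} is a statement about the single map $\mu_{-x}$ applied to $x\cdot(n+1)$; you give no mechanism converting one into the other, and you yourself flag this as the unresolved crux. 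The paper instead computes $(-x\cdot(n+1))\mu_{-x}$ directly from \autoref{prop:specialmu}\ref{prob:specialmu-mu-x} ($\mu_{-x}=\alpha_x\mu_{-x}\alpha_x\mu_{-x}\alpha_x$), inserts the inductive element $y_n=(-x\cdot n)\mu_{-x}$, and then applies the induction hypothesis \ref{lem:ndiv-iii} (with $\tau$ replaced by $\mu_{-x}$) to the \emph{auxiliary} element $y_n\cdot(n+1)$; this is only legitimate after first proving, by a separate contradiction argument using the induction hypotheses \ref{lem:ndiv-iii} and \ref{lem:ndiv-v}, that $y_n\cdot(n+1)\cdot k$ is a unit for $1\leq k\leq n$. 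Your plan contains no analogue of these auxiliary unit claims (another one, for an arbitrary $z$ with $z\cdot(n+1)=x$, is needed in the uniqueness part of \ref{lem:ndiv-iv}), and without them the induction hypothesis simply cannot be invoked where you need it.

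Two of your claimed reductions are moreover incorrect as stated. For \ref{lem:ndiv-v} you argue that $(x\cdot n)\cdot k\sim 0$ would make $x$ fail to be a unit, via \autoref{lem:timesn_sim} or a ``divisibility count''; but \autoref{lem:timesn_sim} only goes the other way, and the implication ``$x\cdot m\sim 0\implies x\sim 0$'' is false in general: in the projective local Moufang set $\M(\Z_p)$ (special, abelian root groups, and satisfying the hypotheses for any $n<p$) the induced root group $\induced{U_\infty}$ has exponent $p$, so $x\cdot p\sim 0$ for every unit $x$. Since the hypothesis only controls $x\cdot k$ for $k\leq n$ while $(x\cdot n)\cdot k=x\cdot(nk)$ with $nk>n$, \ref{lem:ndiv-v} has real content and needs the paper's argument (transport by $\mu_x$ using the induction hypotheses \ref{lem:ndiv-iii} and \ref{lem:ndiv-v}, with a separate computation for $k=n+1$). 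Similarly, \ref{lem:ndiv-ii} cannot be ``read off'' from \ref{lem:ndiv-iii}: \ref{lem:ndiv-iii} concerns the element $(x\cdot k)\tau$, while \ref{lem:ndiv-ii} concerns $x\tau\cdot k$, and trying to apply \ref{lem:ndiv-iii} to $x\tau$ would presuppose exactly the unit-hood you are proving. The correct step writes $x\tau=(-x)\mu_{-x}\tau$ and uses \autoref{lem:huaAut} (Hua maps commute with $\cdot\,k$) to get $x\tau\cdot(n+1)=(-x)\cdot(n+1)\mu_{-x}\tau\nsim 0$. Finally, your uniqueness sketch in \ref{lem:ndiv-iv} (``pins down $y$'') must be replaced by the concrete argument: for any $z$ with $z\cdot(n+1)=x$, first show $z\cdot k$ is a unit for all $k\leq n+1$, then apply \ref{lem:ndiv-iii} to $z$ with $\tau=\mu_{-x}$ to get $z\mu_{-x}=-x\cdot(n+1)$, hence $z=y_{n+1}$.
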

\begin{proof}
	We prove all these statements simultaneously by induction on $n$. Observe that they clearly hold for $n=1$, using \autoref{prop:specialmu}. We now assume the following:
	\[\text{the lemma holds for $n$ and all $x$ satisfying the conditions}\tag{IH}\label{IH}\]
	and prove the lemma for $n+1$ and all $x$ satisfying the conditions. Hence, we now assume we have some $x$ such that $x\cdot k$ is a unit for $1\leq k\leq n+1$.

	We first claim the following:
	\begin{align}\label{eq:bn(n+1)k-unit}
		y_n\cdot(n+1)\cdot k\text{ is a unit for all $1\leq k\leq n$.}
	\end{align}
	Suppose this were not the case; then there is some $1\leq k\leq n$ for which $y_n\cdot(n+1)\cdot k\sim 0$, so
	\[x\cdot(n+1)\cdot k = y_n\cdot(n+1)\cdot k\cdot n\sim 0\;.\]
	From this we get $x\cdot n\cdot k\sim-x\cdot k$, and hence 
	\[(x\cdot n\cdot k)\mu_x\cdot k\sim(-x\cdot k)\mu_x\cdot k\;.\]
	Using \ref{lem:ndiv-v} and \ref{lem:ndiv-iii} of the induction hypothesis, we get
	\[(x\cdot n)\mu_x \sim (-x)\mu_x \text{, so } x\cdot n\sim -x \text{ and hence } x\cdot(n+1)\sim 0\;,\]
	which contradicts the assumption.

	Now we prove \ref{lem:ndiv-i}. By induction, $(x\cdot k)\mu_{-x}\cdot k = -x$ for all $1\leq k\leq n$, so we only need to show this for $k=n+1$. We have
	\begin{align*}
		&\mathord{-}(x\cdot(n+1))\mu_{-x} \\
		&= (-x\cdot(n+1))\mu_{-x} \\
		&= (-x\cdot n)\alpha_{-x}\mu_{-x} \\
		&=(-x\cdot n)\mu_{-x}\alpha_x\mu_{-x}\alpha_x & \text{(by \autoref{prop:specialmu}\ref{prob:specialmu-mu-x})} \\
		&= y_n\alpha_x\mu_{-x}\alpha_x & \text{(by \eqref{IH})} \\ 
		&= y_n\alpha_{y_n}^n\mu_{-x}\alpha_x \\
		&= (y_n\cdot(n+1))\mu_{-x}\alpha_x \\
		&= \bigl((y_n\cdot(n+1)\cdot n)\mu_{-x}\cdot n\bigr)\alpha_x & \text{(by \eqref{IH} and \eqref{eq:bn(n+1)k-unit})} \\ 
		&= \bigl((x\cdot(n+1))\mu_{-x}\cdot n\bigr)\alpha_x\;.
	\end{align*}
	Hence $\alpha_{(x\cdot(n+1))\mu_{-x}}^{-1} = \alpha_{(x\cdot(n+1))\mu_{-x}}^n\alpha_x$, so indeed
    \[ -x = 0\cdot \alpha_{-x} = 0\cdot\alpha_{(x\cdot(n+1))\mu_{-x}}^{n+1} = (x\cdot(n+1))\mu_{-x}\cdot(n+1)\;. \]
	Next we prove \ref{lem:ndiv-ii}, where again, we only need to check that $x\tau\cdot(n+1)$ is a unit. By \autoref{lem:huaAut}, $(x\cdot n)\mu\tau = x\mu\tau\cdot n$ for any $\mu$-maps $\tau$ and $\mu$. Hence
	\[ x\tau\cdot(n+1) = (-x)\mu_{-x}\tau\cdot(n+1) = (-x)\cdot(n+1)\mu_{-x}\tau\nsim 0\;, \]
	as $x\cdot(n+1)\nsim 0$.

	Similarly we prove \ref{lem:ndiv-iii} using \ref{lem:ndiv-i} and \autoref{lem:huaAut}.
	\begin{align*}
		(x\cdot k)\tau\cdot k 	&= (x\cdot k)\mu_{-x}\mu_x\tau\cdot k = \bigl((x\cdot k)\mu_{-x}\cdot k\bigr)\mu_x\tau \\
					&= -x\mu_x\tau = x\tau\;.  
	\end{align*}

	To prove \ref{lem:ndiv-iv}, we first observe that $y_{n+1} = (-x\cdot(n+1))\mu_{-x}$ indeed satisfies $y_{n+1}\cdot(n+1) = x$, by \ref{lem:ndiv-iii}. We first show the following statement.
	\begin{align}\label{eq:ck-unit}
		\text{If $z\cdot(n+1)=x$, then $z\cdot k$ is a unit for all $1\leq k\leq n+1$.}
	\end{align}
	Indeed, if $z\cdot k\sim 0$, then also $z\cdot k\cdot(n+1)\sim 0$, so $x\cdot k\sim 0$, contradicting the fact that $x\cdot k$ is a unit for all $k\leq n+1$.

	Now we prove that $y_{n+1}$ is unique. Suppose $z\cdot(n+1)=x$, then
	\begin{align*}
		-x\cdot(n+1) 	&= x\mu_{-x}\cdot (n+1) = (z\cdot(n+1))\mu_{-x}\cdot (n+1) \\
				&= z\mu_{-x}\;,
	\end{align*}
	using \ref{lem:ndiv-iii} for $z$, which is allowed by \eqref{eq:ck-unit}. Hence, $z=y_{n+1}$, and indeed $y_{n+1}$ is unique. By~\eqref{eq:ck-unit}, $y_{n+1}\cdot k$ is a unit for $1\leq k\leq n+1$. We only need to show that $y_k\cdot(n+1)$ is a unit for $1\leq k\leq n$. Suppose $y_k\cdot(n+1)\sim 0$, then also $x\cdot(n+1) = y_k\cdot(n+1)\cdot k\sim 0$, which is a contradiction.

	It only remains to show \ref{lem:ndiv-v}, which we do in two steps. First, if $x\cdot(n+1)\cdot k\sim 0$ for some $1\leq k\leq n$, we would have
	\begin{align*}
		& x\cdot n\cdot k \sim -x\cdot k \\
		&\implies (x\cdot n\cdot k)\mu_x\cdot k \sim (-x\cdot k)\mu_x\cdot k \\
		&\implies (x\cdot n)\mu_x \sim (-x)\mu_x &\text{(by \eqref{IH} and \ref{lem:ndiv-iii})}\\
		&\implies x\cdot n \sim -x \\
		&\implies x\cdot(n+1)\sim 0,
	\end{align*}
	which is a contradiction; so $x\cdot(n+1)\cdot k$ is a unit for $1\leq k\leq n$.

	Now if $x\cdot(n+1)\cdot(n+1)\sim 0$, we would have
	\begin{align*}
		& \mathord{-}x\cdot(n+1)\cdot n \sim x\cdot(n+1) \\
		&\implies -(x\cdot(n+1)\cdot n)\mu_x\cdot n \sim (x\cdot(n+1))\mu_x\cdot n \\
		&\implies -(x\cdot(n+1))\mu_x \sim (x\cdot(n+1))\mu_x\cdot n \\
		&\implies (x\cdot(n+1))\mu_x\cdot(n+1)\sim 0 \\
		&\implies -x\sim 0 ,
	\end{align*}
	which is again a contradiction. This shows that $x\cdot(n+1)\cdot k$ is a unit for $1\leq k\leq n+1$.

	By induction, the lemma now holds for all $n$.
\end{proof}

The essence of the previous lemma is contained in the following corollary.

\begin{corollary}\label{cor:ndiv}
	Let $\M$ be a special local Moufang set and assume $x\cdot k$ is a unit for all $1\leq k\leq n$.
	\begin{romenumerate}
		\item there is a unique $y$ such that $y\cdot n = x$, which we denote by $x\cdot\frac{1}{n}$\notatlink{gdivk2}\notatlink{gdivk};
		\item $(x\cdot n)\tau = x\tau\cdot\frac{1}{n}$ and $\bigl(x\cdot\frac{1}{n}\bigr)\tau = x\tau\cdot n$;
		\item if $z\sim x$, then $z\cdot k$ is a unit for all $1\leq k\leq n$ and $x\cdot\frac{1}{n}\sim z\cdot\frac{1}{n}$.
	\end{romenumerate}
\end{corollary}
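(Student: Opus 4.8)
The plan is to derive each of the three claims directly from \autoref{lem:ndiv} and \autoref{prop:specialmu}, since the heavy lifting (the simultaneous induction) has already been carried out there. For part (i), \autoref{lem:ndiv}\ref{lem:ndiv-iv} already produces an element $y_n := (-x\cdot n)\mu_{-x}$ with $y_n\cdot n = x$ and shows it is \emph{the} unique such element; so I would simply set $x\cdot\tfrac1n := y_n$ and record uniqueness as an immediate restatement. (I would also note in passing that $x\cdot\tfrac1n$ is itself a unit, and more generally that $(x\cdot\tfrac1n)\cdot k$ is a unit for all $1\le k\le n$, which comes for free from \ref{lem:ndiv-iv}; this will be needed for part (iii).)

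For part (ii), the key input is \autoref{lem:ndiv}\ref{lem:ndiv-iii}, which says $(x\cdot k)\tau\cdot k = x\tau$ for $1\le k\le n$. Applying this with $k=n$ gives $(x\cdot n)\tau\cdot n = x\tau$; by the uniqueness from part (i) (using \autoref{lem:ndiv}\ref{lem:ndiv-ii}, which guarantees $x\tau\cdot k$ is a unit for $1\le k\le n$, so that $x\tau$ is $n$-divisible in the required controlled way), this says exactly $(x\cdot n)\tau = x\tau\cdot\tfrac1n$. For the second identity, I would apply the first one with $x$ replaced by $x\cdot\tfrac1n$: since $(x\cdot\tfrac1n)\cdot k$ is a unit for $1\le k\le n$, we get $\bigl((x\cdot\tfrac1n)\cdot n\bigr)\tau = (x\cdot\tfrac1n)\tau\cdot\tfrac1n$, i.e.\ $x\tau = (x\cdot\tfrac1n)\tau\cdot\tfrac1n$; applying $\cdot\, n$ to both sides (and using uniqueness of $n$-division once more) yields $(x\cdot\tfrac1n)\tau = x\tau\cdot n$.

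For part (iii), suppose $z\sim x$. First I need that $z\cdot k$ is a unit for $1\le k\le n$: by the equivalence established in the proof of \autoref{lem:timesn_sim}, $x\sim z$ forces $\induced{\alpha_x}=\induced{\alpha_z}$, hence $\induced{\alpha_{x\cdot k}}=\induced{\alpha_{z\cdot k}}$, so $x\cdot k\sim z\cdot k$; since $x\cdot k$ is a unit (i.e.\ $x\cdot k\nsim 0$ and $\nsim\infty$), so is $z\cdot k$. In particular $z\cdot\tfrac1n$ is defined by part (i). Now apply \autoref{lem:timesn_sim} to the pair $x\cdot\tfrac1n$ and $z\cdot\tfrac1n$: it suffices to show $(x\cdot\tfrac1n)\cdot n \sim (z\cdot\tfrac1n)\cdot n$, and both sides are $x$ and $z$ respectively, which are equivalent by hypothesis. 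Running \autoref{lem:timesn_sim} ``backwards'' is the only subtle point — it only gives an implication in one direction — so instead I would argue contrapositively: if $x\cdot\tfrac1n\nsim z\cdot\tfrac1n$, then, since $(x\cdot\tfrac1n)\cdot n$ and $(z\cdot\tfrac1n)\cdot n$ being equivalent would, by the same $\induced{\alpha}$-computation as above, force $\induced{\alpha_{x\cdot\frac1n}}^n=\induced{\alpha_{z\cdot\frac1n}}^n$ but not necessarily $\induced{\alpha_{x\cdot\frac1n}}=\induced{\alpha_{z\cdot\frac1n}}$, I would instead note that $\alpha_{x\cdot\frac1n}\alpha_{z\cdot\frac1n}^{-1}\in U_\infty$ has induced action of finite order dividing $n$ on $\class0$; but in the Moufang set $\induced{\M}$ the stabilizer structure forbids a nontrivial such element fixing a point only after $n$ steps — more cleanly, $x\cdot\tfrac1n\sim z\cdot\tfrac1n$ iff $\induced{\alpha_{x\cdot\frac1n}}=\induced{\alpha_{z\cdot\frac1n}}$ iff (applying the $n$-th power map, which is injective on the relevant quotient since $x\sim z$ already pins down $\induced{\alpha_x}$) $x\sim z$. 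The honest obstacle here is exactly this: \autoref{lem:timesn_sim} is one-directional, so I must be careful about whether $z\cdot\tfrac1n\sim x\cdot\tfrac1n$ genuinely follows, and the cleanest fix is to observe that on equivalence classes $\cdot\,n$ acts as $\induced{\alpha}\mapsto\induced{\alpha}^n$ in the group $U_{\class\infty}$ of the quotient Moufang set, and to use uniqueness of $n$-division \emph{in the quotient} (which holds because $\induced{\M}$ is a Moufang set where these elements behave predictably) to cancel the $n$-th power. I would phrase part (iii) with that remark and keep the computation short.

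Overall, no new identities are needed; the proof is a bookkeeping exercise assembling \autoref{lem:ndiv}\ref{lem:ndiv-ii}, \ref{lem:ndiv-iii}, \ref{lem:ndiv-iv}, \autoref{lem:timesn_sim}, and the uniqueness of $n$-division. The one place requiring genuine care is the ``descent'' in part (iii), i.e.\ passing from $(x\cdot\tfrac1n)\cdot n\sim(z\cdot\tfrac1n)\cdot n$ back to $x\cdot\tfrac1n\sim z\cdot\tfrac1n$; I expect to resolve it by working in $\induced{\M}$ and invoking uniqueness of division there, so this should be the main (and only) obstacle.
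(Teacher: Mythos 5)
Parts (i) and (ii) are correct and follow the paper's own route: (i) is a restatement of \autoref{lem:ndiv}\ref{lem:ndiv-iv}, and (ii) combines \autoref{lem:ndiv}\ref{lem:ndiv-ii}--\ref{lem:ndiv-iv} exactly as the paper does (the paper applies \autoref{lem:ndiv}\ref{lem:ndiv-iii} to $x\cdot\frac1n$, you apply the already-proved first identity to $x\cdot\frac1n$ — the same computation). The first half of (iii), that $z\cdot k$ is a unit, is also fine.

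The second half of (iii) is where your proposal diverges and where it is not yet a proof. You correctly identify that \autoref{lem:timesn_sim} only goes one way, but your fix — ``uniqueness of $n$-division in the quotient, which holds because $\induced{\M}$ is a Moufang set where these elements behave predictably'' — is asserted, not proved; nothing about being a Moufang set gives unique $n$-division for free. Your earlier detour is actually wrong as stated: from $\induced{\alpha_{x\cdot\frac1n}}^{\,n}=\induced{\alpha_{z\cdot\frac1n}}^{\,n}$ you cannot conclude that $\induced{\alpha_{x\cdot\frac1n}}\induced{\alpha_{z\cdot\frac1n}}^{-1}$ has order dividing $n$, because abelian root groups are \emph{not} among the hypotheses of this corollary (only specialness is). Your quotient idea can be repaired, but you must say how: $\induced{\M}$ is itself a special (local) Moufang set (specialness passes through the canonical projection $\pi$), and $\class{x}\cdot k=\class{x\cdot k}\neq\class{0}$ for $1\le k\le n$, so \autoref{lem:ndiv}\ref{lem:ndiv-iv} applies to $\induced{\M}$ and gives a \emph{unique} $n$-th root of $\class{x}$ in $\class{X}\setminus\{\class{\infty}\}$; since both $\class{x\cdot\frac1n}$ and $\class{z\cdot\frac1n}$ are such roots of $\class{x}=\class{z}$, they coincide. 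With that paragraph added your argument is complete. Note, though, that the paper avoids the whole issue with a shorter trick: by (ii) applied with $\tau^{-1}$ in place of $\tau$ one has $x\cdot\frac1n=(x\tau^{-1}\cdot n)\tau$, so
\[
x\sim z\implies x\tau^{-1}\sim z\tau^{-1}\implies x\tau^{-1}\cdot n\sim z\tau^{-1}\cdot n\implies x\cdot\tfrac1n\sim z\cdot\tfrac1n\;,
\]
using only the forward direction of \autoref{lem:timesn_sim} and the fact that $\tau^{\pm1}$ preserve $\sim$ — no uniqueness or passage to the quotient is needed.
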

\begin{proof}\preenum
	\begin{romenumerate}
		\item By \autoref{lem:ndiv}\ref{lem:ndiv-iv}, $y:=(-x\cdot n)\mu_{-x}$ is the unique element satisfying $y\cdot n=x$.
		\item \autoref{lem:ndiv}\ref{lem:ndiv-iii} gives us $(x\cdot n)\tau\cdot n = x\tau$, so 
		\[(x\cdot n)\tau = x\tau\cdot\tfrac{1}{n}\;.\]
		Now by \autoref{lem:ndiv}\ref{lem:ndiv-iv} the element $x\cdot\frac{1}{n}$ also satisfies the conditions of \autoref{lem:ndiv}. Hence we have 
		\[\bigl(\bigl(x\cdot\tfrac{1}{n}\bigr)\cdot n\bigr)\tau\cdot n = \bigl(x\cdot\tfrac{1}{n}\bigr)\tau\;,\]
		so $x\tau\cdot n = \bigl(x\cdot\frac{1}{n}\bigr)\tau$.
		\item By \autoref{lem:timesn_sim}, $z\cdot k \sim x\cdot k\nsim 0$ for all $1\leq k\leq n$. Now we have
		\begin{align*}
			x\sim z &\implies x\tau^{-1}\sim z\tau^{-1}\implies x\tau^{-1}\cdot n\sim z\tau^{-1}\cdot n \\
			&\implies (x\tau^{-1}\cdot n)\tau\sim (z\tau^{-1}\cdot n)\tau\implies x\cdot\tfrac{1}{n}\sim z\cdot\tfrac{1}{n}\qedhere
		\end{align*}
	\end{romenumerate}
\end{proof}
	

\section[Abelian root groups]{Special local Moufang sets with\\ abelian root groups}	
\subsection{Unique \texorpdfstring{$k$}{\emph{k}}-divisibility again}

We have already considered the $k$-divisibility of elements, but one can also look at $k$-divisibility of an entire group.

\begin{definition}
	A group $U$ is \define[k-divisible@$k$-divisible!uniquely k-divisible@uniquely $k$-divisible]{(uniquely)} \define[k-divisible@$k$-divisible]{$k$-divisible} if for every $u\in U$, there is a unique $v\in U$ such that $v^k = u$.
\end{definition}

Now, if the root groups of a special local Moufang set are abelian, and the units are uniquely $k$-divisible, then so is the entire root group.

\begin{proposition}\label{prop:ndivglobal}
	Let $\M$ be a special local Moufang set with $U_\infty$ abelian and $n\in\N$. If for all units $x$ and all $1\leq k\leq n$, $x\cdot k$ is also a unit, then $U_\infty$ is uniquely $k$-divisible for all $1\leq k\leq n$.
\end{proposition}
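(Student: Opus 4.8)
The plan is to fix a unit $e$ and write $\tau = \mu_e$, and to work with the given element $u \in U_\infty$. Recall $U_\infty = U_\infty^\circ \cup U_\infty^\times$, so we treat two cases. If $u = \alpha_x$ for a unit $x$, then by hypothesis $x \cdot k$ is a unit for all $1 \le k \le n$, and \autoref{cor:ndiv} gives a \emph{unique} $y$ with $y \cdot k = x$, i.e.\ $\alpha_y^k = \alpha_{x \cdot \frac1k}{}^{-1}\cdots$ — more carefully, $0 \cdot \alpha_y^k = y \cdot k = x = 0 \cdot \alpha_x$, so $\alpha_y^k = \alpha_x = u$ since both lie in $U_\infty$ and agree on $0$ (using \ref{axiom:LM1}). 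Uniqueness of such $y$ in $X \setminus \class{\infty}$ translates directly to uniqueness of $\alpha_y$ in $U_\infty$. This disposes of the case $u \in U_\infty^\times$.

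For the general case $u = \alpha_x$ with $x \sim 0$ (the element of $U_\infty^\circ$), I would use the splitting trick already used in the proof of \autoref{prop:huaincl}: pick a unit $e$ and write $\alpha_x = \alpha_{x'} \alpha_e$ where $x' = x \cdot \alpha_e^{-1}$ is again a unit (since $\alpha_x \alpha_e^{-1}$ does not fix $\class 0$). Hmm — but $\alpha_{x'}$ and $\alpha_e$ need not have compatible $k$-th roots unless I choose things carefully. Here is where \textbf{abelianness of $U_\infty$} enters: in an abelian $k$-divisible group, the $k$-th root is unique and $k$-th roots are multiplicative. So the real content is to show that \emph{every} $\alpha_x \in U_\infty$ has a $k$-th root, after which abelianness gives uniqueness for free. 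To get existence for $u = \alpha_x$ with $x \sim 0$, I would instead argue: take any unit $y$, then $\alpha_x \alpha_y^{-1} = \alpha_z$ where $z = x \cdot \alpha_y^{-1}$; since $\alpha_x$ fixes $\class 0$ and $\alpha_y$ maps $\class 0$ off itself, $z$ is a unit. Then $\alpha_x = \alpha_z \alpha_y$ is a product of two elements of $U_\infty^\times$, each of which has a (unique) $k$-th root $\alpha_{z'}$, $\alpha_{y'}$ by the first case; by abelianness $(\alpha_{z'}\alpha_{y'})^k = \alpha_{z'}^k \alpha_{y'}^k = \alpha_z \alpha_y = \alpha_x$, giving a $k$-th root of $u$.

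For uniqueness in the general case, suppose $v_1^k = v_2^k = u$ in $U_\infty$. Since $U_\infty$ is abelian, $(v_1 v_2^{-1})^k = v_1^k v_2^{-k} = \id$, so it suffices to show $U_\infty$ has no nontrivial $k$-torsion for $1 \le k \le n$. If $w = \alpha_x$ with $w^k = \id$ and $x \ne 0$ (so $x \nsim \infty$), I would split $x$ via a unit as above into $x = $ (unit) $\cdot$ (stuff) and reduce to the case $x$ a unit; then $x \cdot k = 0 \cdot \alpha_x^k = 0$, but $x \cdot k$ is a unit by hypothesis, hence $x \cdot k \nsim 0$ — contradiction. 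Actually the cleanest route: for any $x \nsim \infty$ with $x \ne 0$, write $\alpha_x = \alpha_z \alpha_y$ with $z, y$ units (as in the paragraph above), and note $\alpha_x^k = \alpha_z^k \alpha_y^k = \alpha_{z \cdot k}\,\alpha_{y \cdot k}$; if this is $\id$ then $z \cdot k = -(y \cdot k)$, but both $z \cdot k$ and $y \cdot k$ are units, and I need to derive a contradiction with $x \ne 0$. This last deduction is the delicate point and likely needs the $\mu$-map manipulations of \autoref{lem:ndiv}; I expect it to follow by applying \autoref{cor:ndiv}(i) to $z$ and $y$ separately and using uniqueness of $k$-th roots for units, forcing $\alpha_x = \id$, i.e.\ $x = 0$.

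The main obstacle will be the torsion-freeness argument in the case where $u$ lies in $U_\infty^\circ$: the hypothesis "$x \cdot k$ is a unit for units $x$" is only directly about units, so I must carefully propagate it through the splitting $\alpha_x = \alpha_z \alpha_y$ and make sure the bookkeeping with signs ($-(y\cdot k)$ vs.\ units) genuinely forces triviality. I anticipate that \autoref{lem:ndiv}\ref{lem:ndiv-iv} (uniqueness of $y_k$ with $y_k \cdot k = x$) together with abelianness is exactly what makes this work, and the whole proof is then a few lines of careful case analysis with no new ideas beyond \autoref{cor:ndiv}.
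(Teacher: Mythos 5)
Your proposal is correct, and its existence half is exactly the paper's argument: split $\alpha_x$ into a product of two elements of $U_\infty^\times$ (the paper writes $\alpha_x = \alpha_{x\alpha_{-e}}\alpha_e$), take the unique $k$-th roots of each factor via \autoref{cor:ndiv}, and multiply them using commutativity. Where you diverge is uniqueness. The paper never discusses torsion: given $u^k = u'^k = \alpha_x$ and a root $\alpha_y^k = \alpha_{x\alpha_{-e}}$, it notes that $(\alpha_y^{-1}u)^k = \alpha_y^{-k}\alpha_x = \alpha_e$ and likewise for $u'$, so both $\alpha_y^{-1}u$ and $\alpha_y^{-1}u'$ are $k$-th roots of the unit element $\alpha_e$ and coincide by \autoref{cor:ndiv}; uniqueness for non-units thus reduces to uniqueness for units in one line. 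You instead reduce uniqueness to the absence of $k$-torsion in $U_\infty$ and kill torsion by the same splitting. That route also works, and the step you flag as delicate is in fact harmless: from $\alpha_x^k = \id$ with $\alpha_x = \alpha_z\alpha_y$, $z,y$ units, you get $z\cdot k = -(y\cdot k) = (-y)\cdot k$, and since the hypothesis of the proposition quantifies over \emph{all} units it applies to the unit $z\cdot k$ itself, so $(z\cdot k)\cdot j$ is a unit for all $j\leq n$ and \autoref{cor:ndiv} (equivalently \autoref{lem:ndiv}\ref{lem:ndiv-iv}) gives uniqueness of $k$-th roots of the element $z\cdot k$; hence $z=-y$ and $\alpha_x = \id$. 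In particular you should apply the uniqueness statement to the common value $z\cdot k$, not ``to $z$ and $y$ separately'', and no appeal to the internals of \autoref{lem:ndiv} is needed. One small bookkeeping point: the two-unit splitting of $\alpha_x$ needs a unit $y\nsim x$, which is automatic when $x\sim 0$ but for $x$ a unit requires a second unit class (available since $x\nsim x\cdot 2$ when $n\geq 2$, and $k=1$ is vacuous); your direct treatment of the unit case ($x\cdot k = 0$ contradicts the hypothesis) sidesteps this anyway. Net comparison: the paper's translation trick is shorter and avoids the torsion discussion altogether, while your version is a natural alternative that costs only the extra observation that the hypothesis propagates to $z\cdot k$.
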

\begin{proof}
	Let $1\leq k\leq n$. \autoref{cor:ndiv} already shows that, if $x$ is a unit, there is a unique $y$ such that $\alpha_y^k=\alpha_x$; therefore, it only remains to check the unique $k$-divisibility for non-units.
	So suppose that $x$ is not a unit. Take any unit $e$; then $\alpha_x = \alpha_{x\alpha_{-e}}\alpha_{e}$. Now $x\alpha_{-e}$ and $e$ are units, so both $\alpha_{x\alpha_{-e}}$ and $\alpha_e$ are uniquely $k$-divisible, say with $\alpha_y^k = \alpha_{x\alpha_{-e}}$ and $\alpha_z^k = \alpha_e$. Since $U_\infty$ is abelian, we get
	\[(\alpha_y\alpha_z)^k = \alpha_y^k\alpha_z^k = \alpha_{x\alpha_{-e}}\alpha_{e} = \alpha_x\,.\]
	To show uniqueness, suppose there are two elements $u,u' \in U_\infty$ with $u^k = u'^k = \alpha_x$. Then
	\[(\alpha_y^{-1}u)^k = \alpha_y^{-k}\alpha_x = \alpha_y^{-k}\alpha_y^k\alpha_z^k = \alpha_e\,,\]
	and similarly $(\alpha_y^{-1}u')^k = \alpha_e$. By the uniqueness for units, we get $\alpha_y^{-1}u' = \alpha_y^{-1}u$, so $u=u'$.
\end{proof}

\subsection{\textit{\textmugreek}-maps and Hua maps}

If the root groups are abelian, there are a lot of very nice identities for $\mu$-maps and Hua maps. To prove these, we need to know there are at least two classes of units. In the case where for all units $x$, $x\cdot 2$ is also a unit, we automatically get at least two classes of units.

\begin{lemma}
	Let $\M$ be a special local Moufang set with $U_\infty$ abelian and $\abs{\class{X}}>3$. For any unit $x$, we have $h_x = h_{-x}$.
\end{lemma}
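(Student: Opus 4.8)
I would like to reduce the statement $h_x = h_{-x}$ to the corresponding statement for $\mu$-maps, namely that $\mu_x \mu_{-x}$ acts trivially, which is not quite true ($\mu_x \mu_{-x} = \id$ only says $\mu_{-x} = \mu_x^{-1}$), so instead I should aim directly to show $\tau\mu_x = \tau\mu_{-x}$, i.e.\ $\mu_x = \mu_{-x}$, which is \emph{false} in a proper local Moufang set — so the right target must be $h_x = h_{-x}$ as elements of $G$, using a conjugation trick rather than equality of $\mu$-maps. The standard approach for special Moufang sets (Proposition~5.2(1) of \cite{DMSegevIdentitiesMS}) is: pick a second class of units, say represented by $x\cdot 2$ or by some $y$ with $y \nsim x$, apply \autoref{prop:sumform} or \autoref{lem:specialsum} with a careful choice of arguments, and massage the resulting $\mu$-identity into the claim. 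Concretely, I would first use the hypothesis $\abs{\class{X}}>3$ to produce a unit $y$ with $y\nsim x$ and $y \nsim -x$ and $x\alpha_y$ a unit (three classes to avoid, so four classes of elements of $\class X$ suffice).

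First I would invoke \autoref{lem:specialsum}: for units $x,y$ with $x\alpha_y$ a unit, $x\mu_{x\alpha_y} = (-y)\alpha_{-x}\alpha_{x\mu_y}\alpha_{-y}$. The plan is to also write down the same identity with $x$ replaced by $-x$ (and adjusting $y$ so that $(-x)\alpha_y$ remains a unit — since $U_\infty$ is abelian, $x\alpha_y$ and $(-x)\alpha_{y}$ being units can be arranged simultaneously by choosing $y$ outside finitely many classes). Then, using speciality (\autoref{prop:specialmu}: $-z = z\mu_z = z\mu_{-z}$, $(-w)\mu_z = -(w\mu_z)$) and the fact that $U_\infty$ is abelian (so $\alpha_a\alpha_b = \alpha_b\alpha_a$, and $\alpha_{a}\alpha_b$ gives the sum $a+b$, a commutative operation), I would compare the two expressions and extract a relation between $\mu_{x\alpha_y}$ and $\mu_{(-x)\alpha_y}$. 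Combined with \autoref{prop:sumform}'s final identity $\mu_y\mu_z\mu_{-x} = \mu_{y\alpha_{-x}}$, which relates products of three $\mu$-maps to a single one, this should let me bootstrap from a relation among various $\mu_{\bullet}$ to the desired relation $\tau\mu_x = \tau\mu_{-x}$ after multiplying by the appropriate Hua-subgroup element.

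Alternatively — and this is probably cleaner — I would work with Hua maps directly. By \autoref{lem:hua}\ref{itm:huatau}, $h_{x\tau} = h_{-x}^\tau$, and by \autoref{lem:hua}\ref{itm:huahua}, $h_{xh_y} = h_{-y}h_{x\tau}^{-1}h_y$. The idea is to choose $y$ so that $xh_y$ and $x$ lie in a useful relation; for instance, taking $y$ with $xh_y = -x$ would immediately give $h_{-x} = h_{-y}h_{x\tau}^{-1}h_y$, and then I would need a second application to eliminate $h_{x\tau}^{-1}$ and the conjugating factors, using that in the special abelian case the Hua subgroup is sufficiently well-behaved (e.g.\ $\mu$-maps are involutions by \autoref{prop:mu-involution}, if that is available — here it is stated later, so I should avoid circularity and instead use only \autoref{lem:mu}, \autoref{lem:hua}, \autoref{prop:specialmu}, \autoref{prop:sumform} and \autoref{lem:specialsum}). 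The existence of such a $y$ (a unit with $xh_y = -x$, equivalently $x \cdot$ some Hua action hitting $-x$) is where the $\abs{\class X} > 3$ hypothesis does its work: with only one class of units all Hua maps coincide and $h_x = h_{-x}$ is trivial, so the content is genuinely in having a second class of units to "move" $x$ to $-x$.

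\textbf{Main obstacle.} The delicate point will be the bookkeeping of which elements remain units under the various operations ($x\alpha_y$, $(-x)\alpha_y$, $xh_y$, their images under $\mu$- and $\tau$-maps) — this requires the $\abs{\class X}>3$ hypothesis precisely to guarantee enough "room," and getting the genericity conditions on $y$ exactly right is the crux; once $y$ is chosen, the algebra is a finite manipulation of the identities in \autoref{lem:mu}, \autoref{lem:hua}, \autoref{prop:specialmu}, \autoref{prop:sumform}, \autoref{lem:specialsum} together with commutativity of $U_\infty$. I expect the cleanest route is the Hua-map route: find a unit $y$ with $xh_y$ in the class of $-x$, apply \autoref{lem:hua}\ref{itm:huahua} twice, and collapse using speciality and $h_{-x} = h_{-y}h_{x\tau}^{-1}h_y$ together with \autoref{lem:hua}\ref{itm:huatau}.
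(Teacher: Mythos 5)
Your proposal has two genuine problems, one conceptual and one structural. Conceptually, your opening move is off: since $h_x=\tau\mu_x$ and $h_{-x}=\tau\mu_{-x}$, the statement $h_x=h_{-x}$ is \emph{literally equivalent} to $\mu_x=\mu_{-x}$, so dismissing "equality of $\mu$-maps" as false in proper local Moufang sets is mistaken --- it is exactly what must be (and is) proven, and the paper records $\mu_x=\mu_{-x}$, i.e.\ $\mu_x^2=\id$, as an immediate consequence in \autoref{prop:mu-involution}. Worse, your preferred "Hua-map route" hinges on finding a unit $y$ with $xh_y=-x$, and there is no reason such a $y$ exists: the Hua subgroup need not carry $x$ to $-x$ (in $\M(R)$ one has $[1,s]h_{[1,r]}=[1,sr^2]$, so you would need $-1$ to be a square in $R$, which fails already for $R=\Z_3$ or $\R$). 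The hypothesis $\abs{\class{X}}>3$ does not provide this element; its actual role is to supply an auxiliary unit $e$ in a class different from that of $-x$.

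Structurally, neither of your routes carries out the computation, and both omit a substantive part of the argument: $h_x=h_{-x}$ is an equality of permutations of $X$, so after establishing the identity at "generic" points one must still verify it at every $y\in X$. The paper's proof first shows, for a unit $y$ with $y\nsim -x$, by a direct pointwise calculation using the special form $\mu_{-x}=\alpha_{-x}\alpha^\tau_{x\tau^{-1}}\alpha_{-x}$ (\autoref{prop:specialmu}) and the sign rule $(-a)\alpha_{-b}=-(a\alpha_b)$ coming from abelianness, that $(-y)\mu_{-x}=-(y\mu_x)$, hence $y\mu_x=y\mu_{-x}$; it then patches in the remaining cases: $y\sim -x$ and $y\sim 0$ via the factorization $y=y\alpha_{-e}\alpha_e$ for a unit $e\nsim -x$ together with \autoref{lem:huaAut} (this is where $\abs{\class{X}}>3$ is used), and $y\sim\infty$ by conjugating with $\tau$ and \autoref{lem:hua}\ref{itm:huatau}. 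Your route via \autoref{lem:specialsum} and \autoref{prop:sumform} could plausibly be made to yield the generic identity, but as written it never extracts it, and the extension to the classes of $-x$, $0$ and $\infty$ is absent altogether; so the proposal as it stands is a plan with genuine gaps rather than a proof.
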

\begin{proof}
	First assume $y$ is a unit with $y\nsim -x$, then
	\begin{align*}
		(-y)\mu_{-x} &= (-y)\alpha_{-x}\tau^{-1}\alpha_{x\tau^{-1}}\tau\alpha_{-x} \\
		&= (-y\alpha_x)\tau^{-1}\alpha_{x\tau^{-1}}\tau\alpha_{-x} \\
		&= (-y\alpha_x\tau^{-1})\alpha_{x\tau^{-1}}\tau\alpha_{-x} &\text{(as $-y\alpha_x$ is a unit)} \\
		&= (-y\alpha_x\tau^{-1}\alpha_{-x\tau^{-1}}\tau)\alpha_{-x} \\
		&= -y\alpha_x\tau^{-1}\alpha_{-x\tau^{-1}}\tau\alpha_x \\
		&= -y\mu_x\;,
	\end{align*}
	where we repeatedly use 
	\[(-x')\alpha_{-y'} = 0\alpha_{x'}^{-1}\alpha_{y'}^{-1} = 0(\alpha_{x'}\alpha_{y'})^{-1} = -(x'\alpha_{y'})\;.\]
	From this, we get $y\mu_x = -(-y\mu_x) = -(-y)\mu_{-x} = y\mu_{-x}$, so for any unit $y$ with $y\nsim -x$, we have $yh_x = y\tau\mu_x = y\tau\mu_{-x} = yh_{-x}$.
	
	Next, we look at the case where $y\sim -x$. Take a unit $e$ such that $e\nsim -x$, then $y\alpha_{-e}$ is a unit and $y\alpha_{-e}\nsim -x$, so we can use the previous case to get
	\begin{align*}
		yh_x &= y\alpha_{-e}\alpha_eh_x \\
		&= (y\alpha_{-e})h_x\alpha_{eh_x} &\text{(by \autoref{lem:huaAut})} \\
		&= (y\alpha_{-e})h_{-x}\alpha_{eh_{-x}} &\text{(by the previous case)} \\
		&= y\alpha_{-e}\alpha_eh_{-x} &\text{(by \autoref{lem:huaAut})} \\
		&= yh_{-x}\;.
	\end{align*}
	
	Thirdly, we look at the case where $y\sim 0$. Take any unit $e$, then $y\alpha_{-e}$ is a unit, so we can repeat the previous argument to get $yh_x = yh_{-x}$.
	
	Finally, we need to cover the case $y\sim\infty$.
	\begin{align*}
		yh_x &= y\tau h_x^{\tau} \tau^{-1} \\
		     &= y\tau h_{-x\tau} \tau^{-1} &\text{(by \autoref{lem:hua}\ref{itm:huatau})} \\
		     &= y\tau h_{x\tau} \tau^{-1} &\text{(by the previous case with $x\tau$ replacing $x$)} \\
		     &= y\tau h_{-x}^\tau \tau^{-1} &\text{(by \autoref{lem:hua}\ref{itm:huatau})} \\
		     &= y h_{-x}\;.
	\end{align*}
	Now we know $yh_x = yh_{-x}$ for all $y\in X$, so $h_x = h_{-x}$.
\end{proof}

This lemma has many implications.

\begin{proposition}\label{prop:mu-involution}
	Let $x$ and $y$ be units in a special local Moufang set with $U_\infty$ abelian and $\abs{\class{X}}>3$.
	\begin{romenumerate}
		\item $\mu_x = \mu_{-x}$, so $\mu_x^2 = \id$;\label{itm:mu-involution}
		\item $\mu_x^{\mu_y} = \mu_{x\mu_y}$;
		\item if $x\alpha_y$ is a unit, then $\mu_x\mu_{x\alpha_y}\mu_y = \mu_y\mu_{x\alpha_y}\mu_x = \mu_{(x\tau\alpha_{y\tau})\tau}$. \label{itm:mucommute}
		\item $h_{x\tau} = h_x^{-1}$;
		\item $h_xh_yh_x = h_{yh_x}$.
	\end{romenumerate}
\end{proposition}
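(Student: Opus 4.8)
The plan is to derive each part of \autoref{prop:mu-involution} from the preceding lemma $h_x = h_{-x}$ together with the identities already collected for $\mu$-maps and Hua maps.

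\textbf{For (i).} Since $h_x = \tau\mu_x$ and $h_{-x} = \tau\mu_{-x}$, the lemma gives $\tau\mu_x = \tau\mu_{-x}$, hence $\mu_x = \mu_{-x}$. Combining with \autoref{lem:mu}\ref{itm:muinv}, which says $\mu_{-x} = \mu_x^{-1}$, we get $\mu_x = \mu_x^{-1}$, i.e. $\mu_x^2 = \id$.

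\textbf{For (ii).} Apply \autoref{lem:mu}\ref{itm:mutau}, which states $\mu_{x\tau} = \mu_{-x}^\tau$, but with $\tau$ replaced by the $\mu$-map $\mu_y$ (legitimate since $\mu$-maps do not depend on the choice of $\tau$, by \autoref{lem:mu}\ref{itm:muindep}). This yields $\mu_{x\mu_y} = \mu_{-x}^{\mu_y}$, and then (i) gives $\mu_{-x} = \mu_x$, so $\mu_{x\mu_y} = \mu_x^{\mu_y}$.

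\textbf{For (iii).} Start from the last statement of \autoref{prop:sumform}: $\mu_y\mu_z\mu_{-x} = \mu_{y\alpha_{-x}}$ where $z = x\tau^{-1}\alpha_{-(y\tau^{-1})}\tau$. Replacing $x$ by $-x$ and $y$ by $x\alpha_y$ (and checking the non-equivalence hypothesis, which holds since $x\alpha_y$ being a unit forces $x\alpha_y\nsim 0$), the right-hand side becomes $\mu_{x\alpha_y\alpha_x}$; but here I will instead substitute so that the conclusion reads $\mu_{x\alpha_y}\mu_w\mu_x$ for an appropriate $w$, and then use (i) to replace $\mu_{-x}$ by $\mu_x$ and $\mu_{y}$ by $\mu_{-y}$ as needed to symmetrize. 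The symmetry $\mu_x\mu_{x\alpha_y}\mu_y = \mu_y\mu_{x\alpha_y}\mu_x$ then follows by applying \autoref{prop:sumform} in the two orders (once with the roles of the two units swapped, noting $z$ is symmetric in $x,y$ up to the $\til{}$ operation which is trivial here by specialness) and invoking (i). The identification of the common value with $\mu_{(x\tau\alpha_{y\tau})\tau}$ comes from tracking the image of $0$ under \autoref{lem:specialsum} or by a direct computation using \autoref{lem:mu}\ref{itm:muform} in the special case. This is the step I expect to be the main obstacle, since it requires carefully matching the index computations in \autoref{prop:sumform} and \autoref{lem:specialsum} against the specialness simplifications.

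\textbf{For (iv) and (v).} For (iv): $h_{x\tau} = \tau\mu_{x\tau}$, and by \autoref{lem:mu}\ref{itm:mutau}, $\mu_{x\tau} = \mu_{-x}^\tau$, so $h_{x\tau} = \tau\tau^{-1}\mu_{-x}\tau = \mu_{-x}\tau$. By (i), $\mu_{-x} = \mu_x$, so $h_{x\tau} = \mu_x\tau = (\tau\mu_x)^{-1}\cdot(\tau\mu_x)\mu_x\tau$... more directly: $h_x^{-1} = \mu_x^{-1}\tau^{-1} = \mu_x\tau^{-1}$ using (i), so I must check $\mu_x\tau = \mu_x\tau^{-1}$, i.e. $\tau^2 = \mu_x^{-1}\mu_x\cdots$; instead I will use \autoref{lem:hua}(i), which gives $h_{x,\tau^{-1}} = h_{x\tau,\tau}^{-1}$, together with the observation that under (i) the dependence on $\tau$ versus $\tau^{-1}$ collapses because $\tau^2 \in H$ acts trivially in the relevant sense — more cleanly, \autoref{lem:hua}\ref{itm:huatau} gives $h_{x\tau} = h_{-x}^\tau$, and by the lemma $h_{-x} = h_x$, so $h_{x\tau} = h_x^\tau = \tau^{-1}h_x\tau = \tau^{-1}\tau\mu_x\tau = \mu_x\tau$; meanwhile $h_x^{-1} = (\tau\mu_x)^{-1} = \mu_x^{-1}\tau^{-1} = \mu_x\tau^{-1}$ by (i), and these agree iff $\mu_x\tau = \mu_x\tau^{-1}$, i.e. $\tau^2 = \id$; this holds because $\tau = \mu_e$ for a unit $e$ and (i) gives $\mu_e^2 = \id$. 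Hence (iv) follows. For (v): substitute $\tau = h_x$ into the identity $h_yh_xh_y = h_{xh_y}$... rather, use \autoref{lem:hua}\ref{itm:huahua}: $h_{xh_y} = h_{-y}h_{x\tau}^{-1}h_y$. By the lemma $h_{-y} = h_y$, and by (iv) $h_{x\tau}^{-1} = h_x$, giving $h_{xh_y} = h_yh_xh_y$, which is (v) after renaming.
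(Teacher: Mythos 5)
Parts (i), (ii), (iv) and (v) of your proposal are correct and follow essentially the same route as the paper: (i) from $h_x=h_{-x}$ together with $\mu_{-x}=\mu_x^{-1}$; (ii) from \autoref{lem:mu}\ref{itm:mutau} with $\mu_y$ playing the role of $\tau$; (iv) from \autoref{lem:hua}\ref{itm:huatau}, $h_{-x}=h_x$ and $\tau^2=\id$ (which you correctly justify via $\tau=\mu_e$ and (i)); (v) from \autoref{lem:hua}\ref{itm:huahua} combined with (iv).

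Part (iii), however, has a genuine gap. You never pin down the substitution into \autoref{prop:sumform} (the first attempt checks the wrong hypothesis --- sumform requires the two substituted units to be non-equivalent to \emph{each other}, not to $0$ --- and is then abandoned for an unspecified one), the symmetry claim is wrong as stated (specialness does not make $\til{}$ trivial; it gives $\til z=-z$, so swapping the two units in the sumform expression replaces $z$ by $-z$ and you would still need (i) to conclude $\mu_{-z}=\mu_z$), and the identification of the common value with $\mu_{(x\tau\alpha_{y\tau})\tau}$ is explicitly deferred as ``the main obstacle'', so this part is not proved. The missing idea is simply to apply \autoref{prop:sumform} to the pair $(x,-y)$: since $\tau$ is an involution by (i) and $\M$ is special, the element occurring there is
\[z = x\tau^{-1}\alpha_{-((-y)\tau^{-1})}\tau = x\tau\,\alpha_{y\tau}\,\tau = 0\,\alpha_{x\tau}\alpha_{y\tau}\tau\;,\]
which is \emph{genuinely} symmetric in $x$ and $y$ by commutativity of $U_\infty$ (no $\til{}$ bookkeeping needed), and is by construction exactly $(x\tau\alpha_{y\tau})\tau$, so the identification of the common value is automatic. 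Sumform then reads $\mu_{-y}\mu_z\mu_{-x}=\mu_{(-x)\alpha_{-y}}$, and using (i) together with $(-x)\alpha_{-y}=-(x\alpha_y)$ and $x\alpha_y=y\alpha_x$ (abelian $U_\infty$) one gets $\mu_z=\mu_y\mu_{x\alpha_y}\mu_x$; interchanging $x$ and $y$ gives $\mu_z=\mu_x\mu_{x\alpha_y}\mu_y$, which is precisely the statement.
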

\begin{proof}\preenum
	\begin{romenumerate}
		\item Since $h_x = h_{-x}$, we also have $\mu_x = \tau^{-1}h_x = \tau^{-1}h_{-x} = \mu_{-x}$.
		\item By \autoref{lem:mu}\ref{itm:mutau}, we have $\mu_{x\mu_y} = \mu_{-x}^{\mu_y}$, so this follows from \ref{itm:mu-involution}.
		\item Let $z = x\tau\alpha_{y\tau}\tau$. Then, by \autoref{prop:sumform}, we have 
		\[\mu_{-y}\mu_z\mu_{-x} = \mu_{(-x)\alpha_{-y}}\;,\]
		so by \ref{itm:mu-involution} we get $\mu_z = \mu_x\mu_{y\alpha_x}\mu_y$. If we interchange $x$ and $y$, $z$ remains the same by the commutativity of $U_{\infty}$, so we also get $\mu_z = \mu_y\mu_{x\alpha_y}\mu_x$. By the commutativity of $U_{\infty}$ again, $x\alpha_y = 0\alpha_x\alpha_y = y\alpha_x$.
		\item Using the fact that $\tau$ is a $\mu$-map and hence an involution and \autoref{lem:hua}\ref{itm:huatau}, we get
		\[h_{x\tau} = h_{-x}^{\tau} = \tau\tau\mu_{-x}\tau = (\tau\mu_x)^{-1} = h_x^{-1}\;.\]
		\item By \autoref{lem:hua}\ref{itm:huahua} and the previous property, we get
		\[h_{yh_x} = h_{-x}h_{y\tau}^{-1}h_x = h_xh_yh_x\;.\qedhere\]
	\end{romenumerate}
\end{proof}

We can now wonder what the relation is between $\mu_x$ and $\mu_{x\cdot\ell}$, and similarly between $h_x$ and $h_{x\cdot\ell}$

\begin{proposition}\label{prop:mu-hua-a.s-part}
	Let $\M$ be a special local Moufang set with $U_\infty$ abelian and take $n\in\N$. Assume that for all units $x$ and all $1\leq k\leq n$, $x\cdot k$ is also a unit. Then we have $y\mu_x\cdot \ell^2 = y\mu_{x\cdot \ell}$ and $yh_x\cdot \ell^2 = yh_{x\cdot \ell}$ for all units $y$ and for $\ell\in\{n,n^{-1}\}$.
\end{proposition}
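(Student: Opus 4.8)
The plan is to collapse the four assertions to a single statement about $\mu$-maps and prove it by induction on $n$. The case $n=1$ is vacuous ($\ell=1$, $x\cdot\ell=x$, $z\cdot\ell^2=z$), so assume $n\geq2$. Then $x$ and $x\cdot2$ are units in distinct classes: if $x\sim x\cdot2$ then $\induced{\alpha_x}=\induced{\alpha_{x\cdot2}}=\induced{\alpha_x}^2$, forcing $\induced{\alpha_x}=\id$ and hence $x\sim0$, a contradiction; so $\abs{\class X}>3$ and \autoref{prop:mu-involution} together with the lemma preceding it are available. In particular every $\mu$-map, and therefore $\tau$, is an involution. Also, by \autoref{prop:ndivglobal} the group $U_\infty$ is uniquely $k$-divisible for $1\leq k\leq n$, hence also uniquely $n^2$-divisible (the $n^2$-power map is the square of the bijective $n$-power map), so the roots $\cdot\,\tfrac1{n^2}$ appearing below are well defined.

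It is enough to prove
\[(\star_n)\qquad y\mu_{x\cdot n}=(y\mu_x)\cdot n^2\qquad\text{for all units }x,y\]
(where "$x$ a unit" already forces $x\cdot k$ to be a unit for $1\leq k\leq n$, by hypothesis). Indeed, using $h_x=\tau\mu_x$, $\tau^2=\id$ and $z:=y\tau$ — a unit exactly when $y$ is — one gets $yh_{x\cdot n}=z\mu_{x\cdot n}$ and $(yh_x)\cdot n^2=(z\mu_x)\cdot n^2$, so the Hua-version for $\ell=n$ is $(\star_n)$ applied to $z$. Moreover, applying $(\star_n)$ with $x$ replaced by the unit $x\cdot\tfrac1n$ (legitimate by \autoref{lem:ndiv}\ref{lem:ndiv-iv}, which also gives the required divisibility), so that $(x\cdot\tfrac1n)\cdot n=x$, yields $y\mu_x=(y\mu_{x\cdot\frac1n})\cdot n^2$; by unique $n^2$-divisibility this is exactly $y\mu_{x\cdot\frac1n}=(y\mu_x)\cdot\tfrac1{n^2}$, the $\mu$-version for $\ell=n^{-1}$, and the Hua-version for $\ell=n^{-1}$ then follows as above.

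Now prove $(\star_n)$ by induction on $n$, the case $n=1$ being trivial; assume $(\star_{n-1})$ for all units. Work inside the abelian group $A:=(X\setminus\class\infty,+)$, in which $x\cdot k=kx$, $\alpha_{kx}=\alpha_x^k$, and $\alpha_w$ acts as $t\mapsto t+w$; since $H$ normalizes $U_\infty$ we have $\alpha_y^h=\alpha_{yh}$ for $h\in H$, so each element of $H$ acts on $A$ by a group automorphism, while $z\mapsto z\cdot n^2$ is a group endomorphism of $A$. Reading \autoref{lem:specialsum} in this additive notation gives the sum formula $p\mu_{p+q}=p\mu_q-p-2q$ for units $p,q,p+q$. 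Taking $p=x$ and $q=x\cdot(n-1)$ (so $p+q=x\cdot n$) and inserting $x\mu_{x\cdot(n-1)}=(x\mu_x)\cdot(n-1)^2=-(x\cdot(n-1)^2)$ from $(\star_{n-1})$, the identity $(n-1)^2+2(n-1)+1=n^2$ yields the diagonal case
\[x\mu_{x\cdot n}=-(x\cdot n^2)=(x\mu_x)\cdot n^2.\]
Because $\mu_x$ is an involution we may write $\mu_{x\cdot n}=\mu_x\eta$ with $\eta:=\mu_x\mu_{x\cdot n}\in H$, and since $\mu_x$ permutes the units, $(\star_n)$ is equivalent to $z\eta=z\cdot n^2$ for all units $z$. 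The set $\{z\in A:z\eta=z\cdot n^2\}$ is a subgroup of $A$ (the equaliser of two group endomorphisms), and by the diagonal case — using $x\eta=(x\mu_x)\mu_{x\cdot n}=(-x)\mu_{x\cdot n}=-(x\mu_{x\cdot n})=x\cdot n^2$ and \autoref{prop:specialmu}(i) — it contains $x$.

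It remains to show this subgroup is all of $A$, equivalently that it contains every unit (the units generate $A$: for $z\sim0$ and any unit $e$, $z-e$ is again a unit and $z=(z-e)+e$). This is the technical heart, and the main obstacle, since the sum formula can only increase or decrease the $\mu$-index by multiples of the \emph{base} point. I would resolve it by exploiting the formula at scaled base points: applied with $p=x\cdot(n-j)$ and $q=x\cdot j$ it expresses $(x\cdot(n-j))\mu_{x\cdot n}$ in terms of $(x\cdot(n-j))\mu_{x\cdot j}$, which is known from $(\star_j)$ with $j<n$ and from \autoref{lem:ndiv}\ref{lem:ndiv-i} (computing $(x\cdot m)\mu_x$); combining enough such relations with the unique $k$-divisibility of $U_\infty$ one verifies $z\eta=z\cdot n^2$ on a generating set of $A$, and then additivity of $\eta$ and of $z\mapsto z\cdot n^2$ closes the induction.
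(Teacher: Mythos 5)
Your reductions are fine: collapsing the four statements to $(\star_n)$, the diagonal computation $x\mu_{x\cdot n}=-(x\cdot n^2)$ via \autoref{lem:specialsum} and $(\star_{n-1})$, and the reformulation of $(\star_n)$ as ``$z\eta=z\cdot n^2$ for all units $z$'' with $\eta=\mu_x\mu_{x\cdot n}\in H$ are all correct. But the proof is not complete, and the gap is exactly where you flag it. The equaliser $\{z\in A\mid z\eta=z\cdot n^2\}$ is indeed a subgroup containing $x$, yet your plan for showing it contains every unit only produces relations at the points $p=x\cdot(n-j)$, $q=x\cdot j$: every element you can reach this way lies in the cyclic subgroup $\Z x\leq A$. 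The units of $A$ are in general nowhere near contained in $\Z x$ (already for $\M(R)$ with $R=k[[t]]$ and $k$ a large field, $\Z x$ is a tiny subgroup of $(R,+)$), so the ``generating set of $A$'' on which you hope to verify $z\eta=z\cdot n^2$ is not reachable by iterating the sum formula at scaled base points. If instead you decompose $x\cdot n=w+q$ with $w$ a general unit, the formula produces the term $w\mu_{x\cdot n-w}$, whose base point $x\cdot n-w$ is not covered by any induction hypothesis, so the scheme does not close. In short: the diagonal case is proved, the general case is not, and the sketched method cannot bridge the two.

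For comparison, the paper needs no induction on $n$ and no passage through $\eta$: it computes $y\mu_{x\cdot n}$ directly for an \emph{arbitrary} unit $y$ with $y\nsim -x\cdot n$, using the factorization $\mu_{x\cdot n}=\alpha_{x\cdot n}\,\tau\,\alpha_{-(x\cdot n)\tau}\,\tau\,\alpha_{x\cdot n}$ from \autoref{prop:specialmu}\ref{prop:specialmu-ii} and repeatedly moving the factor $n$ past $\tau$ via \autoref{cor:ndiv} (writing $y\alpha_{x\cdot n}=\bigl(\bigl(y\cdot\tfrac1n\bigr)\alpha_x\bigr)\cdot n$, then $(z\cdot n)\tau=z\tau\cdot\tfrac1n$, etc.), which telescopes to $\bigl(y\cdot\tfrac1n\bigr)\mu_x\cdot n=y\mu_x\cdot n^2$; the excluded case $y\sim -x\cdot n$ is then handled by applying the result to $-y$ and using specialness. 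If you want to rescue your argument, you should replace the last step by this kind of direct computation for a general unit $y$ (or find another identity that controls $\mu_{x\cdot n}$ on units outside $\Z x$); the subgroup/equaliser framing by itself does not supply it.
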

\begin{proof}
	If $n=1$, there is nothing to prove, so we assume $n\geq 2$. In this case, as $x\nsim x\cdot 2$ for any unit $x$, we get $\abs{\class{X}}>3$.
	
	First assume $y$ is a unit with $y\nsim -x\cdot n$. Then
	\begin{align*}
		y\mu_{x\cdot n} &= y\alpha_{x\cdot n}\tau\alpha_{-(x\cdot n)\tau}\tau\alpha_{x\cdot n} \\
		&= \bigl(\bigl(y\cdot\tfrac{1}{n}\bigr)\alpha_x\cdot n\bigr)\tau\alpha_{-(x\cdot n)\tau}\tau\alpha_{x\cdot n} \\
		&= \bigl(\bigl(y\cdot\tfrac{1}{n}\bigr)\alpha_x\tau\cdot\tfrac{1}{n}\bigr)\alpha_{-x\tau\cdot\frac{1}{n}}\tau\alpha_{x\cdot n} \\
		&= \bigl(\bigl(y\cdot\tfrac{1}{n}\bigr)\alpha_x\tau\alpha_{-x\tau}\cdot\tfrac{1}{n}\bigr)\tau\alpha_{x\cdot n} \\
		&= \bigl(\bigl(y\cdot\tfrac{1}{n}\bigr)\alpha_x\tau\alpha_{-x\tau}\tau\cdot n\bigr)\alpha_{x\cdot n} \\
		&= \bigl(y\cdot\tfrac{1}{n}\bigr)\alpha_x\tau\alpha_{-x\tau}\tau\alpha_{x}\cdot n \\
		&= \bigl(y\cdot\tfrac{1}{n}\bigr)\mu_x\cdot n = y\mu_x\cdot n^2
	\end{align*}
	If $y\sim -x\cdot n$, we use the previous case to get $(-y)\mu_{x\cdot n} = (-y)\mu_x\cdot n^2$, so as $\M$ is special, we also get $y\mu_{x\cdot n}=y\mu_x\cdot n^2$, which now holds for all units $y$.
	
	By substituting $x$ by $x\cdot \frac{1}{n}$, we get
	\[y\mu_x = y\mu_{x\cdot\frac{1}{n}}\cdot n^2\text{, so }y\mu_{x\cdot\frac{1}{n}} = y\mu_x\cdot\tfrac{1}{n^2}\;,\]
	so we get $y\mu_{x\cdot \ell} = y\mu_x\cdot \ell^2$ for both values of $\ell$.
	
	Using the previous case, we get, for all units $y$.
	\[yh_x\cdot \ell^2 = y\tau\mu_x\cdot \ell^2 = y\tau\mu_{x\cdot \ell} = yh_{x\cdot \ell}\;.\qedhere\]
\end{proof}

\subsection{A paired structure}

We would now like to know what $y\mu_{x\cdot\ell}$ is when $y$ is not a unit. Of course, if $y\sim 0$, we get $y\mu_x \sim \infty$, so it would not make sense to compare $y\mu_{x\cdot\ell}$ to $y\mu_x \cdot\ell^2$, since the second expression is not even defined. To resolve this, we also use the `multiplication by $n$' for $U_0$. In short, we consider two opposite structures in the local Moufang set, one corresponding to $U_\infty$ and one corresponding to $U_0$.

\begin{definition}
	For $x\in X\setminus\overline{0}$ and $k\geq 1$, we define 
	\[x\cdottil k:=\infty\cdot\gamma_{x\tau^{-1}}^k\;.\notatlink{xntil}\]
\end{definition}

\begin{remark}
	The definition of $\cdottil$ corresponds to the definition of $\cdot$ when we interchange $0$ and $\infty$. As we could repeat the previous sections with $0$ and $\infty$ interchanged, we still get valid statements when we replace $\cdot$ with $\cdottil$ in \autoref{cor:ndiv} and \autoref{prop:mu-hua-a.s-part}.
	
	Even though $\tau$ appears in the definition of $\cdottil$, remember that $\gamma_{x\tau^{-1}}$ is the unique element of $U_0$ mapping $\infty$ to $x$ (independent of $\tau$), so $\cdottil$ does not depend on the choice of $\tau$.
\end{remark}

The first thing we can observe is that $\cdot$ and $\cdottil$ are closely related:

\begin{lemma}
	Let $\M$ be a special local Moufang set.
	\begin{romenumerate}
		\item If $x\nsim\infty$, then $(x\cdot n)\tau = x\tau\cdottil n$. \label{itm:xnt}
		\item If $x\nsim0$, then $(x\cdottil n)\tau = x\tau\cdot n$.
	\end{romenumerate}
\end{lemma}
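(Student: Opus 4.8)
The statement to prove is the compatibility of the two ``multiplication by $n$'' operations with $\tau$: namely $(x\cdot n)\tau = x\tau\cdottil n$ for $x\nsim\infty$, and $(x\cdottil n)\tau = x\tau\cdot n$ for $x\nsim 0$. Since $\cdottil$ is defined from $\cdot$ by interchanging the roles of $0$ and $\infty$ (and since everything developed for $U_\infty$ dualizes to $U_0$, as the remark preceding the lemma notes), the two parts are formally equivalent: applying $\tau$ to both sides of (i) and using $\tau^2=\id$ turns it into (ii) with $x\tau$ in place of $x$. So the plan is to prove (i) and then deduce (ii) by this symmetry.

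For (i), the plan is to unwind the definitions. We have $x\cdot n = 0\cdot\alpha_x^n$, so $(x\cdot n)\tau = 0\cdot\alpha_x^n\tau$. Inserting $\tau^{-1}\tau$ between the $\alpha_x$ factors and using $0\tau^{-1}=\infty$ (recall $\tau$ is a $\mu$-map, hence swaps $0$ and $\infty$ and $\tau^{-1}=\tau$ up to the special structure), we can rewrite $\alpha_x^n\tau = \tau(\tau^{-1}\alpha_x\tau)^n = \tau\,\gamma_x^{\,n}$... but one must be careful: $\gamma_x = \alpha_x^\tau = \tau^{-1}\alpha_x\tau$ is the unique element of $U_0$ mapping $\infty$ to $x\tau$, whereas $\cdottil$ is defined using $\gamma_{x\tau^{-1}}$, the unique element of $U_0$ mapping $\infty$ to $x$. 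Writing $y := x\tau$, so $x = y\tau^{-1}$, we get $0\cdot\alpha_x^n\tau = 0\tau\cdot(\alpha_x^\tau)^n = \infty\cdot\gamma_x^{\,n} = \infty\cdot\gamma_{(y\tau^{-1})}^{\,n}\cdot{}$; wait---$\gamma_x$ has subscript $x$, but $\cdottil$ applied to $y$ uses subscript $y\tau^{-1}=x$. Indeed $y\cdottil n = \infty\cdot\gamma_{y\tau^{-1}}^{\,n} = \infty\cdot\gamma_x^{\,n}$, which is exactly what we computed $(x\cdot n)\tau$ to equal. So (i) should follow by a direct translation between the definitions of $\cdot$, $\gamma$, and $\cdottil$, together with $0\tau=\infty$ and the conjugation identity $\alpha_x^\tau = \gamma_x$.

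The only subtlety---and the place I expect mild friction rather than a genuine obstacle---is bookkeeping of the subscripts and of which element of $U_0$ is meant by $\gamma_?$, since $\gamma_x$ and $\gamma_{x\tau^{-1}}$ play different roles. One should also double-check that $x\nsim\infty$ is exactly the hypothesis needed for $\alpha_x$ (equivalently $\gamma_{x\tau^{-1}}$) to be defined, and that $x\nsim\infty \iff x\tau\nsim 0$, which holds because $\tau$ preserves $\sim$ and swaps the classes $\overline{0}$ and $\overline{\infty}$ (this is already recorded in the lemma after \autoref{prop:mu_defin}). The specialness hypothesis on $\M$ is presumably not strictly needed for this particular computation---it is inherited from the section's standing assumptions---but it does no harm to invoke it if convenient; in any case the identity $(x\cdot n)\tau = x\tau\cdottil n$ is a pure consequence of the definitions of $\cdot$ and $\cdottil$ plus $\alpha_x^\tau=\gamma_x$, so the proof should be short. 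Once (i) is established, apply $\tau$ to both sides: $(x\cdot n)\tau^2 = (x\tau\cdottil n)\tau$, i.e. $x\cdot n = (x\tau\cdottil n)\tau$; replacing $x$ by $x\tau^{-1}$ throughout (legitimate since $x\tau^{-1}\nsim\infty \iff x\nsim 0$) yields $(x\cdottil n)\tau = x\tau\cdot n$, which is (ii).
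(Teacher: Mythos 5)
Your part (i) is correct and is essentially the paper's own computation read from left to right: $(x\cdot n)\tau = 0\alpha_x^n\tau = \infty(\alpha_x^\tau)^n = \infty\gamma_x^n = x\tau\cdottil n$, and you handle the subscript issue correctly, since $x\tau\cdottil n$ is by definition $\infty\gamma_{(x\tau)\tau^{-1}}^n = \infty\gamma_x^n$.

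The gap is in your deduction of (ii). You pass from (i) to (ii) by applying $\tau$ and cancelling $\tau^2$, i.e.\ you use $\tau^2=\id$ (and you assert parenthetically that $\tau^{-1}=\tau$ ``up to the special structure''); moreover your final substitution actually produces $x\tau^{-1}\cdot n$, which you identify with $x\tau\cdot n$ --- the same unjustified identity again. But the lemma assumes only that $\M$ is special: the involution property of $\mu$-maps is \autoref{prop:mu-involution}, which needs in addition that $U_\infty$ be abelian and $\abs{\class{X}}>3$, and neither is available here. Without $\tau^2=\id$ your manipulation only gives $(x\cdottil n)\tau = (x\tau^{-1}\cdot n)\tau^2$, which is not the assertion. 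The paper avoids this entirely: part (i), whose proof uses nothing about $\tau$ beyond its being a $\mu$-map, also holds with $\tau^{-1}=\mu_{-e}$ in place of $\tau$ (recall $\cdot$ and $\cdottil$ do not depend on the choice of $\tau$), and applying that version to $x\tau$ (which satisfies $x\tau\nsim\infty$ when $x\nsim 0$) gives $(x\tau\cdot n)\tau^{-1} = x\cdottil n$, i.e.\ $(x\cdottil n)\tau = x\tau\cdot n$. Equivalently, one can note that by \ref{axiom:LM2} and uniqueness $\gamma_{x\tau^{-1}} = \alpha_{x\tau}^{\tau^{-1}}$, whence $(x\cdottil n)\tau = \infty\tau\alpha_{x\tau}^n = 0\alpha_{x\tau}^n = x\tau\cdot n$ directly. (A third repair: $\tau^2\in H$ normalizes $U_\infty$, so $(y\cdot n)\tau^2 = y\tau^2\cdot n$, which would also close your argument --- but as written the step is not justified by the lemma's hypotheses.)
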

\begin{proof}\preenum
	\begin{romenumerate}
		\item We have
		\[x\tau\cdottil n = \infty\gamma_x^n = \infty\alpha_x^{\tau n} = \infty\tau^{-1}\alpha_x^n\tau = (0\alpha_x^n)\tau = (x\cdot n)\tau\;.\]
		\item This follows from~\ref{itm:xnt}, using $x\tau^{-1}$ and replacing $\tau^{-1}$ by $\tau$.\qedhere
	\end{romenumerate}
\end{proof}

Combining this with \autoref{cor:ndiv}, we are able to express $\cdot\frac{1}{n}$ in terms of $\cdottil$ and we can extend \autoref{prop:mu-hua-a.s-part}:

\begin{proposition}\label{prop:mu-x.s}
	Let $\M$ be a special local Moufang set with abelian root groups, and take $n\in\N$. Assume that for all units $x$ and all $1\leq k\leq n$, $x\cdot k$ is also a unit. Let $\ell\in\{n,n^{-1}\}$. Then for all units $x$, we have
	\begin{romenumerate}
		\item $x\cdottil\ell = x\cdot\ell^{-1}$, hence $(x\cdot\ell)\tau = x\tau\cdottil\ell$ and $(x\cdottil\ell)\tau = x\tau\cdot\ell$;
		\item For $y\nsim\infty$, we have $yh_{x\cdot \ell} = yh_x\cdot \ell^2$.\label{itm:h-x.sfirst}
		\item For $y\nsim 0$, we have $yh_{x\cdot \ell^{-1}} = yh_{x\cdottil \ell} = yh_x\cdottil \ell^2$.
		\item For $y\nsim\infty$, we have $y\mu_{x\cdot\ell^{-1}} = y\mu_{x\cdottil\ell} = y\mu_x\cdottil\ell^{2}$;\label{itm:mu-x.sfirst}
		\item For $y\nsim 0$, we have $y\mu_{x\cdot\ell} = y\mu_x\cdot\ell^2$.\label{itm:mu-x.s}
	\end{romenumerate}
\end{proposition}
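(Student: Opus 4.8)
The plan is to prove the five statements of \autoref{prop:mu-x.s} in the order listed, deriving each from the previous ones together with the already-established \autoref{prop:mu-hua-a.s-part}, \autoref{cor:ndiv} and the lemma relating $\cdot$ and $\cdottil$. The philosophy throughout is that $\cdot$ governs elements not equivalent to $\infty$ and $\cdottil$ governs elements not equivalent to $0$, and that $\tau$ (which is special, so $(-x)\tau = -(x\tau)$) intertwines the two; by symmetry (interchanging $0$ and $\infty$, $\cdot$ and $\cdottil$, as noted in the remark after the definition of $\cdottil$) every statement proved for $\cdot$ has a mirror image for $\cdottil$, and I will use this repeatedly rather than re-proving things.

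For (i): since $x$ is a unit, $x\cdot k$ is a unit for $1\le k\le n$ by hypothesis, so $x\cdot\frac1n$ is defined by \autoref{cor:ndiv}, and $(x\cdot\frac1n)\tau = x\tau\cdot n$ by \autoref{cor:ndiv}(ii). Applying the lemma $(x'\cdot n)\tau = x'\tau\cdottil n$ with $x' = x\cdot\frac1n$ gives $x\tau\cdottil n = ((x\cdot\frac1n)\cdot n)\tau = x\tau\cdot n$... wait — more directly: apply the identity $(z\cdot n)\tau = z\tau\cdottil n$ with $z = x\cdot\frac1n$; then the left side is $x\tau$ while the right side is $(x\cdot\frac1n)\tau\cdottil n$, and combined with $(x\cdot\frac1n)\tau = x\tau\cdot n$ this yields, after replacing $x$ by $x\tau^{-1}$ and $\tau$ by $\tau^{-1}$, the identity $x\cdottil\frac1n = x\cdot n$, equivalently $x\cdottil n = x\cdot\frac1n$; running the same argument for $\ell = n$ itself gives $x\cdottil n = x\cdot n^{-1}$, i.e.\ $x\cdottil\ell = x\cdot\ell^{-1}$ for $\ell\in\{n,n^{-1}\}$. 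The two displayed consequences then follow by applying the lemma again and substituting.

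For (ii)–(v): statement (ii) is literally the second identity of \autoref{prop:mu-hua-a.s-part} (valid since $y\nsim\infty$ reduces, via $\alpha_y = \alpha_{y\alpha_{-e}}\alpha_e$ for a unit $e$ and \autoref{lem:huaAut}, to the case of units, exactly as in \autoref{prop:ndivglobal}), so I would just cite it or re-run that reduction. Statement (iii) is the mirror of (ii) under $0\leftrightarrow\infty$, so it holds by the symmetry remark, once one checks — using (i) — that $x\cdot\ell^{-1} = x\cdottil\ell$, which identifies the two left-hand sides. For (iv): write $y\mu_{x\cdottil\ell}$; since $\mu_{x'} = \tau^{-1}h_{x',\tau}$ and $h$ transforms by $\ell^2$ under $\cdottil$-scaling of the index (part (iii)), I get $y\mu_{x\cdottil\ell} = y\tau^{-1}h_{x\cdottil\ell} = (y\tau^{-1})h_x\cdottil\ell^2 = y\mu_x\cdottil\ell^2$ — here I must be careful that $y\nsim\infty$ forces $y\tau^{-1}\nsim 0$ so that (iii) applies, and that $\cdottil\ell^2$ commutes past $\tau^{-1}$ correctly, which again is the intertwining lemma; the identification $x\cdottil\ell = x\cdot\ell^{-1}$ from (i) gives the first equality. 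Statement (v) is the mirror of (iv) (or of \autoref{prop:mu-hua-a.s-part}'s $\mu$-identity) under $0\leftrightarrow\infty$, valid for $y\nsim 0$.

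The main obstacle I anticipate is purely bookkeeping rather than conceptual: keeping straight which of $\cdot$, $\cdottil$, $\frac1n$, $\cdottil\frac1n$ appears where, and in particular making sure every invocation of \autoref{cor:ndiv} or \autoref{prop:mu-hua-a.s-part} is legitimate — those require the relevant argument to be a unit whose first $n$ multiples are units, and one must verify (using \autoref{cor:ndiv}(iii), which propagates the unit-divisibility hypothesis along $\sim$-classes, and \autoref{lem:timesn_sim}) that $x\cdot\frac1n$, $x\cdottil\ell$, etc.\ still satisfy this when $x$ does. The genuinely delicate point is the equivalence-class side conditions ($y\nsim\infty$ versus $y\nsim 0$) in (ii)–(v): applying $\tau$ or $\tau^{-1}$ swaps $\overline 0$ and $\overline\infty$, so I must track which half of the dichotomy I am in at each step, and this is exactly where an off-by-one in the $0/\infty$ symmetry would produce a false statement; I would therefore prove (ii) and (iv) carefully by hand and invoke symmetry only for (iii) and (v).
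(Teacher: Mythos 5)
Your proposal is correct and follows essentially the same route as the paper's proof: (i) from \autoref{cor:ndiv} together with the lemma intertwining $\cdot$, $\cdottil$ and $\tau$ (the paper also gets the $\ell=n^{-1}$ case by switching $0$ and $\infty$), (ii) by extending \autoref{prop:mu-hua-a.s-part} from units to $y\sim 0$ via $\alpha_y=\alpha_{y\alpha_{-e}}\alpha_e$ and \autoref{lem:huaAut}, and (iv), (v) via $\mu_x=\tau^{-1}h_x$ with the appropriate $y\tau^{-1}$ landing in the right half of the dichotomy. The only cosmetic difference is that for (iii) and (v) the paper conjugates by $\tau$ explicitly, using $h_{x\tau}=h_{-x}^{\tau}$ and $(z\cdot\ell^2)\tau^{-1}=z\tau^{-1}\cdottil\ell^2$, instead of appealing to the $0\leftrightarrow\infty$ symmetry remark; the two arguments coincide here because in the special, abelian-root-group setting the $\mu$- and Hua maps attached to a unit are unchanged when the basis is reversed, a small identification your symmetry appeal leaves implicit.
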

\begin{proof}\preenum
	\begin{romenumerate}
		\item By \autoref{cor:ndiv} and the previous lemma, we have
		\[(x\cdottil n)\tau = x\tau\cdot n = \bigl(x\cdot\tfrac{1}{n}\bigr)\tau\;.\]
		so $x\cdottil n = x\cdot\frac{1}{n}$. By switching the roles of $0$ and $\infty$, we also get $x\cdottil\frac{1}{n} = x\cdot n$. Furthermore, we get $x\tau\cdottil\ell = x\tau\cdot\ell^{-1} = (x\cdot\ell)\tau$ and $x\tau\cdot\ell = (x\cdot\ell^{-1})\tau = (x\cdottil\ell)\tau$.
		\item We already know the identity if $y$ is a unit, so we only need to treat the case where $y\sim 0$. In that case, $y = y\alpha_{-e}\alpha_e$, so
		\begin{align*}
			yh_{x\cdot\ell} &= y\alpha_{-e}\alpha_eh_{x\cdot\ell} \\
				&= (y\alpha_{-e})h_{x\cdot\ell}\alpha_e^{h_{x\cdot\ell}} \\
				&= (y\alpha_{-e}h_x\cdot\ell^2)\alpha_{eh_{x\cdot\ell}} \\
				&= (y\alpha_{-e}h_x\cdot\ell^2)\alpha_{eh_x\cdot\ell^2} \\
				&= (y\alpha_{-e}h_x\alpha_{eh_x})\cdot\ell^2 \\
				&= (y\alpha_{-e}\alpha_e)h_x\cdot\ell^2 \\
				&= yh_x\cdot\ell^2\;.
		\end{align*}
		\item Now assume $y\nsim 0$, then
		\begin{align*}
			yh_{x\cdot\ell^{-1}} &= y\tau h_{x\cdot\ell^{-1}}^\tau\tau^{-1} \\
				 &= y\tau h_{(-x\cdot\ell^{-1})\tau}\tau^{-1} \\
				 &= y\tau h_{(-x)\tau\cdot\ell}\tau^{-1} \\
				 &= (y\tau h_{(-x)\tau}\cdot\ell^2)\tau^{-1} \\
				 &= (y\tau h_x^\tau\cdot\ell^2)\tau^{-1} \\
				 &= y\tau h_x^\tau\tau^{-1}\cdottil\ell^2 \\
				 &= yh_x\cdottil\ell^2\;.
		\end{align*}
		\item If $y\nsim\infty$, we have $y\tau^{-1}\nsim 0$, so
		\[y\mu_{x\cdottil\ell} = y\tau^{-1}h_{x\cdottil\ell} = y\tau^{-1}h_x\cdottil\ell^2 = y\mu_x\cdottil\ell^2\;.\]
		\item If $y\nsim0$, we have $y\tau^{-1}\nsim\infty$, so
		\[y\mu_{x\cdot\ell} = y\tau^{-1}h_{x\cdot\ell} = y\tau^{-1}h_x\cdot\ell^2 = y\mu_x\cdot\ell^2\;.\qedhere\]
	\end{romenumerate}
\end{proof}

We can now make a paired structure by looking at $X\setminus\class{\infty}$ and $X\setminus\class{0}$, and consider $\mu$-maps as going from one side to the other (and Hua maps as maps from each side to itself). When we restrict ourselves to these sets, \autoref{prop:mu-x.s} indicates that $x\mapsto\mu_x$ is close to being quadratic in some sense. This will be important in defining a Jordan pair in \autoref{sec:LMStoJP}.


	\part{Examples of local Moufang sets and characterizations}\label{part:examples}

	\chapter{Projective local Moufang sets}\label{chap:chap6_projective}
	The first examples of local Moufang sets we consider are those with a projective line over a local ring $R$ as underlying set. We need to define a natural equivalence relation on this set, and define the root groups to finally get a local Moufang set $\M(R)$. Next, we consider a local Moufang set satisfying some extra conditions \hyperref[axiom:R1]{(R1-4)} and we construct a ring from it. We use this construction to characterize which local Moufang sets are isomorphic to $\M(R)$ for some local ring $R$ with characteristic of the residue field different from $2$. In the final section, we give some connections between the Bruhat-Tits tree of $\SL_2$ over a valued field, and these projective local Moufang sets.

\section{Defining projective local Moufang sets}	
\subsection{Projective spaces over local rings}

In this section, let $R$ be a unital, commutative local ring with maximal ideal $\m$. We start with a general way of constructing $n$-dimensional projective spaces over such rings. The geometric properties of these spaces have been studied by F.D.~Veldkamp in \cite{VeldkampGeometryRings}.

\begin{definition}
	Let $(a_0,a_1,\ldots,a_n)\in R^{n+1}$ such that 
	\[a_0R+\cdots+a_nR = R\;.\]
	Then we write 
	\[[a_0,\ldots,a_n] := \{(ua_0,ua_1,\ldots, ua_n)\mid u\in R^\times\}\;.\notatlink{[ab]}\]
	We define the \define{projective space} of dimension $n$ over $R$ as
	\[\P^n(R):= \left\{[a_0,\ldots,a_n]\;\middle|\; 
	\begin{array}{l} (a_0,a_1,\ldots,a_n)\in R^{n+1} \\ a_0R+a_1R+\cdots a_nR=R \end{array}\right\}\;.\notatlink{P1R}\]
\end{definition}

This definition works for general rings, though it gives more structure if we have a local ring. In this case, the condition $a_0R+a_1R+\cdots+a_nR=R$ implies that at least one $a_i$ is invertible. Furthermore, we can use this to find a unique representative for each point of $\P^n(R)$: if $[a_0,a_1,\ldots,a_n]\in\P^n(R)$, we take the minimal $i$ for which $a_i$ is invertible, and we rescale by $a_i^{-1}$. This means that on the positions $0,\ldots,i-1$, we have elements of $\m$, then we have a $1$, and on positions $i+1,\ldots,n$, we have elements of $R$. In short, we get a decomposition of $\P^n(R)$ according to these unique representatives:
\begin{align}
	\P^n(R) &= \{[1,r_1,\ldots,r_n]\mid r_1,\ldots,r_n\in R\} \notag\\
		&\quad\cup \{[m_0,1,r_2,\ldots,r_n]\mid m_0\in\m, r_2,\ldots,r_n\in R\} \notag\\
		&\quad\cup \{[m_0,m_1,1,r_3,\ldots,r_n]\mid m_0,m_1\in\m, r_3,\ldots,r_n\in R\} \notag\\
		&\quad\cup \cdots \notag\\
		&\quad\cup \{[m_0,m_1,\ldots,m_{n-1},1]\mid m_0,\ldots,m_{n-1}\in\m\} \label{eq:projspace_representatives}
\end{align}
Another way of defining the projective space, which does not require the choice of a basis for $R^{n+1}$, is
\[\P^n(R) = \{V\leq R^{n+1}\mid V\text{ is a free submodule of rank $1$}\}\;.\]
It is useful to observe what happens to the projective space when we take a local ring homomorphism $\phi$ from $R$ to another local ring $S$. As invertible elements are mapped to invertible elements, 
\[\P^n(\phi)\colon \P^n(R)\to \P^n(S)\colon [a_0,a_1,\ldots,a_n]\mapsto[\phi(a_0),\phi(a_1),\ldots,\phi(a_n)]\]
is a well-defined map. Note that this makes $\P^n$ into a functor from the category of local rings with only local ring homomorphisms to the category of sets.

\begin{lemma}\label{lem:projspace_representatives}
	Let $\phi\colon R\to S$ be a local ring homomorphism, take any $[a_0,a_1,\ldots,a_n]\in\P^n(R)$ and let
	\[\P^n(\phi)([a_0,\ldots,a_n]) = [b_0,\ldots,b_n]\;.\]
	If we choose the unique representatives for $[a_0,\ldots,a_n]$ and $[b_0,\ldots,b_n]$ as in \eqref{eq:projspace_representatives}, we have $\phi(a_i)=b_i$ for all $i\in\{0,\ldots,n\}$.
\end{lemma}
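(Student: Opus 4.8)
The statement to prove is \autoref{lem:projspace_representatives}: given a local ring homomorphism $\phi\colon R\to S$ and a point $[a_0,\ldots,a_n]\in\P^n(R)$ written in the canonical form of \eqref{eq:projspace_representatives}, the image $\P^n(\phi)([a_0,\ldots,a_n])$, also written in canonical form, has coordinates $\phi(a_i)$.

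The plan is to unwind the definition of the canonical representative on both sides and to use the defining property $f(\m)\subset\n$ of a local ring homomorphism (where $\n$ is the maximal ideal of $S$). First I would fix the canonical representative $(a_0,\ldots,a_n)$ of $[a_0,\ldots,a_n]$ and let $i$ be the minimal index for which $a_i$ is invertible; by the construction in \eqref{eq:projspace_representatives} this means $a_0,\ldots,a_{i-1}\in\m$, $a_i=1$, and $a_{i+1},\ldots,a_n\in R$. Applying $\phi$ coordinatewise gives a representative $(\phi(a_0),\ldots,\phi(a_n))$ of the point $\P^n(\phi)([a_0,\ldots,a_n])=[\phi(a_0),\ldots,\phi(a_n)]$. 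Since $\phi$ is a local ring homomorphism, $\phi(a_j)\in\phi(\m)\subset\n$ for $j<i$, so $\phi(a_j)$ is non-invertible in $S$; and $\phi(a_i)=\phi(1)=1$ is invertible. Hence $i$ is again the minimal index at which this representative is invertible, and the representative $(\phi(a_0),\ldots,\phi(a_n))$ is already rescaled so that that entry equals $1$. Therefore it \emph{is} the canonical representative of $[\phi(a_0),\ldots,\phi(a_n))]$ as described in \eqref{eq:projspace_representatives}, which is exactly the claim $b_j=\phi(a_j)$ for all $j$.

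There is essentially no obstacle here; the only point requiring a little care is that passing to $[b_0,\ldots,b_n]$ might a priori require rescaling, and one must check that no rescaling is needed because the pivot entry $\phi(a_i)$ is already $1$ and it sits at the minimal invertible position. That minimality is precisely what $\phi(\m)\subset\n$ guarantees, together with the fact that a ring homomorphism sends $1$ to $1$ and invertible elements to invertible elements. I would also briefly note that the map $\P^n(\phi)$ is well-defined (already remarked in the text just before the lemma), so the only content is the identification of the canonical form, and a one-line proof suffices.
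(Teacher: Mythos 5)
Your proof is correct and follows essentially the same route as the paper: both arguments use that $\phi(\m)\subset\n$ preserves the minimal invertible position and that $\phi(1)=1$ forces the rescaling factor to be trivial. The only cosmetic difference is that the paper introduces the comparison unit $s$ with $\phi(a_i)=sb_i$ and checks $s=1$ at the pivot coordinate, whereas you invoke uniqueness of the canonical representative directly; these amount to the same observation.
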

\begin{proof}
	If $\P^n(\phi)([a_0,\ldots,a_n]) = [b_0,\ldots,b_n]$, the $i$'s for which $a_i$ is invertible are the same as those for which $b_i$ is invertible. Suppose $j$ is the minimal $i$ for which $a_i$ (and hence also $b_i$) is invertible. If $[a_0,\ldots,a_n]$ and $[b_0,\ldots,b_n]$ are the representatives as in \eqref{eq:projspace_representatives}, we have $a_j=1$ and $b_j=1$. Now, by the definition of $\P^n(\phi)$, $\phi(a_i) = sb_i$ for some fixed invertible element $s\in S$ and all $i\in\{0,\ldots,n\}$. By looking at the $j$-th coordinate, $s=1$, so $\phi(a_i)=b_i$ for all $i$.
\end{proof}

Using all this lemma, we can prove the following:
\begin{theorem}\label{thm:inverse_limit_P1}
	Let $R$ be a local ring with completion $\complete{R}$. Then
	\[\P^n(\complete{R}) = \varprojlim \P^n(R/\m^i)\;,\]
	with projection maps $\P^n(p_j)$, with $p_j\colon \complete{R}\to R/\m^j\colon (r_i)_i\mapsto r_j$.
\end{theorem}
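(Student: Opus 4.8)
The plan is to show that the natural maps $\P^n(p_j)\colon \P^n(\complete{R})\to\P^n(R/\m^j)$ form a cone over the inverse system $(\P^n(R/\m^i),\P^n(\phi_{ij}))$ and that this cone is universal, i.e.\ that the induced map to $\varprojlim\P^n(R/\m^i)$ is a bijection. Since $\complete R=\varprojlim R/\m^i$ as a ring (\autoref{thm:ring_inverse_limit}) and $p_j$ is the ring projection, functoriality of $\P^n$ on the category of local rings with local ring homomorphisms immediately gives $\P^n(\phi_{ij})\circ\P^n(p_i)=\P^n(p_j)$, so we do have a cone; the content is the universal property. By \autoref{thm:set_inverse_limit} it suffices to produce a bijection
\[
\Psi\colon \P^n(\complete R)\longrightarrow \left\{([c^{(i)}])_{i}\in\prod_i\P^n(R/\m^i)\;\middle|\; \P^n(\phi_{ij})([c^{(i)}])=[c^{(j)}]\text{ for }i\geq j\right\}
\]
that is compatible with the projections, namely $\Psi([a])=\bigl(\P^n(p_i)([a])\bigr)_i$.

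First I would use \autoref{lem:projspace_representatives}: it says that if we pick, for a compatible family $([c^{(i)}])_i$, the \emph{canonical} representatives of each $[c^{(i)}]$ as in~\eqref{eq:projspace_representatives}, then the ring maps $\phi_{ij}$ send the chosen representative of $[c^{(i)}]$ coordinatewise to the chosen representative of $[c^{(j)}]$. In particular the index $k$ of the first invertible coordinate is the same for all $i$ (invertibility in $R/\m^i$ only depends on the residue, i.e.\ on $R/\m$), so all the canonical representatives have a $1$ in position $k$, entries of the maximal ideal in positions $<k$, and arbitrary entries in positions $>k$. Thus a compatible family is the same data as, for each coordinate position $\ell$, a compatible family $(a^{(i)}_\ell)_i$ in the rings $R/\m^i$ with the constraint that $a^{(i)}_\ell\in\m/\m^i$ for $\ell<k$ and $a^{(i)}_k=1$. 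A compatible family in the $R/\m^i$ is exactly an element of $\complete R$, and the constraints translate to: the element in position $\ell<k$ lies in $\widehat{\m}$ (the maximal ideal of $\complete R$, which is $\varprojlim\m/\m^i$), and position $k$ is $1$. This gives exactly a canonical representative $[\hat a_0,\dots,\hat a_n]\in\P^n(\complete R)$, and we define $\Psi^{-1}$ by sending the compatible family to this point.

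Then I would check the two directions. Surjectivity: given a compatible family, the point just constructed maps under $\P^n(p_i)$ to $[\,p_i(\hat a_0),\dots,p_i(\hat a_n)\,]$, and since $p_i(\hat a_\ell)=a^{(i)}_\ell$ (the $i$-th component of the profinite element we built from the $\ell$-th coordinates), this recovers the canonical representative of $[c^{(i)}]$, hence the family. Injectivity: if $[a],[a']\in\P^n(\complete R)$ have $\P^n(p_i)([a])=\P^n(p_i)([a'])$ for all $i$, replace $a,a'$ by their canonical representatives; these have the same first-invertible index $k$ (detected already at $i=1$), so the rescaling ambiguity disappears, and $p_i(a_\ell)=p_i(a'_\ell)$ for all $i$ forces $a_\ell=a'_\ell$ in $\complete R=\varprojlim R/\m^i$ because that inverse limit is separated. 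Finally $\Psi$ is a cone morphism by construction, and since the target is the inverse limit in $\Set$ with its universal property, $\P^n(\complete R)$ together with the maps $\P^n(p_j)$ satisfies the universal property of the inverse limit.

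The main obstacle is purely bookkeeping: one must be careful that ``canonical representative'' is well defined and genuinely functorial along the tower, which is precisely what \autoref{lem:projspace_representatives} provides, and that invertibility (equivalently, which coordinate becomes the leading $1$) is detected uniformly across all $R/\m^i$ because it only depends on the residue field $R/\m$. Once those two points are in place, the identification of a compatible family of canonical representatives with a single canonical representative over $\complete R$ is mechanical, and no estimates or Cauchy-sequence arguments beyond the definition $\complete R=\varprojlim R/\m^i$ are needed. (Strictly speaking the statement as written abbreviates $\P^1$ to $\P^n$; I would either state and prove it for $\P^n$ and specialize, or simply run the argument with $n=1$, where the two coordinate positions make the case analysis on $k\in\{0,1\}$ completely explicit.)
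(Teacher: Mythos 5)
Your proof is correct and takes essentially the same route as the paper: both arguments rest on the canonical representatives of \eqref{eq:projspace_representatives} and \autoref{lem:projspace_representatives} to assemble a compatible family of points coordinatewise into a single point over $\complete{R}$. The only cosmetic difference is that you establish the universal property by exhibiting a bijection with the explicit set-theoretic limit of \autoref{thm:set_inverse_limit}, whereas the paper checks it directly against an arbitrary cone; the content is the same.
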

\begin{proof}
	We denote $\phi_{ij}$ for the natural projection $R/\m^i\to R/\m^j$ when $i\geq j$. As $\P^n$ is a functor, \ref{axiom:IL1} is satisfied.
	For the universal property, assume $X$ is another set with maps $q_j\colon X\to \P^n(R/\m^j)$ which commute with the maps $\phi_{ij}$. Now take $x\in X$ and write $q_j(x) = [a_{0,j},\ldots, a_{n,j}]$, using representatives from \eqref{eq:projspace_representatives}. As we have 
	\[\P^n(\phi_{ij})([a_{0,i},\ldots, a_{n,i}]) = [a_{0,j},\ldots, a_{n,j}]\]
	for all $i\geq j$, \autoref{lem:projspace_representatives} implies that $\phi_{ij}(a_{k,i}) = a_{k,j}$ for all $k$. This means that for all $k$, $(a_{k,i})_i\in\complete{R}$. As there is some $k$ for which all $a_{k,i}$ are invertible (there even is a number $k$ for which $(a_{k,i})_i=1$), we find $[(a_{0,i})_i,\ldots,(a_{n,i})_i]\in\P^n(\complete{R})$, giving us a map $\psi\colon X\to\P^n(\complete{R})$. This is also the unique $\psi$ satisfying $p_j\circ\psi=q_j$. Hence $\P^n(\complete{R})$ is indeed the inverse limit.
\end{proof}

\subsection{Construction of projective local Moufang sets}

We can now turn to defining projective local Moufang sets. These are generalizations of projective Moufang sets, and hence they should act on a projective line. In our context, we want to add an equivalence relation to this projective line.

\begin{definition}
	Let $(R,\m)$ be a local ring. The \define{projective line} $\P^1(R)$ is the union
	\[\P^1(R) := \{[1,r]\mid r\in R\}\cup\{[m,1]\mid m\in\m\}\;.\notatlink{P1R}\notatlink{[ab]}\]
	We set
	\begin{equation}
	\begin{aligned}
		&[1,r]\sim[1,r']\iff r-r'\in\m	&&\text{ for all $r,r'\in R$} \\
		&[m,1]\sim[m',1]		&&\text{ for all $m,m'\in\m$} \\
		&[m,1]\nsim[1,r]		&&\text{ for all $m\in\m,r\in R$.}
	\end{aligned} \label{eq:projline_equiv}
	\end{equation}
\end{definition}

We can let $\PSL_2(R)$ act on $\P^1(R)$ on the right, and we will construct a local Moufang set using a subgroup and an element of $\PSL_2(R)$.

\begin{proposition}\label{prop:projectiveLMS_preLMS}
	Consider $(\P^1(R),\sim)$ and define
	\begin{align*}
		U &= \left\{\begin{bmatrix}
			1 & r \\
			0 & 1
		\end{bmatrix}\in\PSL_2(R)\;\middle|\; r\in R\right\} &
		\tau &=\begin{bmatrix} 0 & -1 \\ 1 & 0 \end{bmatrix}\in\PSL_2(R)\;.
	\end{align*}
	Then $U$ and $\tau$ preserve the equivalence and satisfy \hyperref[axiom:C1]{\textnormal{(C1-2)}}, where $\infty=[0,1]$ and $0=[1,0]$.
\end{proposition}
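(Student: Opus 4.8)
The plan is to verify each of the three claims in turn: that $U$ preserves $\sim$, that $\tau$ preserves $\sim$, and that the conditions \hyperref[axiom:C1]{(C1-2)} hold with the stated base points.

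First I would handle the equivalence-preservation for $U$. The action of $\begin{bmatrix}1 & r \\ 0 & 1\end{bmatrix}$ sends $[a,b]\mapsto[a,ar+b]$. For a point $[1,s]$ this gives $[1,s+r]$, and $[1,s]\sim[1,s']$ means $s-s'\in\m$, which is clearly preserved since $(s+r)-(s'+r)=s-s'$. For a point $[m,1]$ with $m\in\m$ we get $[m,mr+1]$; since $mr\in\m$, the element $mr+1$ is a unit, so after rescaling this is $[m(mr+1)^{-1},1]$ with $m(mr+1)^{-1}\in\m$, hence still in the second part of $\P^1(R)$ and equivalent to every other such point. Thus $U$ preserves the three clauses of~\eqref{eq:projline_equiv}. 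The same kind of bookkeeping shows $\tau$ preserves $\sim$: $\tau$ sends $[a,b]\mapsto[-b,a]$, so $[1,r]\mapsto[-r,1]$ and $[m,1]\mapsto[-m,1]\sim[1,0]\cdot$(wait, more carefully) $\mapsto[-1,m]$; one checks that $[1,r]$ with $r$ a unit goes to $[-r,1]$, i.e. to the second part, while $[1,r]$ with $r\in\m$ goes to $[-r,1]=[-1,r^{-1}]$... so I would instead argue directly: $\tau$ is an element of $\PSL_2(R)$, and the equivalence $[a,b]\sim[a',b']$ turns out to be exactly ``$[a,b]$ and $[a',b']$ reduce to the same point of $\P^1(R/\m)$''. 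So the cleanest route is to observe that reduction mod $\m$ gives a map $\P^1(R)\to\P^1(R/\m)$ whose fibers are precisely the $\sim$-classes (this is immediate from~\eqref{eq:projline_equiv} together with the unique-representative description), and since $\PSL_2(R)$ acts on $\P^1(R)$ compatibly with the reduction map $\PSL_2(R)\to\PSL_2(R/\m)$ acting on $\P^1(R/\m)$, \emph{every} element of $\PSL_2(R)$ preserves $\sim$. This disposes of equivalence-preservation for both $U$ and $\tau$ at once.

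Next I would verify \ref{axiom:C1} and \ref{axiom:C1'}. With $\infty=[0,1]$, the matrix $\begin{bmatrix}1&r\\0&1\end{bmatrix}$ fixes $[0,1]$, so $U$ fixes $\infty$. The complement $X\setminus\class{\infty}$ is exactly $\{[1,r]\mid r\in R\}$ (since $\class{\infty}=\{[m,1]\mid m\in\m\}$), and $U$ acts on it by $[1,r]\mapsto[1,r+s]$ as $s$ ranges over $R$; this is clearly sharply transitive, since $r+s=r'$ has the unique solution $s=r'-r$ and $[1,s]\ne\mathrm{id}$ forces $s\ne0$. For \ref{axiom:C1'}: the induced action on $\class{X}$ is the action of $U$'s reduction on $\P^1(R/\m)$, which is $\{\overline{[1,\bar r]}\}\cup\{\overline{\infty}\}$; the reduction of $U$ is $\{\begin{bmatrix}1&\bar r\\0&1\end{bmatrix}\}$, acting sharply transitively on $\P^1(R/\m)\setminus\{\overline{\infty}\}$ by the same argument over the field $R/\m$. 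Finally \ref{axiom:C2}: $0:=\infty\tau=[0,1]\tau=[-1,0]=[1,0]$, and indeed $[1,0]\nsim[0,1]$ by the third clause of~\eqref{eq:projline_equiv}; and $\infty\tau^2=[0,1]\tau^2=[0,1]$ since $\tau^2=\begin{bmatrix}-1&0\\0&-1\end{bmatrix}$ is scalar, hence trivial in $\PSL_2(R)$. That $0=[1,0]$ matches the asserted choice of base point.

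The main obstacle is purely bookkeeping with the two coordinate patches of $\P^1(R)$: one has to be careful that after applying a matrix, a representative of the form $[m,\text{unit}]$ or $[\text{unit},r]$ gets renormalized correctly to land in the canonical form of~\eqref{eq:projline_equiv}, and that the ``unit $+$ element of $\m$ is a unit'' fact is used at the right spots. Framing everything through the reduction map $\P^1(R)\twoheadrightarrow\P^1(R/\m)$ and the group reduction $\PSL_2(R)\to\PSL_2(R/\m)$ — so that $\sim$ is literally ``same image in $\P^1(R/\m)$'' and all transitivity statements over $R$ reduce to the classical field case over $R/\m$ — is what keeps the argument short and avoids doing the same renormalization six times; I would structure the write-up around that observation.
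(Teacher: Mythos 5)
Your proof is correct, and for the verification of \ref{axiom:C1}, \ref{axiom:C1'} and \ref{axiom:C2} it proceeds essentially as the paper does (fixing $[0,1]$, translation action on $\{[1,r]\mid r\in R\}$, reduction to the residue field for \ref{axiom:C1'}, and $\tau^2$ scalar for \ref{axiom:C2}). Where you genuinely diverge is in the equivalence-preservation: the paper checks directly that each $u\in U$ preserves the three clauses of \eqref{eq:projline_equiv} and then runs a three-case computation for $\tau$ (using $r\equiv r'\bmod\m\iff r^{-1}\equiv r'^{-1}\bmod\m$ for invertible entries, and $\tau^{-1}=\tau$), whereas you observe that $\sim$ is exactly the fiber relation of the reduction map $\P^1(R)\to\P^1(R/\m)$ induced by the local ring homomorphism $R\to R/\m$, and that the action of any matrix in $\PSL_2(R)$ commutes with reduction; hence \emph{all} of $\PSL_2(R)$ preserves $\sim$, disposing of $U$ and $\tau$ simultaneously. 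This is a legitimate strengthening and is arguably cleaner — it avoids the patch-by-patch renormalization entirely, and it is the same mechanism the paper itself invokes only later, for \ref{axiom:C1'} (via $\class{[a,b]}=[\class{a},\class{b}]$); the price is that you must note the action is well defined on $\P^1(R)$ (if $(a,b)M$ had both coordinates in $\m$, so would $(a,b)$, since $M$ is invertible and $\m$ is an ideal), a point that is implicit in your write-up. The abandoned direct computation for $\tau$ in your first attempt contains a slip in the image of $[m,1]$, but since you discard that route and the final argument does not rely on it, there is no gap.
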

\begin{proof}
	Let $u=\begin{bsmallmatrix} 1 & r \\ 0 & 1 \end{bsmallmatrix}\in U$, then we immediately get $[1,s]\sim[1,s']$ is equivalent to $[1,s]u\sim[1,s']u$. Furthermore, 
	\[[m,1]u = [m,mr+1] = [m(mr+1)^{-1},1]\;,\]
	which means that $u$ also preserves the set $\{[m,1]\mid m\in\m\}$. Hence $u$ preserves equivalence. For $\tau$ we have a few different cases:
	\begin{nrenumerate}
		\item If $r,r'$ are not invertible, then $[1,r]\sim[1,r']$ and also
			\[[-r,1]\sim[-r',1]\;,\]
			as $r,r'\in\m$. This means $[1,r]\tau\sim[1,r']\tau$.
		\item If $r,r'$ are both invertible, then
		\begin{align*}
			[1,r]\sim[1,r'] &\iff r-r'\in\m \iff r\equiv r'\modulo{\m} \\
					&\iff r^{-1}\equiv r'^{-1} \modulo{\m} \iff r'^{-1}-r^{-1}\in\m \\
					&\iff [1,-r^{-1}]\sim[1,-r'^{-1}] \iff [1,r]\tau\sim[1,r']\tau
		\end{align*}
		\item If $m,m'\in\m$, then $[1,m]\sim[1,m']$, and also $[-m,1]\sim[-m',1]$, meaning $[1,m]\tau\sim[1,m']\tau$.
	\end{nrenumerate}
	As $\tau^{-1}=\tau$, we know $\tau$ preserves $\sim$.
	
	Next, we verify the three conditions \ref{axiom:C1}, \ref{axiom:C1'} and \ref{axiom:C2}. For the first, we note that $U$ fixes $[0,1]=:\infty$, and for any $[1,r]$ and $[1,s]$ in $X\setminus\class{\infty}$, there is a unique element of $U$ mapping the first to the second: $\begin{bsmallmatrix} 1 & s-r \\ 0 & 1 \end{bsmallmatrix}$. To show \ref{axiom:C1'}, we use the projection $R\to R/\m \colon r\mapsto\class{r}$. We can see that $\class{[a,b]} = [\class{a},\class{b}]$ and $\induced{\begin{bsmallmatrix} 1 & r \\ 0 & 1 \end{bsmallmatrix}} = \begin{bsmallmatrix} \class{1} & \class{r} \\ \class{0} & \class{1} \end{bsmallmatrix}$. The argument to show \ref{axiom:C1'} is now identical to that for \ref{axiom:C1}, but using the residue field instead of the local ring. The last condition is straightforward: $\infty\tau = [1,0]\nsim \infty$, and $\infty\tau^2 = \infty$.
\end{proof}

We now want to use \autoref{thm:constrMouf} to show that we actually get a local Moufang set. In order to do this, we need to determine the units, $\alpha$-maps and the Hua maps.

\begin{proposition}
	Consider $(\P^1(R),\sim)$ and set
	\begin{align*}
		U &= \left\{\begin{bmatrix}
			1 & r \\
			0 & 1
		\end{bmatrix}\in\PSL_2(R)\;\middle|\; r\in R\right\} &
		\tau &=\begin{bmatrix} 0 & -1 \\ 1 & 0 \end{bmatrix}\in\PSL_2(R)\;.
	\end{align*}
	The units are the points $[1,r]$ with $r$ invertible, we have $\alpha_{[1,r]} = \begin{bsmallmatrix} 1 & r \\ 0 & 1 \end{bsmallmatrix}$ and $h_{[1,r]} = \begin{bsmallmatrix} r^{-1} & 0 \\ 0 & r \end{bsmallmatrix}$.
\end{proposition}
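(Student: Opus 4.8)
The plan is to verify the three claimed descriptions directly by computing in $\PSL_2(R)$ acting on $\P^1(R)$. First I would identify the units: by definition, $x \in \P^1(R)$ is a unit if $x \nsim 0$ and $x \nsim \infty$. Since $\infty = [0,1]$ and $0 = [1,0]$, and using the equivalence \eqref{eq:projline_equiv}, the points equivalent to $\infty$ are exactly $\{[m,1] \mid m \in \m\}$, and the points equivalent to $0 = [1,0]$ are exactly $\{[1,r] \mid r \in \m\}$. Hence the units are precisely the points $[1,r]$ with $r \in R \setminus \m = R^\times$, i.e.\ $r$ invertible.

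Next I would compute $\alpha_{[1,r]}$. By construction (\autoref{constr:MUtau}), $\alpha_x$ is the unique element of $U = U_\infty$ mapping $0 = [1,0]$ to $x$. For $x = [1,r]$ with $r$ a unit, the matrix $\begin{bsmallmatrix} 1 & r \\ 0 & 1 \end{bsmallmatrix}$ sends $[1,0]$ to $[1,r]$ and lies in $U$; since $U$ acts sharply transitively on $X \setminus \class\infty$ (which was established in \autoref{prop:projectiveLMS_preLMS} via \ref{axiom:C1}), this is the unique such element, so $\alpha_{[1,r]} = \begin{bsmallmatrix} 1 & r \\ 0 & 1 \end{bsmallmatrix}$.

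Finally, for the Hua map I would use the definition $h_x = \tau \alpha_x \tau^{-1} \alpha_{-(x\tau^{-1})} \tau\, \alpha_{-(-(x\tau^{-1}))\tau}$ from the notation in \autoref{chap:chap3_constr}, or more simply compute $\mu_x$ first and use $h_x = \tau\mu_x$ once we know $h_x$ has the expected form. The cleanest route: for a unit $x = [1,r]$, compute $x\tau^{-1} = x\tau = [1,r]\begin{bsmallmatrix} 0 & -1 \\ 1 & 0 \end{bsmallmatrix} = [r,-1] = [1,-r^{-1}]$ (rescaling by $r^{-1}$, valid since $r$ is a unit), so $-(x\tau^{-1}) = [1,r^{-1}]$ and its further images are all units, letting me write all the relevant $\alpha$- and $\gamma$-maps as explicit upper/lower triangular matrices. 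Multiplying these matrices in $\PSL_2(R)$ and simplifying should collapse to $\begin{bsmallmatrix} r^{-1} & 0 \\ 0 & r \end{bsmallmatrix}$; as a sanity check, this diagonal matrix does fix both $0$ and $\infty$ as a Hua map must, and it acts on $[1,s]$ by $[1,s] \mapsto [r^{-1}, rs] = [1, r s r] = [1, rsr]$, matching the expected Hua map behaviour $h_x \colon y \mapsto xyx$ of projective (local) Moufang sets.

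The main obstacle will be bookkeeping in the matrix multiplication for $h_{[1,r]}$: one must carefully track the six or so factors (the three conjugates of $\tau$ interleaved with $\alpha$-maps), keep consistent the convention that group actions are on the right so matrix products compose in the written order, and repeatedly rescale representatives in $\P^1(R)$ using invertibility of $r$ — there is no genuine difficulty here, only the risk of sign or transposition errors, so I would organize the computation by first simplifying $\tau\alpha_x\tau^{-1}$ to a lower-triangular matrix and then folding in the remaining factors one at a time.
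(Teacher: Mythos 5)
Your proposal is correct and follows essentially the same route as the paper: read off the units from the equivalence classes of $0=[1,0]$ and $\infty=[0,1]$, identify $\alpha_{[1,r]}$ as the unique element of $U$ sending $[1,0]$ to $[1,r]$, and obtain $h_{[1,r]}$ by multiplying out the six matrices in the construction's defining formula $\tau\alpha_x\tau^{-1}\alpha_{-(x\tau^{-1})}\tau\,\alpha_{-(-(x\tau^{-1}))\tau}$, with your intermediate values $x\tau^{-1}=[1,-r^{-1}]$ and $-(x\tau^{-1})=[1,r^{-1}]$ agreeing with the factors the paper writes down. The only cosmetic point is that your argument for $\alpha_{[1,r]}$ works (and should be stated) for all $r\in R$, not just invertible $r$.
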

\begin{proof}
	A point of $\P^1(R)$ is a unit if it is not equivalent to $0$ or $\infty$, i.e.\ it is not equivalent to $[0,1]$ or $[1,0]$. The first means that a unit must be of the form $[1,r]$, the second means $r$ must be invertible. Clearly, $\alpha_{[1,r]}$ as defined in the statement maps $[1,0]$ to $[1,r]$. Finally, if $[1,r]$ is a unit, we find
	\begin{align*}
		h_{[1,r]} &= \tau\alpha_{[1,r]}\tau^{-1}\alpha_{-([1,r]\tau^{-1})}\tau\alpha_{-(-([1,r]\tau^{-1}))\tau} \\
		&= 	\begin{bmatrix} 0 & -1 \\ 1 & 0 \end{bmatrix}
			\begin{bmatrix} 1 & r \\ 0 & 1 \end{bmatrix}
			\begin{bmatrix} 0 & -1 \\ 1 & 0 \end{bmatrix}
			\begin{bmatrix} 1 & r^{-1} \\ 0 & 1 \end{bmatrix}
			\begin{bmatrix} 0 & -1 \\ 1 & 0 \end{bmatrix}
			\begin{bmatrix} 1 & r \\ 0 & 1 \end{bmatrix} \\
		&= \begin{bmatrix} r^{-1} & 0 \\ 0 & r \end{bmatrix}\;.\qedhere
	\end{align*}
\end{proof}

With an exact expression for the Hua maps, we can now prove that we get a local Moufang set.

\begin{theorem}
	Let $(R,\m)$ be a local ring and set
	\begin{align*}
		U &= \left\{\begin{bmatrix}
			1 & r \\
			0 & 1
		\end{bmatrix}\in\PSL_2(R)\;\middle|\; r\in R\right\} &
		\tau &=\begin{bmatrix} 0 & -1 \\ 1 & 0 \end{bmatrix}\in\PSL_2(R)
	\end{align*}
	acting on $(\P^1(R),\sim)$. Then $\M(U,\tau)$ is a local Moufang set.
\end{theorem}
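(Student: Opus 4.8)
The plan is to invoke \autoref{thm:constrMouf}: we already know from \autoref{prop:projectiveLMS_preLMS} that $U$ and $\tau$ satisfy \hyperref[axiom:C1]{(C1-2)}, so $\M(U,\tau)$ is a local pre-Moufang set, and it remains only to verify that $h_x$ normalizes $U$ for every unit $x$. But the previous proposition gives us the explicit formula $h_{[1,r]} = \begin{bsmallmatrix} r^{-1} & 0 \\ 0 & r \end{bsmallmatrix}$ for $r\in R^\times$, so this is now a direct computation.

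Concretely, I would compute, for $r\in R^\times$ and arbitrary $s\in R$,
\[
\begin{bmatrix} r^{-1} & 0 \\ 0 & r \end{bmatrix}^{-1}
\begin{bmatrix} 1 & s \\ 0 & 1 \end{bmatrix}
\begin{bmatrix} r^{-1} & 0 \\ 0 & r \end{bmatrix}
=
\begin{bmatrix} r & 0 \\ 0 & r^{-1} \end{bmatrix}
\begin{bmatrix} 1 & s \\ 0 & 1 \end{bmatrix}
\begin{bmatrix} r^{-1} & 0 \\ 0 & r \end{bmatrix}
=
\begin{bmatrix} 1 & rsr \\ 0 & 1 \end{bmatrix}\in U
\]
(using commutativity of $R$, so $r s r = r^2 s$). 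Since $rsr$ ranges over all of $R$ as $s$ does (indeed $s\mapsto rsr$ is a bijection of $R$ because $r$ is invertible), conjugation by $h_{[1,r]}$ maps $U$ onto $U$, i.e.\ $U^{h_{[1,r]}} = U$. As every unit of $\P^1(R)$ is of the form $[1,r]$ with $r\in R^\times$, this shows $h_x$ normalizes $U$ for all units $x$. By \autoref{thm:constrMouf}, $\M(U,\tau)$ is a local Moufang set. \qed

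There is no real obstacle here: the only subtlety is the bookkeeping in $\PSL_2(R)$ (we are working modulo scalar matrices, but that does not affect whether a conjugate lies in $U$, since $U$ is already a subgroup of $\PSL_2(R)$), and the fact that we need $R$ commutative to simplify $rsr$, which is part of the standing hypothesis. One could alternatively phrase the verification via \autoref{cor:construction_equivalentconditions}, checking $U^\tau = U^{\mu_x}$, but since $h_{x,\tau} = \tau\mu_x$ and we have the closed form for $h_x$ already, the normalization criterion is the most economical route.
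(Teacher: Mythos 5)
Your proposal is correct and follows essentially the same route as the paper: both reduce the claim via \autoref{prop:projectiveLMS_preLMS} and \autoref{thm:constrMouf} to the statement that each Hua map $h_{[1,r]}$ normalizes $U$, and verify this by the same explicit matrix computation $u^{h_{[1,r]}} = \begin{bsmallmatrix} 1 & r^2s \\ 0 & 1 \end{bsmallmatrix}\in U$. The extra remarks on the bijectivity of $s\mapsto rsr$ and the bookkeeping in $\PSL_2(R)$ are harmless and only make the argument slightly more explicit than the paper's.
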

\begin{proof}
	By \autoref{prop:projectiveLMS_preLMS} and \autoref{thm:constrMouf}, it is sufficient to show that $h_{[1,r]}$ normalizes $U$ for all units. Let $[1,r]$ be a unit and take any $u=\begin{bsmallmatrix} 1 & s \\ 0 & 1 \end{bsmallmatrix}\in U$. We can compute that $u^{h_{[1,r]}} = \begin{bsmallmatrix} 1 & sr^2 \\ 0 & 1 \end{bsmallmatrix}\in U$. Hence $U$ is normalized by $h_{[1,r]}$.
\end{proof}

\begin{definition}
	Let $(R,\m)$ be a local ring and set
	\begin{align*}
		U &= \left\{\begin{bmatrix}
			1 & r \\
			0 & 1
		\end{bmatrix}\in\PSL_2(R)\;\middle|\; r\in R\right\} &
		\tau &=\begin{bmatrix} 0 & -1 \\ 1 & 0 \end{bmatrix}\in\PSL_2(R)
	\end{align*}
	acting on $(\P^1(R),\sim)$. We call the local Moufang set $\M(U,\tau)$ a \define{projective local Moufang set}, which we denote by $\M(R)$\notatlink{M(R)}.
\end{definition}

\subsection{Some properties}

We are interested in seeing what some of the other notions of local Moufang sets become for projective local Moufang sets.

\begin{proposition}
	Let $\M(R)$ be a projective local Moufang set. Then
	\begin{romenumerate}
		\item $U_0 = \left\{\begin{bmatrix}
			1 & 0 \\
			r & 1
			\end{bmatrix}\in\PSL_2(R)\;\middle|\; r\in R\right\}$;
		\item for a unit $[1,r]$, $\mu_{[1,r]} = \begin{bmatrix} 0 & -r^{-1} \\ r & 0 \end{bmatrix}$;\label{prop:projLMS_mu}
		\item for a unit $[1,r]$, $\til[1,r] = [1,-r] = -[1,r]$;
		\item the Hua subgroup is $H = \left\{\begin{bmatrix}
			r & 0 \\
			0 & r^{-1}
			\end{bmatrix}\in\PSL_2(R)\;\middle|\; r\in R^\times\right\}$;
		\item the little projective group is $G = \PSL_2(R)$\notatlink{PSLn}.
	\end{romenumerate}
	Hence $\M(R)$ is special, has abelian root groups and an abelian Hua subgroup.
\end{proposition}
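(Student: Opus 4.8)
The statement collects five explicit descriptions of the standard objects in $\M(R)$, and then concludes that $\M(R)$ is special, has abelian root groups, and has an abelian Hua subgroup. The plan is to verify each of the five items by a direct matrix computation in $\PSL_2(R)$, and then read off the final three properties from items (iii), (i), and (iv) respectively.

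First I would establish (i): $U_0 = U_\infty^\tau$ by \autoref{constr:MUtau}, so conjugating $\begin{bsmallmatrix} 1 & r \\ 0 & 1 \end{bsmallmatrix}$ by $\tau = \begin{bsmallmatrix} 0 & -1 \\ 1 & 0 \end{bsmallmatrix}$ gives $\begin{bsmallmatrix} 1 & 0 \\ -r & 1 \end{bsmallmatrix}$, and since $r$ ranges over all of $R$ this yields the claimed lower-triangular group. For (ii), I would use the already-established formula $h_{[1,r]} = \begin{bsmallmatrix} r^{-1} & 0 \\ 0 & r \end{bsmallmatrix}$ together with $\mu_{[1,r]} = \tau^{-1}h_{[1,r]}$ (valid since $\tau$ is a $\mu$-map, by the discussion around \autoref{prop:mu_defin}); multiplying $\begin{bsmallmatrix} 0 & 1 \\ -1 & 0 \end{bsmallmatrix}\begin{bsmallmatrix} r^{-1} & 0 \\ 0 & r \end{bsmallmatrix}$ gives $\begin{bsmallmatrix} 0 & r \\ -r^{-1} & 0 \end{bsmallmatrix}$, which in $\PSL_2(R)$ equals $\begin{bsmallmatrix} 0 & -r^{-1} \\ r & 0 \end{bsmallmatrix}$. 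Then (iii) follows by checking $[1,r]\mu_{[1,r]^{-1}}$: since $-[1,r]=[1,-r]$ (as $\alpha_{[1,r]}^{-1}=\begin{bsmallmatrix}1&-r\\0&1\end{bsmallmatrix}$ maps $0$ to $[1,-r]$), one computes $\til{[1,r]}$ using \autoref{lem:mu}\ref{itm:tilmu}, i.e.\ $\til{[1,r]} = -((-[1,r])\mu_{[1,r]})$, applying the matrix $\mu_{[1,r]}$ from (ii) to $[1,-r]$; alternatively one checks directly that $(-x)\tau = -(x\tau)$ using the case analysis already present in \autoref{prop:projectiveLMS_preLMS}.

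For (iv), the Hua subgroup is $H = \langle h_{[1,r]} \mid r\in R^\times\rangle$ by the remark after the definition of $H$, and since each $h_{[1,r]} = \begin{bsmallmatrix} r^{-1} & 0 \\ 0 & r \end{bsmallmatrix}$ and the product of two diagonal matrices of this shape is again of this shape (with $r$ invertible), $H$ equals the set of all $\begin{bsmallmatrix} r & 0 \\ 0 & r^{-1} \end{bsmallmatrix}$ with $r\in R^\times$ — here I would note that the map $R^\times \to H$ is surjective and that $[1,r]$ being a unit is exactly $r\in R^\times$. For (v), by \autoref{thm:hua2pt} we have $G = U_0 H U_\infty \cup U_0 H \tau U_0^\circ$; since $U_0, U_\infty, H, \tau$ all lie in $\PSL_2(R)$ we get $G \subseteq \PSL_2(R)$, and conversely $\PSL_2(R)$ is generated by the classes $[e_{ij}(r)]$ (\autoref{thm:PSL_is_E}), all of which lie in $U_\infty \cup U_0$, so $\PSL_2(R)\subseteq G$.

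Finally, the three summary properties are immediate: $\M(R)$ is special by (iii) and the definition of special; $U_\infty$ (hence every root group, by conjugacy \ref{axiom:LM2}) is abelian because matrix multiplication of two upper unitriangular matrices is commutative; and $H$ is abelian by (iv) since diagonal matrices commute. I do not expect any genuine obstacle here — every step is a short explicit computation in $\PSL_2(R)$ — the only mild care needed is keeping track of the passage to the quotient $\PSL_2(R)$ (so that $\begin{bsmallmatrix} 0 & r \\ -r^{-1} & 0 \end{bsmallmatrix}$ and $\begin{bsmallmatrix} 0 & -r^{-1} \\ r & 0 \end{bsmallmatrix}$ are identified, differing by $-\I_2$) and remembering that "unit of $\M(R)$" translates precisely to "invertible element of $R$".
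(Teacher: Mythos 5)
Your overall route is the same as the paper's: (i) by conjugating $\begin{bsmallmatrix} 1 & r \\ 0 & 1 \end{bsmallmatrix}$ with $\tau$, (ii) from $\mu_{[1,r]} = \tau^{-1}h_{[1,r]}$, (iv) from the Hua maps, (v) via \autoref{thm:PSL_is_E}, and the closing three properties read off from (iii), (i) and (iv). There is, however, a genuine error in your item (ii). Your product $\tau^{-1}h_{[1,r]} = \begin{bsmallmatrix} 0 & r \\ -r^{-1} & 0 \end{bsmallmatrix}$ is correct, but the claim that this equals $\begin{bsmallmatrix} 0 & -r^{-1} \\ r & 0 \end{bsmallmatrix}$ in $\PSL_2(R)$ ``differing by $-\I_2$'' is false: multiplying by $-\I_2$ gives $\begin{bsmallmatrix} 0 & -r \\ r^{-1} & 0 \end{bsmallmatrix}$, while the two matrices you identify differ by the scalar $-r^2$, so they represent the same element of $\PSL_2(R)$ only when $r^4=1$. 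In fact $\begin{bsmallmatrix} 0 & -r \\ r^{-1} & 0 \end{bsmallmatrix}$ is the correct value of $\mu_{[1,r]}$: it is what the defining expression $g\alpha_{[1,r]}h$ of \autoref{prop:mu_defin} gives with $g=h=\begin{bsmallmatrix} 1 & 0 \\ -r^{-1} & 1 \end{bsmallmatrix}$, and it is $\tau^{-1}h_{[1,r]}$ under the paper's left-to-right composition. The matrix displayed in the statement has $r$ and $r^{-1}$ interchanged (it is $h_{[1,r]}\tau^{-1}$, i.e.\ the product taken in the wrong order, equivalently $\mu_{[1,r^{-1}]}$); you should flag this discrepancy rather than force the identification. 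The paper's own proof of (ii) is just the one-line observation $\mu_{[1,r]}=\tau^{-1}h_{[1,r]}$ and never multiplies out, which is why the issue does not surface there.

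The slip propagates into your first proposed derivation of (iii): using $\til[1,r] = -\bigl((-[1,r])\mu_{[1,r]}\bigr)$ with the stated matrix yields $[1,-r^{-3}]$ rather than $[1,-r]$, so that route fails as written, whereas with the correct matrix it does give $[1,-r]$. Your alternative route for (iii) — checking $(-x)\tau = -(x\tau)$ directly, which is essentially the paper's computation $\til[1,r] = (-([1,r]\tau^{-1}))\tau = [1,r^{-1}]\tau = [1,-r]$ — is fine. Items (i), (iv), (v) and the final conclusions (special, abelian root groups, abelian Hua subgroup) are correct and argued essentially as in the paper.
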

\begin{proof}\preenum
	\begin{romenumerate}
		\item If $u=\begin{bsmallmatrix} 1 & r \\ 0 & 1 \end{bsmallmatrix}\in U$, then we can compute $u^\tau = \begin{bsmallmatrix} 1 & 0 \\ -r & 1 \end{bsmallmatrix}$. As $r$ runs through $R$, we find $U_0$ as desired.
		\item As we have a local Moufang set, $\mu_{[1,r]} = \tau^{-1}h_{[1,r]}$.
		\item We compute $\til[1,r] = (-[1,r]\tau^{-1})\tau = [1,r^{-1}]\tau = [1,-r]$.
		\item This follows immediately from \ref{prop:projLMS_mu}.
		\item By \autoref{thm:PSL_is_E}, $\PSL_2(R)$ is the group generated by elementary matrices. But for $2\times2$-matrices, the set of elementary matrices is precisely the union of $U_\infty$ and $U_0$. As the little projective group is generated by $U_\infty$ and $U_0$, it coincides with $\PSL_2(R)$.
		\qedhere
	\end{romenumerate}
\end{proof}

Another interesting observation is that if $\characteristic(R/\m)\neq 2$, then $2$ is invertible in $R$. Hence, if $r$ is invertible, so is $2r$, and this implies that if $[1,r]$ is a unit, then so is $[1,r]\cdot 2$.

Finally, we want to check what happens to the local Moufang sets if we change our local ring with a local ring homomorphism.
\begin{proposition}
	Let $(R,\m)$ and $(S,\n)$ be local rings and $f\colon R\to S$ a local ring morphism. Then $\P^1(f)$ is a homomorphism of local Moufang sets from $\M(R)$ to $\M(S)$. If $R\subset S$, then $\M(R)$ can be embedded in $\M(S)$. If $S$ is a quotient of $R$, then $\M(S)$ is a quotient of $\M(R)$.
\end{proposition}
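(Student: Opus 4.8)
The plan is to verify the three claims in order by unpacking the definitions and using \autoref{prop:isomorphism2} to lift the ring homomorphism $f$ to a homomorphism of the constructed local Moufang sets. First I would set up the map $\phi := \P^1(f)\colon\P^1(R)\to\P^1(S)$ and check the two hypotheses of \autoref{prop:isomorphism2}. The equivalence-compatibility $[a,b]\sim[a',b']\iff\phi([a,b])\sim\phi([a',b'])$ requires care: by \autoref{lem:projspace_representatives} the unique representatives of $\P^1(R)$ from \eqref{eq:projspace_representatives} are sent to the unique representatives of $\P^1(S)$, so the three cases in \eqref{eq:projline_equiv} can be checked directly, using that $f$ is a \emph{local} ring homomorphism (so $f(\m)\subset\n$ and $r-r'\in\m\implies f(r)-f(r')\in\n$, while conversely if $f(r)-f(r')\notin\n$ then $r-r'$ is... here one must be slightly careful, since $f$ need not reflect non-membership in $\m$; but the point is that $\phi$ preserves the \emph{partition} of $\P^1(R)$ into $\{[1,r]\}$ versus $\{[m,1]\}$ only up to the behaviour on residue fields). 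Actually the cleanest route is to use that $f$ induces a field homomorphism $\bar f\colon R/\m\to S/\n$, that $\class{[a,b]}=[\class a,\class b]$, and that $\class{\phi([a,b])}=\bar f_*(\class{[a,b]})$ where $\bar f_*=\P^1(\bar f)$ is injective since $\bar f$ is a homomorphism of fields; then $[a,b]\sim[a',b']$ iff their images in $\P^1(R/\m)$ agree iff (by injectivity of $\bar f_*$) their images in $\P^1(S/\n)$ agree iff $\phi([a,b])\sim\phi([a',b'])$.

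Next I would produce the group homomorphism $\theta\colon U\to V$ (where $U,V$ are the unipotent upper-triangular subgroups of $\PSL_2(R)$, $\PSL_2(S)$) by $\theta\colon\begin{bsmallmatrix}1&r\\0&1\end{bsmallmatrix}\mapsto\begin{bsmallmatrix}1&f(r)\\0&1\end{bsmallmatrix}$; this is a homomorphism since $f$ is additive, and $u\phi=\phi\theta(u)$ because $\P^1(f)$ is compatible with the (matrix) action entrywise. The condition $\tau\phi=\phi\tau'$ is immediate since both $\tau=\begin{bsmallmatrix}0&-1\\1&0\end{bsmallmatrix}$ have entries $0,\pm1$ which are fixed by any unital ring homomorphism. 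By \autoref{prop:isomorphism2}, $\phi\colon\M(R)\to\M(S)$ is then a homomorphism of local Moufang sets. For the second claim, if $R\subset S$ (with the inclusion a local ring homomorphism), then $f$ is injective, hence $\phi=\P^1(f)$ is injective (two distinct free rank-one submodules of $R^2$ map to distinct ones of $S^2$, or just: distinct unique representatives map to distinct ones by \autoref{lem:projspace_representatives}), so $\phi$ is an injective homomorphism of local Moufang sets, i.e.\ an embedding. For the third claim, if $S$ is a quotient of $R$, say $S=R/I$ with $\m=$ preimage of $\n$ so that the quotient map is a local ring homomorphism, then $f$ is surjective; I would check $\phi$ is surjective (every point of $\P^1(S)$ lifts, using the unique-representative description: a representative $[1,\bar r]$ lifts to $[1,r]$ for any $r\in f^{-1}(\bar r)$, and $[\bar m,1]$ with $\bar m\in\n$ lifts to $[m,1]$ with $m\in\m$ chosen in $f^{-1}(\bar m)$, which is possible since $f(\m)=\n$ as $f$ is surjective with kernel inside $\m$), and conclude by the \autoref{def:surj_inv_system}-style definition of quotient (a local Moufang set is a quotient of another iff there is a surjective homomorphism between them, as defined just before \autoref{prop:quotientMS}) that $\M(S)$ is a quotient of $\M(R)$.

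The main obstacle I anticipate is the careful treatment of the equivalence-reflecting condition ``$[a,b]\sim[a',b']\iff\phi([a,b])\sim\phi([a',b'])$'', specifically the ``$\Leftarrow$'' direction: a general ring homomorphism $R\to S$ need not satisfy $f(r)\notin\m_S$-image $\implies r\notin\m_R$, but a \emph{local} ring homomorphism does give $f^{-1}(\n)\supseteq\m$, which combined with injectivity of the induced $\bar f\colon R/\m\hookrightarrow S/\n$ is exactly what is needed; the bookkeeping is to phrase everything through the residue field reduction and the injectivity of $\P^1(\bar f)$ rather than trying to argue case-by-case with the representatives of \eqref{eq:projline_equiv}. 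The surjectivity argument in the third part has a mild subtlety too: one needs that every maximal ideal of $R$ lying over $\n$ equals $\m$, which holds because $R$ is local, so the quotient map $R\to R/I=S$ is automatically a local ring homomorphism once $I\subset\m$; I would state this as the meaning of ``$S$ is a quotient of $R$'' in the local-ring category. Everything else is routine unpacking of \autoref{constr:MUtau}, \autoref{prop:isomorphism2}, and the functoriality of $\P^1$ already established in \autoref{thm:inverse_limit_P1} and \autoref{lem:projspace_representatives}.
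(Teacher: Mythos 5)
Your proposal is correct and essentially matches the paper's proof: you check that $\P^1(f)$ respects the equivalence, define $\theta$ on the upper unitriangular root group by applying $f$ entrywise, note $\tau\phi=\phi\tau'$, and conclude by \autoref{prop:isomorphism2}, with the embedding and quotient claims following from injectivity resp.\ surjectivity of $\P^1(f)$ (which the paper obtains via \autoref{cor:morphism_inj} and \autoref{cor:morphism_surj} in the remark after the proposition). The only substantive comment is that the ``$\Leftarrow$'' direction you worry about needs no residue-field detour: a unital ring homomorphism sends units to units, so $f(r-r')\in\n$ already forces $r-r'\in\m$, while the locality hypothesis $f(\m)\subset\n$ gives the forward direction, exactly as in the paper's direct case check on the representatives.
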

\begin{proof}
	We first show that $\P^1(f)$ preserves $\sim$. For all $r,r'\in R$ we have
	\begin{align*}
		[1,r]\sim[1,r'] &\iff r-r'\in\m \\
				&\iff f(r-r')\in\n &\text{(this uses $f(\m)\subset\n$)} \\
				&\iff [1,f(r)]\sim[1,f(r')]\;.
	\end{align*}
	Furthermore, $[f(m),1]\sim[f(m'),1]$ for $m,m'\in\m$, and $[f(m),1]\nsim[1,f(r)]$ for any $m\in\m$ and $r\in R$.
	
	Assume $\M(R) = \M(U,\tau)$ and $\M(S) = \M(U',\tau')$ as in the definition of projective local Moufang sets. We define
	\[\theta\colon U\to U'\colon \begin{bmatrix}
			1 & r \\
			0 & 1
			\end{bmatrix}\mapsto \begin{bmatrix}
			1 & f(r) \\
			0 & 1
			\end{bmatrix}\;.\]
	We can now check that $u\phi = \phi\theta(u)$ for all $u\in U$ and $\tau\phi = \phi\tau'$. By \autoref{prop:isomorphism2}, we see that $\P^1(f)$ is a homomorphism of local Moufang sets.
\end{proof}

By \autoref{cor:morphism_inj} this proposition means that if $R$ is a local subring of $S$, then $\M(R)$ is a local Moufang subset of $\M(S)$. Similarly \autoref{cor:morphism_surj} implies that if $S$ is a quotient of $R$, then $\M(S)$ is a quotient of $\M(R)$.


\section{Characterization of \texorpdfstring{$\M(R)$}{M(R)}}	
\subsection{Constructing a ring from a local Moufang set}

We have seen that $\M(R)$ is an example of a special local Moufang set.
It is natural to ask what conditions can be put on a local Moufang set to ensure that it is equal to $\M(R)$ for some local ring~$R$.
With some additional assumptions, it is possible to recover the ring structure from the local Moufang set, at least provided the characteristic of the residue field is different from $2$.
We will use a method similar to the related result for Moufang sets in \S 6 of \cite{DMWeissMSJordanAlgebras}.

\begin{importantconstruction}\label{constr:ring}
	Suppose that $\M$ is a local Moufang set satisfying the following conditions:
	\begin{manualenumerate}[label=\textnormal{(R\arabic*)},labelwidth=\widthof{(R1)}]
		\item $\M$ is special;\label{axiom:R1}
		\item $U_\infty$ is abelian;\label{axiom:R2}
		\item the Hua subgroup $H$ is abelian;\label{axiom:R3}
		\item if $x$ is a unit, then so is $x\cdot2$.\label{axiom:R4}
	\end{manualenumerate}
	We consider the set $R := X\setminus\class{\infty}$, and define an addition and a multiplication. Note that there is a bijection between $R$ and $U_\infty$ by $x\mapsto\alpha_x$. We define the addition on $R$ as
		\[x+y:=0\cdot\alpha_x\alpha_y \notatlink{xyplus}\]
	for all $x,y \in R$.
	This addition is simply the translation of the group composition in $U_\infty$ to the set $R$. Since $U_\infty$ is an abelian group, so is $(R,+)$. By \autoref{prop:ndivglobal}, $U_\infty$ is uniquely $2$-divisible, hence also $(R,+)$ is uniquely $2$-divisible.

	To define the multiplication, we first choose a fixed unit $e \in R$\notatlink{e}, which will be the identity element of the multiplication.
	We will use the Hua maps corresponding to $\tau=\mu_e$, i.e.\@~we use $h_x = h_{x,\mu_e} = \mu_e \mu_x$. Note that by \ref{axiom:R1}, \ref{axiom:R2}, \ref{axiom:R4} and \autoref{prop:mu-involution}\ref{itm:mu-involution}, $\mu_e$ is an involution, so we also have $\mu_eh_x = \mu_x$. For any $y\in R$, we now define a map $R_y \colon X\to X$ by
		\[ xR_y := \begin{cases} \begin{aligned}
			& x\cdot h_{e+y} - x\cdot h_y - x && \hspace{1.15em}\text{$y$, $y+e$ units,}\\
			& \mathord{-}x\cdot h_{-e+y}+x + x\cdot h_y && 
					\left\{\begin{aligned}
						&\text{$y$ a unit} \\
						&\text{$y+e$ not a unit}
					\end{aligned}\right. \\
			& x\cdot h_{2e+y}-x\cdot h_{e+y}-x\cdot h_{-2e}+x && \hspace{1.15em}\text{$y$ not a unit.}
		\end{aligned} \end{cases} \]
	(We will verify in the proof of \autoref{le:monoid} below
	that all Hua maps occurring in this definition can indeed be defined.)
		Combining this with the unique $2$-divisibility, we can now define the multiplication on~$R$ by
	\[ xy:=xR_y\cdot\tfrac{1}{2} \]
	for all $x,y \in R$.
\end{importantconstruction}

We will now prove that this structure is a local ring, so for the remainder of the subsection, our local Moufang set satisfies \hyperref[axiom:R1]{(R1-4)}. First, we observe that the action of any Hua map on~$R$ is a group automorphism of $(R,+)$:
\begin{lemma}
	Let $h$ be a Hua map. Then for any $x,y\in R$, we have $(x+y)h = xh+yh$. In particular, $(-x)h = -xh$ and $(x/2)h = xh/2$.
\end{lemma}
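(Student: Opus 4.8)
The plan is to derive everything from the single fact that a Hua map acts on the root group $U_\infty$ by conjugation in the way recorded in \autoref{lem:huaAut}, together with the observation that Hua maps fix both $0$ and $\infty$ (and hence the whole class $\overline{\infty}$, since they lie in $\Sym(X,\sim)$). Write $h = h_z$ for the unit $z$ giving rise to $h$ (so $h = \tau\mu_z$). Since $h$ fixes $0$, we have $0 = 0h^{-1}$, hence for $x,y\in R$
\[
(x+y)h = (0\cdot\alpha_x\alpha_y)h = \bigl(0h^{-1}\cdot\alpha_x\alpha_y\bigr)h = 0\cdot\bigl(h^{-1}\alpha_x\alpha_y h\bigr) = 0\cdot(\alpha_x\alpha_y)^{h}.
\]
By \autoref{lem:huaAut}, $(\alpha_x\alpha_y)^{h} = \alpha_{xh}\alpha_{yh}$ (and $xh,yh\in R$ because $h$ maps $X\setminus\overline{\infty}$ bijectively to itself), so $(x+y)h = 0\cdot\alpha_{xh}\alpha_{yh} = xh+yh$, which is the additivity claim.

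The two remaining assertions are then immediate. Applying additivity with $y = -x$ and using $x + (-x) = 0\cdot\alpha_x\alpha_x^{-1} = 0$ together with $0h = 0$ gives $xh + (-x)h = 0$, i.e.\ $(-x)h = -(xh)$. For the halving statement, recall that by \ref{axiom:R4} and \autoref{prop:ndivglobal} the group $U_\infty$, and hence $(R,+)$, is uniquely $2$-divisible, so $x/2$ is the unique element of $R$ with $(x/2)+(x/2) = x$; applying additivity gives $xh = (x/2)h + (x/2)h$, and uniqueness of the half forces $(x/2)h = xh/2$.

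I do not expect a genuine obstacle here: the computation rests only on \autoref{lem:huaAut} and the fact that $h$ fixes the basis, both of which are already available. The one point worth flagging is that \autoref{lem:huaAut} is phrased for the Hua maps $h_x$ attached to units, which is exactly the situation at hand, since by definition a Hua map is some $h_z$ with $z$ a unit; no iteration over general products in $H$ is needed for this lemma.
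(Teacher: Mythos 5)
Your proof is correct and is essentially the paper's own argument: the paper obtains additivity precisely by rewriting \autoref{lem:huaAut} in terms of the addition on $R$ (using $\alpha_x\alpha_y=\alpha_{x+y}$ and $0h=0$), and gets the negation and halving statements as immediate consequences via unique $2$-divisibility, exactly as you do.
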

\begin{proof}
	The first identity is \autoref{lem:huaAut} rewritten in terms of the addition in $R$.
    The second and the third are immediate consequences, using the unique $2$-divisibility.
\end{proof}

Note that if $h$ and $h'$ are Hua maps, we can define $x(h+h'):= xh+xh'$, and the map $h+h'$ is a group endomorphism of $(R,+)$.
In particular, each $R_x$ is a group endomorphism of $(R,+)$.

\begin{lemma}\label{le:monoid}
	The structure $(R,\cdot)$ is a commutative monoid with identity element $e$.
\end{lemma}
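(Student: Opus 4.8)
The plan is to verify the three assertions of the lemma separately: that $e$ is a two-sided identity, that the product is commutative, and that it is associative. Throughout, the key computational tool is the definition $xy = xR_y\cdot\tfrac12$, and the fact (from the preceding lemma) that each $R_y$ — being a $\Z$-linear combination of Hua maps — is a group endomorphism of $(R,+)$. A preliminary task, flagged in the construction, is to check that every Hua map occurring in the definition of $R_y$ is actually defined, i.e.\ that the relevant points are units: when $y$ is a unit but $y+e$ is not, one needs $-e+y = y - e$ to be a unit, and when $y$ is not a unit one needs $2e+y$, $e+y$ and $-2e = (-e)\cdot 2$ to be units. The last is \ref{axiom:R4} applied to $-e$; the others follow because $y\sim 0$ forces $ne+y \sim ne$, which is a unit for $n\in\{1,2\}$ again by \ref{axiom:R4} and the fact that $2e$ is a unit iff $e$ is (here I would invoke \autoref{lem:timesn_sim} together with the characterisation of units).

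For the identity element: I would compute $eR_y$ and $xR_e$ in each of the three cases. Using \autoref{prop:mu-hua-a.s-part} (with $n=2$) one has $eh_y = e\mu_e\mu_y = $ a scalar relation, and more usefully $xh_{y}$ for $x=e$ can be simplified via $h_e = \id$. In the first case $xR_e = x\cdot h_{2e} - x\cdot h_e - x = x\cdot h_{e\cdot 2} - 2x$; by \autoref{prop:mu-hua-a.s-part}\ref{itm:h-x.sfirst}, $x h_{e\cdot 2} = (xh_e)\cdot 4 = x\cdot 4$, so $xR_e = x\cdot 4 - x\cdot 2 = x\cdot 2$, whence $xe = x\cdot 2\cdot\tfrac12 = x$. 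The computation of $ey$ is the more delicate one: in the first case $eR_y = e h_{e+y} - eh_y - e$, and I would use the biadditivity-type identity $h_{x+y}$ is \emph{not} additive, so instead I use $eh_z = e\mu_e\mu_z$; since $\mu_e$ is an involution and $e\mu_e = -e$ (by \autoref{prop:specialmu}\ref{prop:specialmu-iii}), $eh_z = (-e)\mu_z = -(e\mu_z) = -(e\mu_{-z})$, and $e\mu_z = -z$ when... — here I would carefully track $e\mu_z$ using that $\mu_z$ swaps $0$ and $\infty$ and that $\mu_z = g\alpha_z h$ with $g$ the element of $U_0$ sending $\infty \mapsto -z$; evaluating at $e$ requires knowing $e\cdot g$. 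This is the step I expect to be the main obstacle: pinning down $e h_z$ cleanly in terms of the addition and $z$. I anticipate the answer is $eh_z = $ something like $z$ itself in the unit case (this is exactly the analogue of $h_x\colon y\mapsto xyx$ degenerating), giving $eR_y = (e+y) - y - e + (\text{correction}) $; one then checks the three cases reconcile to $eR_y = y\cdot 2$, hence $ey = y$.

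For commutativity, I would show $xR_y\cdot\tfrac12 = yR_x\cdot\tfrac12$, equivalently $xR_y = yR_x$ (using unique $2$-divisibility). Here the symmetry is not manifest from the formula because the case division is on $y$, so one must handle the mixed cases ($x$ a unit, $y$ not, etc.) by hand, reducing via additivity and the identity-element computation to the already-symmetric sub-cases. A cleaner route may be to first establish, for units $x,y$ with $x+y$ a unit, the identity $xh_{x+y} - xh_y - x = yh_{x+y} - yh_x - y$; this should follow from \autoref{prop:mu-hua-a.s-part} and \autoref{lem:specialsum} (which relates $\mu_{x\alpha_y}$ to $\mu_x$, $\mu_y$ and translations), and then the non-unit cases follow by the additive extension trick $x = x\alpha_{-e}\alpha_e$ used repeatedly in the earlier proofs. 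Finally, for associativity I would argue that $R_y$ and $R_z$, being endomorphisms of $(R,+)$ built from Hua maps, satisfy $R_yR_z$-type relations coming from the Hua map identity $h_yh_zh_y = h_{zh_y}$ (\autoref{prop:mu-involution}); concretely one reduces $x(yz)$ and $(xy)z$ to a common normal form in terms of $x\cdot h_{\bullet}$ where the subscripts are built additively, and matches them. Since the lemma only claims a commutative monoid (not yet distributivity), I would not attempt more here, and I would organise the write-up as: (0) all Hua maps in the definition exist; (1) $e$ is an identity; (2) commutativity; (3) associativity — each reduced to identities among Hua maps already available from Chapter~5.
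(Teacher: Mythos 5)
Your outline breaks down at the two places that carry the real content of the lemma. The more serious gap is associativity (and, to a lesser extent, commutativity): nowhere in your plan do you use \ref{axiom:R3}, the hypothesis that the Hua subgroup $H$ is abelian, and without it the statement is simply false, so no argument built only from the identities you cite can work. Indeed $\mu_x^2=\id$, $h_yh_xh_y=h_{xh_y}$, \autoref{lem:specialsum} and the scaling identities of \autoref{prop:mu-hua-a.s-part} all hold in any special (local) Moufang set with abelian root groups satisfying \ref{axiom:R4}; in particular they hold for $\M(K)$ with $K$ a noncommutative skew field of characteristic $\neq 2$, where the very same recipe gives $xR_y = (e+y)x(e+y)-yxy-x = xy+yx$, i.e.\ the Jordan product $\tfrac12(xy+yx)$, which is commutative but not associative. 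The indispensable step, and the paper's first move, is that each $R_x$ is an integral combination of elements of $H$, so by \ref{axiom:R3} any two operators $R_x,R_y$ commute; this yields $(xy)z = xR_yR_z\cdot\tfrac14 = xR_zR_y\cdot\tfrac14 = (xz)y$ for all $x,y,z$. Once the \emph{left} identity $ex=x$ is known, putting $x=e$ in this relation gives commutativity, and feeding commutativity back into it gives associativity — both properties drop out of that one line. By contrast, your "normal form via $h_yh_zh_y=h_{zh_y}$" cannot distinguish the associative case from the Jordan case, and the symmetric identity you propose for commutativity is in any case misstated (it should involve $h_{e+y}$ and $h_{e+x}$, not $h_{x+y}$) and not derived.

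The second gap is the one you flag yourself: the left identity $ey=y$. Your computation of the right identity via $xR_e = xh_{e\cdot 2}-xh_e-x = x\cdot 4 - x\cdot 2$ is fine, but note that the right identity is useless for the substitution trick above, so $eR_x = x\cdot 2$ is the load-bearing computation, not an optional refinement. Your anticipated simplification is also off target: $eh_z$ is not "something like $z$" (in $\M(R)$ it is $z^2$), and $e\mu_z$ admits no clean closed form. The paper's point is that you never need to evaluate $e\mu_z$: using $e\mu_e=-e$ and specialness, $eR_x = -e\mu_{e+x}+e\mu_x-e$ in the first case, and expanding $e\mu_{e+x} = -x-e+e\mu_x-x$ by \autoref{lem:specialsum} makes the unknown term $e\mu_x$ cancel, leaving $eR_x = x\cdot 2$; the other two cases of the definition of $R_x$ (whose well-definedness you check correctly, much as the paper does inside its case analysis) are handled by the same cancellation with $\mu_{x-e}$, $\mu_{2e+x}$, $\mu_{-2e}$ in place of $\mu_{e+x}$.
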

\begin{proof}
	By the previous lemma and observation, we have 
	\[\bigl(x\cdot\tfrac{1}{2}\bigr)R_y = xR_y\cdot\tfrac{1}{2}\;.\]
	Furthermore, since $H$ is abelian, every two maps $R_x$ and $R_y$ commute. Hence
	\begin{align*}
		(xy)z &= \bigl(xR_y\cdot\tfrac{1}{2}\bigr)R_z\cdot\tfrac{1}{2} = xR_yR_z\cdot\tfrac{1}{4} = xR_zR_y\cdot\tfrac{1}{4}\\
			&= \bigl(xR_z\cdot\tfrac{1}{2}\bigr)R_y\cdot\tfrac{1}{2} = (xz)y
	\end{align*}
	for all $x,y,z \in R$.
		This proves the identity
	\begin{equation}\label{eq:xyz}
		(xy)z = (xz)y
	\end{equation}
	for all $x,y,z \in R$, which will be crucial for showing commutativity and associativity.

	Next, we show that $e$ is a left identity element for the multiplication, i.e.\@~that $ex=x$ for all $x\in R$, and we will show this in each of the three cases from the definition of $R_x$. We use \autoref{prop:specialmu} and \autoref{lem:specialsum}, which translates to
\[x\mu_{x+y} = -y-x+x\mu_y-y\]
	for all $x,y \in R$ such that $x$, $y$ and $x+y$ are units.
	\begin{nrenumerate}
		\item Assume $x$ and $x+e$ are units. We get
		\begin{align*}
			eR_x 	&= e\cdot h_{e+x} - e\cdot h_x - e = e\mu_e\mu_{e+x}-e\mu_e\mu_x-e \\
				&= (-e)\mu_{e+x}-(-e)\mu_x-e = -e\mu_{e+x}+e\mu_x-e \\
				&= -(-x-e+e\mu_x-x)+e\mu_x-e = 2x\,,
		\end{align*}
		so $ex = eR_x/2 = x$.
		\item Assume $x$ is a unit and $x+e$ is not a unit.
		Hence $x-e$ is a unit, or we would have $x-e\sim0\sim x+e$, which would imply $e\sim-e$, contradicting \ref{axiom:R4}. We get
		\begin{align*}
			eR_x 	& = -e\cdot h_{-e+x}+e + e\cdot h_x = -e\cdot \mu_e\mu_{-e+x}+e + e\cdot \mu_e\mu_x \\
				&= -(-e)\mu_{-e+x}+e + (-e)\mu_x \\
				&= -(-x+e+(-e)\mu_x-x) + e + (-e)\mu_x = 2x\,,
		\end{align*}
		so $ex = eR_x/2 = x$.
		\item Assume $x$ is not a unit, so $x+e$ and $x+2e$ are units (since $e$ is a unit and $2e\nsim0$). We get
		\begin{align*}
			eR_x & = e\cdot h_{2e+x}-e\cdot h_{e+x}-e\cdot h_{-2e}+e \\
				&= e\cdot \mu_e\mu_{2e+x}-e\cdot \mu_e\mu_{e+x}-e\cdot \mu_e\mu_{-2e}+e \\
				&= -e\mu_{2e+x}+e\mu_{e+x}-(-e)\mu_{-2e}+e \\
				&= -(-e-x-e+e\mu_{e+x}-e-x)+e\mu_{e+x} \\
				&\qquad-(e+e+(-e)\mu_e+e)+e \\
				&= 2x\,,
		\end{align*}
		so $ex = eR_x/2 = x$.
	\end{nrenumerate}
    Substituting $e$ for $x$ in~\eqref{eq:xyz}, we get $yz=zy$ for all $y,z \in R$, so the multiplication is commutative.
    In particular, $xe=ex=x$ for all $x\in R$, so $e$ is an identity element.
    Finally, we apply commutativity to~\eqref{eq:xyz}, and we get 
    \[(yx)z=(xy)z = (xz)y = y(xz)\]
    for all $x,y,z \in R$, so the multiplication is associative.
\end{proof}

The previous lemma contains most of the work in proving that $R$ is a ring. The final ingredient we need is distributivity.

\begin{theorem}
	The structure $(R,+,\cdot)$ is a unital, commutative ring.
\end{theorem}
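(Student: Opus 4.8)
The plan is to observe that, after \autoref{le:monoid}, the only ring axiom still to be verified is distributivity, and that this is a short consequence of the additivity of the maps $R_y$ combined with the unique $2$-divisibility of $(R,+)$.

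First I would recall the facts already in hand: by \autoref{constr:ring} and \autoref{prop:ndivglobal}, $(R,+)$ is an abelian, uniquely $2$-divisible group; by \autoref{le:monoid}, $(R,\cdot)$ is a commutative monoid with identity $e$; and each $R_y$ is a group endomorphism of $(R,+)$, since it is (by its definition) a sum of Hua maps and each Hua map acts additively on $R$ by \autoref{lem:huaAut}.

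Next I would establish right distributivity. Because $(R,+)$ is uniquely $2$-divisible, the doubling map $a\mapsto a+a$ is a group automorphism of $(R,+)$, so its inverse $a\mapsto a\cdot\tfrac{1}{2}$ is again additive. Hence, for all $x,x',y\in R$,
\[(x+x')y = (x+x')R_y\cdot\tfrac{1}{2} = \bigl(xR_y + x'R_y\bigr)\cdot\tfrac{1}{2} = xR_y\cdot\tfrac{1}{2} + x'R_y\cdot\tfrac{1}{2} = xy + x'y\;.\]
Using the commutativity of the multiplication from \autoref{le:monoid}, this also yields $x(y+y') = (y+y')x = yx + y'x = xy + xy'$, so $\cdot$ distributes over $+$ on both sides.

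Combining this with the facts recalled above shows that $(R,+,\cdot)$ satisfies all the axioms of a unital commutative ring. I do not expect any real obstacle in this final step: the genuinely delicate points — that $xy$ is well defined in each of the three cases of the definition of $R_x$, and that the multiplication is associative — were already disposed of in \autoref{le:monoid}, and distributivity falls out immediately from the endomorphism property of the $R_y$ together with the additivity of halving.
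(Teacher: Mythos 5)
Your proposal is correct and follows essentially the same route as the paper: both reduce the statement to distributivity, which is obtained from the fact that each $R_y$ is a group endomorphism of $(R,+)$ (being a linear combination of Hua maps) together with the additivity of halving, and left distributivity then follows by commutativity from \autoref{le:monoid}. Your explicit justification that $a\mapsto a\cdot\tfrac{1}{2}$ is additive is a welcome detail the paper leaves implicit, but it is not a different argument.
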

\begin{proof}
	We know that $(R,+)$ is an abelian group with identity element $0$ (isomorphic to $U_\infty$), and that the multiplicative structure is a commutative monoid with identity element $e$. It only remains to show the distributivity. We have seen before that the maps $R_x$, as linear combinations of Hua maps, act linearly on $(R,+)$. This means that $(x+y)R_z = xR_z+yR_z$, and so
	\[ (x+y)z = (xR_z+yR_z)\cdot\tfrac{1}{2} = xR_z\cdot\tfrac{1}{2}+yR_z\cdot\tfrac{1}{2} = xz+yz \]
	for all $x,y,z \in R$.
	By commutativity, we also get 
	\[x(y+z)=(y+z)x=yx+zx = xy+xz\]
	for all $x,y,z \in R$. We conclude that $R$ is indeed a unital, commutative ring.
\end{proof}

Our next goal is to show that $R$ is a local ring. To do this, we will identify the invertible elements, and show that the non-invertible elements form an ideal.

\begin{proposition}\label{prop:inverse}
	If $x\in X$ is a unit, then $x\in R$ is invertible with inverse $x^{-1}:=(-x)\mu_e$.
\end{proposition}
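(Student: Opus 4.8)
The plan is to show that $z := (-x)\mu_e$ is a right inverse of $x$ in the ring $R$ constructed in \autoref{constr:ring}; since $R$ is commutative this is enough. First I would note that $z$ is a unit: $-x$ is a unit by the corollary to \autoref{prop:unit}, and $\mu_e=\tau$ maps units to units, so $z\nsim 0,\infty$, whence $z\in R$ and the product $xz$ makes sense. By commutativity of $R$ it suffices to prove $zx=e$, i.e.\@ (unwinding the definition of the multiplication) $zR_x = 2e$, an equality in the abelian group $(R,+)\cong U_\infty$. Two preliminary computations drive everything. Since $\mu_e$ is an involution (available by \autoref{prop:mu-involution}\ref{itm:mu-involution}, which applies because \ref{axiom:R1}, \ref{axiom:R2}, \ref{axiom:R4} hold and \ref{axiom:R4} forces $\abs{\class X}>3$), we get $z\mu_e = (-x)\mu_e^2 = -x$, and hence $zh_x = z\mu_e\mu_x = (-x)\mu_x = -(x\mu_x) = -(-x) = x$, using that $\M$ is special together with \autoref{prop:specialmu}\ref{prop:specialmu-iii}. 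I would also use the translation of \autoref{lem:specialsum} already exploited in the proof of \autoref{le:monoid}, namely $x\mu_{x+y} = x\mu_y - x - 2y$ whenever $x$, $y$ and $x+y$ are units.

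Then I split into two cases according to whether $x+e$ is a unit. If it is, the first branch of the definition of $R_x$ applies: $zR_x = zh_{e+x} - zh_x - z$. Here $zh_{e+x} = (-x)\mu_{x+e} = -(x\mu_{x+e}) = -(x\mu_e - x - 2e) = -(x\mu_e)+x+2e$, and since $z = (-x)\mu_e = -(x\mu_e)$ the three terms collapse to $zR_x = -(x\mu_e)+2e-(-(x\mu_e)) = 2e$. If $x+e$ is not a unit, then $x-e$ is a unit — otherwise $x-e\sim 0\sim x+e$ would give $2e\sim 0$, contradicting \ref{axiom:R4} — so the second branch applies: $zR_x = -zh_{-e+x} + z + zh_x$. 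Now $zh_{x-e} = (-x)\mu_{x-e} = -(x\mu_{x-e})$, and applying the identity with $y=-e$ together with $\mu_{-e}=\mu_e$ gives $x\mu_{x-e} = x\mu_e - x + 2e$, so $zh_{x-e} = -(x\mu_e)+x-2e$; substituting and using $z=-(x\mu_e)$ again yields $zR_x = 2e$. In both cases $zR_x = 2e$, hence $zx = zR_x\cdot\tfrac12 = e$, and therefore $x^{-1} = z = (-x)\mu_e$.

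The individual calculations are short; the only real care needed is bookkeeping — making sure the correct branch of the piecewise definition of $R_x$ is selected and that every Hua map occurring in it ($h_x$, $h_{x+e}$, $h_{x-e}$) is actually defined. This is exactly the point where \ref{axiom:R4} is used, to guarantee that $x-e$ is a unit in the second case; the third branch of the definition of $R_x$ never arises because $x$ itself is a unit. Everything else is a direct substitution in $(R,+)$.
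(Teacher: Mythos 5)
Your proof is correct and follows essentially the same route as the paper's: the same reduction (by commutativity) to showing $(-x)\mu_e R_x = 2e$, the same case split on whether $x+e$ is a unit (with the same argument that $x-e$ is then a unit via \ref{axiom:R4}), and the same ingredients, namely $\mu_e^2=\id$, $(-x)\mu_x = x$ from \autoref{prop:specialmu}, and the translated identity from \autoref{lem:specialsum}. The differences are purely cosmetic bookkeeping.
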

\begin{proof}
	Note that we only need to show $x^{-1}x=e$, since by commutativity we will also get $xx^{-1}=e$.
	We will again use \autoref{prop:specialmu} and \autoref{lem:specialsum}.
	Again, we need to proceed case by case, but since $x$ is a unit, only the two first cases of the multiplication occur.
	\begin{nrenumerate}
		\item In this case, both $x$ and $x+e$ are units. We get
		\begin{align*}
			(-x)\mu_eR_x 	&= (-x)\mu_e\cdot h_{e+x} - (-x)\mu_e\cdot h_x - (-x)\mu_e \\
					&= (-x)\mu_{e+x} - (-x)\mu_x - (-x)\mu_e \\
					&= -x\mu_{x+e} - x +x\mu_e \\
					&= -(-e-x+x\mu_e-e)-x+x\mu_e \\
					&= 2e\,,
		\end{align*}
		so $x^{-1}x = x^{-1}R_x/2 = e$.
		\item In this case, $x$ a unit and $x+e$ not a unit (so $x-e$ a unit), hence
		\begin{align*}
			(-x)\mu_eR_x 	&= -(-x)\mu_e\cdot h_{-e+x}+(-x)\mu_e + (-x)\mu_e\cdot h_x \\
					&= -(-x)\mu_{-e+x}+(-x)\mu_e + (-x)\mu_x \\
					&= x\mu_{x-e}+(-x)\mu_e + x \\
					&= (e-x+x\mu_e+e)-x\mu_e+x \\
					&= 2e\,,
		\end{align*}
		so again $x^{-1}x = x^{-1}R_x/2 = e$.
        \qedhere
	\end{nrenumerate}
\end{proof}

Observe that we have shown that the elements we called `units' in the local Moufang set correspond to the units in the ring $R$.

\begin{proposition}
	The set $\m:=\class{0}\subset R$ is an ideal in $R$.
\end{proposition}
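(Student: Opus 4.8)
The plan is to verify the two defining properties of an ideal for $\m=\class0$: closure under the additive group structure of $(R,+)$, and absorption of products by arbitrary elements of $R$. For the additive part, I would use that $x\mapsto\alpha_x$ identifies $(R,+)$ with $U_\infty$ and that $\alpha_{x+y}=\alpha_x\alpha_y$ and $\alpha_{-x}=\alpha_x^{-1}$; under this identification $\m$ corresponds exactly to $U_\infty^\circ=\Ker(U_\infty\to U_{\class\infty})$, which is a subgroup, so $\m$ is closed under $+$ and negation. Concretely, using $x\sim0\iff\induced{\alpha_x}=\id$ (from the proof of \autoref{lem:timesn_sim}): if $x,y\in\m$ then $\induced{\alpha_{x+y}}=\induced{\alpha_x}\,\induced{\alpha_y}=\id$ and $\induced{\alpha_{-x}}=\induced{\alpha_x}^{-1}=\id$, so $x+y,-x\in\m$. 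Note also that $\m$ is proper, since the chosen $e$ is a unit and hence $e\nsim0$, i.e.\ $e\notin\m$.

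For absorption, fix $x\in\m$ and $r\in R$ and recall $xr=xR_r\cdot\tfrac{1}{2}$, where $R_r$ is, as observed in \autoref{constr:ring}, a $\Z$-linear combination of (construction) Hua maps inside $\End(R,+)$. The two features of $R_r$ I would invoke are: $R_r$ is a group endomorphism of $(R,+)$, hence $0R_r=0$; and $R_r$ preserves $\sim$, because the Hua maps lie in $G\leq\Sym(X,\sim)$ and the operations $(a,b)\mapsto a+b$ and $a\mapsto -a$ on $R$ respect $\sim$ (again via $\induced{\alpha_{a+b}}=\induced{\alpha_a}\,\induced{\alpha_b}$). Hence $x\sim0$ gives $xR_r\sim0R_r=0$, i.e.\ $xR_r\in\m$.

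Finally I would deduce $xr\in\m$ from $xR_r\in\m$. By \ref{axiom:R4} together with \autoref{prop:ndivglobal}, the group $U_\infty$, and hence $(R,+)$, is uniquely $2$-divisible, so $xr=xR_r\cdot\tfrac{1}{2}$ is the unique $w$ with $w\cdot2=xR_r$. If $xr$ were not in $\m$, it would be a unit of the local Moufang set, and then \ref{axiom:R4} would force $w\cdot2=xR_r$ to be a unit, contradicting $xR_r\in\m$. Thus $xr\in\m$, and by commutativity of $R$ also $rx=xr\in\m$; combined with the additive subgroup property, this shows $\m$ is an ideal.

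The step I expect to require the most care is the claim that $R_r$ preserves $\sim$ and sends $0$ to $0$. The point is that this should be read off uniformly from the shape of $R_r$ — a $\Z$-combination of Hua maps and the identity of $(R,+)$ — without inspecting the three cases in its definition separately: Hua maps are $\sim$-preserving (being elements of $G$), the identity trivially is, and any $\End(R,+)$-combination of $\sim$-preserving additive endomorphisms is again $\sim$-preserving and fixes $0$.
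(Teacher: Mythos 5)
Your proof is correct and follows essentially the same route as the paper: additive closure of $\m$ from the equivalence-preserving action of $U_\infty$ (the paper writes $x+y = x\alpha_y\sim 0\alpha_y = y\sim 0$, you phrase it via $\induced{\alpha_x}=\id$), then absorption by writing $xR_r$ as a $\Z$-combination of Hua maps (each fixing $0$ and preserving $\sim$) so that $xR_r\sim 0$, and finally the same contrapositive use of \ref{axiom:R4} with unique $2$-divisibility to conclude $xr\sim 0$. The only cosmetic difference is that you package the middle step as ``$R_r$ preserves $\sim$ and fixes $0$'' rather than summing the terms $xh_{r_i}\sim 0$ directly, which amounts to the same thing once additive closure of $\m$ is in hand.
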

\begin{proof}
	Let $x,y \in \m$; then 
	\[x+y = x\alpha_y\sim0\alpha_y=y\sim0\;,\]
	so $x+y\sim0$ and $x+y\in\m$. Also 
	\[-x = 0\alpha_{-x}\sim x\alpha_{-x}=0\;,\]
	so $-x\in\m$. Next, we need to verify that $\m$ is closed under multiplication with $R$. Take $x\in\m$ and $r\in R$. We get $xr = xR_r/2$, but $R_r$ is a linear combination of Hua maps. Hence $xR_r$ is a sum of $x\cdot h_{r_i}$ for some $r_i$, and $x h_{r_i}\sim 0 h_{r_i}=0$. Hence $xR_r\sim 0$, and~\ref{axiom:R4} then implies that also $xr\sim 0$. Hence $\m$ is an ideal.
\end{proof}
We can now summarize our results of this section.
\begin{theorem}\label{thm:Rlocal}
	Suppose that $\M$ is a local Moufang set satisfying \hyperref[axiom:R1]{\textnormal{(R1)--(R4)}}.
    Then the ring $R$ obtained from \autoref{constr:ring} is a local ring, and $2$ is invertible in $R$.
\end{theorem}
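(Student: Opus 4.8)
The plan is to assemble the results already established in this section. We know that $(R,+,\cdot)$ is a unital commutative ring; by \autoref{prop:inverse} every unit of $\M$ is an invertible element of $R$; and we have just shown that $\m:=\class{0}$ is an ideal of $R$. Hence, to prove that $R$ is local, it suffices to show that $\m$ is exactly the set of non-invertible elements of $R$, and then, as a separate point, that $2$ is invertible.

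First I would observe that $R=X\setminus\class{\infty}$ is the disjoint union of $\m=\class{0}$ and the set of units of $\M$, so every element of $R$ either lies in $\m$ or is a unit of $\M$. Every unit of $\M$ is invertible in $R$ by \autoref{prop:inverse}, so each element of $R\setminus\m$ is invertible. Conversely, no element $x\in\m$ can be invertible: if $xy=e$ for some $y\in R$, then, exactly as in the proof that $\m$ is an ideal (where one shows $xR_y$ is a sum of terms $x\cdot h_{r_i}\sim 0$ and then uses \ref{axiom:R4} to conclude $xy=xR_y\cdot\tfrac12\sim 0$), we get $xy\in\m$; but $e\notin\m$ since $e$ is a unit, a contradiction. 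Therefore the non-invertible elements of $R$ are precisely the elements of $\m$, and since $\m$ is an ideal, $R$ is a local ring with maximal ideal $\m$.

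For the last assertion, by the definitions of the addition and of $x\cdot k$ in \autoref{constr:ring} we have $2 = e+e = 0\cdot\alpha_e\alpha_e = 0\cdot\alpha_e^2 = e\cdot 2$. Since $e$ is a unit, \ref{axiom:R4} gives that $e\cdot 2$ is again a unit, so $2 = e\cdot 2$ is invertible in $R$ by \autoref{prop:inverse}.

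I expect no genuine obstacle here: the substantive work (the ring axioms, the identification of inverses of units, and the verification that $\m$ is an ideal) has all been done in the preceding results, and what remains is bookkeeping. The one place where the hypotheses really bite is the identification of $\m$ with the set of non-invertible elements, which passes through \ref{axiom:R4} via the ideal argument — without the assumption that doubling a unit yields a unit, halving an element of $\m$ could escape $\m$, and this step would fail.
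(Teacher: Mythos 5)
Your proof is correct and follows essentially the same route as the paper: identify $R\setminus\m$ with the units of $\M$ (hence invertible by \autoref{prop:inverse}), use that $\m$ is a proper ideal to conclude it is exactly the set of non-invertible elements, and note $2=e\cdot 2$ is a unit by \ref{axiom:R4}. Your extra remarks (spelling out why elements of $\m$ cannot be invertible, and the bookkeeping $2=e+e=e\cdot 2$) only make explicit what the paper leaves implicit.
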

\begin{proof}
	We have shown that $\m$ is an ideal in $R$.
	On the other hand, if $r\in R\setminus\m$, then $r\in X\setminus\class{\infty}$ is a unit, so $\m$ is exactly the set of non-invertible elements of $R$;
	it must therefore be the unique maximal ideal of $R$.
	Since $2=2e$ is a unit, it is invertible in $R$.
\end{proof}

\subsection{Characterizing \texorpdfstring{$\M(R)$}{M(R)}}

Our goal is to use \autoref{thm:Rlocal} to characterize $\M(R)$ as a special local Moufang set satisfying certain conditions.
As a first step, we will apply \autoref{constr:ring} on $\M(R)$;
we will see that the resulting local ring is indeed isomorphic to the ring $R$ we started with.

\begin{theorem}
	If $R$ is a local ring with residue field not of characteristic $2$, then the ring $R'$ we get from $\M(R)$ using \autoref{constr:ring} with unit $[1,e]$ is isomorphic to $R$.
\end{theorem}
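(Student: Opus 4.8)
The plan is to run \autoref{constr:ring} explicitly on $\M=\M(R)$ and to check that each ingredient of the resulting ring $R'$ matches $R$, up to the rescaling forced by the choice of identity element $[1,e]$.

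First I would verify the hypotheses of the construction. We have already seen that $\M(R)$ is special, has abelian root groups and an abelian Hua subgroup, so \ref{axiom:R1}--\ref{axiom:R3} hold; and since $\characteristic(R/\m)\neq2$ the element $2$ lies in $R^\times$, so $[1,r]$ is a unit exactly when $r\in R^\times$, whence $[1,r]\cdot2=[1,2r]$ is again a unit and \ref{axiom:R4} holds. By \autoref{thm:Rlocal}, $R'=\P^1(R)\setminus\class{\infty}=\{[1,r]\mid r\in R\}$ is a local ring with identity $[1,e]$ and maximal ideal $\class{0}$. The additive structure is immediate: since $\alpha_{[1,r]}=\left[\begin{smallmatrix}1&r\\0&1\end{smallmatrix}\right]$, the addition of $R'$ is $[1,r]+[1,s]=0\cdot\alpha_{[1,r]}\alpha_{[1,s]}=[1,r+s]$, so $r\mapsto[1,r]$ is an isomorphism of abelian groups $(R,+)\to(R',+)$; in particular $(R',+)$ is uniquely $2$-divisible, as also follows from \autoref{prop:ndivglobal}.

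The multiplication is where the real work lies. The construction uses the Hua maps relative to $\tau=\mu_{[1,e]}$, so one cannot simply quote the Hua-map formula from the definition of $\M(R)$ (which uses the standard $\tau=\left[\begin{smallmatrix}0&-1\\1&0\end{smallmatrix}\right]$) but must recompute: $h_{[1,r]}=\mu_{[1,e]}\mu_{[1,r]}$ is a diagonal element of $\PSL_2(R)$, and one finds that it acts on points by $[1,a]\mapsto[1,\,a r^2 e^{-2}]$ for every unit $[1,r]$. Feeding this into the three-case definition of the endomorphism $R_{[1,b]}$ and using the elementary cancellations $(e+b)^2-b^2-e^2=2be$, $-(b-e)^2+b^2+e^2=2be$ and $(b+2e)^2-(b+e)^2-3e^2=2be$ for the three branches respectively, each case collapses — after halving, which is legitimate by unique $2$-divisibility — to the single formula
\[ [1,a]\cdot_{R'}[1,b]=[1,\,abe^{-1}]\qquad\text{for all }a,b\in R. \]

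It then only remains to exhibit the isomorphism. Set $\Phi\colon R\to R'\colon r\mapsto[1,er]$. It is additive by the previous step, sends $1$ to the identity $[1,e]$ of $R'$, is a bijection since $e\in R^\times$, and is multiplicative because $\Phi(rs)=[1,ers]=[1,(er)(es)e^{-1}]=\Phi(r)\cdot_{R'}\Phi(s)$; hence $R'\cong R$ as unital commutative rings. The main obstacle is the computation in the third paragraph: beyond the nuisance of working with $\mu_{[1,e]}$ rather than the standard $\tau$ and of normalising projective coordinates via $[a,b]=[1,a^{-1}b]$ for $a\in R^\times$, one has to check that all three branches of the piecewise definition of $R_{[1,b]}$ yield the same product, and it is exactly the three binomial cancellations above that make this work — this is the arithmetic heart of why \autoref{constr:ring} is the correct inverse of $R\mapsto\M(R)$.
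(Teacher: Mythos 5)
Your proof is correct and follows essentially the same route as the paper: the isomorphism is the same map $r\mapsto[1,re]$, and the key fact is the same product formula $[1,a]\cdot[1,b]=[1,abe^{-1}]$. The only difference is that you derive this formula explicitly (recomputing the Hua maps with respect to $\tau=\mu_{[1,e]}$ and checking all three branches of $R_{[1,b]}$), whereas the paper's proof simply uses it, justifying it afterwards by the remark that $R'$ is an isotope of the associative ring $R$ with new unit $e$.
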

\begin{proof}
	We define a bijection $\phi \colon R\rightarrow R' \colon r\mapsto[1,re]$. Then
		\begin{align*}
		\phi(1) &= [1,e] \,, \\
			\phi(r+s) &= [1,(r+s)e] = [1,re]+[1,se] = \phi(r)+\phi(s)\,, \\
		\phi(rs) &= [1,rse] = [1,ree^{-1}se] = [1,re][1,se] = \phi(r)\phi(s)\,,
		\end{align*}
	for all $r,s \in R$.
	We conclude that $\phi$ is a ring isomorphism.
\end{proof}
\begin{remark}
    The ring $R'$ is, in fact, an {\em isotope} of $R$ with new unit $e$,
    and we have simply illustrated the (well known) fact that an isotope of an associative ring is always isomorphic to the original ring.
\end{remark}
\begin{corollary}
	If $\M(R)$ is isomorphic to $\M(R')$ for local rings $R$ and $R'$ with residue field not of characteristic $2$, then $R\cong R'$.
\end{corollary}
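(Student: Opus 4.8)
The plan is to chain together the two preceding results. The statement to prove is that if $\M(R)\cong\M(R')$ for local rings $R,R'$ whose residue fields are not of characteristic $2$, then $R\cong R'$. The key observation is that \autoref{constr:ring} is applicable to both $\M(R)$ and $\M(R')$: by the properties collected just before, each projective local Moufang set is special, has abelian root groups and an abelian Hua subgroup, and since $\characteristic(R/\m)\neq2$ implies $2\in R^\times$, a unit $[1,r]$ always has $[1,r]\cdot 2 = [1,2r]$ a unit, so \ref{axiom:R4} holds. Thus conditions \hyperref[axiom:R1]{(R1)--(R4)} are satisfied, and the construction produces local rings which, by the theorem immediately above, are isomorphic to $R$ and $R'$ respectively (after choosing units $[1,e]$ and $[1,e']$).

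First I would fix an isomorphism $\phi\colon\M(R)\to\M(R')$ and transport the chosen unit: pick a unit $[1,e]$ in $\M(R)$, and set $[1,e']:=[1,e]\phi$, which is a unit in $\M(R')$ by \autoref{lem:theta_maps}. Then I would run \autoref{constr:ring} on $\M(R)$ with unit $[1,e]$ to get a local ring $R_1\cong R$, and on $\M(R')$ with unit $[1,e']$ to get a local ring $R_1'\cong R'$. The heart of the argument is that an isomorphism of local Moufang sets carrying the chosen unit to the chosen unit induces a ring isomorphism $R_1\cong R_1'$: indeed the underlying set of $R_1$ is $X\setminus\class{\infty}$, the addition is defined via $\alpha$-maps, the multiplication via Hua maps and $\mu_{[1,e]}$, and by \autoref{lem:theta_maps} (in particular the compatibility $\alpha_x\phi = \phi\alpha_{x\phi}$, $\mu_x\phi=\phi\mu_{x\phi}$, and $h_x$ transported correctly once $\tau$ and $\tau'=\tau\phi$ are matched) every piece of structure used to define $R_1$ is carried by $\phi$ to the corresponding piece defining $R_1'$. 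Hence $\phi$ restricts to a ring isomorphism $R_1\to R_1'$, and composing with the isomorphisms $R\cong R_1$ and $R_1'\cong R'$ gives $R\cong R'$.

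The main obstacle is bookkeeping rather than anything deep: one must check that $\phi$ respects the case distinction in the definition of $R_y$ (units versus non-units, and $y+e$ a unit or not), which follows because $\phi$ preserves the equivalence relation in both directions, so $[1,r]$ is a unit iff $[1,r]\phi$ is, and $[1,r]+[1,e]=[1,r]\alpha_{[1,e]}$ is mapped correctly. One also has to be a little careful that the Hua maps used in \autoref{constr:ring} are those for $\tau=\mu_{[1,e]}$; since $\mu_{[1,e]}\phi=\phi\mu_{[1,e']}$, the transported $\tau$ is exactly $\mu_{[1,e']}$, so the Hua maps match up. Once these compatibilities are in place the computation that $\phi$ is additive and multiplicative on $R_1$ is entirely formal. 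I expect the cleanest write-up simply records that \hyperref[axiom:R1]{(R1)--(R4)} hold for $\M(R)$ and $\M(R')$, invokes the previous theorem to identify the constructed rings with $R$ and $R'$, and then notes that a unit-preserving isomorphism of local Moufang sets induces a ring isomorphism of the constructed rings by \autoref{lem:theta_maps}, yielding $R\cong R'$.
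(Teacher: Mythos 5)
Your proposal is correct and takes essentially the same route as the paper, which states this corollary without a separate proof: the intended argument is precisely that the ring produced by \autoref{constr:ring} is an invariant of the isomorphism class of the local Moufang set, combined with the preceding theorem identifying that ring with $R$ (resp.\ $R'$). The only point worth making explicit is that an isomorphism $\phi$ need not carry the standard basis of $\M(R)$ to that of $\M(R')$, so before setting $[1,e']:=[1,e]\phi$ one should compose $\phi$ with a suitable element of the little projective group of $\M(R')$ (using \autoref{prop:twotrans}) so that the bases correspond; after that adjustment your transport of $+$, the Hua maps and the halving via \autoref{lem:theta_maps} goes through exactly as you describe.
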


We will now characterize $\M(R)$ purely based on data from the local Moufang set.
We will need one extra assumption on the local Moufang set, in addition to \hyperref[axiom:R1]{(R1-4)}.
\begin{theorem}\label{thm:PSL2equiv}
	Let $\M$ be a local Moufang set satisfying \hyperref[axiom:R1]{\textnormal{(R1-4)}}.
	Let $e$ and $R_x$ be as in \autoref{constr:ring}.
	Assume furthermore that
	\begin{align}
		x\mu_e\alpha_y = yR_x\alpha_{-2e}\mu_eR_x\mu_e \quad \text{for all $x\sim0$ and $y\nsim\infty$.}\label{eq:extraPSL}\tag{$\star$}
	\end{align}
	Then $\M$ is isomorphic to $\M(R)$, where $R$ is the local ring obtained from \autoref{constr:ring}.
\end{theorem}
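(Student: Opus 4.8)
The plan is to exhibit an explicit isomorphism $\phi\colon\M\to\M(R)$ and verify it via \autoref{prop:isomorphism2}. Throughout, $R$ denotes the local ring produced by \autoref{constr:ring} (which is indeed a local ring by \autoref{thm:Rlocal}), with maximal ideal $\m=\class{0}$, identity $e$, and $2=2e\in R^\times$; recall from \autoref{prop:inverse} that a unit $x\in X$ is invertible in $R$ with $x^{-1}=(-x)\mu_e$, and that by construction $xR_y=(xy)\cdot 2$ for all $x,y\in R$. I would set $\phi(x)=[1,x]$ for $x\nsim\infty$ and $\phi(x)=[-(x\tau),1]$ for $x\sim\infty$; since $x\sim\infty$ forces $x\tau\in\class{0}=\m$, the second expression is a legitimate point of $\P^1(R)$, and $\phi(0)=[1,0]$, $\phi(\infty)=[0,1]$ are the base points of $\M(R)$. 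That $\phi$ is a bijection is clear: on $X\setminus\class{\infty}=R$ it is $r\mapsto[1,r]$, and on $\class{\infty}$ it is the composite of the bijections $x\mapsto x\tau$ onto $\m$ and $m\mapsto[-m,1]$. For the compatibility with $\sim$, the only case needing an argument is $x,x'\nsim\infty$: using $\alpha_x\alpha_{x'}^{-1}=\alpha_{x-x'}$ and the sharp transitivity of $\induced{U_\infty}$ one gets $x\sim x'\iff\induced{\alpha_x}=\induced{\alpha_{x'}}\iff x-x'\sim 0\iff x-x'\in\m$, which matches the rule \eqref{eq:projline_equiv} defining $\sim$ on $\P^1(R)$.

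Next I would define $\theta\colon U_\infty\to U_\infty^{\M(R)}$ by $\alpha_x\mapsto\begin{bsmallmatrix}1&x\\0&1\end{bsmallmatrix}$; this is a group isomorphism because both $x\mapsto\alpha_x$ and $r\mapsto\begin{bsmallmatrix}1&r\\0&1\end{bsmallmatrix}$ are group isomorphisms from $(R,+)$. By \autoref{prop:isomorphism2}, it then remains to prove the two identities $\tau\phi=\phi\tau'$ and $u\phi=\phi\theta(u)$ for all $u\in U_\infty$, where $\tau'=\begin{bsmallmatrix}0&-1\\1&0\end{bsmallmatrix}$. The first is a short case check on $z\in X$ according to whether $z\in\{0,\infty\}$, $z$ is a unit, $z\sim 0$, or $z\sim\infty$; the only non-formal point is the unit case, where specialness of $\M$ enters: for a unit $z$ one has $(-z)\mu_e=-(z\mu_e)$, hence $z\tau=z\mu_e=-((-z)\mu_e)=-z^{-1}$, which matches $[1,z]\tau'=[1,-z^{-1}]$.

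For the second identity it suffices to take $u=\alpha_x$ and check $z\alpha_x\phi=z\phi\begin{bsmallmatrix}1&x\\0&1\end{bsmallmatrix}$ for all $z\in X$. When $z\nsim\infty$ this is immediate, since $z\alpha_x=z+x$ and $[1,z]\begin{bsmallmatrix}1&x\\0&1\end{bsmallmatrix}=[1,z+x]$. The remaining case $z\sim\infty$ is the heart of the argument, and it is exactly what hypothesis \eqref{eq:extraPSL} is designed for. Put $p:=z\tau\in\m$. On the matrix side, $[-p,1]\begin{bsmallmatrix}1&x\\0&1\end{bsmallmatrix}=[-p,\,e-px]$, and since $px\in\m$ makes $e-px$ invertible this equals $[-p(e-px)^{-1},1]$; as $z\alpha_x\sim\infty$ we have $\phi(z\alpha_x)=[-(z\alpha_x)\tau,1]$, so the goal reduces to $(z\alpha_x)\tau=p(e-px)^{-1}$. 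Since $\tau=\mu_e$ is an involution (\autoref{prop:mu-involution}\ref{itm:mu-involution}, noting $\abs{\class{X}}>3$ because by \ref{axiom:R4} a unit $x$ and $x\cdot 2$ lie in different classes), we have $z=p\mu_e$, so applying \eqref{eq:extraPSL} with $p$ in place of $x$ and $x$ in place of $y$ gives $z\alpha_x=p\mu_e\alpha_x=xR_p\,\alpha_{-2e}\,\mu_e\,R_p\,\mu_e$; applying $\mu_e=\tau$ to both sides then yields $(z\alpha_x)\tau=xR_p\,\alpha_{-2e}\,\mu_e\,R_p$.

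What is left is to simplify this right-hand side, a chain of routine but careful computations inside $R$: first $xR_p\,\alpha_{-2e}=(xp)\cdot 2-2e=(xp-e)\cdot 2$, which is a unit since $xp\in\m$ and by \ref{axiom:R4}; then, using the unit case of $\tau\phi=\phi\tau'$ just established, $\bigl((xp-e)\cdot 2\bigr)\mu_e=-\bigl((xp-e)\cdot 2\bigr)^{-1}=-\tfrac{1}{2}(xp-e)^{-1}$; and finally, since $R_p$ is additive on $(R,+)$, commutes with halving (it is a sum of Hua maps, cf.\ the remarks preceding \autoref{le:monoid} and \autoref{lem:huaAut}), and satisfies $wR_p=(wp)\cdot 2$, a short ring manipulation gives that applying $R_p$ produces $p(e-px)^{-1}$, as required. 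With both identities verified, \autoref{prop:isomorphism2} shows $\phi$ is a homomorphism of local Moufang sets, and being bijective it is an isomorphism, so $\M\cong\M(R)$. The main obstacle is precisely this $z\sim\infty$ case: pulling out the correct instance of \eqref{eq:extraPSL} and then keeping scrupulous track of which elements are units (so that $e-px$, $xp-e$ and $(xp-e)\cdot 2$ may be inverted) while threading $R_p$ through the doubling, the halving, and the relation $wR_p=(wp)\cdot 2$. Everything else is bookkeeping.
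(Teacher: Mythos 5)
Your proof is correct and takes essentially the same route as the paper's: the same bijection $\phi$ (sending $x\mapsto[1,x]$ for $x\nsim\infty$ and $x\mapsto[-(x\tau),1]$ for $x\sim\infty$), the same $\theta$ on $U_\infty$ with an appeal to \autoref{prop:isomorphism2}, the same case check for $\tau\phi=\phi\tau'$, and the identical use of \eqref{eq:extraPSL} combined with $wR_p=(wp)\cdot 2$ and $w\mu_e=-w^{-1}$ for units to handle the crucial $z\sim\infty$ case. The only difference is cosmetic: you apply $\mu_e$ to both sides before simplifying, while the paper carries the $\mu_e$'s through the same computation.
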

\begin{proof}
	We adopt the notations from \autoref{constr:ring} for $\M$,
    and we will denote the root group $U_{[0,e]}$ of $\M(R)$ by $U'$.
    We will construct a bijection from $X$ to $\P^1(R)$ preserving the equivalence, a bijection from $U_\infty$ to $U'$,
    and an involution $\tau$ of $\M(R)$ 
    such that the action of $U_\infty$ and $\mu_e$ on~$X$ is permutationally equivalent with the action of $U'$ and $\tau$ on~$\P^1(R)$. By \autoref{prop:isomorphism2} this will show that $\M$ is indeed isomorphic to $\M(R)$.

    By construction, $R = X\setminus\class{\infty}$, and we have $\class{\infty} = \class{0}\mu_e$ by the definition of $\mu$-maps. So we define
	\[ \phi \colon X\to \P^1(R)  \colon  x\mapsto
        \begin{cases}
            [e,x] &\text{if $x\in R$,} \\
            [-x\mu_e, e] & \text{if $x\in \class{\infty}$.}
        \end{cases} \]
	Note that in the second case $x\mu_e\nsim\infty$, so this is indeed an element of the ring $R$.
    It is clear that $\phi$ is a bijection;
    we claim that $\phi$ preserves the equivalence. First, if $x,y\in R$, then
	\begin{align*}
		x\sim y	&\iff x\alpha_{-y}\sim 0\iff x-y\sim 0 \\
			&\iff x-y\in\m\iff [e,x]\sim [e,y]\,,
	\end{align*}
	where the last equivalence follows from~\eqref{eq:projline_equiv} on p.\@~\pageref{eq:projline_equiv}.
    Secondly, if $x\sim y\sim\infty$, then $x\mu_e\sim y\mu_e\sim 0$, so both are in $\m$, and hence $x\phi\sim y\phi$.
    Finally, if $x\sim\infty$ but $y\in R$, then again $x\mu_e\sim 0$, so $x\mu_e\in \m$, hence $x\phi\nsim y\phi$.

    Let $\tau = \begin{bsmallmatrix} 0 & e \\ -e & 0 \end{bsmallmatrix}$.
	It remains to show that the actions of $U_\infty$ and $\mu_e$ on $X$ are permutationally equivalent with the actions of $U'$ and $\tau$ on $\P^1(R)$ via $\phi$.
    For $\tau$ and $\mu_e$, we compute, using $\mu_e^2=\id$,
	\begin{align*}
	x\mu_e\phi &= \begin{cases}
	               	[-x,e] & \text{if $x\mu_e\sim\infty\iff x\sim0$}\,, \\
	               	[e,x\mu_e] = [e, -x^{-1}]\hspace{17.15ex} & \text{if $x\in R^\times$}\,, \\
	               	[e,x\mu_e] & \text{if $x\mu_e\sim 0\iff x\sim\infty$}\,;
	               \end{cases} \\
	x\tau\phi &= \begin{cases}
	               	[e,x]\tau = [-xe, e^2] = [-x,e] & \text{if $x\sim0$}\,, \\
	               	[-x,e] = [e, -x^{-1}] & \text{if $x\in R^\times$}\,, \\
	               	[-x\mu_e,e]\tau = [-e^2, -x\mu_ee] = [e, x\mu_e] & \text{if $x\sim\infty$}\,;
	               \end{cases}
	\end{align*}
	so $x\mu_e\phi = x\tau\phi$. To show that the actions of $U_\infty$ and $U'$ are the same, we first observe that the map
	\[ \theta \colon U_\infty\mapsto U' \colon  \alpha_x\mapsto \alpha_{[e,x]} \]
    for all $x \in R$, is a group isomorphism because
    \begin{align*}
    	\theta(\alpha_x)\theta(\alpha_y) &= \alpha_{[e,x]}\alpha_{[e,y]} = \begin{bmatrix} e & x \\ 0 & e \end{bmatrix} \begin{bmatrix} e & y \\ 0 & e \end{bmatrix}
        = \begin{bmatrix} e & x+y \\ 0 & e \end{bmatrix}\\
        &= \alpha_{[e,x+y]} = \theta(\alpha_{x+y}) = \theta(\alpha_x\alpha_y)
    \end{align*}
    for all $x,y \in R$.
    It only remains to show that
    \[ x\alpha_y\phi = x\phi\theta(\alpha_y) \quad \text{for all $x \in X$ and $y \in R$.} \]
    We distinguish two cases: if $x\in R$, then
	\[x\alpha_y\phi = (x+y)\phi = [e,x+y] = [e,x]\alpha_{[e,y]} = x\phi\theta(\alpha_y)\,.\]
	If $x\sim\infty$, we set $x' = x\mu_e^{-1}$, which is then equivalent to $0$, so by \eqref{eq:extraPSL}
	\begin{align*}
        x\alpha_y
            &= x'\mu_e\alpha_y = yR_{x'}\alpha_{-2e}\mu_eR_{x'}\mu_e = (2yx'-2e)\mu_eR_{x'}\mu_e \\
            &= -(2yx'-2e)^{-1}R_{x'}\mu_e = (-2^{-1}(yx'-e)^{-1})R_{x'}\mu_e \\
            &= (2(-2^{-1}(yx'-e)^{-1})x')\mu_e = (-(yx'-e)^{-1}x')\mu_e\,,
	\end{align*}
    where we have used the fact that $(2yx'-2e)$ is invertible because $2yx' \in \m$.
    Since $x\alpha_y \sim \infty$, this implies
	\[ x\alpha_y\phi = [-(-(yx'-e)^{-1}x')\mu_e\mu_e, e] = [(yx'-e)^{-1}x', e]\,, \]
    where we use $\mu_e^2=\id$. On the other hand,
	\begin{align*}
		x\phi\theta(\alpha_y) &= [-x\mu_e, e]\alpha_{[e,y]} = [-x', e]\begin{bmatrix} e & y \\ 0 & e \end{bmatrix} \\
			&= [-x'e, -x'y+e] = [(yx'-e)^{-1}x', e] \,,
	\end{align*}
	and we conclude that $x\alpha_y\phi = x\phi\theta(\alpha_y)$ also in this case.
\end{proof}


\section{Serre's tree for \texorpdfstring{$\PSL_2$}{PSL\textunderscore2}}\label{sec:Serre}
\subsection{Fields with a discrete valuation and lattices}

In \cite{SerreTrees}, J.-P.\ Serre describes the Bruhat-Tits building corresponding to $\SL_2$ over fields with discrete valuation. As such fields have a local ring as valuation ring, there are connections to local Moufang sets. We refer to \cite{ShalenSL2Tree} for a more expanded explanation on Serre's tree, from which some of the arguments here are taken.

\begin{definition}
	Let $K$\notatlink{K} be a field. A \define{discrete valuation} is a surjective map $v\colon K\to\Z\cup\{\infty\}$\notatlink{va} such that
	\begin{manualenumerate}[label=\textnormal{(DV\arabic*)},labelwidth=\widthof{(DV0)}]
		\item $v(a) = \infty\iff a=0$ for all $a\in K$;
		\item $v(ab) = v(a)+v(b)$ for all $a,b\in K$;
		\item $v(a+b) \geq \min(v(a),v(b))$ for all $a,b\in K$.
	\end{manualenumerate}
	We write $\rO:=\{a\in K\mid v(a)\geq0\}$\notatlink{O} for the \define{valuation ring} and $\pi$\notatlink{pi} for the \define{uniformizer}, a fixed element such that $v(\pi)=1$. The \define{residue field} is $\rO/\pi\rO$.
\end{definition}

It is well-known that $\rO$ is a local ring with maximal ideal $\pi\rO$. The vertices of Serre's tree are homothety classes of lattices in the vector space $K^2$.

\begin{definition}
	An $\rO$-\define{lattice} $L$\notatlink{L} is a free $\rO$-module embedded in a vector space $K^n$. Two lattices $L$ and $L'$ are \define[lattice!homothetic lattices]{homothetic} if there is an $a\in K$ such that $aL = L'$.
\end{definition}

We will be working with full lattices in $K^2$, meaning our lattices have rank $2$. Furthermore, if $L$ and $L'$ are homothetic, there is a number $k\in\Z$ for which $\pi^k L=L'$, as $\pi$ is a uniformizer.

\begin{definition}
	If $L$ is an $\rO$-lattice, we write $[L] := \{\pi^k L\mid k\in\Z\}$\notatlink{[L]} for the \define{homothety class} of $L$.
\end{definition}

If we have two lattice classes, we want to compare them somehow. One can show that it is always possible to find a lattice in each homothety class such that one is contained in the other. This will be the basis for defining a distance between lattices.

\begin{lemma}
	If $L_0$ and $L$ are lattices, then there is a number $k\in\Z$ such that $\pi^kL\subset L_0$. Hence there is a maximal lattice in $[L]$ contained in $L_0$.
\end{lemma}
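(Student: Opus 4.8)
The statement has two parts: first, that for lattices $L_0$ and $L$ there is a $k\in\Z$ with $\pi^k L\subset L_0$; second, that among all lattices in the homothety class $[L]$ there is a (unique) maximal one contained in $L_0$.

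For the first part, the plan is to pick $\rO$-bases of $L_0$ and $L$ and compare them. Since $K$ is the fraction field of the valuation ring $\rO$, every element of $K$ is of the form $u\pi^n$ with $u\in\rO^\times$ and $n\in\Z$, hence every element of $K$ can be cleared of denominators by multiplying by a suitable power of $\pi$. Concretely, write $L_0 = \rO e_1 + \rO e_2$ and $L = \rO f_1 + \rO f_2$ for some bases of $K^2$. Express each $f_j$ in terms of $e_1, e_2$ with coefficients in $K$; each such coefficient has valuation bounded below by some integer (possibly negative). Take $k$ larger than the negatives of all these valuations (i.e.\ $k \geq -\min_{i,j} v(c_{ij})$ where $f_j = \sum_i c_{ij} e_i$). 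Then every coefficient of $\pi^k f_j$ in the $e_i$-basis lies in $\rO$, so $\pi^k f_j \in L_0$ for $j=1,2$, and therefore $\pi^k L = \rO(\pi^k f_1) + \rO(\pi^k f_2) \subset L_0$.

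For the second part, I would argue that the set $S = \{\, n\in\Z \mid \pi^n L \subset L_0 \,\}$ is nonempty (by the first part) and bounded below. Boundedness below follows because if $\pi^n L \subset L_0$ for arbitrarily negative $n$, then $L_0$ would contain $\pi^n L$ for all small $n$, hence would contain $\bigcup_{n} \pi^n L = K^2$ (since the $\pi^n L$ exhaust $K^2$ as $n\to -\infty$), which is impossible as $L_0$ is a finitely generated $\rO$-module while $K^2$ is not (equivalently, $L_0$ has rank $2$ and a proper submodule structure; a lattice cannot contain a full $K$-subspace). Hence $S$ has a minimal element $k_0$, and $\pi^{k_0} L$ is the desired maximal lattice in $[L]$ contained in $L_0$: it is contained in $L_0$ by definition of $S$, and if $\pi^m L \subset L_0$ with $\pi^{k_0} L \subsetneq \pi^m L$ then $m < k_0$, contradicting minimality. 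Uniqueness is automatic since the lattices in $[L]$ are totally ordered by inclusion (as $\pi^m L \subset \pi^n L \iff m \geq n$).

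The main obstacle, such as it is, is making the boundedness-below argument clean: one needs to justify that a lattice cannot contain an arbitrarily large dilate of another lattice. The cleanest route is probably to observe that the quotient $L_0 / \pi^k L$ (when $\pi^k L \subset L_0$) is a finite-length $\rO$-module whose length grows by a fixed positive amount each time $k$ increases by one — so $k$ is forced to stay above a fixed bound for the containment to even make sense — but since we only need existence of the minimum, the simpler observation that $\bigcup_k \pi^{-k} L = K^2 \not\subset L_0$ suffices and avoids any length computation. This also sets up the subsequent definition of the distance $d([L_0],[L])$ as (essentially) this length, which is presumably the next step in the paper.
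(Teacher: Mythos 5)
Your proof is correct and uses essentially the same argument as the paper: express generators of $L$ in an $\rO$-basis of $L_0$ and clear denominators using a power of $\pi$ bounded by the minimal valuation of the coefficients. Your additional boundedness-below argument for the existence of the maximal lattice in $[L]$ (via $\bigcup_n \pi^n L = K^2 \not\subset L_0$) correctly fills in the "Hence" that the paper leaves implicit.
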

\begin{proof}
	Let $L_0 = e_1\rO+e_2\rO$, then $\{e_1,e_2\}$ is a basis for $K^2$, so 
	\[L = (ae_1+be_2)\rO + (ce_1+de_2)\rO\]
	for some $a,b,c,d\in K$. Now set $k = \min\{v(a),v(b),v(c),v(d)\}$. Then 
	\[v(\pi^{-k}a) = v(a)-k\geq0\;,\]
	so $\pi^{-k}a\in\rO$, and similarly $\pi^{-k}b,\pi^{-k}c,\pi^{-k}d\in\rO$, so $\pi^{-k}L\subset L_0$.
\end{proof}

\begin{definition}
	Let $L_0$ and $L$ be lattices. Assume $L$ is the maximal lattice in $[L]$ contained in $L_0$. Then also there is a minimal $n\in\N$ such that $\pi^n L_0\subset L$. We define $d([L_0],[L]) = n$.
\end{definition}

In \citenobackref{SerreTrees}, this map is defined in a different way, from which it is obvious that it is a well-defined (in our definition, this distance can still depend on the choice of $L_0$). This distance corresponds to the distance in a graph of lattice classes.

\begin{theorem}
	The graph $T$\notatlink{T} defined by 
	\[V(T) = \{[L]\mid L\text{ an $\rO$-lattice in }K^2\}\]
	where $[L]$ is adjacent to $[L']$ if and only if $d([L],[L']) = 1$\notatlink{dLL} is a tree and the distance in the graph between two lattice classes $[L]$ and $[L']$ is precisely $d([L],[L'])$.
\end{theorem}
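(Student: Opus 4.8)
The plan is to show the graph $T$ is connected, has no cycles, and that graph distance agrees with $d$. The key tool is the standard fact that for any two lattice classes $[L]$, $[L']$ one can choose representatives and a basis $\{e_1,e_2\}$ of $K^2$ such that $L = e_1\rO + e_2\rO$ and $L' = e_1\rO + \pi^n e_2\rO$ for some $n\geq 0$; this $n$ is exactly $d([L],[L'])$ and it does not depend on the choices (this is the "elementary divisors" / Smith normal form argument over the discrete valuation ring $\rO$, which one could cite from \cite{SerreTrees} or \cite{ShalenSL2Tree}). Granting this normal form, I would first verify that $d$ is symmetric and well-defined: if $L'\subset L$ is the maximal representative of $[L']$ inside $L$ with $\pi^n L\subset L'$ minimally, then in the basis above $L = e_1\rO+e_2\rO$, $L' = e_1\rO+\pi^n e_2\rO$, and $\pi^n L_0 \subset L'$ shows $d([L],[L']) = d([L'],[L]) = n$. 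I would also record that $d([L],[L'])=0$ iff $[L]=[L']$, and that $d$ satisfies the triangle inequality, either directly from the normal form or by the path-length interpretation once connectivity is established.

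Next I would establish \emph{connectivity}, by induction on $n = d([L],[L'])$. For $n=0$ there is nothing to prove; for $n\geq 1$, using the normal form $L=e_1\rO+e_2\rO$, $L'=e_1\rO+\pi^n e_2\rO$, the intermediate lattice $L'' = e_1\rO + \pi e_2\rO$ satisfies $d([L],[L''])=1$ and $d([L''],[L'])=n-1$ (rescale: $\pi^{-1}L'' = \pi^{-1}e_1\rO + e_2\rO$, and relative to this one reads off the distance to $[L']$ as $n-1$). So by induction $[L']$ is joined to $[L'']$ by a path of length $n-1$, hence $[L]$ to $[L']$ by a path of length $n$. This simultaneously gives $\mathrm{dist}_T([L],[L'])\leq d([L],[L'])$.

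For the reverse inequality and the \emph{tree} property, the main point is that $d$ satisfies a strong convexity/ultrametric-type estimate: if $d([L],[L'])=1$ then for any third class $[L'']$ one has $d([L],[L''])$ and $d([L'],[L''])$ differing by exactly $1$ (i.e.\ no "even ambiguity" — moving along an edge changes the distance to any fixed vertex by $\pm 1$, never $0$). Once this is proven from the normal form, it follows by a standard argument that (a) $\mathrm{dist}_T \geq d$, so combined with the above $\mathrm{dist}_T = d$, and (b) there are no cycles: a cycle would be a closed edge-path, and tracking $d$ to a fixed vertex along it would force the total change to be both zero and a sum of $\pm 1$'s of odd... more precisely, on a shortest cycle one finds a vertex from which two neighbours are both strictly closer (or both strictly farther), contradicting the $\pm1$ property. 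Connectivity plus acyclicity gives that $T$ is a tree.

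The step I expect to be the main obstacle is proving the $\pm 1$ property (equivalently, the full well-definedness and convexity of $d$) cleanly, since it requires working carefully with the $\rO$-module structure: given $L' \subset L$ with $[L]$ adjacent to $[L']$, one must analyze an arbitrary lattice $L''$ simultaneously against both $L$ and $L'$, which amounts to a two-step elementary-divisor computation. I would handle this by always passing to the normal form for the pair $([L],[L''])$ first, then expressing the edge $[L]$–$[L']$ in that basis (an edge corresponds to a sublattice of index $\pi$, i.e.\ a line in the residue-field plane $L/\pi L$), and checking the two resulting cases for where that line sits relative to the filtration induced by $L''$. This is a finite, low-dimensional linear-algebra-over-$\rO/\pi\rO$ check rather than anything deep, but it is where the bookkeeping lives. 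Everything else — symmetry, the value at equal classes, connectivity — follows routinely from the normal form.
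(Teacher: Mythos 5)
The paper itself gives no argument for this theorem — it simply cites Serre (\emph{Trees}, Theorem~1, p.~70) — so you are supplying an actual proof, and most of your outline is sound and is essentially Serre's route: the elementary-divisor normal form $L=e_1\rO+e_2\rO$, $L'=e_1\rO+\pi^ne_2\rO$, symmetry and well-definedness of $d$, connectivity by inserting the intermediate class $[e_1\rO+\pi e_2\rO]$ (giving $\mathrm{dist}_T\le d$), and the fact that $d(\cdot,[L''])$ changes by exactly $\pm1$ across an edge (giving $d\le\mathrm{dist}_T$, hence $d=\mathrm{dist}_T$). A small slip: in your symmetry check you write $\pi^nL_0\subset L'$ where you mean $\pi^nL\subset L'$.

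The genuine gap is in the acyclicity step. The $\pm1$ property does \emph{not} contradict a vertex having two neighbours that are both strictly closer to a fixed vertex: in any bipartite graph the graph metric has the $\pm1$ property, and a $4$-cycle with $d:=\mathrm{dist}_T$ satisfies everything you have established (adjacency iff $d=1$, $d=\mathrm{dist}_T$, and the $\pm1$ rule) yet is not a tree. What acyclicity actually requires is the stronger, lattice-specific statement: for a fixed base class $[L_0]$ and any $[M]$ with $d([L_0],[M])=n\ge1$, there is \emph{exactly one} neighbour of $[M]$ at distance $n-1$, namely $[M+\pi^{n-1}L_0]$ — equivalently, paths without backtracking are geodesics, which is how Serre argues. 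With that uniqueness, a vertex of maximal distance (from a vertex of the cycle) on an injective cycle has two cycle-neighbours both strictly closer, a contradiction. The fix is close at hand in your own sketch: your two-case analysis of which line of $M/\pi M$ the edge corresponds to, versus the line determined by $\pi^{n-1}L_0$, proves exactly this uniqueness provided you record \emph{which} case yields descent, rather than only the weaker conclusion that the two distances differ by exactly $1$.
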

\begin{proof}
	This is \citenobackrefoptional{Theorem~1, p.~70}{SerreTrees}.
\end{proof}

We would like to find a connection between some actions on this tree and local Moufang sets. To do this, we will make precise a remark of Serre which relates the lattice classes at a fixed distance from one lattice class to points of a projective line (see \citenobackrefoptional{p.~72}{SerreTrees}). For this, we need more details on the connection between lattice classes at distance $n$.

\begin{lemma}\label{lem:latticeproperties}
	Let $L_0$ be a lattice not in $[L]$, and assume $L$ is the maximal lattice in $[L]$ contained in $L_0$. Denote $n=d([L_0],[L])$. Then 
	\begin{romenumerate}
		\item $L\not\subset \pi L_0$.
		\item If $L_0 = e_1\rO+e_2\rO$, then there are $a,b\in\rO$ such that 
			\[L = (ae_1+be_2)\rO + \pi^nL_0\]
			with at least one of $a,b$ invertible (in $\rO$).\label{lem:latticeproperties-coords}
		\item If $L_0 = e_1\rO+e_2\rO$ and \label{lem:latticeproperties-projcoords}
			\[L = (ae_1+be_2)\rO + \pi^nL_0 = (a'e_1+b'e_2)\rO + \pi^nL_0\]
			with at least one of $a,b$ invertible, then there is an invertible element $u\in\rO$ such that
			\[a\equiv ua'\modulo{\pi^n}\text{ and }b\equiv ub'\modulo{\pi^n}\;.\]
		\item Assume $L_0 = e_1\rO+e_2\rO$ and $L = (ae_1+be_2)\rO + \pi^nL_0$ with at least one of $a,b$ invertible. If $a',b'\in\rO$ are such that there is an invertible $u\in\rO$ such that
			\[a\equiv ua'\modulo{\pi^n}\text{ and }b\equiv ub'\modulo{\pi^n}\;,\]
			then $L = (a'e_1+b'e_2)\rO + \pi^nL_0$.\label{lem:latticeproperties-projcoordsbis}
	\end{romenumerate}
\end{lemma}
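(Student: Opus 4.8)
The claim is a converse to part~\ref{lem:latticeproperties-projcoords}: projectively equivalent coordinate vectors (mod $\pi^n$) describe the same sublattice $L$. The natural approach is a direct computation showing the two generating sets produce the same $\rO$-module. Write $L' := (a'e_1+b'e_2)\rO + \pi^nL_0$ and show $L' = L$. The hypothesis gives $a = ua' + \pi^n s$ and $b = ub' + \pi^n t$ for some $s,t\in\rO$ and $u\in\rO^\times$. From this, $ae_1+be_2 = u(a'e_1+b'e_2) + \pi^n(se_1+te_2)$, and since $se_1+te_2 \in L_0$, we get $\pi^n(se_1+te_2) \in \pi^nL_0 \subset L'$. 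Hence $ae_1+be_2 \in u(a'e_1+b'e_2)\rO + \pi^nL_0 \subset L'$ (using that $u\in\rO$), so $(ae_1+be_2)\rO \subset L'$, and combined with $\pi^nL_0 \subset L'$ this yields $L \subset L'$.

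For the reverse inclusion $L' \subset L$, I would exploit the symmetry of the relation: since $u\in\rO^\times$, we may write $a' = u^{-1}a - \pi^n u^{-1}s$ and $b' = u^{-1}b - \pi^n u^{-1}t$ with $u^{-1}\in\rO^\times$, which is exactly the same hypothesis with the roles of $(a,b)$ and $(a',b')$ interchanged (and $u$ replaced by $u^{-1}$). Applying the inclusion just proved in this symmetric form gives $L' \subset L$, hence $L = L'$.

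One subtlety to address is that the statement presupposes $L = (ae_1+be_2)\rO + \pi^nL_0$ genuinely is a lattice of the required form (one of $a,b$ invertible), which is given as a hypothesis here, so I need not reprove \ref{lem:latticeproperties-coords}; I only need the module-equality manipulation. I would also note in passing that the congruences $a\equiv ua'$, $b\equiv ub'$ mod $\pi^n$ force one of $a',b'$ to be invertible as well (since $u$ is a unit and one of $a,b$ is), so $L'$ is indeed a legitimate presentation of a lattice class at distance $n$; this is a one-line remark but worth including for consistency with the surrounding lemmas.

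\textbf{Expected main obstacle.} Honestly there is no deep obstacle here — this is a routine $\rO$-module computation. The only thing requiring a little care is making the symmetry argument airtight: one must check that replacing $u$ by $u^{-1}$ and swapping the primed/unprimed vectors really does land one back in the exact hypothesis of the statement, so that the first inclusion can be reused verbatim rather than re-derived. Everything else is bookkeeping with the relations $a = ua'+\pi^n s$, $b = ub'+\pi^n t$ and the observation $se_1+te_2\in L_0$.
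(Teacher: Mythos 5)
Your treatment of part (iv) is correct, and it is essentially the paper's own argument: the paper writes $a' = au^{-1}+r\pi^n$, $b' = bu^{-1}+s\pi^n$, absorbs the $\pi^n$-correction terms into $\pi^nL_0$, and uses that $u^{-1}$ is a unit to replace the generator $au^{-1}e_1+bu^{-1}e_2$ by $ae_1+be_2$. Your packaging as two inclusions, with the reverse inclusion obtained by swapping $(a,b)\leftrightarrow(a',b')$ and $u\leftrightarrow u^{-1}$, is the same computation in a slightly different order, and your observation that the forward inclusion never uses invertibility of $a$ or $b$ is exactly what makes the symmetric application legitimate. So for that item there is nothing to object to.

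The gap is that the statement is the whole four-part lemma, and your proposal proves only part (iv), explicitly taking (ii) and (iii) as given. Parts (i) and (ii) carry the substantive content and need genuine arguments: for (i) one uses maximality of $L$ among the lattices of $[L]$ contained in $L_0$ (if $L\subset\pi L_0$, then $\pi^{-1}L$ lies in $[L]$, is contained in $L_0$, and strictly contains $L$, a contradiction); for (ii) one writes $L=(ae_1+be_2)\rO+(ce_1+de_2)\rO$ with all coefficients in $\rO$ (since $L\subset L_0$), uses (i) to get an invertible coefficient, reduces the second generator to $\pi^k e_2\rO$ with $k=v(d-ca^{-1}b)$ (when, say, $a$ is a unit), and then identifies $k=n$ from the minimality defining $d([L_0],[L])$, using both $\pi^nL_0\subset L$ and $\pi^kL_0\subset L$. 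Part (iii) is short but also has to be written: equality of the two modules lets you express $ae_1+be_2$ as $u(a'e_1+b'e_2)$ plus an element of $\pi^nL_0$, giving the congruences, and $u$ is forced to be a unit because one of $a,b$ is. As it stands, your proposal is a correct proof of one quarter of the lemma, not of the lemma itself.
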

\begin{proof}\preenum
	\begin{romenumerate}
		\item If $L\subset\pi L_0$, then $L\subset\pi^{-1}L\subset L_0$, which contradicts the assumption that $L$ was the maximal lattice in $[L]$ contained in $L_0$.
		\item As $\{e_1,e_2\}$ is a basis for $K^2$, we can write 
			\[L = (ae_1+be_2)\rO+(ce_1+de_2)\rO\;.\]
			Since $L\subset L_0$, $a,b,c,d\in\rO$, and since $L\not\subset\pi L_0$, at least one of $a,b,c,d$ is invertible in $\rO$. Without loss of generality, we can assume $a$ is invertible, which means that we get 
			\begin{align*}
				L 	&= (ae_1+be_2)\rO + \bigl((ce_1+de_2)-ca^{-1}(ae_1+be_2)\bigr)\rO \\
					&= (ae_1+be_2)\rO + (d-ca^{-1}b)e_2\rO \\
					&= (ae_1+be_2)\rO + \pi^ke_2\rO\;,
			\end{align*}
			with $k = v(d-ca^{-1}b)$. As $\pi^n L_0\subset L$, we have $k\leq n$. Furthermore, we can see that $\pi^k L_0\subset L$, so $n\leq k$, which means $k=n$. Hence we have $L = (ae_1+be_2)\rO + \pi^n L_0$.
		\item As $(ae_1+be_2)\rO + \pi^nL_0 = (a'e_1+b'e_2)\rO + \pi^nL_0$, there are $u,r,s\in\rO$ such that
		\[ae_1+be_2 = u(a'e_1+b'e_2) + r\pi^n e_1 + s\pi^n e_2\;,\]
		hence $a \equiv ua'\modulo{\pi^n}$ and  $b \equiv ub'\modulo{\pi^n}$. As at least one of $a$ or $b$ are invertible, $u$ must be invertible.
		\item By the assumptions, there are $r,s\in\rO$ such that 
		\begin{align*}
			&(a'e_1+b'e_2)\rO + \pi^nL_0 \\
			&= \bigl((au^{-1}+r\pi^n)e_1+(bu^{-1}+s\pi^n)e_2\bigr)\rO+\pi^nL_0 
		\end{align*}
		and hence
		\begin{align*}
			(a'e_1+b'e_2)\rO + \pi^nL_0 &= (au^{-1}e_1+bu^{-1}e_2)\rO+\pi^nL_0 \\
						&= (ae_1+be_2)\rO+\pi^nL_0 = L\;.\qedhere
		\end{align*}
	\end{romenumerate}
\end{proof}

By \autoref{lem:latticeproperties}\ref{lem:latticeproperties-coords}, the lattice classes at distance $n$ from $[L_0]$ now correspond to the set
\[T_n = \{L\text{ lattice in $K^2$}\mid d([L_0],[L])=n, \pi^n L_0\subset L\subset L_0\}\;,\]
and by \autoref{lem:latticeproperties}\ref{lem:latticeproperties-projcoords} the map
\begin{align}
\begin{aligned}
	\chi_n 	&\colon T_n\to \P^1(\rO/\pi^n\rO) \\
		&\colon (ae_1+be_2)\rO+\pi^n L_0\mapsto[a\modulo{\pi^n},b\modulo{\pi^n}]\;.
\end{aligned}\label{eq:chi_n}
\end{align}
gives a well-defined map to the projective line $\P^1(\rO/\pi^n\rO)$. By \autoref{lem:latticeproperties}\ref{lem:latticeproperties-projcoordsbis}, the inverse map $[a,b]\mapsto (\tilde{a}e_1+\tilde{b}e_2)\rO+\pi^nL_0$ is also well-defined, where $\tilde{a}$ and $\tilde{b}$ are lifts of $a,b$ to $\rO$, so $\chi_n$ is a bijection between $T_n$ and $\P^1(\rO/\pi^n\rO)$.

We can also prove a converse to \autoref{lem:latticeproperties}\ref{lem:latticeproperties-coords}:

\begin{lemma}\label{lem:latticeproperties2}
	Let $L_0 = e_1\rO+e_2\rO$ be a lattice that is not in $[L]$. If 
	\[L = (ae_1+be_2)\rO + \pi^nL_0\]
	with $a,b\in\rO$ and at least one of $a,b$ invertible, then $d([L_0],[L])=n$. Furthermore, $L$ is the unique lattice in $[L]$ that can be written in such a way.
\end{lemma}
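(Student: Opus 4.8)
The claim has two parts: (1) computing the distance $d([L_0],[L]) = n$ for a lattice of the given form, and (2) showing uniqueness of such a representation within the homothety class $[L]$. For part (1), the plan is to mimic the argument in \autoref{lem:latticeproperties}\ref{lem:latticeproperties-coords}, but in reverse. First I would observe that $L = (ae_1+be_2)\rO + \pi^n L_0 \subset L_0$ since $a,b,\pi \in \rO$, and that $\pi^n L_0 \subset L$ is immediate from the definition of $L$. So $L$ lies between $\pi^n L_0$ and $L_0$. Since at least one of $a,b$ is invertible, $ae_1 + be_2 \notin \pi L_0$, hence $L \not\subset \pi L_0$, which by \autoref{lem:latticeproperties}(i)-style reasoning means $L$ is the maximal lattice in its homothety class contained in $L_0$. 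Then $d([L_0],[L])$ is, by definition, the minimal $m \in \N$ with $\pi^m L_0 \subset L$. We already have $\pi^n L_0 \subset L$, so $d([L_0],[L]) \leq n$. For the reverse inequality, I would suppose $\pi^m L_0 \subset L$ with $m < n$ and derive a contradiction: this would force $\pi^m e_i \in L$ for $i=1,2$, and combined with the fact that (after a change of basis as in the proof of \ref{lem:latticeproperties-coords}, say assuming $a$ invertible) $L = (ae_1 + be_2)\rO + \pi^n e_2 \rO$, one computes that $\pi^m e_2 \in L$ would require $\pi^{m} \in \pi^n \rO$, i.e. $m \geq n$, a contradiction. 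Hence $d([L_0],[L]) = n$.

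For part (2), uniqueness: suppose $L' = (a'e_1 + b'e_2)\rO + \pi^n L_0$ with $a',b' \in \rO$, at least one invertible, and $[L'] = [L]$. By part (1) applied to $L'$, we get $d([L_0],[L']) = n = d([L_0],[L])$, and both $L$ and $L'$ are the maximal lattice in $[L] = [L']$ contained in $L_0$ (by the $L \not\subset \pi L_0$ argument above, which applies equally to $L'$). But the maximal lattice in a homothety class contained in a given lattice $L_0$ is unique — this is exactly the content of the lemma preceding \autoref{lem:latticeproperties} ("there is a maximal lattice in $[L]$ contained in $L_0$"). Therefore $L = L'$. This also shows the representation is unique \emph{as a lattice}; if one additionally wants the coordinate data $(a \bmod \pi^n, b \bmod \pi^n)$ to be unique up to a common unit, that is precisely \autoref{lem:latticeproperties}\ref{lem:latticeproperties-projcoords}, so no new work is needed there.

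The main obstacle I anticipate is the reverse distance inequality in part (1): one must be careful that the normalization/change-of-basis step (replacing the second generator so that $L = (ae_1+be_2)\rO + \pi^n e_2\rO$) does not secretly use invertibility of the wrong coordinate. The clean way around this is to treat the two cases ($a$ invertible, $b$ invertible) symmetrically, or better, to avoid coordinates entirely: note $L/\pi^n L_0$ is a cyclic $\rO/\pi^n\rO$-submodule of $L_0/\pi^n L_0 \cong (\rO/\pi^n\rO)^2$ generated by the image of $ae_1 + be_2$, which is a unimodular vector (one coordinate a unit), hence generates a free rank-one direct summand; then $\pi^m L_0 \subset L$ translates to $\pi^m (\rO/\pi^n\rO)^2 \subset (\rO/\pi^n\rO)(ae_1+be_2)$, and projecting onto the complementary summand forces $\pi^m = 0$ in $\rO/\pi^n\rO$, i.e. $m \geq n$. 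This module-theoretic phrasing sidesteps the case analysis. Everything else is a direct invocation of results already in the excerpt.
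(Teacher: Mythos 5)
Your argument is correct and follows essentially the same route as the paper's proof: the inclusions $\pi^n L_0\subset L\subset L_0$, minimality of $n$ forced by the invertibility of $a$ or $b$ (the paper does exactly the coefficient/valuation computation you sketch in your coordinate route, only splitting off $n=1$ separately), and uniqueness from the fact that a homothety class contains a unique lattice squeezed between $\pi^n L_0$ and $L_0$, which is your maximality-of-$L$ and $L'$ argument in slightly different clothing. Your basis-free variant (the image of $ae_1+be_2$ is unimodular, hence generates a free direct summand of $(\rO/\pi^n\rO)^2$, and projecting to the complement forces $m\geq n$) is a clean repackaging rather than a genuinely different proof; the only small point to tidy is that in the uniqueness step you should allow $L'=(a'e_1+b'e_2)\rO+\pi^{n'}L_0$ with an a priori different exponent $n'$, which your part (1) applied to $L'$ immediately forces to equal $n$, exactly as in the paper.
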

\begin{proof}
	We immediately see that $L\subset L_0$ and $\pi^n L_0\subset L$. If $n=1$, it is clearly minimal with $\pi^n L_0\subset L$, otherwise we would get $L_0\subset L\subset L_0$ and then $L_0\in[L]$, contradicting the assumptions. Hence, if $n=1$, we get $d([L_0],[L])=1$.
	
	We can now assume $n>1$ and without loss of generality, we assume $a$ is invertible. Suppose $n$ was not the minimal number for which $\pi^n L_0\subset L$, then $\pi^{n-1}L_0\subset L$. Hence $\pi^{n-1}e_i\in L$ for $i=1,2$. This means that there are $r,s,t\in\rO$ such that
	\begin{align*}
		\pi^{n-1}e_2 = (ar+\pi^ns)e_1 + (br+\pi^nt)e_2\\
	\intertext{and hence}
		ar = -\pi^ns \quad\text{ and }\quad br = \pi^{n-1}-\pi^ns\;.
	\end{align*}
	As $a$ is invertible, we get $v(r) \geq n$ from the first equality. The second equality gives $n-1 = v(br) = v(b) + v(r)\geq n$, a contradiction. Hence $n$ was minimal to begin with, so $d([L_0],[L])=n$.
	
	Assume we also have $L' = (a'e_1+b'e_2)\rO + \pi^{n'}L_0\in[L]$. By the previous, $n'=d([L_0],[L])=n$. Now $L$ is the unique element of $[L]$ such that $\pi^nL_0\subset L\subset L_0$, but $L'$ also satisfies this inclusion. Hence $L=L'$.
\end{proof}

As a corollary of these lemmas, we can change the definition of Serre's tree to use these specific lattices:
\begin{corollary}\label{cor:tree_lattices}
	Let $L_0 = e_1\rO+e_2\rO$ be a lattice. Define the graph $T'$ by
	\[V(T') = \left\{(ae_1+be_2)\rO + \pi^nL_0 \;\middle|\;
	\begin{array}{l}
		n\in\N, a,b\in\rO \\
		\text{at least one of $a,b$ invertible}
	\end{array}\right\}\;,\]
	and two vertices $L,L'$ are adjacent if they can be written as
	\[L = (ae_1+be_2)\rO + \pi^nL_0\text{ and }L' = (ae_1+be_2)\rO + \pi^{n+1}L_0\;,\]
	for $n\in\N$, and $a,b\in\rO$ with at least one of $a,b$ invertible.
	Then the graph $T'$ is isomorphic to Serre's tree $T$ by the map $L\mapsto[L]$.
\end{corollary}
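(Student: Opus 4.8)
The plan is to verify directly that the map $\Phi\colon V(T')\to V(T)\colon L\mapsto[L]$ is a graph isomorphism, i.e.\ a bijection on vertices with the property that $L\sim_{T'}L'$ if and only if $[L]\sim_T[L']$; the base lattice $L_0=e_1\rO+e_2\rO$ stays fixed throughout. First I would establish bijectivity. Injectivity is immediate from \autoref{lem:latticeproperties2}: for $n\geq1$ a lattice $(ae_1+be_2)\rO+\pi^nL_0$ with one of $a,b$ a unit is the \emph{unique} lattice of this shape in its homothety class, and for $n=0$ the only lattice of this shape is $L_0$ itself, whose class differs from every $[L]$ with $d([L_0],[L])\geq1$. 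For surjectivity, given a class $[L]$ I would take the maximal lattice $L_1\in[L]$ contained in $L_0$ (which exists by the lemma preceding the definition of the distance); by \autoref{lem:latticeproperties}\ref{lem:latticeproperties-coords} this $L_1$ has the form $(ae_1+be_2)\rO+\pi^nL_0$ with $n=d([L_0],[L])$ and one of $a,b$ invertible, so $L_1\in V(T')$ and $\Phi(L_1)=[L]$ (the case $[L]=[L_0]$ being covered by $L_1=L_0$).

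Next I would show that edges of $T'$ map to edges of $T$. If $L=(ae_1+be_2)\rO+\pi^nL_0$ and $L'=(ae_1+be_2)\rO+\pi^{n+1}L_0$ are adjacent in $T'$, then $\pi L\subseteq L'\subseteq L$, and both inclusions are strict: assuming (without loss of generality) $a$ is a unit, short valuation estimates of the same type as those in the proof of \autoref{lem:latticeproperties2} show $ae_1+be_2\notin\pi L$ and $\pi^ne_2\notin L'$. Hence $\pi L\subsetneq L'\subsetneq L$, and unwinding the definition of $d$ (the maximal lattice of $[L']$ contained in $L$ is then $L'$ itself, and the minimal exponent $k$ with $\pi^kL\subset L'$ is $1$) this says exactly $d([L],[L'])=1$, so $[L]\sim_T[L']$.

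The step I expect to be the main obstacle is the converse — pulling an edge of $T$ back to an edge of $T'$ — since only here does one genuinely use that $T$ is a tree. Let $[P]\sim_T[Q]$. Because $T$ is a tree and $[P]\neq[Q]$, the distances $d([L_0],[P])$ and $d([L_0],[Q])$ differ by exactly $1$, so after relabelling I may assume $d([L_0],[P])=m$ and $d([L_0],[Q])=m+1$ with $m\geq0$. Write $\Phi^{-1}([Q])=(a'e_1+b'e_2)\rO+\pi^{m+1}L_0$ with one of $a',b'$ a unit, and set $L:=(a'e_1+b'e_2)\rO+\pi^mL_0$. By \autoref{lem:latticeproperties2}, $L\in V(T')$ and $d([L_0],[L])=m$, and applying the strict-inclusion argument of the previous paragraph to $L$ and $\Phi^{-1}([Q])$ gives $[L]\sim_T[Q]$. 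Thus $[L]$ and $[P]$ are both neighbours of $[Q]$ at distance $m$ from $[L_0]$; in a tree a vertex at distance $m+1$ from $[L_0]$ has a unique such neighbour (the predecessor of $[Q]$ on the geodesic to $[L_0]$), so $[L]=[P]$, i.e.\ $\Phi^{-1}([P])=L$. Since $L$ and $\Phi^{-1}([Q])$ are adjacent in $T'$ by the very definition of $T'$, we conclude $\Phi^{-1}([P])\sim_{T'}\Phi^{-1}([Q])$.

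Putting the three steps together shows that $\Phi$ is an isomorphism of graphs. The only genuinely computational points are the strictness verifications in the edge-preservation step, which are routine valuation estimates; everything else is bookkeeping with \autoref{lem:latticeproperties} and \autoref{lem:latticeproperties2} combined with elementary tree geometry (uniqueness of geodesics in $T$).
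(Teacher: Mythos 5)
Your proposal is correct and follows essentially the same route as the paper's own proof: the same appeal to \autoref{lem:latticeproperties}\ref{lem:latticeproperties-coords} and \autoref{lem:latticeproperties2} for bijectivity, the inclusion $\pi L\subsetneq L'\subsetneq L$ for the forward edge direction, and for the converse the construction of the candidate neighbour $L'+\pi^mL_0$ together with the tree property of $T$ to identify it with the given neighbour. The only difference is cosmetic: you spell out the strictness of the inclusions and the unique-predecessor argument, which the paper leaves as ``clearly''.
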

\begin{proof}
	By \autoref{lem:latticeproperties}\ref{lem:latticeproperties-coords}, the map $L\mapsto[L]$ is surjective (observe that $L_0 = (ae_1+be_2)\rO + \pi^0L_0$ for any $a,b\in\rO$). By the final statement of \autoref{lem:latticeproperties2}, the map is injective.
	
	Take $L$ and $L'$ in $V(T')$. First assume that $L$ and $L'$ are adjacent in $T'$. Then clearly $\pi L \subset L'\subset L$ and $L\neq L'$, hence $d([L],[L'])=1$, so $[L]$ and $[L']$ are adjacent in $T$. Conversely, assume $[L]$ and $[L']$ are adjacent. Then without loss of generality,
	\[d([L_0],[L'])=d([L_0],[L])+1\;.\]
	Write $d([L_0],[L'])=n$. By \autoref{lem:latticeproperties2}, we can write 
	\[L' = (ae_1+be_2)\rO + \pi^{n+1}L_0\;,\]
	with at least one of $a,b\in\rO$ invertible. We now claim that 
	\[L=(ae_1+be_2)\rO+\pi^n L_0 = L'+\pi^n L_0\;.\]
	By \autoref{lem:latticeproperties2}, $d([L_0],[L'+\pi^n L_0])=n$. Furthermore,
	\[\pi(L'+\pi^n L_0)\subset L'\subsetneq L'+\pi^n L_0\;,\]
	so $d([L'],[L'+\pi^n L_0])=1$. As $T$ is a tree, this means that 
	\[[L'+\pi^n L_0] = [L]\;.\]
	Using \autoref{lem:latticeproperties2} one last time, we get $L = L'+\pi^n L_0$, which means $L$ and $L'$ are adjacent in $T$.
\end{proof}

Let $\{e_1,e_2\}$ by the standard basis of $K^2$. We now look at the action of $\SL_2(K)$. Clearly if $g\in\SL_2(K)$, $g$ maps a lattice onto a lattice. Also, $(\pi^k L)g = \pi^k Lg$, so $g$ acts on $V(T)$. Finally, $g$ preserves inclusion of lattices, so $g$ preserves $d(\cdot,\cdot)$, and hence it preserves the tree $T$. In short: $\SL_2(K)$ acts on the tree $T$. We are interested in the point stabilizer of this action.

\begin{proposition}
	Let $L_0 = e_1\rO+e_2\rO$, then 
	\[\SL_2(K)_{[L_0]} = \SL_2(K)_{L_0} = \SL_2(\rO)\;.\notatlink{SLn}\]
\end{proposition}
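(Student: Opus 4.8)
The plan is to prove the chain of equalities $\SL_2(K)_{[L_0]} = \SL_2(K)_{L_0} = \SL_2(\rO)$ by establishing the two easy inclusions directly and then closing the loop.

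\textbf{Step 1: $\SL_2(\rO) \subseteq \SL_2(K)_{L_0}$.} If $g \in \SL_2(\rO)$, then $g$ has entries in $\rO$, so $L_0 g = (e_1\rO + e_2\rO)g \subseteq L_0$ since $e_1 g$ and $e_2 g$ are $\rO$-linear combinations of $e_1, e_2$. Applying the same argument to $g^{-1} \in \SL_2(\rO)$ (its inverse also has entries in $\rO$, since $\det g = 1$ is a unit and the adjugate has entries in $\rO$) gives $L_0 g^{-1} \subseteq L_0$, hence $L_0 \subseteq L_0 g$. Therefore $L_0 g = L_0$, i.e.\ $g \in \SL_2(K)_{L_0}$.

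\textbf{Step 2: $\SL_2(K)_{L_0} \subseteq \SL_2(K)_{[L_0]}$.} This is immediate: if $g$ fixes $L_0$ then it fixes the homothety class $[L_0] = \{\pi^k L_0 \mid k \in \Z\}$, since $g$ commutes with scalar multiplication by $\pi^k$.

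\textbf{Step 3: $\SL_2(K)_{[L_0]} \subseteq \SL_2(\rO)$.} Take $g \in \SL_2(K)$ with $[L_0 g] = [L_0]$, so $L_0 g = \pi^k L_0$ for some $k \in \Z$. Taking determinants of the $\rO$-module situation (or comparing volumes/indices): $L_0 g$ and $\pi^k L_0$ must have the same image in $K^2/L_0$-type invariants. More concretely, the index (as $\rO$-modules, measured via the valuation of the determinant) satisfies $v(\det g) = 0$ on one hand since $\det g = 1$, while $\pi^k L_0$ relates to $L_0$ by a factor whose determinant-valuation is $2k$. Hence $2k = 0$, so $k = 0$ and $L_0 g = L_0$. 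Writing $g$ in the basis $\{e_1, e_2\}$, the condition $L_0 g = L_0$ forces all four entries of $g$ into $\rO$ (as $e_1 g, e_2 g \in L_0$), so $g \in \Mat_2(\rO)$ with $\det g = 1$, i.e.\ $g \in \SL_2(\rO)$.

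\textbf{Step 4: conclude.} Steps 1--3 give $\SL_2(\rO) \subseteq \SL_2(K)_{L_0} \subseteq \SL_2(K)_{[L_0]} \subseteq \SL_2(\rO)$, so all three sets coincide.

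The main obstacle is Step 3, specifically the determinant/index argument showing $k = 0$: one must be careful that $\SL_2$ (rather than $\GL_2$) is what kills the scalar ambiguity — a general $g \in \GL_2(K)$ with $[L_0 g] = [L_0]$ need only satisfy $v(\det g) \in 2\Z$, and it is precisely the constraint $\det g = 1$ that pins down $k = 0$. I would phrase this cleanly by noting that $L_0$ and $\pi^k L_0 = L_0 g$ being related by an element of determinant $1$ forces the "distance" $d([L_0], [L_0 g]) = 0$ together with the parity/volume bookkeeping; alternatively, pick a basis adapted to both lattices (elementary divisors) and read off the valuations directly. The remaining steps are routine manipulations with lattices and matrices over the local ring $\rO$.
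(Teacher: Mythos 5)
Your proposal is correct and follows essentially the same route as the paper: the two easy inclusions $\SL_2(\rO)\subseteq\SL_2(K)_{L_0}\subseteq\SL_2(K)_{[L_0]}$, followed by writing $L_0 g=\pi^kL_0$ and using $0=v(\det g)=2k$ (the change-of-basis matrix being in $\GL_2(\rO)$ contributes valuation $0$) to force $k=0$ and hence entries in $\rO$. No gaps worth noting.
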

\begin{proof}
	Clearly $\SL_2(\rO)\subset\SL_2(K)_{L_0}\subset\SL_2(K)_{[L_0]}$. Next, we assume $g\in\SL_2(K)_{[L_0]}$, i.e.\ $L_0 g = \pi^k L_0$ for some $k\in\Z$. Let $g = \big(\begin{smallmatrix} a&b\\c&d \end{smallmatrix}\big)$. As $L_0 g = \pi^k L_0$, there is a matrix $\big(\begin{smallmatrix} a'&b'\\c'&d' \end{smallmatrix}\big)$ in $\GL_2(\rO)$ such that
	\[ae_1 +be_2 = a'\pi^k e_1 + b'\pi^ke_2\text{ and }ce_1+de_2 = c'\pi^ke_1+d'\pi^ke_2\;.\]
	Since $\det g = 1$, we get 
	\[0=v(\det g) = v(ad-bc) = 2k+v(a'd'-b'c') = 2k\;,\]
	so $k=0$ and $a,b,c,d\in\rO$. Hence $g\in\SL_2(\rO)$.
\end{proof}

The induced action of such a point-stabilizer is exactly what will turn out to be an action of a local Moufang set.

\subsection{Action on spheres}

We now fix $L_0 = e_1\rO+e_2\rO$ and look at the induced action of $\SL_2(\rO)$ on the vertices at some fixed distance $n$. This action will give rise to a local Moufang set which will be isomorphic to $\M(\rO/\pi^n\rO)$.

\begin{theorem}\label{thm:localSerreLMS}
	Assume $L_0 = e_1\rO+e_2\rO$. We set
	\[T_n = \{L\text{ lattice in $K^2$}\mid d([L_0],[L])=n, \pi^n L_0\subset L\subset L_0\}\]
	and define $L\sim L'\iff L+\pi L_0 = L'+\pi L_0$.
	
	Then the induced action of $\SL_2(\rO)$ on $(T_n,\sim)$ is isomorphic to the action of $\PSL_2(\rO/\pi^n\rO)$ on $(\P^1(\rO/\pi^n\rO),\sim)$, where  the correspondence between $T_n$ and $\P^1(\rO/\pi^n\rO)$ is given by $\chi_n$ as defined by \eqref{eq:chi_n} on p.~\pageref{eq:chi_n}.
\end{theorem}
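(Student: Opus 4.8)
The plan is to use the bijection $\chi_n$ from \eqref{eq:chi_n} together with the reduction homomorphism on the group side. First I recall that $\chi_n$ is a well-defined bijection from $T_n$ onto $\P^1(\rO/\pi^n\rO)$: this is exactly the content of \autoref{lem:latticeproperties}\ref{lem:latticeproperties-coords}--\ref{lem:latticeproperties-projcoordsbis} together with \autoref{lem:latticeproperties2}. On the group side I introduce the map
\[\rho\colon\SL_2(\rO)\longrightarrow\PSL_2(\rO/\pi^n\rO)\]
obtained by reducing all matrix entries modulo $\pi^n$ and then passing to the quotient modulo scalar matrices. It is a group homomorphism, and it is surjective, since by \autoref{thm:PSL_is_E} the group $\PSL_2(\rO/\pi^n\rO)$ is generated by the classes of the elementary matrices $e_{12}(\bar r),e_{21}(\bar r)$, each of which is the $\rho$-image of the corresponding elementary matrix over $\rO$ obtained by lifting $\bar r$.

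Next I check that the two actions are compatible with $\chi_n$. Since $g\in\SL_2(\rO)=\SL_2(K)_{L_0}$ fixes $L_0$ setwise, it fixes $\pi^nL_0$ as well; being a tree isometry fixing $[L_0]$ and preserving inclusions of lattices, it therefore maps $T_n$ into $T_n$, so $\SL_2(\rO)$ acts on $(T_n,\sim)$. Writing a vertex as $L=(ae_1+be_2)\rO+\pi^nL_0$ with at least one of $a,b$ a unit, and $g=\begin{bsmallmatrix}p&q\\r&s\end{bsmallmatrix}$, we get $Lg=\bigl((ap+br)e_1+(aq+bs)e_2\bigr)\rO+\pi^nL_0$ because $\pi^nL_0g=\pi^nL_0$; reducing these coordinates modulo $\pi^n$ (legitimate by \autoref{lem:latticeproperties}\ref{lem:latticeproperties-projcoords}) shows $\chi_n(Lg)=\chi_n(L)\rho(g)$, where the right-hand side is the usual right action of $\PSL_2$ on the projective line. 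A short computation using the description \eqref{eq:projline_equiv} of $\sim$ on $\P^1(\rO/\pi^n\rO)$, together with $L+\pi L_0=(ae_1+be_2)\rO+\pi L_0$ and the isomorphism $(\rO/\pi^n\rO)/\m\cong\rO/\pi\rO$, shows that $L\sim L'$ if and only if $\chi_n(L)$ and $\chi_n(L')$ have the same image in $\P^1(\rO/\pi\rO)$, i.e.\ $\chi_n(L)\sim\chi_n(L')$.

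Finally I assemble these ingredients. Because $\PSL_2(\rO/\pi^n\rO)$ is the little projective group of the local Moufang set $\M(\rO/\pi^n\rO)$, it acts faithfully on $\P^1(\rO/\pi^n\rO)$; combined with the identity $\chi_n(Lg)=\chi_n(L)\rho(g)$ and the surjectivity of $\chi_n$, this forces the kernel of the $\SL_2(\rO)$-action on $T_n$ to equal $\ker\rho$. Hence the induced faithful action of $\SL_2(\rO)$ on $T_n$ is the action of $\SL_2(\rO)/\ker\rho\cong\PSL_2(\rO/\pi^n\rO)$, and $\chi_n$ is an equivalence-preserving bijection intertwining it with the action of $\PSL_2(\rO/\pi^n\rO)$ on $(\P^1(\rO/\pi^n\rO),\sim)$, which is precisely the asserted isomorphism of actions. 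I expect the only real difficulty to be bookkeeping: keeping the left/right matrix conventions consistent in the computation of $\chi_n(Lg)$, and checking that every change of representative of a vertex is licensed by \autoref{lem:latticeproperties}\ref{lem:latticeproperties-projcoords}; the surjectivity of $\rho$ and the faithfulness of $\PSL_2(\rO/\pi^n\rO)$ are quoted from earlier results.
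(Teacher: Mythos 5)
Your proof is correct, and it rests on the same backbone as the paper's: the bijection $\chi_n$ supplied by \autoref{lem:latticeproperties} and \autoref{lem:latticeproperties2}, the coordinate computation showing $\chi_n(Lg)=\chi_n(L)\rho(g)$ (which is exactly the paper's formula \eqref{eq:lattice_action}), and the closing case analysis showing that $\chi_n$ preserves $\sim$. Where you genuinely diverge is in identifying the induced faithful action. The paper computes the kernel of $\SL_2(\rO)\to\Sym(T_n,\sim)$ directly: it tests a trivially acting matrix against the vertices $e_1\rO+\pi^nL_0$, $e_2\rO+\pi^nL_0$ and $(e_1+e_2)\rO+\pi^nL_0$, concludes via \autoref{lem:latticeproperties}\ref{lem:latticeproperties-projcoords} that the matrix reduces to a scalar modulo $\pi^n$, and checks the converse by a direct lattice computation. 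You instead deduce that this kernel equals $\ker\rho$ from the intertwining identity together with faithfulness of the $\PSL_2(\rO/\pi^n\rO)$-action on $\P^1(\rO/\pi^n\rO)$, and you supply the surjectivity of the reduction map $\rho$ via \autoref{thm:PSL_is_E} and lifting of elementary matrices. Your route gives a cleaner kernel argument and makes explicit a point the paper leaves implicit, namely that $\SL_2(\rO)$ really does surject onto $\PSL_2(\rO/\pi^n\rO)$, so the induced action is the full group rather than a subgroup; its mild cost is the appeal to faithfulness of the projective-line action, which the paper only uses implicitly when identifying the little projective group of $\M(R)$ with $\PSL_2(R)$ --- a one-line check (a matrix fixing $[1,0]$, $[0,1]$ and $[1,1]$ is scalar) would make your argument self-contained. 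The steps you flag as bookkeeping (that one coordinate of $(a,b)g$ remains invertible, and the $\sim$-comparison through $(\rO/\pi^n\rO)/\m\cong\rO/\pi\rO$, where the paper splits into the cases $a$ invertible or not) are indeed routine and match the paper's computations.
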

\begin{proof}
	The first thing we need to check is if $\SL_2(\rO)$ actually acts on $T_n$ preserving $\sim$. Let $g\in \SL_2(\rO)$, then $L_0 g = L_0$ and $g$ preserves the distance in the tree $T$. Hence for any lattice $L\in T_n$ and any $L'\sim L$ we get
	\begin{align*}
		&d([L_0 g],[L g]) = n \text{, so } d([L_0],[L g]) = n\\
		&\pi^n L_0 g\subset Lg\subset L_0 g \text{, so } \pi^n L_0\subset Lg\subset L_0 \\
		&L g+\pi L_0 g = L' g+\pi L_0 g \text{, so } L g+\pi L_0 = L' g+\pi L_0\;.
	\end{align*}
	This means $Lg \in T_n$ and $L g\sim L'g$.
	
	Next, we want to know what the induced action of $\SL_2(\rO)$ on $T_n$ is. We want to determine the kernel of the group action, i.e.\ we want to find 
	\[N := \Ker(\SL_2(\rO)\to\Sym(T_n,\sim))\;.\]
	Let $g\in N$ and write $g = \big(\begin{smallmatrix} a&b\\c&d \end{smallmatrix}\big)$. We now have 
	\[e_1\rO+\pi^nL_0 =(e_1\rO+\pi^nL_0)g = (ae_1+be_2)\rO+\pi^nL_0\;.\]
	By \autoref{lem:latticeproperties}\ref{lem:latticeproperties-projcoords}, there is an invertible $u\in\rO$ such that 
	\[a \equiv u\modulo{\pi^n}\text{ and }b\in\pi^n\rO\;.\] Similarly
	\[e_2\rO+\pi^nL_0 =(e_2\rO+\pi^nL_0)g = (ce_1+de_2)\rO+\pi^nL_0\;,\]
	from which we get an invertible $u'\in\rO$ such that $c\in\pi^n\rO$ and $d\equiv u'\modulo{\pi^n}$. Finally we use
	\begin{align*}
		(e_1+e_2)\rO+\pi^nL_0 	&= ((e_1+e_2)\rO+\pi^nL_0)g \\
					&= ((a+c)e_1+(b+d)e_2)\rO+\pi^nL_0\;,
	\end{align*}
	and find an element $u''\in\rO$ such that $a+c\equiv u''\equiv b+d\modulo{\pi^n}$. Combining these, we get
	\[u\equiv a+c\equiv b+d\equiv u'\modulo{\pi^n}\;,\]
	so $g$ is a matrix which reduces to a scalar matrix modulo $\pi^n\rO$. Conversely, assume
	\[g = \begin{pmatrix} u+\pi^na'&\pi^nb'\\\pi^nc'&u+\pi^nd' \end{pmatrix}\]
	with $a',b',c',d'\in\rO$. If $L = (ae_1+be_2)\rO+\pi^nL_0$ is any lattice in $T_n$, we get
	\begin{align*}
		Lg &= \bigl((au+\pi^naa'+\pi^nbc')e_1+(bu+\pi^nab'+\pi^nbd')e_2\bigr)\rO+\pi^n L_0 \\
		   &= (aue_1+bue_2)\rO+\pi^n L_0 = (ae_1+be_2)\rO+\pi^n L_0 = L
	\end{align*}
	so $g$ fixes all of $T_n$. Hence the kernel of the action of $\SL_2(\rO)$ on $T_n$ are those matrices that reduce to scalar matrices modulo $\pi^n\rO$. This means we get an action of $\PSL_2(\rO/\pi^n\rO)$ on $T_n$ by
	\begin{align}
		\begin{aligned}
			&\bigl((ae_1+be_2)\rO+\pi^nL_0\bigr)\begin{bmatrix}
				a'&b'\\c'&d'
			\end{bmatrix} \\
			&= \bigl((aa'+bc')e_1+(ab'+bd')e_2\bigr)\rO+\pi^nL_0\;.
		\end{aligned}\label{eq:lattice_action}
	\end{align}
	Hence the action of $\begin{bsmallmatrix} a'&b'\\c'&d' \end{bsmallmatrix}$ commutes with $\chi_n$, so the action of $\PSL_2(\rO/\pi^n\rO)$ on $T_n$ and $\P^1(\rO/\pi^n\rO)$ is isomorphic.
	
	The last thing we need to check is that $\chi_n$ preserves the equivalence relation. Let $L,L'\in T_n$, then $L+\pi L_0$ and $L'+\pi L_0$ are two lattices at distance $1$ of $L_0$. This means, if $\chi_n(L)=[a,b]$ and $\chi_n(L')=[a',b']$, then by \autoref{lem:latticeproperties}\ref{lem:latticeproperties-projcoords}, for any lifts $\tilde{a}$, $\tilde{b}$, $\tilde{a}'$ and $\tilde{b}'$ of $a$, $b$, $a'$ and $b'$,
	\begin{align*}
		&L+\pi L_0 = L'+\pi L_0 \\
		&\iff (\tilde{a}e_1+\tilde{b}e_2)\rO+\pi L_0 = (\tilde{a}'e_1+\tilde{b}'e_2)\rO+\pi L_0 \\
		&\iff \exists u\in\rO^\times\colon a'\equiv ua\modulo{\pi}\text{ and }b'\equiv ub\modulo{\pi}\;.
	\intertext{There are now two cases: if $a$ is invertible, then so is $a'$ and we get $u\equiv a'a^{-1}\modulo{\pi}$. In this case}
		&\iff b'\equiv a'a^{-1}b\modulo{\pi} \iff ba^{-1}-b'a'^{-1}\in\pi\rO \\
		&\iff[1,ba^{-1}]\sim[1,b'a'^{-1}]
	\end{align*}
	In the second case $a\in\pi\rO/\pi^n\rO$, so $a'\equiv ua\equiv0\modulo{\pi}$ and we can always choose $u=b'b^{-1}$, so we always have $L+\pi L_0 = L'+\pi L_0$. But in this case, we also always have $[ab^{-1},1]\sim[a'b'^{-1},1]$. We conclude that in all cases, $L\sim L'\iff\chi_n(L)\sim\chi_n(L')$.
\end{proof}

\subsection{Action on the boundary}

In \autoref{thm:localSerreLMS}, we observed that the action of $\SL_2(\rO)$ on Serre's tree induces many local Moufang sets, which are also `local' in the tree, in the sense that these actions are limited to a part of the tree at a finite distance of a fixed point. We would like to find local Moufang sets which have a more global action. These will occur by looking at the action on the ends of the tree.

\begin{definition}
	Let $T$ be a tree. A \define{ray} is a path in $T$ that is infinite in one direction. In other words, it is a sequence $(x_i)_i = (x_0,x_1,\ldots)$ of vertices where $x_i$ is adjacent to $x_{i+1}$ for all $i\in\N$. Two rays are \define[ray!parallel rays]{parallel} if their intersection is also a ray. An \define{end} is a parallel class of rays. The set of ends of $T$ is the \define{boundary} of the tree, which we denote by $\partial T$\notatlink{dT}.
\end{definition}

To simplify matters, we will identify the boundary with rays starting in a fixed vertex.

\begin{proposition}\label{prop:ends_rays}
	Let $T$ be a tree with a vertex $x_0$. Then there is a correspondence between $\partial T$ and the rays starting in $x_0$.
\end{proposition}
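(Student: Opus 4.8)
The statement is a standard fact about trees, so the proof should be short and essentially a construction plus a verification of well-definedness and bijectivity. The plan is to exhibit a map in each direction between $\partial T$ and the set of rays based at $x_0$, and check they are mutually inverse.

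\textbf{Key steps.} First I would send a ray starting in $x_0$ to its parallel class, which is obviously an end; this gives a map from based rays to $\partial T$. For the other direction, I would take an end $\epsilon$, pick any ray $(y_i)_i$ representing it, and use connectedness of the tree to find the geodesic path from $x_0$ to $y_0$; concatenating this finite path with the tail of $(y_i)_i$ and then deleting any backtracking yields a ray starting in $x_0$. The main content is to check: (1) this based ray lies in the class $\epsilon$ — because two rays with a common infinite tail are parallel, and the surgery only changes an initial segment; (2) the based ray does not depend on the choice of representative $(y_i)_i$ of $\epsilon$ — here one uses that in a tree there is a \emph{unique} reduced path between any two vertices, so any two rays from $x_0$ that are parallel (i.e. eventually coincide) must in fact be \emph{equal}, since once they share a vertex they share the unique reduced continuation, and reduced paths from $x_0$ through a common vertex are forced. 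This uniqueness is exactly what makes the correspondence a bijection: distinct based rays lie in distinct ends (injectivity), and every end contains a based ray by the construction above (surjectivity).

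\textbf{Expected obstacle.} The only subtle point is the claim that two parallel rays both starting at $x_0$ are equal, and more generally that the ``straightening'' procedure (concatenate and remove backtracking) is well-defined and terminates in a genuine ray rather than a finite path or something eventually constant. This requires the tree to have no leaves along the relevant rays, which is automatic here since every vertex of Serre's tree $T$ has degree at least two (indeed, for the valuation ring $\rO$ with residue field of size $q$, every lattice class has $q+1$ neighbours), so a ray can always be extended and the straightened path is infinite. I would phrase the argument using the fact, recorded earlier, that $T$ is a tree, so that ``reduced path between two vertices'' is well-defined and unique; everything else is bookkeeping about initial segments and tails of sequences. I do not expect to need anything beyond connectedness and the absence of cycles.
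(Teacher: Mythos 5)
Your proposal is correct and follows essentially the same route as the paper: send a based ray to its parallel class, and conversely splice the geodesic from $x_0$ onto a representative ray (the paper does this by cutting at the vertex $y_N$ closest to $x_0$, which is exactly what your backtracking-removal produces), with injectivity coming from the fact that two parallel rays both starting at $x_0$ must coincide. One remark: your ``expected obstacle'' is not actually an issue and needs no appeal to Serre's tree having vertices of degree $\geq 2$ --- the straightened path always contains the infinite tail $(y_i)_{i\geq N}$ of the chosen representative, so it is automatically a genuine ray; this matters because the proposition is stated for an arbitrary tree, not just for Serre's tree.
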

\begin{proof}
	Clearly, rays starting in $x_0$ give rise to ends by taking the parallel classes. If two such rays $(x_i)_i$ and $(x'_i)_i$ are parallel, then the intersection is a ray $(x_i)_{i\geq N}$ for some $N$. As $x_0$ is in the intersection, $N=0$, so the rays coincide.
	
	Conversely, if we have an end represented by a ray $(y_i)_i$, then we can take $N$ to be such that $d(x_0,y_N)$ is minimal. As $T$ is a tree, this $N$ is unique. We can take the unique path $(x_0,x_1,\ldots,x_d=y_N)$ from $x_0$ to $y_N$. The ray 
	\[(x_0,x_1,\ldots,x_d,y_{N+1},y_{N+2},\ldots)\]
	is now parallel to $(y_i)_i$ and starts in $x_0$. Hence any end contains a unique ray starting in $x_0$.
\end{proof}

Now let's return to Serre's tree, and determine what the boundary is. We use the description $T'$ of the tree as in \autoref{cor:tree_lattices}, as well as the sets $T_n$ from \autoref{thm:localSerreLMS}. Remark that 
\[T_n = \{L\in V(T')\mid d([L],[L_0])=n\}\;,\]
and if $L\in T_n$, then $L+\pi^{n-1}L_0\in T_{n-1}$. This means that the rays of $T'$ are sequences $(L_i)_i\in\prod_i T_i$ such that $L_i = L_{i+1}+\pi^iL_0$ for all $i$. In other words, using the correspondence of \autoref{prop:ends_rays}, we get
\begin{align}
	\varprojlim T_i &\cong \partial T\quad\text{by }(L_i)_i\mapsto \text{the parallel class of }([L_i])_i\;.\label{eq:inverse_limit_boundary}
\end{align}
We will use this correspondence to identify the ends of the tree with a projective line.

\begin{proposition}
	The boundary of $T$ corresponds to $\P^1(\complete{\rO})$.
\end{proposition}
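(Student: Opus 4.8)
The plan is to combine the inverse limit description of the boundary in~\eqref{eq:inverse_limit_boundary} with the inverse limit description of $\P^1(\complete{\rO})$ from \autoref{thm:inverse_limit_P1}. By \autoref{thm:inverse_limit_P1} (applied to the local ring $\rO$ with maximal ideal $\pi\rO$ and completion $\complete{\rO}$), we have
\[\P^1(\complete{\rO}) = \varprojlim \P^1(\rO/\pi^i\rO)\,,\]
where the inverse system uses the maps $\P^1(\phi_{ij})$ induced by the natural projections $\phi_{ij}\colon \rO/\pi^i\rO\to\rO/\pi^j\rO$. On the other hand, \eqref{eq:inverse_limit_boundary} identifies $\partial T$ with $\varprojlim T_i$, where the connecting maps are $L\mapsto L+\pi^iL_0$ from $T_{i+1}$ to $T_i$ (and their composites).

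First I would check that the bijections $\chi_n\colon T_n\to\P^1(\rO/\pi^n\rO)$ from \eqref{eq:chi_n} are compatible with the two inverse systems, i.e.\ that for $i\geq j$ the diagram formed by $\chi_i$, $\chi_j$, the connecting map $L\mapsto L+\pi^jL_0$ on the lattice side, and $\P^1(\phi_{ij})$ on the projective side commutes. Concretely, if $L = (ae_1+be_2)\rO + \pi^iL_0$ with $a,b\in\rO$ and at least one invertible, then $\chi_i(L) = [a\bmod\pi^i, b\bmod\pi^i]$, while $L+\pi^jL_0 = (ae_1+be_2)\rO+\pi^jL_0$ by \autoref{lem:latticeproperties2}, so $\chi_j(L+\pi^jL_0) = [a\bmod\pi^j,b\bmod\pi^j]$; and $\P^1(\phi_{ij})([a\bmod\pi^i,b\bmod\pi^i]) = [a\bmod\pi^j,b\bmod\pi^j]$ by definition of $\P^1(\phi_{ij})$. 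Hence the square commutes. This shows the two inverse systems $(T_i)_i$ and $(\P^1(\rO/\pi^i\rO))_i$ are isomorphic as inverse systems of sets (via the $\chi_i$), so their inverse limits are canonically in bijection.

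Combining these facts gives a bijection
\[\partial T \;\cong\; \varprojlim T_i \;\cong\; \varprojlim \P^1(\rO/\pi^i\rO) \;=\; \P^1(\complete{\rO})\,,\]
where the first bijection is \eqref{eq:inverse_limit_boundary}, the second is induced by the family $(\chi_i)_i$ (using that isomorphic inverse systems have the same inverse limit, e.g.\ from the construction in \autoref{thm:set_inverse_limit}), and the equality is \autoref{thm:inverse_limit_P1}. Explicitly, an end represented by a ray $([L_i])_i$ with $L_i\in T_i$ and $L_i = L_{i+1}+\pi^iL_0$ corresponds to the point $([a_i\bmod\pi^i, b_i\bmod\pi^i])_i$ of $\P^1(\complete{\rO})$, which (via \autoref{lem:projspace_representatives}) is just the point $[(a_i)_i,(b_i)_i]$ with coordinates in $\complete{\rO}$ obtained by taking compatible representatives.

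I expect the main obstacle to be purely bookkeeping: making sure the connecting maps on the lattice side in \eqref{eq:inverse_limit_boundary} really are $L\mapsto L+\pi^iL_0$ (this needs \autoref{cor:tree_lattices} together with the observation, already made just before \eqref{eq:inverse_limit_boundary}, that $L\in T_n \implies L+\pi^{n-1}L_0\in T_{n-1}$ and that a ray is exactly a compatible sequence of such lattices), and then lining these up with the maps $\P^1(\phi_{ij})$ of the $\P^1$-inverse system. Once the square above is seen to commute, the rest is a formal identification of inverse limits. No hard computation is involved; the only subtlety is that one should also note, for a fully canonical statement, that this identification is equivariant for the relevant group actions (the action of $\SL_2(\rO)$, hence of $\PSL_2(\complete{\rO})$, on both sides), using \eqref{eq:lattice_action} — but even without equivariance the bijection of sets is enough for the stated proposition.
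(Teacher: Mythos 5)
Your proposal is correct and follows essentially the same route as the paper: identify $\partial T$ with $\varprojlim T_i$ via \eqref{eq:inverse_limit_boundary}, check that the bijections $\chi_i$ from \eqref{eq:chi_n} commute with the connecting maps $L\mapsto L+\pi^jL_0$ and $\P^1(\psi_{ij})$, and conclude with \autoref{thm:inverse_limit_P1}. The only difference is cosmetic (your extra remark on $\SL_2(\rO)$-equivariance, which, as you note, is not needed for the stated proposition).
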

\begin{proof}
	We already know that the boundary of $T$ corresponds to $\varprojlim T_i$ where 
	\[\phi_{ij}\colon T_i\to T_j\colon L\mapsto L+\pi^jL_0\;.\]
	We will prove that the inverse system $(T_i,\phi_{ij})$ is isomorphic to the inverse system $(\P^1(\rO/\pi^i\rO),\P^1(\psi_{ij}))$, where $\psi_{ij}\colon\rO/\pi^i\rO\to\rO/\pi^j\rO$ is the natural projection map.
	
	In order to show the isomorphism of these inverse systems, we need bijections between $T_i$ and $\P^1(\rO/\pi^i\rO)$ for all $i$. These bijections are precisely the maps $\chi_i$ as defined in \autoref{thm:localSerreLMS}. What we need to show is the commutativity of the following squares for all $i\geq j$:
	\begin{center}
		\begin{tikzpicture}[regulararrow,node distance=2cm]
			\node (Ti) {$T_i$};
			\node[right of=Ti,xshift=2cm] (Tj) {$T_j$};
			\node[below of=Ti,yshift=0.5cm] (Pi) {$\P^1(\rO/\pi^i\rO)$};
			\node[right of=Pi,xshift=2cm] (Pj) {$\P^1(\rO/\pi^j\rO)$};
			\draw[->]	(Ti) edge node[above] {\scriptsize$\phi_{ij}$} (Tj)
					(Ti) edge node[left] {\scriptsize$\chi_i$} (Pi)
					(Tj) edge node[right] {\scriptsize$\chi_j$} (Pj)
					(Pi) edge node[above] {\scriptsize$\P^1(\psi_{ij})$} (Pj);
		\end{tikzpicture}
	\end{center}
	This can be checked by the definitions of the maps:
	\begin{align*}
		\bigl((ae_1+be_2)\rO+\pi^i L_0\bigr)\phi_{ij}\chi_j 
		&= \bigl((ae_1+be_2)\rO+\pi^j L_0\bigr)\chi_j \\
		&= [a\modulo{\pi^j},b\modulo{\pi^j}] \\[0.5ex]
		\bigl((ae_1+be_2)\rO+\pi^i L_0\bigr)\chi_i\P^1(\psi_{ij})
		&= [a\modulo{\pi^i},b\modulo{\pi^i}]\P^1(\psi_{ij}) \\
		&= [\psi_{ij}(a\modulo{\pi^i}),\psi_{ij}(b\modulo{\pi^i})] \\
		&= [a\modulo{\pi^j},b\modulo{\pi^j}]
	\end{align*}
	This means that $\varprojlim T_i\cong \varprojlim \P^1(\rO/\pi^i\rO)$. By \autoref{thm:inverse_limit_P1}, this is $\P^1(\complete{\rO})$.
\end{proof}

Remark that this correspondence is also in \cite[p.~72]{SerreTrees}, but it is not made explicit. We can make it explicit with the following maps:
\begin{center}
	\begin{tikzpicture}[regulararrow,node distance=0.6cm]
		\node (PO) {$\P^1(\complete{\rO})$};
		\node[right of=PO,xshift=3.8cm] (lim) {$\varprojlim T_i$};
		\node[right of=lim,xshift=3.8cm] (partT) {$\partial T$};
		\node[below of=PO,yshift=-0.2cm] (ab) {$[a,b]$};
		\node[below of=lim,yshift=-0.2cm] (lattices) {$\bigl((\tilde{a}_ie_1+\tilde{b}_ie_2)\rO+\pi^i L_0\bigr)_i$};
		\node[below of=partT,yshift=-0.2cm] (endmid) {};
		\node[below of=lattices] (ray) {$(L_i)_i$};
		\node[below of=endmid] (end) {parallel class of $([L_i])_i$};
		
		\draw[->] 	(PO) edge node[above] {\scriptsize$\chi$} (lim)
				(lim) edge (partT);
		\draw[|->] 	(ab) edge (lattices)
				(ray) edge (end);
	\end{tikzpicture}
\end{center}
Here $\tilde{a}_i\in\rO$ is such that $a\equiv\tilde{a}_i\modulo{\pi^i}$ and similarly for $\tilde{b}_i$. We can use this correspondence to look at the action of $\SL_2(\rO)$ on the boundary. Remember that $\SL_2(\rO)$ fixes $L_0$, so any $g\in\SL_2(\rO)$ actually sends a ray starting in $L_0$ to another such ray. This means that we can view the action of $\SL_2(\rO)$ on $\partial T$ by looking at the action of $\SL_2(\rO)$ on $\varprojlim T_i$.

\begin{proposition}\label{prop:isomorphic_action_boundary}
	Assume $\rO$ is complete, then the induced action of $\SL_2(\rO)$ on $\varprojlim T_i$ is isomorphic to that of $\PSL_2(\rO)$ on $\P^1(\rO)$, with the correspondence $\chi$.
\end{proposition}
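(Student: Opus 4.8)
The plan is to assemble the finite-level identifications from \autoref{thm:localSerreLMS} into a single identification of the inverse limits. Since $\rO$ is a complete discrete valuation ring it is Noetherian with $\bigcap_i\pi^i\rO=0$, so the canonical map $\rO\to\complete{\rO}$ is an isomorphism; hence by \autoref{thm:inverse_limit_P1} we may identify $\P^1(\rO)=\P^1(\complete{\rO})=\varprojlim\P^1(\rO/\pi^i\rO)$, with bonding maps $\P^1(\psi_{ij})$, and by \eqref{eq:inverse_limit_boundary} the action of $\SL_2(\rO)$ on $\partial T$ is its action on $\varprojlim T_i$, with bonding maps $\phi_{ij}\colon L\mapsto L+\pi^jL_0$.

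First I would check that $\SL_2(\rO)$ acts on $\varprojlim T_i$ componentwise: any $g\in\SL_2(\rO)$ fixes $L_0$ and preserves distances in $T$, so it restricts to an equivalence-preserving permutation of each $T_i$, and since $g$ is $\rO$-linear it commutes with $L\mapsto L+\pi^jL_0$, i.e.\ with every $\phi_{ij}$; thus $(L_i)_i\mapsto(L_ig)_i$ is a well-defined action on $\varprojlim T_i$. By \autoref{thm:localSerreLMS} each bijection $\chi_i\colon T_i\to\P^1(\rO/\pi^i\rO)$ intertwines this action (which on $T_i$ factors through $\PSL_2(\rO/\pi^i\rO)$) with the natural action of $\PSL_2(\rO/\pi^i\rO)$ on $\P^1(\rO/\pi^i\rO)$, and in the previous proposition we saw that the $\chi_i$ form an isomorphism of the inverse systems $(T_i,\phi_{ij})$ and $(\P^1(\rO/\pi^i\rO),\P^1(\psi_{ij}))$. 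Since the reduction maps $\SL_2(\rO)\to\PSL_2(\rO/\pi^i\rO)$ are themselves compatible with these bonding maps, $\chi=\varprojlim\chi_i$ is an $\SL_2(\rO)$-equivariant bijection from $\varprojlim T_i$ onto $\P^1(\rO)$.

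It then remains to pass to faithful actions and recognise the resulting group as $\PSL_2(\rO)$. The kernel of the action of $\SL_2(\rO)$ on $\varprojlim T_i$ is the intersection of the kernels of the actions on the $T_i$, which by \autoref{thm:localSerreLMS} are the matrices reducing to scalar matrices modulo $\pi^i$. For $g=\begin{bsmallmatrix}a&b\\c&d\end{bsmallmatrix}\in\SL_2(\rO)$ this forces $b,c\in\bigcap_i\pi^i\rO=0$ and $a-d\in\bigcap_i\pi^i\rO=0$, so $g=a\I_2\in\Sc_2(\rO)$; conversely scalar matrices act trivially. Hence the kernel is exactly $\SL_2(\rO)\cap\Sc_2(\rO)$, so the induced faithful action of $\SL_2(\rO)$ on $\varprojlim T_i$ is the action of $\PSL_2(\rO)$, and via the equivariant bijection $\chi$ this is the standard action of $\PSL_2(\rO)$ on $\P^1(\rO)$.

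I expect the only mildly delicate point to be this last kernel/faithfulness bookkeeping — checking that completeness (more precisely, separatedness) of $\rO$ is exactly what makes the intersection of the level-$i$ kernels collapse to the scalar matrices; the remaining steps are a routine gluing of the statements already established for each $T_i$ and of the inverse-limit description of $\P^1(\complete{\rO})$.
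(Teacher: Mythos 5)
Your proof is correct, and in substance it relies on the same ingredients as the paper's (the level-wise statement of \autoref{thm:localSerreLMS}, the isomorphism of inverse systems via the $\chi_i$, and $\bigcap_i\pi^i\rO=0$), but it packages them differently. The paper argues directly at the limit: it determines the kernel by testing a trivial action on the three specific rays $(e_1\rO+\pi^iL_0)_i$, $(e_2\rO+\pi^iL_0)_i$, $((e_1+e_2)\rO+\pi^iL_0)_i$ and invoking \autoref{lem:latticeproperties}\ref{lem:latticeproperties-projcoords}, and it verifies equivariance of $\chi$ by an explicit computation of $[a,b]g$ against the lattice action \eqref{eq:lattice_action}. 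You instead glue the already-established finite-level facts: $\chi=\varprojlim\chi_i$ is equivariant because each $\chi_i$ is and everything is compatible with the bonding maps, and the kernel is the intersection of the level-$i$ kernels, which \autoref{thm:localSerreLMS} identifies as the matrices congruent to scalars modulo $\pi^i$. That is a cleaner reuse of what has been proved, at the cost of two small points worth making explicit: (1) to pass from ``$g$ acts trivially on $\varprojlim T_i$'' to ``$g$ acts trivially on each $T_i$'' you need surjectivity of the projections $\varprojlim T_j\to T_i$, which holds here because every $(ae_1+be_2)\rO+\pi^iL_0$ extends to the ray $((ae_1+be_2)\rO+\pi^jL_0)_j$ (the paper sidesteps this by only testing the three rays above); (2) the collapse $\bigcap_i\pi^i\rO=0$ is automatic for any discrete valuation ring, so completeness is not what makes the kernel computation work --- it is used exactly where you use it earlier, namely to identify $\P^1(\rO)\cong\P^1(\complete{\rO})=\varprojlim\P^1(\rO/\pi^i\rO)$, i.e.\ to make $\chi$ surjective onto $\varprojlim T_i$. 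With those remarks added, your argument matches the proposition.
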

\begin{proof}
	We first determine the induced action on $\varprojlim T_i$. Let 
	\[g\in\Ker(\SL_2(\rO)\to\Sym(\varprojlim T_i))\;.\]
	Write $g = \bigl(\begin{smallmatrix} a&b\\c&d \end{smallmatrix}\bigr)$. As $g$ acts trivially, we have
	\[e_1\rO+\pi^iL_0 = (e_1\rO+\pi^iL_0)g = (ae_1+be_2)\rO+\pi^iL_0\]
	for all $i$. By \autoref{lem:latticeproperties}\ref{lem:latticeproperties-projcoords}, $b\in\pi^i\rO$ for all $i\in\rO$. This means $v(b)\geq i$ for all $i\in\N$, so $v(b)=\infty$ and hence $b=0$. Similarly,\[e_2\rO+\pi^iL_0 = (e_2\rO+\pi^iL_0)g = (ce_1+de_2)\rO+\pi^iL_0\]
	for all $i$, so $c=0$. We now get
	\[(e_1+e_2)\rO+\pi^iL_0 = \bigl((e_1+e_2)\rO+\pi^iL_0\bigr)g = (ae_1+de_2)\rO+\pi^iL_0\]
	for all $i$, so there are units $u_i\in\rO$ such that $a\equiv u_i\equiv d\modulo{\pi^i}$ for all $i$. This means that $v(a-d) = 0$, so $a=d$. Hence $g$ is a scalar matrix. As all scalar matrices in $\SL_2(\rO)$ preserve all lattices, we see that the induced action of $\SL_2(\rO)$ on $\varprojlim T_i$ is that of $\PSL_2(\rO)$.
	
	Next, we show the correspondence between the action of $\PSL_2(\rO)$ on $\varprojlim T_i$ and on $\P^1(\rO)$. We know the bijection
	\[\chi\colon \P^1(\rO)\to\varprojlim T_i\colon [a,b]\mapsto\bigl((ae_1+be_2)\rO+\pi^i L_0\bigr)_i \;.\]
	If $g=\begin{bsmallmatrix} a' & b' \\ c' & d' \end{bsmallmatrix}$, we get
	\begin{align*}
		[a,b]g &= [aa'+bc',ab'+bd']
	\end{align*}
	and
	\begin{align*}
		&\bigl((ae_1+be_2)\rO+\pi^i L_0\bigr)_ig \\
		&= \bigl(((ae_1+be_2)\rO+\pi^i L_0)g\bigr)_i \\
		&= \bigl(((aa'+bc')e_1+(ab'+bd')e_2)\rO+\pi^i L_0\bigr)_i\;.
	\end{align*}
	Hence $\chi$ commutes with the action of $\PSL_2(\rO)$, so the actions are isomorphic.
\end{proof}

This means that the action of $\PSL_2(\rO)$ on $\varprojlim T_i$ can be given the structure of a local Moufang set. A precise description is given in the following theorem:
\begin{theorem}\label{thm:Serre_LMS_complete}
	Let $T$ be Serre's tree and assume $\rO$ is complete. Endow $\varprojlim T_i$ with an equivalence relation by 
	\[(L_i)_i\sim(L'_i)_i\iff L_1=L'_1\;.\]
	Then the action of $\PSL_2(\rO)$ on $(\varprojlim T_i,\sim)$ gives rise to a local Moufang set. When we set $\infty = (e_2\rO+\pi^iL_0)_i$ and $0 = (e_1\rO+\pi^iL_0)_i$, we can find the root groups by \autoref{constr:MUtau} using
	\begin{align*}
		U &= \left\{\begin{bmatrix}
			1 & r \\
			0 & 1
		\end{bmatrix}\in\PSL_2(\rO)\;\middle|\; r\in \rO\right\} &
		\tau &=\begin{bmatrix} 0 & -1 \\ 1 & 0 \end{bmatrix}\in\PSL_2(\rO)\;.
	\end{align*}
\end{theorem}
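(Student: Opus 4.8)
The plan is to transfer everything through the isomorphism $\chi\colon\P^1(\complete{\rO})\to\varprojlim T_i$ established in \autoref{prop:isomorphic_action_boundary}, and then invoke the already-proved fact that $\M(\complete{\rO})$ is a local Moufang set. First I would observe that the equivalence relation defined here by $(L_i)_i\sim(L'_i)_i\iff L_1=L'_1$ corresponds, under $\chi$, exactly to the equivalence relation on $\P^1(\complete{\rO})$ from \eqref{eq:projline_equiv}: a lattice class $L_1\in T_1$ is determined by its image $\chi_1(L_1)\in\P^1(\rO/\pi\rO)$, and two points of $\P^1(\complete{\rO})$ are $\sim$-equivalent precisely when they reduce to the same point modulo the maximal ideal $\pi\complete{\rO}$, i.e.\ modulo $\pi$ since $\complete{\rO}/\pi\complete{\rO}\cong\rO/\pi\rO$. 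This is essentially the last paragraph of the proof of \autoref{thm:localSerreLMS} taken in the limit, so I would cite that computation rather than redo it.

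Next I would record that $\complete{\rO}$ is again a complete local ring with maximal ideal $\pi\complete{\rO}$ (standard; also visible from \autoref{thm:inverse_limit_P1} together with the basic properties of completions quoted in the Preliminaries), so that $\M(\complete{\rO})$ is defined and, by the theorem in Section~6.1 that $\M(R)$ is a local Moufang set for any local ring $R$, is indeed a local Moufang set. Its underlying set is $(\P^1(\complete{\rO}),\sim)$, its root group $U_\infty$ and its chosen $\tau$ are precisely the matrices displayed in the statement (with entries now in $\complete{\rO}=\rO$ since $\rO$ is assumed complete), and $U_\infty$ corresponds to $U$ and $\tau$ to $\tau$ under the identification. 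By \autoref{prop:isomorphic_action_boundary}, $\chi$ is a bijection $\P^1(\rO)\to\varprojlim T_i$ that intertwines the $\PSL_2(\rO)$-actions; combined with the previous paragraph it intertwines the equivalence relations as well. So $\chi$ satisfies the hypotheses of \autoref{prop:isomorphism2} (with $\theta$ the identity on $\PSL_2(\rO)$, or rather the obvious identification of $U$ with itself): $x\sim x'\iff x\chi\sim x'\chi$, $u\chi=\chi u$ for $u\in U$ via the action identity, and $\tau\chi=\chi\tau$ since $\tau$ acts the same way on both sides. Hence $\chi$ is an isomorphism of local Moufang sets from $\M(\complete{\rO})=\M(U,\tau)$ onto the structure on $(\varprojlim T_i,\sim)$, which therefore is a local Moufang set with root groups obtained from \autoref{constr:MUtau} using the displayed $U$ and $\tau$.

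The one genuine point needing care — the main obstacle, such as it is — is checking that the action of $\PSL_2(\rO)$ on $\varprojlim T_i$ really is the \emph{transported} action under $\chi$ and not merely abstractly isomorphic to it: \autoref{prop:isomorphic_action_boundary} already does exactly this by exhibiting $\chi$ as commuting with the two actions, so I would lean on that explicitly. A secondary bookkeeping point is to confirm that $\infty=(e_2\rO+\pi^iL_0)_i$ and $0=(e_1\rO+\pi^iL_0)_i$ are the $\chi$-images of the standard basis points $[0,1]$ and $[1,0]$ of $\P^1(\rO)$ — immediate from the formula for $\chi$ — so that the basis and the $\mu$-map $\tau$ used in \autoref{constr:MUtau} match on both sides. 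With these identifications in place, the conclusion follows formally, and the displayed description of the root groups is just the statement of \autoref{constr:MUtau} applied to the data $(U,\tau)$ pulled back through $\chi$.
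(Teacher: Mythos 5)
Your proposal is correct and takes essentially the same route as the paper: show that $\chi$ preserves the equivalence relation (matching $(L_i)_i\sim(L'_i)_i\iff L_1=L'_1$ with the relation \eqref{eq:projline_equiv} on $\P^1(\rO)$) and then transport the local Moufang set structure of $\M(\rO)$ through the action-intertwining bijection of \autoref{prop:isomorphic_action_boundary}. Only a minor citation caveat: \autoref{prop:isomorphism2} assumes both sides are already local Moufang sets, so formally you should first conclude that the transported structure on $(\varprojlim T_i,\sim)$ satisfies \hyperref[axiom:C1]{(C1-2)} and the Hua-map criterion of \autoref{thm:constrMouf} verbatim by transport (which is exactly the identification you set up), and only then speak of $\chi$ as an isomorphism.
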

\begin{proof}
	We show that $\chi$ preserves $\sim$. Assume $[1,r]\sim[1,r']$ for $r,r'\in\rO$. Then $r - r'\in\pi\rO$, so $r\equiv r'\modulo{\pi}$. By \autoref{lem:latticeproperties}\ref{lem:latticeproperties-projcoordsbis}, this means
	\[(e_1+re_2)\rO+\pi L_0 = (e_1+r'e_2)\rO + \pi L_0\;,\]
	so $[1,r]\chi\sim[1,r']\chi$. Similarly, we prove that $[m,1]\sim[m',1]$ implies $[m,1]\chi\sim[m',1]\chi$ for $m,m'\in\pi\rO$. Finally, if $m\in\pi\rO$ and $r\in\rO$, we have
	\[(me_1+e_2)\rO+\pi L_0 \neq (e_1+re_2)\rO + \pi L_0\;,\]
	as equality would imply that there is an invertible $u\in\rO$ such that $u\equiv m\equiv 0\modulo{\pi}$, which is impossible. Hence $\chi$ preserves equivalence.
	
	The remaining statements now follow from \autoref{prop:isomorphic_action_boundary}, using the description of the isomorphism between the actions.
\end{proof}

Finally, we look at the case where $\rO$ is not complete. We still have an action of $\PSL_2(\rO)$ on $\varprojlim T_i$, but it is no longer transitive. This corresponds to the action of $\PSL_2(\rO)$ on $\P^(\complete{\rO})$ not being transitive. The inclusion $\P^1(\rO)\xrightarrowtail{}\P^1(\complete{\rO})$ gives us a local Moufang subset, of which the little projective group is $\PSL_2(\rO)$. This corresponds to a subset of the ends of $T$, on which $\PSL_2(\rO)$ acts as a local Moufang set.

\begin{theorem}
	Let $\M$ be the local Moufang set defined by the action of $\PSL_2(\complete{\rO})$ on $(\varprojlim T_i,\sim)$ and set
	\begin{align*}
		Y :=& \bigl\{\bigl((ae_1+be_2)\rO+\pi^iL_0\bigr)_i\in\varprojlim T_i \;\bigm|\; \\
		&\qquad\qquad a,b\in\rO\text{ with one of $a,b$ invertible}\bigr\}\;.
	\end{align*}
	Then $Y$ induces a local Moufang subset of $\M$ which is isomorphic to $\M(\rO)$.
\end{theorem}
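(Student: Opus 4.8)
The plan is to exhibit $Y$ as a local Moufang subset in the sense of the definition preceding \autoref{prop:hom_mouf_set}'s predecessor (the one with axioms \ref{axiom:S1}--\ref{axiom:S3}), and then to identify the resulting local Moufang set with $\M(\rO)$ via the inclusion $\P^1(\rO)\hookrightarrow\P^1(\complete\rO)$. First I would recall from the previous theorem that $\M$ acts on $(\varprojlim T_i,\sim)$ with the identification $\chi\colon\P^1(\complete\rO)\to\varprojlim T_i$ being an isomorphism of $\PSL_2(\complete\rO)$-sets preserving $\sim$; under this identification, by \autoref{lem:latticeproperties}\ref{lem:latticeproperties-coords} and \autoref{lem:latticeproperties2}, the set $Y$ corresponds exactly to $\chi(\P^1(\rO))$, i.e.\ to those $[a,b]$ with $a,b\in\rO$ and at least one of $a,b$ invertible — which is precisely $\P^1(\rO)$ as a subset of $\P^1(\complete\rO)$. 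So the whole statement reduces to: the subset $\P^1(\rO)\subset\P^1(\complete\rO)$ induces a local Moufang subset of $\M(\complete\rO)$, and that local Moufang subset is isomorphic to $\M(\rO)$.

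Next I would check the three axioms for $\P^1(\rO)$, working with the basis $0=[1,0]$, $\infty=[0,1]$ (both in $\P^1(\rO)$) and keeping $\tau=\begin{bsmallmatrix}0&-1\\1&0\end{bsmallmatrix}$, which has entries in $\rO$ so is a $\mu$-map of $\M(\complete\rO)$ that visibly preserves $\P^1(\rO)$. For \ref{axiom:S1}: $V=\{u\in U_\infty\mid 0u\in\P^1(\rO)\}$ consists of the matrices $\begin{bsmallmatrix}1&r\\0&1\end{bsmallmatrix}$ with $0\cdot u=[1,r]\in\P^1(\rO)$, i.e.\ $r\in\rO$, and this is clearly a subgroup (it is $U$ as defined for $\M(\rO)$). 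For \ref{axiom:S2}: if $r\in\rO$, then $[1,s]u=[1,s+r]\in\P^1(\rO)$ when $s\in\rO$, and $[m,1]u=[m,mr+1]=[m(mr+1)^{-1},1]$ with $m\in\pi\rO$ — here $mr+1$ is invertible in $\rO$ since $mr\in\pi\rO$ (note $\pi\complete\rO\cap\rO=\pi\rO$) — so indeed $\P^1(\rO)u\subset\P^1(\rO)$; this is just the computation already done in \autoref{prop:projectiveLMS_preLMS}. For \ref{axiom:S3}: take the unit $y=[1,1]\in\P^1(\rO)$; then $\mu_y=\begin{bsmallmatrix}0&-1\\1&0\end{bsmallmatrix}$ (by \autoref{prop:projLMS_mu} applied in $\M(\complete\rO)$ with $r=1$), which has $\rO$-entries and preserves $\P^1(\rO)$. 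Hence $Y$ induces the local Moufang subset $\M(V,\mu_y)$ of $\M(\complete\rO)$.

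It then remains to identify $\M(V,\mu_y)$ with $\M(\rO)$. By construction $\M(V,\mu_y)$ acts on $(\P^1(\rO),\sim)$ — the restriction of the equivalence on $\P^1(\complete\rO)$ to $\P^1(\rO)$, which is exactly the equivalence \eqref{eq:projline_equiv} for $\rO$ since $[1,r]\sim[1,r']$ in $\P^1(\complete\rO)$ iff $r-r'\in\pi\complete\rO$ iff $r-r'\in\pi\rO$ for $r,r'\in\rO$. Moreover $V$ is literally the root group $U$ used in the definition of $\M(\rO)$, and $\mu_y=\tau$ is the element $\begin{bsmallmatrix}0&-1\\1&0\end{bsmallmatrix}$ used there. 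So $\M(V,\mu_y)=\M(U,\tau)=\M(\rO)$ by \autoref{constr:MUtau}; alternatively, if one prefers to route this through the ring-homomorphism functoriality, the inclusion $\rO\hookrightarrow\complete\rO$ is a local ring homomorphism, so by the penultimate proposition of Section~6.1 it induces a homomorphism of local Moufang sets $\M(\rO)\to\M(\complete\rO)$ which is injective (by \autoref{cor:morphism_inj}, since $\P^1(\rO)\to\P^1(\complete\rO)$ is injective) with image $Y$, and \autoref{cor:isomorphism} gives that $\M(\rO)$ is isomorphic to the induced local Moufang subset on $Y$. I do not expect a serious obstacle here: everything is bookkeeping once one has the translation via $\chi$ between $Y$ and $\P^1(\rO)$. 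The one place to be careful is the claim $\pi\complete\rO\cap\rO=\pi\rO$, i.e.\ that an element of $\rO$ divisible by $\pi$ in $\complete\rO$ is already divisible by $\pi$ in $\rO$; this follows because $\complete\rO/\pi\complete\rO\cong\rO/\pi\rO$, so the maximal ideal of $\complete\rO$ restricts to the maximal ideal of $\rO$, which is exactly what makes the inclusion a \emph{local} ring homomorphism and what keeps the equivalence relations compatible.
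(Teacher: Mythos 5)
Your proof is correct and follows essentially the same route as the paper: both verify \ref{axiom:S1}--\ref{axiom:S3} for the same subgroup $V$ of upper unitriangular matrices over $\rO$ and the same $\mu$-map $\mu_{[1,1]\chi}=\begin{bsmallmatrix}0&-1\\1&0\end{bsmallmatrix}$, and then identify the resulting local Moufang subset with $\M(\rO)$ from the explicit descriptions of $V$ and $\tau$. The only difference is cosmetic: you transport everything through $\chi$ to $\P^1(\rO)\subset\P^1(\complete{\rO})$ and reuse the computations of \autoref{prop:projectiveLMS_preLMS}, whereas the paper carries out the (equally short) computations directly in the lattice picture on $Y$.
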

\begin{proof}
	As in \autoref{thm:Serre_LMS_complete}, we set $\infty = (e_2\rO+\pi^iL_0)_i$ and $0 = (e_1\rO+\pi^iL_0)_i$. We get
	\begin{align*}
		V 	:=& \{u\in U_\infty\mid 0u\in Y\} \\
			 =& \left\{\begin{bmatrix}
					1 & r \\
					0 & 1
				\end{bmatrix}\in\PSL_2(\rO)\;\middle|\; r\in \rO\right\}\;,
	\end{align*}
	which clearly is a group, hence \ref{axiom:S1} holds. Furthermore, for $v=\begin{bsmallmatrix} 1 & r \\ 0 & 1\end{bsmallmatrix}\in V$ and $\bigl((ae_1+be_2)\rO+\pi^iL_0\bigr)_i\in Y$, we get
	\begin{align*}
		\bigl((ae_1+be_2)\rO+\pi^iL_0\bigr)_i v &= \bigl((ae_1+(ar+b)e_2)\rO+\pi^iL_0\bigr)_i\in Y\;,
	\end{align*}
	so \ref{axiom:S2} is satisfied. Remark that $\tau = \begin{bsmallmatrix} 0 & -1 \\ 1 & 0 \end{bsmallmatrix}$ is a $\mu$-map, as $\tau = \mu_{[1,1]\chi}$, and $[1,1]\chi\in Y$. Now 
	\begin{align*}
		\bigl((ae_1+be_2)\rO+\pi^iL_0\bigr)_i \tau &= \bigl((be_1-ae_2)\rO+\pi^iL_0\bigr)_i\in Y\;,
	\end{align*}
	which shows \ref{axiom:S3}. Hence $\M(V,\tau)$ is a local Moufang subset of $\M$, and by the descriptions of $V$ and $\tau$, it is isomorphic to $\M(\rO)$.
\end{proof}
	
	\chapter{Local Moufang sets \& Jordan pairs}\label{chap:chap7_jordan}
	We construct a local Moufang set $\M(V)$ from any local Jordan pair $V$. Next we take a local Moufang set satisfying assumptions \hyperref[itm:J1]{(J1-4)}, and we construct a local Jordan pair. Finally, we connect these two constructions and characterize those local Moufang sets that are isomorphic to $\M(V)$ for some local Jordan pair.

\section[Local Jordan pairs to local Moufang sets]{From local Jordan pairs to local Moufang sets}\label{sec:JP}
\subsection{Projective space}

If we want to construct a local Moufang set from a Jordan pair, we need a set to act on. The set we will use is the projective space of a Jordan pair $V$. This concept was introduced by O.~Loos in \cite{LoosHomAlgVar}. The description we use comes from Loos' more recent article \cite{LoosDecompProjSpace}.

\begin{definition}
	Two pairs $(x,y),(x',y')\in V$ are \define[Jordan pair!projective equivalence]{projectively equivalent} if
	\[(x,y-y')\text{ is quasi-invertible and }x' = x^{y-y'}\;.\]
	Using \autoref{prop:JPbasic}\ref{itm:JPbasicB}, this can be shown to be an equivalence relation, and we will denote the equivalence class of $(x,y)$ by $[x,y]$. The \define[Jordan pair!projective space]{projective space of $V$} is the set
	\[\P(V) := \{[x,y]\mid (x,y)\in V\}\;.\notatlink{P(V)}\notatlink{[xy]}\]
\end{definition}

The condition for two pairs to be projectively equivalent is not so easy to grasp. To simplify matters, we determine some nice representatives for points of $\P(V)$, when $V$ is a local Jordan pair.

\begin{proposition}\label{prop:PVrepr}
	Let $V$ be a local Jordan pair with $e$ invertible. Take any $(x,y)\in V$ then at least one of the following occurs:
	\begin{manualenumerate}[label*=\textnormal{(\Roman*)},labelwidth=\widthof{(II)}]
		\item There is a unique $t\in V^+$ such that $[x,y]=[t,0]$.
		\item There is a unique $t\in V^-$ such that $[x,y]=[e,e^{-1}+t]$.
	\end{manualenumerate}
	If in either of the cases $t$ is non-invertible, then the other case cannot occur. If $t$ is invertible, we can have
	\[[t,0] = [e,e^{-1}-t^{-1}].\]
\end{proposition}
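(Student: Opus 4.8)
\textbf{Proof plan for Proposition~\ref{prop:PVrepr}.}

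The plan is to prove the three assertions in turn, using the explicit description of projective equivalence together with the basic Jordan pair identities of \autoref{prop:JPbasic}, in particular \ref{itm:JPbasicB} and \ref{itm:JPbasicswitch}.

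First I would establish that at least one of (I) or (II) occurs. Given $(x,y)\in V$, I distinguish according to whether $y$ is invertible in $V^-$. If $y$ is \emph{not} invertible, then $(x,y)\modulo{\Rad V}$ has a zero second component in the Jordan division pair $V/\Rad V$; since in a division pair $(x,0)$ is always quasi-invertible (the Bergman operator is the identity), \autoref{prop:JPbasic}\ref{itm:JPbasicQIlift} lifts this to show $(x,y)$ is quasi-invertible, so $[x,y]=[x^y,0]$ and we are in case~(I) with $t=x^y$. If $y$ \emph{is} invertible, write $y = e^{-1}+(y-e^{-1})$; I want $[x,y]=[e,e^{-1}+t]$ with $t=y-e^{-1}\in V^-$. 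This requires $(e, y - (e^{-1}+t)) = (e,0)$ to be quasi-invertible with $e^{0}=e$, which is trivially true since $B_{e,0}=\id$; hence $[e,e^{-1}+t]=[e,y]$, and it remains to see $[e,y]=[x,y]$. By the definition of projective equivalence this asks $(x,y-y)=(x,0)$ quasi-invertible with $x^{0}=x=e$ --- which only holds if $x=e$. So the honest argument must instead go: one of the two cases always applies but possibly after first normalizing $x$. Concretely, if $x$ is invertible I normalize to get representative $[e,\cdot]$ by using \autoref{prop:JPbasic}\ref{itm:JPbasicswitch} and~\ref{itm:JPbasicQy}; if $x$ is not invertible, then $x\in\Rad V^+$, so $(x,y)$ is quasi-invertible for \emph{every} $y$ by the radical being properly quasi-invertible, forcing case~(I). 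This case analysis, carefully tracking invertibility of $x$ versus $y$ modulo $\Rad V$, is where the real work lies.

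For uniqueness within each case: if $[t,0]=[t',0]$ then $(t,0)$ is quasi-invertible (clear) and $t'=t^{0}=t$, so $t$ is unique; similarly if $[e,e^{-1}+t]=[e,e^{-1}+t']$ then $(e,t-t')$ quasi-invertible and $e = e^{t-t'} = (e-(t-t')Q_e)B_{e,t-t'}^{-1}$, and unwinding with \autoref{prop:JPbasic}\ref{itm:JPbasicA} (using $e$ invertible, $B_{e,s}=Q_{e^{-1}-s}Q_e$) gives $Q_{e^{-1}}Q_e = B_{e,t-t'}$, i.e. $Q_{e^{-1}-(t-t')}=Q_{e^{-1}}$, from which $t=t'$ follows since $Q$ is injective on a division pair modulo the radical and the difference lies appropriately. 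Then for the incompatibility claim: if $t$ in case~(I) is non-invertible, then $[t,0]=[x,y]$ with $x=t$ non-invertible, so $x\notin(V^+)^\times$; but a representative of the form $[e,e^{-1}+t']$ has invertible first component $e$, and two projectively equivalent pairs have first components related by a Bergman operator (an isomorphism), hence $x$ invertible $\iff$ $e$ invertible --- contradiction. The symmetric statement rules out case~(I) with non-invertible $t$ from coexisting with~(II).

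Finally, the displayed identity $[t,0]=[e,e^{-1}-t^{-1}]$ for invertible $t$: here I must check $(e, \,e^{-1} - (e^{-1}-t^{-1})) = (e, t^{-1})$ is quasi-invertible and $e^{t^{-1}} = t$. Quasi-invertibility of $(e,t^{-1})$ with both entries invertible follows from \autoref{prop:JPbasic}\ref{itm:JPbasicA}: $B_{e,t^{-1}} = Q_{t^{-1}}Q_{e - (t^{-1})^{-1}} = Q_{t^{-1}}Q_{e-t}$, which is invertible as a product of invertible maps. Then $e^{t^{-1}} = (e - t^{-1}Q_e)B_{e,t^{-1}}^{-1}$, and I compute this equals $t$ by substituting $B_{e,t^{-1}} = Q_{e^{-1}-t^{-1}}Q_e$ (the other form in \ref{itm:JPbasicA}) and using $Q_e^{-1} = Q_{e^{-1}}$ together with $e Q_e = eQ_eQ_e^{-1}$-type manipulations; this reduces to the Jordan pair analogue of the elementary rank-one identity $[1,t]\leftrightarrow[t^{-1},1]$, so that $[t,0]=[e,t^{-1}]^{-}$... which after translating the $y$-coordinate by $e^{-1}$ is exactly $[e,e^{-1}-t^{-1}]$ via the definition of projective equivalence applied to the pair $(e,e^{-1})$ and $(e, e^{-1}-(e^{-1}-t^{-1}))$. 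The main obstacle throughout is bookkeeping: keeping the two signs $\sigma$ straight, remembering that $x^y$ lives in $V^+$ while $y^x$ lives in $V^-$, and invoking exactly the right clause of \autoref{prop:JPbasic} for each quasi-inverse computation. Once the invertible-element identities $B_{e,y} = Q_{e^{-1}-y}Q_e$ and $Q_e^{-1}=Q_{e^{-1}}$ are in hand, each verification is a short direct computation.
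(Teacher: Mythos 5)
Your outline agrees with the paper's on the easy half, but the step that carries all the content is missing, and the last two claims are mishandled. For existence: your final case split (if $x$ is non-invertible then $x\in\Rad V^+$, so $(x,y)$ is quasi-invertible and $[x,y]=[x^y,0]$) matches the paper, which splits on quasi-invertibility of $(x,y)$ and notes that failure of quasi-invertibility forces $x$ invertible. But in the case ``$x$ invertible'' you only announce that you will ``normalize to get a representative $[e,\cdot]$''; this normalization \emph{is} the proof. The paper names the candidate $t=y-x^{-1}$ and then verifies, via $B_{e,s}=Q_{e^{-1}-s}Q_e$ and $xQ_{x^{-1}}=x^{-1}$, that $(e,e^{-1}-x^{-1})$ is quasi-invertible and $e^{e^{-1}-x^{-1}}=x$, i.e.\ $[x,y]=[e,e^{-1}+t]$. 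Without exhibiting $t$ and doing this computation, existence in case (II) is unproven.

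For the displayed identity, the definition of projective equivalence applied to $(e,e^{-1}-t^{-1})$ and $(t,0)$ requires checking that $(e,(e^{-1}-t^{-1})-0)$ is quasi-invertible and $e^{e^{-1}-t^{-1}}=t$. You instead check the pair $(e,t^{-1})$ and the identity $e^{t^{-1}}=t$: this is the wrong difference (there is no ``translate the $y$-coordinate by $e^{-1}$'' operation on projective points), the identity is false in general (in the pair $(R,R)$ of a local ring with $e=1$ one has $e^{t^{-1}}=(1-t^{-1})^{-1}\neq t$), and your quasi-invertibility argument invokes invertibility of $Q_{e-t}$, which fails whenever $t-e\in\Rad V^+$, e.g.\ $t=e$. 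The exclusivity claim is also only half addressed: your first-component argument (whose justification should be the identity $Q_{x^y}=B_{x,y}^{-1}Q_xB_{y,x}^{-1}$, not ``the first components are related by a Bergman operator'') shows at best that if both representations exist then $t$ is invertible; it says nothing about $s$, so the direction ``if the parameter in case (II) is non-invertible then case (I) cannot occur'' is untouched. The paper settles everything at once: if $[t,0]=[e,e^{-1}+s]$ then $B_{e,e^{-1}+s}=Q_sQ_e$ must be invertible, forcing $s$ invertible, and the computation $t=e^{e^{-1}+s}=-s^{-1}$ then gives both the exclusivity statement and $[t,0]=[e,e^{-1}-t^{-1}]$. (A minor further point: your case-(II) uniqueness sketch relies on ``$Q$ injective'', which fails since $Q_{-x}=Q_x$, and on an unjustified jump from $e=e^{t-t'}$ to $B_{e,t-t'}=\id$; that part would need repair as well.)
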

\begin{proof}
	Let $(x,y)\in V$. Assume first that $(x,y)$ is quasi-invertible. Then we immediately have $[x,y] = [x^y,0]$, so we are in the first case.

	So assume now that $(x,y)$ is not quasi-invertible. Then $x$ is invertible by \autoref{prop:JPbasic}\ref{itm:JPbasicD}. In this case, set $t = y-x^{-1}$. Now, using \autoref{prop:JPbasic}\ref{itm:JPbasicA}, we have
	\begin{align*}
		&[x,y] = [e,e^{-1}+t] \\
		&\iff (e,e^{-1}-x^{-1})\text{ is quasi-invertible and }e^{e^{-1}-x^{-1}} = x \\
		&\iff \left\{
		\begin{aligned}
			&B_{e,e^{-1}-x^{-1}}\text{ is invertible} \\
			&e-(e^{-1}-x^{-1})Q_e= xB_{e,e^{-1}-x^{-1}}
		\end{aligned}\right. \\
		&\iff \left\{
		\begin{aligned}
			&Q_{e^{-1}-(e^{-1}-x^{-1})}Q_e\text{ is invertible}\\
			&x^{-1}Q_e = xQ_{e^{-1}-(e^{-1}-x^{-1})}Q_e
		\end{aligned}\right. \\
		&\iff Q_{x^{-1}}Q_e\text{ is invertible and }x^{-1}Q_e = xQ_{x^{-1}}Q_e
	\end{align*}
	Now $e$ and $x$ are invertible, so $Q_e$ and $Q_{x^{-1}}$ are invertible, and $x^{-1} = xQ_{x^{-1}}$. So indeed, we found a representative for $[x,y]$ of the second form.

	Now assume that $[t,0]=[e,e^{-1}+s]$ for some $s \in V^-$. Then $B_{e,e^{-1}-s}$ must be invertible, but $B_{e,e^{-1}-s} = Q_{s}Q_e$, since $e$ is invertible. Hence $Q_{s}$ must be invertible, so $s$ must be invertible. In this case
	\begin{align*}
		t 	&= e^{e^{-1}+s} = (e - (e^{-1}+s)Q_e)Q_e^{-1}Q_{-s}^{-1} \\
			&= (e - e-sQ_e)Q_e^{-1}Q_{s}^{-1} = -sQ_{s}^{-1} = -s^{-1}\;,
	\end{align*}
	so $t$ is also invertible. This proves the remaining statements.
\end{proof}

By \autoref{prop:PVrepr}, we now have a nice set of representatives for $\P(V)$ as follows:
\begin{equation}\label{eq:P(V)}
    \P(V) = \{[x,0]\mid x\in V^+\}\cup \{[e,e^{-1}+y]\mid y\in \Rad V^-\}\;.\notatlink{P(V)}
\end{equation}
The second subset consists of projective points that are ``close'' to each other, in the sense that they only differ by a non-invertible element. We can define a similar closeness relation on the first subset.

\begin{definition}\label{def:radequiv}
	We define a \define{radical equivalence} relation $\sim$ on $\P(V)$ by
	\begin{align*}
		[x,0]&\sim[x',0]\iff x-x'\in\Rad V^+ \\
		[e,e^{-1}+y]&\sim [e,e^{-1}+y']\iff y-y'\in\Rad V^- \\
		[x,0]&\not\sim [e,e^{-1}+y]\hspace{3.2em}\text{if $x\in\Rad V^+$ or $y\in\Rad V^-$.}
	\end{align*}
\end{definition}

Note that this equivalence is well-defined by \autoref{prop:JPbasic}\ref{itm:JPbasicInvertRadical}.
\begin{remark}
	We could have avoided the explicit choice of representatives for $\P(V)$ by defining the radical equivalence by
	\[ [x,y] \sim [x',y'] \iff
		\begin{aligned}
			&\text{ there are }(\hat{x},\hat{y})\in[x,y]\text{ and }(\hat{x}',\hat{y}')\in[x',y'] \\
			&\text{ such that } (\hat{x},\hat{y})\equiv(\hat{x}',\hat{y}')\modulo{\Rad V}\;.
		\end{aligned}
	\]
\end{remark}
\begin{remark}
	Observe that $[0,0]\nsim[e,e^{-1}]\nsim [e,0]\nsim[0,0]$, so the set of equivalence classes $\class{\P(V)}$ contains at least $3$ classes.
\end{remark}

\subsection{Root groups and \texorpdfstring{$\mu$}{\textit{\textmugreek}}-maps}

We will now construct a local Moufang set using \autoref{constr:MUtau}. To be more precise, we will use the approach described in \autoref{rem:constr_UU'} and define two root groups acting on $(\P(V),\sim)$.

\begin{definition}\label{def:alphazetaJP}For all $v\in V^+$:
	\[\alpha_v : \begin{cases}
		[x,0]\mapsto [x+v,0] & \text{for all $x\in V^+$} \\
		[e,e^{-1}+y]\mapsto [e,e^{-1}+y^v] & \text{for all $y\in \Rad V^-$}
	\end{cases}\]
	For all $w\in V^-$:
	\[\zeta_w : \begin{cases}
		[x,0]\mapsto [x^w,0] & \text{for all $x\in \Rad V^+$} \\
		[e,e^{-1}+y]\mapsto [e,e^{-1}+y+w] & \text{for all $y\in V^-$}
	\end{cases}\]
\end{definition}

Before we continue, we have to check that these maps preserve the radical equivalence.

\begin{proposition}
	The maps $\alpha_v$ and $\zeta_w$ preserve the radical equivalence on $\P(V)$.
\end{proposition}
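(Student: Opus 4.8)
The plan is to verify that each of the maps $\alpha_v$ and $\zeta_w$ respects the three cases in the definition of $\sim$ (Definition~\ref{def:radequiv}), handling the two maps separately and, within each, the three kinds of pairs of points. Since the two maps are ``dual'' to each other (one acts by $\alpha_v$-translation on the $[x,0]$-chart and by quasi-inverses on the $[e,e^{-1}+y]$-chart, the other the other way around), it suffices to treat $\alpha_v$ in full and then invoke the symmetry $V=(V^+,V^-)\leftrightarrow(V^-,V^+)$ for $\zeta_w$.

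For $\alpha_v$, start with two points $[x,0]$ and $[x',0]$ with $x-x'\in\Rad V^+$. Then $(x+v)-(x'+v)=x-x'\in\Rad V^+$, so $[x+v,0]\sim[x'+v,0]$; this case is immediate. Next take $[e,e^{-1}+y]$ and $[e,e^{-1}+y']$ with $y-y'\in\Rad V^-$; here I must show $y^v-y'^v\in\Rad V^-$. I would use \autoref{prop:JPbasic}: the pairs $(y,v)$ and $(y',v)$ in $(V^-,V^+)$ are quasi-invertible (since $y,y'\in\Rad V^-$, by \ref{itm:JPbasicD} every such pair is quasi-invertible), so $y^v$ and $y'^v$ are defined, and $y^v\in\Rad V^-$ by the analogue of the eleventh item of \autoref{prop:JPbasic} with the roles of $V^+$ and $V^-$ swapped; hence $y^v-y'^v\in\Rad V^-$ because $\Rad V$ is an ideal (and in particular a subgroup in each component). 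Finally, for the mixed case I must show that $\alpha_v$ does not merge the two charts: if $x\in\Rad V^+$ then $x+v$ need not be in $\Rad V^+$, but the point $[x+v,0]$ lands back in the $[\cdot,0]$-chart, while $[e,e^{-1}+y^v]$ stays in the other chart with $y^v\in\Rad V^-$, so the representatives \eqref{eq:P(V)} are of different types and $[x+v,0]\nsim[e,e^{-1}+y^v]$; the same bookkeeping works when $y\in\Rad V^-$.

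The argument for $\zeta_w$ is then obtained by the same three-case analysis with $V^+$ and $V^-$ interchanged: translation in the $[e,e^{-1}+y]$-chart is trivially $\sim$-preserving, $x\mapsto x^w$ preserves $\Rad V^+$ and hence differences in $\Rad V^+$ by \autoref{prop:JPbasic} (again using that $\Rad V$ is an ideal), and the mixed case is the same representative-type bookkeeping.

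I expect the main (mild) obstacle to be the middle case — showing $y^v-y'^v\in\Rad V^-$ from $y-y'\in\Rad V^-$. One must be slightly careful that quasi-inversion is not additive, so ``$y^v-y'^v$'' cannot be rewritten as ``$(y-y')^{\text{something}}$'' naively; instead I would argue that both $y^v$ and $y'^v$ lie in $\Rad V^-$ individually (using properness of the radical ideal and that $y,y'\in\Rad V^-$), which already forces their difference into $\Rad V^-$. If one prefers not to pass through the explicit representatives, the alternative formulation of $\sim$ in the remark after Definition~\ref{def:radequiv} could be used, but the representative-based computation above is the most direct and stays entirely within the identities of \autoref{prop:JPbasic}.
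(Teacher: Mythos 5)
Your proof is correct and takes essentially the same route as the paper's: a chart-by-chart case analysis in which the translation cases are immediate, the quasi-inverse cases are settled by the fact that quasi-inverses of radical elements remain in the radical (so both images lie in $\Rad V^{\pm}$ individually, making their difference radical), and the mixed case follows from the same radical-membership observation. No substantive differences from the paper's argument.
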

\begin{proof}
	First, $[x,0]\sim[x',0]$ if and only if $x-x'\in\Rad V^+$, which is equivalent to $(x+v)-(x'+v)\in\Rad V^+$, so in this case $\alpha_v$ preserves equivalence. If furthermore $x\in\Rad V^+$ then $[x,0]\sim[x',0]$ if and only if $x'\in\Rad V^+$. Since $x'\in\Rad V^+\iff x'^w\in\Rad V^+$ and $x^w\in\Rad V^+$, we find that $\zeta_w$ also preserves equivalence in this case.

	Similarly, $[e,e^{-1}+y]\sim[e,e^{-1}+y']$ is equivalent to $[e,e^{-1}+y]\zeta_w\sim[e,e^{-1}+y']\zeta_w$ and if $y\in\Rad V^-$, \[[e,e^{-1}+y]\sim[e,e^{-1}+y']\iff[e,e^{-1}+y]\alpha_v\sim[e,e^{-1}+y']\alpha_v\;.\]
	Finally, assume $x\in\Rad V^+$ and $y\in\Rad V^-$, so $[x,0]\nsim[e,e^{-1}+y]$. Then $x^w\in\Rad V^+$, so also $[x,0]\zeta_w\nsim[e,e^{-1}+y]\zeta_w$ and $y^v\in\Rad V^-$, so also $[x,0]\alpha_v\nsim[e,e^{-1}+y]\alpha_v$. These cover all cases.
\end{proof}

We will use the set of all $\alpha_v$ to get $U_\infty$ and the set of $\zeta_w$ to get $U_0$. Whatever approach we take now, we will need the $\mu$-maps. More precisely, we need the action of the $\mu$-maps, which we determine in the next proposition. 
The bulk of the computational work of this section is contained in the proof of this proposition.

\begin{proposition}\label{prop:JordanMuaction}
	Let $\mu_{v} = \zeta_{v^{-1}}\alpha_v\zeta_{v^{-1}}$ for $v\in V^+$ invertible. Then
	\begin{alignat*}{2}
		&[e,e^{-1}+y]\mu_{v} = [yQ_v,0] && \text{for $y\in \Rad V^-$}\;, \\
		&[e,e^{-1}+y]\mu_{v} = [e,e^{-1}-y^{-1}Q_v^{-1}] &\quad& \text{for $y\in V^-\setminus\Rad V^-$}\;, \\
		&[x,0]\mu_{v} = [e,e^{-1}+xQ_v^{-1}] && \text{for $x\in \Rad V^+$}\;.
    \end{alignat*}
	As a consequence, $\mu_v^2 = \id$. Using the other representations, we get
    \begin{alignat*}{2}
        &[e,e^{-1}+y]\mu_{v} = [yQ_v,0] && \text{ for all $y\in V^-$,} \\
        &[x,0]\mu_{v} = [e,e^{-1}+xQ_v^{-1}] \quad && \text{ for all $x\in V^+$,} \\
        &[x,0]\mu_v = [-x^{-1}Q_v,0] && \text{ for all $x\in V^+\setminus\Rad V^+$.}
    \end{alignat*}
\end{proposition}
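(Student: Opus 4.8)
The strategy is to compute the action of $\mu_v = \zeta_{v^{-1}}\alpha_v\zeta_{v^{-1}}$ directly on the three types of representatives from \eqref{eq:P(V)}, tracking carefully which of the two cases of \autoref{prop:PVrepr} each intermediate point falls into. This requires knowing, at each stage, whether the relevant Jordan pair element is invertible, quasi-invertible, or in the radical — which is exactly why the statement is split into a ``radical'' piece and a ``unit'' piece.

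First I would treat $[e,e^{-1}+y]$ with $y\in\Rad V^-$. Applying $\zeta_{v^{-1}}$ gives $[e,e^{-1}+y+v^{-1}]$; applying $\alpha_v$ requires rewriting this point in the first form $[x,0]$, which in turn requires checking that $(e, y+v^{-1})$ is quasi-invertible (using \autoref{prop:JPbasic}\ref{itm:JPbasicA} and the fact that $y\in\Rad V^-$, so $y+v^{-1}$ is invertible with $(y+v^{-1})^{-1}$ close to $v$). Then $\alpha_v$ acts by translation by $v$ on the $V^+$-coordinate, and a final $\zeta_{v^{-1}}$ converts back; the bookkeeping should collapse to $[yQ_v,0]$ via the identities in \autoref{prop:JPbasic}\ref{itm:JPbasicidDQ}, \ref{itm:JPbasicQy}. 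The same method handles $y\in V^-\setminus\Rad V^-$ (now $y$ itself is invertible, so one gets the ``II-type'' output $[e,e^{-1}-y^{-1}Q_v^{-1}]$), and dually $[x,0]$ with $x\in\Rad V^+$ (giving $[e,e^{-1}+xQ_v^{-1}]$). The claim $\mu_v^2=\id$ then follows by composing the three formulas and using $Q_vQ_v^{-1}=\id$ on the appropriate pieces, noting $\mu_v$ swaps the ``radical'' part of the $[x,0]$-stratum with the ``radical'' part of the $[e,e^{-1}+y]$-stratum.

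For the ``as a consequence'' formulas valid on \emph{all} of $V^\sigma$: I would extend $[e,e^{-1}+y]\mu_v=[yQ_v,0]$ from $y\in\Rad V^-$ to arbitrary $y\in V^-$ by checking it against the already-established formula for $y\in V^-\setminus\Rad V^-$ — that is, showing $[y^{-1}Q_v^{-1}\text{-expression}]$ and $[yQ_v,0]$ denote the same projective point when $y$ is invertible, which is precisely the case covered by the last line of \autoref{prop:PVrepr} applied with $t = yQ_v$ (invertible since $y$ and $v$ are): $[yQ_v,0]=[e,e^{-1}-(yQ_v)^{-1}]=[e,e^{-1}-y^{-1}Q_v^{-1}]$ using \ref{itm:JPbasicA}-style inversion of $Q_v$. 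Symmetrically for $[x,0]\mu_v=[e,e^{-1}+xQ_v^{-1}]$ on all $x\in V^+$, and then $[x,0]\mu_v=[-x^{-1}Q_v,0]$ for invertible $x$ follows by rewriting $[e,e^{-1}+xQ_v^{-1}]$ back into ``I-form'' via \autoref{prop:PVrepr}'s last identity with $t=-x^{-1}Q_v$ (check $e^{-1}-(-x^{-1}Q_v)^{-1}=e^{-1}+xQ_v^{-1}$).

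\textbf{Main obstacle.} The hard part will be the very first quasi-invertibility verification: showing $[e,e^{-1}+y+v^{-1}]=[e^{v\cdot\text{something}},0]$ cleanly, i.e.\ identifying the quasi-inverse $e^{\,y+v^{-1}}$ and simplifying it to $yQ_v$. This is where the Bergman-operator identity $B_{e,w}=Q_{e^{-1}-w}Q_e$ for invertible $e$ (\autoref{prop:JPbasic}\ref{itm:JPbasicA}), the inversion rule for $Q$, and the quasi-inverse/quadratic interactions (\ref{itm:JPbasicQy}, \ref{itm:JPbasicswitch}) all have to be combined in the right order; a wrong grouping produces an unrecognizable expression. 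Once that computation is done correctly, the remaining cases are variations on the same theme and the ``all of $V^\sigma$'' extensions are routine applications of \autoref{prop:PVrepr}.
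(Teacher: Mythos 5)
Your plan coincides with the paper's own proof: it too computes $\zeta_{v^{-1}}\alpha_v\zeta_{v^{-1}}$ directly on the representatives of \eqref{eq:P(V)}, tracking invertibility/radical membership at each stage, reduces everything to quasi-inverse identities verified with \autoref{prop:JPbasic} (the identity $(v-(y+v^{-1})^{-1})^{v^{-1}}=yQ_v$ being exactly the "main obstacle" you flag), and then gets $\mu_v^2=\id$ and the all-of-$V^\sigma$ formulas by the $[t,0]=[e,e^{-1}-t^{-1}]$ conversion of \autoref{prop:PVrepr}. The only point your sketch glosses over, but which your "track which case each intermediate point falls into" principle already anticipates, is that for $y\in V^-\setminus\Rad V^-$ the computation splits further according to whether $y+v^{-1}$ lies in $\Rad V^-$ or not.
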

\begin{proof}
For simplicity, we will set $w = v^{-1}$ throughout this proof, so $\mu_v = \zeta_w\alpha_v\zeta_w$, $wQ_v = v$ and $vQ_w=w$.

In the first case, we start with $[e,e^{-1}+y]$ for $y\in \Rad V^-$ and we want to compute $[e,e^{-1}+y]\mu_v$. 

We first check that $y+w$ is invertible and $-(y+w)^{-1}+v\in\Rad V^+$. Since $y\in\Rad V^-$ and $w$ is invertible, clearly $y+w$ is invertible. Secondly, take $z\in V^-$ arbitrary, then
\[\bigl(-(y+w)^{-1}+v,z\bigr) \equiv \bigl(-w^{-1}+v,z\bigr)\equiv (0,z)\modulo{\Rad V}\;,\]
so $(-(y+w)^{-1}+v,z)\modulo{\Rad V}$ is quasi-invertible, and by \autoref{prop:JPbasic}\ref{itm:JPbasicQIlift} that means $(-(y+w)^{-1}+v,z)$ is quasi-invertible. As $z$ was arbitrary, that means $-(y+w)^{-1}+v\in \Rad V^+$.
We now get
\begin{align*}
	[e,e^{-1}+y]\mu_v &= [e,e^{-1}+y]\zeta_w\alpha_v\zeta_w = [e,e^{-1}+y+w]\alpha_v\zeta_w \\
			  &= [-(y+w)^{-1},0]\alpha_v\zeta_w = [-(y+w)^{-1}+v,0]\zeta_w \\
			  &= [(-(y+w)^{-1}+v)^w,0]\;,
\end{align*}
so we want to prove $(-(y+w)^{-1}+v)^w = yQ_v$. We get
\begin{align*}
&(v-(y+w)^{-1})^w = yQ_v \\
&\iff (w^{\bigl(v-(y+w)^{-1}\bigr)}-w)Q_w^{-1} = yQ_v \hspace{28.8pt}\text{(by \autoref{prop:JPbasic}\ref{itm:JPbasicswitch})} \\
&\iff w^{\bigl(v-(y+w)^{-1}\bigr)} = y + w \\
&\iff \bigl(w-(v-(y+w)^{-1})Q_w\bigr)B_{w,v-(y+w)^{-1}}^{-1} = y+w \\
&\iff \bigl(w-vQ_w+(y+w)^{-1}Q_w)\bigr)\bigl(Q_{v-(v-(y+w)^{-1})}Q_w\bigr)^{-1} = y+w \\
&	\hspace{203.75pt}\text{(by \autoref{prop:JPbasic}\ref{itm:JPbasicA})} \\
&\iff (y+w)^{-1}Q_wQ_w^{-1}Q_{(y+w)^{-1}}^{-1} = y+w \\
&\iff (y+w)^{-1}Q_{y+w} = y+w\;,
\end{align*}
which holds, so the identity holds.

In the second case, we start with $[e,e^{-1}+y]$ for $y\in V^-\setminus\Rad V^-$. The permutation $\zeta_w$ maps this element to $[e,e^{-1}+y+w]$.
We now distinguish two cases according to whether $y+w\in\Rad V^-$ or not.
Assume first that $y+w\in\Rad V^-$; then
\begin{align*}
	[e,e^{-1}+y]\mu_v &= [e,e^{-1}+y+w]\alpha_v\zeta_w = [e,e^{-1}+(y+w)^v]\zeta_w \\*
			&= [e,e^{-1}+(y+w)^v+w]\;.
\end{align*}
We need to check that $(y+w)^v+w = -y^{-1}Q_v^{-1}$:
\begin{align*}
&(y+w)^v+w = -y^{-1}Q_v^{-1} \\
&\iff	(v^{y+w}-v)Q_v^{-1}+w = -y^{-1}Q_v^{-1} \hspace{23.48pt}\text{(by \autoref{prop:JPbasic}\ref{itm:JPbasicswitch})} \\
&\iff	v^{y+w}-v+wQ_v = -y^{-1} \\
&\iff	\bigl(v-(y+w)Q_v\bigr)B_{v,y+w}^{-1} = -y^{-1}  \\
&\iff	\bigl(v-(y+w)Q_v\bigr)(Q_{w-(y+w)}Q_v)^{-1} = -y^{-1} \\
&	\hspace{203.75pt}\text{(by \autoref{prop:JPbasic}\ref{itm:JPbasicA})} \\
&\iff	\bigl(vQ_v^{-1}-(y+w)\bigr)Q_{-y}^{-1} = -y^{-1} \\
&\iff	-yQ_y^{-1} = -y^{-1}\;,
\end{align*}
which holds, so the identity holds.

Assume now that $y+w\not\in\Rad V^-$. If we set $x=v-(y+w)^{-1}$, then $y = (v-x)^{-1}-w$. If $x$ was not invertible, then we would have $y \equiv 0\modulo{\Rad V}$ (using \autoref{prop:JPbasic}\ref{itm:JPbasicInvertRadical}), which contradicts $y\in V^-\setminus\Rad V^-$. Hence $v-(y+w)^{-1}$ is invertible, so
\begin{align*}
	[e,e^{-1}+y]\mu_v &= [e,e^{-1}+y+w]\alpha_v\zeta_w = [-(y+w)^{-1},0]\alpha_v\zeta_w \\
			&= [v-(y+w)^{-1},0]\zeta_w \\
			&= [e,e^{-1}+((y+w)^{-1}-v)^{-1}]\zeta_w \\
			&= [e,e^{-1}+((y+w)^{-1}-v)^{-1}+w]
\end{align*}
We want to show $((y+w)^{-1}-v)^{-1}+w = -y^{-1}Q_v^{-1}$.
\begin{align*}
& ((y+w)^{-1}-v)^{-1}+w = -y^{-1}Q_w \\
&\iff v-(y+w)^{-1} = (y^{-1}Q_w+w)^{-1} \\
&\iff \bigl(v-(y+w)^{-1}\bigr)Q_{y^{-1}Q_w+w} = y^{-1}Q_w+w
\end{align*}
Now first observe that
\begin{align*}
	(y+w)Q_y^{-1} &= (y+w)^{-1}Q_{y+w}Q_y^{-1} \\
			&= (y+w)^{-1}(Q_{y,w}+Q_y+Q_w)Q_y^{-1} \\
			&= (y+w)^{-1}Q_{y,w}Q_y^{-1}+(y+w)^{-1} + (y+w)^{-1}Q_wQ_y^{-1}
\end{align*}
Using \ref{itm:JPbasicidDQ} and \ref{itm:JPbasicid} from \autoref{prop:JPbasic}, we get
\begin{align*}
	&\bigl(v-(y+w)^{-1}\bigr)Q_{y^{-1}Q_w+w} \\
	&= \bigl(v-(y+w)^{-1}\bigr)(Q_{y^{-1}Q_w,w}+Q_{y^{-1}Q_w}+Q_w) \\
	&= \bigl(v-(y+w)^{-1}\bigr)(Q_{y^{-1}Q_w,w}+Q_wQ_y^{-1}Q_w+Q_w) \\
	&= vQ_{y^{-1}Q_w,w}+vQ_wQ_y^{-1}Q_w+vQ_w \\*
	&\qquad-(y+w)^{-1}(Q_{y^{-1}Q_w,w}+Q_wQ_y^{-1}Q_w+Q_w) \\
	&= vQ_{y^{-1}Q_w,w}+wQ_y^{-1}Q_w+w \\*
	&\qquad-(y+w)^{-1}Q_{y^{-1}Q_w,w}-(y+w)^{-1}Q_wQ_y^{-1}Q_w-(y+w)^{-1}Q_w \\
	&= vQ_wD_{w,y^{-1}}+wQ_y^{-1}Q_w+w \\*
	&\qquad-(y+w)^{-1}D_{y^{-1},w}Q_w-(y+w)^{-1}Q_wQ_y^{-1}Q_w-(y+w)^{-1}Q_w \\
	&= w+wD_{w,y^{-1}}+wQ_y^{-1}Q_w \\*
	&\qquad-(y+w)^{-1}D_{y^{-1},w}Q_w-(y+w)^{-1}Q_wQ_y^{-1}Q_w-(y+w)^{-1}Q_w \\
	&= w+2y^{-1}Q_w+wQ_y^{-1}Q_w \\*
	&\qquad-(y+w)^{-1}D_{y^{-1},w}Q_w-(y+w)^{-1}Q_wQ_y^{-1}Q_w-(y+w)^{-1}Q_w \\
	&= w+y^{-1}Q_w+\bigl(y^{-1}+wQ_y^{-1}-(y+w)^{-1}D_{y^{-1},w} \\*
	&\hspace{9em}-(y+w)^{-1}Q_wQ_y^{-1}-(y+w)^{-1}\bigr)Q_w \\
	&= w+y^{-1}Q_w+\bigl((y+w)Q_y^{-1}-(y+w)^{-1}D_{y^{-1},w} \\*
	&\hspace{9em}-(y+w)^{-1}Q_wQ_y^{-1}-(y+w)^{-1}\bigr)Q_w \\
	&= w+y^{-1}Q_w+\bigl((y+w)^{-1}Q_{y,w}Q_y^{-1}-(y+w)^{-1}D_{y^{-1},w}\bigr)Q_w \\
	&= w+y^{-1}Q_w
\end{align*}
This finishes the second case.

In the third case, we start with $[x,0]$ for $x\in \Rad V^+$. Since $x\in \Rad V^+$, we also have $x^w\in\Rad V^+$, so as $v$ is invertible, so is $x^w+v$. We get
\begin{align*}
	[x,0]\mu_{v} &= [x,0]\zeta_w\alpha_v\zeta_w = [x^w,0]\alpha_v\zeta_w = [x^w+v,0]\zeta_w \\
		&= [e,e^{-1}-(x^w+v)^{-1}]\zeta_w = [e,e^{-1}-(x^w+v)^{-1}+w]\;.
\end{align*}
Hence, we want to prove $-(x^w+v)^{-1}+w = xQ_v^{-1}$.
\begin{align*}
	&	 -(x^w+v)^{-1}+w = xQ_v^{-1} \\
	&\iff	 (x^w+v)^{-1} = w-xQ_v^{-1} \\
	&\iff	 x^w = (w-xQ_v^{-1})^{-1}-v \\
	&\iff	 x = \bigl((w-xQ_v^{-1})^{-1}-v\bigr)^{-w} & \text{(using \autoref{prop:JPbasic}\ref{itm:JPbasicB})}\\
	&\iff	 yQ_v = \bigl((w-y)^{-1}-v\bigr)^{-w} & \text{(set $y=xQ_v^{-1}$)} \\
	&\iff	 yQ_{v'} = \bigl(v'-(w'+y)^{-1}\bigr)^{w'} & \text{(set $w = -w'$ and $v = -v'$)}
\end{align*}
This is precisely the identity we have proven in the first case.
\end{proof}

\subsection{The local Moufang set \texorpdfstring{$\M(V)$}{M(V)}}

Now we would like to use the permutations we have to construct a local Moufang set with \autoref{constr:MUtau} from p.~\pageref{constr:MUtau}. Of course, that requires the conditions for the construction to be satisfied.

\begin{proposition}
	Let $V$ be a local Jordan pair with invertible element $e\in V^+$. Then $U = \{\alpha_v\mid v\in V^+\}$ is a group, and together with the permutation $\tau = \mu_e$, the conditions \ref{axiom:C1}, \ref{axiom:C1'} and \ref{axiom:C2} are satisfied, with $0=[0,0]$, $\infty=[e,e^{-1}]$.
\end{proposition}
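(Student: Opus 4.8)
The plan is to verify each of the conditions \ref{axiom:C1}, \ref{axiom:C1'} and \ref{axiom:C2} in turn, relying on the action formulas from \autoref{def:alphazetaJP} and \autoref{prop:JordanMuaction}. First I would show that $U = \{\alpha_v \mid v\in V^+\}$ is a group. From the definition of $\alpha_v$ on the two types of representatives in \eqref{eq:P(V)}, one checks that $\alpha_v\alpha_{v'} = \alpha_{v+v'}$: on points $[x,0]$ this is immediate since $(x+v)+v' = x+(v+v')$, and on points $[e,e^{-1}+y]$ with $y\in\Rad V^-$ it follows from $(y^v)^{v'} = y^{v+v'}$, which is \autoref{prop:JPbasic}\ref{itm:JPbasicB} (noting that $y\in\Rad V^-$ guarantees quasi-invertibility of all the relevant pairs). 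Hence $v\mapsto\alpha_v$ is a homomorphism from $(V^+,+)$ onto $U$ with $\alpha_0 = \id$ and $\alpha_v^{-1} = \alpha_{-v}$, so $U$ is a group.

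Next I would verify \ref{axiom:C1}: that $U$ fixes $\infty = [e,e^{-1}] = [e,e^{-1}+0]$ (clear, since $0^v = 0$) and acts sharply transitively on $\P(V)\setminus\class{\infty}$. By \autoref{def:radequiv}, $\class{\infty}$ consists exactly of the points $[e,e^{-1}+y]$ with $y\in\Rad V^-$, so $\P(V)\setminus\class{\infty} = \{[x,0]\mid x\in V^+\}$ by the decomposition \eqref{eq:P(V)}. On this set $\alpha_v$ acts by $[x,0]\mapsto[x+v,0]$, which is visibly sharply transitive: the unique $\alpha_v$ sending $[0,0]$ to $[x,0]$ is $\alpha_x$, and more generally the unique element sending $[x,0]$ to $[x',0]$ is $\alpha_{x'-x}$. (One should double-check that $[x,0]=[x',0]$ forces $x=x'$, which is the uniqueness part of \autoref{prop:PVrepr} case (I).) For \ref{axiom:C1'} I would pass to the quotient: by the remark after \autoref{def:radequiv}, $\class{\P(V)}$ has at least three classes, and the induced action $\induced{\alpha_v}$ on $\class{\P(V)}\setminus\{\class{\infty}\}$ is identified, via the representatives $[x,0]$, with the action of $(V^+/\Rad V^+, +)$ on itself by translation — since $\alpha_v$ sends $[x,0]$ to $[x+v,0]$ and $[x,0]\sim[x',0]\iff x-x'\in\Rad V^+$. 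Translation of a group on itself is sharply transitive, which gives \ref{axiom:C1'}.

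Finally \ref{axiom:C2} is essentially immediate from \autoref{prop:JordanMuaction}: with $\tau = \mu_e$, taking $v = e$ (so $w = e^{-1}$, $Q_e$ invertible) the formulas give $\infty\tau = [e,e^{-1}+0]\mu_e = [0Q_e,0] = [0,0] = 0$ and $0\tau = [0,0]\mu_e = [e,e^{-1}+0Q_e^{-1}] = [e,e^{-1}] = \infty$. Thus $\infty\tau = 0$, and since \autoref{prop:JordanMuaction} records $\mu_e^2 = \id$ we get $\infty\tau^2 = \infty$; it remains only to observe $0 = [0,0] \nsim [e,e^{-1}] = \infty$, which is exactly the remark that $\class{\P(V)}$ has at least three classes (in particular $[0,0]\nsim[e,e^{-1}]$). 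I also need to confirm $\tau = \mu_e\in\Sym(\P(V),\sim)$, i.e. that $\mu_e$ preserves the radical equivalence; this follows because $\mu_e = \zeta_{e^{-1}}\alpha_e\zeta_{e^{-1}}$ is a composition of the maps $\alpha_v$ and $\zeta_w$, which were already shown to preserve $\sim$ in the proposition preceding \autoref{prop:JordanMuaction}.

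The only mildly delicate point — and hence the main obstacle — is the group law $\alpha_v\alpha_{v'} = \alpha_{v+v'}$ on the ``close'' points $[e,e^{-1}+y]$ with $y\in\Rad V^-$, where one must invoke \autoref{prop:JPbasic}\ref{itm:JPbasicB} with the correct identification of arguments and make sure all quasi-invertibility hypotheses hold (they do, since $y$ and its iterated quasi-inverses stay in $\Rad V^-$ by \autoref{prop:JPbasic}). Everything else is bookkeeping with the representative decomposition \eqref{eq:P(V)} and the equivalence relation \autoref{def:radequiv}.
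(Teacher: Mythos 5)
Your proposal is correct and follows essentially the same route as the paper's proof: the group law via \autoref{prop:JPbasic}\ref{itm:JPbasicB} on the points $[e,e^{-1}+y]$, \ref{axiom:C1} and \ref{axiom:C1'} via the regular representation of $V^+$ (resp.\ $V^+/\Rad V^+$) on the complement of $\class{\infty}$, and \ref{axiom:C2} directly from \autoref{prop:JordanMuaction}. Your extra remarks (uniqueness of representatives $[x,0]$ and that $\mu_e$ preserves $\sim$ as a composition of $\alpha$- and $\zeta$-maps) are correct points that the paper leaves implicit.
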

\begin{proof}
	We claim first that $\alpha_0 = \id$. We have $[x,0]\alpha_0 = [x,0]$ for all $x\in V^+$, and $[e,e^{-1}+y]\alpha_0 = [e,e^{-1}+y^0]$ for all $y\in\Rad V^-$. Now $y^0 = (y-0Q_y)B_{y,0}^{-1} = y\id$, so indeed $\alpha_0 = \id$. Next, we claim $\alpha_z\alpha_v = \alpha_{z+v}$. By definition $[x,0]\alpha_z\alpha_v = [x,0]\alpha_{z+v}$ for all $x\in V^+$. By \autoref{prop:JPbasic}\ref{itm:JPbasicB}, we have
	\[[e,e^{-1}+y]\alpha_z\alpha_v = [e,e^{-1}+(y^z)^v] = [e,e^{-1}+y^{z+v}] = [e,e^{-1}+y]\alpha_{z+v}\]
	for all $y\in \Rad V^-$. Using these two, we get $\alpha_v^{-1} = \alpha_{-v}$, so $U$ is a group.
	
	The group $U$ fixes $[e,e^{-1}]$, as 
	\[0^v = (0-vQ_0)B_{0,v}^{-1} = (0-0)\id = 0\;,\]
	hence we choose $\infty:=[e,e^{-1}]$. Furthermore, by \autoref{def:radequiv} and \eqref{eq:P(V)}
	\[\P(V)\setminus \class{[e,e^{-1}]} = \{[x,0]\mid x\in V^+\}\;,\]
	and $\alpha_v$ acts on this set by $x\mapsto x+v$. This action of $U$ on $\P(V)\setminus \class{[e,e^{-1}]}$ is the regular representation of $(V^+,+)$, and hence a regular action. This proves \ref{axiom:C1}.

	For $x\in V^\sigma$, denote $\class{x}$ for the image of $x$ in the quotient $V^\sigma/\Rad V^\sigma$. Now $\class{\P(V)}\setminus \{\class{[e,e^{-1}]}\}$ has a natural correspondence to 
	\[\{[\class{x},\class{0}]\mid \class{x}\in V^+/\Rad V^+\}\;.\]
	The induced action of $\alpha_v$ on this set is given by $\class{x}\mapsto\class{x+v}$, which only depends on $\class{x}$. The action of $\induced{U}$ on 
	\[\{[\class{x},\class{0}]\mid \class{x}\in V^+/\Rad V^+\}\]
	is the regular representation of $(V^+/\Rad V^+,+)$, and hence a regular action. This shows \ref{axiom:C1'}.

	By \autoref{prop:JordanMuaction}, we have $[e,e^{-1}]\tau = [0Q_e,0] = [0,0]$, which is not radically equivalent to $[e,e^{-1}]$. This means we can take $0:=[0,0]$. By the same proposition, $[0,0]\tau = [e,e^{-1}+0Q_e^{-1}] = [e,e^{-1}]$, which proves \ref{axiom:C2}.
\end{proof}

As we can now use \autoref{constr:MUtau} to create $\M(U,\tau)$, we would like to use one of the equivalent conditions of \autoref{cor:construction_equivalentconditions} to prove we have a local Moufang set. In order to do this, we need to know how the maps of type $\alpha_x$, $\gamma_x$ and $\mu_x$ correspond to the maps we have already defined.
Our notation in \autoref{def:alphazetaJP} and \autoref{prop:JordanMuaction} suggests what this correspondence will be, and we make this precise in \autoref{prop:gammaJP} below.

\begin{proposition}\label{prop:gammaJP}
	Let $V$ be a local Jordan pair with invertible element $e\in V^+$ and let $U = \{\alpha_v\mid v\in V^+\}$ and $\tau = \mu_e$. For all $v,t\in V^+$ with $t$ invertible, we have $\alpha_v^{\mu_t} = \zeta_{vQ_t^{-1}}$. Using this, we get $\alpha_{[v,0]} = \alpha_v$, $\gamma_{[v,0]} = \zeta_{vQ_e^{-1}}$ and $\mu_{[t,0]} = \mu_t$. Moreover, $(-[t,0])\tau = -([t,0]\tau)$.
\end{proposition}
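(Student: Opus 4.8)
\textbf{Proof plan for \autoref{prop:gammaJP}.}

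The plan is to establish the statements in the order they are listed, since each one feeds into the next. First I would compute $\alpha_v^{\mu_t}$ directly from the definitions. By definition $\mu_t = \zeta_{t^{-1}}\alpha_t\zeta_{t^{-1}}$ (as in \autoref{prop:JordanMuaction}), and the statement $\mu_t^2 = \id$ from that proposition gives $\mu_t^{-1} = \mu_t$. So $\alpha_v^{\mu_t} = \mu_t\alpha_v\mu_t$, and I would evaluate this permutation on the two types of representatives $[x,0]$ and $[e,e^{-1}+y]$ of $\P(V)$, using the explicit action of $\mu_t$ from \autoref{prop:JordanMuaction} together with the action of $\alpha_v$ and $\zeta_w$ from \autoref{def:alphazetaJP}. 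The goal is to recognize the result as the permutation $\zeta_{vQ_t^{-1}}$; the bookkeeping here is the analogue of what was already done in \autoref{prop:JordanMuaction}, and the key algebraic identities needed ($Q_{yQ_t} = Q_tQ_yQ_t$ type manipulations and the quasi-inverse identities of \autoref{prop:JPbasic}) are all available. I expect this to be the main computational obstacle, though it is of the same flavour as the (already completed) proof of \autoref{prop:JordanMuaction}, so it should be routine in spirit if tedious.

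Once $\alpha_v^{\mu_t} = \zeta_{vQ_t^{-1}}$ is established, the remaining identifications are quick. The equality $\alpha_{[v,0]} = \alpha_v$ holds because $\alpha_{[v,0]}$ is by definition the unique element of $U = U_\infty$ mapping $0 = [0,0]$ to $[v,0]$, and $[0,0]\alpha_v = [0+v,0] = [v,0]$ by \autoref{def:alphazetaJP}, so uniqueness forces $\alpha_{[v,0]} = \alpha_v$. For $\gamma_{[v,0]}$: by the notation of \autoref{constr:MUtau}, $\gamma_x = \alpha_x^\tau$ with $\tau = \mu_e$, so $\gamma_{[v,0]} = \alpha_{[v,0]}^{\mu_e} = \alpha_v^{\mu_e} = \zeta_{vQ_e^{-1}}$, applying the first identity with $t = e$. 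For $\mu_{[t,0]}$: recall from \autoref{prop:mu_defin} and \autoref{remark:mu_hua_construction} that $\mu_{[t,0]}$ is the unique element of $U_0\alpha_{[t,0]}U_0$ swapping $0$ and $\infty$. I would check that $\mu_t = \zeta_{t^{-1}}\alpha_t\zeta_{t^{-1}}$ lies in $U_0\alpha_{[t,0]}U_0$ (using $\alpha_{[t,0]} = \alpha_t$ and that $\zeta_{t^{-1}} \in U_0$, which follows since $U_0 = U_\infty^\tau = \{\zeta_w \mid w \in V^-\}$ — this set equality itself comes from the first identity applied with arbitrary invertible $t$ and the fact that $\cdot Q_t^{-1}$ is a bijection $V^+ \to V^-$), and that $\mu_t$ swaps $0 = [0,0]$ and $\infty = [e,e^{-1}]$, which is exactly computed in \autoref{prop:JordanMuaction}. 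Uniqueness then gives $\mu_{[t,0]} = \mu_t$.

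Finally, for $(-[t,0])\tau = -([t,0]\tau)$: using $\alpha_{[t,0]} = \alpha_t$ and $\alpha_v^{-1} = \alpha_{-v}$ (shown in the previous proposition), we get $-[t,0] = [0,0]\alpha_{[t,0]}^{-1} = [0,0]\alpha_{-t} = [-t,0]$. Then $(-[t,0])\tau = [-t,0]\mu_e = [-t^{-1}Q_e, 0]$ by the last displayed formula of \autoref{prop:JordanMuaction} (valid since $t$, hence $-t$, is invertible, so $-t \notin \Rad V^+$). On the other hand $[t,0]\tau = [t,0]\mu_e = [t^{-1}Q_e, 0]$ by the same formula, and $-([t,0]\tau) = -[t^{-1}Q_e,0] = [-t^{-1}Q_e, 0]$, again using that negation on the $[\cdot,0]$ representatives is $x \mapsto -x$. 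Comparing the two gives the equality, using that $Q_e$ is linear so $-(t^{-1}Q_e) = (-t^{-1})Q_e$. This last step is essentially immediate once \autoref{prop:JordanMuaction} is in hand; the only point to be careful about is that both $[t,0]$ and $[-t,0]$ genuinely have invertible first coordinate so the relevant case of the $\mu$-map formula applies.
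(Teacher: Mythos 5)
Your overall route is the paper's: you verify $\alpha_v^{\mu_t}=\zeta_{vQ_t^{-1}}$ by letting $\mu_t\alpha_v\mu_t$ act on the points of $\P(V)$ via the formulas of \autoref{prop:JordanMuaction} and \autoref{def:alphazetaJP}, and the identifications $\alpha_{[v,0]}=\alpha_v$ and $\gamma_{[v,0]}=\alpha_v^{\mu_e}=\zeta_{vQ_e^{-1}}$ follow exactly as you say. One organizational hint for the main computation: since \autoref{def:alphazetaJP} only defines the action of $\alpha_v$ on $[e,e^{-1}+y]$ for $y\in\Rad V^-$, the cleanest split is the one the paper uses, namely the family $[e,e^{-1}+y]$ with $y$ ranging over \emph{all} of $V^-$ (after $\mu_t$ one lands on $[yQ_t,0]$, where $\alpha_v$ is just a translation) together with $[x,0]$ for $x\in\Rad V^+$ only (after $\mu_t$ one is at $[e,e^{-1}+xQ_t^{-1}]$ with $xQ_t^{-1}\in\Rad V^-$); these two families cover $\P(V)$, and the only Jordan identity needed in the second case is \autoref{prop:JPbasic}\ref{itm:JPbasicQy}. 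For $\mu_{[t,0]}=\mu_t$ you argue by uniqueness of the element of $U_0\alpha_{[t,0]}U_0$ swapping $0$ and $\infty$ (\autoref{prop:mu_defin} with \autoref{remark:mu_hua_construction}), after noting $U_0=U^\tau=\{\zeta_w\mid w\in V^-\}$; this is valid and mildly different from the paper, which instead computes $(-[t,0])\tau$ and $-([t,0]\tau)$ first and substitutes them into the defining formula $\mu_x=\gamma_{(-x)\tau^{-1}}\alpha_x\gamma_{-(x\tau^{-1})}$ of \autoref{constr:MUtau}. Either order works.

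The only genuine flaw is a pair of sign slips in your final paragraph, which happen to cancel. By the last formula of \autoref{prop:JordanMuaction}, $[x,0]\mu_e=[-x^{-1}Q_e,0]$ for $x\notin\Rad V^+$, so $[t,0]\tau=[-t^{-1}Q_e,0]$ (you dropped the minus sign from the formula), while $[-t,0]\tau=[-(-t)^{-1}Q_e,0]=[t^{-1}Q_e,0]$ because $(-t)^{-1}=-t^{-1}$ (you wrote $[-t^{-1}Q_e,0]$). The correct common value of $(-[t,0])\tau$ and $-([t,0]\tau)$ is therefore $[t^{-1}Q_e,0]$, in agreement with the paper's proof; your conclusion stands because the two errors compensate, but both displayed intermediate values need their signs corrected.
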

\begin{proof}
	We compute the action of $\alpha_v^{\mu_t}$ on the points of $\P(V)$ using \autoref{prop:JordanMuaction}. First, take $[e,e^{-1}+y]$ with $y\in V^-$. We get
	\begin{align*}
		[e,e^{-1}+y]\mu_t^{-1}\alpha_v\mu_t 
			&= [yQ_t,0]\alpha_v\mu_t = [yQ_t+v,0]\mu_t \\
			&= [e,e^{-1}+(yQ_t+v)Q_t^{-1}] \\
			&= [e,e^{-1}+y+vQ_t^{-1}] \\
			&= [e,e^{-1}+y]\zeta_{vQ_t^{-1}}\;.
	\end{align*}
	Next, take $x\in\Rad V^+$, then $xQ_t^{-1}\in\Rad V^-$, so
	\begin{align*}
		[x,0]\mu_t^{-1}\alpha_v\mu_t &= [e,e^{-1}+xQ_t^{-1}]\alpha_v\mu_t = [e,e^{-1}+(xQ_t^{-1})^v]\mu_t \\
			& = [(xQ_t^{-1})^vQ_t,0] = [x^{vQ_t^{-1}},0] \\
			& = [x,0]\zeta_{vQ_t^{-1}}\;,
	\end{align*}
	where we used \autoref{prop:JPbasic}\ref{itm:JPbasicQy}. Hence for all points of $\P(V)$, the image of $\zeta_{vQ_y}$ is equal to that of $\alpha_v^{\mu_t}$, so these permutations are equal.

	For the other statements, observe first that $\alpha_{[v,0]} = \alpha_v$ since $\alpha_v$ is the unique element of $U$ mapping $[0,0]$ to $[v,0]$, and by definition $\gamma_{[v,0]} = \alpha_{[v,0]}^\tau = \alpha_v^{\mu_e} = \zeta_{vQ_e^{-1}}$. Finally, if $t$ is invertible, we have
	\[(-[t,0])\tau = [-t,0]\tau = [e,e^{-1}+t^{-1}]\tau = [t^{-1}Q_e,0]\]
	and similarly $-([t,0]\tau) = [t^{-1}Q_e,0]$, which shows the last statement. Using the definition of $\mu_{[t,0]}$ in \autoref{constr:MUtau}, we get
	\begin{align*}
		\mu_{[t,0]} :=& \gamma_{(-[t,0])\tau^{-1}}\alpha_{[t,0]}\gamma_{-([t,0]\tau^{-1})} \\
			=& \gamma_{[t^{-1}Q_e,0]}\alpha_{[t,0]}\gamma_{[t^{-1}Q_e,0]} \\
			=& \zeta_{t^{-1}}\alpha_t\zeta_{t^{-1}} = \mu_t\;.\qedhere
	\end{align*}
\end{proof}

As we now know what all the maps of \autoref{constr:MUtau} are, we can use them to show that we have a local Moufang set.

\begin{theorem}\label{thm:MV}
	Let $V$ be a local Jordan pair with invertible element $e$. Set $U:=\{\alpha_v\mid v\in V^+\}$ and $\tau = \mu_e$,
	where $\alpha_v$ and $\mu_e$ are as in \autoref{def:alphazetaJP} and \autoref{prop:JordanMuaction}, respectively.
	Then $\M(U,\tau)$ is a local Moufang set.
\end{theorem}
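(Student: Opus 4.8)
The plan is to invoke \autoref{cor:construction_equivalentconditions}, which says that $\M(U,\tau)$ is a local Moufang set if and only if $U^{\mu_{[t,0]}} = U^\tau$ for all units $[t,0]$ (equivalently, if and only if $h_{[t,0]}$ normalizes $U$ for all units). By \autoref{prop:gammaJP} we know that the units are exactly the points $[t,0]$ with $t\in V^+$ invertible (these are precisely the points not radically equivalent to $0=[0,0]$ or $\infty=[e,e^{-1}]$), that $\mu_{[t,0]} = \mu_t = \zeta_{t^{-1}}\alpha_t\zeta_{t^{-1}}$, and — crucially — that $\alpha_v^{\mu_t} = \zeta_{vQ_t^{-1}}$ for all $v\in V^+$ and all invertible $t$.

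First I would compute $U^{\mu_t}$. Since $U = \{\alpha_v \mid v\in V^+\}$, the identity $\alpha_v^{\mu_t} = \zeta_{vQ_t^{-1}}$ gives $U^{\mu_t} = \{\zeta_{vQ_t^{-1}} \mid v\in V^+\}$. Because $Q_t$ is invertible (as $t$ is invertible), the map $v\mapsto vQ_t^{-1}$ is a bijection of $V^+$ onto $V^-$, so $U^{\mu_t} = \{\zeta_w \mid w\in V^-\} =: U_0$, independently of $t$. In particular, applying this with $t = e$ gives $U^\tau = U^{\mu_e} = U_0$ as well. Hence $U^{\mu_{[t,0]}} = U_0 = U^\tau$ for every unit $[t,0]$, which is exactly condition (iii) of \autoref{cor:construction_equivalentconditions}.

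The remaining point is to make sure that the hypotheses needed to even run \autoref{constr:MUtau} are in place, i.e.\ that $U$ is a group, that $\tau=\mu_e$ is a well-defined permutation of $\P(V)$ preserving $\sim$, and that \ref{axiom:C1}, \ref{axiom:C1'} and \ref{axiom:C2} hold with $0=[0,0]$, $\infty=[e,e^{-1}]$. All of this has already been established in the preceding propositions of this subsection: that $U=\{\alpha_v\}$ is a group and $U,\tau$ satisfy \hyperref[axiom:C1]{(C1)--(C2)} is the content of the proposition immediately before \autoref{prop:gammaJP}, while \autoref{prop:JordanMuaction} establishes that $\mu_v$ (in particular $\tau=\mu_e$) is a well-defined involutory permutation and $\zeta_w,\alpha_v$ preserve the radical equivalence. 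So the proof is essentially a bookkeeping assembly of these facts together with the computation of $U^{\mu_t}$ above, followed by citing \autoref{cor:construction_equivalentconditions}.

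I do not expect a serious obstacle here, since all the hard computational work — notably the explicit action of the $\mu$-maps and the conjugation formula $\alpha_v^{\mu_t}=\zeta_{vQ_t^{-1}}$ — has been done in \autoref{prop:JordanMuaction} and \autoref{prop:gammaJP}. The only thing to be careful about is to phrase the argument so that one correctly identifies the units of $\M(U,\tau)$ with the invertible elements of $V^+$ (via \autoref{prop:gammaJP} and \autoref{def:radequiv}) before applying \autoref{cor:construction_equivalentconditions}, and to note that the surjectivity of $v\mapsto vQ_t^{-1}$ onto all of $V^-$ is what makes $U^{\mu_t}$ equal to the full group $U_0$ rather than just a subset.
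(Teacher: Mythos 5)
Your proposal is correct and follows essentially the same route as the paper: both compute $U^{\mu_{[t,0]}} = \{\zeta_{vQ_t^{-1}} \mid v\in V^+\} = \{\zeta_w\mid w\in V^-\} = U^\tau$ using the conjugation formula of \autoref{prop:gammaJP} and the invertibility of $Q_t$, then invoke the criterion from the construction (the paper via \autoref{thm:constrMouf} with \autoref{lem:gleq}, you via \autoref{cor:construction_equivalentconditions}, which is the same condition). The bookkeeping you mention (that (C1), (C1'), (C2) and the group structure of $U$ were established in the preceding propositions) matches the paper's implicit reliance on those results.
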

\begin{proof}
	In \autoref{constr:MUtau}, we have 
	\[U_0 := U^\tau = \{\zeta_{vQ_e^{-1}}\mid v\in V^+\} = \{\zeta_w \mid w\in V^-\}\;,\]
	where the final equality follows from the fact that $Q_e$ is invertible. Now let $[t,0]$ be an arbitrary unit in $\P(V)$, then $\mu_{[t,0]} = \mu_t$, so
	\begin{align*}
		U^{\mu_{[t,0]}} &= \{\alpha_v^{\mu_t}\mid v\in V^+\} = \{\zeta_{vQ_t^{-1}}\mid v\in V^+\} \\
				&= \{\zeta_w\mid w\in V^-\} = U_0\;,
	\end{align*}
	since $t$, and hence $Q_t$ is invertible. Hence $U_0 = U_\infty^{\mu_{[t,0]}}$ for all units $[t,0]$, and \autoref{constr:MUtau} gives a local Moufang set by \autoref{thm:constrMouf}.
\end{proof}

\begin{definition}
	Let $V$ be a local Jordan pair with invertible element $e$. Set $U = \{\alpha_v\mid v\in V^+\}$ and $\tau = \mu_e$. We define $\M(V) := \M(U,\tau)$.\notatlink{M(V)}
\end{definition}


\section[Local Moufang sets to local Jordan pairs]{From local Moufang sets to local Jordan pairs}\label{sec:LMStoJP}
\subsection{The construction and basic properties}

We now investigate the reverse construction: we try to make a local Jordan pair starting from a local Moufang set satisfying some additional assumptions.
One obvious necessary assumption is that the root groups have to be abelian, and by \autoref{prop:gammaJP}, we also know that the local Moufang set has to be special.
We will also impose a restriction to avoid the cases where $V/\Rad V$ has characteristic $2$ or $3$.
Finally, we will need a linearity assumption.

Notice that for a given Jordan pair $V$, the Moufang set $\M(V)$ cannot detect the base ring~$k$ over which the Jordan pair was initially defined.
For this reason, the Jordan pair that we will (try to) construct will be defined over the base ring $\Z$, i.e., it will consist of a pair of $\Z$-modules.

\begin{importantconstruction}\label{constr:Jpair}
	Suppose $\M$ is a local Moufang set satisfying the following properties:
	\begin{manualenumerate}[label=\textnormal{(J\arabic*)},labelwidth=\widthof{(J1)}]
		\item $\M$ is special;\label{itm:J1}
		\item $U_\infty$ is abelian;\label{itm:J2}
		\item if $x$ is a unit, then so is $x\cdot2$ and $x\cdot 3$.\label{itm:J3}
	\end{manualenumerate}
	Then we define two $\Z$-modules as follows:
	\begin{itemize}
		\item $V^+ := X\setminus\class{\infty}$ with $x+z:=0\alpha_x\alpha_z$;\notatlink{xyplus}\notatlink{xymin}
		\item $V^- := X\setminus\class{0}$ with $y\plustil w:=\infty\gamma_{y\tau}\gamma_{w\tau}$.\notatlink{xyplustil}\notatlink{xymintil}
	\end{itemize}
	Now we have, for all $x,z\in V^+$ and units $t$, $(x+z)\mu_t = x\mu_t\plustil z\mu_t$, and similarly for all $y,w\in V^-$, $(y\plustil w)\mu_t = y\mu_t+ w\mu_t$. Hence $\mu$-maps induce group isomorphisms between $V^+$ and $V^-$. We now define the following maps:
	\begin{align}
		&\begin{aligned}
			&\mu_{x,z} := \mu_{x+z}-\mu_x-\mu_z:V^-\to V^+\notatlink{muxy}\\
			&\text{for units $x,z\in V^+$ such that $x+z$ is a unit;}
		\end{aligned} \label{map:muxz} \\
		&\begin{aligned}
			&\mutil_{y,w} := \mu_{y\plustil w}\mintil\mu_y\mintil\mu_w:V^+\to V^-\notatlink{muxytil}\\
			&\text{for units $y,w\in V^+$ such that $y\plustil w$ is a unit.}
		\end{aligned}\label{map:muyw}
	\end{align}
	The final assumption we make is the following:
	\begin{manualenumerate}[label=\textnormal{(J\arabic*)},labelwidth=\widthof{(J1)},resume]
		\item There are bilinear maps
		\begin{align*}
			&\mu_{\cdot,\cdot}\colon V^+ \times V^+ \to \Hom(V^-, V^+) \\
			&\mutil_{\cdot,\cdot}\colon V^- \times V^- \to \Hom(V^+, V^-)
		\end{align*}
		that extend \eqref{map:muxz} and \eqref{map:muyw}. \label{itm:J4}
	\end{manualenumerate}
	We now have a pair of $\Z$-modules $(V^+,V^-)$ and bilinear maps $\mu_{\cdot,\cdot}$ and $\mutil_{\cdot,\cdot}$ which will define a local Jordan pair, as will be shown in \autoref{thm:localJP}.
\end{importantconstruction}
\begin{remark}
	By \hyperref[itm:J1]{(J1-3)}, $\tau$ is an involution, so $\gamma_{y\tau}$ is the unique element of $U_0$ mapping $\infty$ to $y$, and hence it does not depend on $\tau$. In particular, $y\plustil w$ does not depend on the choice of $\tau$.
\end{remark}

\begin{remark}\label{rem:expression_muxz}
	If we have a local Moufang set satisfying \hyperref[itm:J1]{(J1-4)}, we can express $\mu_{x,z}$ (and similarly $\mutil_{y,w}$) in terms of $\mu$-maps for any pair of $x,z\in V^+$ by the linearity:
	\begin{align*}
		\mu_{x,z} &= \mu_{x+z}-\mu_x-\mu_z &&\text{$x$, $z$ and $x+z$ units;} \\
		\mu_{x,z} &:= -\mu_{-x,z} &&\text{$x$, $z$ units but $x+z$ not a unit;} \\
		\mu_{x,z} &:= \mu_{x,x+z}-\mu_{x,x} &&\text{$x$ a unit but $z$ not a unit;} \\
		\mu_{x,z} &:= \mu_{z,x} &&\text{$z$ a unit but $x$ not a unit;} \\
		\mu_{x,z} &:= \mu_{x+e,z}-\mu_{e,z} &&\text{$x$, $z$ not units and $e$ an arbitrary unit.}
	\end{align*}
\end{remark}

For the remainder of this section, we will always assume that we have a local Moufang set satisfying \hyperref[itm:J1]{(J1-4)}.
We start by showing some basic identities, which will help to show that the construction gives us a Jordan pair.

\begin{lemma}\label{lem:JMS_identities}
	In a local Moufang set where \hyperref[itm:J1]{\normalfont{(J1-4)}} holds, we have the following identities:
	\begin{romenumerate}
		\item $t\mu_{t,x} = -x\cdot 2$ \quad for all units $t$ and all $x\in V^+$;\label{itm:xxy}
		\item $y\mu_{t,t} = y\mu_t\cdot 2$ \quad for all units $t$ and all $y\in V^-$\label{itm:xyx};
		\item $\mu_s\mu_{s,t}\mu_t = \mu_t\mu_{s,t}\mu_s = \mutil_{s,t}$ \quad for all units $s,t$;\label{itm:reverse_order}
		\item $s\mu_t\mu_{s,t} = -t\mu_s\cdot 2$ \quad for all units $s,t$;\label{itm:xxmuyy}
		\item $y\mu_{x,z}\tau = y\tau\mutil_{x\tau,z\tau}$ \quad for all $x,z\in V^+$ and $y\in V^-$.
	\end{romenumerate}
\end{lemma}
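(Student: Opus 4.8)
The five identities in \autoref{lem:JMS_identities} are all of the same flavour: they express a computation in the paired structure $(V^+,V^-)$ in terms of $\mu$-maps, and each reduces either to the special identity $x\mu_{x+y}=-y-x+x\mu_y-y$ (the translation of \autoref{lem:specialsum} used in \autoref{constr:ring}) or to \autoref{prop:sumform}. The strategy throughout is: unfold the definition of $\mu_{x,z}$ (or $\mutil_{y,w}$) as a difference of three $\mu$-maps, apply the computation to a suitable point, and use that $\mu$-maps induce group isomorphisms $V^+\to V^-$. I would first establish \ref{itm:xxy} and \ref{itm:xyx} for units $x,z$ (respectively $y$), and only afterwards extend to the general (non-unit) case via the linearity in \ref{itm:J4} and the case analysis in \autoref{rem:expression_muxz}.

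For \ref{itm:xxy}: when $t$, $x$ and $t+x$ are all units, I would write $t\mu_{t,x}=t\mu_{t+x}-t\mu_t-t\mu_x$, use $t\mu_t=-t$ from \autoref{prop:specialmu}\ref{prop:specialmu-iii}, and rewrite $t\mu_{t+x}$ via the special-sum identity applied with the roles of the summands chosen so that $t\mu_{t+x}=t\mu_{(x)+(t)}=-t-x+t\mu_x-t$ — this is exactly the form that makes the three terms collapse to $-x\cdot 2$. The remaining ranges of $x$ (namely $x$ a unit with $t+x$ not a unit, and $x$ not a unit) follow by the recursive formulas of \autoref{rem:expression_muxz}, using bilinearity of $\mu_{\cdot,\cdot}$ and the fact that $x\mapsto x\cdot 2$ is additive on $U_\infty$ (since $U_\infty$ is abelian). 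Identity \ref{itm:xyx} is the $0$-to-$\infty$ mirror: it follows from \ref{itm:xxy} applied in the opposite structure, or directly by the same manipulation using $\mutil$-maps and $y\mu_t\cdot 2=(y\mu_t)+(y\mu_t)$; here one uses that $\mu_t$ carries $+$ on $V^+$ to $\plustil$ on $V^-$, as recorded in \autoref{constr:Jpair}.

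For \ref{itm:reverse_order}: the equality $\mu_s\mu_{s,t}\mu_t=\mu_t\mu_{s,t}\mu_s$ and the identification with $\mutil_{s,t}$ should come from conjugating the additive identity $(y\plustil w)\mu_t=y\mu_t+w\mu_t$ (and its $\tau$-twisted versions) by $\mu_s$, together with \autoref{prop:mu-involution}\ref{itm:mucommute} — i.e. the three-$\mu$-map commutation $\mu_x\mu_{x\alpha_y}\mu_y=\mu_y\mu_{x\alpha_y}\mu_x$, linearized. Concretely, I would expand $\mutil_{s,t}=\mu_{s\plustil t}\mintil\mu_s\mintil\mu_t$ and rewrite $s\plustil t$ using \autoref{prop:sumform} (which gives the element $z$ with $\mu_z=\mu_s\mu_{s\alpha_t}\mu_t$ in the unit case), then linearize. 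Identity \ref{itm:xxmuyy} is then a corollary: apply \ref{itm:reverse_order} to express $s\mu_t\mu_{s,t}$ as $s(\mu_t\mu_{s,t})$ and feed in \ref{itm:xyx}/\ref{itm:xxy}. Finally \ref{itm:xyx} — that $y\mu_{x,z}\tau=y\tau\mutil_{x\tau,z\tau}$ — is the compatibility of the bilinearization with $\tau$: since $\tau$ is a $\mu$-map hence an involution swapping $V^+$ and $V^-$, and $\mu_{x\tau}=\mu_{-x}^\tau$ by \autoref{lem:mu}\ref{itm:mutau} (which in the special case with $\abs{\class{X}}>3$ becomes $\mu_{x\tau}=\mu_x^\tau$ by \autoref{prop:mu-involution}), conjugating the defining difference $\mu_{x+z}-\mu_x-\mu_z$ by $\tau$ term by term yields $\mu_{(x\tau)\plustil(z\tau)}\mintil\mu_{x\tau}\mintil\mu_{z\tau}=\mutil_{x\tau,z\tau}$; one then extends from units to all $x,z$ by \autoref{rem:expression_muxz} and bilinearity. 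The main obstacle I anticipate is purely bookkeeping: making sure that in the special-sum identity the summands are assigned to $x$ and $y$ in the order that produces the desired sign and the desired collapse, and then checking that each of the five recursive branches in \autoref{rem:expression_muxz} really does propagate the identity — the unit case is where the real content lies, and the rest is routine linearity, but it must be written out carefully to avoid sign errors.
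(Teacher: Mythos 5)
Your routes for \ref{itm:xxy}, \ref{itm:reverse_order}, \ref{itm:xxmuyy} and the last identity are essentially the paper's: \autoref{lem:specialsum} together with $t\mu_t=-t$ and the linearity extension of \autoref{rem:expression_muxz} for \ref{itm:xxy}; the commutation $\mu_s\mu_{s+t}\mu_t=\mu_t\mu_{s+t}\mu_s=\mu_{(s\tau+t\tau)\tau}$ (i.e.\ \autoref{prop:mu-involution}\ref{itm:mucommute}, which is \autoref{prop:sumform} plus involutivity) expanded via the definitions of $\mu_{s,t}$ and $\mutil_{s,t}$ for \ref{itm:reverse_order}; \ref{itm:xxmuyy} as a corollary of \ref{itm:xxy} and \ref{itm:reverse_order} (note the paper uses \ref{itm:xxy}, not \ref{itm:xyx}, applying $\mu_t\mu_s$ to $s\mu_s\mu_{s,t}=t\cdot2$); and termwise conjugation by $\tau$ for the last item. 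One slip in \ref{itm:xxy}: your instantiation ``$t\mu_{t+x}=-t-x+t\mu_x-t$'' is wrong; the special-sum identity with acting element $t$ and other summand $x$ reads $t\mu_{t+x}=-x-t+t\mu_x-x=-x\cdot2-t+t\mu_x$, and with your written version the three terms collapse to $-t-x$ rather than $-x\cdot 2$. This is exactly the bookkeeping you flagged and the correct assignment does work, so it is fixable, but as written the computation fails.

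The genuine gap is \ref{itm:xyx}. It is \emph{not} the mirror of \ref{itm:xxy} in the opposite structure: that mirror is $t\mutil_{t,y}=\mintil y\cdottil 2$ (the identity recorded in the remark after the lemma), a statement about $\mutil_{t,\cdot}$ evaluated at the special point $t$, whereas \ref{itm:xyx} asserts that the equal-argument bilinearization $\mu_{t,t}$ acts as $\mu_t\cdot2$ on \emph{every} $y\in V^-$. Nor can ``the same manipulation'' reach it: \autoref{lem:specialsum} only computes the action of $\mu_{t+x}$ at the particular point $t$ (and its relatives), not at an arbitrary $y$. What \ref{itm:xyx} actually requires is the quadratic scaling of $\mu$-maps, $y\mu_{t\cdot2}=y\mu_t\cdot4$ for all $y\nsim0$, which is \autoref{prop:mu-x.s}\ref{itm:mu-x.s} (equivalently \autoref{prop:mu-hua-a.s-part}); then $y\mu_{t,t}=y\mu_{t\cdot2}-y\mu_t\cdot2=y\mu_t\cdot2$. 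Since your proposal never invokes this scaling property, the sketch of \ref{itm:xyx} does not go through as written; supply that ingredient and the rest of your plan is sound.
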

\begin{proof}\preenum
	\begin{romenumerate}
		\item First assume $t,x$ are units such that $t+x$ is also a unit. Then by \autoref{lem:specialsum}, we have
		\[t\mu_{t+x} = -x\cdot2-t+t\mu_x\;,\]
		hence
		\[t\mu_{t,x} = -x\cdot2-t+t\mu_x - t\mu_t - t\mu_x = -x\cdot2\;,\]
		since $t\mu_t = -t$. If $t+x$ is not a unit, we can replace $x$ by $-x$ (as $x\cdot2$ is a unit by \ref{itm:J3}, $t-x=t+x-(x\cdot 2)$ is a unit) and get
		\[t\mu_{t,x} = -t\mu_{t,-x} = -(x\cdot 2) = -x\cdot 2\;.\]
		Finally, if $x$ is not a unit, we can take any unit $e$ and get
		\[t\mu_{t,x} = t\mu_{t,x-e}+t\mu_{t,e} = -(x-e)\cdot 2-e\cdot 2 = -x\cdot2\;.\]
		\item We have $\mu_{t,t} = \mu_{t\cdot 2}-\mu_t\cdot 2$. By \autoref{prop:mu-x.s}\ref{itm:mu-x.s}, we get $y\mu_{t\cdot 2} = y\mu_t\cdot 4$, so
		\[y\mu_{t,t} = y\mu_t\cdot 4 - y\mu_t\cdot 2 = y\mu_t\cdot 2\;.\]
		\item Assume first that $s+t$ is also a unit. By \autoref{prop:mu-involution}\ref{itm:mucommute}, we then have $\mu_s\mu_{s+t}\mu_t = \mu_t\mu_{s+t}\mu_s = \mu_{(s\tau+t\tau)\tau}$. By the linearity of $\tau$, we have $\mu_{(s\tau+t\tau)\tau} = \mu_{s\plustil t}$, so we can now use the definitions of $\mu_{s,t}$ and $\mutil_{s,t}$ to get
		\begin{align*}
			\mu_s(\mu_{s,t}+\mu_s+\mu_t)\mu_t &= \mu_t(\mu_{s,t}+\mu_s+\mu_t)\mu_s = \mutil_{s,t}\plustil\mu_s\plustil\mu_t\;,
			\intertext{so since all $\mu$-maps are involutions,}
			\mu_s\mu_{s,t}\mu_t \plustil \mu_t \plustil \mu_s &= \mu_t\mu_{s,t}\mu_s \plustil \mu_t \plustil \mu_s = \mutil_{s,t}\plustil\mu_s\plustil\mu_t\;,
			\intertext{hence}
			\mu_s\mu_{s,t}\mu_t &= \mu_t\mu_{s,t}\mu_s = \mutil_{s,t}\;.
		\end{align*}
		If $s+t$ is not a unit, we can replace $t$ by $-t$ and use linearity the get the result.
		\item From \ref{itm:xxy} we know $-s\mu_{s,t} = t\cdot 2$. Hence, using \ref{itm:reverse_order}, we get
		\[t\cdot 2 = s\mu_s\mu_{s,t} = s\mu_t\mu_{s,t}\mu_s\mu_t\;.\]
		Applying $\mu_t\mu_s$ to both sides, we get
		\[t\mu_t\mu_s\cdot 2 = s\mu_t\mu_{s,t}\quad\text{ and hence }\quad s\mu_t\mu_{s,t} = -t\mu_s\cdot 2\;.\]
		\item Assume first that $x$, $z$ and $x+z$ are units. Then we have
		\begin{align*}
			y\mu_{x,z}\tau &= y(\mu_{x+z}-\mu_x-\mu_z)\tau \\
				&= y\tau(\mu_{(x+z)\tau}\mintil\mu_{x\tau}\mintil\mu_{z\tau}) \\
				&= y\tau(\mu_{x\tau\plustil z\tau}\mintil\mu_{x\tau}\mintil\mu_{z\tau}) \\
				&= y\tau\mutil_{x\tau,z\tau}\;.
		\end{align*}
		By linearity, this identity now holds for all $x$ and $z$ in $V^+$.
		\qedhere
	\end{romenumerate}
\end{proof}

\begin{remark}
	Since $V^+$ and $V^-$ play the same role in the construction (our choice of $0$ and $\infty$ could have been reversed), any identity we have proven will also hold with $+$ and $-$ interchanged. For example, the identity $t\mutil_{t,y} = \mintil y\cdottil 2$ also holds for all units $t$ and all $y\in V^-$.
\end{remark}

In the next two subsections, we will prove the axioms \ref{axiom:JP1} and \ref{axiom:JP2} of a Jordan pair. In the process, we will also show the linearizations of those axioms, so by \autoref{prop:JPsufficientaxioms}\ref{itm:sufficient_JP}, we will then have shown that we have a Jordan pair, since assumptions \hyperref[itm:J1]{\normalfont{(J1-3)}} imply in particular that there is no $2$-torsion.
To prove \ref{axiom:JP1} and \ref{axiom:JP2}, we will first restrict everything to units.

\subsection{Proving the axioms for units}

We first prove \ref{axiom:JP1} by linearizing some of the basic identities we have shown earlier. The proof is based on ideas from the proof of Theorem 5.11 in \cite{DMSegevIdentitiesMS}. Remark that $\mu$-maps will correspond to the quadratic maps of the Jordan pair, and $\mu_{\cdot,\cdot}$ (and $\mutil_{\cdot,\cdot}$) will correspond to the bilinearizations $Q_{\cdot,\cdot}$. In these terms, \ref{axiom:JP1} translates to $yQ_{x,zQ_x} = xQ_{y,z}Q_x$, or in the local Moufang set: $y\mutil_{x,z\mu_x} = x\mu_{y,z}\mu_x$. Up to renaming, this is the identity we will prove for units:

\begin{proposition}\label{prop:JMS_JP1_units}
	In a local Moufang set where \hyperref[itm:J1]{\normalfont{(J1-4)}} holds, we have the following identities:
	\begin{romenumerate}
		\item $\mu_{r,s}\mu_s\mu_{r,t}+\mu_{t,s}\mu_s\mu_r = \mu_{r,t}\mu_s\mu_{r,s}+\mu_r\mu_s\mu_{t,s}$ \quad for all units $r,s,t$;\label{itm:linearize_commuting}
		\item $r\mu_s\mu_{t,s}+t\mu_s\mu_{r,s} = -s\mu_{r,t}\cdot 2$ \quad for all units $r,s,t$;\label{itm:linearize_xxmuyy}
		\item $r\mu_s\mu_{t,s} = t\mutil_{r,s}\mu_r$ \quad for all units $r,s,t$;\label{itm:JMS_JP1_units_iii}
		\item $x\mutil_{z\mu_y,y} = y\mu_{x,z}\mu_y = z\mutil_{x\mu_y,y}$ \quad for all units $x,z\in V^+$ and all units $y\in V^-$.\label{itm:JP1_units}
	\end{romenumerate}
\end{proposition}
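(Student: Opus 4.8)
The plan is to obtain \ref{itm:linearize_commuting}--\ref{itm:JP1_units} as successive linearizations of the ``diagonal'' identities already proved in Lemma~\ref{lem:JMS_identities} and Proposition~\ref{prop:mu-involution}, exploiting the bilinearity of $\mu_{\cdot,\cdot}$ and $\mutil_{\cdot,\cdot}$ granted by \ref{itm:J4}, the additivity of the maps $\mu_x\colon V^+\to V^-$ and $\mu_{x,z}\colon V^-\to V^+$ (and their $\mutil$-analogues), and the ``quadratic'' scaling $s\mu_{r\cdot\ell}=s\mu_r\cdot\ell^2$ from Proposition~\ref{prop:mu-x.s}. Throughout it suffices to argue in the generic situation where all the elements and sums that occur are units, removing this restriction afterwards by a case distinction exactly as in Remark~\ref{rem:expression_muxz}; I will not repeat that routine reduction.

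For \ref{itm:linearize_xxmuyy} I start from $r\mu_s\mu_{r,s}=-s\mu_r\cdot2$ (this is Lemma~\ref{lem:JMS_identities}\ref{itm:xxmuyy} after renaming) and polarise by replacing $r$ with $r+t$. On the left, additivity gives $(r+t)\mu_s=r\mu_s\plustil t\mu_s$ and $\mu_{r+t,s}=\mu_{r,s}+\mu_{t,s}$, so the left-hand side expands to $r\mu_s\mu_{r,s}+t\mu_s\mu_{r,s}+r\mu_s\mu_{t,s}+t\mu_s\mu_{t,s}$; on the right, the defining relation of $\mu_{r,t}$ evaluated at $s$ gives $s\mu_{r+t}=s\mu_{r,t}+s\mu_r+s\mu_t$, hence $-s\mu_{r+t}\cdot2=-s\mu_{r,t}\cdot2-s\mu_r\cdot2-s\mu_t\cdot2$. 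Substituting $r\mu_s\mu_{r,s}=-s\mu_r\cdot2$ and $t\mu_s\mu_{t,s}=-s\mu_t\cdot2$ and cancelling leaves precisely $r\mu_s\mu_{t,s}+t\mu_s\mu_{r,s}=-s\mu_{r,t}\cdot2$, which is \ref{itm:linearize_xxmuyy}.

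For \ref{itm:linearize_commuting}, the diagonal case $t=r$ is — after dividing by $3$ (legitimate since \ref{itm:J3} excludes $3$-torsion) and using $\mu_{r,r}+\mu_r=3\mu_r$ on units, which follows from Proposition~\ref{prop:mu-x.s} — just Lemma~\ref{lem:JMS_identities}\ref{itm:reverse_order} rearranged into the form $\mu_{r,s}\mu_s\mu_r=\mu_r\mu_s\mu_{r,s}$. The full identity \ref{itm:linearize_commuting} is then its polarisation in $r$; the permutation $\mu_r$ that appears is handled via Proposition~\ref{prop:mu-involution}\ref{itm:mucommute}, which expresses $\mu_{r+t}$ through $\mu_r,\mu_t$ and $\mu_{r\plustil t}$, after which the diagonal terms cancel and only the stated equality of maps $V^-\to V^+$ survives. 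Identities \ref{itm:JMS_JP1_units_iii} and \ref{itm:JP1_units} are then formal consequences: from Lemma~\ref{lem:JMS_identities}\ref{itm:reverse_order} in the form $\mutil_{r,s}=\mu_s\mu_{r,s}\mu_r$ together with $\mu_r^2=\id$ one has $t\mutil_{r,s}\mu_r=t\mu_s\mu_{r,s}$, so \ref{itm:JMS_JP1_units_iii} is equivalent to the symmetry $r\mu_s\mu_{t,s}=t\mu_s\mu_{r,s}$, which is extracted by evaluating \ref{itm:linearize_commuting} at a well-chosen element of $V^-$ — the natural choice is $r\mu_r=-r$, for which $(-r)\mu_r=r$, so that the terms ending in $\mu_r$ collapse — and simplifying with \ref{itm:linearize_xxmuyy} and with Lemma~\ref{lem:JMS_identities}\ref{itm:xxy}; concretely one finds $r\mu_s\mu_{t,s}=-s\mu_{r,t}=t\mu_s\mu_{r,s}$, the outer equality being manifestly symmetric in $r$ and $t$. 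Finally \ref{itm:JP1_units} — the ``\ref{axiom:JP1} for units'' identity — comes out of \ref{itm:JMS_JP1_units_iii} via the substitutions $r\rightsquigarrow z\mu_y$, $s\rightsquigarrow y$, $t\rightsquigarrow x$ (and $r\rightsquigarrow x\mu_y$ for the second equality), again translating between $\mu_{\cdot,\cdot}$ and $\mutil_{\cdot,\cdot}$ by Lemma~\ref{lem:JMS_identities}\ref{itm:reverse_order} and transporting through $\mu_y$ with $\mu_y^2=\id$; the point is merely that $z\mu_y$, $x\mu_y$, $y$, \dots\ are units whenever $x,y,z$ are.

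The main obstacle is the derivation of the symmetry $r\mu_s\mu_{t,s}=t\mu_s\mu_{r,s}$ (equivalently of \ref{itm:JMS_JP1_units_iii}): choosing the right element of $V^-$ at which to evaluate \ref{itm:linearize_commuting}, matching up all the resulting terms, and keeping the unit-versus-non-unit case analysis under control so that every bilinearised map that appears is actually defined — this is where the argument, modelled on the proof of Theorem~5.11 in \cite{DMSegevIdentitiesMS}, becomes genuinely technical, whereas the rest is bookkeeping.
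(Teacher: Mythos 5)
Your overall route is the paper's: polarise the identities of \autoref{lem:JMS_identities} (your treatment of \ref{itm:linearize_xxmuyy} is exactly the paper's argument), reduce \ref{itm:JMS_JP1_units_iii} via $\mutil_{r,s}\mu_r=\mu_s\mu_{r,s}$ to the symmetry $r\mu_s\mu_{t,s}=t\mu_s\mu_{r,s}$, and deduce \ref{itm:JP1_units} from the chain $r\mu_s\mu_{t,s}=-s\mu_{r,t}=t\mu_s\mu_{r,s}$. But the step you yourself single out as the crux contains a genuine gap, indeed a circularity. Evaluating \ref{itm:linearize_commuting} at $r$ (or $-r$) kills the terms that \emph{begin} with $\mu_{r,s}$, $\mu_{r,t}$, $\mu_r$ (via $r\mu_{r,s}=-s\cdot2$, $r\mu_{r,t}=-t\cdot2$, $r\mu_r=-r$ from \autoref{lem:JMS_identities}\ref{itm:xxy} and specialness --- not the terms ``ending in $\mu_r$''), and after simplifying with \ref{itm:linearize_xxmuyy} what survives is
\[
r\mu_{t,s}\mu_s\mu_r \;=\; -t\mu_s\mu_{r,s}\;,
\]
in which the maps occur in the order $\mu_{t,s},\mu_s,\mu_r$, not $r\mu_s\mu_{t,s}$. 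The identity $r\mu_s\mu_{t,s}=-s\mu_{r,t}$ that you claim ``concretely'' falls out of this evaluation is, given \ref{itm:linearize_xxmuyy} and unique $2$-divisibility, \emph{equivalent} to the symmetry you are trying to establish, so quoting it at this point assumes the conclusion. The missing ingredient is the conversion of $r\mu_{t,s}\mu_s\mu_r$ into $t\mutil_{r,s}\mu_r$: the paper does this by substituting $r\mu_s$ for $r$ in the displayed identity, applying $\mu_s\mu_r$, and transporting $\mu_{r\mu_s,s}$ through $\mu_s$ by the fifth identity of \autoref{lem:JMS_identities}, so that $\mu_s\mu_{r\mu_s,s}\mu_s$ becomes $\mutil_{r,s\mu_s}=-\mutil_{r,s}$. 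Without this (or an equivalent manoeuvre), \ref{itm:JMS_JP1_units_iii}, and with it \ref{itm:JP1_units}, is not proved.

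A secondary imprecision concerns \ref{itm:linearize_commuting}: your seed identity $\mu_{r,s}\mu_s\mu_r=\mu_r\mu_s\mu_{r,s}$ is correct (it is \autoref{lem:JMS_identities}\ref{itm:reverse_order} conjugated by $\mu_r$; the expansion you need is the definitional $\mu_{r+t}=\mu_{r,t}+\mu_r+\mu_t$, not \autoref{prop:mu-involution}\ref{itm:mucommute}), but a single polarisation $r\rightsquigarrow r+t$ only yields the sum of \ref{itm:linearize_commuting} and the same identity with $r$ and $t$ interchanged, because both the $\ell$- and $\ell^2$-coefficients survive after the diagonal terms cancel. To separate them you must substitute $r+t\cdot\ell$ for at least two admissible values of $\ell$ and use $\mu_{t\cdot\ell}=\mu_t\cdot\ell^2$ (two of $\ell=1,2,3$ always keep $r+t\cdot\ell$ a unit), exactly as the paper does; merely listing the scaling identity among your tools does not close this step.
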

\begin{proof}\preenum
	\begin{romenumerate}
		\item We start with the first identity of Lemma~\ref{lem:JMS_identities}\ref{itm:reverse_order}, conjugating both sides by $\mu_t$ and then replacing $t$ by $r+t\cdot \ell$, for those $\ell\in\{1,2,3\}$ for which $r+t\cdot \ell$ is a unit. Since $\mu_{r+t\cdot \ell} = \mu_{r,t}\cdot \ell+\mu_r+\mu_t\cdot \ell^2$, we get
		\begin{align*}
			&(\mu_{r,s}+\mu_{t,s}\cdot \ell)\mu_s(\mu_{r,t}\cdot \ell+\mu_r+\mu_t\cdot \ell^2) \\
			&= (\mu_{r,t}\cdot \ell+\mu_r+\mu_t\cdot \ell^2)\mu_s(\mu_{r,s}+\mu_{t,s}\cdot \ell)
		\end{align*}
		and after expanding,
		\begin{align*}
			&\mu_{r,s}\mu_s\mu_r + (\mu_{r,s}\mu_s\mu_{r,t}+\mu_{t,s}\mu_s\mu_r)\cdot \ell \\
			&\qquad+ (\mu_{t,s}\mu_s\mu_{r,t}+\mu_{r,s}\mu_s\mu_t)\cdot \ell^2 + \mu_{t,s}\mu_s\mu_t\cdot \ell^3 \\
			&= \mu_r\mu_s\mu_{r,s} + (\mu_{r,t}\mu_s\mu_{r,s}+\mu_r\mu_s\mu_{t,s})\cdot \ell \\
			&\qquad+ (\mu_{r,t}\mu_s\mu_{t,s}+\mu_t\mu_s\mu_{r,s})\cdot \ell^2 + \mu_t\mu_s\mu_{t,s}\cdot \ell^3
		\end{align*}
		Observe that the constant terms and terms with $\ell^3$ cancel due to Lemma~\ref{lem:JMS_identities}\ref{itm:reverse_order}, so we have
		\begin{align*}
			&(\mu_{r,s}\mu_s\mu_{r,t}+\mu_{t,s}\mu_s\mu_r)\cdot \ell + (\mu_{t,s}\mu_s\mu_{r,t}+\mu_{r,s}\mu_s\mu_t)\cdot \ell^2 \\
			&= (\mu_{r,t}\mu_s\mu_{r,s}+\mu_r\mu_s\mu_{t,s})\cdot \ell + (\mu_{r,t}\mu_s\mu_{t,s}+\mu_t\mu_s\mu_{r,s})\cdot \ell^2\;.
		\end{align*}
		Observe now that there are at least two values of $\ell$ for which $r+t\cdot \ell$ is a unit, since $t$ and $t\cdot2$ are units and adding a unit to a non-unit gives a unit. Using those two values, we can deduce that the coefficients on the left and right hand side of both $\ell$ and $\ell^2$ are equal. This means that
		\[\mu_{r,s}\mu_s\mu_{r,t}+\mu_{t,s}\mu_s\mu_r = \mu_{r,t}\mu_s\mu_{r,s}+\mu_r\mu_s\mu_{t,s}\;.\]
		\item We similarly linearize \autoref{lem:JMS_identities}\ref{itm:xxmuyy}, replacing $r$ by $r+t$ if $r+t$ is a unit (if not, replace $r$ by $r-t$). We get
		\begin{align*}
			(r+t)\mu_s\mu_{r+t,s} &= -s\mu_{r+t}\cdot 2 \;,
		\end{align*}
		so using \ref{itm:J4} and the definition of $\mu_{r,t}$,
		\begin{align*}
			&r\mu_s\mu_{r,s}+r\mu_s\mu_{t,s}+t\mu_s\mu_{r,s}+t\mu_s\mu_{t,s} \\
			&= -s\mu_{r,t}\cdot 2 - s\mu_r\cdot 2 - s\mu_t\cdot 2\;.
		\end{align*}
		We can now use \autoref{lem:JMS_identities}\ref{itm:xxmuyy} twice to get
		\begin{align*}
			r\mu_s\mu_{t,s}+t\mu_s\mu_{r,s} &= -s\mu_{r,t}\cdot 2\;.
		\end{align*}
		\item We apply identity \ref{itm:linearize_commuting} to the element $r$, and use \autoref{lem:JMS_identities} to get
		\begin{align*}
			(-s\cdot 2)\mu_s\mu_{r,t} + r\mu_{t,s}\mu_s\mu_r &= (-t\cdot 2)\mu_s\mu_{r,s} + (-r)\mu_s\mu_{t,s} \;. \\
		\intertext{Using linearity, this yields}
			s\mu_{r,t}\cdot 2 + r\mu_{t,s}\mu_s\mu_r &= -t\mu_s\mu_{r,s}\cdot 2 -r\mu_s\mu_{t,s} \;, \\
		\intertext{and by \ref{itm:linearize_xxmuyy},}
			r\mu_{t,s}\mu_s\mu_r &= -t\mu_s\mu_{r,s} \,. \\
		\intertext{We now replace $r$ by $r\mu_s$ and apply $\mu_s\mu_r$:}
			r\mu_s\mu_{t,s}\mu_r\mu_s\mu_s\mu_r &= -t\mu_s\mu_{r\mu_s,s}\mu_s\mu_r \\
			\implies\quad r\mu_s\mu_{t,s} &= -t\mutil_{r,s\mu_s}\mu_r \\
			\implies\quad r\mu_s\mu_{t,s} &= t\mutil_{r,s}\mu_r\;.
		\end{align*}
		\item By \autoref{lem:JMS_identities}\ref{itm:reverse_order}, we have $\mutil_{r,s}\mu_r = \mu_s\mu_{r,s}$, so \ref{itm:JMS_JP1_units_iii} becomes
		\[ r\mu_s\mu_{t,s} = t\mu_s\mu_{r,s}\;.\]
		We can now plug this in \ref{itm:linearize_xxmuyy} and use the unique $2$-divisibility to get
		\[r\mu_s\mu_{t,s} = -s\mu_{r,t}\;.\]
		We now apply $\mu_s$ to both sides and use linearity to get
		\[s\mu_{r,t}\mu_s = \mathord{\mintil}r\mu_s\mu_{t,s}\mu_s = \mathord{\mintil}r\mutil_{t\mu_s,\mathord{\mintil}s} = r\mutil_{t\mu_s,s}\;.\]
		After renaming variables, we get the first identity we wanted to prove. For the second identity, remark that $y\mu_{x,z}\mu_y$ is symmetric in $x$ and $z$, hence
		\[ x\mutil_{z\mu_y,y} = y\mu_{x,z}\mu_y = z\mutil_{x\mu_y,y}\;.\qedhere\]
	\end{romenumerate}
\end{proof}

\begin{remark}
	The technique used in the previous lemma will be used extensively to linearize many identities which hold when all unknowns are units. We describe it here in generality: replace an unknown $x$ by $x+\hat{x}\cdot \ell$ for some unused variable name $\hat{x}$, and $\ell\in\{1,2,3,4\}$. Next, we can combine several facts to expand the resulting identity as a polynomial in powers of $\ell$:
	\begin{itemize}
		\item the linearity of $\mu_{\cdot,\cdot}$ and $\mutil_{\cdot,\cdot}$;
		\item the definition of $\mu_{\cdot,\cdot}$ to expand $\mu_{x+\hat{x}\cdot\ell} = \mu_{x,\hat{x}}\cdot\ell + \mu_x + \mu_{\hat{x}\cdot\ell}$, which requires $x+\hat{x}\cdot\ell$ to be a unit;
		\item the identity $\mu_{\hat{x}\cdot\ell} = \mu_{\hat{x}}\cdot\ell^2$, which requires $\hat{x}$ to be a unit.
	\end{itemize}
	We assume the highest power of $\ell$ occurring is $\ell^4$. By the identity we started with, the coefficients of $\ell^0$ and $\ell^4$ will always be equal. If we now find $3$ values for which $x+\hat{x}\cdot \ell$ is a unit, we can solve the Vandermonde system of equations and then we know that the coefficients of $\ell^1$, $\ell^2$ and $\ell^3$ are also equal.

	This technique will be used in many proofs to come, but it does not necessarily work for any identity in $\mu_\cdot$, $\mu_{\cdot,\cdot}$ and $\mutil_{\cdot,\cdot}$. It can be checked that it does work whenever it is used.
\end{remark}

Next, we will prove \ref{axiom:JP2} for units. This axiom for Jordan pairs corresponds to the Triple Shift Formula for Jordan algebras (see \cite[p.~202]{McCrimmonTaste}), which can be deduced from the axioms of Jordan algebras. We will use ideas from \cite{JacobsonJordanAlgebrasNotes} where such a deduction is made, and adapt them to the context of local Moufang sets. This will require many intermediate identities and will also require the choice of a fixed invertible element of the Jordan-pair-to-be. Hence we fix a unit $e$\notatlink{e} of our local Moufang set, which we will use throughout the following few lemmas. We begin with two basic consequences of \autoref{prop:JMS_JP1_units}.
\begin{lemma}\label{lem:JMS_basic_identities}
	In a local Moufang set where \hyperref[itm:J1]{\normalfont{(J1-4)}} holds, we have the following identities:
	\begin{romenumerate}
		\item for all units $x\in V^+$ and $y\in V^-$\label{itm:muxe}
			\[y\mu_{x,e} = e\mutil_{y,x\mu_e}\mu_e = -e\mu_{y\mu_e,x}\;;\]
		\item $x\mu_e\mu_{x,e} = -e\mu_x\cdot 2$ \quad for all units $x\in V^+$. \label{itm:xmuemuxe}
	\end{romenumerate}
\end{lemma}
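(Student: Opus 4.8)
\textbf{Proof plan for Lemma~\ref{lem:JMS_basic_identities}.}
The two identities are both special cases of the unit-identities already established in \autoref{prop:JMS_JP1_units}, specialized by inserting the fixed unit $e$, together with the basic identities from \autoref{lem:JMS_identities}. So the whole proof is a matter of careful substitution and bookkeeping, with no genuinely new idea required.

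For \ref{itm:muxe}, the plan is to start from \autoref{prop:JMS_JP1_units}\ref{itm:JP1_units}, which reads $x\mutil_{z\mu_y,y} = y\mu_{x,z}\mu_y = z\mutil_{x\mu_y,y}$ for units $x,z\in V^+$ and units $y\in V^-$. I would apply this with the roles of the variables chosen so that one of the two ``outer'' arguments becomes $e$: concretely, take the identity in the form $y\mu_{x,e}\mu_e = e\mutil_{x\mu_e,e\mu_e}$ (using $z=e$, and $y$ replaced by $e$ in the $\mu$-map subscript ... ), then simplify $e\mu_e = -e$ and use the evenness/linearity already recorded, i.e.\ $\mutil_{x\mu_e,-e} = -\mutil_{x\mu_e,e}$, to rewrite the right-hand side. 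Applying $\mu_e$ once more (using $\mu_e^2=\id$ from \autoref{prop:mu-involution}\ref{itm:mu-involution}, valid by \hyperref[itm:J1]{(J1-3)}) converts $y\mu_{x,e}\mu_e = \pm e\mutil_{y,x\mu_e}$ into the stated $y\mu_{x,e} = e\mutil_{y,x\mu_e}\mu_e$. For the second equality in \ref{itm:muxe}, I would again invoke \autoref{prop:JMS_JP1_units}\ref{itm:JMS_JP1_units_iii}, $r\mu_s\mu_{t,s} = t\mutil_{r,s}\mu_r$, or the symmetric displayed identity $s\mu_{r,t}\mu_s = r\mutil_{t\mu_s,s}$ appearing in its proof; choosing $s=e$ and matching variables yields $e\mu_{y\mu_e,x} = -\,y\mu_{x,e}$ after using $e\mu_e=-e$ and linearity, which rearranges to the claim. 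Throughout one must keep track of the signs introduced by the identifications $e\mu_e=-e$ and $(-a)\mu_t = -(a\mu_t)$ (the latter from specialness plus \autoref{lem:huaAut}); this sign-chasing is the only real source of error.

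For \ref{itm:xmuemuxe}, the cleanest route is to specialize \autoref{lem:JMS_identities}\ref{itm:xxmuyy}, which states $s\mu_t\mu_{s,t} = -t\mu_s\cdot 2$ for units $s,t$. Taking $s=x$ and $t=e$ gives directly $x\mu_e\mu_{x,e} = -e\mu_x\cdot 2$, which is exactly the assertion. (Alternatively one can derive it from \autoref{lem:JMS_identities}\ref{itm:xxy}, $t\mu_{t,x}=-x\cdot2$, combined with \autoref{lem:JMS_identities}\ref{itm:reverse_order}; but the first route is a one-line substitution.)

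I do not anticipate a serious obstacle here: the content is entirely contained in the already-proved unit-identities, and the work is the disciplined substitution $r,s,t \mapsto$ (combinations of $x$, $y$, $e$) plus simplification using $e\mu_e=-e$, $\mu_e^2=\id$, linearity of $\mu_{\cdot,\cdot}$ and $\mutil_{\cdot,\cdot}$, and the sign rule for $\mu$-maps on inverses. The one place to be careful is making sure that whenever I write $\mu_{a,b}$ or $\mutil_{a,b}$ with one argument equal to $e$ (or $\pm e$), all the points involved are genuinely units so that the identities of \autoref{prop:JMS_JP1_units} apply on the nose; since $e$ is a fixed unit and $x,y$ are assumed to be units, and $e\mu_e=-e$, $x\mu_e$, $y\mu_e$ are again units by \autoref{lem:mu}, this is automatic, but it should be stated explicitly.
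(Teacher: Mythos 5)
Your treatment of \ref{itm:xmuemuxe} is fine and even more direct than the paper's: \autoref{lem:JMS_identities}\ref{itm:xxmuyy} with $s=x$, $t=e$ literally \emph{is} the assertion $x\mu_e\mu_{x,e}=-e\mu_x\cdot 2$ (the paper instead deduces it from \ref{itm:muxe} by putting $y=x\mu_e$). The second equality of \ref{itm:muxe} can also be salvaged along the lines you sketch, but not from the identities you actually quote: neither \autoref{prop:JMS_JP1_units}\ref{itm:JMS_JP1_units_iii} nor the displayed $s\mu_{r,t}\mu_s=r\mutil_{t\mu_s,s}$ gives it by a one-step substitution. What does work is the other displayed identity inside that same proof, $r\mu_s\mu_{t,s}=-s\mu_{r,t}$, with $s=e$, $r=y\mu_e$, $t=x$: since $y\mu_e\mu_e=y$, this reads $y\mu_{x,e}=-e\mu_{y\mu_e,x}$ on the nose, with no sign-chasing and no use of $e\mu_e$.

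The genuine gap is in your derivation of the first equality $y\mu_{x,e}=e\mutil_{y,x\mu_e}\mu_e$. In \autoref{prop:JMS_JP1_units}\ref{itm:JP1_units} the middle term is $y\mu_{x,z}\mu_y$: the element being acted on and the subscript of the trailing $\mu$-map are the \emph{same} $y$, so you cannot ``replace $y$ by $e$ in the $\mu$-map subscript'' while keeping a general unit $y$ in front -- that is not a substitution instance of the identity. Concretely, your intermediate claim $y\mu_{x,e}\mu_e=e\mutil_{x\mu_e,e\mu_e}$ cannot be right, since its right-hand side does not depend on $y$; and the legitimate specialization $z=e$ of \ref{itm:JP1_units} only yields $y\mu_{x,e}\mu_y=e\mutil_{x\mu_y,y}$, with $\mu_y$ (not $\mu_e$) at the end, which is a different statement. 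The missing move, which the paper makes, is to apply \ref{itm:JP1_units} with the roles of $V^+$ and $V^-$ interchanged (permitted by the symmetry remark after \autoref{lem:JMS_identities}): for units $a,b\in V^-$ and a unit $c\in V^+$ one has $a\mu_{b\mu_c,c}=c\mutil_{a,b}\mu_c$, and the choice $c=e$, $a=y$, $b=x\mu_e$ gives $y\mu_{x,e}=e\mutil_{y,x\mu_e}\mu_e$ directly, using $\mu_e^2=\id$. (Alternatively, once you have $y\mu_{x,e}=-e\mu_{y\mu_e,x}$ as above, the first equality follows from \autoref{lem:JMS_identities}\,(v) together with $e\mu_e=\mintil e$ and linearity of $\mu_{\cdot,\cdot}$, which is the paper's computation read backwards.) As written, your route to the first equality does not go through, and the unresolved ``$\pm$'' is a symptom of that.
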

\begin{proof}\preenum
	\begin{romenumerate}
		\item We take \autoref{prop:JMS_JP1_units}\ref{itm:JP1_units}, interchange the roles of $V^+$ and $V^-$ and replace $y$ by $e$, $x$ by $y$ and $z$ by $x\mu_e$. This gives the first equality. For the second, we use
		\[e\mutil_{y,x\mu_e}\mu_e = e\mu_e\mu_{y\mu_e,x\mu_e\mu_e} = (\mathord{\mintil}e)\mu_{y\mu_e,x} = -e\mu_{y\mu_e,x}\;.\]
		\item Setting $y = x\mu_e$ in \ref{itm:muxe}, we get $x\mu_e\mu_{x,e} = -e\mu_{x,x} = -e\mu_x\cdot 2$.\qedhere
	\end{romenumerate}
\end{proof}

We start building up some identities that we will use to prove \ref{axiom:JP2} for units.

\begin{lemma}\label{lem:JMS_QJ-identities}
	Let $\M$ be a local Moufang set satisfying \hyperref[itm:J1]{\normalfont{(J1-4)}}.
    Then for all units $x,z,v \in V^+$ and all units $y,w \in V^-$, the following identities hold:
	\begin{romenumerate}
		\item $\mu_x\mu_y\mu_z+\mu_z\mu_y\mu_x+\mu_{x,z}\mu_y\mu_{x,z} = \mu_{y\mu_{x,z}} + \mu_{y\mu_x,y\mu_z}$; \label{QJ7}
		\item $\mu_{x,e}\mu_e\mu_{x,e}+\mu_{e\mu_x,e} = \mu_x\cdot 2$; \label{QJ20}
		\item $y\mu_{x,w\mu_{x,z}}+y\mu_{z,w\mu_x} = x\mutil_{y,w}\mu_{x,z} + z\mutil_{y,w}\mu_x$; \label{QJ9}
		\item $e\mu_{z,e\mu_x} = x\mu_e\mu_{z,x}$; \label{QJ26}
		\item $e\mu_{v,e\mu_{x,z}} = x\mu_e\mu_{v,z}+z\mu_e\mu_{v,x}$. \label{QJ27}
	\end{romenumerate}
\end{lemma}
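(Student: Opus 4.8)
\textbf{Proof plan for \autoref{lem:JMS_QJ-identities}.} The strategy throughout is \emph{linearization of previously established identities}, using exactly the technique described in the remark after \autoref{prop:JMS_JP1_units}: replace a unit by a two-term expression $x+\hat x\cdot\ell$, expand everything as a polynomial in $\ell\in\{1,2,3,4\}$ using the bilinearity of $\mu_{\cdot,\cdot}$ and $\mutil_{\cdot,\cdot}$, the expansion $\mu_{x+\hat x\cdot\ell}=\mu_{x,\hat x}\cdot\ell+\mu_x+\mu_{\hat x}\cdot\ell^2$, and the identity $\mu_{\hat x\cdot\ell}=\mu_{\hat x}\cdot\ell^2$ (\autoref{prop:mu-x.s}); then match coefficients of $\ell^1,\ell^2,\ell^3$ via the Vandermonde argument, since the $\ell^0$ and highest-degree coefficients always agree by the source identity.

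First I would prove \ref{QJ7}: this is the linearization of \autoref{prop:mu-involution}, combined with \autoref{lem:JMS_identities}\ref{itm:reverse_order} ($\mu_x\mu_{x,z}\mu_z=\mutil_{x,z}$ for units) and \autoref{prop:JMS_JP1_units}\ref{itm:linearize_commuting}. Concretely, take the relation $\mu_x\mu_y\mu_x=\mu_{y\mu_x}$ (which follows since $\mu$-maps are involutions and $\mu_{y\mu_x}=\mu_x^{\mu_y}$ composed appropriately, cf.\ \autoref{prop:mu-involution}), and replace $x$ by $x+z\cdot\ell$, then collect the $\ell^1$ and $\ell^2$ coefficients. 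The cross terms give precisely $\mu_x\mu_y\mu_z+\mu_z\mu_y\mu_x$ on the left and $\mu_{y\mu_{x,z}}+\mu_{y\mu_x,y\mu_z}$ on the right after identifying $\mu_{y\mu_x,y\mu_z}$ as the bilinearization of $x\mapsto\mu_{y\mu_x}$. Then \ref{QJ20} is the specialization of \ref{QJ7} obtained by setting $y=e$, $z=e$ and using \autoref{lem:JMS_basic_identities}\ref{itm:xmuemuxe} and \autoref{lem:JMS_identities}\ref{itm:xyx} to simplify; one must be slightly careful that the substitution $z=e$ is legitimate, which it is since all the maps extend bilinearly by \ref{itm:J4}.

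For \ref{QJ9} I would linearize \autoref{prop:JMS_JP1_units}\ref{itm:JP1_units} (the units version of \ref{axiom:JP1}), namely $y\mu_{x,z}\mu_y=z\mutil_{x\mu_y,y}$ type relations, replacing $x$ (or $z$) by a two-term sum and extracting the bilinear part; the presence of $w\mu_{x,z}$ inside a further $\mu$ means one should substitute and expand in two stages, using \ref{QJ7} to absorb the quadratic-in-$\ell$ terms. Identity \ref{QJ26} is the specialization of \ref{QJ9} (or of \autoref{prop:JMS_JP1_units}\ref{itm:JMS_JP1_units_iii}) with $y\mapsto e$, $w\mapsto e\mu_?$ chosen so that $x\mutil_{e,\cdot}$ collapses via \autoref{lem:JMS_basic_identities}\ref{itm:muxe}; and finally \ref{QJ27} is the linearization of \ref{QJ26} in the variable $x$, replacing $x$ by $x+z\cdot\ell$ and reading off the $\ell^1$ coefficient, using \autoref{prop:mu-x.s} to handle $\mu_{x+z\cdot\ell}$ and the bilinearity to handle $\mu_{v,e\mu_{x+z\cdot\ell}}$ — here the inner $e\mu_{x+z\cdot\ell}$ expands via $e\mu_{x,z}$, so one gets exactly $x\mu_e\mu_{v,z}+z\mu_e\mu_{v,x}$.

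The main obstacle I anticipate is \emph{bookkeeping the case distinctions on units}: the identities $\mu_{x+\hat x\cdot\ell}=\mu_{x,\hat x}\cdot\ell+\mu_x+\mu_{\hat x}\cdot\ell^2$ and $\mu_{\hat x\cdot\ell}=\mu_{\hat x}\cdot\ell^2$ require $x+\hat x\cdot\ell$ and $\hat x\cdot\ell$ respectively to be units, and the Vandermonde step needs \emph{three} admissible values of $\ell$; one must argue each time (as in \autoref{prop:JMS_JP1_units}\ref{itm:linearize_commuting}) that enough such $\ell$ exist, using \ref{itm:J3} together with the fact that adding a unit to a non-unit yields a unit. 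A secondary subtlety is that several of these identities are stated for \emph{units} $x,z,v,y,w$ only at this stage, so when one identity is fed into the proof of the next, the intermediate expressions (like $w\mu_{x,z}$ or $e\mu_{x,z}$) need not be units, and one must either keep everything inside bilinear maps — where \ref{itm:J4} guarantees the expressions still make sense — or postpone non-unit cases to the later full linearization. None of the individual computations is deep, but the interlocking of the five parts, with each built from the previous by one more linearization, is where care is needed.
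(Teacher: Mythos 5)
Your outline follows the paper's route almost exactly: \ref{QJ7} by linearizing $\mu_x\mu_y\mu_x=\mu_{y\mu_x}$, \ref{QJ20} by setting $y=z=e$ in \ref{QJ7}, \ref{QJ9} by linearizing the units form of \ref{axiom:JP1} from \autoref{prop:JMS_JP1_units}\ref{itm:JP1_units}, \ref{QJ26} by specializing \ref{QJ9}, and \ref{QJ27} by linearizing \ref{QJ26}. There is, however, a concrete gap in your plan for \ref{QJ7}. Linearizing only in $x$ forces you to expand $\mu_{y\mu_{x+z\cdot\ell}}$ as a polynomial in $\ell$; its argument is $y\mu_x+y\mu_{x,z}\cdot\ell+y\mu_z\cdot\ell^2$, and the middle summand $y\mu_{x,z}$ need not be a unit. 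At this stage there is no expansion rule for a $\mu$-map at a sum containing a non-unit summand (and no intrinsic meaning for ``$\mu_{y\mu_{x,z}}$''): the identity $\mu_{a+b}=\mu_a+\mu_b+\mu_{a,b}$ is only justified when $a$, $b$ and $a+b$ are units, and quadraticity of $x\mapsto\mu_x$ on all of $V^+$ is precisely what \autoref{thm:localJP} has yet to establish. You flag this danger yourself (``keep everything inside bilinear maps''), but your concrete plan for \ref{QJ7} does not do so. The paper's remedy is to linearize first in $y$, obtaining $\mu_x\mu_{y,y'}\mu_x=\mu_{y\mu_x,y'\mu_x}$ -- an identity entirely between the bilinear maps provided by \ref{itm:J4} -- then linearize in $x$, and only afterwards set $y'=y$, use $\mu_{t,t}=\mu_t\cdot 2$ and unique $2$-divisibility. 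You need this (or an equivalent device) to make part \ref{QJ7} rigorous; also note that the cross terms $\mu_x\mu_y\mu_z+\mu_z\mu_y\mu_x$ sit in the $\ell^2$ coefficient, not the $\ell^1$ coefficient as you state.

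Two smaller points. For \ref{QJ26} the correct specialization of \ref{QJ9} is $x=e$, $w=z\mu_e$, and the subsequent simplification is not a one-line collapse: besides \autoref{lem:JMS_basic_identities} it also uses \ref{QJ20} (applied to $y$) before the unwanted terms cancel, so the vague ``$w\mapsto e\mu_?$'' in your sketch needs to be carried out in full. For \ref{QJ20}, the identity that evaluates the term $\mu_{e\mu_{x,e}}$ is \autoref{lem:JMS_identities}\ref{itm:xxy} (giving $e\mu_{x,e}=-x\cdot 2$, hence $\mu_{e\mu_{x,e}}=\mu_x\cdot 4$), rather than the references you cite. Parts \ref{QJ9} and \ref{QJ27} are fine as planned and coincide with the paper's argument.
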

\begin{proof}\preenum
	\begin{romenumerate}
		\item We start from the identity $\mu_x\mu_y\mu_x = \mu_{y\mu_x}$, and linearize $y$ to $y\plustil y'\cdottil\ell$.
        Equating the coefficients of $\ell$ on both sides yields
		\[\mu_x\mu_{y,y'}\mu_x = \mu_{y\mu_x,y'\mu_x}\;.\]
		Next, we linearize $x$ to $x+z\cdot\ell$, and equate the coefficients of $\ell^2$ on both sides of the equality; this gives
		\begin{align*}
			&\mu_x\mu_{y,y'}\mu_z+\mu_z\mu_{y,y'}\mu_x+\mu_{x,z}\mu_{y,y'}\mu_{x,z} \\
			&= \mu_{y\mu_x,y'\mu_z}+\mu_{y\mu_z,y'\mu_x}+\mu_{y\mu_{x,z},y'\mu_{x,z}}\;.
		\end{align*}
		We can now set $y'=y$ and use \autoref{lem:JMS_identities}\ref{itm:xyx} to get
		\[\mu_x\mu_y\mu_z\cdot 2+\mu_z\mu_y\mu_x\cdot 2+\mu_{x,z}\mu_y\mu_{x,z}\cdot 2 = \mu_{y\mu_x,y\mu_z}\cdot 2+\mu_{y\mu_{x,z}}\cdot 2\;.\]
		The unique $2$-divisibility now gives us the desired identity.
		\item We set $y=z=e$ in \ref{QJ7} and get
		\[\mu_x\mu_e\mu_e + \mu_e\mu_e\mu_x + \mu_{x,e}\mu_e\mu_{x,e} = \mu_{e\mu_{x,e}} + \mu_{e\mu_x,e\mu_e}\;,\]
		which reduces to
		\[\mu_x\cdot 2 + \mu_{x,e}\mu_e\mu_{x,e} = \mu_{-x\cdot 2} + \mu_{e\mu_x,-e}\]
		and by $\mu_{-x\cdot 2} = \mu_x\cdot 4$ and linearity, we get
		\[\mu_x\cdot 2 + \mu_{x,e}\mu_e\mu_{x,e} = \mu_{x}\cdot 4 - \mu_{e\mu_x,e}\;,\]
		so after rearranging we get the identity we wanted to prove.
		\item Starting from the first equality of \autoref{prop:JMS_JP1_units}\ref{itm:JP1_units}, we interchange the roles of $V^+$ and $V^-$ and rename some variables to get $y\mu_{w\mu_x,x} = x\mutil_{y,w}\mu_x$. Next, we linearize $x$ to $x+z\cdot\ell$; the coefficients of $\ell^1$ give the desired equality.
		\item Set $x=e$ and $w = z\mu_e$ in \ref{QJ9} to get
		\[y\mu_{e,z\mu_e\mu_{e,z}}+y\mu_{z,z} = e\mutil_{y,z\mu_e}\mu_{e,z} + z\mutil_{y,z\mu_e}\mu_e\;.\]
		By \autoref{lem:JMS_basic_identities}\ref{itm:xmuemuxe}, $z\mu_e\mu_{e,z} = -e\mu_z\cdot 2$, so the previous identity becomes
		\begin{equation}
			-y\mu_{e,e\mu_z}\cdot 2+y\mu_z\cdot 2 = e\mutil_{y,z\mu_e}\mu_{e,z} + z\mu_e\mu_{y\mu_e,z}\;. \label{eq:QJ26_stepi}
		\end{equation}
		Next, we take identity \ref{QJ20}, replace $x$ by $z$, and apply it to $y$. This gives
		\[y\mu_{z,e}\mu_e\mu_{z,e}+y\mu_{e\mu_z,e} = y\mu_z\cdot 2\;,\]
		which we can combine with \eqref{eq:QJ26_stepi} to
		\[-y\mu_{e,e\mu_z}+y\mu_{z,e}\mu_e\mu_{z,e} = e\mutil_{y,z\mu_e}\mu_{e,z} + z\mu_e\mu_{y\mu_e,z}\;.\]
		By \autoref{lem:JMS_basic_identities}\ref{itm:muxe}, we have
		\[y\mu_{z,e} = e\mutil_{y,z\mu_e}\mu_e\;,\]
		so
		\[y\mu_{z,e}\mu_e\mu_{z,e} = e\mutil_{y,z\mu_e}\mu_{e,z}\;.\]
		From this, we get
		\[-y\mu_{e,e\mu_z} = z\mu_e\mu_{y\mu_e,z}\;.\]
		Again by \autoref{lem:JMS_basic_identities}\ref{itm:muxe}, we have
		\[-y\mu_{e,e\mu_z} = -(-e\mu_{y\mu_e,e\mu_z}) = e\mu_{y\mu_e,e\mu_z}\;.\]
		Combining these last two identities, replacing $z$ by $x$ and $y$ by $z\mu_e$ gives the desired identity.
		\item We linearize $x$ to $x+v\cdot\ell$ in \ref{QJ26}, take the coefficients in $\ell^1$ and interchange $z$ and $v$ to get the desired identity.\qedhere
	\end{romenumerate}
\end{proof}

We are now ready to prove \ref{axiom:JP2} for units. Our starting point is an identity which is symmetric in two unknowns on one side of the equality sign, and hence must also be symmetric in those unknowns on the other side.
For clarity in the notation, we will occasionally replace $\mu_{\cdot,\cdot}$ by $\mu(\cdot,\cdot)$.

\begin{proposition}\label{prop:JMS_JP2_units}
	In a local Moufang set where \hyperref[itm:J1]{\normalfont{(J1-4)}} holds, we have the following identities:
	\begin{romenumerate}
		\item $e\mu(z\mu_e\mu_x,z) = e\mu(x\mu_e\mu_z,x)$ \quad for all units $x,z\in V^+$;\label{QJ29}
		\item for all units $x,z\in V^+$ and any $v\in V^+$\label{QJ34}
		\begin{align*}
			e\mu(z\mu_e\mu_{x,v},z) = e\mu(x\mu_e\mu_z,v)+e\mu(v\mu_e\mu_z,x)\;;
		\end{align*}
		\item for all units $x,z,v\in V^+$ \label{QJ33}
		\begin{align*}
			&e\mu(x\mu_e\mu_z,x\mu_e\mu_{v,e}) + e\mu(x\mu_e\mu_{v,e}\mu_e\mu_z,x) \\
			&= e\mu_{z,v}\mu_e\mu_x\mu_e\mu_{z,e} + e\mu(z\mu_e\mu_x,v)\mu_e\mu_{z,e} \;;
		\end{align*}
		\item for all units $x,z,v\in V^+$\label{QJ32}
		\begin{align*}
			&v\mu_e\mu(z\mu_e\mu_x,z) - v\mu_e\mu(x\mu_e\mu_z,x) \\
			&= e\mu(x\mu_e\mu_z,x\mu_e\mu_{v,e}) - e\mu(z\mu_e\mu_x,v)\mu_e\mu_{z,e} \;;
		\end{align*}
		\item $x\mu_y\mu_{x,z} = y\mu_{y\mu_x,z}$ \quad for all units $x,z\in V^+$ and all units $y\in V^-$.\label{itm:JP2_units}
	\end{romenumerate}
\end{proposition}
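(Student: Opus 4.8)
The plan is to deduce the final identity \ref{itm:JP2_units}, namely $x\mu_y\mu_{x,z} = y\mu_{y\mu_x,z}$ for units $x,z\in V^+$ and units $y\in V^-$, from the chain of identities \ref{QJ29}--\ref{QJ32} built up earlier in the proposition, together with the unit-version of \ref{axiom:JP1} already established in \autoref{prop:JMS_JP1_units}. The overall strategy mirrors Jacobson's deduction of the Triple Shift Formula for Jordan algebras from the other axioms, transported to the language of $\mu$-maps: we use the fact that the starting identities are symmetric in two of the three unit variables, so that the \emph{other} side of the equation must also be symmetric, and this forced symmetry, after rearrangement and cancellation using the $2$- and $3$-divisibility from \ref{itm:J3}, collapses to \ref{itm:JP2_units}.

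First I would establish \ref{QJ29} by exploiting the symmetry of the expression $y\mu_{x,z}\mu_y$ in $x$ and $z$ from \autoref{prop:JMS_JP1_units}\ref{itm:JP1_units}: setting $y=z\mu_e$ (or an appropriate substitution) in the chain of identities of \autoref{lem:JMS_QJ-identities}, and comparing with the version where the roles of $x$ and $z$ are swapped, forces the equality $e\mu(z\mu_e\mu_x,z)=e\mu(x\mu_e\mu_z,x)$. Then \ref{QJ34} follows from \ref{QJ29} by the linearization technique described in the remark after \autoref{prop:JMS_JP1_units}: replace $x$ by $x+v\cdot\ell$, expand using bilinearity of $\mu_{\cdot,\cdot}$, the expansion $\mu_{x+v\cdot\ell} = \mu_{x,v}\cdot\ell+\mu_x+\mu_v\cdot\ell^2$ and $\mu_{v\cdot\ell}=\mu_v\cdot\ell^2$, and equate coefficients of $\ell^1$; the constant and $\ell^4$ coefficients cancel automatically, and using three admissible values of $\ell$ (available since $v,v\cdot 2,v\cdot 3$ can be made units) we extract the $\ell^1$-component. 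Identities \ref{QJ33} and \ref{QJ32} come from further linearizations (linearizing the remaining unit variable $z$ or $v$) and from combining \ref{QJ34} with \ref{QJ20} and \ref{QJ26} of \autoref{lem:JMS_QJ-identities}; these are bookkeeping-heavy but routine once the symmetry principle is in place.

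The final step is to assemble \ref{itm:JP2_units} from \ref{QJ32}. Here I would note that the left-hand side of \ref{QJ32} is visibly antisymmetric under $x\leftrightarrow z$ (it is a difference of the form $F(x,z)-F(z,x)$), while the right-hand side, after applying \autoref{lem:JMS_basic_identities} and the established part of \ref{axiom:JP1} to rewrite $e\mu_{z,v}\mu_e\mu_x\mu_e\mu_{z,e}$ and the $e\mu_e$-conjugates as genuine two-variable expressions, can be massaged into a form whose antisymmetrization yields exactly $x\mu_y\mu_{x,z} - y\mu_{y\mu_x,z}$ evaluated against a suitable invertible element. Setting $v=e$ in the appropriate places, using $\mu_e^2=\id$, \autoref{prop:specialmu}, and the uniqueness of the element of $U_\infty$ mapping $0$ to a given point (so that an identity of the form $0\cdot g = 0\cdot g'$ with $g,g'\in U_\infty$ forces $g=g'$) lets one strip off the outer $e\mu(\cdot,\cdot)$ and conclude. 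Because \ref{axiom:JP1} for units is already known and $V$ has no $2$-torsion, \autoref{prop:JPsufficientaxioms}\ref{itm:sufficient_JP} then applies once \ref{axiom:JP2} for units is extended to all arguments by bilinearity in the next subsection.

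The main obstacle I anticipate is \emph{not} any single algebraic trick but the sheer combinatorial weight of tracking which $\mu$-maps live on $V^+$ versus $V^-$ and which substitutions keep all the intermediate elements units (so that the $\mu_{\cdot,\cdot}$-expansions and the identity $\mu_{x\cdot\ell}=\mu_x\cdot\ell^2$ remain valid). In particular, verifying that the linearization technique legitimately applies to each of \ref{QJ29}, \ref{QJ34}, \ref{QJ33} — i.e. that there really are three values of $\ell\in\{1,2,3,4\}$ making the relevant sum a unit, and that the $\ell^0$ and $\ell^4$ coefficients cancel by the base identity — is where the care is needed; the paper's own remark warns that this does not work for every identity, so each application must be checked rather than invoked blindly.
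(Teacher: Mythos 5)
Your outline for \ref{QJ29} and \ref{QJ34} matches the paper's route: \ref{QJ29} is indeed forced by symmetry considerations coming from \autoref{prop:JMS_JP1_units}\ref{itm:JP1_units} (after rewriting the asymmetric-looking terms via \autoref{lem:JMS_QJ-identities}\ref{QJ20} and \ref{QJ26} so that $e\mu(z\mu_e\mu_x,z)$ equals a visibly symmetric expression), and \ref{QJ34} is exactly the $\ell^1$-linearization of \ref{QJ29} in $x$. But your description of how \ref{QJ33} and \ref{QJ32} arise is not right: neither is a further linearization of \ref{QJ34} combined with \ref{QJ20}/\ref{QJ26}. In the paper, \ref{QJ33} is obtained by specializing \autoref{lem:JMS_QJ-identities}\ref{QJ9} (with $z=e$, $v=v\mu_e$, $y=z\mu_e$), simplifying via \autoref{lem:JMS_basic_identities}, and then invoking \ref{QJ34} twice; \ref{QJ32} comes from antisymmetrizing \autoref{lem:JMS_QJ-identities}\ref{QJ7} specialized at $z=e$, $y=z\mu_e$, applying the result to $v\mu_e$, and using \ref{QJ33}. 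These are not ``routine bookkeeping'' consequences of the symmetry principle; each needs a specific earlier identity from \autoref{lem:JMS_QJ-identities} that your plan never brings into play.

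The genuine gap is in your final step, the passage from \ref{QJ32} to \ref{itm:JP2_units}. You propose to antisymmetrize \ref{QJ32}, set $v=e$, and then ``strip off the outer $e\mu(\cdot,\cdot)$'' using the uniqueness of the element of $U_\infty$ mapping $0$ to a given point. That uniqueness only says that two points of $X\setminus\class{\infty}$ are equal iff the corresponding $\alpha$'s are equal; it gives no cancellation law for the bilinear operators $w\mapsto e\mu(a,w)$, which are not known (and not needed) to be injective, so there is no legitimate way to remove the wrapper by this argument, and setting $v=e$ would in any case only yield a special case of the target identity. What the paper actually does is eliminate the wrapper by substitution rather than cancellation: it uses \autoref{lem:JMS_QJ-identities}\ref{QJ27}, i.e.\ $e\mu_{v,e\mu_{x,z}} = x\mu_e\mu_{v,z}+z\mu_e\mu_{v,x}$, twice with carefully chosen arguments (first with $z=v$ and $v=x\mu_e\mu_z$, then with $z\mu_e\mu_x$, $z$, $v$ in place of $z$, $v$, $x$), combines the two outputs with \ref{QJ32} to arrive at $z\mu_e\mu_x\mu_e\mu_{z,v} = x\mu_e\mu(x\mu_e\mu_z,v)$, and only then renames $y=x\mu_e$ to obtain $x\mu_y\mu_{x,z} = y\mu_{y\mu_x,z}$. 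Without identifying \ref{QJ27} and these substitutions (or an equivalent device), the last step of your proof does not go through.
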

\begin{proof}\preenum
	\begin{romenumerate}
		\item We start with \autoref{prop:JMS_JP1_units}\ref{itm:JP1_units}, where we interchange the roles of $V^+$ and $V^-$, set $x=e$ and rename the other variables variables:
		\[e\mu_{y\mu_z,z} = z\mutil_{e,y}\mu_z\;.\]
		Linearizing $z$ to $z+x\cdot\ell$ and taking the coefficients of $\ell^2$ gives us
		\[e\mu_{y\mu_{x,z},x}+e\mu_{y\mu_x,z} = x\mutil_{e,y}\mu_{x,z}+z\mutil_{e,y}\mu_x\;.\]
		Substituting $z\mu_e$ for $y$, we get
		\[e\mu(z\mu_e\mu_{x,z},x)+e\mu(z\mu_e\mu_x,z) = x\mutil_{e,z\mu_e}\mu_{x,z}+z\mutil_{e,z\mu_e}\mu_x\;.\]
		We want to show that $e\mu(z\mu_e\mu_x,z)$ is symmetric in $x$ and $z$, i.e.\ we need to show that the remaining terms are symmetric in $x$ and $z$. By \autoref{prop:JMS_JP1_units}\ref{itm:JP1_units},
		\[x\mutil_{e,z\mu_e}\mu_{x,z} = e\mu_{x,z}\mu_e\mu_{x,z}\;,\]
		so this term is symmetric. Hence it remains to show that $z\mutil_{e,z\mu_e}\mu_x-e\mu(z\mu_e\mu_{x,z},x)$ is symmetric.
		We have
		\begin{align*}
			&z\mutil_{e,z\mu_e}\mu_x-e\mu(z\mu_e\mu_{x,z},x) \\
			&= e\mu_{z,z}\mu_e\mu_x - e\mu(e\mu_{x,e\mu_z},x) \\
			&= e\mu_z\mu_e\mu_x\cdot 2 - e\mu(e\mu_{x,e\mu_z},x) \\
		\intertext{by \autoref{prop:JMS_JP1_units}\ref{itm:JP1_units} and \autoref{lem:JMS_QJ-identities}\ref{QJ26}. By \autoref{lem:JMS_QJ-identities}\ref{QJ20} applied to $e\mu_z\mu_e$, this is}
			&= e\mu_z\mu_e\mu_{x,e}\mu_e\mu_{x,e} + e\mu_z\mu_e\mu_{e\mu_x,e} - e\mu(e\mu_{x,e\mu_z},x) \\
			&= e\mu_z\mu_e\mu_{x,e}\mu_e\mu_{x,e} - e\mu_{e\mu_z,e\mu_x} - e\mu(e\mu_{x,e\mu_z},x)\;,
		\end{align*}
		by \autoref{lem:JMS_basic_identities}\ref{itm:muxe}. The term $e\mu_{e\mu_z,e\mu_x}$ is again symmetric in $x$ and $z$, so it is sufficient to prove that the remaining difference is symmetric. We will, in fact, show that this expression is always $0$ by using \autoref{lem:JMS_basic_identities}\ref{itm:muxe} twice:
		\begin{align*}
			e\mu_z\mu_e\mu_{x,e}\mu_e\mu_{x,e} &= -e\mu(e\mu_z\mu_e\mu_{x,e},x) \\
				&= -e\mu(-e\mu_{e\mu_z,x},x) = e\mu(e\mu_{x,e\mu_z},x)\;.
		\end{align*}
		Putting everything together, we get
		\[e\mu(z\mu_e\mu_x,z) = e\mu_{x,z}\mu_e\mu_{x,z} - e\mu_{e\mu_z,e\mu_x}\;,\]
		which is symmetric in $x$ and $z$, hence we must have 
		\[e\mu(z\mu_e\mu_x,z) = e\mu(x\mu_e\mu_z,x)\;.\]
		\item Linearize $x$ to $x+v\cdot\ell$ in \ref{QJ29} and take the coefficients of $\ell^1$. This shows the desired identity for any unit $v\in V^+$. If $v$ is not a unit, add the identity for $v-e$ and $e$ (both units) and use the linearity in $v$ to get the identity for $v$.
		\item Start with \autoref{lem:JMS_QJ-identities}\ref{QJ9} and set $z=e$, $v = v\mu_e$ and $y = z\mu_e$:
		\begin{align*}
			&z\mu_e\mu(x,v\mu_e\mu_{x,e})+z\mu_e\mu(e,v\mu_e\mu_x) \\
			&= x\mutil_{z\mu_e,v\mu_e}\mu_{x,e} + e\mutil_{z\mu_e,v\mu_e}\mu_x\;.
		\end{align*}
		Using \autoref{lem:JMS_basic_identities}\ref{itm:muxe} twice, we get
		\begin{align*}
			&z\mu_e\mu(x,v\mu_e\mu_{x,e}) \\
			&= x\mutil_{z\mu_e,v\mu_e}\mu_{x,e} + e\mutil_{z\mu_e,v\mu_e}\mu_x - z\mu_e\mu(e,v\mu_e\mu_x) \\
			&= x\mu_e\mu_{z,v}\mu_e\mu_{x,e} - e\mu_{z,v}\mu_e\mu_x - z\mu_e\mu(v\mu_e\mu_x,e) \\
			&= -e\mu(x\mu_e\mu_{z,v},x) - e\mu_{z,v}\mu_e\mu_x + e\mu(z,v\mu_e\mu_x)\;,
		\intertext{and by \ref{QJ34} with $x$ and $z$ interchanged, this becomes}
			&= - e\mu_{z,v}\mu_e\mu_x - e\mu(z\mu_e\mu_x,v)\;.
		\end{align*}
		Next, we apply $\mu_e\mu_{z,e}$ to this identity, and we use $v\mu_e\mu_{x,e} = x\mu_e\mu_{v,e}$, a consequence of \autoref{prop:JMS_JP1_units}\ref{itm:JP1_units}:
		\begin{align*}
		&e\mu_{z,v}\mu_e\mu_x\mu_e\mu_{z,e} + e\mu(z\mu_e\mu_x,v)\mu_e\mu_{z,e} \\
		&= -z\mu_e\mu(x,x\mu_e\mu_{v,e})\mu_e\mu_{z,e} \\
		&= e\mu(z,z\mu_e\mu(x,x\mu_e\mu_{v,e}))\;.
		\end{align*}
		Finally, take \ref{QJ34} and set $v = x\mu_e\mu_{v,e}$ (this need not be a unit, but we have shown this identity for non-units as well). This gives
		\begin{align*}
			&e\mu(z\mu_e\mu(x,x\mu_e\mu_{v,e}),z) \\
			&= e\mu(x\mu_e\mu_z,x\mu_e\mu_{v,e}) + e\mu(x\mu_e\mu_{v,e}\mu_e\mu_z,x)\;,
		\end{align*}
		hence
		\begin{align*}
			&e\mu(x\mu_e\mu_z,x\mu_e\mu_{v,e}) + e\mu(x\mu_e\mu_{v,e}\mu_e\mu_z,x) \\
			&= e\mu_{z,v}\mu_e\mu_x\mu_e\mu_{z,e} + e\mu(z\mu_e\mu_x,v)\mu_e\mu_{z,e}\;.
		\end{align*}
		\item Set $z = e$ and $y = z\mu_e$ in \autoref{lem:JMS_QJ-identities}\ref{QJ7}. This gives
		\begin{align*}
			&\mu_e\mu_x\mu_e\mu_z\mu_e+\mu_e\mu_z\mu_e\mu_x\mu_e+\mu_{x,e}\mu_e\mu_z\mu_e\mu_{x,e} \\
			&= \mu(z\mu_e\mu_{x,e}) + \mu(z\mu_e\mu_x,z)\;.
		\end{align*}
		We take the difference of this identity with the same identity, but interchanging $x$ and $z$. Using $z\mu_e\mu_{x,e} = x\mu_e\mu_{z,e}$, we get
		\[\mu_{x,e}\mu_e\mu_z\mu_e\mu_{x,e}-\mu_{z,e}\mu_e\mu_x\mu_e\mu_{z,e} = \mu(z\mu_e\mu_x,z) - \mu(x\mu_e\mu_z,x)\;,\]
		which we apply to $v\mu_e$:
		\begin{align*}
			&v\mu_e\mu_{x,e}\mu_e\mu_z\mu_e\mu_{x,e}-v\mu_e\mu_{z,e}\mu_e\mu_x\mu_e\mu_{z,e}  \\
			&= v\mu_e\mu(z\mu_e\mu_x,z) - v\mu_e\mu(x\mu_e\mu_z,x)\;.
		\end{align*}
		We repeatedly use \autoref{lem:JMS_basic_identities}\ref{itm:muxe} to get
		\begin{align*}
			&v\mu_e\mu(z\mu_e\mu_x,z) - v\mu_e\mu(x\mu_e\mu_z,x) \\
			&= v\mu_e\mu_{x,e}\mu_e\mu_z\mu_e\mu_{x,e} - v\mu_e\mu_{z,e}\mu_e\mu_x\mu_e\mu_{z,e} \\
			&= x\mu_e\mu_{v,e}\mu_e\mu_z\mu_e\mu_{x,e} + e\mu_{v,z}\mu_e\mu_x\mu_e\mu_{z,e} \\
			&= -e\mu(x,x\mu_e\mu_{v,e}\mu_e\mu_z) + e\mu_{v,z}\mu_e\mu_x\mu_e\mu_{z,e} \\
			&= e\mu(x\mu_e\mu_z,x\mu_e\mu_{v,e}) - e\mu(z\mu_e\mu_x,v)\mu_e\mu_{z,e} \\
			&= -e\mu(x\mu_e\mu_z,e\mu_{x,v}) + e\mu(z,e\mu(z\mu_e\mu_x,v))
		\end{align*}
		in which the second last step follows from \ref{QJ33}.
		\item Set $z = v$ and $v = x\mu_e\mu_z$ in \autoref{lem:JMS_QJ-identities}\ref{QJ27} to get
		\[e\mu(x\mu_e\mu_z,e\mu_{x,v}) - v\mu_e\mu(x\mu_e\mu_z,x) = x\mu_e\mu(x\mu_e\mu_z,v)\;.\]
		Combining this with \ref{QJ32} gives
		\[x\mu_e\mu(x\mu_e\mu_z,v) = e\mu(z,e\mu(z\mu_e\mu_x,v)) - v\mu_e\mu(z\mu_e\mu_x,z)\;.\]
		Next, we substitute $z\mu_e\mu_x$ for $z$, $z$ for $v$ and $v$ for $x$ in identity \ref{QJ27} of \autoref{lem:JMS_QJ-identities} to get
		\[e\mu(z,e\mu(z\mu_e\mu_x,v)) - v\mu_e\mu(z\mu_e\mu_x,z) = z\mu_e\mu_x\mu_e\mu_{z,v}\;.\]
		Combining these last two identities gives us
		\[z\mu_e\mu_x\mu_e\mu_{z,v} = x\mu_e\mu(x\mu_e\mu_z,v)\;.\]
		Substituting $x\mu_e$ for $y$, $x$ for $z$ and $z$ for $v$ turns this into
		\[x\mu_y\mu_{x,z} = y\mu_{y\mu_x,z}\;,\]
		which is the identity we wanted.\qedhere
	\end{romenumerate}
\end{proof}

\subsection{The construction gives a Jordan pair}

We can now prove \ref{axiom:JP1} and \ref{axiom:JP2} by linearizing their counterparts for units. We will henceforth use the notation $\{\cdot\cdot\cdot\}$ from Jordan pairs to denote the triple product, i.e.\ $\{x\,y\,z\}:= y\mu_{x,z}$ and $\{y\,x\,w\}:= x\mutil_{y,w}$. We remind the reader that these triple products are linear in all their arguments.

\begin{proposition}\label{prop:JP1}
	In a local Moufang set where \hyperref[itm:J1]{\normalfont{(J1-4)}} holds, we have the following identities:
	\begin{romenumerate}
		\item for all units $y,w\in V^-$ and all units $x,z\in V^+$\label{itm:JP1almostlinear}
			\begin{align*}
				\{x\,y\,w\mu_{x,z}\} + \{z\,y\,w\mu_x\} &= \{y\,x\,w\}\mu_{x,z} + \{y\,z\,w\}\mu_x\;;
			\end{align*}
		\item for all $y,w\in V^-$ and all $x,z,v\in V^+$ \label{itm:JP1linear}
			\begin{align*}
				&\{x\,y\,w\mu_{v,z}\} + \{v\,y\,w\mu_{x,z}\} + \{z\,y\,w\mu_{x,v}\} \\
				&= \{y\,x\,w\}\mu_{v,z} + \{y\,v\,w\}\mu_{x,z} + \{y\,z\,w\}\mu_{x,v}
			\end{align*}
		\item $\{x\,y\,w\mu_{x,x}\} = \{y\,x\,w\}\mu_{x,x}$ \quad for all $y,w\in V^-$ and $x\in V^+$.
	\end{romenumerate}
\end{proposition}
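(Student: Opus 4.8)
The statement to prove is item (iii) of Proposition~\ref{prop:JP1}: for all $y,w\in V^-$ and all $x\in V^+$, we have $\{x\,y\,w\mu_{x,x}\} = \{y\,x\,w\}\mu_{x,x}$. This is the diagonal specialization of the polarized identity in item (ii). So the plan is simply to specialize the already-proven multilinear identity (ii) by setting $v = z = x$.

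\textbf{First step.} I would start from item~(ii), which holds for \emph{all} $x,z,v\in V^+$ and all $y,w\in V^-$ (with no unit hypotheses), namely
\[
\{x\,y\,w\mu_{v,z}\} + \{v\,y\,w\mu_{x,z}\} + \{z\,y\,w\mu_{x,v}\}
= \{y\,x\,w\}\mu_{v,z} + \{y\,v\,w\}\mu_{x,z} + \{y\,z\,w\}\mu_{x,v}\;.
\]
Now substitute $v = x$ and $z = x$. Each of the three terms on the left-hand side becomes $\{x\,y\,w\mu_{x,x}\}$, so the left-hand side equals $3\{x\,y\,w\mu_{x,x}\}$; similarly the right-hand side equals $3\{y\,x\,w\}\mu_{x,x}$. (Here I use that $\mu_{\cdot,\cdot}$ is bilinear, so $\mu_{v,z}$, $\mu_{x,z}$, $\mu_{x,v}$ all become $\mu_{x,x}$ under the substitution, and that the triple product is linear in each slot, so the substitution is legitimate termwise.)

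\textbf{Second step.} From $3\{x\,y\,w\mu_{x,x}\} = 3\{y\,x\,w\}\mu_{x,x}$ I would conclude $\{x\,y\,w\mu_{x,x}\} = \{y\,x\,w\}\mu_{x,x}$ by dividing by $3$. This is legitimate: by assumption~\ref{itm:J3} every unit $x$ satisfies that $x\cdot 3$ is a unit, and (as already used repeatedly, e.g.\ in \autoref{prop:ndivglobal}) this forces $V^+$ to be uniquely $3$-divisible; by symmetry $V^-$ is uniquely $3$-divisible as well, so multiplication by $3$ is injective on both $\Z$-modules and the cancellation is valid. Since the target identity is linear in $w$ and the triple products land in $V^+$ (for $\{x\,y\,w\mu_{x,x}\}$, with $w\mu_{x,x}\in V^+$ and then $\{x\,y\,(\cdot)\}$ giving an element of $V^-$ — one should double-check the variances, but the two sides live in the same module by construction of the triple products), unique divisibility applies directly.

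\textbf{Main obstacle.} There is essentially no obstacle here: item~(iii) is a formal consequence of the multilinear item~(ii) plus unique $3$-divisibility. The only thing that requires a moment of care is bookkeeping — confirming that after the substitution all three summands on each side genuinely coincide (which they do, because the three summands are obtained from one another by permuting $x,v,z$, and after setting all three equal the permutations act trivially) — and confirming that the ``divide by $3$'' step is justified by \ref{itm:J3}. Both are routine, so the proof is short.

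\begin{proof}
In \ref{itm:JP1linear}, substitute $v = x$ and $z = x$. Since $\mu_{\cdot,\cdot}$ is bilinear, we have $\mu_{v,z} = \mu_{x,z} = \mu_{x,v} = \mu_{x,x}$ after this substitution, and since the triple products are linear in each argument, each of the three summands on the left-hand side equals $\{x\,y\,w\mu_{x,x}\}$ while each of the three summands on the right-hand side equals $\{y\,x\,w\}\mu_{x,x}$. Hence
\[
3\{x\,y\,w\mu_{x,x}\} = 3\{y\,x\,w\}\mu_{x,x}\;.
\]
By \ref{itm:J3}, for every unit $x$ the element $x\cdot 3$ is again a unit; as in the proof of \autoref{prop:ndivglobal} this implies that $U_\infty$, and hence $(V^+,+)$, is uniquely $3$-divisible, and by the symmetry between $V^+$ and $V^-$ the same holds for $(V^-,+)$. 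In particular multiplication by $3$ is injective, so we may cancel the factor $3$ to obtain
\[
\{x\,y\,w\mu_{x,x}\} = \{y\,x\,w\}\mu_{x,x}\;,
\]
which is the desired identity.
\end{proof}
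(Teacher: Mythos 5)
Your proof of item~(iii) is correct and is essentially the paper's own argument: the paper likewise sets $x=z=v$ in (ii), obtains $\{x\,y\,w\mu_{x,x}\}\cdot 3 = \{y\,x\,w\}\mu_{x,x}\cdot 3$, and cancels by unique $3$-divisibility (which, as you note, follows from \ref{itm:J3} via \autoref{prop:ndivglobal} and its mirror image for $V^-$). Your bookkeeping about the three summands coinciding and both sides lying in $V^+$ is also fine.

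However, the statement you were asked to prove is the whole of \autoref{prop:JP1}, i.e.\ items (i)--(iii), and your argument takes item (ii) as given. That is where the actual work of the proposition lies, and it is not addressed: the paper proves (i) by starting from the unit identity $\{x\,y\,w\mu_x\} = \{y\,x\,w\}\mu_x$ of \autoref{prop:JMS_JP1_units}\ref{itm:JP1_units}, linearizing $x$ to $x+z\cdot\ell$ and comparing coefficients of $\ell^1$ (which requires choosing enough values of $\ell$ for which $x+z\cdot\ell$ remains a unit, available thanks to \ref{itm:J3}); it then proves (ii) by a further linearization $x\mapsto x+v\cdot\ell$ of (i), and finally removes all unit hypotheses by writing any non-unit argument $X$ as $(X-e)+e$ for a unit $e$ and using the linearity of the triple products and of $\mu_{\cdot,\cdot}$, $\mutil_{\cdot,\cdot}$ from \ref{itm:J4}. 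Without (i) and (ii) — in particular without the extension of (ii) from units to arbitrary elements, which is exactly what makes the diagonal substitution $v=z=x$ legitimate even when $x$ is not a unit — your derivation of (iii) rests on an unproven premise, so as a proof of the stated proposition it is incomplete.
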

\begin{proof}\preenum
	\begin{romenumerate}
		\item After renaming, $\{x\,y\,w\mu_x\} = \{y\,x\,w\}\mu_x$ follows from \autoref{prop:JMS_JP1_units}\ref{itm:JP1_units}. We linearize $x$ to $x+z\cdot\ell$ in this identity, and the equality of the coefficients of $\ell^1$ is then the desired identity.
		\item We linearize $x$ to $x+v\cdot\ell$ in \ref{itm:JP1almostlinear} and take the coefficients of $\ell^1$ to get the identity we want for units, i.e.
		\begin{align*}
			&\{x\,y\,w\mu_{v,z}\} + \{v\,y\,w\mu_{x,z}\} + \{z\,y\,w\mu_{x,v}\} \\
			&= \{y\,x\,w\}\mu_{v,z} + \{y\,v\,w\}\mu_{x,z} + \{y\,z\,w\}\mu_{x,v}
		\end{align*}
		holds for all units $x,z,v\in V^-$ and all units $y,w\in V^+$. We now claim that the variables do not need to be units. If any of the variables $X$ is not a unit, take any unit $e$ and write $X = (X-e)+e$ (or $X = (X\mintil e)\plustil e$ if the variable is in $V^-$). The required identity then follows, using the linearity in $X$, and the fact that the identity holds for the units $X-e$ and $e$. Hence the identity holds for any $x,z,v\in V^-$ and $y,w\in V^+$.
		\item We take $x=z=v$ in \ref{itm:JP1linear}, and hence get
		\[\{x\,y\,w\mu_{x,x}\}\cdot 3 = \{y\,x\,w\}\mu_{x,x}\cdot 3\]
		for any $x\in V^-$ and $y,w\in V^+$. By the unique $3$-divisibility, we get the desired identity.\qedhere
	\end{romenumerate}
\end{proof}

\begin{proposition}\label{prop:JP2}
	In a local Moufang set where \hyperref[itm:J1]{\normalfont{(J1-4)}} holds, we have the following identities:
	\begin{romenumerate}
		\item for all units $x,z,v\in V^+$ and any unit $y\in V^-$
			\begin{align*}
				\{v\,x\mu_y\,z\} + \{x\,v\mu_y\,z\} = \{y\mu_{x,v}\,y\,z\}\;;
			\end{align*}
		\item for all $x,z,v\in V^+$ and all $y,w\in V^-$ \label{itm:JP2linear}
			\begin{align*}
				&\{v\,x\mu_{y,w}\,z\} + \{x\,v\mu_{y,w}\,z\} \\
				&= \{y\mu_{x,v}\,w\,z\} + \{w\mu_{x,v}\,y\,z\}\;;
			\end{align*}
		\item $\{x\,x\mu_{y,y}\,z\} = \{y\mu_{x,x}\,y\,z\}$ \quad for all $x,z\in V^+$ and $y\in V^+$.
	\end{romenumerate}
\end{proposition}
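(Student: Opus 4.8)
The plan is to follow the pattern of the proof of \autoref{prop:JP1}: linearize the ``unit version'' of \ref{axiom:JP2} twice and then specialize. The only input will be \autoref{prop:JMS_JP2_units}\ref{itm:JP2_units}, which in triple-product notation reads $\{x\,x\mu_y\,z\} = \{y\mu_x\,y\,z\}$ for units $x,z\in V^+$ and $y\in V^-$ --- that is, precisely \ref{axiom:JP2} in the case where $x$, $z$ and $y$ are all units. From this, item~(i) will be the partial linearization in $x$ (keeping $z$, $y$ as units), item~(ii) the further linearization in $y$ together with a passage from units to arbitrary elements, and item~(iii) the diagonal specialization $v=x$, $w=y$ combined with unique $2$-divisibility.

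Concretely, for item~(i) I would replace $x$ by $x+v\cdot\ell$ in $\{x\,x\mu_y\,z\} = \{y\mu_x\,y\,z\}$ and compare coefficients of $\ell$, using the linearization technique described after \autoref{prop:JMS_JP2_units}. On the left the first slot depends linearly on $x$ and the middle argument $x\mu_y$ depends linearly on $x$, so the left side is a polynomial of degree $2$ in $\ell$; on the right one expands $y\mu_{x+v\cdot\ell} = y\mu_x\plustil y\mu_{x,v}\cdottil\ell\plustil y\mu_v\cdottil\ell^2$, so the right side is also of degree $2$. The coefficients of $\ell^0$ and $\ell^2$ recover the base identity for $x$ and for $v$ respectively, and the coefficient of $\ell^1$ is $\{v\,x\mu_y\,z\}+\{x\,v\mu_y\,z\} = \{y\mu_{x,v}\,y\,z\}$, which is item~(i); here \ref{itm:J3} guarantees enough values of $\ell$ among $\{1,2,3\}$ for which $x+v\cdot\ell$ is a unit (adding $v\cdot\ell$ to a non-unit would give a unit, and $v$, $v\cdot2$, $v\cdot3$ are units), so the comparison is legitimate. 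For item~(ii) I would then linearize item~(i) in $y$, replacing $y$ by $y+w\cdot\ell$, expanding $x\mu_{y+w\cdot\ell}$ and $v\mu_{y+w\cdot\ell}$ via the bilinearization of $\mu$ and using trilinearity of the triple product in the remaining (linear) slots; the $\ell^1$-coefficient gives the identity for units, and then --- exactly as in the proof of \autoref{prop:JP1}\ref{itm:JP1linear} --- writing any non-unit variable $X$ as $(X-e)+e$ (resp.\ $(X\mintil e)\plustil e$) for a fixed unit $e$ and using bilinearity removes the unit hypotheses on all of $x,z,v,y,w$. Finally, for item~(iii), setting $v=x$ and $w=y$ in item~(ii) yields $2\{x\,x\mu_{y,y}\,z\} = 2\{y\mu_{x,x}\,y\,z\}$, and unique $2$-divisibility of the root groups (\autoref{prop:ndivglobal}, available under \hyperref[itm:J1]{\normalfont{(J1--3)}} and the abelian hypothesis) cancels the $2$.

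The individual computations are routine bilinear bookkeeping, so the only real obstacle is keeping the linearization consistent: one must carefully distinguish that the map $x\mapsto x\mu_y$ and the triple-product slots are \emph{linear}, while $x\mapsto\mu_x$ (as a subscript) is \emph{quadratic}, so that after substituting $x+v\cdot\ell$ both sides of the base identity become polynomials of degree $2$ in $\ell$ whose $\ell^0$ and $\ell^2$ coefficients must first be checked to be the base identities before one can legitimately read off the $\ell^1$ part --- and likewise for the second substitution in $y$. Once (i)--(iii) are established, \ref{axiom:JP2} follows from item~(iii) (since the quadratic maps of the Jordan pair being constructed are half the bilinear $\mu$-maps, and unique $2$-divisibility applies), which together with \autoref{prop:JP1} and \autoref{prop:JPsufficientaxioms}\ref{itm:sufficient_JP} --- there is no $2$-torsion by \hyperref[itm:J1]{\normalfont{(J1--3)}} --- completes the verification that \autoref{constr:Jpair} produces a Jordan pair.
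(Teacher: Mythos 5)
Your proposal is correct and follows essentially the same route as the paper: linearize the unit case $\{x\,x\mu_y\,z\}=\{y\mu_x\,y\,z\}$ first in $x$ (coefficient of $\ell^1$), then in $y$ (where the paper writes the substitution as $y\plustil w\cdottil\ell$ since these live in $V^-$), remove the unit hypotheses by writing non-units as $(X-e)+e$, and finally set $v=x$, $w=y$ and cancel the factor $2$ by unique $2$-divisibility. Your extra bookkeeping about the degrees in $\ell$ and the available values of $\ell$ is exactly the justification the paper's general linearization remark provides.
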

\begin{proof}\preenum
	\begin{romenumerate}
		\item Using the definition of the triple product, we can rewrite \autoref{prop:JMS_JP2_units}\ref{itm:JP2_units} as $\{x\,x\mu_y\,z\} = \{y\mu_x\,y\,z\}$ for all units $x,z\in V^+$ and all units $y\in V^-$. We linearize $x$ to $x+v\cdot\ell$ and take coefficients of $\ell^1$ to get the desired identity.
		\item We linearize $y$ to $y\plustil w\cdottil\ell$ and take the coefficients of $\ell^1$ to get the required identity for units, i.e.
		\[
		\{v\,x\mu_{y,w}\,z\} + \{x\,v\mu_{y,w}\,z\} = \{y\mu_{x,v}\,w\,z\} + \{w\mu_{x,v}\,y\,z\}\]
		holds for all units $x,z,v\in V^+$ and all units $y,w\in V^-$. As in the proof of \autoref{prop:JP1}\ref{itm:JP1linear}, we can use linearity to prove this identity for all $x,z,v\in V^+$ and $y,w\in V^-$.
		\item In \ref{itm:JP2linear}, set $v=x$ and $w=y$ to get
		\[\{x\,x\mu_{y,y}\,z\}\cdot 2 = \{y\mu_{x,x}\,y\,z\}\cdot 2\]
		for any $x,z\in V^+$ and any $y\in V^-$. By the unique $2$-divisibility, we get the desired identity.\qedhere
	\end{romenumerate}
\end{proof}

Using these linearizations, we can immediately show that we have a Jordan pair.

\begin{theorem}\label{thm:localJP}
	Let $\M$ be a local Moufang set satisfying \hyperref[itm:J1]{\normalfont{(J1-4)}}.
    Then \autoref{constr:Jpair} gives a Jordan pair $(V^+,V^-)$ with
	\[Q_x^+ = \mu_{x,x}\cdot\frac{1}{2}\text{ for all $x\in V^+$}\quad\text{and}\quad Q_y^- = \mutil_{y,y}\cdottil\frac{1}{2}\text{ for all $y\in V^-$.}\]
	Furthermore, the non-invertible elements form a proper ideal 
	\[I = (I^+,I^-) = (\class{0},\class{\infty})\;,\]
	so $V$ is a local Jordan pair with $\Rad V = I$. Moreover, $V^+$ is uniquely $2$- and $3$-divisible.
\end{theorem}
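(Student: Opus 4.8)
The plan is to verify the Jordan pair axioms \ref{axiom:JP1}--\ref{axiom:JP3} via \autoref{prop:JPsufficientaxioms}\ref{itm:sufficient_JP}, then identify the radical, and finally read off the divisibility statement. First I would note that \hyperref[itm:J1]{(J1--3)} guarantee that $V^+$ and $V^-$ have no $2$-torsion (and in fact no $3$-torsion): since every unit $x$ has $x\cdot 2$ and $x\cdot 3$ units, the translation to $U_\infty$ shows $U_\infty$, hence $(V^+,+)$, is uniquely $2$- and $3$-divisible by \autoref{prop:ndivglobal} (after checking units are uniquely $2$- and $3$-divisible by \autoref{cor:ndiv}, which applies because $\M$ is special). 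So I would record the divisibility of $V^+$ (and symmetrically $V^-$) immediately — this is the easy tail of the theorem. By the no-$2$-torsion property, $Q_x := \mu_{x,x}\cdot\tfrac12$ is well-defined, and the bilinearization of $Q$ with respect to $+$ recovers $Q_{x,z} = \mu_{x,z}$ (using the linearity assumption \ref{itm:J4}), so the triple products $\{x\,y\,z\} = y\mu_{x,z}$ and $\{y\,x\,w\} = x\mutil_{y,w}$ are exactly the maps analyzed in Propositions~\ref{prop:JP1} and \ref{prop:JP2}.

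Next I would check that $Q$ is quadratic: $Q(x\cdot n) = \mu_{x\cdot n, x\cdot n}\cdot\tfrac12 = \mu_{x,x}\cdot\tfrac{n^2}{2} = n^2 Q(x)$ follows from the linearity of $\mu_{\cdot,\cdot}$ together with $\mu_{x\cdot n} = \mu_x\cdot n^2$ (which holds on units by \autoref{prop:mu-x.s}, and extends by linearity); and the bilinearization $Q_{x,z}=\mu_{x,z}$ is bilinear by \ref{itm:J4}. Then \ref{axiom:JP1} is precisely \autoref{prop:JP1}\emph{(iii)} rewritten: $\{x\,y\,zQ_x\} = \{x\,y\,z\mu_{x,x}\cdot\tfrac12\} = \{y\,x\,z\}\mu_{x,x}\cdot\tfrac12 = \{yxz\}Q_x$, and \ref{axiom:JP2} is \autoref{prop:JP2}\emph{(iii)} similarly: $\{yQ_x\,y\,z\} = \{x\,xQ_y\,z\}$. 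Since $\Z$ has no nilpotents and $V$ has no $2$-torsion, \autoref{prop:JPsufficientaxioms}\ref{itm:sufficient_JP} lets me conclude $V$ is a Jordan pair once I also verify that JP1 and its full linearizations hold in all scalar extensions — but here the base ring is $\Z$, so ``all scalar extensions'' must be handled: I would observe that the linearized identities \autoref{prop:JP1}\emph{(ii)} and \autoref{prop:JP2}\emph{(ii)} are multilinear identities in the $\Z$-modules, hence persist under any base change $V\otimes_\Z R$, so JP1, its linearizations, and JP2 hold in every scalar extension; this is the precise hypothesis of \autoref{prop:JPsufficientaxioms}\ref{itm:sufficient_JP} once we also note $V\otimes_\Z R$ still has no $2$-torsion is not needed — rereading the proposition, part \ref{itm:sufficient_JP} only requires $V$ itself to be $2$-torsion-free and JP1, its linearizations and JP2 to hold (in $V$, since the scalar-extension clause is subsumed), so the argument is in fact cleaner than it first appears.

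Then I would identify the non-invertible elements. An element $x\in V^+$ is invertible iff $Q_x = \mu_{x,x}\cdot\tfrac12$ is an invertible linear map; since $\mu_{x,x} = \mu_x\cdot 2$ acts on $V^-$ (hence $Q_x$ acts as $\mu_x$ up to the isomorphism $\tau$), and $\mu_x$ is a bijection of $X$ exactly when $x$ is a unit (i.e.\ $x\notin\class{0}$), I would show $I^+ = \class{0}$ is the set of non-invertible elements of $V^+$, and symmetrically $I^- = \class{\infty}$. That $I = (I^+, I^-)$ is an ideal: $\class{0}$ is closed under $+$ (it is a subgroup of $U_\infty$, being $\Ker(U_\infty\to U_{\class 0})$-orbit of $0$), the triple products $\{x\,y\,z\}$ with any entry in $I$ land in $I$ because $\mu$-maps and $\mu_{\cdot,\cdot}$ preserve the equivalence $\sim$ and send $\class 0$-representatives to $\class 0$-representatives (this mirrors the ideal computation already done in the projective case, \S\ref{sec:chap6}-style), and likewise $xQ_y = x\mutil_{y,y}\cdottil\tfrac12 \in I^+$ when $x\in I^+$. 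Properness: $I\neq V$ since $e$ is a unit. Hence $V$ is local with $\Rad V = I$ by \autoref{prop:JPbasic}\ref{itm:JPbasicD} (the non-invertible elements of a local Jordan pair form the radical).

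The main obstacle I anticipate is the bookkeeping around the scalar-extension clause in the definition of a Jordan pair versus what \autoref{prop:JPsufficientaxioms}\ref{itm:sufficient_JP} actually delivers, and relatedly pinning down precisely which of the identities in Propositions~\ref{prop:JP1}--\ref{prop:JP2} play the role of JP1, its linearizations, and JP2 — the unlinearized ``(iii)'' versions are the axioms themselves, while the ``(ii)'' versions are the full linearizations needed for the hypothesis. The verification that invertibility of $x\in V^+$ corresponds exactly to $x$ being a unit in the local-Moufang-set sense, and that therefore $\class{0}$ and $\class{\infty}$ are the right radical components, also requires care: one must translate ``$Q_x$ invertible'' through $Q_x = \tfrac12\mu_{x,x}$ and the fact that $\mu_x\colon X\to X$ is a bijection for units, checking that $Q_x$ being bijective as a map $V^-\to V^+$ is equivalent to $\mu_x$ being defined, i.e.\ $x\nsim 0$. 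Everything else is routine linearity and the identities already assembled.
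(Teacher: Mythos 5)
Your handling of the Jordan-pair axioms and of the divisibility statement is essentially the paper's: quadraticity of $Q_x=\mu_{x,x}\cdot\tfrac12$ comes from \ref{itm:J4}, the axioms \ref{axiom:JP1}--\ref{axiom:JP2} and their linearizations are exactly the ``(iii)'' and ``(ii)'' parts of \autoref{prop:JP1} and \autoref{prop:JP2}, and the conclusion is via \autoref{prop:JPsufficientaxioms}\ref{itm:sufficient_JP}; your resolution of the scalar-extension worry is also the right reading of that proposition. The genuine gap is in the locality half. You claim that products with an entry in $I$ land in $I$ ``because $\mu$-maps and $\mu_{\cdot,\cdot}$ preserve the equivalence'', mirroring the projective case. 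That argument only covers the case where the element of $I$ sits in the slot being \emph{acted on}: e.g.\ for $x\in I^+$, $xQ_y=x\mutil_{y,y}\cdottil\tfrac12$ is by (the analogue of) \autoref{rem:expression_muxz} a $\Z$-linear combination of terms $x\mu_w$ with $w$ a unit, and each such term lies in $I$ since $\mu_w$ preserves $\sim$ and swaps $\class{0}$ and $\class{\infty}$. But the ideal condition also requires $yQ_x\in I$ and $\{x\,y\,z\}=y\mu_{x,z}\in I$ for $x\in I^+$ and \emph{arbitrary} $y$, in particular for $y$ a unit. There the element of $I$ appears only in the \emph{subscript}: $y\mu_{x,x}$ is a combination of terms $y\mu_w$ with $w$ units such as $x+e$ and $e$, and none of those terms lies in $I$ individually -- only the specific combination does -- so equivalence-preservation of the individual $\mu_w$ gives you nothing. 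The paper closes exactly this case with an identity derived from \ref{axiom:JP1} (already available at that point): for a unit $y$, $yQ_x=\{x\,y\,x\}=\{y\,x\,x\mu_y\}\mu_y$ and $\{x\,y\,z\}=\{y\,x\,z\mu_y\}\mu_y$, which converts the expression into one of the ``easy'' type (a combination of $x\mu_w$'s) followed by the single equivalence-preserving map $\mu_y$. Without this maneuver, or a substitute for it, your verification that $I$ is an ideal does not go through.

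The same gap undercuts your identification of the non-invertible elements. The direction ``$x$ a unit $\implies x$ invertible'' is fine: $Q_x=\mu_x$ for units by \autoref{lem:JMS_identities}\ref{itm:xyx}, and $\mu_x$ is an involution. But the converse -- $x\in\class{0}$ forces $Q_x$ non-invertible -- cannot be read off from ``$\mu_x$ is not defined'', because for non-units $Q_x$ is defined by the linear combinations above and there is no a priori reason such a combination fails to be bijective. In the paper this direction is a \emph{consequence} of the ideal property: once $I$ is a proper ideal, it contains no invertible elements (an invertible $u\in I^\sigma$ would give $y=yQ_uQ_{u^{-1}}\in I^{-\sigma}$ for all $y$, forcing $I=V$), so the non-invertible elements are precisely $I$, whence $V$ is local and $\Rad V=I$ by \autoref{prop:JPbasic}\ref{itm:JPbasicD}. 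In short, the ideal argument with the JP1 trick is the load-bearing step for both halves of the locality claim, and it is the step your proposal leaves unproved.
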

\begin{proof}
	By \ref{itm:J2}, both $V^+$ and $V^-$ are $\Z$-modules, and by the fact that $\mu$-maps are morphisms between the (abelian) groups $V^+$ and $V^-$, the maps $Q_x^+$ and $Q_y^-$ are homomorphisms. By \ref{itm:J4}, the map $x\mapsto \mu_{x,x}$ is quadratic in $x$ and $y\mapsto\mutil_{y,y}$ is quadratic in $y$. By the \autoref{prop:JP1} and \autoref{prop:JP2} (which also hold when interchanging $+$ and $-$), \ref{axiom:JP1} and \ref{axiom:JP2} hold, along with their linearizations, so by \autoref{prop:JPsufficientaxioms}\ref{itm:sufficient_JP}, $(V^+,V^-)$ is a Jordan pair.

	Next, we want to prove the Jordan pair is local. We first claim that if $x\in V^\sigma$ is a unit, then it is invertible in the Jordan pair. For such $x$, we have $Q_x^\sigma = \mu_x$, which is an involution and hence invertible with $x^{-1} = x\mu_x = -x$. Next, we show that $I$ is an ideal. If $x\in I^\sigma$, we have $x\mu_z\in I^{-\sigma}$ for any unit $z$. For any $y\in V^{-\sigma}$ the element $xQ_y$ is a linear combination of such $x\mu_z$ by \autoref{rem:expression_muxz}, so $xQ_y\in I^{-\sigma}$. Next, if $x\in I^\sigma$ and $y\in V^{-\sigma}\setminus I^{-\sigma}$, we have \[yQ_x = \{x y x\} = \{y\,x\,x\mu_y\}\mu_y\in I^{-\sigma}\;,\]
	again because this is a linear combination of $x\mu_z\in I^{-\sigma}$. Here we used the fact that $y$ is a unit (so $\mu_y$ is invertible) together with \ref{axiom:JP1}. Finally, if $x\in I^\sigma$, $y\in V^{-\sigma}\setminus I^{-\sigma}$ and $z\in V^\sigma$, we have $\{xyz\} = \{y\,x\,z\mu_y\}\mu_y\in I^\sigma$. Hence $I$ is a (proper) ideal. In particular, $I$ does not contain any invertible elements. As all elements of $V\setminus I$ are invertible, $I$ is precisely the set of non-invertible elements; we conclude that $V$ is a local Jordan pair and $I = \Rad V$. The fact that $V^+$ is uniquely $2$- and $3$-divisible is a consequence of \ref{itm:J3}, using \hyperref[itm:J1]{(J1-2)}.
\end{proof}


\section{There and back again}\label{sec:connect}

\subsection{The Jordan pair from \texorpdfstring{$\M(V)$}{M(V)}}

In \autoref{sec:JP} we described a way to create a local Moufang set $\M(V)$ from a local Jordan pair $V$, while \autoref{sec:LMStoJP} contains a way to construct local Jordan pairs from certain local Moufang sets. We want to investigate how these two constructions interact.

Suppose first that we start with a local Jordan pair $V$ and apply \autoref{thm:MV} to obtain a local Moufang set $\M(V)$.
It is natural to ask whether we can apply \autoref{thm:localJP} to $\M(V)$ in order to retrieve the local Jordan pair $V$.
We begin by verifying that the conditions required to apply this theorem are indeed satisfied.
\begin{proposition}
	Let $V = (V^+,V^-)$ be a local Jordan pair such that $V^+$ is uniquely $2$- and $3$-divisible. Then $\M(V)$ satisfies the conditions \hyperref[itm:J1]{\normalfont{(J1-4)}}
    from \autoref{constr:Jpair}.
\end{proposition}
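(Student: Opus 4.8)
The statement to prove is that, given a local Jordan pair $V$ with $V^+$ uniquely $2$- and $3$-divisible, the local Moufang set $\M(V)$ satisfies the four conditions \hyperref[itm:J1]{(J1--4)} of \autoref{constr:Jpair}. The plan is to verify each of these conditions in turn, using the explicit descriptions of the root groups, $\mu$-maps, and arithmetic in $\M(V)$ obtained in \autoref{sec:JP} (in particular \autoref{def:alphazetaJP}, \autoref{prop:JordanMuaction}, \autoref{prop:gammaJP} and \autoref{thm:MV}), together with the Jordan-pair identities from \autoref{prop:JPbasic}.

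For \ref{itm:J1} (speciality), I would invoke the last sentence of \autoref{prop:gammaJP}, which states $(-[t,0])\tau = -([t,0]\tau)$ for every unit $[t,0]$; since the units of $\M(V)$ are exactly the $[t,0]$ with $t$ invertible, this is precisely the definition of special. For \ref{itm:J2} ($U_\infty$ abelian), I would recall from the proof just before \autoref{thm:MV} that $U_\infty = \{\alpha_v \mid v\in V^+\}$ with $\alpha_z\alpha_v = \alpha_{z+v}$, so $U_\infty \cong (V^+,+)$ is abelian. For \ref{itm:J3}, I need that if $[t,0]$ is a unit then so are $[t,0]\cdot 2$ and $[t,0]\cdot 3$. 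Using $\alpha_t^k = \alpha_{t\cdot k}$ (where $t\cdot k$ is $k$-fold addition in $V^+$, matching the group structure), I have $[t,0]\cdot k = [kt,0]$, and $kt$ is invertible whenever $t$ is because $Q_{kt} = k^2 Q_t$ is invertible in a $k$-divisible setting — more carefully, I would argue via unique $2$- and $3$-divisibility of $V^+$ that $kt$ lies outside $\Rad V^+$ (if $kt\in\Rad V^+$ then, by the ideal property and unique divisibility, $t\in\Rad V^+$, contradicting invertibility of $t$).

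The main work, and the expected obstacle, is \ref{itm:J4}: exhibiting the bilinear maps $\mu_{\cdot,\cdot}\colon V^+\times V^+\to\Hom(V^-,V^+)$ and $\mutil_{\cdot,\cdot}\colon V^-\times V^-\to\Hom(V^+,V^-)$ extending the partially-defined $\mu_{x,z} := \mu_{x+z}-\mu_x-\mu_z$ and its twin. The natural candidate is $\mu_{x,z} := Q_{x,z}$ (the bilinearization of the quadratic map $Q$), which is manifestly bilinear in $(x,z)$ by definition of a quadratic map, and similarly $\mutil_{y,w} := Q_{y,w}$. I would then check that this candidate agrees with $\mu_{x+z}-\mu_x-\mu_z$ on units: since \autoref{prop:JordanMuaction} and the identification $V^+ = \P(V)\setminus\class{\infty}$ give $\mu_{[t,0]}$ acting as $Q_t$ (up to the identifications tracked in \autoref{prop:gammaJP}), one has $\mu_{x+z} - \mu_x - \mu_z = Q_{x+z} - Q_x - Q_z = Q_{x,z}$ for invertible $x,z,x+z$, exactly matching \eqref{map:muxz}. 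One delicate point is that the actions of $\mu_{[t,0]}$ computed in \autoref{prop:JordanMuaction} involve both the $[x,0]$ and $[e,e^{-1}+y]$ representatives, so I would need to verify the identification $\mu_{x,z} = Q_{x,z}$ as a map $V^- \to V^+$ consistently on the $V^- = X\setminus\class{0}$ side; this amounts to carefully unwinding the definitions of $+$, $\plustil$, $\gamma$ and the $\tau$-conjugations and checking compatibility with \autoref{lem:JMS_identities}-style identities — routine but the part most likely to require attention to the representative bookkeeping. Once \ref{itm:J4} is in place with $\mu_{\cdot,\cdot} = Q_{\cdot,\cdot}$ and $\mutil_{\cdot,\cdot} = Q_{\cdot,\cdot}$, all four conditions hold and the proposition follows.
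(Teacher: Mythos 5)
Your proposal is correct and follows essentially the same route as the paper's proof: speciality from \autoref{prop:gammaJP}, abelianness from $U_\infty\cong(V^+,+)$, \ref{itm:J3} from $Q_{kx}=Q_x\cdot k^2$ together with unique $2$- and $3$-divisibility of $V^+$, and \ref{itm:J4} by taking the bilinearizations $Q_{\cdot,\cdot}$ as the extending bilinear maps and checking, via \autoref{prop:JordanMuaction}, that on the $[e,e^{-1}+y]$ representatives $\mu_{x+z}-\mu_x-\mu_z$ acts as $Q_{v,v'}$ for units. Only your secondary remark for \ref{itm:J3} (that $kt\in\Rad V^+$ would force $t\in\Rad V^+$ ``by the ideal property and unique divisibility'') is not justified as stated; the clean argument is the one you give first, via invertibility of $Q_{kt}=Q_t\cdot k^2$, which is exactly the paper's.
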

\begin{proof}
	By the definition of $\M(V)$ we have $[x,0]\alpha_{[v,0]} = [x+v,0]$, so 
	\[U_\infty = \{\alpha_x\mid x\in \P(V)\setminus\class{\infty}\}\cong V^+\;.\]
	Hence $U_\infty$ is abelian and by \autoref{prop:gammaJP}, $\M(V)$ is special. Next, if $[x,0]$ is a unit, then $x$ is invertible, which means $Q_x$ is invertible. As $Q_{2x} = Q_x\cdot 4$ and $Q_{3x} = Q_x\cdot 9$, we also have $2x$ and $3x$ invertible, so $[2x,0]$ and $[3x,0]$ are also units. This means \hyperref[itm:J1]{(J1-3)} are satisfied.

	To show \ref{itm:J4}, we compute $\mu_{x,x'}$ for units $x=[v,0]$ and $x'=[v',0]$ such that $x+x'$ is also a unit. By \autoref{prop:gammaJP}, we have $\mu_{x,x'} = \mu_{v+v'}-\mu_v-\mu_{v'}$. If we apply this to any $[e,e^{-1}+y]$, we get
	\begin{align*}
		[e,e^{-1}+y](\mu_{v+v'}-\mu_v-\mu_{v'}) &= [yQ_{v+v'},0] - [yQ_v,0]-[yQ_{v'},0] \\
			&= [yQ_{v,v'},0]\;.
	\end{align*}
	Therefore, we can define $\mu_{x,x'}$ for arbitrary $x$ and $x'$ by
	\[ [e,e^{-1}+y]\mu_{x,x'} := [yQ_{v,v'},0]\;. \]
	As $(x,x')\mapsto Q_{v,v'}$ is bilinear, so is the map $(x,x')\mapsto\mu_{x,x'}$. A similar argument shows that we can define
	\[[x,0]\mutil_{[e,e^{-1}+w],[e,e^{-1}+w']} := [e,e^{-1}+xQ_{w,w'}]\]
	for arbitrary $y=[e,e^{-1}+w]$ and $y'=[e,e^{-1}+w']$, and that the map $(y,y')\mapsto\mutil_{y,y'}$ is bilinear. Hence \ref{itm:J4} holds.
\end{proof}
We now know that we can apply \autoref{thm:localJP} on $\M(V)$, so we can compare the resulting local Jordan pair to the original local Jordan pair $V$.

\begin{theorem}
	Let $V = (V^+,V^-)$ be a local Jordan pair with quadratic maps $Q$ such that $V^+$ is uniquely $2$- and $3$-divisible. Denote the local Jordan pair we get from applying \autoref{thm:localJP} to $\M(V)$ by $W = (W^+,W^-)$. Then $V\cong W$.
\end{theorem}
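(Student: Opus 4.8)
The strategy is to exhibit an explicit isomorphism of Jordan pairs between $V$ and $W$. By construction, $W^+ = \P(V)\setminus\overline{\infty}$ and $W^- = \P(V)\setminus\overline{0}$, with the group operations coming from $U_\infty$ and $U_0$ of $\M(V)$. Using the representatives from \eqref{eq:P(V)}, the natural candidate maps are
\[
    \psi^+\colon V^+\to W^+\colon v\mapsto [v,0]\,,\qquad
    \psi^-\colon V^-\to W^-\colon w\mapsto [e,e^{-1}+w]\,.
\]
First I would check that $\psi^+$ and $\psi^-$ are bijections: this is immediate from \eqref{eq:P(V)}, since every element of $W^+ = \P(V)\setminus\overline{\infty}$ has a unique representative of the form $[v,0]$ and every element of $W^- = \P(V)\setminus\overline{0}$ has a unique representative $[e,e^{-1}+w]$ (here one uses that $\overline{0} = \{[x,0]\mid x\in\Rad V^+\}$ and the radical equivalence to identify the two pieces of $\P(V)$ correctly).

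Next I would verify that $\psi^+$ and $\psi^-$ are additive, i.e.\ group isomorphisms for the relevant additive structures. For $\psi^+$: in $\M(V)$ we have $[x,0]\alpha_{[v,0]} = [x+v,0]$ by \autoref{def:alphazetaJP}, and in \autoref{constr:Jpair} the addition on $W^+$ is $a+b = 0\alpha_a\alpha_b$; combining these gives $\psi^+(v)+\psi^+(v') = [0,0]\alpha_{[v,0]}\alpha_{[v',0]} = [v+v',0] = \psi^+(v+v')$. For $\psi^-$: the additive structure on $W^-$ is $a\plustil b = \infty\gamma_{a\tau}\gamma_{b\tau}$, and by \autoref{prop:gammaJP} together with \autoref{def:alphazetaJP} (the action of $\zeta_w$), the action of $\gamma_{[e,e^{-1}+w]\tau}$ on $\infty = [e,e^{-1}]$ adds $w$ to the $V^-$-coordinate; so $\psi^-(w)\plustil\psi^-(w') = [e,e^{-1}+w+w'] = \psi^-(w+w')$.

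Finally, and this is the main point, I would check that $(\psi^+,\psi^-)$ intertwines the quadratic maps, i.e.\ that $\psi^-(yQ^V_x) = \psi^+(x)Q^W_{\text{...}}$ in the appropriate sense, or equivalently in terms of $\mu$-maps. By \autoref{thm:localJP}, the quadratic map $Q^W$ on $W^+$ is $\mu_{a,a}\cdot\tfrac12$, so it suffices to show $\psi^-(w)\mu_{\psi^+(v)} = \psi^+(wQ_v)$ for all $v\in V^+$ and $w\in V^-$, and then linearize. But by \autoref{prop:gammaJP} we have $\mu_{[v,0]} = \mu_v$ (the $\mu$-map from \autoref{prop:JordanMuaction}), and by \autoref{prop:JordanMuaction} itself $[e,e^{-1}+w]\mu_v = [wQ_v,0]$ for all $w\in V^-$. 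That is exactly $\psi^-(w)\mu_{\psi^+(v)} = \psi^+(wQ_v)$. Bilinearizing in $v$ (using the unique $2$-divisibility of $V^+$ and the already-established additivity of $\psi^\pm$) gives $\psi^-(w)\mu_{\psi^+(v),\psi^+(v')} = \psi^+(wQ_{v,v'})$, hence $\psi^-(w)Q^W_{\psi^+(v)} = \psi^-(w)\mu_{\psi^+(v),\psi^+(v)}\cdot\tfrac12 = \psi^+(wQ_{v,v})\cdot\tfrac12 = \psi^+(wQ_v)$, so $\psi^+(yQ_x) = \psi^+(y)$... more precisely $\psi^-$ and $\psi^+$ satisfy the defining relation $h_\sigma(yQ_x) = h_{-\sigma}(y)Q_{h_\sigma(x)}$ of a Jordan pair homomorphism; the analogous computation on the $V^-$ side (interchanging the roles of $0$ and $\infty$, using $[x,0]\mu_v = [e,e^{-1}+xQ_v^{-1}]$ and the corresponding statement for $Q^-$) handles the other direction. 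Since $\psi^+$ and $\psi^-$ are bijective, \autoref{cor:isomorphism}-style reasoning (or simply the fact that a bijective homomorphism of Jordan pairs has a homomorphism inverse) shows $(\psi^+,\psi^-)$ is an isomorphism, so $V\cong W$.

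\textbf{Expected main obstacle.} The only genuinely delicate step is bookkeeping the identification of the two additive structures and the two sets of representatives across \eqref{eq:P(V)}, \autoref{def:alphazetaJP}, \autoref{prop:JordanMuaction}, \autoref{prop:gammaJP}, and \autoref{constr:Jpair} — in particular making sure that the $W^-$-coordinate convention (which uses $\gamma_{a\tau}$, the unique element of $U_0$ fixing the class of $0$ appropriately) matches the $[e,e^{-1}+w]$ representative, and that the radical equivalence on $\P(V)$ coincides with the equivalence $\sim$ that $\M(V)$ carries. Once those identifications are pinned down, the quadratic-map compatibility is essentially free from \autoref{prop:JordanMuaction}, since that proposition was proved precisely to make $\mu_v$ act as $Q_v$.
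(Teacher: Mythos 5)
Your proposal is correct and takes essentially the same route as the paper: the paper works with the inverse bijections $h_+\colon [x,0]\mapsto x$, $h_-\colon [e,e^{-1}+y]\mapsto y$, checks additivity from the computed actions of $\alpha_v$ and $\zeta_w$, and verifies the quadratic-map compatibility via \autoref{prop:JordanMuaction}, handling non-invertible $x$ by writing $\mu_{[x,0],[x,0]}\cdot\tfrac{1}{2}$ as an explicit integer combination of unit $\mu$-maps — the same linearization you perform abstractly. The remaining differences (direction of the isomorphism, abstract bilinearization versus the explicit combination $\mu_{[e+x,0]}\cdot 2-\mu_{[2e+x,0]}+\mu_{[e,0]}\cdot 2$) are cosmetic.
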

\begin{proof}
    Denote the quadratic maps of the Jordan pair $W$ by $U$.
	By construction, $W^+ = \{[x,0]\mid x\in V^+\}$ and $W^- = \{[e,e^{-1}+y]\mid y\in V^-\}$. We compute the addition on $W$:
	\begin{align*}
		[x,0]+[x',0] &= [0,0]\alpha_{v}\alpha_{v'} = [v+v',0] \\
		[e,e^{-1}+y]\plustil[e,e^{-1}+y'] &= [e,e^{-1}]\gamma_{[e,e^{-1}+y]\tau}\gamma_{[e,e^{-1}+y']\tau} \\
			&= [e,e^{-1}]\gamma_{[yQ_e,0]}\gamma_{[y'Q_e,0]} \\
			&= [e,e^{-1}]\zeta_{y}\zeta_{y'} = [e,e^{-1}+y+y']\;,
	\end{align*}
	where we used \autoref{def:alphazetaJP} and \autoref{prop:gammaJP}. A second ingredient we will need, is the actions of the $\mu$-maps. By \autoref{prop:gammaJP} we have $\mu_{[x,0]} = \mu_{x}$ for all invertible $x\in V^+$. By \autoref{prop:JordanMuaction} this means $[e,e^{-1}+y]\mu_{[x,0]} = [yQ_x,0]$. Similarly, for all invertible $y\in V^-$, $\mu_{[e,e^{-1}+y]} = \mu_{[-y^{-1},0]} = \mu_{-y^{-1}}$, so
	\[[x,0]\mu_{[e,e^{-1}+y]} = [e,e^{-1}+xQ_{-y^{-1}}^{-1}] = [e,e^{-1}+xQ_y]\;.\]
	We are now ready to define an isomorphism between $V$ and $W$:
	\[h_+\colon W^+\to V^+\colon [x,0]\mapsto x\qquad h_-\colon W^-\to V^-\colon [e,e^{-1}+y]\mapsto y\;.\]
	What remains to be proven is the linearity of these maps, and the fact that they preserve the quadratic maps of the Jordan pairs. Linearity is immediate from our computation of the addition on $W$. Next, take any $[x,0]\in W^+$ and $[e,e^{-1}+y]\in W^-$. If $[x,0]$ is a unit, we have
	\begin{align*}
		h_+([e,e^{-1}+y]U^+_{[x,0]}) 	&= h_+([e,e^{-1}+y]\mu_{[x,0]}) = yQ_x \\
						&= h_-([e,e^{-1}+y])Q^+_{h_+([x,0])}\;.
	\end{align*}
	If $[x,0]$ is not a unit, we get
	\begin{align*}
		&h_+([e,e^{-1}+y]U^+_{[x,0]}) \\
			&= h_+([e,e^{-1}+y]\mu_{[x,0],[x,0]}\cdot\tfrac{1}{2}) \\
			&= h_+([e,e^{-1}+y](\mu_{[e+x,0]}\cdot2-\mu_{[2e+x,0]}+\mu_{[e,0]}\cdot2)) \\
			&= h_+([e,e^{-1}+y](\mu_{e+x}\cdot2-\mu_{2e+x}+\mu_{e}\cdot2)) \\
			&= h_+([yQ^+_{e+x}\cdot2-yQ^+_{2e+x}+yQ^+_{e}\cdot2,0]) \\
			&= y(Q^+_{e,x}+Q^+_e+Q^+_x)\cdot2 \\*
			&\qquad-y(Q^+_{e,x}\cdot 2+Q^+_x+Q^+_{e}\cdot 4)+yQ^+_{e}\cdot2 \\
			&= yQ^+_x = h_-([e,e^{-1}+y])Q^+_{h_+([x,0])}\;.
	\end{align*}
	Similarly, we get
	\begin{align*}
		h_-([x,0]U^-_{[e,e^{-1}+y]}) &= h_-([x,0]\mu_{[e,e^{-1}+y]}) = xQ^-_y \\
			&= h_+([x,0])Q^-_{h_-([e,e^{-1}+y])}
	\end{align*}
	for $[e,e^{-1}+y]$ a unit, and otherwise
	\begin{align*}
		h_-([x,0]U^-_{[e,e^{-1}+y]}) &= h_-([x,0]\mutil_{[e,e^{-1}+y],[e,e^{-1}+y]}\cdottil\tfrac{1}{2}) \\
			&= h_-([x,0](\mu_{[e,e^{-1}+e^{-1}+y]}\cdottil2 \\
			&\qquad\mintil\mu_{[e,e^{-1}+2e^{-1}+y]}\plustil\mu_{[e,e^{-1}+e^{-1}]}\cdottil2)) \\
			&= h_-([xQ^-_{e^{-1}+y}\cdot 2 - xQ^-_{2e^{-1}+y} + xQ^-_{e^{-1}}\cdot 2]) \\
			&= xQ^-_{e^{-1}+y}\cdot 2 - xQ^-_{2e^{-1}+y} + xQ^-_{e^{-1}}\cdot 2 \\
			&= xQ^-_y = h_+([x,0])Q^-_{h_-([e,e^{-1}+y])}\;.
	\end{align*}
	Hence $(h_+,h_-)$ is a homomorphism from $W$ to $V$, and since it is a bijection, it is also an isomorphism.
\end{proof}
\begin{corollary}
	If $V$ and $W$ are local Jordan pairs and $\M(V)\cong \M(W)$, then $V\cong W$.
\end{corollary}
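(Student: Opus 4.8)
The plan is to deduce this from the previous theorem, which states that if $W$ is the local Jordan pair obtained by applying \autoref{thm:localJP} to $\M(V)$, then $W\cong V$. More precisely, suppose $\M(V)\cong\M(W)$ for local Jordan pairs $V$ and $W$. By \autoref{thm:MV}, both $\M(V)$ and $\M(W)$ are local Moufang sets, and by the previous proposition they both satisfy conditions \hyperref[itm:J1]{(J1-4)}; in particular, for each of them the construction in \autoref{constr:Jpair} can be carried out, and since the $Q$-maps show $V^+$ and $W^+$ are uniquely $2$- and $3$-divisible (as $Q_{2x}=Q_x\cdot4$ and $Q_{3x}=Q_x\cdot 9$), the hypotheses of the previous theorem are met.

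The key point is that \autoref{constr:Jpair} depends only on the local Moufang set, not on any extra choices beyond a fixed unit $e$ (and the resulting Jordan pairs for different choices of $e$ are isotopic, hence isomorphic). So I would first observe that an isomorphism $\phi\colon\M(V)\to\M(W)$ of local Moufang sets induces an isomorphism between the local Jordan pairs produced by applying \autoref{constr:Jpair} to $\M(V)$ and to $\M(W)$ respectively. This follows by inspecting the construction: by \autoref{prop:isomorphism1} (or \autoref{cor:isomorphism}) the isomorphism $\phi$ carries the set $X\setminus\class{\infty}$ bijectively onto $Y\setminus\class{\infty}$, it intertwines the $\alpha$-maps and the $\gamma$-maps via the group isomorphisms $\theta_x$, and by \autoref{lem:theta_maps}\ref{itm:morphism_mu} it intertwines the $\mu$-maps. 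Hence it respects the additions on $V^\pm$ and the maps $\mu_{\cdot,\cdot}$ and $\mutil_{\cdot,\cdot}$ defining the quadratic operators $Q^\pm$. Therefore the two constructed Jordan pairs are isomorphic as Jordan pairs.

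Combining this with the previous theorem gives the result: let $V'$ and $W'$ be the local Jordan pairs obtained from $\M(V)$ and $\M(W)$ via \autoref{constr:Jpair}. By the previous theorem, $V'\cong V$ and $W'\cong W$. By the observation just made, $\M(V)\cong\M(W)$ implies $V'\cong W'$. Hence $V\cong V'\cong W'\cong W$.

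I do not expect any serious obstacle here; the only thing requiring a little care is making precise that \autoref{constr:Jpair} is functorial with respect to isomorphisms of local Moufang sets, i.e.\ that an isomorphism $\phi$ really does induce an isomorphism of the constructed Jordan pairs. This is a routine check using the explicit description of the data of $\M(V)$ from \autoref{constr:MUtau} together with \autoref{lem:theta_maps}, but it is the step that carries the logical weight of the argument.
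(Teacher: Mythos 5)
Your overall route is exactly the one the paper intends: the corollary is meant to follow immediately from the preceding theorem, combined with the (routine but essential) observation that \autoref{constr:Jpair} is invariant under isomorphisms of local Moufang sets, and your sketch of that invariance via \autoref{cor:isomorphism} and \autoref{lem:theta_maps} is the right check — this is precisely what the paper leaves implicit.

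There is, however, one step that fails as written: the parenthetical claim that $V^+$ and $W^+$ are uniquely $2$- and $3$-divisible ``as $Q_{2x}=Q_x\cdot 4$ and $Q_{3x}=Q_x\cdot 9$''. Those identities do not yield divisibility of the additive groups; the implication runs the other way. In the proposition preceding the theorem, unique $2$- and $3$-divisibility of $V^+$ is a \emph{hypothesis}, and it is used exactly to conclude that $Q_x\cdot 4$ and $Q_x\cdot 9$ are invertible whenever $Q_x$ is, which is how \ref{itm:J3} is verified for $\M(V)$. The divisibility is not automatic for a local Jordan pair: for $V=(\Z/2\Z,\,\Z/2\Z)$ with $Q_a\colon x\mapsto axa$ (a Jordan division pair, hence local) one has $x\cdot 2=0$ in $\M(V)$ for every $x$, so \ref{itm:J3} fails and neither that proposition nor \autoref{thm:localJP} applies. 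So you should treat unique $2$- and $3$-divisibility as a standing assumption carried over from the theorem (the printed corollary omits it as well, and is only justified under it); with that assumption in place your argument is complete and coincides with the paper's, but it cannot be derived from the $Q$-map identities as you claim.
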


\subsection{Characterizing \texorpdfstring{$\M(V)$}{M(V)}}

Conversely, suppose now that we start with a local Moufang set $\M$ to which we apply \autoref{thm:localJP} to get a local Jordan pair $V$, and consider the local Moufang set $\M(V)$ obtained from $V$ by \autoref{thm:MV}; it is now natural to ask whether $\M \cong \M(V)$.
We will be able to give a positive answer to this question provided that we impose an additional assumption determining the action of $U$ on $\class{\infty}$.

\begin{theorem}\label{thm:extra}
	Let $\M$ be a local Moufang set satisfying \hyperref[itm:J1]{\normalfont{(J1-4)}}, and let $V$ be the local Jordan pair obtained from $\M$ by applying \autoref{thm:localJP}. Assume that
	\begin{align}\label{eq:extra}
		\begin{aligned}
			t\alpha_x \mintil x\mutil_{t,t\alpha_x} \plustil t\alpha_x\mu_{x,x}\mutil_{t,t}\cdottil\tfrac{1}{4}
				= t \mintil x\mutil_{t,t}\cdottil\tfrac{1}{2} \\
			\text{for all $t\sim\infty$ and $x\nsim \infty$\;.} 
		\end{aligned}
		\tag{$*$}
	\end{align}
	Then $\M\cong\M(V)$.
\end{theorem}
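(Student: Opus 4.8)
The strategy is to construct an explicit isomorphism $\phi\colon\M\to\M(V)$ using \autoref{prop:isomorphism2}, by exhibiting a bijection between the underlying sets and a compatible group isomorphism on root groups, together with a $\mu$-map that is preserved. Recall that for the local Jordan pair $V = (V^+,V^-)$ obtained via \autoref{thm:localJP}, we have $V^+ = X\setminus\class{\infty}$ and $V^- = X\setminus\class{0}$ as sets, with $Q_x^+ = \mu_{x,x}\cdot\tfrac{1}{2}$ and $Q_y^- = \mutil_{y,y}\cdottil\tfrac{1}{2}$, and $\Rad V = (\class{0},\class{\infty})$. On the other hand, $\M(V)$ acts on $(\P(V),\sim)$ with $\P(V) = \{[x,0]\mid x\in V^+\}\cup\{[e,e^{-1}+y]\mid y\in\Rad V^-\}$ by \eqref{eq:P(V)}, where $e$ is the fixed unit used in \autoref{constr:Jpair}. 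So first I would define
\[
\phi\colon X\to\P(V)\colon x\mapsto\begin{cases} [x,0] & \text{if }x\nsim\infty\,,\\ [e,e^{-1}+x\mu_e\mu_e] = [e,e^{-1}+x] & \text{if }x\sim\infty\,,\end{cases}
\]
noting that for $x\sim\infty$ we have $x\mu_e\sim0$, i.e.\ $x\mu_e\in\Rad V^-$, and then $x\mu_e$ is an element of $V^-$ close to $0$; more precisely, following the pattern in the proof of \autoref{thm:PSL2equiv}, I would map $x\sim\infty$ to $[e,e^{-1}-(x\mu_e)^{?}]$ or similar — the exact representative has to be chosen so that $\phi$ restricted to $\class{\infty}$ matches the action, and the cleanest choice is likely $x\mapsto [e, e^{-1}+ x\mu_e]$ after identifying $\class{\infty}\mu_e$ with $\Rad V^-$. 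I would then check that $\phi$ preserves $\sim$: for $x,x'\nsim\infty$, $x\sim x'\iff x-x'\in\class{0} = \Rad V^+\iff[x,0]\sim[x',0]$ by \autoref{def:radequiv}; the cases involving $\class{\infty}$ follow similarly using that $\mu_e$ preserves the equivalence and swaps $\class{0},\class{\infty}$.

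Next I would define the group isomorphism on the root group at $\infty$. In $\M$, $U_\infty = \{\alpha_x\mid x\in V^+\}$ with $\alpha_x\alpha_z = \alpha_{x+z}$ (the group operation of $V^+$), and in $\M(V)$ the root group at $\phi(\infty)$ is $\{\alpha_{[v,0]}\mid v\in V^+\}$, again with $\alpha_{[v,0]}\alpha_{[v',0]} = \alpha_{[v+v',0]}$ by the proof of \autoref{thm:MV}. So the natural candidate is $\theta\colon U_\infty\to U_{\phi(\infty)}\colon\alpha_x\mapsto\alpha_{[x,0]}$, which is a group isomorphism since addition on $V^+$ is the same on both sides. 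I must also verify $\tau\phi = \phi\tau'$ where $\tau = \mu_e$ in $\M$ and $\tau' = \mu_{[e,0]}$ in $\M(V)$ (which by \autoref{prop:gammaJP} equals $\mu_e$ in the Jordan-pair sense) — this is a direct computation on the three pieces of $\P(V)$: on $[x,0]$ with $x$ a unit, $x\mu_e$ corresponds under $\phi$ to $[x\mu_e,0]$ and on the $\M(V)$ side $[x,0]\mu_{[e,0]} = [e,e^{-1}+xQ_e^{-1}]$ by \autoref{prop:JordanMuaction}, and I'd reconcile these using $Q_e^{-1} = \mu_e$ (since $\mu_e$ is an involution and $Q_e = \mu_{e,e}\cdot\tfrac12$ with $e\mu_{e,e} = e\mu_e\cdot2 = -e\cdot2$, so $Q_e = \mu_e$ on the level of the maps). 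The genuinely new content — where hypothesis \eqref{eq:extra} enters — is verifying the morphism condition $u\phi = \phi\theta(u)$ for $u = \alpha_x\in U_\infty$ acting on a point $t\sim\infty$, i.e.\ checking that $(t\alpha_x)\phi = \phi(t)\alpha_{[x,0]} = [e,e^{-1}+t\mu_e]\alpha_{[x,0]}$. By \autoref{def:alphazetaJP}, $[e,e^{-1}+t\mu_e]\alpha_{[x,0]} = [e,e^{-1}+(t\mu_e)^x]$, where $(t\mu_e)^x$ is the quasi-inverse in the Jordan pair $V$. So what must be shown is
\[
(t\alpha_x)\mu_e = (t\mu_e)^x\qquad\text{in }V^-\,,
\]
and this is exactly the content of the technical identity \eqref{eq:extra}: expanding the Jordan-pair quasi-inverse $(t\mu_e)^x = (t\mu_e - xQ_{t\mu_e})B_{t\mu_e,x}^{-1}$ via the Bergman operator $B_{x,y} = \id - D_{x,y} + Q_yQ_x$, and rewriting everything in terms of $\mu$-maps, $\mu_{\cdot,\cdot}$ and $\mutil_{\cdot,\cdot}$ using $Q_t^- = \mutil_{t,t}\cdottil\tfrac12$ and the triple-product notation, should reduce precisely to \eqref{eq:extra} after multiplying through by the (invertible, since $t\mu_e\in\Rad V^-$) operator $B_{t\mu_e,x}$.

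\textbf{Main obstacle.} The hard part will be the last step: faithfully translating the Jordan-theoretic formula for the quasi-inverse $(t\mu_e)^x$ — with its Bergman operator, $D$-operators and $Q$-maps — into the $\mu$-map language of the local Moufang set, and matching it term-by-term with \eqref{eq:extra}. This requires care because (i) one must keep track of which of $V^+,V^-$ each element lives in and which direction each $\mu$-map goes, (ii) the element $t\mu_e$ lies in $\Rad V^-$ so $Q_{t\mu_e}$ is not invertible and one cannot blithely invert it — instead one works with $B_{t\mu_e,x}$, which \emph{is} invertible since $x\in V^+$ is arbitrary and $(t\mu_e, x)$ is quasi-invertible by \autoref{prop:JPbasic}\ref{itm:JPbasicD} applied appropriately, and (iii) the bilinearized maps $\mu_{x,x}$ and $\mutil_{t,t}$ carry the factors of $\tfrac14$ and $\tfrac12$ that appear in \eqref{eq:extra}, so the unique $2$- and $3$-divisibility from \ref{itm:J3} must be invoked to divide cleanly. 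Once $(t\alpha_x)\mu_e = (t\mu_e)^x$ is established, \autoref{lem:theta_maps}\ref{itm:unique_theta} guarantees the single remaining $\theta$-map is the correct one, \autoref{prop:isomorphism2} assembles everything into a homomorphism $\phi\colon\M\to\M(V)$, and since $\phi$ is a bijection on underlying sets (its inverse is read off directly from \eqref{eq:P(V)}), \autoref{cor:isomorphism} upgrades it to an isomorphism, completing the proof that $\M\cong\M(V)$.
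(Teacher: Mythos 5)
Your overall architecture is the same as the paper's: define $\phi$ on $X$ piecewise, take $\theta\colon\alpha_x\mapsto\alpha_{[x,0]}$, match $\tau=\mu_e$ with $\tau'=\mu_{[e,0]}$ via $Q_e=\mu_e$, and reserve hypothesis~\eqref{eq:extra} exactly for the verification of $u\phi=\phi\theta(u)$ at points of $\class{\infty}$. But the decisive step is garbled. On $\class{\infty}$ you first write the correct map $t\mapsto[e,e^{-1}+t]$ and then talk yourself out of it in favour of $t\mapsto[e,e^{-1}+t\mu_e]$, "identifying $\class{\infty}\mu_e$ with $\Rad V^-$". No such identification is needed or possible: in \autoref{constr:Jpair} one has $V^-=X\setminus\class{0}$, so a point $t\sim\infty$ already \emph{is} an element of $\Rad V^-=\class{\infty}$, whereas $t\mu_e\in\class{0}=\Rad V^+\subset V^+$, so $[e,e^{-1}+t\mu_e]$ puts a $V^+$-element in a $V^-$-slot and is ill-typed. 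The same type error infects your target identity $(t\alpha_x)\mu_e=(t\mu_e)^x$ (a quasi-inverse of two $V^+$-elements), and your claim that expanding it through the Bergman operator "should reduce precisely to \eqref{eq:extra}" is both unverified and, as stated, not what \eqref{eq:extra} encodes. Since this is precisely the step where the hypothesis must be used, the proof as written has a genuine gap; your worries about the non-invertibility of $Q_{t\mu_e}$ are artefacts of this wrong reduction.

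The repair is your own first formula. With $\phi(t)=[e,e^{-1}+t]$ for $t\sim\infty$, the required identity is $t\alpha_x=t^x$ (quasi-inverse of $(t,x)$ with $t\in\Rad V^-$, $x\in V^+$), because $[e,e^{-1}+t]\alpha_{[x,0]}=[e,e^{-1}+t^x]$ by \autoref{def:alphazetaJP}. Now \eqref{eq:extra} translates directly, with no term-by-term matching left to guess: $x\mutil_{t,t\alpha_x}=\{t\,x\,t\alpha_x\}=t\alpha_x D_{t,x}$, $t\alpha_x\mu_{x,x}\mutil_{t,t}\cdottil\tfrac14=t\alpha_x Q_xQ_t$, and $x\mutil_{t,t}\cdottil\tfrac12=xQ_t$, so \eqref{eq:extra} reads $t\alpha_x(\id\mintil D_{t,x}\plustil Q_xQ_t)=t\mintil xQ_t$, i.e.\ $t\alpha_x B_{t,x}=t\mintil xQ_t$; since $t\in\Rad V^-$ the pair $(t,x)$ is quasi-invertible, $B_{t,x}$ is invertible, and $t\alpha_x=t^x$ follows. (A small separate slip: to get $Q_e=\mu_e$ you should cite \autoref{lem:JMS_identities}, the identity $y\mu_{e,e}=y\mu_e\cdot2$ for $y\in V^-$, rather than evaluating $\mu_{e,e}$ at $e\in V^+$, which is again a type mismatch.) With these corrections your argument coincides with the paper's proof.
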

\begin{proof}
	To avoid confusion, we will denote the set with equivalence of the local Moufang set $\M$ by $(X,{\sim})$, and the corresponding root group $U_\infty$ by $U$. Recall that $\M(V)$ acts on the set $\P(V)$; we will denote the root group $U_{[0,0]}$ by $U'$. To prove that $\M \cong \M(V)$, we need an equivalence-preserving bijection $\phi\colon X\to\P(V)$, an isomorphism $\theta\colon U\to U'$ and a $\mu$-map in each local Moufang set, which we will denote by $\tau$ and $\tau'$, respectively, such that the action of $U$ and $\tau$ on $X$ are permutationally equivalent with the action of $U'$ and $\tau'$ on $\P(V)$.

	Let $e$ be a unit in $X$; then by \eqref{eq:P(V)} and \autoref{thm:localJP} we can describe $\P(V)$ as
    \[ \P(V) = \{[t,0]\mid t\nsim\infty\}\cup\{[e,e^{-1}+t]\mid t\sim\infty\}\;. \]
    We define
	\[ \phi \colon X\to \P(V)  \colon  t\mapsto
        \begin{cases}
            [t,0] &\text{if $t\in X\setminus\class{\infty}$,} \\
            [e,e^{-1}+t] & \text{if $t\in \class{\infty}$.}
        \end{cases} \]
        We check that this bijection preserves the equivalence, using \autoref{def:radequiv}. First, if $x,x'\nsim\infty$, we have
        \begin{align*}
        	x\sim x'&\iff x-x'\sim 0\iff x-x'\in\Rad V^+ \\
			&\iff [x,0]\sim[x',0] \iff x\phi\sim x'\phi\;.
        \end{align*}
        Second, if $y\sim y'\sim\infty$, then $y,y'\in\Rad V^-$, so 
        \[y\phi=[e,e^{-1}+y]\sim[e,e^{-1}+y']=y'\phi\;.\]
        Finally, if $x\nsim\infty$ and $y\sim\infty$ (or vice-versa) then $[x,0]\nsim[e,e^{-1}+y]$.

        Next, we set $\tau = \mu_e$ and $\tau' = \mu_{[e,0]}$. Then
        \begin{align*}
        	t\tau\phi = t\mu_e\phi &= \begin{cases}
			[t\mu_e,0] &\text{if $t\nsim 0$}, \\
			[e,e^{-1}+t\mu_e] & \text{if $t\sim 0$};
			\end{cases}\\
        	t\phi\tau' &= \begin{cases}
            [t,0]\mu_{[e,0]} = [e,e^{-1}+tQ_e^{-1}] &\text{if $t\sim 0$}, \\
            [t,0]\mu_{[e,0]} = [-t^{-1}Q_e,0] & \text{if $t$ is a unit}, \\
            [e,e^{-1}+t]\mu_{[e,0]} = [tQ_e,0] & \text{if $t\sim\infty$}.
        \end{cases}
        \end{align*}
	Observe that $Q_e = \mu_{e,e}\cdot\frac{1}{2} = \mu_e$, so $Q_e^{-1} = \mu_e^{-1} = \mu_e$, and that $t^{-1} = t\mu_t = -t$ if $t$ is a unit. Hence, in all cases, $t\tau\phi = t\phi\tau'$.

	We now define
	\[\theta \colon U\to U' \colon \alpha_x\mapsto\alpha_{[x,0]}\qquad\text{for all $x\nsim\infty$.}\]
	This is clearly a group isomorphism. It only remains to verify that $t\alpha_x\phi = x\phi\theta(\alpha_x)$. If $t\nsim\infty$, we have
	\[t\alpha_{x}\phi = (t+x)\phi = [t+x,0] = [t,0]\alpha_{[x,0]} = t\phi\theta(\alpha_{x})\;,\]
	so the only case left to consider is when $t\sim\infty$. By~\eqref{eq:extra}, we have
	\begin{alignat*}{2}
		&&t\alpha_x \mintil x\mutil_{t,t\alpha_x} \plustil t\alpha_x\mu_{x,x}\mutil_{t,t}\cdottil\tfrac{1}{4} &= t \mintil x\mutil_{t,t}\cdottil\tfrac{1}{2} \\
		&\implies& t\alpha_x \mintil t\alpha_xD_{t,x} \plustil t\alpha_xQ_xQ_t &= t \mintil xQ_t\\
		&\implies& t\alpha_x(\id \mintil D_{t,x} \plustil Q_xQ_t) &= t \mintil xQ_t\;.
	\end{alignat*}
	Now observe that $(t,x)$ is quasi-invertible because $V$ is a local Jordan pair and $t\in \Rad V^-$; hence $\id \mintil D_{t,x} \plustil Q_xQ_t$ is invertible and
	\begin{align*}
		t\alpha_x &= (t \mintil xQ_t)(\id \mintil D_{t,x} \plustil Q_xQ_t)^{-1} = t^x\;.
	\end{align*}
	We conclude that also in this case,
	\[t\alpha_{x}\phi = t^{x}\phi = [t^{x},0] = [t,0]\alpha_{[x,0]} = t\phi\theta(\alpha_x)\;.\]
	Hence we have shown that $\M$ and $\M(V)$ are isomorphic.
\end{proof}

\begin{remark}
	As can be observed in the proof, the extra condition~\eqref{eq:extra} is a translation of the original definition of $\alpha_x$ in $\M(V)$: $[e,e^{-1}+t]\alpha_x = [e,e^{-1}+t^x]$. It is at this point unclear to us whether this assumption is strictly necessary.
	It seems likely that there is a connection with the extra assumption needed in \autoref{thm:PSL2equiv}, but we have not been able to verify this.
\end{remark}
	\chapter{Hermitian local Moufang sets}\label{chap:chap8_hermitian}
	Our final examples are the Hermitian local Moufang sets, which are also our first examples which do not have abelian root groups. We first introduce orthogonal local Moufang sets, as these are a specific class of Hermitian local Moufang sets with a simpler description. Next, we define the underlying set and root groups of Hermitian local Moufang sets, and we prove that we indeed get local Moufang sets.

\section{Orthogonal local Moufang sets}	
\subsection{A set to act on}

In this section, we assume $R$ is a local ring with maximal ideal $\m$.

\begin{definition}
	Let $W$\notatlink{W} be a right $R$-module. A map $q\colon W\to R$\notatlink{q} is a \define{quadratic form} if
	\begin{manualenumerate}[label=\textnormal{(Q\arabic*)},labelwidth=\widthof{(Q1)}]
		\item $q(xr) = q(x)r^2$ for all $x\in W$ and $r\in R$;\label{itm:Q1}
		\item $f(x,y):=q(x+y)-q(x)-q(y)$ is bilinear.\notatlink{f}\label{itm:Q2}
	\end{manualenumerate}
	A quadratic form is \define[quadratic form!anisotropic ---]{anisotropic} if
	\begin{manualenumerate}[label=\textnormal{(Q\arabic*)},labelwidth=\widthof{(Q1)},resume]
		\item $q(x)\in\m\implies x\in W\m$ for all $x\in W$.\label{itm:Q3}
	\end{manualenumerate}
\end{definition}

Assume we have an anisotropic quadratic form on a right $R$-module $W$, then we can define
\[\tilde{W}:=R\times W\times R\qquad \tilde{q}\colon \tilde{W}\to R: (r,x,s)\mapsto q(x)-rs\;.\notatlink{qtil}\]
This is also a quadratic form (but not an anisotropic one).

We can now take the triples $(r,x,s)$ up to invertible scalar multiple
\[[r,x,s]:=\{(rt,xt,st)\in \tilde{W}\mid t \in R\setminus\m\}\;,\notatlink{[rxs]}\]
which, if we restrict to nice triples, gives a projective space:
\[\P(W):=\{[r,x,s]\mid (r,x,s)\in W, rR+sR=R\}\;.\notatlink{P(W)}\]
In this projective space, we can now look at the isotropic points of the form $\tilde{q}$:
\[\mathcal{Q}(W,q):=\{[r,x,s]\in\P(W)\mid \tilde{q}(r,x,s)=0\}\;.\notatlink{QWq}\]
Observe that the assumption $rR+sR=R$ implies that at least one of $r$ and $s$ is invertible, hence we get
\[\mathcal{Q}(W,q)=\{[1,x,q(x)]\mid x\in W\}\cup\{[q(x),x,1]\mid x\in W\m\}\;.\]
Lastly, we can define an equivalence relation on $\mathcal{Q}(W,q)$ by saying two points are equivalent if they reduce to the same point if we quotient out $\m$:
\begin{align*}
	&[1,x,r]\sim[1,y,s]\iff x-y\in W\m \\
	&[r,x,1]\sim[s,y,1]\iff x-y\in W\m \\
	&[r,x,s]\nsim[r',y,s']\qquad\text{ otherwise.}
\end{align*}

\subsection{A local Moufang set}

We can now define equivalence-preserving permutations of $\mathcal{Q}(W,q)$ to construct our local Moufang set. We define $\alpha_{[1,x,r]}$, $\zeta_{[r,x,1]}$ and $\tau$ by
\begin{align*}
	[1,y,s]\alpha_{[1,x,r]} &:= [1,y+x,s+r+f(x,y)]\\
	[s,y,1]\alpha_{[1,x,r]} &:= \bigg[\frac{s}{1+rs+f(x,y)},\frac{y+xs}{1+rs+f(x,y)},1\bigg]&\text{for $y\in W\m$}\\[1ex]
	[1,y,s]\zeta_{[r,x,1]} &:= \bigg[1,\frac{y+xs}{1+rs+f(x,y)},\frac{s}{1+rs+f(x,y)}\bigg]&\text{for $y\in W\m$}\\
	[s,y,1]\zeta_{[r,x,1]} &:= [s+r+f(x,y),y+x,1] \\[1ex]
	[r,x,s]\tau &:= [s,x,r]
\end{align*}
One can verify that all these permutations are in $\Sym\big(\mathcal{Q}(W,q),\sim\big)$.

We now set $U = \{\alpha_{[1,x,r]}\mid x\in W\}$, and we consider the construction $\M(U,\tau)$. It is easy to verify \ref{axiom:C1} and \ref{axiom:C1'}. Furthermore, $U$ fixes $[0,0,1]=:\infty$, $[0,0,1]\tau = [1,0,0]=:0$, and $0\tau = \infty$ so \ref{axiom:C2} also holds.

It is now possible to compute the following:
\begin{align*}
	\gamma_{[1,x,r]} &= \alpha_{[1,x,r]}^\tau = \zeta_{[r,x,1]} \\
	\mu_{[1,x,r]} &= \zeta_{[r^{-1},-xr^{-1},1]}\alpha_{[1,x,r]}\zeta_{[r^{-1},-xr^{-1},1]}&\text{ for $x\in W\setminus W\m$} \\
	[1,y,s]\mu_{[1,x,r]} &= \bigg[\frac{s}{r^2}, \Big(y-x\frac{f(x,y)}{r}\Big)\frac{1}{r},1\bigg]&\text{ for $x\in W\setminus W\m$} \\
	[s,y,1]\mu_{[1,x,r]} &= \bigg[1, \Big(y-x\frac{f(x,y)}{r}\Big)r,r^2s\bigg]&\text{ for $x\in W\setminus W\m$} \\
	\alpha_{[1,y,s]}^{\mu_{[1,x,r]}} &= \zeta_{[1,y,s]\mu_{[1,x,r]}}&\text{ for $x\in W\setminus W\m$}
\end{align*}

Now, by the construction, 
\[U_0 := \{\gamma_{[1,x,r]}\mid x\in W\} = \{\zeta_{[r,x,1]}\mid x\in W\}\;,\]
and hence
\begin{align*}
	U^{\mu_{[1,x,r]}} &= \{\alpha_{[1,y,s]}^{\mu_{[1,x,r]}} \mid y\in W\} = \{\zeta_{[1,y,s]\mu_{[1,x,r]}}\mid y\in W\} \\
			&= \{\zeta_{[1,y,s]}\mid y\in W\} = U_0
\end{align*}
for all $x\in W\setminus W\m$. Hence this construction gives a local Moufang set by \autoref{thm:constrMouf} and \autoref{lem:gleq}.

\begin{definition}
	An \define{orthogonal local Moufang set} is a local Moufang set originating from an anisotropic quadratic form $q$ on a right $R$-module $W$ in the preceding manner. We denote this local Moufang set by $\M(W,q)$\notatlink{M(W,q)}.
\end{definition}

\begin{remark}
	We have skipped computations here, as orthogonal local Moufang sets can be acquired from local Jordan pairs. Indeed, if $W$ is a right $R$-module with quadratic form $q$, then we can define a Jordan pair by
	\[V^+=V^-=W\qquad Q_x\colon V^\sigma\to V^{-\sigma}\colon y\mapsto yq(x) - xf(x,y)\;.\]
	In this case, one can check that $x$ is invertible if and only if $x\in W\setminus W\m$, and that $(W\m,W\m)$ is an ideal in the Jordan pair $(V^+,V^-)$. Hence we can construct a local Moufang set $\M(V^+,V^-)$. The descriptions of the set, maps\ldots , one would get using the Jordan pair construction will be different from the descriptions given here, but the local Moufang sets will be isomorphic.
	
	Alternatively, orthogonal local Moufang sets are also specific instances of Hermitian local Moufang sets, which we will describe in the next section.
\end{remark}


\section{Hermitian local Moufang sets}	
\subsection{A set to act on}

In this section, $R$ is a unital local ring (not necessarily commutative) with maximal ideal $\m$, and an involution $\ast$\notatlink{ast}. Remark that we automatically have $\m^\ast=\m$. Furthermore, $\epsilon$\notatlink{e0} will denote an element of $Z(R)$ such that $\epsilon\epsilon^\ast=1$. The following definitions and properties are from Chapter~5 in \cite{HahnOMearaClassicalGroups}.

\begin{definition}
	A \define{$\ast$-form} on a right $R$-module $W$\notatlink{W} is a biadditive map $f\colon W\times W\to R$ such that
	\[f(xr,ys) = r^\ast f(x,y)s\text{ for all $x,y\in W$ and $r,s\in R$.}\]
	We call such $f$ \define[e-Hermitian@$\epsilon$-Hermitian]{$\epsilon$-Hermitian} if $f(x,y) = f(y,x)^\ast\epsilon$  for all $x,y\in W$.
\end{definition}
Set 
\[\Lambda_{\mathrm{min}} := \{r-r^\ast\epsilon\mid r\in R\}\text{ and } \Lambda_{\mathrm{max}} := \{r\in R\mid r^\ast\epsilon = -r\}\;.\notatlink{Lambdamin}\notatlink{Lambdamax}\]
\begin{definition}
	A \define{form parameter} is an additive subgroup $\Lambda$\notatlink{Lambda} of $R$ such that $\Lambda_{\mathrm{min}}\subset\Lambda\subset\Lambda_{\mathrm{max}}$ and $r^\ast\Lambda r\subset\Lambda$ for all $r\in R$. We call $(R,\Lambda)$\notatlink{RLambda} a \define{form ring}.
\end{definition}

Suppose $s = s' + \lambda$ for some $\lambda\in\Lambda$ as in the previous definition, and take $r\in R$. Then $r^\ast s r = r^\ast s' r + r^\ast\lambda r$. As $r^\ast \lambda r$ is in $\Lambda$, we have $r^\ast sr+\Lambda = r^\ast s' r + \Lambda$. Hence, we can define $r^\ast(s+\Lambda)r := r^\ast s r + \Lambda$. This gives a right action of $R$ on the additive group $R/\Lambda$.

\begin{proposition}
	If $(R,\Lambda)$ is a form ring, and there is an $r\in Z(R)$ such that $r+r^\ast$ is invertible, then $\Lambda = \Lambda_{\mathrm{min}} = \Lambda_{\mathrm{max}}$. This is the case if $2\not\in\m$.
\end{proposition}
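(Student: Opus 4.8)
The statement has two parts: first, that the three sets $\Lambda_{\mathrm{min}}$, $\Lambda$, $\Lambda_{\mathrm{max}}$ coincide whenever there is a central element $r$ with $r+r^\ast$ invertible; and second, that this hypothesis holds when $2\notin\m$. The second part is easy and I would dispatch it first: take $r=1\in Z(R)$, so $r+r^\ast=1+1^\ast=2$ (since $\ast$ fixes $1$), and $2\notin\m$ means exactly that $2$ is invertible in the local ring $R$. So the real content is the first claim, and I would prove it via the chain of inclusions $\Lambda_{\mathrm{max}}\subset\Lambda_{\mathrm{min}}\subset\Lambda\subset\Lambda_{\mathrm{max}}$; the last two inclusions are part of the definition of a form parameter, so everything reduces to showing $\Lambda_{\mathrm{max}}\subset\Lambda_{\mathrm{min}}$.

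For that inclusion, fix $r\in Z(R)$ with $u:=r+r^\ast$ invertible, and let $s\in\Lambda_{\mathrm{max}}$, i.e.\ $s^\ast\epsilon=-s$. I want to exhibit $s$ as $t-t^\ast\epsilon$ for some $t\in R$. The natural candidate, working by analogy with the classical ``divide by $2$'' argument, is $t:=u^{-1}sr$ (or a symmetric variant like $r u^{-1} s$); the point of the centrality of $r$ (hence of $u$ and $u^{-1}$) is that these elements commute past everything, which makes the bookkeeping with the involution manageable. Then I would compute $t-t^\ast\epsilon$: using $u^\ast = r^\ast+r = u$, that $u$ (being $r+r^\ast$ with $r$ central) lies in $Z(R)$ so $u^{-1}\in Z(R)$ and is $\ast$-fixed, and that $s^\ast\epsilon=-s$, one should get
\[
t - t^\ast\epsilon = u^{-1}sr - (u^{-1}sr)^\ast\epsilon = u^{-1}sr - r^\ast s^\ast u^{-1}\epsilon = u^{-1}sr + r^\ast s u^{-1} = u^{-1}s(r+r^\ast) = u^{-1}su = s,
\]
where I also used $\epsilon\in Z(R)$ to move $\epsilon$ and the identity $s^\ast\epsilon=-s$ rewritten as $r^\ast s^\ast\epsilon = -r^\ast s$. (I would double-check the exact placement of $r$ versus $r^\ast$ and whether one needs $t = ru^{-1}s$ instead; this is the one spot where a sign or an order could go wrong.) This shows $s\in\Lambda_{\mathrm{min}}$, completing the chain and hence $\Lambda_{\mathrm{min}}=\Lambda=\Lambda_{\mathrm{max}}$.

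The main obstacle — really the only delicate point — is getting the computation of $t-t^\ast\epsilon$ to close cleanly; in particular verifying that $u=r+r^\ast$ is genuinely central and $\ast$-invariant so that $u^{-1}$ can be freely commuted and conjugated, and that the appearances of $\epsilon$ cancel correctly given $\epsilon\in Z(R)$ and $\epsilon\epsilon^\ast=1$. Everything else is definitional. I would also remark in passing that the inclusion $\Lambda_{\mathrm{min}}\subset\Lambda_{\mathrm{max}}$ (needed implicitly for the chain to be a chain) is automatic: $(r-r^\ast\epsilon)^\ast\epsilon = (r^\ast - \epsilon^\ast r)\epsilon = r^\ast\epsilon - \epsilon^\ast\epsilon r = r^\ast\epsilon - r = -(r-r^\ast\epsilon)$, using $\epsilon\in Z(R)$ and $\epsilon^\ast\epsilon=1$.
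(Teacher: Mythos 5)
Your proof is correct, and it is essentially the standard argument: the paper itself gives no proof but simply cites Example~1 of Section~5.1C of Hahn--O'Meara, where the same ``divide by $r+r^\ast$'' computation is carried out. The one point you hedged on does close as written: since $r\in Z(R)$, applying $\ast$ to $ra=ar$ shows $r^\ast\in Z(R)$ as well, so $u=r+r^\ast$ is central and $\ast$-fixed, $u^{-1}$ commutes with everything, and with $t=u^{-1}sr$ one gets $t-t^\ast\epsilon=u^{-1}s(r+r^\ast)=s$ exactly as in your display; the final claim follows from $1^\ast=1$ and the fact that in a local ring $2\notin\m$ means $2$ is invertible.
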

\begin{proof}
	\cite[Example 1 of Section 5.1C]{HahnOMearaClassicalGroups}
\end{proof}

\begin{definition}
	Let $(R,\Lambda)$ be a form ring and $W$ a right $R$-module. A pair of maps $(f,q)$\notatlink{fq} with $f\colon W\times W\to R$ and $q\colon W\to R/\Lambda$ is a \define[L-quadratic form@$\Lambda$-quadratic form]{$\Lambda$-quadratic form} on $W$ if there is a $\ast$-form $h$ such that
	\begin{align*}
		f(x,y) &:= h(x,y)+h(y,x)^\ast\epsilon \\
		q(x) &:= h(x,x) + \Lambda
	\end{align*}
	We say $(f,q)$ is \emph{defined by} $h$.
	We call a $\Lambda$-quadratic form \define[L-quadratic form@$\Lambda$-quadratic form!anisotropic ---]{anisotropic} if $q(x)\in\m+\Lambda\implies x\in W\m$ for all $x\in W$.
\end{definition}

\begin{proposition}
	If $(f,q)$ is a $\Lambda$-quadratic form, we have
	\begin{manualenumerate}[label=\textnormal{($\Lambda$Q\arabic*)},labelwidth=\widthof{($\Lambda$Q1)}]
		\item $q(x+y) = q(x)+q(y)+f(x,y)+\Lambda$ for all $x,y\in W$;\label{itm:LambdaQ1}
		\item $q(xr) = r^\ast q(x)r$ for all $x\in W$ and $r\in R$;\label{itm:LambdaQ2}
		\item $f(x,x) = r + r^\ast\epsilon$ for all $x\in W$ and $r\in R$\\
			such that $q(x)=r+\Lambda$.\label{itm:LambdaQ3}
	\end{manualenumerate}
	For any projective module $W$, any $\epsilon$-Hermitian form $f$ and any map $q\colon W\to R/\Lambda$ satisfying these three properties, the pair $(f,q)$ is a $\Lambda$-quadratic form defined by some $\ast$-form $h$.
\end{proposition}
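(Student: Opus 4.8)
The proof splits naturally into two implications. The first (easy) direction is to verify that any $\Lambda$-quadratic form $(f,q)$ defined by a $\ast$-form $h$ satisfies \ref{itm:LambdaQ1}--\ref{itm:LambdaQ3}. For \ref{itm:LambdaQ1}, I would expand $q(x+y) = h(x+y,x+y)+\Lambda$ using biadditivity of $h$, grouping the four terms into $h(x,x)+h(y,y)$ plus the cross terms $h(x,y)+h(y,x)$, and then note that $h(x,y)+h(y,x) \equiv h(x,y)+h(y,x)^\ast\epsilon = f(x,y) \pmod{\Lambda}$ because $h(y,x)-h(y,x)^\ast\epsilon \in \Lambda_{\mathrm{min}}\subset\Lambda$. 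For \ref{itm:LambdaQ2}, I would use $q(xr) = h(xr,xr)+\Lambda = r^\ast h(x,x) r + \Lambda = r^\ast q(x) r$, invoking the well-definedness of the right action of $R$ on $R/\Lambda$ already established in the excerpt. For \ref{itm:LambdaQ3}, if $q(x) = r+\Lambda$ then $r - h(x,x) \in \Lambda \subset \Lambda_{\mathrm{max}}$, so $(r-h(x,x))^\ast\epsilon = -(r-h(x,x))$; adding $r+r^\ast\epsilon$ to both sides and using that $f(x,x) = h(x,x)+h(x,x)^\ast\epsilon$ gives $f(x,x) = r+r^\ast\epsilon$ after a short rearrangement.

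The second (substantive) direction is the converse: given a projective module $W$, an $\epsilon$-Hermitian form $f$, and a map $q\colon W\to R/\Lambda$ satisfying \ref{itm:LambdaQ1}--\ref{itm:LambdaQ3}, I must produce a $\ast$-form $h$ with $f(x,y) = h(x,y)+h(y,x)^\ast\epsilon$ and $q(x) = h(x,x)+\Lambda$. The strategy is to choose a basis $(e_i)_{i\in I}$ of the free (or projective) module $W$, fix a total order on $I$, and define $h$ on basis pairs by
\[ h(e_i,e_j) := \begin{cases} f(e_i,e_j) & i<j,\\ r_i & i=j,\\ 0 & i>j, \end{cases} \]
where $r_i\in R$ is any chosen lift of $q(e_i)\in R/\Lambda$; then extend $h$ biadditively and $\ast$-sesquilinearly, i.e.\ $h(xr,ys) := r^\ast h(x,y) s$. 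One then checks on basis elements that $h(e_i,e_j)+h(e_j,e_i)^\ast\epsilon = f(e_i,e_j)$: for $i<j$ this reads $f(e_i,e_j)+0 = f(e_i,e_j)$; for $i>j$ it is $0 + f(e_j,e_i)^\ast\epsilon = f(e_i,e_j)$ by $\epsilon$-Hermiticity; and for $i=j$ it is $r_i+r_i^\ast\epsilon = f(e_i,e_i)$, which is exactly \ref{itm:LambdaQ3}. By bilinearity of $f$ and biadditivity/sesquilinearity of $h$ this identity then propagates to all $x,y\in W$. Finally $h(x,x)+\Lambda = q(x)$ is checked by writing $x=\sum_i e_i r_i$ (finite sum), expanding $h(x,x) = \sum_{i\le j} r_i^\ast h(e_i,e_j) r_j$ versus $q(x) = \sum q(e_i r_i) + \sum_{i<j} f(e_ir_i, e_jr_j)+\Lambda$ via repeated application of \ref{itm:LambdaQ1} and \ref{itm:LambdaQ2}, and matching term by term modulo $\Lambda$.

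The main obstacle I anticipate is twofold. First, handling $W$ \emph{projective} rather than free: one should either reduce to the free case (every projective module is a direct summand of a free module, and one can either extend $f$ and $q$ to the free module — but this requires care with anisotropy, which is not needed here — or simply work with a "basis-like" generating family coming from the dual-basis lemma), or work with a projective coordinate system $(e_i, e_i^\ast)$ and define $h(x,y) := \sum_{i<j}\text{(stuff)} + \sum_i \text{(diagonal)}$ using the coordinate functionals; the sesquilinearity bookkeeping is more delicate there. Second, even in the free case, the term-by-term verification of $h(x,x)+\Lambda = q(x)$ for a general element $x=\sum e_i r_i$ requires an induction on the number of nonzero coordinates, repeatedly invoking \ref{itm:LambdaQ1} to peel off one summand at a time and \ref{itm:LambdaQ2} to pull scalars through $q$; one must be attentive that the cross terms $f(e_ir_i,e_jr_j) = r_i^\ast f(e_i,e_j) r_j$ are assigned consistently to the $i<j$ part of $h$. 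Since the paper states this proposition as background (citing \cite{HahnOMearaClassicalGroups}), I would keep the projectivity handled by a one-line reduction to the free case via a dual basis and present the free-case construction as above, relegating the inductive matching to a brief "a straightforward induction shows" remark.
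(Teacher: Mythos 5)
Your argument is correct, but it is worth noting that the paper does not prove this proposition at all: its ``proof'' is a citation of \cite[5.1.15]{HahnOMearaClassicalGroups}, so what you have done is reconstruct the standard textbook argument rather than parallel anything in the dissertation. Your forward direction is exactly right (the only point worth making explicit is that pushing the identity $h(e_i,e_j)+h(e_j,e_i)^\ast\epsilon=f(e_i,e_j)$ from basis pairs to arbitrary $x,y$ uses $\epsilon\in Z(R)$ to commute $\epsilon$ past the coefficients, which is fine since that is a standing hypothesis). For the converse, your triangular construction --- $h(e_i,e_j)=f(e_i,e_j)$ for $i<j$, a lift $r_i$ of $q(e_i)$ on the diagonal, $0$ below --- is the classical one, and the diagonal case is precisely where ($\Lambda$Q3) is consumed; the inductive expansion of $q\bigl(\sum_i e_ir_i\bigr)$ via ($\Lambda$Q1) and ($\Lambda$Q2) then matches $h(x,x)$ modulo $\Lambda$ as you say. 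Your handling of projectivity also works in either of the two forms you sketch: the extension-by-zero of $(f,q)$ to a free complement visibly preserves ($\Lambda$Q1)--($\Lambda$Q3) and $\epsilon$-Hermiticity (anisotropy is indeed irrelevant here), and alternatively the dual-basis family $(e_i,e_i^\ast)$ lets you run the same formula directly, since the coordinates $e_i^\ast(x)$ are genuine linear functionals and all sums are finite. The trade-off between your route and the paper's is the usual one: the citation keeps the background section short, while your proof makes the statement self-contained and shows exactly where projectivity and ($\Lambda$Q3) enter, namely only in choosing the diagonal of $h$.
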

\begin{proof}
	\citenobackrefoptional{5.1.15}{HahnOMearaClassicalGroups}
\end{proof}

Now assume we have an anisotropic $\Lambda$-quadratic form $(f,q)$ on $W$ defined by some $\ast$-form $h$. We set
\begin{align*}
	&\tilde{W} := R\times W\times R\\
	&\tilde{h}\colon\tilde{W}\times \tilde{W}\to R\colon\big((r,x,s),(r',y,s')\big)\mapsto h(x,y)-r^\ast s'\;.
\end{align*}
I.e.\ we have a $\ast$-form $\tilde{h}$ on $\tilde{W}$, and the corresponding $\Lambda$-quadratic form $(\tilde{f},\tilde{q})$\notatlink{fqtil} is
\begin{align*}
	\tilde{f}\big((r,x,s),(r',y,s')\big) &:= f(x,y)-r^\ast s'-s^\ast r'\epsilon \\
	\tilde{q}(r,x,s) &:= q(x)-r^\ast s + \Lambda
\end{align*}
We now take the triples $(r,x,s)$ in $\tilde{W}$ up to invertible scalar multiple
\[[r,x,s]:= \{(rt,xt,st)\in \tilde{W}\mid t\in R\setminus\m\}\;,\notatlink{[rxs]}\]
which, if we restrict to nice triples, gives a projective space:
\[\P(W) := \{[r,x,s]\mid (r,x,s)\in \tilde{W}, rR+sR=R\}\;.\notatlink{P(W)}\]
In this projective space, we now look at the isotropic points of $\tilde{q}$:
\[\mathcal{H}(W,q) := \{[r,x,s]\in\P(W)\mid q(x) = r^\ast s+\Lambda\}\;.\]
In a local ring, $rR+sR=R$ implies that at least one of $r$ and $s$ is invertible, so we get
\begin{align*}
	\mathcal{H}(W,q) &= \{[1,x,r]\in\P(W)\mid q(x) = r+\Lambda\}\notatlink{HWq}\\*
				&\quad\cup\{[r,x,1]\in\P(W)\mid q(x) = r^\ast+\Lambda, r\in\m\} \\
			&= \{[1,x,r]\in\P(W)\mid q(x) = r+\Lambda, r\in\m\}\\*
			&\quad\cup\{[r,x,1]\in\P(W)\mid q(x) = r^\ast+\Lambda\}.
\end{align*}
If the space and form are clear from context, we will simply write $\mathcal{H}$\notatlink{HW}. Finally, we can define an equivalence relation on $\mathcal{H}$ by saying two points are equivalent if they reduce to the same point when we quotient out $\m$. Concretely, this means
\begin{align*}
	&[1,x,r]\sim[1,y,s] \iff x-y\in W\m\text{ and }r-s \in\m \\
	&[r,x,1]\sim[s,y,1] \iff x-y\in W\m\text{ and }r-s \in\m \\
	&[r,x,s]\nsim[r',y,s']\qquad \text{ when $r,s'\in\m$ or $r',s\in\m$.}
\end{align*}
\begin{proposition}
	The relation $\sim$ is well-defined, and is an equivalence relation on $\mathcal{H}$.
\end{proposition}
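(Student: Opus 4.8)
The plan is to realize $\sim$ as the pullback of equality along a single well-defined map from $\mathcal{H}$ to a projective line over the residue division ring $\class{R}:=R/\m$; once that map is in place, well-definedness of $\sim$ and the equivalence-relation axioms both come for free, and all that remains is to check that the map is well-defined and that it really does induce the three clauses of the definition. Note that the $\Lambda$-quadratic form conditions cutting out $\mathcal{H}$ inside $\P(W)$ are irrelevant to $\sim$, so I may as well work in the ambient $\P(W)$.

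First I would record the normal-form facts I need. Because $R$ is local, the requirement $rR+sR=R$ in the definition of $\P(W)$ forces at least one of $r,s$ to be a unit; hence every point of $\mathcal{H}$ has a representative of the form $[1,x,r]$ or of the form $[r,x,1]$, and, since a unit-scalar fixing a coordinate equal to $1$ must itself be $1$, each such normalized representative is unique. Moreover a point admits representatives of \emph{both} forms exactly when it is $[1,x,r]$ with $r\in R^\times$, in which case it also equals $[r^{-1},xr^{-1},1]$; a point $[1,x,r]$ with $r\in\m$ has only the first form, and symmetrically a point $[r,x,1]$ with $r\in\m$ has only the second. So the three clauses of the definition genuinely cover all normalized representatives, and the first two clauses overlap only on points with both coordinates invertible.

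Next I would define $\pi\colon\mathcal{H}\to\P(\class{W})$, where $\class{W}:=W/W\m$ is a right $\class{R}$-module and $\P(\class{W})$ is the analogous projective space over the division ring $\class{R}$, by reducing the coordinates of a normalized representative modulo $\m$ and $W\m$: $\pi([1,x,r]):=[\class{1},\class{x},\class{r}]$ and $\pi([r,x,1]):=[\class{r},\class{x},\class{1}]$. The substantive point is that $\pi$ is well-defined: any two unit-scalar multiples of a representative reduce to the same triple, and for a point with both coordinates invertible the identity $(\class{1},\class{x},\class{r})\class{r}^{-1}=(\class{r}^{-1},\class{x}\class{r}^{-1},\class{1})$ (valid since $\class{r}\in\class{R}^\times$) shows the two prescriptions agree; also $rR+sR=R$ reduces to $\class{r}\,\class{R}+\class{s}\,\class{R}=\class{R}$, so $\pi$ genuinely lands in $\P(\class{W})$.

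Finally I would check that $P\sim Q\iff\pi(P)=\pi(Q)$, matching the clauses. For two points $[1,x,r]$ and $[1,y,s]$, equality $[\class{1},\class{x},\class{r}]=[\class{1},\class{y},\class{s}]$ in $\P(\class{W})$ forces the connecting scalar to be $\class{1}$, i.e.\ $x-y\in W\m$ and $r-s\in\m$ --- this is the first clause; the second clause is symmetric. For a mixed pair $[1,x,r]$ and $[s,y,1]$, equality of the reductions forces $\class{r}\neq 0$ and $\class{s}\neq 0$, i.e.\ $r,s\in R^\times$ --- precisely the overlap case, handled by the second clause after renormalizing $[1,x,r]=[r^{-1},xr^{-1},1]$ --- whereas if $r\in\m$ or $s\in\m$ the reductions are distinct, and the definition (the third clause when both ``free'' coordinates lie in $\m$, a renormalized second clause otherwise) correctly records $P\nsim Q$. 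Thus $\sim$ coincides with the pullback of equality on $\P(\class{W})$ along the well-defined map $\pi$, which yields both well-definedness of $\sim$ and that it is an equivalence relation. The main obstacle is the mixed-case bookkeeping --- keeping track of which of the two outer coordinates are units and confirming that the third clause never conflicts with a renormalized application of the first two --- but each sub-case reduces to a one-line manipulation using that $\m$ is a two-sided ideal and $W\m$ a submodule.
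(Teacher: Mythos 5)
Your proof is correct, but it takes a genuinely different route from the paper's. The paper argues directly on the clauses: it assumes $[1,x,r]\sim[1,y,s]$ with $r,s$ invertible and computes that $xr^{-1}-ys^{-1}\in W\m$ and $r^{-1}-s^{-1}\in\m$ (and symmetrically for the other overlap), which settles well-definedness; symmetry and reflexivity are immediate, and transitivity is disposed of by a short case distinction using the observation that if $[1,x,r]\sim[1,y,s]$ and $r$ is invertible then so is $s$. You instead realize $\sim$ as the pullback of equality along a reduction map $\pi\colon\mathcal{H}\to\P(W/W\m)$ over the residue division ring $R/\m$, so that well-definedness and all three equivalence-relation axioms follow at once; the only work is checking that $\pi$ is well-defined on the overlap (both normalized forms reduce to the same point) and that each clause matches equality of reductions. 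Your approach is arguably cleaner and more conceptual --- it in effect anticipates the reduction $\class{W}=R/\m\times W/W\m\times R/\m$ that the paper only introduces later when verifying (C1') --- at the cost of setting up $\P(\class{W})$ first, while the paper's computation is entirely self-contained and shorter. Two cosmetic slips in your write-up, neither affecting correctness: unit-scalar multiples of a representative reduce to right-scalar multiples over $\class{R}$, i.e.\ to the same \emph{point} of $\P(\class{W})$, not the same triple (harmless, since you work with the unique normalized representatives anyway); and in the mixed case with exactly one ``free'' coordinate in $\m$, whether one invokes a renormalized first or second clause depends on which side is non-invertible, so your phrase ``a renormalized second clause otherwise'' should read ``a renormalized first or second clause, according to which outer coordinate is invertible.''
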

\begin{proof}
	To check if $\sim$ is well-defined, we need to verify that whenever two points can both be written as $[1,x,r]$ and $[s,y,1]$, the equivalence of these two points is independent on the choice of representation. First assume $[1,x,r]\sim[1,y,s]$, then we want to show that $[r^{-1},xr^{-1},1]\sim[s^{-1},ys^{-1},1]$.
	We have
	\begin{align*}
		xr^{-1}-ys^{-1}\in W\m &\iff x-ys^{-1}r\in W\m \\
		&\iff x-ys^{-1}(r-s+s)\in W\m \\
		&\iff x-y-ys^{-1}(r-s)\in W\m\;,
	\end{align*}
	which is the case. For the second condition, we get
	\begin{align*}
		r^{-1}-s^{-1}\in\m
		&\iff 1-s^{-1}r\in\m \\
		&\iff 1-s^{-1}(r-s+s)\in\m \\
		&\iff s^{-1}(r-s)\in\m\;,
	\end{align*}
	which holds. The proof that $[r,x,1]\sim[s,y,1]$ implies $[1,xr^{-1},r^{-1}]\sim[1,ys^{-1},s^{-1}]$ is identical.
	
	It is clear that this relation is symmetric and reflexive. Proving transitivity requires a short case distinction, but is also easy when we observe that if $[1,x,r]\sim[1,y,s]$ and $r$ is invertible, then $s$ is also invertible (as it differs from $r$ by a non-invertible element).
\end{proof}

\subsection{A local Moufang set}

We can now define the permutations of $(\mathcal{H},\sim)$ that will be part of the local Moufang set.

\begin{definition}\label{def:alphatau_hermLMS}
	For $[1,x,r]\in \mathcal{H}$, we define $\alpha_{[1,x,r]}$ as
	\begin{align*}
	\alpha_{[1,x,r]} \colon \begin{cases*}
					[1,y,s]\mapsto[1,y+x,s+r+f(x,y)] \\
					\begin{aligned}
						[s,y,1]\mapsto&[s(1+rs+f(x,y))^{-1}, \\
								&\quad(y+xs)(1+rs+f(x,y))^{-1},1] 
					\end{aligned} & \text{for $s\in\m$.}
				\end{cases*}
	\end{align*}
	For $[r,x,1]\in \mathcal{H}$, we define $\zeta_{[r,x,1]}$ as
	\begin{align*}
	\zeta_{[r,x,1]} \colon \begin{cases*}
				[s,y,1]\mapsto[s+r+\epsilon^\ast f(x,y),y+x,1] \\
					\begin{aligned}
						[1,y,s]\mapsto& [1,(y+xs)(1+rs+\epsilon^\ast f(x,y))^{-1}, \\
								&\quad s(1+rs+\epsilon^\ast f(x,y))^{-1}] 
					\end{aligned} & \text{for $s\in\m$.}
				\end{cases*}
	\end{align*}
	Finally, we define $\tau$ by
	\begin{align*}
		[r,x,s]\tau := [s,x,r\epsilon] = [s\epsilon^\ast,x\epsilon^\ast,r]\;,
	\intertext{so }
		[r,x,s]\tau^{-1} := [s\epsilon^\ast,x,r] = [s,x\epsilon,r\epsilon]\;.
	\end{align*}
\end{definition}

We now want define a local Moufang set by the construction $\M(U,\tau)$ with $U:=\{\alpha_{[1,x,r]}\mid[1,x,r]\in\mathcal{H}\}$. For this, we first need to check if all $\alpha_{[1,x,r]}$ and $\tau$ preserve $\sim$, and if the conditions \hyperref[axiom:C1]{(C1-2)} are satisfied.

\begin{proposition}
	For all $[1,x,r]\in\mathcal{H}$, $\alpha_{[1,x,r]}^{-1} = \alpha_{[1,-x,f(x,x)-r]}$ and $\alpha_{[1,x,r]}$ preserves $\sim$. Secondly, $\tau$ preserves $\sim$. Finally, \ref{axiom:C1}, \ref{axiom:C1'} and \ref{axiom:C2} hold.
\end{proposition}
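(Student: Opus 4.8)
The proof proceeds in three parts, mirroring the three claims in the statement.

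\textbf{The map $\alpha_{[1,x,r]}$.} First I would verify the inverse formula $\alpha_{[1,x,r]}^{-1} = \alpha_{[1,-x,f(x,x)-r]}$. Note that $[1,-x,f(x,x)-r]$ does lie in $\mathcal{H}$: since $q(-x) = q(x) = r+\Lambda$ and $f(x,x) = t+t^\ast\epsilon$ whenever $q(x) = t+\Lambda$ by \ref{itm:LambdaQ3}, one checks $q(-x) = (f(x,x)-r)+\Lambda$ using \ref{itm:LambdaQ2} and the identity $f(x,x)-r = r^\ast\epsilon$ modulo $\Lambda$. Then I would compute the composition $\alpha_{[1,x,r]}\alpha_{[1,-x,f(x,x)-r]}$ on a point $[1,y,s]$ directly from \autoref{def:alphatau_hermLMS}: the first component returns $y+x-x = y$, and the third component, using the bilinearity of $f$ and \ref{itm:LambdaQ1}, collapses to $s$ after the $f(x,y)$ and $f(-x,y+x)$ terms cancel against $r$ and $f(x,x)-r$. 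The harder sub-case is the action on points $[s,y,1]$ with $s\in\m$, where one must verify that applying $\alpha_{[1,x,r]}$ lands in the domain of the formula (i.e.\ the new first coordinate is still in $\m$, which holds because $s(1+rs+f(x,y))^{-1}\in\m$ as $1+rs+f(x,y)$ is a unit), and then that the two successive Möbius-type substitutions compose to the identity; this is a routine but careful computation with the invertible denominators. Preservation of $\sim$ then follows almost immediately from the explicit formulas: on pairs $[1,y,s]$, the differences $x-y'$ become $(y+x)-(y'+x) = y-y'$ and $s-s'$ becomes $(s+r+f(x,y))-(s'+r+f(x,y'))$, which lies in $\m$ whenever $y-y'\in W\m$ (so $f(x,y)-f(x,y')\in\m$) and $s-s'\in\m$; the $[s,y,1]$-case is analogous, using that reduction modulo $\m$ sends the denominator to $1$.

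\textbf{The map $\tau$.} This is the easiest part. From $[r,x,s]\tau = [s,x,r\epsilon]$, and since $\epsilon$ is a central unit with $\epsilon\epsilon^\ast = 1$, one reads off directly that $\tau$ swaps the roles of the first and third coordinates (up to the unit $\epsilon$). Checking that $\tau$ preserves $\mathcal{H}$ uses $q(x) = r^\ast s+\Lambda \iff q(x)\epsilon^{-1}\cdots$ — more precisely, $[s,x,r\epsilon]\in\mathcal{H}$ means $q(x) = s^\ast r\epsilon + \Lambda$, which follows from $q(x) = r^\ast s+\Lambda$ by applying $\ast$ and the $\epsilon$-Hermitian relation. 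Preservation of $\sim$ is immediate from the definition of $\sim$, which is symmetric in the first and third coordinates up to units: $[r,x,s]\sim[r',y,s']$ depends only on $x-y\in W\m$ and $r-s, \ldots$ in a way unaffected by swapping and multiplying by the unit $\epsilon$.

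\textbf{The conditions \hyperref[axiom:C1]{(C1-2)}.} Set $\infty = [0,0,1]$ and $0 = [1,0,0]$. For \ref{axiom:C2}: $\infty\tau = [0,0,1]\tau = [1,0,0]\epsilon\cdots = [1,0,0] = 0$ (using $[0,0,\epsilon] = [0,0,1]$ after rescaling, and similarly $0\tau = \infty$), so $\infty\tau = 0\nsim\infty$ and $\infty\tau^2 = \infty$. For \ref{axiom:C1}: $U = \{\alpha_{[1,x,r]}\}$ fixes $\infty = [0,0,1]$ (plug $[s,y,1] = [0,0,1]$ into the formula: $s=0$ gives $[0,0,1]$ back). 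Then $\mathcal{H}\setminus\class{\infty} = \{[1,x,r]\mid q(x) = r+\Lambda\}$, and I claim $U$ acts sharply transitively on it: given $[1,x,r]$ and $[1,y,s]$, the unique $\alpha$ carrying the first to the second must be $\alpha_{[1,y-x,\,?]}$ where the third coordinate is forced by the constraint that $[1,y-x,?]\in\mathcal{H}$, and one checks using \ref{itm:LambdaQ1} that this $\alpha$ does send $[1,x,r]$ to $[1,y,s]$; uniqueness follows because the first-coordinate action $y\mapsto y+x$ is already sharply transitive on $W$ at the level of the middle coordinate. For \ref{axiom:C1'}: quotient everything by $\m$ — the reduction $R\to R/\m$ is a local ring homomorphism with involution, reducing $(f,q)$ to an (anisotropic, automatically, since $R/\m$ has trivial maximal ideal) $\Lambda/(\Lambda\cap\m)$-quadratic form over the division ring $R/\m$, and $\class{\mathcal{H}}$ is naturally identified with the isotropic points over $R/\m$, with $\induced{U}$ acting as the corresponding unipotent group; the argument for \ref{axiom:C1} then applies verbatim over the residue division ring.

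\textbf{Main obstacle.} The genuinely delicate step is the composition computation for $\alpha_{[1,x,r]}\alpha_{[1,-x,f(x,x)-r]} = \id$ on the ``affine'' chart $[s,y,1]$ with $s\in\m$, since one must track the non-commutative denominators $1+rs+f(x,y)$ through two fractional-linear substitutions and confirm they telescope; all the other verifications are short once the formulas are unwound. I would organize this by first recording the identity $f(x,y) + f(-x, (y+xs)(1+rs+f(x,y))^{-1}) = -f(x,y)(1+rs+f(x,y))^{-1}\cdot(\text{unit})$ type relations as lemmas, then substitute.
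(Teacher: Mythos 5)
Your overall route is the same as the paper's: compute the inverse directly, check preservation of $\sim$ chart by chart, handle $\tau$ via the coordinate swap, and obtain \ref{axiom:C1}, \ref{axiom:C1'}, \ref{axiom:C2} by fixing $\infty=[0,0,1]$, identifying $\mathcal{H}\setminus\class{\infty}$ with the chart $\{[1,x,r]\}$ and using the translation-like action, with \ref{axiom:C1'} done by reduction modulo $\m$. The genuine problem is your justification of sharp transitivity in \ref{axiom:C1}. When $\Lambda\neq0$ (the typical Hermitian case; only in the orthogonal specialization is $\Lambda=0$), the action on the middle coordinate is \emph{not} sharply transitive: for $0\neq\lambda\in\Lambda$ the point $[1,0,\lambda]$ lies in $\mathcal{H}$, and $\alpha_{[1,0,\lambda]}$ sends $[1,y,s]\mapsto[1,y,s+\lambda]$, so it fixes every middle coordinate while being a nontrivial element of $U$. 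So uniqueness does not follow ``at the level of the middle coordinate''. Likewise, the third coordinate of the required element is not ``forced by the constraint $[1,y-x,\mathord{?}]\in\mathcal{H}$'' --- membership in $\mathcal{H}$ only determines it modulo $\Lambda$. It is forced by matching the target: from $[1,x,r]\alpha_{[1,v,w]}=[1,x+v,r+w+f(v,x)]$ one gets $v=y-x$ and $w=s-r-f(y-x,x)$, and one must then \emph{check} that this $w$ satisfies $q(y-x)=w+\Lambda$, which uses \ref{itm:LambdaQ1}, \ref{itm:LambdaQ3} and $\Lambda_{\mathrm{min}}\subset\Lambda$. The paper sidesteps this by proving transitivity via $[1,0,0]\alpha_{[1,x,r]}=[1,x,r]$ and then observing that only $\alpha_{[1,0,0]}$ fixes $[1,0,0]$ (which pins down both the middle and the third coordinate); either repair works, but your stated reason is false as it stands.

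Two smaller points. First, you assert $\mathcal{H}\setminus\class{\infty}=\{[1,x,r]\in\mathcal{H}\}$ without argument; this is precisely where anisotropy enters (if $[t,y,1]\in\mathcal{H}$ with $t\in\m$, then $q(y)\in\m+\Lambda$, hence $y\in W\m$ and the point is equivalent to $\infty$), and it is essentially the only use of anisotropy, so it should be made explicit. Relatedly, the induced form over $R/\m$ is not anisotropic ``automatically, since $R/\m$ has trivial maximal ideal'' --- over a division ring anisotropy is a genuine condition ($q(x)\in\Lambda\Rightarrow x=0$); here it is \emph{inherited} from the anisotropy of $(f,q)$ over $R$. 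Second, for ``$\alpha_{[1,x,r]}$ preserves $\sim$'' you only argue the forward implication; as in the paper, note that the inverse is again of the form $\alpha_{[1,x',r']}$ (which your first step provides), so equivalence of images also forces equivalence of the original points. With these repairs your argument coincides with the paper's proof.
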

\begin{proof}
	The first claim follows by computing $\alpha_{[1,x,r]}\alpha_{[1,-x,f(x,x)-r]}$ and $\alpha_{[1,-x,f(x,x)-r]}\alpha_{[1,x,r]}$.

	Assume $[1,y,s]\sim[1,u,t]$, i.e.\ $y-u\in W\m$ and $s-t\in\m$. Then
	\[[1,y+x,s+r+f(x,y)]\sim [1,u+x,t+r+f(x,u)]\;,\]
	as $(y+x)-(u+x) = y-u\in W\m$ and 
	\[(s+r+f(x,y)) - (t+r+f(x,u)) = s-t+f(x,y-u)\in\m\;.\]
	Next, assume $y\in W\m$ and $s\in\m$, so $[s,y,1]\sim[0,0,1]$. Then
	\[[s(1+rs+f(x,y))^{-1},(y+xs)(1+rs+f(x,y))^{-1},1]\sim [0,0,1]\;,\]
	as $s(1+rs+f(x,y))^{-1}\in \m$ and $(y+xs)(1+rs+f(x,y))^{-1}\in W\m$. By transitivity, this means 
	\[[s,y,1]\sim[t,u,1]\implies [s,y,1]\alpha_{[1,x,r]}\sim[t,u,1]\alpha_{[1,x,r]}\]
	if $x,y\in W\m$ and $s,t\in\m$. This means that in all cases, we know $[s,y,t]\sim[s',y',t']$ implies $[s,y,t]\alpha_{[1,x,r]}\sim[s',y',t']\alpha_{[1,x,r]}$. Since this also holds for $\alpha_{[1,x,r]}^{-1}$, we get the opposite implication as well, so $\alpha_{[1,x,r]}$ preserves $\sim$.
	
	For $\tau$, we see
	\begin{align*}
		[1,x,r]\sim[1,y,s]&\iff x-y\in W\m\wedge r-s\in\m\\
		&\iff (x-y)\epsilon^\ast\in W\m\wedge(r-s)\epsilon^\ast\in\m \\
		&\iff [r\epsilon^\ast,x\epsilon^\ast,1]\sim[s\epsilon^\ast,y\epsilon^\ast,1]\;,
	\end{align*}
	and similarly for the other cases.
	
	Now $U$ clearly fixes $[0,0,1]=:\infty$. Now if we have $[r,x,1]$ with $r\in\m$, then clearly, $q(x) = r^\ast+\Lambda\in\m+\Lambda$, so $x\in W\m$, and hence $[r,x,1]\sim\infty$. This means that $\mathcal{H}\setminus\class{\infty} = \{[1,x,r]\in\mathcal{H}\}$. The group $U$ acts transitively on $\{[1,x,r]\in\mathcal{H}\}$, as for every $[1,x,r]$, there is an element mapping $[1,0,0]$ to $[1,x,r]$. In order to prove regularity, we observe that no element of $U$ except for $\alpha_{[1,0,0]}$ fixes $[1,0,0]$. This proves \ref{axiom:C1}.
	
	For \ref{axiom:C1'}, we need the find the induced action of $U$ on $\class{\mathcal{H}}$. This corresponds to the same construction, but with \[\class{W} := R/\m\times W/W\m \times R/\m\]
	and the induced quadratic form $(\induced{f},\induced{q})$. The argument for \ref{axiom:C1'} is the analogous to the above argument.
	
	Finally, we clearly have $[0,0,1]\tau = [1,0,0]\nsim[0,0,1]=:0$ and $[0,0,1]\tau^2 = [0,0,1]$, so we have \ref{axiom:C2}.
\end{proof}

Next, we need to compute the different maps we need to show $\M(U,\tau)$ is a local Moufang set.

\begin{proposition}
	For all $[1,x,r]\in\mathcal{H}$, we have 
	\[\gamma_{[1,x,r]} := \alpha_{[1,x,r]}^\tau = \zeta_{[1,x,r]\tau}\]
	and 
	\[-[1,x,r] = [1,-x,f(x,x)-r] = [1,-x,r^\ast\epsilon]\;.\]
	If $r\in R\setminus\m$, we have 
	\[\mu_{[1,x,r]} = \zeta_{[\epsilon^\ast r^{-\ast},-x\epsilon^\ast r^{-\ast},1]}\alpha_{[1,x,r]}\zeta_{[\epsilon^\ast r^{-\ast},-xr^{-1},1]}\;.\]
\end{proposition}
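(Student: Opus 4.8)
The statement has three parts, and the natural order is: first compute $\gamma_{[1,x,r]}$, then the negative $-[1,x,r]$, then $\mu_{[1,x,r]}$. For the first part, recall that by definition (from \autoref{constr:MUtau}) $\gamma_{[1,x,r]} = \alpha_{[1,x,r]}^\tau$ is the unique element of $U_0$ mapping $\infty$ to $[1,x,r]\tau$. So the plan is to compute the conjugate $\tau^{-1}\alpha_{[1,x,r]}\tau$ directly by applying it to a generic point $[s,y,1]$ (and to $[1,y,s]$ with $s\in\m$), using the explicit formula for $\tau$ and $\tau^{-1}$ from \autoref{def:alphatau_hermLMS}, and to match the result against the formula for $\zeta_{[1,x,r]\tau}$. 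Since $[1,x,r]\tau = [r\epsilon^\ast, x\epsilon^\ast, 1]$, this is the claim $\alpha_{[1,x,r]}^\tau = \zeta_{[r\epsilon^\ast,x\epsilon^\ast,1]}$; the verification is a routine (if slightly tedious) substitution using the $\epsilon$-Hermitian property $f(y,x) = f(x,y)^\ast\epsilon$ to get the $\epsilon^\ast f(\cdot,\cdot)$ terms in the $\zeta$-formula to line up with the $f(\cdot,\cdot)$ terms coming from conjugating $\alpha$.

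For the negative, I would use the identity $\alpha_{[1,x,r]}^{-1} = \alpha_{[1,-x,f(x,x)-r]}$ already established in the previous proposition, together with the notational convention $-[1,x,r] := 0\cdot\alpha_{[1,x,r]}^{-1} = [1,0,0]\alpha_{[1,-x,f(x,x)-r]} = [1,-x,f(x,x)-r]$. The only remaining thing is the equality $f(x,x)-r = r^\ast\epsilon$, which follows since $[1,x,r]\in\mathcal H$ means $q(x) = r + \Lambda$, so by property \ref{itm:LambdaQ3} we have $f(x,x) = r + r^\ast\epsilon$, hence $f(x,x) - r = r^\ast\epsilon$. (One should note $[1,-x,f(x,x)-r] = [1,-x,r^\ast\epsilon]$ is then literally the same representative, so no rescaling is needed.)

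For the $\mu$-map, I would invoke \autoref{prop:mu_defin}, which says $\mu_{[1,x,r]} = g\,\alpha_{[1,x,r]}\,h$ where $g\in U_0$ is the unique element mapping $\infty$ to $-[1,x,r]$ and $h\in U_0$ is the unique element mapping $[1,x,r]$ to $\infty$; equivalently, in the construction notation, $g = \gamma_{(-[1,x,r])\tau^{-1}}$ and $h = \gamma_{-([1,x,r]\tau^{-1})}$. So the plan is: (1) compute $(-[1,x,r])\tau^{-1}$ using $\tau^{-1}$ from \autoref{def:alphatau_hermLMS} and the negative computed above, obtaining some $[1,x',r']$; (2) compute $-([1,x,r]\tau^{-1})$ similarly; (3) apply the first part of this proposition, $\gamma_{[1,x',r']} = \zeta_{[1,x',r']\tau}$, to each. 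Concretely $[1,x,r]\tau^{-1} = [\epsilon^\ast, x\epsilon, r\epsilon] = [1, x\epsilon\cdot(\epsilon^\ast)^{-1}, r\epsilon(\epsilon^\ast)^{-1}]$; since $\epsilon\epsilon^\ast = 1$ and $\epsilon\in Z(R)$ one simplifies $(\epsilon^\ast)^{-1} = \epsilon$, giving $[1, x\epsilon^2, r\epsilon^2]$ — then rescaling and negating to land on the stated subscripts $[\epsilon^\ast r^{-\ast}, -xr^{-1}, 1]$ and $[\epsilon^\ast r^{-\ast}, -x\epsilon^\ast r^{-\ast}, 1]$. The bookkeeping with $\epsilon$, $\epsilon^\ast$, $r$, $r^\ast$, and the involution is where errors are most likely, and matching $[1,x',r']\tau = \zeta_{[r'\epsilon^\ast, x'\epsilon^\ast, 1]}$ correctly to the two $\zeta$-subscripts in the statement is the main obstacle; everything else is direct substitution. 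Once $g$ and $h$ are identified as the two $\zeta$-maps in the claimed formula, the formula $\mu_{[1,x,r]} = \zeta_{[\epsilon^\ast r^{-\ast},-x\epsilon^\ast r^{-\ast},1]}\,\alpha_{[1,x,r]}\,\zeta_{[\epsilon^\ast r^{-\ast},-xr^{-1},1]}$ follows immediately from \autoref{prop:mu_defin}, and uniqueness of $\mu_{[1,x,r]}$ guarantees there is nothing more to check.
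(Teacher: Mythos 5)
Your route coincides with the paper's on all three parts: the identity $\alpha_{[1,x,r]}^\tau=\zeta_{[1,x,r]\tau}$ is checked by applying $\tau^{-1}\alpha_{[1,x,r]}\tau$ to the two kinds of points $[s,y,1]$ and $[1,y,s]$ with $s\in\m$ and matching against $\zeta_{[r\epsilon^\ast,x\epsilon^\ast,1]}$; the negative comes from $\alpha_{[1,x,r]}^{-1}=\alpha_{[1,-x,f(x,x)-r]}$ together with $f(x,x)=r+r^\ast\epsilon$ (from $q(x)=r+\Lambda$ and \ref{itm:LambdaQ3}); and the $\mu$-map is read off from the construction's formula $\mu_{[1,x,r]}=\gamma_{(-[1,x,r])\tau^{-1}}\,\alpha_{[1,x,r]}\,\gamma_{-([1,x,r]\tau^{-1})}$ combined with the first two parts. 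That is exactly how the paper argues.

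There is, however, a genuine error in the concrete computation you sketch for the third part: by \autoref{def:alphatau_hermLMS}, $\tau^{-1}$ (like $\tau$) \emph{swaps} the outer homogeneous coordinates, so $[1,x,r]\tau^{-1}=[r\epsilon^\ast,x,1]$, not $[\epsilon^\ast,x\epsilon,r\epsilon]=[1,x\epsilon^2,r\epsilon^2]$. If you follow your value through the "rescaling and negating" step, the resulting subscripts involve $r^\ast$ rather than $r^{-\ast}$, so you cannot land on the two $\zeta$'s in the statement; the step as written fails. The correct bookkeeping (using that $\epsilon$ and $\epsilon^\ast$ are central and $\epsilon\epsilon^\ast=1$, and that $r$ is invertible) is: $[1,x,r]\tau^{-1}=[r\epsilon^\ast,x,1]=[1,x\epsilon r^{-1},\epsilon r^{-1}]$, hence $-\bigl([1,x,r]\tau^{-1}\bigr)=[1,-x\epsilon r^{-1},r^{-\ast}]$, whose image under $\tau$ is $[\epsilon^\ast r^{-\ast},-xr^{-1},1]$; and $(-[1,x,r])\tau^{-1}=[1,-x,r^\ast\epsilon]\tau^{-1}=[r^\ast,-x,1]=[1,-xr^{-\ast},r^{-\ast}]$, whose image under $\tau$ is $[\epsilon^\ast r^{-\ast},-x\epsilon^\ast r^{-\ast},1]$. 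With these two computations, part one identifies the two $\gamma$'s with the two $\zeta$'s of the statement and your plan closes as intended, in the same way as the paper's proof.
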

\begin{proof}
	First, take any $[s,y,1]\in\mathcal{H}$. Then
	\begin{align*}
		[s,y,1]\alpha_{[1,x,r]}^\tau &= [s,y,1]\tau^{-1}\alpha_{[1,x,r]}\tau = [1,y\epsilon,s\epsilon]\alpha_{[1,x,r]}\tau \\
		 &= [1,y\epsilon+x,s\epsilon+r+f(x,y)\epsilon]\tau \\
		 &= [s\epsilon\epsilon^\ast+r\epsilon^\ast+f(x,y)\epsilon\epsilon^\ast,y\epsilon\epsilon^\ast+x\epsilon^\ast,1] \\
		 &= [s+r\epsilon^\ast+\epsilon^\ast f(x\epsilon^\ast,y),y+x\epsilon^\ast,1] \\
		 &= [s,y,1]\zeta_{[r\epsilon^\ast,x\epsilon^\ast,1]} = [s,y,1]\zeta_{[1,x,r]\tau}\;.
	\end{align*}
	We still need to check that both actions are the same for $[1,y,s]\in\mathcal{H}$ for $s\in\m$.
	\begin{align*}
		&[1,y,s]\alpha_{[1,x,r]}^\tau \\
		&= [s,y,1]\tau^{-1}\alpha_{[1,x,r]}\tau = [s\epsilon^\ast,y,1]\alpha_{[1,x,r]}\tau \\
		 &= [s\epsilon^\ast(1+rs\epsilon^\ast+f(x,y))^{-1},(y+xs\epsilon^\ast)(1+rs\epsilon^\ast+f(x,y))^{-1},1]\tau \\
		 &= [1,(y+xs\epsilon^\ast)(1+rs\epsilon^\ast+f(x,y))^{-1},s\epsilon^\ast(1+rs\epsilon^\ast+f(x,y))^{-1}\epsilon] \\
		 &= [1,(y+x\epsilon^\ast s)(1+r\epsilon^\ast s+\epsilon^\ast f(x\epsilon^\ast,y))^{-1},s(1+r\epsilon^\ast s+f(x,y))^{-1}] \\
		 &= [1,y,s]\zeta_{[r\epsilon^\ast,x\epsilon^\ast,1]} = [1,y,s]\zeta_{[1,x,r]\tau}\;.
	\end{align*}
	For the second claim, we use $\alpha_{[1,x,r]}^{-1} = \alpha_{[1,-x,f(x,x)-r]}$, and we know that if $q(x) = r+\Lambda$, then $f(x,x) = r+r^\ast\epsilon$, so $f(x,x)-r = r^\ast\epsilon$, from which we get 
	\[[1,-x,f(x,x)-r] = [1,-x,r^\ast\epsilon]\;.\]
	
	The identity for $\mu_{[1,x,r]}$ is now immediate from the definition 
	\[\mu_{[1,x,r]} = \gamma_{(-[1,x,r])\tau^{-1}}\alpha_{[1,x,r]}\gamma_{-([1,x,r]\tau^{-1})}\;,\]
	and previous statements.
\end{proof}

The last big computation we need, is the precise action of the $\mu$-maps.

\begin{proposition}
	Let $[1,x,r]\in\mathcal{H}$ with $r\in R\setminus\m$. Then
	\begin{align*}
		[s,y,1]\mu_{[1,x,r]} &= [1 ,\bigl(y-xr^{-1}f(x,y)\bigr)r^{\ast}\epsilon,rsr^{\ast}\epsilon] \\
		&\hspace{9em}\text{ for all $[s,y,1]\in\mathcal{H}$;} \\
		[1,y,s]\mu_{[1,x,r]} &= [\epsilon^\ast r^{-\ast}sr^{-1},\bigl(y-xr^{-1}f(x,y)\bigr)r^{-1},1] \\
		&\hspace{9em}\text{ for all $[1,y,s]\in\mathcal{H}$.}
	\end{align*}
	From this one can get $\mu_{[1,x,r]}^{-1} = \mu_{[1,-x,r^\ast\epsilon]}$.
\end{proposition}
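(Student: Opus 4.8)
The plan is to verify the two explicit action formulas by direct substitution into the factorization $\mu_{[1,x,r]} = \zeta_{[\epsilon^\ast r^{-\ast},-x\epsilon^\ast r^{-\ast},1]}\,\alpha_{[1,x,r]}\,\zeta_{[\epsilon^\ast r^{-\ast},-xr^{-1},1]}$ established in the previous proposition, applying the three factors in turn according to the piecewise definitions in \autoref{def:alphatau_hermLMS}. To keep the bookkeeping manageable, I would abbreviate the two $\zeta$-factors by writing $a = \epsilon^\ast r^{-\ast}$, $x_1 = -x\epsilon^\ast r^{-\ast}$ and $x_2 = -xr^{-1}$, so that the outer factors are $\zeta_{[a,x_1,1]}$ and $\zeta_{[a,x_2,1]}$, noting that $a\in R\setminus\m$ (since $r$ is invertible) while the middle factor $\alpha_{[1,x,r]}$ has $r\in R\setminus\m$ as well.

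For the first formula, starting from $[s,y,1]$ with $s\in\m$, I would first apply $\zeta_{[a,x_1,1]}$ using its first branch (the $[s,y,1]\mapsto\cdots$ case), producing a new point of the form $[s+a+\epsilon^\ast f(x_1,y),\,y+x_1,\,1]$; since the first coordinate lands in $R\setminus\m$ (because $a$ is invertible), the next application of $\alpha_{[1,x,r]}$ uses its second branch, and then $\zeta_{[a,x_2,1]}$ must be applied after rescaling the representative to the form $[s',y',1]$ or $[1,y'',s'']$ as appropriate. Here one must track carefully which branch of each map is triggered: whether a coordinate is in $\m$ or invertible governs the choice, and the involution identities $r^\ast\epsilon$, $\epsilon\epsilon^\ast=1$, together with \ref{itm:LambdaQ3} (i.e.\ $f(x,x)=r+r^\ast\epsilon$) and the biadditivity and $\ast$-semilinearity of $f$, are what collapse the intermediate expressions into the claimed closed form. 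For the second formula, starting from $[1,y,s]$ with $q(x)=r+\Lambda$, the analogous computation runs through the appropriate branches and should yield $[\epsilon^\ast r^{-\ast}sr^{-1},(y-xr^{-1}f(x,y))r^{-1},1]$.

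The main obstacle will be the middle step where $\alpha_{[1,x,r]}$ acts on a point whose first coordinate is invertible: that branch involves dividing by $1+rs+f(x,y)$-type denominators, and one must verify that these denominators are indeed invertible in the local ring $R$ and simplify correctly under the involution. Keeping the noncommutative order of multiplication straight (all scalars act on the right, and $\ast$ reverses order) will be the most error-prone part, especially in tracking factors like $r^{-\ast}$ versus $r^{-1}$ and the placement of $\epsilon$. I expect that a single rescaling identity — replacing a representative $[r',x',s']$ by $[r't,x't,s't]$ for a suitable invertible $t$ — will be needed at each transition to match the normalized form the next map expects.

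Finally, the concluding identity $\mu_{[1,x,r]}^{-1} = \mu_{[1,-x,r^\ast\epsilon]}$ follows cleanly once the action formulas are in hand. By \autoref{lem:mu}\ref{itm:muinv} we have $\mu_{z}^{-1} = \mu_{-z}$ for any unit $z$, and the previous proposition records $-[1,x,r] = [1,-x,r^\ast\epsilon]$; substituting gives the claim directly. Alternatively, and as an independent check, I would confirm that composing the two derived action formulas for $\mu_{[1,x,r]}$ with those for $\mu_{[1,-x,r^\ast\epsilon]}$ returns the identity on both types of points, which also serves to validate the explicit formulas against each other.
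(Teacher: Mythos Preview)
Your plan is correct and matches the paper's proof: both compute the action directly by applying the three factors of $\mu_{[1,x,r]} = \zeta_{[\epsilon^\ast r^{-\ast},-x\epsilon^\ast r^{-\ast},1]}\,\alpha_{[1,x,r]}\,\zeta_{[\epsilon^\ast r^{-\ast},-xr^{-1},1]}$ in turn, using $f(x,x)=r+r^\ast\epsilon$ and the involution identities to collapse the intermediate expressions. The paper's execution for $[s,y,1]$ in fact requires three subcases (according to whether $s+\epsilon^\ast r^{-\ast}-r^{-1}f(x,y)\in\m$, and then whether $s\in\m$), exactly the kind of branch-tracking you anticipate; your derivation of $\mu_{[1,x,r]}^{-1}=\mu_{[1,-x,r^\ast\epsilon]}$ via $-[1,x,r]=[1,-x,r^\ast\epsilon]$ and $\mu_{-z}=\mu_z^{-1}$ is also the intended route.
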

\begin{proof}
	We first compute the image of $[s,y,1]\in\mathcal{H}$ under $\mu_{[1,x,r]}$.
	\begin{align*}
		&[s,y,1]\mu_{[1,x,r]} \\
		&= [s,y,1]\zeta_{[\epsilon^\ast r^{-\ast},-x\epsilon^\ast r^{-\ast},1]}\alpha_{[1,x,r]}\zeta_{[\epsilon^\ast r^{-\ast},-xr^{-1},1]} \\
		&= [s+\epsilon^\ast r^{-\ast}-\epsilon^\ast f(x\epsilon^\ast r^{-\ast},y),y-x\epsilon^\ast r^{-\ast},1]\alpha_{[1,x,r]}\zeta_{[\epsilon^\ast r^{-\ast},-xr^{-1},1]} \\
		&= [s+\epsilon^\ast r^{-\ast}-r^{-1}f(x,y),y-x\epsilon^\ast r^{-\ast},1]\alpha_{[1,x,r]}\zeta_{[\epsilon^\ast r^{-\ast},-xr^{-1},1]}
	\end{align*}
	We now have two cases depending on whether or not 
	\[s+\epsilon^\ast r^{-\ast}-r^{-1}f(x,y)\in\m\;.\]
	We first assume this is the case, and then we also know $y-x\epsilon^\ast r^{-\ast}\in W\m$. In the computation, we then get the inverse of the following expression:
	\begin{align*}
		& 1+r(s+\epsilon^\ast r^{-\ast}-r^{-1}f(x,y))+f(x,y-x\epsilon^\ast r^{-\ast}) \\
		&= 1+rs+r\epsilon^\ast r^{-\ast}-f(x,y)+f(x,y)-f(x,x)\epsilon^\ast r^{-\ast} \\
		&= 1+rs+r\epsilon^\ast r^{-\ast}-(r+r^\ast\epsilon)\epsilon^\ast r^{-\ast} = rs
	\end{align*}
	this immediately means $s$ is invertible, and we get
	\begin{align*}
		& [s+\epsilon^\ast r^{-\ast}-r^{-1}f(x,y),y-x\epsilon^\ast r^{-\ast},1]\alpha_{[1,x,r]} \\
		&= [(s+\epsilon^\ast r^{-\ast}-r^{-1}f(x,y))(rs)^{-1}, \\*
		&\qquad\quad (y-x\epsilon^\ast r^{-\ast}+x(s+\epsilon^\ast r^{-\ast}-r^{-1}f(x,y)))(rs)^{-1}	,1] \\
		&= [(s+\epsilon^\ast r^{-\ast}-r^{-1}f(x,y))(rs)^{-1}
		,(y+xs-xr^{-1}f(x,y))(rs)^{-1}	,1]
	\end{align*}
	Hence we have
	\begin{align*}
		&[s,y,1]\mu_{[1,x,r]} \\
		&= [(s+\epsilon^\ast r^{-\ast}-r^{-1}f(x,y))(rs)^{-1} \\*
		&\qquad,(y+xs-xr^{-1}f(x,y))(rs)^{-1}	,1]\zeta_{[\epsilon^\ast r^{-\ast},-xr^{-1},1]} \\
		&= [\tilde{s},\tilde{y},1]\;,
	\end{align*}
	where we need to compute $\tilde{s}$ and $\tilde{y}$:
	\begin{align*}
	\tilde{y}&= (y+xs-xr^{-1}f(x,y))(rs)^{-1}-xr^{-1} \\
		&= \bigl(y+xs-xr^{-1}f(x,y)-xr^{-1}rs\bigr)(rs)^{-1} \\
		&= \bigl(y-xr^{-1}f(x,y)\bigr)(rs)^{-1}	\\[1ex]
	\tilde{s}&= (s+\epsilon^\ast r^{-\ast}-r^{-1}f(x,y))(rs)^{-1} + \epsilon^\ast r^{-\ast} \\*
		&\qquad+ \epsilon^\ast f(-xr^{-1},y+xs-xr^{-1}f(x,y))(rs)^{-1} \\
		&= \bigl(s+\epsilon^\ast r^{-\ast}-r^{-1}f(x,y) + \epsilon^\ast r^{-\ast}rs\\*
		&\qquad+ \epsilon^\ast f(-xr^{-1},y+xs-xr^{-1}f(x,y))\bigr)(rs)^{-1} \\
		&= \bigl(s+\epsilon^\ast r^{-\ast}-r^{-1}f(x,y) + \epsilon^\ast r^{-\ast}rs - \epsilon^\ast r^{-\ast}f(x,y) \\*
		&\qquad- \epsilon^\ast r^{-\ast}f(x,x)s + \epsilon^\ast r^{-\ast}f(x,x)r^{-1}f(x,y)\bigr)(rs)^{-1} \\
		&= \bigl(s+\epsilon^\ast r^{-\ast}-r^{-1}f(x,y) + \epsilon^\ast r^{-\ast}rs - \epsilon^\ast r^{-\ast}f(x,y) \\*
		&\qquad- \epsilon^\ast r^{-\ast}(r+r^\ast\epsilon)s + \epsilon^\ast r^{-\ast}(r+r^\ast\epsilon)r^{-1}f(x,y)\bigr)(rs)^{-1} \\
		&= \epsilon^\ast r^{-\ast}s^{-1}r^{-1} 
	\end{align*}
	This means, if $s+\epsilon^\ast r^{-\ast}-r^{-1}f(x,y)\in\m$, we have
	\begin{align*}
		[s,y,1]\mu_{[1,x,r]} &= [\epsilon^\ast r^{-\ast}(rs)^{-1},\bigl(y-xr^{-1}f(x,y)\bigr)(rs)^{-1},1] \\
				&= [1 ,\bigl(y-xr^{-1}f(x,y)\bigr)r^{\ast}\epsilon,rsr^{\ast}\epsilon]
	\end{align*}
	Next, we assume $s+\epsilon^\ast r^{-\ast}-r^{-1}f(x,y)\not\in\m$, so we need to compute 
	\begin{align*}
		&[1 , (y-x\epsilon^\ast r^{-\ast})(s+\epsilon^\ast r^{-\ast}-r^{-1}f(x,y))^{-1} , \\*
		&\qquad(s+\epsilon^\ast r^{-\ast}-r^{-1}f(x,y))^{-1}]\alpha_{[1,x,r]} = [1,\tilde{y},\tilde{s}]\;,
	\end{align*}
	with
	\begin{align*}
		\tilde{y} &= (y-x\epsilon^\ast r^{-\ast})(s+\epsilon^\ast r^{-\ast}-r^{-1}f(x,y))^{-1} + x \\
			&= \bigl(y-x\epsilon^\ast r^{-\ast} + x(s+\epsilon^\ast r^{-\ast}-r^{-1}f(x,y))\bigr) \\*
			&\qquad\cdot(s+\epsilon^\ast r^{-\ast}-r^{-1}f(x,y))^{-1} \\
			&= \bigl(y + xs-xr^{-1}f(x,y))\bigr)(s+\epsilon^\ast r^{-\ast}-r^{-1}f(x,y))^{-1} \\[0.5ex]
		\tilde{s} &= \bigl(1 + r(s+\epsilon^\ast r^{-\ast}-r^{-1}f(x,y))+f(x,y-x\epsilon^\ast r^{-\ast})\bigr) \\*
			&\qquad\cdot(s+\epsilon^\ast r^{-\ast}-r^{-1}f(x,y))^{-1} \\
			&= \bigl(1 + rs+r\epsilon^\ast r^{-\ast}-f(x,y)+f(x,y)-f(x,x)\epsilon^\ast r^{-\ast}\bigr) \\* &\qquad\cdot(s+\epsilon^\ast r^{-\ast}-r^{-1}f(x,y))^{-1} \\
			&= \bigl(1 + rs+r\epsilon^\ast r^{-\ast}-(r+r^\ast\epsilon)\epsilon^\ast r^{-\ast}\bigr)(s+\epsilon^\ast r^{-\ast}-r^{-1}f(x,y))^{-1} \\
			&= (rs)(s+\epsilon^\ast r^{-\ast}-r^{-1}f(x,y))^{-1}
	\end{align*}
	Now, if $s\in R\setminus\m$, we get
	\begin{align*}
		[s,y,1]\mu_{[1,x,r]} &= [(s+\epsilon^\ast r^{-\ast}-r^{-1}f(x,y))(rs)^{-1},\\*
					&\qquad(y + xs-xr^{-1}f(x,y)))(rs)^{-1},1]\zeta_{[\epsilon^\ast r^{-\ast},-xr^{-1},1]}
	\end{align*}
	and this is the same as in the previous case, so again we get 
	\begin{align*}
		[s,y,1]\mu_{[1,x,r]} &= [\epsilon^\ast r^{-\ast}(rs)^{-1},\bigl(y-xr^{-1}f(x,y)\bigr)(rs)^{-1},1] \\*
				&= [1 ,\bigl(y-xr^{-1}f(x,y)\bigr)r^{\ast}\epsilon,rsr^{\ast}\epsilon]\;.
	\end{align*}
	If $s\in\m$, we temporarily write $t = s+\epsilon^\ast r^{-\ast}-r^{-1}f(x,y)$, so we have
	\begin{align*}
		[s,y,1]\mu_{[1,x,r]} &= [1,(y + xs-xr^{-1}f(x,y))t^{-1},rst^{-1}]\zeta_{[\epsilon^\ast r^{-\ast},-xr^{-1},1]} \\*
		&= [1, \tilde{y}, \tilde{s}]
	\end{align*}
	where
	\begin{align*}
		\tilde{y} &= \bigl((y + xs-xr^{-1}f(x,y))t^{-1} - xr^{-1}rst^{-1}\bigr) \\*
		&\qquad\cdot\bigl(1+\epsilon^\ast r^{-\ast}rst^{-1}+\epsilon^\ast f(-xr^{-1},y + xs-xr^{-1}f(x,y))t^{-1}\bigr)^{-1} \\
		&= \bigl(y + xs-xr^{-1}f(x,y) - xs\bigr) \\*
		&\qquad\cdot\bigl(t+\epsilon^\ast r^{-\ast}rs+\epsilon^\ast f(-xr^{-1},y + xs-xr^{-1}f(x,y))\bigr)^{-1}
	\end{align*}
	We now look at the second factor separately:
	\begin{align*}
		&t+\epsilon^\ast r^{-\ast}rs+\epsilon^\ast f(-xr^{-1},y + xs-xr^{-1}f(x,y)) \\
		&= s+\epsilon^\ast r^{-\ast}-r^{-1}f(x,y)+\epsilon^\ast r^{-\ast}rs-\epsilon^\ast r^{-\ast}f(x,y) \\*
		&\quad- \epsilon^\ast r^{-\ast}f(x,x)s +\epsilon^\ast r^{-\ast}f(x,x)r^{-1}f(x,y) \\
		&= s+\epsilon^\ast r^{-\ast}-r^{-1}f(x,y)+\epsilon^\ast r^{-\ast}rs-\epsilon^\ast r^{-\ast}f(x,y) \\*
		&\quad- \epsilon^\ast r^{-\ast}(r+r^\ast\epsilon)s +\epsilon^\ast r^{-\ast}(r+r^\ast\epsilon)r^{-1}f(x,y) \\
		&= \epsilon^\ast r^{-\ast}\;.
	\end{align*}
	Hence, we get
	\begin{align*}
		\tilde{y} &= \bigl(y -xr^{-1}f(x,y) \bigr)r^{\ast}\epsilon \\
	\intertext{and secondly}
		\tilde{s} &= rst^{-1} \\*
		&\quad\cdot\bigl(1+\epsilon^\ast r^{-\ast}rst^{-1}+\epsilon^\ast f(-xr^{-1},y + xs-xr^{-1}f(x,y))t^{-1}\bigr)^{-1} \\
		&= rs\cdot\bigl(t+\epsilon^\ast r^{-\ast}rs+\epsilon^\ast f(-xr^{-1},y + xs-xr^{-1}f(x,y))\bigr)^{-1} \\
		&= rsr^{\ast}\epsilon
	\end{align*}
	All in all, we get
	\begin{align}
		[s,y,1]\mu_{[1,x,r]} &= [1 ,\bigl(y-xr^{-1}f(x,y)\bigr)r^{\ast}\epsilon,rsr^{\ast}\epsilon]\;. \label{action:muinft}
	\end{align}
	
	Now, we still need the image of $[1,y,s]$ under $\mu_{[1,x,r]}$, where $s\in\m$ and hence $y\in W\m$. We first compute
	\begin{align*}
		&[1,y,s]\zeta_{[\epsilon^\ast r^{-\ast},-x\epsilon^\ast r^{-\ast},1]}\alpha_{[1,x,r]} \\*
		&= [1,(y-x\epsilon^\ast r^{-\ast}s)(1+\epsilon^\ast r^{-\ast}s-f(xr^{-\ast},y))^{-1},\\*
		&\qquad\quad s(1+\epsilon^\ast r^{-\ast}s-f(xr^{-\ast},y))^{-1}]\alpha_{[1,x,r]} \\*
		&= [1,\tilde{y},\tilde{s}]\;,
	\end{align*}
	with
	\begin{align*}
		\tilde{y} &= (y-x\epsilon^\ast r^{-\ast}s)(1+\epsilon^\ast r^{-\ast}s-f(xr^{-\ast},y))^{-1} + x \\
			&= (y-x\epsilon^\ast r^{-\ast}s+x+x\epsilon^\ast r^{-\ast}s-xf(xr^{-\ast},y)) \\*
			&\qquad\cdot(1+\epsilon^\ast r^{-\ast}s-f(xr^{-\ast},y))^{-1}\\
			&= (y+x-xr^{-1}f(x,y))(1+\epsilon^\ast r^{-\ast}s-r^{-1}f(x,y))^{-1} \\[1ex]
		\tilde{s} &= \bigl(s + r(1+\epsilon^\ast r^{-\ast}s-r^{-1}f(x,y)) + f(x,y-x\epsilon^\ast r^{-\ast}s)\bigr) \\*
		&\qquad\cdot(1+\epsilon^\ast r^{-\ast}s-r^{-1}f(x,y))^{-1} \\
		&= \bigl(s + r+r\epsilon^\ast r^{-\ast}s-f(x,y) + f(x,y)-f(x,x)\epsilon^\ast r^{-\ast}s\bigr) \\*
		&\qquad\cdot(1+\epsilon^\ast r^{-\ast}s-r^{-1}f(x,y))^{-1} \\
		&= \bigl(s + r+r\epsilon^\ast r^{-\ast}s-(r+r^\ast\epsilon)\epsilon^\ast r^{-\ast}s\bigr)(1+\epsilon^\ast r^{-\ast}s-r^{-1}f(x,y))^{-1} \\
		&= r(1+\epsilon^\ast r^{-\ast}s-r^{-1}f(x,y))^{-1}
	\end{align*}
	Hence we get
	\begin{align*}
		&[1,y,s]\mu_{[1,x,r]} \\
		&= [(1+\epsilon^\ast r^{-\ast}s-r^{-1}f(x,y))r^{-1},  \\*
		&\qquad(y+x-xr^{-1}f(x,y))r^{-1},1]\zeta_{[\epsilon^\ast r^{-\ast},-xr^{-1},1]} \\
		&= [\tilde{s},(y-xr^{-1}f(x,y))r^{-1},1]\;,
	\end{align*}
	where we still need to compute $\tilde{s}$:
	\begin{align*}
		\tilde{s} &= (1+\epsilon^\ast r^{-\ast}s-r^{-1}f(x,y))r^{-1} + \epsilon^\ast r^{-\ast} \\*
		&\qquad+ \epsilon^\ast f(-xr^{-1},y+x-xr^{-1}f(x,y))r^{-1} \\
		&= r^{-1} + \epsilon^\ast r^{-\ast}sr^{-1} - r^{-1}f(x,y)r^{-1}+\epsilon^\ast r^{-\ast}-\epsilon^\ast r^{-\ast}f(x,y)r^{-1} \\
		&\quad	- \epsilon^\ast r^{-\ast} f(x,x)r^{-1} + \epsilon^\ast r^{-\ast} f(x,x)r^{-1}f(x,y)r^{-1} \\
		&= r^{-1} + \epsilon^\ast r^{-\ast}sr^{-1} - r^{-1}f(x,y)r^{-1}+\epsilon^\ast r^{-\ast}-\epsilon^\ast r^{-\ast}f(x,y)r^{-1} \\
		&\quad	- \epsilon^\ast r^{-\ast} (r+r^\ast\epsilon)r^{-1} + \epsilon^\ast r^{-\ast} (r+r^\ast\epsilon)r^{-1}f(x,y)r^{-1} \\
		&= \epsilon^\ast r^{-\ast}sr^{-1}
	\end{align*}
	Hence, for $[1,y,s]\in\mathcal{H}$ with $s\in\m$, we have
	\begin{align*}
		[1,y,s]\mu_{[1,x,r]} &= [\epsilon^\ast r^{-\ast}sr^{-1},(y-xr^{-1}f(x,y))r^{-1},1]\;.
	\end{align*}
	If $s$ is invertible, we have $[1,y,s] = [s^{-1},ys^{-1},1]$, and hence we can use identity \ref{action:muinft} to compute the action:
	\begin{align*}
		[1,y,s]\mu_{[1,x,r]} &= [s^{-1},ys^{-1},1]\mu_{[1,x,r]} \\
			&= [1,(ys^{-1}-xr^{-1}f(x,y)s^{-1})r^\ast\epsilon,rs^{-1}r^\ast\epsilon] \\
			&= [\epsilon^\ast r^{-\ast}sr^{-1},(y-xr^{-1}f(x,y))r^{-1},1]\;,
	\end{align*}
	so we get the same formula for all $[1,y,s]\in\mathcal{H}$.
\end{proof}

Using the previous computation, we can now verify that the conjugate of an $\alpha_{[1,y,s]}$ by a $\mu$-map corresponds to some $\zeta$:

\begin{proposition}
	For $[1,x,r],[1,y,s]\in\mathcal{H}$ with $r\in R\setminus\m$, we have $\alpha_{[1,y,s]}^{\mu_{[1,x,r]}} = \zeta_{[1,y,s]\mu_{[1,x,r]}}$.
\end{proposition}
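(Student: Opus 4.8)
The plan is to verify the identity of permutations directly from the formulas already established. Rewriting $\alpha_{[1,y,s]}^{\mu_{[1,x,r]}} = \zeta_{[1,y,s]\mu_{[1,x,r]}}$ (recall $g^h=h^{-1}gh$) as
$\alpha_{[1,y,s]}\,\mu_{[1,x,r]} = \mu_{[1,x,r]}\,\zeta_{[1,y,s]\mu_{[1,x,r]}}$, it suffices to show that these two composites agree as permutations of $(\mathcal{H},\sim)$. Every point of $\mathcal{H}$ has a representative of the form $[1,u,t]$ (for arbitrary $t$) or $[t,u,1]$ with $t\in\m$, so I would evaluate both composites on a point $P$ of each of these two shapes. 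It is convenient to record $[1,y,s]\mu_{[1,x,r]}=[r',x',1]$ with $r'=\epsilon^\ast r^{-\ast}sr^{-1}$ and $x'=(y-xr^{-1}f(x,y))r^{-1}$, so that the right-hand side is unravelled via the defining formula for $\zeta_{[r',x',1]}$ from \autoref{def:alphatau_hermLMS}.

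The computation then proceeds in layers. For $P=[t,u,1]$ with $t\in\m$ (so $u\in W\m$ by anisotropy), the left-hand side first applies $\alpha_{[1,y,s]}$, producing a point $[t_1,u_1,1]$ carrying a denominator factor of the form $1+st+f(y,u)$, and then applies $\mu_{[1,x,r]}$ via $[t_1,u_1,1]\mu_{[1,x,r]}=[1,(u_1-xr^{-1}f(x,u_1))r^\ast\epsilon,\,rt_1r^\ast\epsilon]$; the right-hand side applies $\mu_{[1,x,r]}$ to $P$ first, landing in the chart $[1,*,*]$, and then $\zeta_{[r',x',1]}$. Comparing the two resulting triples reduces to a chain of identities in $R$, manipulated with the usual toolkit: $\epsilon\epsilon^\ast=1$, $\epsilon\in Z(R)$, $f(ua,vb)=a^\ast f(u,v)b$, $f(u,v)=f(v,u)^\ast\epsilon$, and above all $f(x,x)=r+r^\ast\epsilon$ coming from $q(x)=r+\Lambda$ together with \ref{itm:LambdaQ3} — exactly the relations that made the earlier computation of the action of $\mu_{[1,x,r]}$ go through. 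The point $P=[1,u,t]$ is treated symmetrically, and inside each of the two cases one splits according to whether the denominator elements (such as $1+st+f(y,u)$ on the left, or $1+r's+\epsilon^\ast f(x',u)$ on the right) lie in $\m$ or are invertible, since this selects the branch of the piecewise definitions; as in the preceding propositions, the final triples agree across branches.

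The main obstacle is the length and bookkeeping rather than any new idea: each side is a threefold composite of piecewise-defined maps whose denominators are quadratic in the module variable, and one has to carry these denominators through the $\mu$-conjugation and recognise, after substituting $f(x,x)=r+r^\ast\epsilon$, that they collapse to clean factors like $r^\ast\epsilon$ or $rsr^\ast\epsilon$. No conceptual ingredient is needed beyond what was already used to compute the actions of $\alpha$, $\zeta$ and $\mu$. Once this identity is in hand, it follows that $U^{\mu_{[1,x,r]}}=U_0$ for every unit $[1,x,r]$, so \autoref{thm:constrMouf} together with \autoref{lem:gleq} shows that $\M(U,\tau)$ is a local Moufang set — the \emph{Hermitian local Moufang set} $\M(W,f,q)$.
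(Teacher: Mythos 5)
Your proposal is correct and takes essentially the same approach as the paper: both verify the permutation identity point-by-point on the two charts of $\mathcal{H}$ (with the same branch splits on whether the denominators lie in $\m$), using the explicit action formulas for $\alpha$, $\zeta$ and $\mu_{[1,x,r]}$ together with $f(x,x)=r+r^\ast\epsilon$ and the $\epsilon$-Hermitian relations. The only cosmetic difference is that you rearrange the claim to $\alpha_{[1,y,s]}\,\mu_{[1,x,r]}=\mu_{[1,x,r]}\,\zeta_{[1,y,s]\mu_{[1,x,r]}}$, whereas the paper evaluates the conjugate directly as $\mu_{[1,-x,r^\ast\epsilon]}\,\alpha_{[1,y,s]}\,\mu_{[1,x,r]}$ via $\mu_{[1,x,r]}^{-1}=\mu_{[1,-x,r^\ast\epsilon]}$.
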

\begin{proof}
	We will verify that the action of the two permutations is the same. Note that the right hand side is 
	\begin{align*}
		\zeta_{[1,y,s]\mu_{[1,x,r]}} = \zeta_{[\epsilon^\ast r^{-\ast}sr^{-1},(y-xr^{-1}f(x,y))r^{-1},1]}\;.
	\end{align*}

	First, take $[t,u,1]\in\mathcal{H}$, so
	\begin{align*}
		&[t,u,1]\zeta_{[1,y,s]\mu_{[1,x,r]}} \\
		&= [t+\epsilon^\ast r^{-\ast}sr^{-1} +\epsilon^\ast f\bigl((y-xr^{-1}f(x,y))r^{-1},u\bigr), \\*
		&\qquad u+\bigl(y-xr^{-1}f(x,y)\bigr)r^{-1},1]\;.
	\end{align*}
	Now we compute the image of the left hand side:
	\begin{align*}
		&[t,u,1]\mu_{[1,-x,r^\ast\epsilon]}\alpha_{[1,y,s]}\mu_{[1,x,r]} \\
		&= [1, (u-x\epsilon^\ast r^{-\ast}f(x,u))r,r^\ast t r\epsilon]\alpha_{[1,y,s]} \mu_{[1,x,r]} \\
		&= [1, (u-x\epsilon^\ast r^{-\ast}f(x,u))r + y , \\*
		&\qquad r^\ast t r\epsilon + s + f(y,u)r-f(y,x)\epsilon^\ast r^{-\ast}f(x,u)r] \mu_{[1,x,r]} \\
		&= [\tilde{t}, \tilde{u}, 1]\;,
	\end{align*}
	where
	\begin{align*}
		\tilde{t} &= \epsilon^\ast r^{-\ast}(r^\ast t r\epsilon + s + f(y,u)r-f(y,x)\epsilon^\ast r^{-\ast}f(x,u)r)r^{-1} \\
			&= t + \epsilon^\ast r^{-\ast}sr^{-\ast}+\epsilon^\ast\bigl(r^{-\ast}f(y,u)-r^{-\ast}f(y,x)\epsilon^\ast r^{-\ast}f(x,u)\bigr)\\
			&= t + \epsilon^\ast r^{-\ast}sr^{-\ast}+\epsilon^\ast\bigl(f(yr^{-1},u)-f(x(r^{-\ast}f(y,x)\epsilon^\ast r^{-\ast})^\ast,u)\bigr)\\
			&= t + \epsilon^\ast r^{-\ast}sr^{-\ast}+\epsilon^\ast\bigl(f(yr^{-1},u)-f(xr^{-1}f(x,y) r^{-1},u)\bigr)\\
			&= t + \epsilon^\ast r^{-\ast}sr^{-\ast}+\epsilon^\ast f\bigl(y(-xr^{-1}f(x,y))r^{-1},u\bigr)\\[1ex]
		\tilde{u} &= u-x\epsilon^\ast r^{-\ast}f(x,u) + yr^{-1} \\*
			&\qquad - xr^{-1}f(x,ur-x\epsilon^\ast r^{-\ast}f(x,u)r + y)r^{-1} \\
			&= u-x\epsilon^\ast r^{-\ast}f(x,u) + yr^{-1} - xr^{-1}f(x,u) \\*
			&\qquad+ xr^{-1}f(x,x)\epsilon^\ast r^{-\ast}f(x,u)- xr^{-1}f(x,y)r^{-1} \\
			&= u + \bigl(y- xr^{-1}f(x,y)\bigr)r^{-1}-x\epsilon^\ast r^{-\ast}f(x,u) - xr^{-1}f(x,u) \\*
			&\qquad+ xr^{-1}(r+r^\ast\epsilon)\epsilon^\ast r^{-\ast}f(x,u) \\
			&= u + \bigl(y- xr^{-1}f(x,y)\bigr)r^{-1}
	\end{align*}
	hence we have
	\begin{align*}
		[t,u,1]\mu_{[1,-x,r^\ast\epsilon]}\alpha_{[1,y,s]}\mu_{[1,x,r]} &= [t,u,1]\zeta_{[1,y,s]\mu_{[1,x,r]}}\;.
	\end{align*}
	Next, we look at the image of $[1,u,t]\in\mathcal{H}$ with $t\in\m$. The image of the right hand side is
	\begin{align*}
		&[1,u,t]\zeta_{[1,y,s]\mu_{[1,x,r]}} = \zeta_{[\epsilon^\ast r^{-\ast}sr^{-1},(y-xr^{-1}f(x,y))r^{-1},1]} \\
		&= [1, (u+(y-xr^{-1}f(x,y))r^{-1}t)d^{-1}, td^{-1}]\;,
	\end{align*}
	with $d = 1+\epsilon^\ast r^{-\ast}sr^{-1}t+\epsilon^\ast f((y-xr^{-1}f(x,y))r^{-1},u)$. We now compute the image of the left hand side:
	\begin{align*}
		&[1,u,t]\mu_{[1,-x,r^\ast\epsilon]}\alpha_{[1,y,s]}\mu_{[1,x,r]} \\
		&= [\epsilon^\ast r^{-1}t r^{-\ast}, (u-x\epsilon^\ast r^{-\ast}f(x,u))\epsilon^\ast r^{-\ast},1]\alpha_{[1,y,s]}\mu_{[1,x,r]} \\
		&= [\epsilon^\ast r^{-1}t r^{-\ast}\tilde{d}^{-1} , \\*
		&\qquad	\bigl((u-x\epsilon^\ast r^{-\ast}f(x,u))\epsilon^\ast r^{-\ast}+y\epsilon^\ast r^{-1}t r^{-\ast}\bigr)\tilde{d}^{-1}
			,1]\mu_{[1,x,r]}
	\end{align*}
	with
	\begin{align*}
		\tilde{d} &= 1+s\epsilon^\ast r^{-1}t r^{-\ast} + f(y,(u-x\epsilon^\ast r^{-\ast}f(x,u))\epsilon^\ast r^{-\ast}) \\
		&= 1+s\epsilon^\ast r^{-1}t r^{-\ast} + f(y,u)\epsilon^\ast r^{-\ast}-f(y,x)\epsilon^\ast r^{-\ast}f(x,u)\epsilon^\ast r^{-\ast} \\
		&= r^\ast\bigl(1+r^{-\ast}s\epsilon^\ast r^{-1}t + r^{-\ast}f(y,u)\epsilon^\ast \\*
		&\qquad - r^{-\ast}f(y,x)\epsilon^\ast r^{-\ast}f(x,u)\epsilon^\ast \bigr)r^{-\ast} \\
		&= r^\ast\bigl(1+\epsilon^\ast r^{-\ast}s r^{-1}t + \epsilon^\ast f(yr^{-1},u)  \\*
		&\qquad - \epsilon^\ast f(x(r^{-\ast}f(y,x)\epsilon^\ast r^{-\ast})^\ast,u) \bigr)r^{-\ast} \\
		&= r^\ast\bigl(1+\epsilon^\ast r^{-\ast}s r^{-1}t + \epsilon^\ast f((y - xr^{-1} f(x,y))r^{-1},u) \bigr)r^{-\ast} \\
		&= r^\ast d r^{-\ast}\;.
	\end{align*}
	Hence we get
	\begin{align*}
		&[1,u,t]\mu_{[1,-x,r^\ast\epsilon]}\alpha_{[1,y,s]}\mu_{[1,x,r]} \\
		&= [\epsilon^\ast r^{-1}t d^{-1}r^{-\ast} , \\*
		&\qquad	\bigl(u\epsilon^\ast-x\epsilon^\ast r^{-\ast}f(x,u)\epsilon^\ast +y\epsilon^\ast r^{-1}t \bigr)d^{-1} r^{-\ast}
			,1]\mu_{[1,x,r]} \\
		&= [1, \tilde{u}, \tilde{t}]
	\end{align*}
	with
	\begin{align*}
		\tilde{u} &= \bigl(u\epsilon^\ast-x\epsilon^\ast r^{-\ast}f(x,u)\epsilon^\ast +y\epsilon^\ast r^{-1}t \\*
		&\qquad- xr^{-1}f(x,u\epsilon^\ast-x\epsilon^\ast r^{-\ast}f(x,u)\epsilon^\ast +y\epsilon^\ast r^{-1}t)\bigr)d^{-1} \epsilon \\
		&= \bigl(u+y r^{-1}t -x\epsilon^\ast r^{-\ast}f(x,u)  \\*
		&\qquad- xr^{-1}f(x,u - x\epsilon^\ast r^{-\ast}f(x,u) +y r^{-1}t)\bigr)d^{-1} \\
		&= \bigl(u+y r^{-1}t- xr^{-1}f(x,y )r^{-1}t -x\epsilon^\ast r^{-\ast}f(x,u) \\*
		&\qquad - xr^{-1}f(x,u) + xr^{-1}f(x,x)\epsilon^\ast r^{-\ast}f(x,u)\bigr)d^{-1} \\
		&= \bigl(u+(y - xr^{-1}f(x,y))r^{-1}t -x\epsilon^\ast r^{-\ast}f(x,u) \\*
		&\qquad- xr^{-1}f(x,u) + xr^{-1}(r+r^\ast\epsilon)\epsilon^\ast r^{-\ast}f(x,u)\bigr)d^{-1} \\
		&= \bigl(u+(y - xr^{-1}f(x,y))r^{-1}t\bigr)d^{-1} \\[1ex]
		\tilde{t} &= r\epsilon^\ast r^{-1}t d^{-1}r^{-\ast}r^\ast\epsilon \\
			&=t d^{-1}\;,
	\end{align*}
	so indeed, we have, for $[1,u,t]\in\mathcal{H}$ with $t\in\m$
	\begin{align*}
		[1,u,t]\mu_{[1,-x,r^\ast\epsilon]}\alpha_{[1,y,s]}\mu_{[1,x,r]} &= [1,u,t]\zeta_{[1,y,s]\mu_{[1,x,r]}}\;.\qedhere
	\end{align*}
\end{proof}

After this extensive amount of computation, we can prove that the object we constructed is a local Moufang set.

\begin{theorem}
	Let $(f,q)$ be an isotropic $\Lambda$-quadratic form on $W$ and take $\tau$ and $U=\{\alpha_{[1,x,r]}\mid[1,x,r]\in\mathcal{H}\}$ as defined in \autoref{def:alphatau_hermLMS}. Then the structure $\M(U,\tau)$ created by \autoref{constr:MUtau} is a local Moufang set.
\end{theorem}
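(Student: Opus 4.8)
The plan is to reduce everything to the criterion of \autoref{thm:constrMouf} (equivalently, \autoref{cor:construction_equivalentconditions}). The preceding proposition already verifies that $U$ and $\tau$ preserve $\sim$ and satisfy \hyperref[axiom:C1]{\textnormal{(C1-2)}}, and $\abs{\class{\mathcal{H}}}>2$ (one checks this on $\mathcal{H}(\class{W},\class{q})$, which carries the quotient of the data), so \autoref{constr:MUtau} applies and $\M(U,\tau)$ is a local pre-Moufang set by \autoref{prop:constr}. Hence it remains only to prove that $U^{\mu_{[1,x,r]}}=U^{\tau}$ for every unit $[1,x,r]$; all the necessary computational input has already been assembled above.

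First I would identify the units. A point is a unit iff it is equivalent to neither $0=[1,0,0]$ nor $\infty=[0,0,1]$; since $\mathcal{H}\setminus\class{\infty}=\{[1,x,r]\in\mathcal{H}\}$, no such point is equivalent to $\infty$, and for $[1,x,r]\in\mathcal{H}$ the relation $q(x)=r+\Lambda$ together with anisotropy gives $r\in\m\iff x\in W\m\iff[1,x,r]\sim 0$. So the units are exactly the $[1,x,r]$ with $r\in R^\times$, which is precisely the case in which the formula for $\mu_{[1,x,r]}$ and its action were derived. Now fix such a unit. Since $U=\{\alpha_{[1,y,s]}\mid[1,y,s]\in\mathcal{H}\}$, the identity $\alpha_{[1,y,s]}^{\mu_{[1,x,r]}}=\zeta_{[1,y,s]\mu_{[1,x,r]}}$ proved in the last proposition gives $U^{\mu_{[1,x,r]}}=\{\zeta_{[1,y,s]\mu_{[1,x,r]}}\mid[1,y,s]\in\mathcal{H}\}$. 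On the other hand, $\gamma_{[1,y,s]}=\alpha_{[1,y,s]}^{\tau}=\zeta_{[1,y,s]\tau}$, and since $\tau$ is a $\mu$-map it swaps $0$ and $\infty$ and preserves $\sim$, so $U^{\tau}=\{\zeta_P\mid P\in\mathcal{H}\setminus\class{0}\}$, every point of $\mathcal{H}\setminus\class{0}$ being representable with last coordinate $1$. Because $\mu_{[1,x,r]}$ is likewise a bijection of $\mathcal{H}$ swapping $0$ and $\infty$ and preserving $\sim$, the map $[1,y,s]\mapsto[1,y,s]\mu_{[1,x,r]}$ is a bijection of $\mathcal{H}\setminus\class{\infty}$ onto $\mathcal{H}\setminus\class{0}$, and its image consists (by the explicit formula) of triples with last coordinate $1$; hence $\{[1,y,s]\mu_{[1,x,r]}\mid[1,y,s]\in\mathcal{H}\}=\mathcal{H}\setminus\class{0}$, and therefore $U^{\mu_{[1,x,r]}}=\{\zeta_P\mid P\in\mathcal{H}\setminus\class{0}\}=U^{\tau}$. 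By \autoref{cor:construction_equivalentconditions}, $\M(U,\tau)$ is a local Moufang set.

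The genuinely heavy part of this section lies before the statement: the verification of the conjugation identity $\alpha_{[1,y,s]}^{\mu_{[1,x,r]}}=\zeta_{[1,y,s]\mu_{[1,x,r]}}$, with its long manipulations involving $h$, $f$ and $\epsilon$. Granting that, the only point in the proof above needing a little care is the equivalence-class bookkeeping in the middle paragraph --- identifying the units and checking that the relevant bijections land where claimed. If one prefers to avoid that bijection argument, it suffices to observe that $U^{\mu_{[1,x,r]}}\subseteq U^{\tau}$ is immediate from the formulas, while both $U^{\mu_{[1,x,r]}}$ and $U^{\tau}$ act sharply transitively on $\mathcal{H}\setminus\class{0}$ (by \ref{axiom:C1} transported through the conjugating elements), so the two groups coincide.
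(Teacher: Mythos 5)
Your proof is correct and follows essentially the same route as the paper: reduce via \autoref{thm:constrMouf} (or \autoref{cor:construction_equivalentconditions}) to showing $U^{\mu_{[1,x,r]}}=U^\tau=U_0$ for all units, then use the conjugation identity $\alpha_{[1,y,s]}^{\mu_{[1,x,r]}}=\zeta_{[1,y,s]\mu_{[1,x,r]}}$ and identify both sets with $\{\zeta_{[s,y,1]}\mid[s,y,1]\in\mathcal{H}\}$; your bijection/sharp-transitivity bookkeeping just makes explicit a step the paper leaves implicit. One small inaccuracy: at this stage $\tau$ is merely the given permutation, not (necessarily) a $\mu$-map, but the facts you actually use --- that it swaps $0$ and $\infty$ and preserves $\sim$ --- are established directly in the earlier proposition, so nothing breaks.
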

\begin{proof}
	By the construction, we have 
	\[U_0 = U^\tau = \{\alpha_{[1,x,r]}^\tau\mid[1,x,r]\in\mathcal{H}\} = \{\zeta_{[r,x,1]}\mid [r,x,1]\in\mathcal{H}\}\;.\]
	By \autoref{lem:gleq} and \autoref{thm:constrMouf}, it is sufficient to show $U^{\mu_{[1,x,r]}}=U_0$ for all units $[1,x,r]$. By the previous proposition, we have
	\begin{align*}
		U^{\mu_{[1,x,r]}} &= \{\alpha_{[1,y,s]}^{\mu_{[1,x,r]}}\mid[1,y,s]\in\mathcal{H}\} = \{\zeta_{[1,y,s]\mu_{[1,x,r]}}\mid [1,y,s]\in\mathcal{H}\} \\
			&= \{\zeta_{[s,y,1]}\mid [s,y,1]\in\mathcal{H}\} = U_0\;.
	\end{align*}
	Hence $\M(U,\tau)$ is a local Moufang set.
\end{proof}

\begin{definition}
	A \define{Hermitian local Moufang set} is a local Moufang set originating from an anisotropic $\Lambda$-quadratic form $(f,q)$ on a right $R$-module $W$ in the preceding manner. We denote this local Moufang set by $\M(W,f,q)$\notatlink{M(W,f,q)}.
\end{definition}

\part*{Appendices}
\appendix
	
	\chapter{Ideas for further investigation}
	During my research, I encountered many other interesting problems that could increase our understanding of local Moufang sets and how they are connected to other parts of mathematics. This appendix collects some of these problems, and in some cases, ideas on how to approach them.

\section{Twisting projective local Moufang sets}

In \autoref{chap:chap6_projective} we defined projective local Moufang sets, and we were able to characterize them in \autoref{thm:PSL2equiv}. The assumptions of this theorem are the natural conditions \hyperref[axiom:R1]{\textnormal{(R1-4)}}, but also the identity
\begin{align}
	x\mu_e\alpha_y = yR_x\alpha_{-2e}\mu_eR_x\mu_e \quad \text{for all $x\sim0$ and $y\nsim\infty$.}\label{eq:extraPSLbis}\tag{$\star$}
\end{align}
It is unclear whether or not \eqref{eq:extraPSLbis} can be removed from the conditions. In other words, we can wonder if $\M(R)$ is the only special local Moufang set structure we can get acting on the set with equivalence relation $\P^1(R)$, with root groups isomorphic to $(R,+)$ and abelian root groups. One possible way of trying to construct counterexamples to this, would be to try to \emph{twist} $\M(R)$ somehow, by changing the action of all $\alpha_{[1,r]}$ on $\class{[0,1]}$: for each $r\in R$, pick a $\tilde{\alpha}_r\in\Sym(\m)$ fixing $0$, and set
\begin{align*}
	\alpha'_{[1,r]} &\colon \begin{cases}
	                    	[1,s]\mapsto [1,s+r] \\
	                    	[m,1]\mapsto [m\tilde{\alpha}_r,1]
	                    \end{cases}
\end{align*}
One clear assumption we need to make, is the compatibility of the $\tilde{\alpha}_r$, i.e.\ we need $\tilde{\alpha}_r\tilde{\alpha}_{r'} = \tilde{\alpha}_{r+r'}$. We keep $\tau$ as defined for $\M(R)$ and set $U'=\{\alpha'_{[1,r]}\mid r\in R\}$. Then \hyperref[axiom:C1]{\textnormal{(C1-2)}} are satisfied, so we get a local pre-Moufang set $\M(U',\tau)$. If we took $m\tilde{\alpha}_r = m(mr+1)^{-1}$, we would find $\M(R)$. If we would be able to find other choices for $\tilde{\alpha}_r$ giving rise to a local Moufang set, we would have a candidate counterexample to \autoref{thm:PSL2equiv} without condition \eqref{eq:extraPSLbis}.
\begin{openproblem*}
	Can we find a local ring $R$, along with $\tilde{\alpha}_r\in\Sym(\m)$ fixing $0$ for all $r\in R$, such that $\M(U',\tau)$ as defined above is a local Moufang set?
\end{openproblem*}
Remark that if $R$ is generated by $\{r_1,\ldots,r_n\}$, then it is sufficient to define $\tilde{\alpha}_{r_i}$ for $i=1\ldots n$.

\section{Local Moufang sets from structurable algebras}

Structurable algebras have been introduced in by Allison in \cite{AllisonStructurable}, and division structurable algebras have been shown to give rise to Moufang sets.
\begin{definition*}
	Let $K$ be a field with $\characteristic(K)\not\in\{2,3\}$ and $A$ a $K$-algebra with involution $\ast$. For all $x,y,z\in A$, we set
	\[V_{x,y}(z) := \{x,y,z\} := (xy^\ast)z+(zy^\ast)x - (zx^\ast)y\;.\]
	We say $(A,\ast)$ is a \define{structurable algebra} if
	\[[V_{x,y},V_{z,w}] = V_{V_{x,y}(z),w} - V_{z,V_{y,x}(w)}\text{ for all $x,y,z,w\in A$.}\]
	We write $W_{x,y}z := V_{x,z}y$ and $W_xy:= W_{x,x}y$, and define the set of \emph{skew elements}
	\[S := \{x\in A\mid x^\ast = -x\}\;.\]
	An element $x\in A$ is called \emph{invertible} if there is a $y\in A$ such that $V_{x,y} = \id_A$, and in this case we call $\hat{x}:=y$ the \emph{inverse} of $x$. We call $A$ a \emph{structurable division algebra} if all nonzero elements are invertible.
\end{definition*}
The following theorem by L.~Boelaert now shows how one can construct a Moufang set from a structurable division algebra:
\begin{theorem*}
	Let $A$ be a structurable division algebra over a field of characteristic different from $2$, $3$ and $5$.
	Define
	\begin{align*}
		\psi&\colon A\times A\to S\colon (x,y)\mapsto xy^\ast-yx^\ast \\
		q_x&\colon S\to S\colon s\mapsto \tfrac{1}{6}\psi(x,W_x(sx))\;.
	\end{align*}
	We set $U = A\times S$ with group operation
	\[(x,s)+(y,t) = (x+y,s+t+\psi(x,y))\]
	and define a permutation $\tau$ of $U\setminus\{(0,0)\}$ by
	\[\tau \colon \begin{cases}
		            	(x,0)\mapsto (-\hat{x},0) \\
		            	(0,s)\mapsto (0,-\hat{s}) \\
		            	(x,s)\mapsto \bigl(\hat{s}(q_{\hat{x}}(s)+\hat{s})\!\hat{\phantom{x}}\hat{x}
							+\hat{s}(s+q_x(\hat{s}))\!\hat{\phantom{x}}x, -(s+q_x(\hat{s}))\!\hat{\phantom{x}}\bigr)
		      \end{cases}\]
	where all $x\neq 0$ and $s\neq 0$. Then $\M(U,\tau)$ is a Moufang set.
\end{theorem*}
\begin{proof}This is \cite[Theorem~6.25]{BoelaertL}\end{proof}

As structurable algebras generalize Jordan algebras (if $\ast$ is trivial, then a structurable algebra is a Jordan algebra), it seems likely that there is some way to define \emph{local structurable algebras}, and to construct local Moufang sets using these. An ideal in a structurable algebra is a two-sided ideal in the algebra that is invariant under $\ast$, so we could say a structurable algebra is \emph{local} if the set of non-invertible elements is a proper ideal.
\begin{openproblem*}
	Does this notion of local structurable algebras give rise to local Moufang sets?
\end{openproblem*}

\section{Suzuki-Tits local Moufang sets}

In our examples, we have omitted a few more exceptional Moufang sets. The easiest of those are the Suzuki-Tits Moufang sets (discovered in \cite{SuzukiSimplegroups}). These require a field $K$ of characteristic $2$ with a Tits endomorphism $\theta$ (an endomorphism such that $a^{\theta^2} = a^2$). Set $K^\theta = \{a^\theta\mid a\in K\}$, then $K$ is a $K^\theta$-vector space. Now take a subspace $L$ of $K$ which contains $K^\theta$.

Now set $U = L\times L$ with group operation
\[(a,b)+(c,d) := (a+c,b+d+ac^\theta)\;\]
and define a permutation $\tau$ of $U\setminus\{(0,0)\}$ by
\[(a,b)\tau := \left(\frac{b}{a^{\theta+2}+ab+b^\theta}, \frac{a}{a^{\theta+2}+ab+b^\theta}\right)\;.\]
Then $\M(U,\tau)$ is a Moufang set, called the \define{Suzuki-Tits Moufang set} of $(K,\theta,L)$. The set of points of this Moufang set can be embedded in a projective space (see for example \cite[p.~1880]{HVMSuzuki}), so we could try to generalize the Suzuki-Tits Moufang sets by trying to generalize the construction in a projective space over a local ring.

\begin{openproblem*}
	Can we generalize Suzuki-Tits Moufang sets to Suzuki-Tits local Moufang sets?
\end{openproblem*}

\section{Improper local Moufang sets}

All our examples of Moufang sets so far have been proper Moufang sets, meaning that the Hua subgroup is non-trivial. There are also improper Moufang sets, which are just sharply two-transitive permutation groups where the root groups are the point stabilizers. The only examples known originate from \emph{near-fields}
\begin{definition*}
	A triple $(F,+,\cdot)$ is called a \define{near-field} if $(F,+)$ is an abelian group, $(F^\ast,\cdot)$ is a group, $\cdot$ is left distributive over $+$, and $0\cdot a = 0$ for all $a\in F$.
\end{definition*}
If $F$ is a near-field, we can define a sharply two-transitive group
\[\mathsf{AG}(1,F) := \{\phi_{a,b}\colon F\to F\colon x\mapsto a\cdot x+b\mid a\in F^\ast, b\in F\}\;,\]
and hence we get a Moufang set.

We can generalize this to local Moufang sets using the following structures:
\begin{definition*}
	A \define{local near-ring} is an abelian group $(R,+)$ with a subgroup $M\leq R$ and an associative multiplication $\cdot$ such that
	\begin{manualenumerate}[label=\textnormal{(LNR\arabic*)},labelwidth=\widthof{(LNR1)}]
		\item $(R\setminus M,\cdot)$ is a group with identity element $1\in R$;
		\item none of the $m\in M$ have a multiplicative inverse;
		\item $\cdot$ is left distributive over $+$;
		\item $0\cdot r=0$ for all $r\in R$.
	\end{manualenumerate}
\end{definition*}
We get a group
\[\mathsf{AG}(1,R) := \{\phi_{r,s}\colon F\to F\colon x\mapsto r\cdot x+s\mid r\in R\setminus M, s\in F\}\;,\]
acting on $R$ with $r\sim s\iff r-s\in M$, with subgroup
\[U = \{\phi_{r,0}\mid r\in R\setminus M\}\]
and $\tau = \phi_{1,-1}$. Now, if we assume $\abs{R/M}>2$, we can show that \hyperref[axiom:C1]{\textnormal{(C1-2)}} are satisfied. Furthermore, we can compute that $\mu_r = \tau$ for all $r\in R\setminus M$, hence we get a local Moufang set $\M(U,\tau)$ with trivial Hua subgroup.

There are many results on the connection of sharply two-transitive groups and near-fields, for example every finite sharply two-transitive group arises from a near-field (see \cite{ZassenhausNearFields}).
\begin{openproblem*}
	To what extent can we generalize results on sharply two-transitive groups to improper local Moufang sets?
\end{openproblem*}

\section{Trees}

In \autoref{sec:Serre}, we studied the Bruhat-Tits tree corresponding to $\PSL_2$ over a local field. We have been able to connect this Bruhat-Tits tree to local Moufang sets using the nice description of the tree using lattices.

We hope that general Bruhat-Tits trees also have strong ties to local Moufang sets, though we have been unable to make a connection in full generality.
\begin{openproblem*}
	Which Bruhat-Tits trees induce actions corresponding to local Moufang sets?
\end{openproblem*}

Another notion from building theory is that of Moufang trees.
\begin{definition*}
	A \define{Moufang tree} is a tree $T$ with a bi-infinite path $(\ldots,x_{-2},x_{-1},x_0,x_1,x_2,\ldots)$ and for each $i\in\Z$, two groups $U_{(i,+)}$ and $U_{(i,-)}$ acting on $T$, satisfying the following properties:
	\begin{manualenumerate}[label=\textnormal{(MT\arabic*)},labelwidth=\widthof{(MT1)}]
		\item For each $i\in\Z$, $U_{(i,\pm)}$ fixes $(x_i, x_{i\pm1}, x_{i\pm2},\ldots)$ and acts sharply transitively on
			\[\{x\in V(T)\mid d(x,x_i) = 1\}\setminus\{x_{i\pm1}\}\;.\]
		\item For each $i<j$ we have
			\begin{align*}
				[U_{(i,+)},U_{(j,+)}]&\leq\langle U_{(i+1,+)},\ldots,U_{(j-1,+)} \rangle \\
				[U_{(i,-)},U_{(j,-)}]&\leq\langle U_{(i+1,-)},\ldots,U_{(j-1,-)} \rangle\;.
			\end{align*}
		\item For all $i\in\Z$ and all $u\in U_{(i,\pm)}$ with $u\neq\id$, there is a 
			\[m_u\in U_{(i,\mp)}u U_{(i,\mp)}\]
			such that $x_j m_u = x_{2i-j}$ for all $j\in\Z$. This means $m_u$ interchanges $(x_i, x_{i+1}, x_{i+2},\ldots)$ and $(x_i, x_{i-1}, x_{i-2},\ldots)$.\label{axiom:MT3}
		\item For all $n = m_u$ as defined in \ref{axiom:MT3}, $U_{(j,\pm)}^n = U_{(2i-j,\mp)}$.
	\end{manualenumerate}
\end{definition*}
A few properties that follow from this definition are the following (for proofs, see for example \cite{RonanBldgs}):
\begin{proposition*}
	Let $T$ be a Moufang tree. Then
	\begin{romenumerate}
		\item For every $i\in\Z$ and $k\in\N$, the set $U_{(i,\pm)}\cdots U_{(i-k,\pm)}$ is a group.
		\item For each $k\geq1$, the group $U_{(0,+)}\cdots U_{(-k,+)}$ acts sharply transitively on
		\[\{x\in V(T)\mid d(x,x_0) = k+1\text{ and }d(x,x_1)=k+2\}\;.\]
		\item The map 
			\[U_{(0,+)}\cdots U_{(-k,+)}\to U_{(0,+)}\colon u_0u_{-1}\cdots u_{-k}\mapsto u_0\]
			is a well-defined group homomorphism.
	\end{romenumerate}
\end{proposition*}
We can now set $U = U_{(0,+)}\cdots U_{(-k,+)}$ and $X = \{x\in V(T)\mid d(x,x_0) = k+1\}$. Define an equivalence on $X$ by saying $x\sim y$ if and only if the path from $x$ to $y$ does not contain $x_0$. Finally, take any $u\in U_{(0,+)}\setminus\{\id\}$, and set $\tau = m_u$. Then we can show that \hyperref[axiom:C1]{\textnormal{(C1-2)}} are satisfied, so $\M(U,\tau)$ is a local pre-Moufang set.
\begin{openproblem*}
	Which Moufang trees induce actions corresponding to local Moufang sets?
\end{openproblem*}

	\begin{otherlanguage}{dutch}
	\chapter{Nederlandstalige samenvatting}
	\section{Historische context}%

Matrixgroepen, of lineaire groepen, worden al bestudeerd sinds de 19de eeuw, met toepassingen in vele gebieden van de wiskunde en fysica.
Oorspronkelijk werden deze groepen vooral beschouwd over velden, maar vanaf het midden van de 20ste eeuw begon men ook lineaire groepen over algemene ringen te onderzoeken.
Lineaire groepen over algemene ringen zijn lastig om te bestuderen, maar voor enkele types ringen is het wel mogelijk om interessante resultaten te bewijzen over de corresponderende lineaire groepen.
In de jaren '60 bestudeerde Klingenberg een aantal lineaire groepen over een \emph{lokale ring}, een ring $R$ met een uniek maximaal ideaal $\m$.
In \cite{KlingenbergGL} bepaalde hij de normaaldelers van $\GL_n(R)$, en hij bestudeerde ook orthogonale en symplectische groepen in \cite{KlingenbergO,KlingenbergS}.

In de jaren '90 introduceerde J.~Tits \emph{Moufangverzamelingen} in \cite{TitsTwinBldgs} om enkelvoudige algebra\"ische groepen van relatieve rang $1$ axiomatisch te benaderen.
Een Moufangverzameling is een verzameling met een klasse groepen die op deze verzameling werken, en die voldoen aan een aantal axioma's.
Voor enkelvoudige algebra\"ische groepen van relatieve rang $1$ is de verzameling die van de parabolische deelgroepen, en is de klasse groepen de corresponderende worteldeelgroepen.
Er zijn een aantal equivalente manieren om naar Moufangverzamelingen te kijken: ze komen overeen met \emph{gespleten BN-paren van rang \'e\'en} (nog een notie ingevoerd door Tits), zijn sterk gerelateerd met \emph{abstracte rang \'e\'en groepen} (ingevoerd door Timmesfeld in \cite{TimmesfeldAR1G}), en zijn equivalent met \emph{delingsparen} (recent gedefinieerd door Loos in \cite{LoosRogdiv})

T.~De Medts en R.~Weiss begonnen in 2006 met de studie van willekeurige Moufangverzamelingen. Sindsdien is de theorie van Moufangverzamelingen uitgediept en zijn heel wat voorbeelden van Moufangverzamelingen beschreven.
Alle Moufangverzamelingen die we kennen zijn op \'e\'en of andere manier van algebra\"ische oorsprong, maar een classificatie van Moufangverzamelingen is nog lang niet in zicht.

De gekende Moufangverzamelingen zijn niet alleen algebra\"isch van oorprong, maar de onderliggende algebra\"ische structuren zijn allemaal `\emph{delingsstructuren}', waarin alle elementen verschillend van nul inverteerbaar zijn.
Bijvoorbeeld: projectieve Moufangverzamelingen worden gedefinieerd over \emph{alternatieve delingsalgebras}, elke \emph{Jordan delingsalgebra} geeft aanleiding tot een Moufangverzameling, en recenter bewees L.~Boelaert dat elke \emph{structureerbare delingsalgebra} een Moufangverzameling bepaalt (zie \cite{BoelaertL}).
Elke Moufangverzameling afkomstig van een enkelvoudige lineaire algebra\"ische groep van relatieve rang $1$ (over een veld met karakteristiek verschillend van $2$ and $3$), is afkomstig van zo een structureerbare delingsalgebra (dit is aangetoond in \cite{BoelaertDMStavrova}).
We zien de `delingsvoorwaarde' in de constructies van de gekende Moufangverzamelingen: er staan overal inversen!
Er zijn nog Moufangverzamelingen die niet rechtstreeks afkomstig zijn van algebra\"ische groepen, maar deze worden nog steeds gedefinieerd over velden (ook bekend als \emph{delingsringen}).

Een gerelateerd gevolg van de definitie van Moufangverzamelingen is dat morfismen van Moufangverzamelingen altijd injectief zijn. Dit betekent dat er relatief weinig morfismen zijn, en dat er bijvoorbeeld geen nuttige manier is om quoti\"enten van Moufangverzamelingen in te voeren.

We kunnen ons afvragen wat er gebeurt als we Moufangverzamelingen proberen te maken met algemenere algebra\"ische structuren, en dat is precies waar dit doctoraat over gaat.
Ter vervanging van de delingsstructuren beschouw ik \emph{lokale} structuren.
Dat betekent dat er niet-inverteerbare elementen kunnen zijn in de structuur, maar dat deze elementen in zekere zin beheersbaar zijn.
Ik probeerde de gekende constructies uit te voeren over \emph{lokale ringen} en \emph{lokale Jordanalgebras}, en destilleerde daaruit een aantal axioma's die Moufangverzamelingen veralgememen tot \emph{lokale Moufangverzamelingen}.

\section{Schets}%

Mijn doctoraat bestaat uit twee delen. In \autoref{part:theory} defini\"eren we lokale Moufangverzamelingen, en ontwikkelen we de algemene theorie van lokale Moufangverzamelingen. \autoref{part:examples} bevat een aantal voorbeelden van lokale Moufangverzamelingen, karakteriseert er enkele van, en verkent een aantal verbanden met andere gebieden van de wiskunde.

Het eerste deel begint in \autoref{chap:chap2_definitions} met de definitie van lokale Moufangverzamelingen en enkele basiseigenschappen.
Het grote verschil met Moufangverzamelingen is dat er meer structuur op de verzameling is, namelijk een equivalentierelatie. We hebben nog steeds een klasse groepen, genaamd \emph{wortelgroepen}, die op de verzameling werken. De axioma's moeten aangepast worden, zodat ze compatibel zijn met deze extra structuur.
Net zoals in de theorie van Moufangverzamelingen is het handig om twee (niet-equivalente) punten van de verzameling te kiezen (dit noemen we een \emph{basis}).
Een belangrijk nieuw concept voor lokale Moufangverzamelingen zijn de \emph{eenheden}, die in de voorbeelden verband houden met de inverteerbare elementen van de onderliggende algebra\"ische structuur.
Wanneer we gebruik maken van deze eenheden, kunnen we een groot deel van de theorie van Moufangverzamelingen veralgemenen naar lokale Moufangverzamelingen. Het bestaan van de $\mu$-afbeeldingen is daar een voorbeeld van. Dit zijn afbeeldingen die de twee punten van de basis verwisselen.

Een eerste doel bij het opbouwen van de theorie van lokale Moufangverzamelingen, is het bepalen van de stabilisator van de basis.
De \emph{Hua deelgroep}, de groep voortgebracht door producten van een even aantal $\mu$-afbeeldingen, is een goede kandidaat voor deze stabilisator.
We hebben aangetoond dat deze twee groepen samenvallen in \autoref{thm:hua2pt}. Om dit aan te tonen hebben we het concept van \emph{quasi-inverteerbaarheid}, dat afkomstig is uit Jordantheorie, ingevoerd voor lokale Moufangverzamelingen.

In \autoref{chap:chap3_constr} beschrijven we een algemene manier om lokale Moufangverzamelingen te construeren met heel wat minder informatie: we hebben slechts \'e\'en wortelgroep nodig, en \'e\'en permutatie die de basispunten verwisselt. Deze constructie is gelijkaardig aan de gekende constructie voor Moufangverzamelingen.
Enkel gebruik makend van deze twee objecten, kunnen we nu alle wortelgroepen maken, en zo alle data voor de lokale Moufangverzameling.
Het is echter niet altijd zo dat deze data voldoet aan de axioma's.
In \autoref{cor:construction_equivalentconditions} bepalen we enkele nodige en voldoende voorwaarden opdat de constructie een lokale Moufangverzameling geeft.
Deze constructie en voorwaarden zullen we zeer vaak gebruiken om lokale Moufangverzamelingen te defini\"eren, en om te bepalen wanneer een structuur een lokale Moufangverzameling is.

Homomorfismen van lokale Moufangverzamelingen worden in \autoref{chap:chap4_category} ingevoerd. Deze definitie vormt \'e\'en geheel met de definities van lokale Moufang deelverzamelingen, quoti\"enten, en uiteindelijk de categorie van lokale Moufangverzamelingen.
Het is interessant om de verbanden te zien tussen homomorfismen van lokale Moufangverzamelingen, afbeeldingen van de onderliggende verzamelingen, alsook groepsmorfismen tussen de wortelgroepen.
Deze concepten worden voornamelijk gebruikt voor het invoeren van de inverse limiet van lokale Moufangverzamelingen, en om te bepalen wanneer zo'n inverse limiet bestaat.

In het laatste theoretische hoofdstuk, \autoref{chap:chap5_special}, bestuderen we \emph{speciale} lokale Moufangverzamelingen, i.e.\ lokale Moufangverzamelingen die aan een specifieke extra voorwaarde voldoen.
Men verwacht dat deze eigenschap sterk gerelateerd is aan het abels zijn van de wortelgroepen.
Ondanks enkele noodzakelijke aanpassingen, kunnen we veel van de theorie van speciale Moufangverzamelingen veralgemenen naar speciale lokale Moufangverzamelingen.
De voornaamste resultaten in dit hoofdstuk veronderstellen dat de lokale Moufangverzameling speciaal is, en abelse wortelgroepen heeft.
In dat geval kunnen we aantonen dat de $\mu$-afbeeldingen involuties zijn (\autoref{prop:mu-involution}), en een voorwaarde geven waaruit volgt dat de wortelgroepen uniek $k$-deelbaar zijn (\autoref{prop:ndivglobal}).

Vervolgens bekijken we wat voorbeelden van lokale Moufangverzamelingen.
De eerste voorbeelden zijn de \emph{projectieve lokale Moufangverzamelingen} in \autoref{chap:chap6_projective}.
Deze worden gedefinieerd over een lokale ring $R$, waarbij de onderliggende verzameling de projectieve rechte is, en de wortelgroepen deelgroepen van $\PSL_2(R)$ zijn.
Omgekeerd, als we een lokale Moufangverzameling hebben die aan enkele voorwaarden voldoet, kunnen we er een lokale ring mee construeren.
Deze ring kunnen we gebruiken om projectieve lokale Moufangverzamelingen te karakteriseren (\autoref{thm:PSL2equiv}).
De derde sectie van dit hoofdstuk maakt de connectie met de \emph{Bruhat-Tits boom} van $\PSL_2$ over een veld $K$ met een \emph{discrete valuatie}.
Zo'n veld bevat een lokale ring, en de actie van $\PSL_2(K)$ op de boom induceert verschillende projectieve lokale Moufangverzamelingen. In het bijzonder gebruiken we de inverse limiet van lokale Moufangverzamelingen om de actie op de rand van de boom te beschrijven.

In \autoref{chap:chap7_jordan} onderzoeken we de link tussen lokale Moufangverzamelingen en Jordantheorie.
De meest natuurlijke aanpak maakt gebruik van \emph{Jordanparen}, en we geven een manier om een lokale Moufangverzameling te maken uit een lokaal Jordanpaar.
Omgekeerd kunnen we een lokaal Jordanpaar maken uit een lokale Moufangverzameling die aan enkele voorwaarden voldoet. Met deze constructie kunnen we de lokale Moufangverzamelingen die afkomstig zijn van lokale Jordanparen karakteriseren in \autoref{thm:extra}.

Zowel de projectieve lokale Moufangverzamelingen als de lokale Moufangverzamelingen afkomstig van Jordanparen zijn speciaal en hebben abelse wortelgroepen.
In \autoref{chap:chap8_hermitian} is ons doel om Hermitische lokale Moufangverzamelingen te defini\"eren.
We defini\"eren eerst orthogonale lokale Moufangverzamelingen. Deze zijn een speciaal geval van Hermitische lokale Moufangverzamelingen (en kunnen ook worden geconstrueerd m.b.v.\ Jordanparen).
Dit voorbeeld illustreert de aanpak die we gebruiken om Hermitische Moufangverzameling te veralgemenen naar de lokale versie.
De definitie van Hermitische lokale Moufangverzamelingen is nogal technisch, en om aan te tonen dat aan de axioma's van lokale Moufangverzamelingen is voldaan, hebben we heel wat algebra\"ische manipulaties nodig.

Een groot deel van deze thesis is vervat in de twee artikels \cite{DMRijckenPSL,DMRijckenLMS_JP}. \autoref{chap:chap4_category}, \autoref{sec:Serre} en \autoref{chap:chap8_hermitian} zijn de voornaamste delen van deze thesis die niet in de artikels te vinden zijn.
	\end{otherlanguage}

\backmatter

	\bookmarksetup{startatroot}
	\prebiblio
	\bibliography{universal.bib}
	\notocchapterspace
	\printindex
	\printnotation
	
	\newpage\null
	\pagestyle{empty}
	\cleardoublepage
	\vspace*{6em}
	\fquote{We were amazed and overwhelmed by \\
		the strange beauty of these sounds.\\
		The rest, to us, is until this day unexplainable.\\
		The transmission ended.}
	
\end{document}